\setlist[enumerate,1]{itemsep=0pt, parsep=0pt, listparindent=\parindent}
\setlist[enumerate,2]{ref=\theenumii, itemsep=0pt, parsep=0pt, listparindent=\parindent}
\setlist[itemize,1]{itemsep=0pt, parsep=0pt, listparindent=\parindent}
\setlist[itemize,2]{itemsep=0pt, parsep=0pt, listparindent=\parindent}
\numberwithin{equation}{section}
\title{Forcing with Language Fragments, Extending Namba Forcing, and Models of Theories with Constraints in Interpretation}
\author{Desmond Lau}
\begin{document}

\maketitle

\begin{abstract}
    We develop a forcing framework based on the idea of amalgamating language fragments into a theory with a canonical term model. We then demonstrate the usefulness of this framework by applying it to variants of the extended Namba problem, as well as to the analysis of models of certain \emph{theories with constraints in interpretation} (\emph{TCIs}). The foundations for a theory of TCIs and their models are laid in parallel to the development of our framework, and are of independent interest.
\end{abstract}

\newtheorem{thm}{Theorem}[section]
\newtheorem{lem}[thm]{Lemma}
\newtheorem{prop}[thm]{Proposition}
\newtheorem{cor}[thm]{Corollary}
\newtheorem{conj}[thm]{Conjecture}
\newtheorem{ques}[thm]{Question}
\newtheorem*{claim}{Claim}
\newtheorem{claim2}[thm]{Claim}
\theoremstyle{definition}
\newtheorem{defi}[thm]{Definition}
\theoremstyle{remark}
\newtheorem*{rem*}{Remark}
\newtheorem{rem}[thm]{Remark}
\newtheorem{ex}[thm]{Example}
\newtheorem{ob}[thm]{Observation}
\newtheorem{fact}[thm]{Fact}
\newtheorem{con}[thm]{Convention}

\theoremstyle{definition}
\newtheorem{innercustomthm}{Theorem}
\newenvironment{customthm}[1]
  {\renewcommand\theinnercustomthm{#1}\innercustomthm}
  {\endinnercustomthm}

\theoremstyle{definition}
\newtheorem{innercustomlem}{Lemma}
\newenvironment{customlem}[1]
  {\renewcommand\theinnercustomlem{#1}\innercustomlem}
  {\endinnercustomlem}

\theoremstyle{definition}
\newtheorem{innercustomdef}{Definition}
\newenvironment{customdef}[1]
  {\renewcommand\theinnercustomdef{#1}\innercustomdef}
  {\endinnercustomdef}

\theoremstyle{remark}
\newtheorem{innercustomfact}{Fact}
\newenvironment{customfact}[1]
  {\renewcommand\theinnercustomfact{#1}\innercustomfact}
  {\endinnercustomfact}

\newcommand{\bd}[1]{\mathbf{#1}}  
\newcommand{\RR}{\mathbb{R}}      
\newcommand{\ZZ}{\mathbb{Z}}      
\newcommand{\col}[1]{\left[\begin{matrix} #1 \end{matrix} \right]}
\newcommand{\comb}[2]{\binom{#1^2 + #2^2}{#1+#2}}
\newcommand{\eq}{=}

\newcommand{\blankpage}{
\newpage
\thispagestyle{empty}
\mbox{}
\newpage
}

{\let\clearpage\relax \tableofcontents} 

\section{Introduction}\label{sect1}

Forcing is a technique in mathematical logic, whereby a set is proven to exist through a non-constructive but coherent assembly of known components. It is most often used in relative consistency proofs, and has been so ever since Cohen invented syntactic forcing in 1963, when he proved that the continuum hypothesis is independent of $\mathsf{ZFC}$ in \cite{cohen1} and \cite{cohen2}.

An application of forcing involves coming up with a partial order (called a \emph{forcing notion}) and analysing a filter (called a \emph{generic filter}) intersecting suitably many dense subsets of the aforementioned partial order. A set of which existence we want to show (called a \emph{generic object}) typically manifests as an amalgamation of members of the generic filter. In set theoretic applications, a generic object generates an extension (called a \emph{generic extension}) of the original universe. 

Over the years, set theorists have discovered important relationships between forcing notions and their generic objects\slash extensions. These relationships usually associate higher-order properties of a partial order with its forcing consequences, to the extent that the study of such properties has become a good description of forcing theory. Now, in order to derive a generic extension we desire, we need to construct a forcing notion drawing from forcing theory with our generic object in mind, at the same time easing tensions between requirements. This balancing act can be extremely tough, in part because it sees little in the way of systematic support.

In this paper we develop a framework in which certain desiderata of a generic object can be naturally realised. Using this framework, all we need to do is to translate requirements on the generic extension into requirements on the generic object. We see via two non-trivial examples, how this structured approach can make things more convenient and intuitive in practice.

It turns out that the bare-bones framework we initially envisioned can be adorned with additional layers of syntactic sugar to capture analogous properties of a specific type of first-order structures. These structures are incidentally models of what we term first-order \emph{theories with constraints in interpretation} (\emph{TCIs}). The concept of a first-order TCI, in some sense, generalises that of a first-order theory in logic, and can be useful in expressing different kinds of objects logicians care about. We devote some space in this paper to the basic theory of TCIs and their models, before relating forcing to generic models of TCIs in a variety of ways that traverses set theory and computability theory. As an aside, we argue for a way to characterise the expressiveness of forcing as a technique, by the kind of truths it is able to impose on generic objects.

\subsection{Sectional Content and Dependencies}

Section \ref{sect15} lays the technical foundation for the rest of the paper. Potential philosophical and meta-theoretic concerns are addressed, prerequisite knowledge highlighted, and background readings recommended. Important definitions and conventions are made explicit, especially those that are more niched, and those that lack community consensus. Consequentially, the topics related to forcing and generic iterations (Subsections \ref{subs24} to \ref{ss27}) tend to get more comprehensive treatments than the others. All subsequent sections depend on Subsections \ref{subs21} to \ref{subs24}, whereas the materials in Subsections \ref{ss25} to \ref{ss27} are only referenced in Subsection \ref{ss43}.

Section \ref{setupsec} concerns itself with the development of our central framework for forcing. The technical machinery of this section concentrates around Lemma \ref{uni}, which itself is a generalisation of Lemma \ref{main2}. On the other hand, Lemma \ref{main2} is the accessible and more applicable backbone of the paper, a throughline tying subsequent sections together. Subsection \ref{subs51} is notable for not depending on Lemma \ref{main2}, nor in fact, on any of Section \ref{setupsec}'s technology.

The entirety of Section \ref{sect2} is devoted to applying the forcing framework developed in Section \ref{setupsec} to variants of the extended Namba problem. Said framework is used to construct specific forcing notions that give rise to generic extensions satisfying various sets of requirements. We start with a relatively simple construction in Theorem \ref{notion1}, before extending it to solve a more difficult problem in Theorem \ref{notion2}. No other section is dependent on what transpires here. (N.B. The proof of Lemma \ref{lem339} references Remark \ref{rempp} in parentheses, but there is no strict dependency --- the low level subtleties in the remark are unnecessary and might detract from the clarity of the proof.)

Section \ref{TCIsec} introduces the notions of (first-order) TCIs and models of TCIs, before relating them to forcing and genericity. In particular, Subsection \ref{subs51} develops the basic theory of TCIs and their models, and can be read right after Section \ref{sect15}. The other subsections depend in part on the results of Section \ref{setupsec}, and give applications of Section \ref{setupsec}'s forcing framework to more general contexts of genericity. Here, genericity is investigated in both set-theoretic and computability-theoretic senses of the word. Like Section \ref{sect2}, this section is not the object of any dependency.

\section{Preliminaries}\label{sect15}

\subsection{The Meta-theory}\label{subs21}

At the meta-level, it suffices to assume $\mathsf{ZFC}$. We frame relative consistency proofs involving additional assumptions as proofs of statements of the form $$\text{``}\mathrm{Con}(\mathsf{ZFC} + \phi) \implies \mathrm{Con}(\mathsf{ZFC} + \psi)\text{''}$$ over $\mathsf{ZFC}$, where $\phi$ is the conjunction of the relevant assumptions. Implicit are the invocations of G\"{o}del's completeness theorem at the meta-level, whenever we argue using models (a.k.a. \emph{universes}) of set theory.

Our meta-theoretic approach to forcing is only slightly more complicated. Conventionally we start with a countable transitive model of $\mathsf{ZFC}$, called a \emph{ground model}, which is not guaranteed to exist by our meta-theory. There are many ways to sidestep this problem and treat the statement ``countable transitive ground model'' as a convenient abuse of notation, a few of which are noted by Kunen in IV.5 of \cite{kunen}. We adopt the first approach detailed in IV.5.1 of \cite{kunen}, an approach that is pretty much standard in the community, and one we feel is most immediately and formally accessible.

\subsection{Basic Mathematical Logic}

Unless specified otherwise, we follow the standard definitions of concepts related to the syntax and semantics of first-order logic, as seen in e.g. \cite{enderton}.

\begin{con}\label{firstconv}
\leavevmode
\begin{enumerate}[label=(\arabic*)]
    \item We call any set of first order formulas a \emph{first-order language}.
    \item We assume the first-order languages we consider to contain only $\neg, \wedge, \vee, \implies, \iff$ as their zeroth-order logical symbols, interpreted semantically in the usual sense. 
    \item Each first-order logical symbol is identified with a unique member of $H(\omega) \setminus \omega$.
    \item Given a first-order language $\mathcal{L}$, let $\mathrm{Ter}(\mathcal{L})$ denote the set of all terms occurring in (some formula in) $\mathcal{L}$.
    \item A first-order structure $\mathfrak{A}$ is presented in the form $(A; \mathcal{I})$, where $A$ is the base set of $\mathfrak{A}$ and $\mathcal{I}$ is the interpretation of the signature of $\mathfrak{A}$. In this presentation, the signature of $\mathfrak{A}$ is simply $dom(\mathcal{I})$. 
    
    Sometimes, when the correspondence between a signature and its interpretation is clear, we might write $(A; \mathcal{I})$ as $(A; \Vec{S})$, where $\Vec{S}$ is some ordering of $ran(\mathcal{I})$.
    \item The signature of a first-order language or structure can contain only relation symbols of a non-zero finite arity, function symbols of a non-zero finite arity, and constant symbols, identified as follows:
    \begin{itemize}
        \item a $n$-ary relation symbol is a triple of the form $(X, 0, n)$,
        \item a $n$-ary function symbol is a triple of the form $(X, 1, n)$, and
        \item a constant symbol is a pair of the form $(X, 2)$.
    \end{itemize}
    We call any such symbol a \emph{signature-related symbol}.
    \item A \emph{form-preserving signature embedding} is an injective function from a set of signature-related symbols into the class of signature-related symbols, such that
    \begin{itemize}
        \item $n$-ary relation symbols are mapped to $n$-ary relation symbols,
        \item $n$-ary function symbols are mapped to $n$-ary function symbols, and
        \item constant symbols are mapped to constant symbols.
    \end{itemize}
    \item\label{227} We will assume that the class of signature-related symbols is disjoint from the set of first-order logical symbols.
    \item\label{224} The variables occurring in any first-order formula must come from a fixed countably infinite set $\mathrm{Var}$. We will assume that $\mathrm{Var}$ is disjoint from both the set of first-order logical symbols and the class of signature-related symbols.
    \item A string over a vocabulary set $\Sigma$ is a member of $\Sigma^{< \omega}$.
    \item We sometimes define procedures in which subformulas of a first-order formula are replaced by other formulas. In these cases, for convenience's sake, what follows will be adopted.
    \begin{quote}
        Let 
        \begin{itemize}
            \item $\phi$ be a first-order formula, 
            \item $\varphi$ be a subformula of $\phi$, and
            \item $\ulcorner x \urcorner$ be a variable occurring in $\phi$.
        \end{itemize}
        Suppose we are to replace $\varphi$ in $\phi$ with some formula $\varphi'$. Let $\ulcorner y \urcorner$ be a bound variable in $\varphi'$. Unless otherwise stated, we always assume $$\ulcorner x \urcorner \neq \ulcorner y \urcorner \text{.}$$        
    \end{quote}
\end{enumerate}
\end{con}

We write ``structure(s)'' as the abbreviation of ``first-order structure(s)'' henceforth. There should be no confusion as these are the only type of structures we will be dealing with.

\begin{defi}
Given any set $X$ and any signature $\sigma$, the \emph{language associated with} $(X; \sigma)$ is the set of first-order formulas over $\sigma$ with parameters from $X$. Similarly, given any structure $\mathfrak{A} = (A; \mathcal{I})$, the \emph{language associated with} $\mathfrak{A}$ is the set of first-order formulas over the signature of $\mathfrak{A}$ with parameters from $A$.
\end{defi}

\begin{defi}
For any structure $\mathfrak{A} = (A; \mathcal{I})$, a $\mathfrak{A}$\emph{-valuation} is a function from $\mathrm{Var}$ into $A$.
\end{defi}

\begin{defi}
The \emph{size of a structure} $\mathfrak{A} = (A; \mathcal{I})$ is equal to $$max\{|A|, |\mathcal{I}|\}.$$ We say $\mathfrak{A}$ is a \emph{countable structure} iff its size is a countable cardinal.
\end{defi}

\begin{defi}\label{def25}
Let $\phi$ be a first-order formula over a signature $\sigma$. We inductively define what it means for $\phi$ to be $\Pi_n$ or $\Sigma_n$ as $n$ ranges over the natural numbers.
\begin{enumerate}[label=(\arabic*)]
    \item If $n = 0$, then $\phi$ is $\Pi_0$ iff $\phi$ is $\Sigma_0$ iff $\phi$ is quantifier-free.
    \item\label{3292} If $n = m + 1$ for some $m < \omega$, then 
    \begin{enumerate}[label=(\alph*)]
        \item\label{3292a} $\phi$ is $\Pi_n$ iff there is a $\Sigma_m$ formula $\varphi$, a number $k < \omega$, and variable symbols $x_1, \dots, x_k$ not bound in $\varphi$, such that 
        \begin{equation*}
            \phi = \ulcorner \forall x_1 \dots \forall x_k \ \varphi \urcorner \text{, and}
        \end{equation*}
        \item\label{3292b} $\phi$ is $\Sigma_n$ iff there is a $\Pi_m$ formula $\varphi$, a number $k < \omega$, and variable symbols $x_1, \dots, x_k$ not bound in $\varphi$, such that 
        \begin{equation*}
            \phi = \ulcorner \exists x_1 \dots \exists x_k \ \varphi \urcorner \text{.}
        \end{equation*}
    \end{enumerate}
\end{enumerate}
Note that if $k = 0$ in \ref{3292}\ref{3292a} and \ref{3292}\ref{3292b}, then $\phi$ is $\Sigma_m$ and $\Pi_m$ respectively.
\end{defi}

\subsection{Basic Set Theory}

Unless specified otherwise, we follow the standard definitions of concepts typically encountered in a foundational set theory course, following e.g. \cite{jechhrbacek}.

\begin{con}
\leavevmode
\begin{enumerate}[label=(\arabic*)]
    \item Unless otherwise specified, $V$ always refers to the universe we are currently working in. For all practical purposes, we can assume it is a countable transitive model of $\mathsf{ZFC}$, so that it doubles as a ground model in case forcing arguments are to be run.
    \item We adopt the set-theoretic interpretation of functions as sets of ordered pairs satisfying certain properties. So when we say a function is definable, we actually mean its graph is definable as a set --- usually a subset of an ambient structure that should be clear in context, if not explicitly mentioned.
    \item We fix in $V$, a distinguished first-order relation symbol $\ulcorner \in \urcorner$.
    \item A first-order formula is \emph{in the language of set theory} iff it is a formula over the signature $\{\ulcorner \in \urcorner\}$.
    \item\label{stcon1} We say a structure $\mathfrak{A}$ is \emph{a structure in the language of set theory} iff 
    \begin{itemize}
        \item the signature of $\mathfrak{A}$ is the singleton set $\{\ulcorner \in \urcorner\}$, and
        \item $\mathfrak{A}$ interprets $\ulcorner \in \urcorner$ as the membership relation on $V$ restricted to the base set of $\mathfrak{A}$.
    \end{itemize}
    More formally, $$\mathfrak{A} = (A; \mathcal{I}) \text{ and } \mathcal{I} = \{(\ulcorner \in \urcorner, \in \cap A)\},$$ where $\in$ is the membership relation on $V$. In this case, we can just write $\mathfrak{A} = (A; \in)$.
    \item A first-order formula is \emph{in a possibly expanded language of set theory} iff it is a formula over some signature $\sigma$ satisfying $\ulcorner \in \urcorner \in \sigma$.
    \item We say a structure $\mathfrak{A}$ is \emph{a structure in a possibly expanded language of set theory} iff we omit the cardinality requirement on the signature of $\mathfrak{A}$ in \ref{stcon1}. More formally, $$\mathfrak{A} = (A; \mathcal{I}) \text{ and } \mathcal{I} = \{(\ulcorner \in \urcorner, \in \cap A)\} \cup \Vec{X},$$ where $\Vec{X}$ is some function and $\in$ is the membership relation on $V$. In this case, we can just write $\mathfrak{A} = (A; \in, \Vec{X})$.
    \item\label{idu} We identify a universe of set theory $W$ with the structure $(W; \in)$. This should not cause confusion in the circumstances we find ourselves in.
    \item A real is a subset of $\omega$. We identify a real with its characteristic function on $\omega$, as is standard in computability theory. As in \ref{idu}, this ambiguity should not cause any confusion.
\end{enumerate}
\end{con}

We overload and expand on Definition \ref{def25} when dealing with the special case of set-theoretic languages. 

\begin{defi}\label{def27}
Let $\phi$ be a first-order formula over a possibly expanded language of set theory. We inductively define what it means for $\phi$ to be $\Delta_n$, $\Pi_n$ or $\Sigma_n$ as $n$ ranges over the natural numbers.
\begin{enumerate}[label=(\arabic*)]
    \item\label{def271} If $n = 0$, then $\phi$ is $\Delta_n$ iff $\phi$ is $\Pi_n$ iff $\phi$ is $\Sigma_n$ iff every quantifier occurring in $\phi$ is bounded by $\in$.
    \item\label{2712} If $n = m + 1$ for some $m < \omega$, then 
    \begin{enumerate}[label=(\alph*)]
        \item\label{2712a} $\phi$ is $\Pi_n$ iff there is a $\Sigma_m$ formula $\varphi$, a number $k < \omega$, and variable symbols $x_1, \dots, x_k$ not bound in $\varphi$, such that 
        \begin{equation*}
            \phi = \ulcorner \forall x_1 \dots \forall x_k \ \varphi \urcorner \text{, and}
        \end{equation*}
        \item\label{2712b} $\phi$ is $\Sigma_n$ iff there is a $\Pi_m$ formula $\varphi$, a number $k < \omega$, and variable symbols $x_1, \dots, x_k$ not bound in $\varphi$, such that 
        \begin{equation*}
            \phi = \ulcorner \exists x_1 \dots \exists x_k \ \varphi \urcorner \text{.}
        \end{equation*}
        \item $\phi$ is $\Delta_n$ iff 
        \begin{itemize}[label=$\circ$]
            \item $\phi$ is $\Pi_n$, and
            \item for some $\Sigma_n$ formula $\varphi$,
            \begin{equation*}
                \mathsf{ZFC} \vdash \phi \iff \varphi \text{.}
            \end{equation*}
        \end{itemize}
    \end{enumerate}
\end{enumerate}
Note that if $k = 0$ respectively in \ref{2712}\ref{2712a} and \ref{2712}\ref{2712b}, then $\phi$ is $\Sigma_m$ and $\Pi_m$ respectively.
\end{defi}

Most of the time, the context should indicate clearly which interpretation to adopt. Nevertheless, we shall try as much as possible to disambiguate things in this respect, and highlight each time Definition \ref{def27} is used instead of Definition \ref{def25}.

\begin{defi}
Let $X$ be a set and $\mathfrak{A}$ be a structure in a possibly expanded language of set theory with base set $A$. We say $X$ is \emph{definable in the language associated with} $\mathfrak{A}$ iff $$X = \{y \in A : \mathfrak{A} \models \phi(y)\}$$ for some formula $\phi$ with one free variable in the language associated with $\mathfrak{A}$.
\end{defi}

\begin{defi}\label{def29}
Let 
\begin{itemize}
    \item $X$ be a set,
    \item $\mathfrak{A}$ be a structure in a possibly expanded language of set theory with base set $A$, and
    \item $n < \omega$. 
\end{itemize} 
We say $X$ is $\Sigma_n$\emph{-definable} (resp. $\Pi_n$\emph{-definable} and $\Delta_n$\emph{-definable}) \emph{in the language associated with} $\mathfrak{A}$ iff $$X = \{y \in A : \mathfrak{A} \models \phi(y)\}$$ for some $\Sigma_n$ (resp. $\Pi_n$ and $\Delta_n$) formula (in accordance with Definition \ref{def27}) $\phi$ with one free variable in the language associated with $\mathfrak{A}$.
\end{defi}

We can generalise Definition \ref{def29} by relaxing the language requirement.

\begin{defi}
Let 
\begin{itemize}
    \item $X$ be a class, 
    \item $\mathfrak{A}$ be a structure in a possibly expanded language of set theory with base class $A$ and signature $\sigma$, such that 
    \begin{itemize}[label=$\circ$]
        \item $A$ is transitive, and
        \item $\mathfrak{A} \models \mathsf{ZFC}$, and
    \end{itemize}
    \item $n < \omega$.
\end{itemize}
We say $X$ is $\Sigma_n$\emph{-definable} (resp. $\Pi_n$\emph{-definable} and $\Delta_n$\emph{-definable}) \emph{in} $\mathfrak{A}$ iff $$X = \{y \in A : \mathfrak{A} \models \phi(y)\}$$ for some $\Sigma_n$ (resp. $\Pi_n$ and $\Delta_n$) formula (in the sense of Definition \ref{def27}) $\phi$ with one free variable over $\sigma$.

If in addition,
\begin{itemize}
    \item $\mathfrak{A} = (A; \in)$, and
    \item $\sigma = \{\ulcorner \in \urcorner\}$,
\end{itemize}
we may identify $\mathfrak{A}$ with $A$ and say $X$ is $\Sigma_n$\emph{-definable} (resp. $\Pi_n$\emph{-definable} and $\Delta_n$\emph{-definable}) \emph{in} $A$ iff $X$ is $\Sigma_n$-definable (resp. $\Pi_n$-definable and $\Delta_n$-definable) in $\mathfrak{A}$. 

We say $X$ is $\Sigma_n$\emph{-definable} (resp. $\Pi_n$\emph{-definable} and $\Delta_n$\emph{-definable}) iff $X$ is $\Sigma_n$-definable (resp. $\Pi_n$-definable and $\Delta_n$-definable) in $\mathfrak{A}'$ for every structure $\mathfrak{A}'$ in a possibly expanded language of set theory such that
\begin{itemize}[label=$\circ$]
    \item $\mathfrak{A}'$ has a transitive base class, and
    \item $\mathfrak{A}' \models \mathsf{ZFC}$.
\end{itemize}
\end{defi}

\begin{defi}
Let $\mathfrak{A}$ be a structure in a possibly expanded language of set theory. When we say 
\begin{equation}\label{eq11}
    \text{``}\mathfrak{A} \textit{ is a model of a sufficiently strong set theory'',} 
\end{equation}
we mean to emphasise the low strength of the set theory $\mathfrak{A}$ satisfies. 

In more concrete terms, what we typically term a \emph{set theory}, is a set of first-order formulas in the language of set theory. Examples of set theories include
\begin{itemize}
    \item the set of axioms of $\mathsf{ZFC}$, and
    \item the set of axioms of \textit{Kripke-Platek set theory} (\emph{without infinity}), denoted $\mathsf{KP}$, which is a much weaker theory than $\mathsf{ZFC}$.
\end{itemize} 
For convenience's sake, one may always assume (\ref{eq11}) to mean 
\begin{quote}
    $\text{``}\mathfrak{A} \models T \text{ for some set theory } T \text{ such that}$ 
    \begin{itemize}
        \item $\mathsf{KP} \subset T$, and
        \item $T \vdash \ulcorner \forall x \ (\text{``} x \text{ is Dedekind-finite} \implies x \text{ is finite''}) \urcorner$''.
    \end{itemize}
\end{quote}
\end{defi}

\begin{defi}
Given (externally) a class $\mathbf{M}$ of classes, we say a definition $\varphi$ in $n$ variables --- for some finite number $n$ --- is \emph{absolute for} $\mathbf{M}$ iff for every two members $X, Y$ of $\mathbf{M}$ such that $X \subset Y$, and for every sequence $\Vec{x}$ of $n$ members of $X$,
\begin{equation*}
    X \models \varphi(\Vec{x}) \iff Y \models \varphi(\Vec{x}) \text{.}
\end{equation*}
\end{defi}

\begin{defi}\label{inoutmodels}
Given that $V'$ and $W$ are models of $\mathsf{ZFC}$, we say $V'$ is a \emph{weak inner model} of $W$ (or equivalently, $W$ is a \emph{weak outer model} of $V'$) iff
\begin{itemize}
    \item $V'$ and $W$ are both transitive, and
    \item $V' \subset W$.
\end{itemize}
If in addition, $V'$ and $W$ share the same ordinals i.e. $ORD^{V'} = ORD^W$, then $V'$ is an \emph{inner model} of $W$ (or equivalently, $W$ is an \emph{outer model} of $V'$).
\end{defi}

\begin{defi}
Let $Y$ be any set.

We say $X$ \emph{codes} $Y$ (or equivalently, $X$ \emph{is a code of} $Y$) iff 
\begin{itemize}
    \item $X$ is a set of ordinals, and
    \item every transitive model of $\mathsf{ZFC - Powerset}$ containing $X$ also contains $Y$.
\end{itemize}

$X$ \emph{codes} $Y$ \emph{as a real} iff $X$ codes $Y$ and $X \subset \omega$.

$Y$ \emph{has a real code} (or equivalently, $Y$ \emph{can be coded as a real}) iff $X$ codes $Y$ as a real for some $X$.
\end{defi}

\begin{lem}\label{setcode}
Let $X$ be a set with $|trcl(X)| = \kappa$. Then there is $A \subset \kappa$ such that $A$ codes $X$.

In particular, any set with a countable transitive closure has a real code.
\end{lem}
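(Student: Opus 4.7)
The natural plan is to code the membership relation on $trcl(\{X\})$ as a set of ordinals via a Mostowski-collapse-style argument. I would first dispatch the finite case: if $\kappa < \omega$, then $X$ is hereditarily finite, and every transitive model of $\mathsf{ZFC - Powerset}$ contains all hereditarily finite sets (such a model contains $\omega$ and is closed under pairing, union, and separation). So $A = \emptyset \subset \kappa$ codes $X$ trivially.

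For infinite $\kappa$, let $T := trcl(\{X\}) = \{X\} \cup trcl(X)$. Foundation rules out $X \in trcl(X)$, so $|T| = \kappa + 1 = \kappa$. Fix a bijection $f : \kappa \to T$ with $f(0) = X$, and define a relation $E$ on $\kappa$ by setting $\alpha E \beta$ iff $f(\alpha) \in f(\beta)$. To encode $E$ as a set of ordinals, I would use the G\"odel pairing function $\pi$, which is $\Delta_1$ over $\mathsf{KP}$ and restricts to a bijection $\kappa \times \kappa \to \kappa$ for infinite cardinals $\kappa$. Set $A := \pi[E] \subset \kappa$.

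To verify that $A$ codes $X$, let $M$ be a transitive model of $\mathsf{ZFC - Powerset}$ with $A \in M$. By absoluteness of $\pi$ and Separation, $M$ recovers $E$ from $A$. The field of $E$ equals $\kappa$: namely, $0$ appears as a second coordinate since $X \neq \emptyset$ (because $\kappa$ is infinite), and every $\alpha > 0$ appears as a first coordinate, because $f(\alpha) \neq X$ sits in a chain of memberships leading up to $X$ inside the transitive set $T$. So $M$ also recovers $\kappa$. Now $(\kappa, E)$ is extensional (by transitivity of $T$) and well-founded (inherited from Foundation), so the Mostowski collapse theorem --- provable in $\mathsf{KP}$ and absolute between transitive models --- yields in $M$ a unique transitive set $T^{*} \in M$ and isomorphism $\sigma : (\kappa, E) \to (T^{*}, \in)$. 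By external uniqueness of the collapse, $\sigma$ agrees with $f$, so $X = f(0) = \sigma(0) \in T^{*} \in M$.

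The only delicate step is arguing that $M$'s internally computed Mostowski collapse matches the external one; this is the standard absoluteness of the collapse for transitive models of sufficiently strong theories, combined with the uniqueness of the collapse of a given well-founded extensional relation. The ``in particular'' clause is then immediate: if $trcl(X)$ is countable, then $\kappa \leq \omega$, so $A \subset \omega$ is a real code of $X$.
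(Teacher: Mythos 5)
Your proof is correct and follows essentially the same route as the paper: fix a bijection from $trcl(X)\cup\{X\}$ onto $\kappa$, push the membership relation through it, code the resulting relation as a subset of $\kappa$ via G\"{o}del pairing, and recover $X$ in any transitive model of $\mathsf{ZFC - Powerset}$ containing the code by applying the Mostowski collapse. The only (immaterial) difference is how $X$ is located inside the recovered transitive set --- you tag it as the collapse image of $0$, whereas the paper identifies it as the unique $\in$-maximal member --- and your separate treatment of the finite case and the field-of-$E$ check are harmless extra care.
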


\begin{proof}
Let $Y := trcl(X) \cup \{X\}$. Note that 
\begin{equation}\label{trcluni}
    X \text{ is the unique } \! \in \!\text{-maximal member of } Y
\end{equation} 
in any transitive model of $\mathsf{ZFC - Powerset}$ containing $Y$. Choose any bijection $f$ from $Y$ into $\kappa$. Define $$R := \{(f(x), f(y)) \in \kappa \times \kappa : (x, y) \in Y \times Y \text{ and } x \in y\}.$$ Now $R$ can be thought of as a subset $A$ of $\kappa$ via the canonical G\"{o}del numbering of pairs. If $V'$ is a transitive model of $\mathsf{ZFC - Powerset}$ containing $A$, then we can recover $R$ in $V'$. The Mostowski collapse function works on $R$ in $V'$ to give us $Y \in V'$. This implies $X \in V'$ since $X$ definable from $Y$ via (\ref{trcluni}).
\end{proof}

\subsection{Forcing and Generic Objects}\label{subs24}

Following the meta-theoretic convention highlighted in Subsection \ref{subs21}, we start with a countable transitive ground model $V$. In the language of forcing, a forcing notion in $V$ is just any partial order in $V$. If $\mathbb{P}$ is a forcing notion in $V$, then a $\mathbb{P}$\emph{-generic filter over} $V$ is a filter on $\mathbb{P}$ intersecting all dense subsets of $\mathbb{P}$ in $V$. 

Given a forcing notion $\mathbb{P}$ in $V$, the class of $\mathbb{P}$-names in $V$ --- denoted $V^{\mathbb{P}}$ --- and the forcing relation $\Vdash_{\mathbb{P}}^V$ (which relates elements of $\mathbb{P}$ with formulas parametrised by $\mathbb{P}$-names in $V$), are both essential to a forcing argument involving $\mathbb{P}$ carried out in $V$. These two classes are uniformly definable in $V$ over the class of all forcing notions $\mathbb{P}$. $\mathbb{P}$-names in $V$ are ``evaluated at" a $\mathbb{P}$-generic filter $g$ over $V$ to obtain the $\mathbb{P}$-generic extension $V[g]$, which is necessarily countable and transitive. In other words, if $g$ is a $\mathbb{P}$-generic filter over $V$, then
\begin{equation*}
    V \subset V[g] = \{\dot{x}[g] : \dot{x} \in V^{\mathbb{P}}\},
\end{equation*}
where $\dot{x}[g]$ means ``$x$ evaluated at $g$". The evaluation procedure is done outside $V$ because $g$ typically (in order to be of use at all) does not exist in $V$. 

\begin{con}
When it is clear that the background universe is $V$, we suppress mention of $V$ when writing forcing relations in $V$. This means that given a forcing notion $\mathbb{P}$ in $V$, $\Vdash_{\mathbb{P}}$ is used interchangeably with $\Vdash_{\mathbb{P}}^V$.
\end{con}

\begin{defi}
We call $W$ a \emph{generic extension} (or a \emph{forcing extension}) of $V$ iff there exists a forcing notion $\mathbb{P}$ in $V$ and a $\mathbb{P}$-generic filter $g$ over $V$, such that $W = V[g]$.
\end{defi}

\begin{defi}
We write ``$\Vdash_{\mathbb{P}} \phi$" to mean $$\text{``} \forall p \ (p \in \mathbb{P} \implies p \Vdash_{\mathbb{P}} \phi) \text{''.}$$ 
\end{defi}

\begin{rem}
A theorem fundamental to the technique of forcing intricately connects the forcing relation $\Vdash_{\mathbb{P}}^V$ with truth in $\mathbb{P}$-generic extensions. It goes as follows:
\begin{quote}
If $\mathbb{P}$ is a forcing notion in $V$, $p \in \mathbb{P}$, $\phi$ is a formula with $n$ free variables, and $\dot{x}_1$, \dots, $\dot{x}_n$ are $\mathbb{P}$-names in $V$, then
\begin{itemize}
    \item 
    \!
    $\begin{aligned}[t]
    & p \Vdash_{\mathbb{P}} \phi(\dot{x}_1, \dots, \dot{x}_n) \iff \\
    & \forall g \ ((g \text{ is } \mathbb{P} \text{-generic over } V \text{ and } p \in g) \\
    & \implies V[g] \models \phi(\dot{x}_1[g], \dots, \dot{x}_n[g])), \text{ and}
    \end{aligned}$
    \item 
    \!
    $\begin{aligned}[t]
    \forall g \ ( & (g \text{ is } \mathbb{P} \text{-generic over } V \text{ and } V[g] \models \phi(\dot{x}_1[g], \dots, \dot{x}_n[g])) \\
    & \implies \exists q \ (q \Vdash_{\mathbb{P}} \phi(\dot{x}_1, \dots, \dot{x}_n) \text{ and } q \in g)) \text{.}
    \end{aligned}$
\end{itemize}
\end{quote}
This theorem, colloquially known as the \emph{forcing theorem}, enables us to reason about truth in generic extensions from within the ground model, and often reduces the argument from one about semantic entailment to one pertaining to combinatorial properties of partial orders. For a technical lowdown of forcing terminology and the proof of the forcing theorem, the reader is encouraged to read Chapter IV of \cite{kunen}. 
\end{rem}

\begin{defi}
Let $\mathbb{P} = (P, \leq_{\mathbb{P}})$ be a forcing notion and $X$ be any set. The \emph{upward closure of} $X$ \emph{in} $\mathbb{P}$, denoted $\mathrm{UC}(\mathbb{P}, X)$, is the set $$\{p \in P : \exists q \ (q \in X \text{ and } q \leq_{\mathbb{P}} p)\}.$$
\end{defi}

\begin{defi}
Let $\mathbb{P} = (P, \leq_{\mathbb{P}})$ be a forcing notion, $D \subset P$ and $A$ be any set. We say a subset $g$ of $P$ \emph{meets} $D$ \emph{in} $A$ iff $$g \cap \{p \in P : p \in D \text{ or } \forall q \ (q \leq_{\mathbb{P}} p \implies q \not\in D)\} \cap A \neq \emptyset.$$ We say $g$ \emph{meets} $D$ iff $g$ meets $D$ in $V$.
\end{defi}

\begin{defi}
Let $\mathbb{P} = (P, \leq_{\mathbb{P}})$ be a forcing notion and $\mathfrak{A} = (A; \in, \Vec{X})$ be a structure in a possibly expanded language of set theory. We say a subset $g$ of $P$ is $\mathbb{P}$\emph{-generic over} $\mathfrak{A}$ (or $g$ is a $\mathbb{P}$\emph{-generic subset over} $\mathfrak{A}$) iff $g$ meets $D$ in $A$ for all $D$ such that
\begin{itemize}
    \item $D \subset P$
    \item $D$ is dense in $\mathbb{P}$, and
    \item $D$ is definable in the language associated with $\mathfrak{A}$.
\end{itemize}
If in addition, $g$ is a filter on $\mathbb{P}$, then we call $g$ a $\mathbb{P}$\emph{-generic filter over} $\mathfrak{A}$.
\end{defi}

\begin{defi}
Let $\mathbb{P} = (P, \leq_{\mathbb{P}})$ be a forcing notion and $\mathfrak{A} = (A; \in, \Vec{X})$ be a structure in a possibly expanded language of set theory. Further, let $n < \omega$. We say a subset $g$ of $P$ is $\mathbb{P}$\emph{-}$\Sigma_n$\emph{-generic} (resp. $\mathbb{P}$\emph{-}$\Pi_n$\emph{-generic} and $\mathbb{P}$\emph{-}$\Delta_n$\emph{-generic}) \emph{over} $\mathfrak{A}$ iff $g$ meets $D$ in $A$ for all $D$ such that
\begin{itemize}
    \item $D \subset P$
    \item $D$ is dense in $\mathbb{P}$, and
    \item $D$ is $\Sigma_n$-definable (resp. $\Pi_n$-definable and $\Delta_n$-definable) in the language associated with $\mathfrak{A}$.
\end{itemize}
If in addition, $g$ is a filter on $\mathbb{P}$, then we call $g$ a $\mathbb{P}$\emph{-}$\Sigma_n$\emph{-generic} (resp. $\mathbb{P}$\emph{-}$\Pi_n$\emph{-generic} and $\mathbb{P}$\emph{-}$\Delta_n$\emph{-generic}) \emph{filter over} $\mathfrak{A}$.
\end{defi}

\begin{defi}
Let $\mathbb{P}$ be a forcing notion and $\mathfrak{A} = (A; \in, \Vec{X})$ be a \emph{transitive} structure in a possibly expanded language of set theory. A set $x$ is a $(\mathbb{P}, \mathfrak{A})$\emph{-generic object} iff there exists $g$ a $\mathbb{P}$-generic filter over $\mathfrak{A}$ such that
\begin{itemize}
    \item $x \in A[g]$, and
    \item $g$ is definable in the language associated with $(A \cup \{x\}; \in, \Vec{X})$,
\end{itemize}
in which case we say $g$ \emph{witnesses} $x$ is a $(\mathbb{P}, \mathfrak{A})$-generic object.
\end{defi}

\begin{defi}
Let $x$ be any set. We call $x$ a \emph{generic object} iff there is a pair $(\mathbb{P}, \mathfrak{A})$ for which $x$ is a $(\mathbb{P}, \mathfrak{A})$-generic object. We call $x$ a $V$\emph{-generic object} iff there is $\mathbb{P}$ for which $x$ is a $(\mathbb{P}, V)$-generic object.
\end{defi}

\begin{ob}\label{ob0}
Let $\mathbb{P} = (P, \leq_{\mathbb{P}})$ be a forcing notion and $X$ be any set. Then there is a structure $\mathfrak{A} = (A; \in) \in V$ such that in every weak outer model of $V$, 
\begin{align*}
    x \text{ is a } (\mathbb{P}, \mathfrak{A}) \text{-generic object} \iff x \text{ is a } (\mathbb{P}, V) \text{-generic object}
\end{align*}
for all $x \subset X$. In fact, we can choose $A$ to be $H(\kappa)$ for any $\kappa > |trcl(\{P, X\})|$.
\end{ob}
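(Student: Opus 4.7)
\emph{Proof proposal.} Set $A := H(\kappa)$ for any $\kappa > |trcl(\{P, X\})|$; then $P, X \in H(\kappa)$ and $\mathfrak{A} = (H(\kappa); \in) \in V$. The plan is to argue that the witnesses for the two flavours of generic-object-hood coincide when $x \subset X$.

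The crux is a correspondence between dense sets: the dense subsets of $\mathbb{P}$ lying in $V$ are precisely those definable in the language associated with $\mathfrak{A}$. Indeed, any $D \subset P$ satisfies $D \subset trcl(P)$, so $|trcl(D)| \leq |trcl(P)| < \kappa$, placing $D \in H(\kappa)$ where it is definable from itself as a parameter; conversely, any set definable in $\mathfrak{A}$ is automatically an element of $H(\kappa) \subset V$. Consequently, for any subset $g \subset P$, $g$ is $\mathbb{P}$-generic over $V$ iff $g$ is $\mathbb{P}$-generic over $\mathfrak{A}$, uniformly in every weak outer model of $V$.

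Next, I verify that the remaining two clauses of the generic-object definition transfer between $A = V$ and $A = H(\kappa)$. For ``$x \in A[g]$'': given any $V$-name $\dot{x}$ with $\dot{x}[g] = x$, the name $\tau := \{(\check{y}, p) : y \in X, \ p \in P, \ p \Vdash_{\mathbb{P}}^V \check{y} \in \dot{x}\}$ has transitive closure of size at most $|trcl(\{P, X\})| < \kappa$ by a straightforward cardinality count, so $\tau \in H(\kappa)$; the forcing theorem then gives $\tau[g] = x \cap X = x$. For ``$g$ is definable in $(A \cup \{x\}; \in)$'': the direction from $A = H(\kappa)$ to $A = V$ lifts via relativization, using $H(\kappa) \in V$ as an auxiliary parameter. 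The converse direction relies on the fact that it suffices to find \emph{some} filter witnessing genericity: given a $g$ witnessing $(\mathbb{P}, V)$-genericity of $x$, one replaces it with a canonical $g' \subset P$ defined from $x$ using only parameters in $H(\kappa) \cup \{x\}$ --- for instance, the filter generated by those $p \in P$ compatible with $x$ relative to the restriction of $\Vdash_{\mathbb{P}}^V$ to names in $H(\kappa)$, which restriction is itself $V$-definable from parameters in $H(\kappa)$.

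I expect the main obstacle to be this last step: certifying the canonical choice of $g'$ and checking that it inherits both genericity and the correct definability. The ingredient that makes this workable is precisely the inequality $\kappa > |trcl(\{P, X\})|$, which ensures that all dense sets, all relevant $\mathbb{P}$-names for subsets of $X$, and all combinatorial data needed to reconstruct a suitable $g'$ from $x$ already reside inside $H(\kappa)$.
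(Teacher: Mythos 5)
The paper states this as an unproved Observation, so there is no official argument to measure you against; I can only assess your proposal on its own terms. Your handling of the first two ingredients is correct: the identification of the dense subsets of $\mathbb{P}$ in $V$ with those definable in the language associated with $(H(\kappa); \in)$ (so that $\mathbb{P}$-genericity over $V$ and over $\mathfrak{A}$ coincide for subsets of $P$), and the nice-name computation showing $x \in V[g] \iff x \in H(\kappa)[g]$ for $x \subset X$. The direction of the definability clause that you handle by relativization is also fine: since $H(\kappa) \cup \{x\}$ is definable in $(V \cup \{x\}; \in)$ from the parameters $H(\kappa) \in V$ and $x$, any witness for $(\mathbb{P}, \mathfrak{A})$-genericity is a witness for $(\mathbb{P}, V)$-genericity.

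The gap is in the converse direction, exactly where you flag it, and your proposed repair does not work as stated. The set of conditions ``compatible with $x$ relative to $\Vdash_{\mathbb{P}}^V$ restricted to $H(\kappa)$-names'' --- i.e.\ the conditions forcing no false atomic fact about the nice name $\tau$ --- is upward closed but not downward directed: if $p \perp_{\mathbb{P}} q$ and neither decides anything about $\tau$ (take $\mathbb{P} = \mathbb{C} \times \mathbb{C}$ with $\tau$ a name depending only on the first coordinate, and $p, q$ differing only in the second), both belong to your set, so it generates no filter, and it properly contains every generic filter evaluating $\tau$ to $x$. More fundamentally, the hypothesis only gives you a witness $g$ definable over $(V \cup \{x\}; \in)$, possibly via a parameter $b \in V \setminus H(\kappa)$ and quantification over all of $V$; what you must produce is a \emph{full} $\mathbb{P}$-generic filter $g'$ (not merely a generic for the complete subalgebra coding $x$, which is all that $x$ canonically determines) that is definable over the much poorer structure $(H(\kappa) \cup \{x\}; \in)$ and still satisfies $x \in H(\kappa)[g']$. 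Nothing in your sketch explains how to eliminate the large parameter or how to recover genericity for all of $\mathbb{P}$ from $x$ alone; until that is supplied, the biconditional is not established.
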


Observation \ref{ob0} allows us to refer to $(\mathbb{P}, V)$-generic objects for any forcing notion $\mathbb{P}$, without needing to quantify over all formulas.

There are looser definitions of a generic filter or a generic object in the literature. For example, we can require the filter to only meet subsets with definitions belonging to a certain complexity class, as is commonly seen in computability theory. Informally then, the study of genericity boils down to observing the effects of a filter meeting a bunch of subsets. 

Section \ref{sect1} hinted at a key difference between forcing theory and the study of partial orders in order theory, and that is the nature of the properties studied apropos of their common subjects. In order theory, only first-order properties of partial orders are considered, whereas forcing theory concerns itself with their higher-order properties. Now, another such differentiating factor is the overwhelming focus on generic objects in forcing theory. In fact, so much attention is paid to generic objects in forcing theory that one might as well call it \emph{genericity theory}.

\begin{defi}
If $\mathbb{B}$ is a boolean algebra with base set $B$ and least element $\mathbb{0}$, then $\mathbb{B}^+$ denotes the partial order reduct of $\mathbb{B}$ restricted to $B \setminus \{\mathbb{0}\}$.
\end{defi}

\begin{defi}
Let $\mathbb{P} = (P, \leq_{\mathbb{P}})$ and $\mathbb{Q} = (Q, \leq_{\mathbb{Q}})$ be preorders. We say $\pi$ is an \emph{embedding from} $\mathbb{P}$ \emph{into} $\mathbb{Q}$ iff $\pi$ is an injective function from $P$ into $Q$ satisfying
\begin{itemize}
    \item $p_1 \leq_{\mathbb{P}} p_2 \iff \pi(p_1) \leq_{\mathbb{Q}} \pi(p_2)$, and
    \item $p_1 \ \bot_{\mathbb{P}} \ p_2 \implies \pi(p_1) \ \bot_{\mathbb{Q}} \ \pi(p_2)$.
\end{itemize}
\end{defi}

\begin{defi}
Let $\mathbb{P} = (P, \leq_{\mathbb{P}})$ and $\mathbb{Q} = (Q, \leq_{\mathbb{Q}})$ be preorders. An embedding $\pi$ from $\mathbb{P}$ into $\mathbb{Q}$ is \emph{complete} iff for every maximal antichain $A$ of $\mathbb{P}$, $$\{\pi(p) : p \in A\}$$ is a maximal antichain of $\mathbb{Q}$.
\end{defi}

\begin{defi}
Let $\mathbb{P} = (P, \leq_{\mathbb{P}})$ and $\mathbb{Q} = (Q, \leq_{\mathbb{Q}})$ be preorders. An embedding $\pi$ from $\mathbb{P}$ into $\mathbb{Q}$ is \emph{dense} iff $ran(\pi)$ is dense in $\mathbb{Q}$.
\end{defi}

\begin{fact}\label{dic}
Let $\mathbb{P}$ and $\mathbb{Q}$ be preorders. Then every dense embedding from $\mathbb{P}$ into $\mathbb{Q}$ is a complete embedding from $\mathbb{P}$ into $\mathbb{Q}$.
\end{fact}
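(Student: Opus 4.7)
The plan is to fix a dense embedding $\pi$ from $\mathbb{P}$ into $\mathbb{Q}$, take an arbitrary maximal antichain $A$ of $\mathbb{P}$, and verify that the image $\pi[A] = \{\pi(p) : p \in A\}$ is (i) an antichain in $\mathbb{Q}$ and (ii) maximal among antichains of $\mathbb{Q}$.

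For (i), I will appeal directly to the incompatibility-preserving clause of the definition of embedding. Distinct members of $A$ are pairwise incompatible in $\mathbb{P}$, so by the second bullet in the definition of embedding, their $\pi$-images are pairwise incompatible in $\mathbb{Q}$; the injectivity of $\pi$ ensures distinctness is not lost. This part is immediate.

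For (ii), the substantive step, I will take an arbitrary $q \in \mathbb{Q}$ and find some $p \in A$ with $\pi(p)$ compatible with $q$. First, since $ran(\pi)$ is dense in $\mathbb{Q}$, I can pick $p' \in \mathbb{P}$ with $\pi(p') \leq_{\mathbb{Q}} q$. Next, since $A$ is a maximal antichain of $\mathbb{P}$, there is some $p \in A$ compatible with $p'$; let $p'' \in \mathbb{P}$ witness this, so $p'' \leq_{\mathbb{P}} p'$ and $p'' \leq_{\mathbb{P}} p$. Applying the order-preserving clause of the embedding definition twice yields $\pi(p'') \leq_{\mathbb{Q}} \pi(p')$ and $\pi(p'') \leq_{\mathbb{Q}} \pi(p)$. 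Combining the first with our earlier inequality gives $\pi(p'') \leq_{\mathbb{Q}} q$, so $\pi(p'')$ is a common lower bound of $q$ and $\pi(p)$, certifying their compatibility.

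There is no real obstacle here; the argument is a straightforward chase through the definitions, and the only subtlety worth flagging is making sure to use the \emph{order-preserving} clause (which goes in both directions, so in particular forward) rather than the incompatibility clause in the maximality step, since we need to deduce an inequality in $\mathbb{Q}$ from one in $\mathbb{P}$.
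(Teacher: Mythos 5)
Your proof is correct and is the standard argument; the paper states this as a Fact without proof, and your verification (images of incompatible elements are incompatible, and density plus maximality of $A$ in $\mathbb{P}$ pushed forward through the order-preserving clause gives maximality of $\pi[A]$ in $\mathbb{Q}$) is exactly the intended routine check.
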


\begin{defi}
Let $\mathbb{P} = (P, \leq_{\mathbb{P}})$ be a preorder. Define \begin{align*}
    w(\leq_{\mathbb{P}}) := \ & \{(p, q) \in P \times P : \{q' : q' \leq_{\mathbb{P}} q\} \text{ is dense below } p\} \text{, and} \\
    w(\mathbb{P}) := \ & (P, w(\leq_{\mathbb{P}})) \text{.}
\end{align*}
\end{defi}

\begin{fact}\label{wfixedp}
For any preorder $\mathbb{P}$, $w(w(\mathbb{P})) = w(\mathbb{P})$.
\end{fact}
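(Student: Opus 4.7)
The plan is to show $w(w(\leq_{\mathbb{P}})) = w(\leq_{\mathbb{P}})$ as subsets of $P \times P$, which is exactly what $w(w(\mathbb{P})) = w(\mathbb{P})$ asserts. Set $\leq_{\mathbb{Q}} := w(\leq_{\mathbb{P}})$ for brevity. Before proving the two inclusions I would record two easy preliminary facts. First, $\leq_{\mathbb{P}} \subseteq \leq_{\mathbb{Q}}$: if $p \leq_{\mathbb{P}} q$, then for any $r \leq_{\mathbb{P}} p$ we have $r \leq_{\mathbb{P}} q$ by transitivity, so $r$ itself sits in $\{q' : q' \leq_{\mathbb{P}} q\}$ below $r$, witnessing density. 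Second, $\leq_{\mathbb{Q}}$ is a preorder: reflexivity is identical to the above, and transitivity is a short diagram chase --- given $p \leq_{\mathbb{Q}} q \leq_{\mathbb{Q}} r$ and $s \leq_{\mathbb{P}} p$, first pick $t \leq_{\mathbb{P}} q, s$ from $p \leq_{\mathbb{Q}} q$, then pick $v \leq_{\mathbb{P}} r, t$ from $q \leq_{\mathbb{Q}} r$ applied to $t \leq_{\mathbb{P}} q$. This ensures that $w(\leq_{\mathbb{Q}})$ makes sense, and also that the $\mathbb{Q}$-analogue of the first fact gives $\leq_{\mathbb{Q}} \subseteq w(\leq_{\mathbb{Q}})$ via the very same reflexive-witness argument.

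The substantive direction is $w(\leq_{\mathbb{Q}}) \subseteq \leq_{\mathbb{Q}}$. I fix $(p, q) \in w(\leq_{\mathbb{Q}})$ and an arbitrary $s \leq_{\mathbb{P}} p$, aiming to produce $u \leq_{\mathbb{P}} q$ with $u \leq_{\mathbb{P}} s$. Using $\leq_{\mathbb{P}} \subseteq \leq_{\mathbb{Q}}$ I have $s \leq_{\mathbb{Q}} p$, so the hypothesis on $(p,q)$ yields some $r$ with $r \leq_{\mathbb{Q}} q$ and $r \leq_{\mathbb{Q}} s$. The crucial step is then to unpack each of these $\leq_{\mathbb{Q}}$-relations at the \emph{trivial} $\leq_{\mathbb{P}}$-extension $r \leq_{\mathbb{P}} r$ (and its successors): from $r \leq_{\mathbb{Q}} q$ applied to $r$ itself, extract $v \leq_{\mathbb{P}} q$ with $v \leq_{\mathbb{P}} r$; then from $r \leq_{\mathbb{Q}} s$ applied to $v$ (which is $\leq_{\mathbb{P}} r$), extract $u \leq_{\mathbb{P}} s$ with $u \leq_{\mathbb{P}} v$. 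Transitivity of $\leq_{\mathbb{P}}$ then gives $u \leq_{\mathbb{P}} q$, finishing the inclusion.

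The main obstacle I anticipate is purely organisational: four layers of witnesses (two in $\leq_{\mathbb{P}}$, two in $\leq_{\mathbb{Q}}$) have to be lined up, and it is easy to get tangled in the quantifier gymnastics. The trick that \emph{collapses} the iteration is to invoke the $\leq_{\mathbb{Q}}$-relations at reflexive $\leq_{\mathbb{P}}$-witnesses (e.g.\ $r \leq_{\mathbb{P}} r$) rather than attempting to propagate density in $\mathbb{Q}$ indirectly; this is essentially the conceptual content of the idempotency of $w$.
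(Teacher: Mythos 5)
Your proof is correct. The paper states this as a Fact without proof, so there is nothing to compare against; your argument --- establishing $\leq_{\mathbb{P}} \subseteq w(\leq_{\mathbb{P}})$, that $w(\leq_{\mathbb{P}})$ is a preorder, and then collapsing the two layers of density by instantiating the $\leq_{\mathbb{Q}}$-relations at reflexive $\leq_{\mathbb{P}}$-witnesses --- is the standard one and all the quantifier bookkeeping checks out.
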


\begin{fact}\label{seppre}
If $\mathbb{P} = (P, \leq_{\mathbb{P}})$ is a preorder, then $w(\mathbb{P})$ is also a preorder.
\end{fact}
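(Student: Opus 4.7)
The plan is to verify directly that $w(\leq_{\mathbb{P}})$ is reflexive and transitive, since a preorder is by definition a reflexive and transitive relation. Throughout, I unfold the definition: $(p, q) \in w(\leq_{\mathbb{P}})$ means that for every $p' \leq_{\mathbb{P}} p$ there exists $q' \leq_{\mathbb{P}} p'$ with $q' \leq_{\mathbb{P}} q$.

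For reflexivity, given any $p \in P$, I want $(p,p) \in w(\leq_{\mathbb{P}})$, i.e., $\{q' : q' \leq_{\mathbb{P}} p\}$ is dense below $p$. Given $p' \leq_{\mathbb{P}} p$, I simply take $q' := p'$: reflexivity of $\leq_{\mathbb{P}}$ yields $q' \leq_{\mathbb{P}} p'$, and by hypothesis $q' = p' \leq_{\mathbb{P}} p$.

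For transitivity, I assume $(p,q), (q,r) \in w(\leq_{\mathbb{P}})$ and check that $\{q' : q' \leq_{\mathbb{P}} r\}$ is dense below $p$. Pick an arbitrary $p' \leq_{\mathbb{P}} p$. Applying the first assumption to $p'$ yields some $p'' \leq_{\mathbb{P}} p'$ with $p'' \leq_{\mathbb{P}} q$. Applying the second assumption to $p''$ (which is below $q$) yields some $p''' \leq_{\mathbb{P}} p''$ with $p''' \leq_{\mathbb{P}} r$. Transitivity of $\leq_{\mathbb{P}}$ gives $p''' \leq_{\mathbb{P}} p'$, so $p'''$ witnesses density at $p'$.

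I do not expect any real obstacle here: the only thing to keep a clear head about is the double application of the density clause in the transitivity step, where one must first descend below $p$ to reach $q$, and then descend further below $q$ to reach $r$, invoking the transitivity of the original preorder $\leq_{\mathbb{P}}$ exactly once at the end to re-anchor the witness below the chosen $p'$.
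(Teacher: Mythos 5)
Your proof is correct: the paper states this as a Fact without proof, and your direct verification of reflexivity (taking $q' := p'$) and transitivity (the two-step descent below $p'$, then below $q$, re-anchored via transitivity of $\leq_{\mathbb{P}}$) is exactly the routine argument the authors intend the reader to supply.
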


\begin{defi}
A preorder $\mathbb{P}$ is \emph{separative} iff $w(\mathbb{P}) = \mathbb{P}$.
\end{defi}

\begin{fact}\label{bcompletion}
If $\mathbb{P}$ is a separative forcing notion, then there is a unique (up to isomorphism) complete boolean algebra $B(\mathbb{P})$ such that a dense embedding exists from $\mathbb{P}$ into $B(\mathbb{P})^+$.
\end{fact}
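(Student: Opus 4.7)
The plan is to realise $B(\mathbb{P})$ as the regular open algebra of the topological space $(P, \tau)$, where $\tau$ is the topology on $P$ generated by the basic open sets $U_p := \{q \in P : q \leq_{\mathbb{P}} p\}$ for $p \in P$. I would take as a background topological lemma that the family $\mathrm{RO}(P, \tau)$ of regular open subsets of any topological space, ordered by inclusion, forms a complete Boolean algebra, with meet given by intersection, join given by the regular open hull of the union, and the complement of a regular open set $U$ given by $\mathrm{int}(P \setminus U)$. The candidate dense embedding is then $\pi: \mathbb{P} \to B(\mathbb{P})^+$ defined by $\pi(p) := \mathrm{int}(\mathrm{cl}(U_p))$.

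Verification that $\pi$ is an embedding splits into three routine checks. The forward direction of monotonicity and the preservation of incompatibility reduce to the elementary observations that $p \leq_{\mathbb{P}} q$ implies $U_p \subset U_q$, and that $p \ \bot_{\mathbb{P}} \ q$ implies $U_p \cap U_q = \emptyset$; the passage from $U_p \cap U_q = \emptyset$ to $\pi(p) \cap \pi(q) = \emptyset$ follows because any basic open inside $\pi(p) \cap \pi(q) \subset \mathrm{cl}(U_p) \cap \mathrm{cl}(U_q)$ would have to meet both $U_p$ and $U_q$, yielding a common extension of $p$ and $q$. Separativity enters essentially in the reverse direction of monotonicity: if $\pi(p) \subset \pi(q)$ but $p \not\leq_{\mathbb{P}} q$, then $w(\mathbb{P}) = \mathbb{P}$ supplies $r \leq_{\mathbb{P}} p$ with $r \ \bot_{\mathbb{P}} \ q$, and $U_r \subset \pi(p) \subset \pi(q) \subset \mathrm{cl}(U_q)$ forces $U_r$ to meet $U_q$, contradicting $r \ \bot_{\mathbb{P}} \ q$. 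Injectivity then comes from antisymmetry of $\leq_{\mathbb{P}}$, and density of $\mathrm{ran}(\pi)$ is immediate, since every nonempty regular open set contains some $U_p$ and therefore contains $\pi(p)$.

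For uniqueness, suppose $\mathbb{B}_1$ and $\mathbb{B}_2$ are both complete Boolean algebras admitting dense embeddings $\pi_i: \mathbb{P} \to \mathbb{B}_i^+$. Define $\phi: \mathbb{B}_1 \to \mathbb{B}_2$ by
\begin{equation*}
    \phi(b) := \bigvee \{\pi_2(p) : p \in P \text{ and } \pi_1(p) \leq b\},
\end{equation*}
where the join is taken in $\mathbb{B}_2$; completeness ensures $\phi$ is well-defined, and density of $\pi_1$ gives the representation $b = \bigvee \{\pi_1(p) : \pi_1(p) \leq b\}$, from which monotonicity of $\phi$ is transparent. A symmetric map $\psi: \mathbb{B}_2 \to \mathbb{B}_1$ serves as a two-sided inverse, so $\phi$ is an order-isomorphism between complete Boolean algebras.

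The main obstacle I anticipate is the upgrade from order-isomorphism to Boolean homomorphism, i.e., showing $\phi(-b) = -\phi(b)$. The cleanest route is the intermediate lemma that if $\pi_1(p)$ and $b$ are incompatible in $\mathbb{B}_1$, then $\pi_2(p)$ and $\phi(b)$ are incompatible in $\mathbb{B}_2$; this lemma rests on Fact \ref{dic}, since dense embeddings are complete embeddings and hence preserve maximal antichains, which is precisely what translates incompatibility with $b$ in $\mathbb{B}_1$ into incompatibility with $\phi(b)$ in $\mathbb{B}_2$. With this lemma in hand, $\phi(b) \wedge \phi(-b) = 0$ and $\phi(b) \vee \phi(-b) = 1$ both follow from density and completeness chases, finishing the proof.
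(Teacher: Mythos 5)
The paper states this as a \emph{Fact} and gives no proof at all --- it is the standard Boolean-completion theorem, cited as background --- so there is no argument of the paper's to compare yours against. Your proposal is the standard and correct route: the regular open algebra of the topology generated by the cones $U_p$, with $\pi(p) = \mathrm{int}(\mathrm{cl}(U_p))$, and the existence half of your argument is sound (separativity in the paper's sense, $w(\mathbb{P}) = \mathbb{P}$, does give exactly the $r \leq_{\mathbb{P}} p$ with $r \ \bot_{\mathbb{P}} \ q$ that you use for reverse monotonicity, and injectivity then follows from antisymmetry). Two remarks on the uniqueness half. First, the step you flag as the main obstacle is a non-issue: an order-isomorphism between Boolean algebras is automatically a Boolean isomorphism, because meets, joins, $0$, $1$, and hence complements are all determined by the order, and a bijection preserving the order in both directions preserves all existing infima and suprema. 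Second, the place where the real work sits is the one you wave at --- verifying that $\psi \circ \phi = \mathrm{id}$. The nontrivial inclusion is that $\pi_2(q) \leq \phi(b)$ forces $\pi_1(q) \leq b$; this needs density of $\mathrm{ran}(\pi_1)$ below $\pi_1(q) \wedge (-b)$, the fact that embeddings reflect incompatibility, and the infinite distributive law $a \wedge \bigvee_i b_i = \bigvee_i (a \wedge b_i)$ in a complete Boolean algebra to conclude that a nonzero element below $\phi(b)$ cannot be incompatible with every generator of the join. Spelling that chase out would complete the proof; as written the proposal is a correct outline of the standard argument with its emphasis slightly misplaced.
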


Fix a preorder $\mathbb{P} = (P, \leq_{\mathbb{P}})$. Note that by Fact \ref{seppre}, $w(\leq_{\mathbb{P}})$ induces an equivalence relation on $P$. To wit, for any $p, q \in P$, let $$p \sim_{\mathbb{P}} q \text{ iff } (p, q) \in w(\leq_{\mathbb{P}}) \text{ and } (q, p) \in w(\leq_{\mathbb{P}}) \text{.}$$ Then $\sim_{\mathbb{P}}$ is an equivalence relation on $P$. 

\begin{defi}
Given a preorder $\mathbb{P}$, call $$s(\mathbb{P}) := w(\mathbb{P}) / \sim_{\mathbb{P}}$$ the \emph{separative quotient} of $\mathbb{P}$.
\end{defi}

\begin{rem}
By Fact \ref{wfixedp}, $s(\mathbb{P})$ is a separative forcing notion given any preorder $\mathbb{P}$.
\end{rem}

\begin{defi}
Given preorders $\mathbb{P}$ and $\mathbb{Q}$, we say $\mathbb{P} \lessdot \mathbb{Q}$ iff there is a complete embedding from $s(\mathbb{P})$ into $B(s(\mathbb{Q}))^+$.
\end{defi}

\begin{fact}\label{forpre}
The relation $\lessdot$ is a preordering of the class of all preorders. Hence it also pre-orders the class of all forcing notions.
\end{fact}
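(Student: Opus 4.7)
The plan is to verify the two defining properties of a preorder --- reflexivity and transitivity --- in turn, using the completion/separation machinery already developed.

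For reflexivity, I need to exhibit a complete embedding from $s(\mathbb{P})$ into $B(s(\mathbb{P}))^+$. Since $s(\mathbb{P})$ is separative (Fact \ref{wfixedp}, via the remark following the definition of $s(\mathbb{P})$), Fact \ref{bcompletion} yields a dense embedding $d: s(\mathbb{P}) \to B(s(\mathbb{P}))^+$, and Fact \ref{dic} immediately upgrades this to a complete embedding. So $\mathbb{P} \lessdot \mathbb{P}$.

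For transitivity, suppose $\mathbb{P} \lessdot \mathbb{Q}$ and $\mathbb{Q} \lessdot \mathbb{R}$, witnessed by complete embeddings $\pi_1 : s(\mathbb{P}) \to B(s(\mathbb{Q}))^+$ and $\pi_2 : s(\mathbb{Q}) \to B(s(\mathbb{R}))^+$. The key step is to lift $\pi_2$ to a complete embedding $\tilde{\pi}_2 : B(s(\mathbb{Q}))^+ \to B(s(\mathbb{R}))^+$ that commutes with the canonical dense embedding $d : s(\mathbb{Q}) \to B(s(\mathbb{Q}))^+$. For each $b \in B(s(\mathbb{Q}))^+$, I define
\begin{equation*}
\tilde{\pi}_2(b) := \sup\nolimits_{B(s(\mathbb{R}))}\{\pi_2(q) : q \in s(\mathbb{Q}),\ d(q) \leq b\},
\end{equation*}
using density of $d[s(\mathbb{Q})]$ in $B(s(\mathbb{Q}))^+$ to ensure the supremand is nonempty and completeness of $B(s(\mathbb{R}))$ to ensure the supremum exists. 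Once $\tilde{\pi}_2$ is shown to be a complete embedding (see the next paragraph), the composition $\tilde{\pi}_2 \circ \pi_1 : s(\mathbb{P}) \to B(s(\mathbb{R}))^+$ will be a complete embedding as well, because composing two complete embeddings trivially preserves injectivity, the order in both directions, incompatibility, and (by two applications of the maximal antichain clause) maximal antichains. That gives $\mathbb{P} \lessdot \mathbb{R}$ and closes the proof.

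The main obstacle --- the only step with real content --- is verifying that $\tilde{\pi}_2$ is a complete embedding. Order-preservation in one direction follows directly from the supremum definition; for the converse, I use separativity of $B(s(\mathbb{Q}))^+$ together with density of $d[s(\mathbb{Q})]$: if $\tilde{\pi}_2(b_1) \leq \tilde{\pi}_2(b_2)$ but $b_1 \not\leq b_2$, find $q \in s(\mathbb{Q})$ with $d(q) \leq b_1$ and $d(q) \wedge b_2 = \mathbb{0}$, and observe that $\pi_2(q) \leq \tilde{\pi}_2(b_1) \leq \tilde{\pi}_2(b_2)$ forces a nonzero meet $\pi_2(q) \wedge \pi_2(q') \neq \mathbb{0}$ for some $q'$ with $d(q') \leq b_2$, contradicting incompatibility preservation of $\pi_2$. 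Incompatibility-preservation for $\tilde{\pi}_2$ is a similar supremum computation. The maximal antichain clause is the crucial one: given a maximal antichain $A \subset B(s(\mathbb{Q}))^+$, its refinement $A' := \{d(q) : q \in s(\mathbb{Q}),\ \exists a \in A\ d(q) \leq a\}$ is still a maximal antichain in $B(s(\mathbb{Q}))^+$ by density of $d[s(\mathbb{Q})]$, so $d^{-1}[A']$ is a maximal antichain in $s(\mathbb{Q})$; completeness of $\pi_2$ then makes $\pi_2(d^{-1}[A'])$ a maximal antichain in $B(s(\mathbb{R}))^+$, and since each $\pi_2(q) \in \pi_2(d^{-1}[A'])$ lies below some $\tilde{\pi}_2(a)$ with $a \in A$, the set $\tilde{\pi}_2[A]$ is itself maximal. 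This is really the universal property of the boolean completion $B(s(\mathbb{Q}))$ in disguise, and is the one place where the separativity and completeness content of Fact \ref{bcompletion} is doing nontrivial work.
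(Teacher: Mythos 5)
The paper states this as a background \emph{Fact} and supplies no proof, so there is nothing to compare against; your argument is the standard one (reflexivity from Facts \ref{wfixedp}, \ref{bcompletion} and \ref{dic}; transitivity by extending $\pi_2$ along the dense embedding $d$ to the Boolean completion via suprema and composing), and its overall structure is sound. The well-definedness, injectivity, two-way order preservation, and incompatibility preservation of $\tilde{\pi}_2$ all check out as you describe.

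There is, however, one step that is false as literally written: the set $A' = \{d(q) : \exists a \in A\ (d(q) \leq a)\}$ is \emph{not} an antichain, let alone a maximal one --- two distinct conditions $q, q'$ with $d(q), d(q')$ below the same $a \in A$ are typically compatible. Consequently $d^{-1}[A']$ is not a maximal antichain of $s(\mathbb{Q})$ either, so you cannot apply the maximal-antichain clause of completeness of $\pi_2$ to it directly. What is true is that $A'$ (equivalently $d^{-1}[A']$) is \emph{dense}: given $q_0$, maximality of $A$ gives $a \in A$ with $d(q_0) \wedge a \neq \mathbb{0}$, and density of $d[s(\mathbb{Q})]$ gives $q \leq q_0$ with $d(q) \leq d(q_0) \wedge a$. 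The repair is to thin $d^{-1}[A']$ to a maximal antichain $A''$ of $s(\mathbb{Q})$ contained in it (possible precisely because $d^{-1}[A']$ is dense), apply completeness of $\pi_2$ to $A''$ to get a maximal antichain $\pi_2[A'']$ of $B(s(\mathbb{R}))^+$, and then observe that each $\pi_2(q)$ with $q \in A''$ satisfies $\pi_2(q) \leq \tilde{\pi}_2(d(q)) \leq \tilde{\pi}_2(a)$ for the witnessing $a \in A$; hence $\tilde{\pi}_2[A]$ is predense, and being an antichain (by incompatibility preservation), it is maximal. With that one-line patch the proof is complete.
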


\begin{defi}\label{comdef}
Preorders $\mathbb{P}$ and $\mathbb{Q}$ are \emph{forcing equivalent} iff $\mathbb{P} \lessdot \mathbb{Q}$ and $\mathbb{Q} \lessdot \mathbb{P}$.
\end{defi}

\begin{rem}\label{sepeq}
By Facts \ref{dic}, \ref{wfixedp}, \ref{bcompletion} and Definition \ref{comdef}, for any preorder $\mathbb{P}$, $\mathbb{P}$ and $w(\mathbb{P})$ are forcing equivalent.
\end{rem}

\begin{fact}\label{defor}
Let $\mathbb{P}$ and $\mathbb{Q}$ be preorders. If there is a dense embedding from $\mathbb{P}$ into $\mathbb{Q}$, then $\mathbb{P}$ and $\mathbb{Q}$ are forcing equivalent.
\end{fact}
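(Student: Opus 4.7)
Given a dense embedding $\pi: \mathbb{P} \to \mathbb{Q}$, my approach is to first lift $\pi$ to a dense embedding $\bar{\pi}: s(\mathbb{P}) \to s(\mathbb{Q})$ between the separative quotients, and then to use Facts \ref{dic} and \ref{bcompletion} together with the uniqueness of the Boolean completion to deduce both $\mathbb{P} \lessdot \mathbb{Q}$ and $\mathbb{Q} \lessdot \mathbb{P}$.

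To construct $\bar{\pi}$, I would set $\bar{\pi}([p]_{\sim_{\mathbb{P}}}) := [\pi(p)]_{\sim_{\mathbb{Q}}}$ and verify well-definedness, injectivity, order preservation and reflection, incompatibility preservation, and density between the separative quotients. Each of these follows by unwinding the definition of $w(\cdot)$ and repeatedly applying two properties of $\pi$: that $a \leq_{\mathbb{P}} b \iff \pi(a) \leq_{\mathbb{Q}} \pi(b)$, and that for any $r \leq_{\mathbb{Q}} \pi(p)$ density of $\pi$ yields some $t \in \mathbb{P}$ with $\pi(t) \leq_{\mathbb{Q}} r$ (whence $t \leq_{\mathbb{P}} p$). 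Density of $\bar{\pi}$ itself is especially clean: given $[q] \in s(\mathbb{Q})$, pick $p \in \mathbb{P}$ with $\pi(p) \leq_{\mathbb{Q}} q$ by density of $\pi$; then every $r \leq_{\mathbb{Q}} \pi(p)$ automatically satisfies $r \leq_{\mathbb{Q}} q$, so $\pi(p) \leq_{w(\mathbb{Q})} q$, giving $\bar{\pi}([p]) \leq_{s(\mathbb{Q})} [q]$.

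With $\bar{\pi}$ in hand, composing it with the dense embedding $\iota_{\mathbb{Q}}: s(\mathbb{Q}) \to B(s(\mathbb{Q}))^+$ supplied by Fact \ref{bcompletion} yields a dense embedding $s(\mathbb{P}) \to B(s(\mathbb{Q}))^+$, which is complete by Fact \ref{dic}; this witnesses $\mathbb{P} \lessdot \mathbb{Q}$. The same composition exhibits $B(s(\mathbb{Q}))$ as a complete Boolean algebra into which $s(\mathbb{P})$ densely embeds, so the uniqueness clause of Fact \ref{bcompletion} produces an isomorphism $\phi: B(s(\mathbb{Q})) \to B(s(\mathbb{P}))$, which restricts to a partial-order isomorphism $B(s(\mathbb{Q}))^+ \to B(s(\mathbb{P}))^+$. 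Composing $\iota_{\mathbb{Q}}$ with $\phi$ then gives a dense embedding $s(\mathbb{Q}) \to B(s(\mathbb{P}))^+$, again complete by Fact \ref{dic}, establishing $\mathbb{Q} \lessdot \mathbb{P}$.

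The main obstacle is concentrated in the lifting step: although each of the verifications for $\bar{\pi}$ is a short unwinding of definitions, it is easy to conflate the three orders $\leq_{\mathbb{P}}$, $\leq_{w(\mathbb{P})}$, and $\leq_{s(\mathbb{P})}$, and some care is required to track which is in play at each step. Once $\bar{\pi}$ is established, the rest is a formal consequence of the preliminaries.
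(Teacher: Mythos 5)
The paper records this as a \textbf{Fact} with no proof supplied, so there is no argument of the paper's to compare against; your proposal is the standard proof and it is correct. The lifting of $\pi$ to $\bar{\pi}$ on the separative quotients goes through exactly as you sketch (the key point being that density of $ran(\pi)$ lets you convert $p \leq_{w(\mathbb{P})} q \iff \pi(p) \leq_{w(\mathbb{Q})} \pi(q)$ in both directions, which simultaneously gives well-definedness, injectivity, and order reflection), and the remaining steps are formal consequences of Facts \ref{dic} and \ref{bcompletion} as you say.
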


\begin{defi}
Let $\mathbb{P} = (P, \leq_{\mathbb{P}})$ and $\mathbb{Q} = (Q, \leq_{\mathbb{Q}})$ be preorders. We say $\pi$ is a \emph{weak embedding from} $\mathbb{P}$ \emph{into} $\mathbb{Q}$ iff $\pi$ is an embedding from $w(\mathbb{P})$ into $w(\mathbb{Q})$.
\end{defi}

\begin{defi}
Let $\mathbb{P} = (P, \leq_{\mathbb{P}})$ and $\mathbb{Q} = (Q, \leq_{\mathbb{Q}})$ be preorders. A weak embedding $\pi$ from $\mathbb{P}$ into $\mathbb{Q}$ is \emph{dense} iff $\pi$ is dense as an embedding from $w(\mathbb{P})$ into $w(\mathbb{Q})$.
\end{defi}

\begin{rem}\label{dwefor}
Let $\mathbb{P}$ and $\mathbb{Q}$ be preorders. By Remark \ref{sepeq} and Fact \ref{defor}, if there is a dense weak embedding from $\mathbb{P}$ into $\mathbb{Q}$, then $\mathbb{P}$ and $\mathbb{Q}$ are forcing equivalent.
\end{rem}

\begin{defi}
If $\mathbb{P} = (P, \leq_{\mathbb{P}})$ is a forcing notion and $p \in P$, we let $g_p (\mathbb{P})$ denote the set $$\{q \in P : p \not \! \! \bot_{\mathbb{P}} \ q\}.$$
\end{defi}

\begin{defi}
Let $\mathbb{P} = (P, \leq_{\mathbb{P}})$ be a forcing notion. A member $p$ of $P$ is an \emph{atom} of $\mathbb{P}$ iff $$\forall q_1 \ \forall q_2 \ ((q_1 \leq_{\mathbb{P}} p \text{ and } q_2 \leq_{\mathbb{P}} p) \implies q_1 \not \! \! \bot_{\mathbb{P}} \ q_2).$$
\end{defi}

\begin{lem}\label{gpgeneric}
If $\mathbb{P} = (P, \leq_{\mathbb{P}})$ is a forcing notion and $p$ is an atom of $\mathbb{P}$, then $g_p (\mathbb{P})$ is a $\mathbb{P}$-generic filter over $V$.
\end{lem}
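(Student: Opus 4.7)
The plan is to verify the two requirements of Definition of a generic filter in turn: first that $g_p(\mathbb{P})$ is a filter on $\mathbb{P}$, and second that $g_p(\mathbb{P})$ meets every dense subset of $\mathbb{P}$ that lies in $V$. The second part will rely only on density (not on $p$ being an atom), while the atom hypothesis will do its essential work in establishing the directedness clause of the filter property.

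First, for the filter checks. Nonemptiness is immediate since $p \not\bot_{\mathbb{P}} p$, so $p \in g_p(\mathbb{P})$. Upward closure is a routine chase: if $q \in g_p(\mathbb{P})$ and $q \leq_{\mathbb{P}} q'$, choose $r \leq_{\mathbb{P}} q, p$ witnessing compatibility; then $r \leq_{\mathbb{P}} q'$ too, so $q' \not\bot_{\mathbb{P}} p$, hence $q' \in g_p(\mathbb{P})$. The main nontrivial point is directedness. Given $q_1, q_2 \in g_p(\mathbb{P})$, pick $r_i \leq_{\mathbb{P}} q_i, p$ for $i = 1, 2$. Now $r_1, r_2 \leq_{\mathbb{P}} p$, and here the atom hypothesis is precisely what is needed: it forces $r_1 \not\bot_{\mathbb{P}} r_2$, yielding some $s \leq_{\mathbb{P}} r_1, r_2$. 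This $s$ is a common extension of $q_1, q_2$ which also satisfies $s \leq_{\mathbb{P}} p$, and in particular $s \in g_p(\mathbb{P})$. This is the only step where atomhood is used.

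For genericity, let $D \subset P$ be dense with $D \in V$. By density applied to $p$, there exists $q \in D$ with $q \leq_{\mathbb{P}} p$. Then $q$ itself witnesses $q \not\bot_{\mathbb{P}} p$, so $q \in g_p(\mathbb{P}) \cap D$. Via Observation \ref{ob0}, it suffices to verify this for definable dense $D \subset P$ belonging to some $H(\kappa)$ with $\kappa > |\mathrm{trcl}(\{P\})|$, and the argument above applies uniformly to any such $D$. One should also observe that, for dense $D$, the condition ``$\forall q (q \leq_{\mathbb{P}} r \implies q \notin D)$'' appearing in the definition of \emph{meeting} $D$ is vacuously false, so meeting $D$ in $A$ reduces to the ordinary requirement $g_p(\mathbb{P}) \cap D \cap A \neq \emptyset$, which we have just shown.

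There is no real obstacle here; the only point requiring thought is recognising that atomhood is exactly the combinatorial input needed to upgrade ``any two members of $g_p(\mathbb{P})$ share a common lower bound with $p$'' into ``any two members of $g_p(\mathbb{P})$ have a common lower bound inside $g_p(\mathbb{P})$'', by forcing the two witnesses below $p$ to be compatible. Everything else is unwinding definitions.
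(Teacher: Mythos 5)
Your proof is correct and follows essentially the same route as the paper: density below $p$ handles genericity, and atomhood of $p$ forces the two witnesses $r_1, r_2 \leq_{\mathbb{P}} p$ to be compatible, which gives the filter property. If anything, your version is slightly more complete, since you explicitly verify upward closure and produce a common lower bound of $q_1, q_2$ lying \emph{inside} $g_p(\mathbb{P})$, whereas the paper stops at pairwise compatibility in $\mathbb{P}$.
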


\begin{proof}
If $D$ is dense in $\mathbb{P}$, then there is $q \in D$ with $q \leq_{\mathbb{P}} p$. Obviously, $q \in g_p (\mathbb{P})$. Therefore $g_p (\mathbb{P})$ is a $\mathbb{P}$-generic subset over $V$. To see that $g_p (\mathbb{P})$ is a filter, let $q_1$ and $q_2$ be members of $g_p (\mathbb{P})$. By the definition of $g_p (\mathbb{P})$, there are $r_1$ and $r_2$ such that 
\begin{itemize}
    \item $r_1 \leq_{\mathbb{P}} q_1$, 
    \item $r_1 \leq_{\mathbb{P}} p$,
    \item $r_2 \leq_{\mathbb{P}} q_2$,
    \item $r_2 \leq_{\mathbb{P}} p$.
\end{itemize}
As $p$ is an atom of $\mathbb{P}$, it must be the case that $r_1 \not \! \! \bot_{\mathbb{P}} \ r_2$, which means $q_1 \not \! \! \bot_{\mathbb{P}} \ q_2$.
\end{proof}

\begin{defi}
A forcing notion $\mathbb{P}$ is \emph{atomic} iff the set of atoms of $\mathbb{P}$ is dense in $\mathbb{P}$.
\end{defi}

\begin{defi}
A forcing notion $\mathbb{P} = (P, \leq_{\mathbb{P}})$ is \emph{atomless} iff no member of $P$ is an atom of $\mathbb{P}$.
\end{defi}

\begin{defi}
Define
\begin{align*}
    C := \ & 2^{< \omega} \text{, and} \\
    \leq_{\mathbb{C}} \ := \ & \{(p, q) : q \subset p\} \text{.}
\end{align*}
Call the forcing notion $\mathbb{C} := (C, \leq_{\mathbb{C}})$ \emph{Cohen forcing}.
\end{defi}

\begin{defi}
Let $\mathbb{P} = (P, \leq_{\mathbb{P}})$ and $\mathbb{Q} = (Q, \leq_{\mathbb{Q}})$ be forcing notions. We say $\mathbb{P}$ is a \emph{regular suborder} of $\mathbb{Q}$, denoted $\mathbb{P} \lessdot \mathbb{Q}$, iff 
\begin{itemize}
    \item $\mathbb{P}$ is a suborder of $\mathbb{Q}$, and
    \item for all $q \in Q$ there is $p \in P$ such that every $p' \leq_{\mathbb{P}} p$ is compatible with $q$ in $\mathbb{Q}$.
\end{itemize}
\end{defi}

\begin{fact}\label{regsubo}
If $\mathbb{P} = (P, \leq_{\mathbb{P}}) \lessdot \mathbb{Q}$, then for every $\mathbb{Q}$-generic filter $g$ over $V$, $g \cap P$ is a $\mathbb{P}$-generic filter over $V$.
\end{fact}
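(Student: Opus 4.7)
The plan is to establish separately that $g \cap P$ is a filter on $\mathbb{P}$ and that it meets every dense subset of $\mathbb{P}$ lying in $V$. The crucial auxiliary step is density transfer: if $D \subseteq P$ is dense in $\mathbb{P}$, then its $\mathbb{Q}$-downward closure $\bar{D} := \{q \in Q : \exists d \in D, \ q \leq_{\mathbb{Q}} d\}$ is dense in $\mathbb{Q}$. Indeed, given $q \in Q$, the regular suborder condition supplies $p \in P$ such that every $p' \leq_{\mathbb{P}} p$ is compatible with $q$ in $\mathbb{Q}$; density of $D$ in $\mathbb{P}$ then yields $d \in D$ with $d \leq_{\mathbb{P}} p$, and the compatibility condition applied to $d$ produces $q' \leq_{\mathbb{Q}} d, q$, so $q' \leq_{\mathbb{Q}} q$ lies in $\bar{D}$. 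Since $\bar{D} \in V$ is dense in $\mathbb{Q}$, $g$ meets $\bar{D}$ at some $q'$ below some $d \in D$; then $d \in g$ by filter upward closure, and so $d \in g \cap P \cap D$.

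Turning to the filter axioms for $g \cap P$: upward closure in $\mathbb{P}$ is immediate because $\mathbb{P}$'s order is inherited from $\mathbb{Q}$ and $g$ is a $\mathbb{Q}$-filter, while nonemptiness follows from density transfer applied to the (trivially dense) set $D = P$. For directedness, given $p_1, p_2 \in g \cap P$, I would consider the trichotomy set
$$D_{p_1, p_2} := \{r \in P : r \leq_{\mathbb{P}} p_1 \text{ and } r \leq_{\mathbb{P}} p_2\} \cup \{r \in P : r \bot_{\mathbb{P}} p_1\} \cup \{r \in P : r \bot_{\mathbb{P}} p_2\},$$
which is dense in $\mathbb{P}$ by a routine splitting argument. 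By density transfer, $g \cap P$ meets $D_{p_1, p_2}$ at some $r$. Since $r, p_1, p_2$ all lie in the $\mathbb{Q}$-filter $g$, each pair $(r, p_i)$ is compatible in $\mathbb{Q}$; taking the standing meaning of ``$\mathbb{P}$ is a suborder of $\mathbb{Q}$'' to include inheritance of compatibility (as is standard), each pair is also compatible in $\mathbb{P}$, ruling out the second and third cases of $D_{p_1, p_2}$ and leaving $r \leq_{\mathbb{P}} p_1, p_2$.

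The main obstacle is precisely that last step: if the suborder relation only guaranteed inheritance of the order and not of compatibility, the reduction property alone would not suffice to rule out $\mathbb{P}$-incompatible pairs in $P$ being $\mathbb{Q}$-compatible, and the directedness argument would break down. Everything else is a clean unwinding of the reduction property via density in $\mathbb{Q}$ together with the filter properties of $g$.
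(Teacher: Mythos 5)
Your proof is correct; the paper states this as a Fact without proof, and your argument (density transfer of $\bar{D}$ via the reduction property, plus the standard trichotomy set for directedness) is the standard one. Your closing caveat is well taken and not mere pedantry: under the minimal reading of ``suborder'' (order inherited, but $\mathbb{P}$-incompatible pairs allowed to be $\mathbb{Q}$-compatible), the statement is genuinely false --- e.g.\ take $Q = \{1, a, b, c\}$ with $c \leq a, b \leq 1$ and $P = \{1, a, b\}$, which satisfies the displayed reduction property, yet the $\mathbb{Q}$-generic filter $g = Q$ yields $g \cap P = \{1, a, b\}$, which is not directed in $\mathbb{P}$. So the convention you invoke, that incompatibility in $\mathbb{P}$ implies incompatibility in $\mathbb{Q}$, is exactly the hypothesis needed to complete the directedness step, and your proof is complete once that reading of ``suborder'' is fixed.
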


\subsection{Universally Baire Sets and Productive Classes}\label{ss25}

\begin{defi}[Feng-Magidor-Woodin]
Let 
\begin{itemize}
    \item $1 \leq k < \omega$, 
    \item $D \in \mathcal{P}(\mathbb{R}^k)$, and
    \item $T$ and $U$ be trees on ${^{k}{\omega}} \times \lambda$ for some cardinal $\lambda$.
\end{itemize} 
We say $T$ and $U$ \emph{witness} $D$ \emph{is universally Baire} iff 
\begin{itemize}
    \item $D = p[T]$, and
    \item $\Vdash^V_{\mathbb{P}} ``p[U] = \mathbb{R}^k \setminus p[T]"$ for all set forcing notions $\mathbb{P}$.
\end{itemize}
We say $D$ is \emph{universally Baire} iff there are trees $T$ and $U$ witnessing $D$ is universally Baire.
\end{defi}

The definition of universally Baire sets of reals first appeared in \cite[Section 2]{fmw}. 

If $T$ and $U$ witness $D$ is universally Baire, then we can read off $T$ an unambiguous version of $D$, which we denote $D^*$, in any generic extension of $V$. Essentially, we let $(D^*)^{V[g]} = (p[T])^{V[g]}$ for any poset $\mathbb{P} \in V$ and any $\mathbb{P}$-generic filter $g$ over $V$.

Note also that if
\begin{itemize}
    \item $T$ and $U$ witness $D$ is universally Baire, and
    \item $T'$ and $U'$ witness $D$ is universally Baire,
\end{itemize}
then for all set forcing notions $\mathbb{P}$, 
\begin{equation*}
    \Vdash^V_{\mathbb{P}} ``p[T] = p[T']",
\end{equation*}
so the evaluation of $D^*$ is independent of the witnesses for $D$ being universally Baire.

\begin{defi}
Let $\Gamma^{\infty}$ denote the set of all universally Baire sets of reals, i.e. 
\begin{equation*}
    \Gamma^{\infty} := \{D \in \bigcup_{1 \leq k < \omega} \mathcal{P}(\mathbb{R}^k) : D \text{ is universally Baire}\}.
\end{equation*}
\end{defi}

\begin{defi}
Let $\Gamma \subset \bigcup_{1 \leq k < \omega} \mathcal{P}(\mathbb{R}^k)$. We say $\Gamma$ is \emph{productive} iff
\begin{enumerate}[label=(\arabic*), leftmargin=40pt]
    \item $\Gamma \subset \Gamma^{\infty}$,
    \item $\Gamma$ is closed under complements, i.e. for all $k < \omega$, if $D \in \Gamma \cap \mathcal{P}(\mathbb{R}^{k+1})$, then $\mathbb{R}^{k+1} \setminus D \in \Gamma$,
    \item $\Gamma$ is closed under projections, i.e. for all $k < \omega$, if $D \in \Gamma \cap \mathcal{P}(\mathbb{R}^{k+2})$, then
    \begin{equation*}
        \exists^{\mathbb{R}} D := \{\Vec{x} \in \mathbb{R}^{k+1} : \exists y \ (\Vec{x}^{\frown}(y) \in D)\} \in \Gamma,
    \end{equation*}
    and
    \item the closure of $\Gamma$ under projections is preserved by set forcing notions in a strong way: for all $k < \omega$, if $D \in \Gamma \cap \mathcal{P}(\mathbb{R}^{k+2})$, then
    \begin{equation*}
        (\exists^{\mathbb{R}} D)^* = \{\Vec{x} \in \mathbb{R}^{k+1} : \exists y \ (\Vec{x}^{\frown}(y) \in D^*)\}
    \end{equation*}
    in all generic extensions of $V$.
\end{enumerate}
\end{defi}

\begin{lem}\label{prod}
Let $\Gamma  = \bigcup_{1 \leq k < \omega} P(\mathbb{R}^{k}) \cap L(\Gamma, \mathbb{R})$ be productive, $D \in \Gamma$, and $\phi$ be a projective formula. If $\Vec{s} \in {^{< \omega}{\mathbb{R}}}$ and $arity(\phi) = dom(\Vec{s}) + 1$, then 
\begin{equation*}
    V \models \phi(\Vec{s}, D) \iff \ \Vdash^V_{\mathbb{P}} \phi(\Vec{s}, D^*)
\end{equation*}
for all set forcing notions $\mathbb{P}$.
\end{lem}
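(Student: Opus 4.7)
The plan is to induct on the complexity of the projective formula $\phi$. For each projective formula $\phi$ in $k+1$ argument slots (the last being a set-of-reals parameter), set
\[
A_\phi := \{\Vec{s} \in (\mathbb{R}^k)^V : V \models \phi(\Vec{s}, D)\}
\]
and, for each generic extension $V[g]$,
\[
B_\phi^{V[g]} := \{\Vec{s} \in (\mathbb{R}^k)^{V[g]} : V[g] \models \phi(\Vec{s}, D^*)\}.
\]
The inductive claim is that (i) $A_\phi \in \Gamma$ and (ii) $(A_\phi)^* = B_\phi^{V[g]}$ in every $V[g]$. Once (i) and (ii) are proved, the lemma follows from universal Baireness: if $T, U$ witness $A_\phi \in \Gamma^{\infty}$, then for $\Vec{s} \in V$ one has $\Vec{s} \in A_\phi \iff \Vec{s} \in p[T]^V$; the forward implication $\Vec{s} \in p[T]^V \Rightarrow \Vec{s} \in p[T]^{V[g]} = (A_\phi)^*$ is automatic (a branch of $T_{\Vec{s}}$ in $V$ remains one in $V[g]$), and the converse holds because $\Vec{s} \notin p[T]^V$ gives $\Vec{s} \in p[U]^V \subset p[U]^{V[g]}$, forcing $\Vec{s} \notin p[T]^{V[g]}$ via the disjointness of $p[T]^{V[g]}$ and $p[U]^{V[g]}$ that universal Baireness guarantees.

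For the base of the induction, arithmetical atomic formulas give Borel sets, which are universally Baire with canonical codes evaluating uniformly across $V$ and $V[g]$; an atomic formula of the form ``some subtuple of $\Vec{x}$ lies in the parameter slot'' yields a set obtained from $D$ by coordinate permutation and product with $\mathbb{R}$, which is definable from $D$ in $L(\Gamma, \mathbb{R})$ (hence in $\Gamma$ by the fixed-point hypothesis $\Gamma = \bigcup_k P(\mathbb{R}^k) \cap L(\Gamma, \mathbb{R})$) and whose $*$-evaluation is computed by the analogously modified tree witnessing $D$ universally Baire.

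For the inductive step, negation is handled by productivity axiom (2) together with the observation that $(\mathbb{R}^k \setminus A_\phi)^* = (\mathbb{R}^k)^{V[g]} \setminus (A_\phi)^*$ (immediate from universal Baireness). Conjunction and disjunction land in $\Gamma$ by the $L(\Gamma, \mathbb{R})$ closure, and their $*$-evaluations commute with intersection and union because combining the witnessing trees for the constituents yields witnesses for the Boolean combinations. The quantifier case is the heart of the matter: axiom (3) gives $A_{\exists y \phi} = \exists^{\mathbb{R}} A_\phi \in \Gamma$, axiom (4) gives $(A_{\exists y \phi})^* = \exists^{\mathbb{R}} (A_\phi)^*$ in $V[g]$, and the inductive hypothesis on $\phi$ then yields $(A_{\exists y \phi})^* = \exists^{\mathbb{R}} B_\phi^{V[g]} = B_{\exists y \phi}^{V[g]}$.

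The main obstacle I anticipate is the absoluteness of the $*$-evaluation across Boolean operations, which is not explicitly packaged into the productivity axioms. Verifying it, however, amounts to routine tree-gymnastics given the witnesses already in hand. The quantifier step, which would otherwise be the crux, is handed to us directly by axioms (3) and (4).
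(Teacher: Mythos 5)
Your proposal is correct and matches the paper's approach exactly: the paper's entire proof is ``By induction on the length of $\phi$,'' and your write-up is a faithful elaboration of that induction, with the productivity axioms (3) and (4) carrying the quantifier step and standard tree amalgamations handling the Boolean cases, just as intended.
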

\begin{proof}
By induction on the length of $\phi$.
\end{proof}

\subsection{Generic Iterations}

We want to first define a fragment of $\mathsf{ZFC}$ rich enough to 
\begin{itemize}
    \item allow for basic analyses of generic ultrapowers, and
    \item be preserved by the generic ultrapowers we will be using.
\end{itemize}
To this end, we follow \cite[Section 3.1]{woodin}.

\begin{defi}[Woodin]\label{nota1}
Let $\mathsf{ZFC}^*$ be the conjunction of $$\mathsf{ZFC} - \mathsf{Replacement} - \mathsf{Powerset}$$ and the following schema:
\begin{quote}
    ``Given any nonempty class of functions $\mathcal{R}$ with
    \begin{itemize}
        \item $dom(f) < \omega_1$ and
        \item $f \restriction \alpha \in \mathcal{R}$
    \end{itemize}
    for all $f \in \mathcal{R}$ and all $\alpha \in dom(f)$, there is $\beta \leq \omega_1$ and a function $g$ with domain $\beta$ such that
    \begin{itemize}
        \item $g \not \in \mathcal{R}$,
        \item for all $\gamma < \beta$, $g \restriction \gamma \in \mathcal{R}$, and
        \item if $\beta = \gamma + 1$, then $g \restriction \gamma$ is $\subset$-maximal in $\mathcal{R}$.''
    \end{itemize}
\end{quote}
\end{defi}

Informally, the schema found in the block quote in Definition~\ref{nota1} says that every tree of height at most $\omega_1$ has a path.

\begin{defi}\label{def257}
For $\alpha \leq \omega_1$, we say a class $$\mathfrak{C} = \langle \bar{N}_i = (N_i; \tilde{\in}_i, I_i), \sigma_{ij} : i \leq j < \alpha \rangle$$ is a \emph{generic iteration} iff 
\begin{itemize}
    \item $\tilde{\in}_0$ is a binary relation on $N_0$ interpreting $\ulcorner \in \urcorner$ in the language of set theory,
    \item $I_0$ is a unary relation on $N_0$,
    \item $(N_0; \tilde{\in}_0) \models \mathsf{ZFC}^*$,
    \item $\bar{N}_0 \models ``I_0$ is a normal uniform ideal on $\omega_1"$, 
    \item for all $i < \alpha$, $\sigma_{ii} = id_{N_i}$,
    \item for all $i < \alpha$ such that $i + 1 < \alpha$, there is $g_{i}$ a $\mathcal{P}(\omega_1)^{\bar{N}_{i}} / I$-generic filter over $N_{i}$ such that 
    \begin{itemize}[label=$\circ$]
        \item $N_{i + 1} = Ult(N_{i}, g_{i})$, and
        \item $\sigma_{i(i + 1)} : N_{i} \longrightarrow N_{i + 1}$ is the corresponding ultrapower embedding,
    \end{itemize}
    \item for all limit ordinals $k < \alpha$, $(\bar{N}_{k}, \langle \sigma_{ik} : i < k \rangle)$ is the direct limit of $\langle \bar{N}_i, \sigma_{ij} : i \leq j < k \rangle$, and
    \item for all $i \leq j \leq k < \alpha$, $\sigma_{ik} = \sigma_{jk} \circ \sigma_{ij}$.
\end{itemize}
In this case, we call $\alpha$ the \emph{length} of $\mathfrak{C}$.
\end{defi}

\begin{defi}
For $\alpha \leq \omega_1$ and any class $\bar{N}$, a class $$\langle \bar{N}_i, \sigma_{ij} : i \leq j < \alpha \rangle$$ is a \emph{generic iteration of} $\bar{N}$ iff 
\begin{itemize}
    \item $\langle \bar{N}_i, \sigma_{ij} : i \leq j < \alpha \rangle$ is a generic iteration, and
    \item $\bar{N} = \bar{N}_0$.
\end{itemize}
\end{defi}

\begin{defi}
A generic iteration $$\langle \bar{N}_i = (N_i; \tilde{\in}_i, I_i), \sigma_{ij} : i \leq j < \alpha \rangle$$ is \emph{well-founded} iff for all $i < \alpha$, $\tilde{\in}_i$ is a well-founded relation on $N_i$.
\end{defi}

Following convention, if $$\langle \bar{N}_i, \sigma_{ij} : i \leq j < \alpha \rangle$$ is a well-founded generic iteration, we shall identify 
\begin{itemize}
    \item each $\bar{N}_i$ with its transitive collapse, and
    \item each $\sigma_{ij}$ with the unique embedding that commutes with $\sigma_{ij}$ and the transitive collapse isomorphisms of $\bar{N}_i$ and $\bar{N}_j$.
\end{itemize}

\begin{defi}
For any class $\bar{N}$, $\bar{N}$ is \emph{generically iterable} iff  
\begin{itemize}
    \item for some $\sigma_{00}$, $\langle \bar{N}_0, \sigma_{00} \rangle$ is a generic iteration of $\bar{N}$, and
    \item every generic iteration of $\bar{N}$ is well-founded.
\end{itemize}
\end{defi}

\begin{defi}
Given a class $\bar{N} = (N; \in, I)$, we say 
\begin{equation*}
    \bar{N} \models ``I \text{ is a precipitous ideal on } \omega_1"
\end{equation*}
iff
\begin{itemize}
    \item for some $\sigma_{00}$, $\langle \bar{N}_0, \sigma_{00} \rangle$ is a generic iteration of $\bar{N}$, and
    \item every generic iteration of $\bar{N}$ of length $2$ is well-founded.
\end{itemize}
\end{defi}

\begin{fact}\label{satipre}
If $\bar{N} = (N; \in, I)$ is such that 
\begin{itemize}
    \item for some $\sigma_{00}$, $\langle \bar{N_0}, \sigma_{00} \rangle$ is a generic iteration of $\bar{N}$, and
    \item $\bar{N} \models ``I \text{ is a saturated ideal on } \omega_1"$,
\end{itemize}
then
\begin{equation*}
    \bar{N} \models ``I \text{ is a precipitous ideal on } \omega_1".
\end{equation*}
\end{fact}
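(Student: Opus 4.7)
The plan is to adapt the classical argument that every saturated normal ideal is precipitous (cf.\ Jech's \emph{Set Theory}, Lemma 22.22), running it inside the possibly non-standard model $\bar{N}$. The substance is: $\omega_2$-saturation of $I$ in $\bar{N}$ translates to the $\omega_2$-chain condition of the forcing $\mathbb{Q} := \mathcal{P}(\omega_1)^{\bar{N}}/I$ (computed inside $\bar{N}$), and this, together with $\sigma$-completeness of $I$ (from normality), forces $Ult(\bar{N}, g)$ to be well-founded for every $\mathbb{Q}$-generic filter $g$ over $\bar{N}$. A length-$2$ generic iteration of $\bar{N}$ is determined by exactly one such ultrapower step $\bar{N}_1 = Ult(\bar{N}, g)$; since $\bar{N}$ is transitive (as $\tilde{\in}_0$ is the actual membership $\in$), only well-foundedness of $\bar{N}_1$ is at issue. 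Suppose for contradiction that $\bar{N}_1$ is ill-founded. Because $Ult(\bar{N}, g)$ is a set-sized structure definable in $\bar{N}[g]$, absoluteness of well-foundedness for set-sized relations between $\bar{N}[g]$ and $V$ (via the rank function and $\mathsf{DC}$) yields a descending sequence already inside $\bar{N}[g]$: there exist a $\mathbb{Q}$-name $\dot{s} \in \bar{N}$ and a condition $S \in g$ such that $S \Vdash$ ``$\dot{s}$ is a descending $\omega$-sequence in the ultrapower'', where $\dot{s}(n)$ names the $n$-th equivalence class.

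The heart of the argument is a saturation-based representation step carried out inside $\bar{N}$. For each $n < \omega$, using $\omega_2$-cc and $\mathsf{AC}$ inside $\bar{N}$, one chooses a maximal antichain $\{T_{\xi, n} : \xi < \lambda_n\}$ below $S$ (with $\lambda_n \leq \omega_1$) together with functions $F_{\xi, n} : \omega_1 \to N$ in $\bar{N}$ such that $T_{\xi, n} \Vdash \dot{s}(n) = [F_{\xi, n}]_{\dot{G}}$ --- possible because every element of $Ult$ is of the form $[f]_{g}$ for some $f \in \bar{N}$. After replacing the $T_{\xi, n}$'s by pairwise disjoint representatives covering $S$ modulo $I$, one pastes the $F_{\xi, n}$'s into a single $F_n \in \bar{N}$ with $S \Vdash \dot{s}(n) = [F_n]_{\dot{G}}$. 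Because the whole construction lives in $\bar{N}$ and $\mathsf{AC}$ holds there, the sequence $\langle F_n : n < \omega \rangle$ is in $\bar{N}$.

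The contradiction then follows cleanly. Setting $B_n := \{\alpha < \omega_1 : F_{n+1}(\alpha) \in F_n(\alpha)\}$, one has $B_n \in \bar{N}$ and $S \setminus B_n \in I$, so $\omega_1$-completeness of $I$ (from normality) gives $\bigcup_n (S \setminus B_n) \in I$; hence $S \cap \bigcap_n B_n$ is $I$-positive, and in particular non-empty. Any $\alpha$ in it yields an $\in$-descending sequence $F_0(\alpha) \ni F_1(\alpha) \ni \dots$ inside $\bar{N}$, contradicting well-foundedness of $\bar{N}$. The main obstacle is the representation step in the middle paragraph: uniformly replacing $\dot{s}(n)$ by $[F_n]_{\dot{G}}$ for a single $F_n \in \bar{N}$ and obtaining the full sequence $\langle F_n \rangle$ inside $\bar{N}$. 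This is exactly where the $\omega_2$-cc does its work, and the sequence-level uniformity relies on $\mathsf{AC}$ being available inside $\bar{N}$ (which it is, since $\mathsf{ZFC}^\ast$ retains Choice).
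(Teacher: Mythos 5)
The paper records this statement as a \emph{Fact} and supplies no proof of its own --- it is the classical theorem that an $\omega_2$-saturated, normal ($\sigma$-complete) ideal on $\omega_1$ is precipitous, internalized to the structure $\bar{N}$. Your write-up is exactly that classical argument and it is sound: a length-$2$ generic iteration consists of a single generic ultrapower step, $\tilde{\in}_0$ is genuinely well-founded by hypothesis, and the saturation-based pasting along a maximal antichain of size $\leq \omega_1$ (disjointified modulo $I$ using $\sigma$-completeness) correctly reduces each name $\dot{s}(n)$ below $S$ to a single representing function $F_n \in \bar{N}$; the diagonal sets $B_n$ and $\sigma$-completeness then produce a genuine descending $\in$-chain, which is absurd. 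Two places deserve the care you partly signal. First, passing from ``the ultrapower is ill-founded where $g$ lives'' to ``some $S \in g$ and some name $\dot{s} \in \bar{N}$ force a descending sequence'' uses Mostowski absoluteness of well-foundedness together with the truth lemma over $\bar{N}$; both require $\bar{N}$ (and $\bar{N}[g]$) to prove enough recursion, which is precisely what the tree-path schema in the definition of $\mathsf{ZFC}^*$ is there to supply in the absence of Replacement. Second, the final appeal to $\sigma$-completeness needs the sequence $\langle S \setminus B_n : n < \omega \rangle$ to be an \emph{element} of $\bar{N}$, not merely each term; you correctly flag that the recursion producing $\langle F_n : n < \omega \rangle$ must therefore be carried out inside $\bar{N}$, and this is the right thing to flag. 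Neither point is a gap in substance; both are of the kind the paper itself elides by stating the result without proof.
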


\begin{lem}\label{gisubset}
If $\bar{N}_0$ is generically iterable, $J$ is a normal uniform ideal on $\omega_1$, and $$\langle \bar{N}_i = (N_i; \in, I_i), \sigma_{ij} : i \leq j \leq \omega_1 \rangle$$ is a generic iteration of $\bar{N}_0$, then $I_{\omega_1} \subset J$.
\end{lem}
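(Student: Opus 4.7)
The plan is to prove the stronger statement that every $X \in I_{\omega_1}$ is nonstationary in $V$, and then invoke the standard fact that any normal uniform ideal on $\omega_1$ contains $\mathrm{NS}_{\omega_1}$. The geometric engine of the argument is that the critical points of the iteration form a club in $\omega_1^V$ disjoint from a tail of any such $X$.

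Write $\kappa_\beta := \omega_1^{\bar{N}_\beta}$, which is the critical point of $\sigma_{\beta,\beta+1}$; note that $\sigma_{\alpha,\beta}(\kappa_\alpha) = \kappa_\beta$ by elementarity. First I would verify, using the direct-limit clause at limit stages, that $\kappa_\gamma = \sup_{\beta < \gamma} \kappa_\beta$ for every limit $\gamma \leq \omega_1$: any $\xi < \kappa_\gamma$ is $\sigma_{i,\gamma}(\xi_0)$ for some $i < \gamma$, and $\xi < \kappa_\gamma = \sigma_{i,\gamma}(\kappa_i)$ forces $\xi_0 < \kappa_i$, hence $\xi = \xi_0 < \kappa_i$ since $\kappa_i$ is the critical point of $\sigma_{i,\gamma}$. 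In the intended setup $\bar{N}_0$ is countable in $V$, so each $\bar{N}_\beta$ with $\beta < \omega_1$ is countable and $\kappa_\beta < \omega_1^V$. Combined with strict monotonicity and continuity, this makes the range of $\langle \kappa_\beta : \beta < \omega_1 \rangle$ a club $C \subseteq \omega_1^V$ with $\kappa_{\omega_1} = \omega_1^V$.

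Next, given $X \in I_{\omega_1}$, I use the direct-limit presentation of $\bar{N}_{\omega_1}$ to fix $\alpha < \omega_1$ and $Y \in N_\alpha$ with $\bar{N}_\alpha \models Y \in I_\alpha$ and $\sigma_{\alpha,\omega_1}(Y) = X$. For each $\beta$ with $\alpha \leq \beta < \omega_1$, put $Y_\beta := \sigma_{\alpha,\beta}(Y)$; by elementarity and $\sigma_{\alpha,\beta}(I_\alpha) = I_\beta$, we get $\bar{N}_\beta \models Y_\beta \in I_\beta$. The standard generic-ultrapower computation for normal ideals yields the identity $\kappa_\beta \in \sigma_{\beta,\beta+1}(Z) \iff Z \in g_\beta$ for $Z \subseteq \kappa_\beta$ in $\bar{N}_\beta$; applied to $Z = Y_\beta$, and using that $Y_\beta \in I_\beta$ (so $Y_\beta \notin g_\beta$), this gives $\kappa_\beta \notin \sigma_{\beta,\beta+1}(Y_\beta)$. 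Since $\kappa_\beta$ lies strictly below the critical point $\kappa_{\beta+1}$ of $\sigma_{\beta+1,\beta+2}$, it is fixed by $\sigma_{\beta+1,\omega_1}$, which transports the noncontainment to $\kappa_\beta \notin \sigma_{\beta,\omega_1}(Y_\beta) = X$.

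Hence $X$ is disjoint from the tail $\{\kappa_\beta : \alpha \leq \beta < \omega_1\}$ of $C$, so $X$ is nonstationary in $V$; since any normal uniform ideal on $\omega_1$ contains $\mathrm{NS}_{\omega_1}$, $X \in J$ as required. I expect the main obstacle to be bookkeeping rather than conceptual: verifying continuity of $\langle \kappa_\beta \rangle$ at limits via the direct-limit clause, pinning down the {\L}o\'{s}-style identity for the generic ultrapower cleanly, and tracking which ordinals are fixed by the tails of the iteration.
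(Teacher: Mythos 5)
Your proof is correct and follows essentially the same route as the paper's: reduce to $\mathrm{NS}_{\omega_1}$ via minimality of the nonstationary ideal among normal uniform ideals, observe that the critical sequence $\langle \omega_1^{\bar{N}_\beta} : \beta < \omega_1 \rangle$ is a club, and use $g_\beta \cap I_\beta = \emptyset$ together with the ultrapower identity to show $X$ misses a tail of that club. The paper's version is just a terser rendering of the same argument (it likewise silently uses that the iterates below stage $\omega_1$ are countable so that the critical points are cofinal in $\omega_1^V$).
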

\begin{proof}
As $\mathrm{NS}_{\omega_1}$ is the smallest normal uniform ideal on $\omega_1$, it suffices to show $I_{\omega_1} \subset \mathrm{NS}_{\omega_1}$. Note that $$C = \{\omega_1^{\bar{N}_i} : i < \omega_1\}$$ is a club in $\omega_1$. Now let $x \in I_{\omega_1}$, so that for some $i < \omega_1$, there is $x_i \in I_i$ for which $\sigma_{i\omega_1}(x_i) = x$. Since 
\begin{equation*}
    \bar{N}_{j+1} \cong Ult(\bar{N}_j, g_j := \{y \in \mathcal{P}(\omega_1)^{\bar{N}_j} : \omega_1^{\bar{N}_j} \in \sigma_{j(j+1)}(y)\})
\end{equation*}
and
\begin{equation*}
    g_j \cap I_j = \emptyset
\end{equation*}
for all $i \leq j < \omega_1$, we have 
\begin{equation*}
    \omega_1^{\bar{N}_j} \not\in \sigma_{i(j+1)}(x_i) = x \restriction (\omega_1^{\bar{N}_{j+1}}) 
\end{equation*}
for all $i \leq j < \omega_1$, whence $$x \cap (C \setminus \omega_1^{\bar{N}_i}) = \emptyset.$$ This means $x \in \mathrm{NS}_{\omega_1}$, and we are done.
\end{proof}

\begin{lem}\label{itercopy}
Suppose 
\begin{itemize}
    \item for some $\sigma_{00}$, $\langle \bar{N}_0, \sigma_{00} \rangle$ is a generic iteration of $\bar{N} = (N; \in, I)$,
    \item for some $\pi_{00}$, $\langle \bar{M}_0, \pi_{00} \rangle$ is a generic iteration of $\bar{M} = (M; \in, I)$,
    \item $\bar{N} \in M$,
    \item $\bar{N} \models ``\text{every } {\omega_1^{\bar{M}}} \text{-sequence is a set}"$,
    \item $N$ contains all maximal antichains of $(\mathcal{P}(\omega_1) / I)^{M}$.
\end{itemize}
Then for each generic iteration $$\langle \bar{N}_i = (N_i; \in, I_i), \sigma_{ij} : i \leq j \leq \alpha \rangle$$ of $\bar{N}$, there is a unique generic iteration $$\langle \bar{M}_i = (M_i; \in, I_i), \pi_{ij} : i \leq j \leq \alpha \rangle$$ of $\bar{M}$ such that for all $i \leq j \leq \alpha$,
\begin{itemize}
    \item $\pi_{0i}(N) = N_i$,
    \item $\bar{N}_i \models ``\text{every } {\omega_1^{\bar{M}_i}} \text{-sequence is a set}"$,
    \item $N_i$ contains all maximal antichains of $(\mathcal{P}(\omega_1) / I_i)^{M_i}$, and
    \item $\pi_{ij} \restriction N_i = \sigma_{ij}$.
\end{itemize}
\end{lem}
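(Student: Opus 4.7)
The plan is to proceed by induction on $\alpha$, constructing the $\bar{M}$-iteration stage by stage so that the coherence condition $\pi_{ij} \restriction N_i = \sigma_{ij}$ and the other three bulleted clauses are maintained throughout. The base case $\alpha = 0$ is exactly the hypothesis. At limit stages, take direct limits in the standard way: the four clauses propagate to the limit from the stages below using commutativity of the embeddings involved, and $\pi_{0\alpha}(N)$ is naturally identified with the direct limit of the $N_i$ under the $\sigma_{ij}$'s precisely because $\pi_{ij} \restriction N_i = \sigma_{ij}$ at all stages $i \leq j < \alpha$.

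The heart of the argument is the successor step. Suppose $\pi_{ij}$ has been built for $i \leq j \leq \alpha$ satisfying all four clauses, and the given $\bar{N}$-iteration is extended to $\alpha + 1$ via some $g_\alpha$ that is $(\mathcal{P}(\omega_1)/I_\alpha)^{N_\alpha}$-generic over $N_\alpha$. The natural move is to let $h_\alpha$ be the upward closure of $g_\alpha$ inside $(\mathcal{P}(\omega_1)/I_\alpha)^{M_\alpha}$, set $\bar{M}_{\alpha+1} := Ult(\bar{M}_\alpha, h_\alpha)$, and take $\pi_{\alpha(\alpha+1)}$ to be the corresponding ultrapower embedding. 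To verify that $h_\alpha$ is $M_\alpha$-generic, one appeals to the inductive clause that $N_\alpha$ contains every $M_\alpha$-maximal antichain of $(\mathcal{P}(\omega_1)/I_\alpha)^{M_\alpha}$: any such antichain $A$ already lies in $N_\alpha$, so a density argument inside $N_\alpha$ (together with the genericity of $g_\alpha$) shows that some element of $A$ meets $g_\alpha$ and hence lies in $h_\alpha$.

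The coherence $\pi_{\alpha(\alpha+1)} \restriction N_\alpha = \sigma_{\alpha(\alpha+1)}$ then follows by identifying ultrapower classes: for any $f \in N_\alpha$ with domain $\omega_1^{\bar{N}_\alpha}$, the equivalence class $[f]_{h_\alpha}$ computed in $M_\alpha$ matches $[f]_{g_\alpha}$ computed in $N_\alpha$, because the $N_\alpha$-subsets of $\omega_1$ that decide equalities and memberships transfer between $g_\alpha$ and $h_\alpha$ by construction. Here the inductive clause ``$\bar{N}_\alpha \models$ every $\omega_1^{\bar{M}_\alpha}$-sequence is a set'' is used to guarantee that the functions and antichains one must reason about from inside $M_\alpha$ are already visible inside $\bar{N}_\alpha$. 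The remaining clauses for stage $\alpha + 1$ --- $\pi_{0(\alpha+1)}(N) = N_{\alpha+1}$, the antichain containment, and the $\omega_1$-sequence condition --- all follow by applying elementarity of $\pi_{\alpha(\alpha+1)}$ and $\sigma_{\alpha(\alpha+1)}$ to the corresponding statements true at stage $\alpha$.

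Uniqueness is forced step by step: at each successor stage the condition $\pi_{ij} \restriction N_i = \sigma_{ij}$ pins down $h_\alpha$ as the unique filter compatible with $g_\alpha$, and both ultrapowers and direct limits are unique up to canonical isomorphism. The principal technical obstacle, as flagged above, is the successor-stage analysis --- showing that the upward-closure recipe yields a genuinely $M_\alpha$-generic filter, and that the two ultrapower constructions agree on $N_\alpha$ in the strong sense asserted. Everything else amounts to diagram-chasing through the generic-iteration apparatus.
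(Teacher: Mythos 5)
Your proposal is correct and follows essentially the same route as the paper, which proves this lemma only by the words ``by induction'' together with the observation that it shares its proof with Larson's Lemma 1.5; your successor-step analysis (upward closure of $g_\alpha$, genericity via the antichain hypothesis, coherence of the two ultrapowers) is exactly that standard argument. The only step you state a bit too quickly is $\pi_{0(\alpha+1)}(N) = N_{\alpha+1}$: elementarity gives $\pi_{0(\alpha+1)}(N) = \pi_{\alpha(\alpha+1)}(N_\alpha)$, but identifying this with $Ult(N_\alpha, g_\alpha)$ additionally requires showing that every $f \in M_\alpha$ mapping $\omega_1^{\bar{M}_\alpha}$ into $N_\alpha$ agrees on an $h_\alpha$-large set with some function in $N_\alpha$, which is where the ``every $\omega_1^{\bar{M}_\alpha}$-sequence is a set'' hypothesis earns its keep.
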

\begin{proof}
By induction on $\gamma$.
\end{proof}

Lemma~\ref{itercopy} is a modified version of \cite[Lemma 1.5]{larson}: instead of requiring $N$ to contain $\mathcal{P}(\mathcal{P}(\omega_1) / I)^{M}$, we only require it to contain all the maximal antichains of $(\mathcal{P}(\omega_1) / I)^{M}$. These two lemmas share the same proof.

\begin{lem}\label{iterlength}
Suppose
\begin{itemize}
    \item $M$ is a transitive model of $\mathsf{ZFC}$,
    \item $\bar{M} := (M; \in, J) \models ``J$ is a precipitous ideal on $\omega_1"$, and
    \item for some $\alpha \in \omega_1^V \cap M$, $$\mathfrak{C} = \langle \bar{M}_i, \pi_{ij} : i \leq j \leq \alpha \rangle$$ is a generic iteration of $\bar{M}$.
\end{itemize}
Then $\mathfrak{C}$ is well-founded.
\end{lem}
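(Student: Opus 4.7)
I would proceed by transfinite induction on $i \leq \alpha$ to show that each $\bar M_i$ is well-founded; well-foundedness of the whole iteration $\mathfrak C$ is then immediate from the definition. The base case $i = 0$ holds trivially because $M$ is transitive.

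\textbf{Successor case.} Suppose $i = j+1$, and let the inductive hypothesis give that $\bar M_j$ is well-founded; identify it henceforth with its Mostowski collapse. The composite embedding $\pi_{0j}\colon \bar M \to \bar M_j$ is elementary, as it is built from the ultrapower embeddings at successor stages and commutes with the direct-limit embeddings at the limit stages below $i$. The assertion ``$J$ is a precipitous ideal on $\omega_1$'' is first-order in the signature of $\bar M$, so by elementarity $\bar M_j \models $ ``$I_j$ is a precipitous ideal on $\omega_1^{\bar M_j}$''. Since $g_j$ is $(\mathcal P(\omega_1)/I_j)^{\bar M_j}$-generic over $N_j$, internal precipitousness yields $\bar M_j[g_j] \models $ ``$\mathrm{Ult}(\bar M_j, g_j)$ is well-founded''. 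Well-foundedness is absolute for transitive models, so $V$ agrees and $\bar M_{j+1} = \mathrm{Ult}(\bar M_j, g_j)$ is well-founded.

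\textbf{Limit case.} This is where the main obstacle lies. Suppose $i \leq \alpha$ is a limit ordinal and each $\bar M_j$ ($j < i$) is already well-founded by the inductive hypothesis. Because $i \leq \alpha < \omega_1^V$, the cofinality of $i$ in $V$ is $\omega$, so I would fix in $V$ a strictly increasing cofinal $\omega$-sequence $\langle j_n : n < \omega\rangle \subset i$. Assume for contradiction that $\bar M_i$ is ill-founded. By the standard rank reduction the witness may be taken to lie in $\mathrm{Ord}^{M_i}$: there are $\tau_0 > \tau_1 > \cdots$ in $\mathrm{Ord}^{M_i}$. After refining, write $\tau_n = \pi_{j_n i}(\sigma_n)$ with $\sigma_n \in \mathrm{Ord}^{M_{j_n}}$; elementarity of $\pi_{j_{n+1} i}$ then gives $\sigma_{n+1} < \pi_{j_n j_{n+1}}(\sigma_n)$ in the well-founded $\bar M_{j_{n+1}}$.

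To extract a contradiction from this twisted descent I would exploit three ingredients: (i) each $\pi_{j_n j_{n+1}}$ has critical point $\omega_1^{M_{j_n}}$ (being the ultrapower embedding by a normal uniform ideal on $\omega_1$), so it fixes every smaller ordinal; (ii) the ordinals $\omega_1^{M_{j_n}}$ form a strictly increasing sequence of genuine countable $V$-ordinals whose supremum is itself a countable $V$-ordinal; and (iii) each $\bar M_{j_n}$ inherits internal precipitousness from $\bar M$ by elementarity. Using these, together with the copying machinery of Lemma~\ref{itercopy} to simulate the tail $\mathfrak C \restriction [j_0, i]$ inside $\bar M_{j_0}$'s Boolean-valued universe, the sequence $\langle \sigma_n\rangle$ gets mirrored by a descending chain whose well-foundedness is forced by internal precipitousness of $I_{j_0}$ in $\bar M_{j_0}$, yielding the desired contradiction inside the already well-founded $\bar M_{j_0}$. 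The hard part will be organising the bookkeeping so that the simulated tail iteration is exactly the one whose internal well-foundedness is granted by the induction hypothesis applied inside $\bar M_{j_0}$.
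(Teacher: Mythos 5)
Your successor step is correct and standard (precipitousness is first-order, hence preserved under $\pi_{0j}$ by elementarity, and well-foundedness of the one-step generic ultrapower then transfers to $V$ by absoluteness). Note, though, that the paper does not prove this lemma at all: it simply cites \cite[Lemma 1.6]{larson}, so you are attempting strictly more than the text does.

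The limit case, which is the entire content of the lemma, has a genuine gap. First, your argument never uses the hypothesis $\alpha \in \omega_1^V \cap M$ — that is, that the length of the iteration is an ordinal \emph{of} $M$. This hypothesis is essential: precipitousness of $J$ in $M$ does not by itself guarantee well-foundedness of generic iterations of arbitrary countable length (that stronger property is generic iterability, which is why Definition \ref{pmax} demands it separately and why Fact \ref{satipre} needs saturation rather than precipitousness). A limit-stage argument that would go through without $\alpha \in M$ would prove a false statement, so something must be wrong with it. Second, the mechanism you propose — simulating the tail $\mathfrak{C}\restriction[j_0,i]$ inside $\bar{M}_{j_0}$ and concluding that its well-foundedness ``is forced by internal precipitousness of $I_{j_0}$'' — conflates one-step precipitousness with iterability: internal precipitousness of $I_{j_0}$ only forces well-foundedness of a \emph{single} generic ultrapower of $\bar{M}_{j_0}$, whereas what you need is well-foundedness of an internally simulated iteration of length $i - j_0$, which is exactly the statement being proved relativised to $\bar{M}_{j_0}$; as sketched the argument is circular. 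Lemma \ref{itercopy} does not help here either — it copies an iteration of a structure $\bar{N} \in M$ upward to an iteration of $\bar{M}$ under antichain-capturing hypotheses that are not available in your configuration. The standard proof (Woodin's, as in \cite{larson}) instead pulls the ill-foundedness witnesses $\eta_n \in \mathrm{Ord}^{M_{j_n}}$ back to ordinals of $M$ itself, using that $\pi_{0 j_n}[\mathrm{Ord}^M]$ is cofinal in $\mathrm{Ord}^{M_{j_n}}$, and it is precisely there that $\alpha \in M$ enters. You should either reproduce that argument in full or, as the paper does, cite it.
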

\begin{proof}
This follows from \cite[Lemma 1.6]{larson}.
\end{proof}

\begin{lem}\label{lift}
Let $V \subset W$ be transitive models of $\mathsf{ZFC}$, such that 
\begin{itemize}
    \item $V$ is definable in $W$,
    \item $V \models ``\mathrm{NS}_{\omega_1}$ is saturated'', and
    \item $\omega_1^W \in V$.
\end{itemize}
If 
\begin{itemize}
    \item $\lambda \leq \kappa \leq \omega_1^W$, and
    \item in $W$, $$\langle \bar{M}_i = (M_i; \in, J_i), \pi_{ij} : \lambda \leq i \leq j \leq \kappa \rangle$$ is a generic iteration of $(H(\omega_2)^V; \in, \mathrm{NS}_{\omega_1}^V)$
\end{itemize}
then $\pi_{\lambda \kappa}$ lifts to a generic ultrapower map $\pi^{+}_{\lambda \kappa} : V \longrightarrow M^+$, for some inner model $M^+$ of $W$.
\end{lem}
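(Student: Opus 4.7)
The plan is to reduce the lemma to a direct application of Lemma~\ref{itercopy}, working in the ambient universe $W$, with $\bar M := (V; \in, \mathrm{NS}_{\omega_1}^V)$, $\bar N := (H(\omega_2)^V; \in, \mathrm{NS}_{\omega_1}^V)$, and $I := \mathrm{NS}_{\omega_1}^V$. After trivially re-indexing the given iteration so that its lowest index is $0$, it is a generic iteration of $\bar N$ inside $W$, and Lemma~\ref{itercopy} will hand us a simultaneously compatible generic iteration of $\bar M$ whose terminal map is the lift we are after.

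I would first verify the hypotheses of Lemma~\ref{itercopy}. Clearly $\bar N \in V$, and $H(\omega_2)^V$ is closed under $\omega_1^V$-sequences, yielding $\bar N \models$ ``every $\omega_1^{\bar M}$-sequence is a set''. The remaining hypothesis --- that $H(\omega_2)^V$ contains every maximal antichain of $(\mathcal{P}(\omega_1)/\mathrm{NS}_{\omega_1})^V$ --- is where saturation does the essential work: any maximal antichain of a saturated $\mathcal{P}(\omega_1)/\mathrm{NS}_{\omega_1}$ has size at most $\omega_1$ in $V$, hence hereditary cardinality strictly below $\omega_2$ in $V$, and therefore lies in $H(\omega_2)^V$. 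Lemma~\ref{itercopy} then supplies a unique generic iteration
\[
\langle \bar V_i, \pi^{+}_{ij} : \lambda \leq i \leq j \leq \kappa \rangle
\]
of $\bar M = V$ in $W$ satisfying $\pi^{+}_{ij} \restriction H(\omega_2)^{\bar V_i} = \pi_{ij}$ for all relevant $i, j$; in particular $\pi^{+}_{\lambda \kappa}$ extends $\pi_{\lambda \kappa}$, as required.

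For well-foundedness, I would invoke Fact~\ref{satipre} to upgrade saturation of $\mathrm{NS}_{\omega_1}^V$ in $V$ to precipitousness of $\mathrm{NS}_{\omega_1}^V$ in $V$. Because $\omega_1^W \in V$ and $\kappa \leq \omega_1^W$, Lemma~\ref{iterlength} --- applied in $W$ with its ``$M$'' taken to be $V$ and its ``$\bar M$'' taken to be $(V; \in, \mathrm{NS}_{\omega_1}^V)$ --- shows each $\bar V_i$ is well-founded, and I then identify each $\bar V_i$ with its transitive collapse. Setting $M^+ := \bar V_\kappa$ produces the target transitive class, which is an inner model of $W$ because $V$ is definable in $W$ and the whole construction is carried out in $W$ using only the generic filters extracted from the given iteration. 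The composition $\pi^{+}_{\lambda \kappa} : V \to M^+$ is then the desired lift.

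The main technical snag I expect is the boundary case $\kappa = \omega_1^W$, because Lemma~\ref{iterlength} is phrased for $\alpha \in \omega_1^V \cap M$ strictly below $\omega_1$. I would handle this by a direct-limit argument: well-foundedness of the length-$\omega_1^W$ iteration reduces to well-foundedness of its countable-length initial segments, each of which falls under the strict hypothesis of Lemma~\ref{iterlength}, and the direct limit of a coherent system of well-founded structures over a linearly ordered index set is itself well-founded.
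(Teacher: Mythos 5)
Your proposal follows exactly the route the paper takes: its proof of Lemma \ref{lift} is literally ``apply Lemmas \ref{itercopy} and \ref{iterlength} in $W$ with $(V;\in,\mathrm{NS}_{\omega_1}^V)$ in place of $\bar M$, with Fact \ref{satipre} in mind,'' and you have filled in precisely the right details. In particular, you correctly locate the role of saturation: it bounds maximal antichains of $(\mathcal{P}(\omega_1)/\mathrm{NS}_{\omega_1})^V$ by $\omega_1$ so that they land in $H(\omega_2)^V$, which is exactly why the paper weakened Larson's hypothesis from ``$N$ contains $\mathcal{P}(\mathcal{P}(\omega_1)/I)^M$'' to ``$N$ contains all maximal antichains.''

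One step needs repair. In handling the boundary case $\kappa=\omega_1^W$ you appeal to the principle that ``the direct limit of a coherent system of well-founded structures over a linearly ordered index set is itself well-founded.'' That principle is false in general --- ill-foundedness can first appear at a direct limit stage, and indeed this is exactly why generic iterability is a nontrivial strengthening of precipitousness (which only controls length-$2$ iterations). What saves you here is that $\omega_1^W$ has uncountable cofinality in $W$: any putative $\tilde\in$-descending $\omega$-sequence in the direct limit is the image under the limit embeddings of elements living at countably many earlier stages, so it pulls back to a descending sequence in $\bar V_\gamma$ for some $\gamma=\sup_n i_n<\omega_1^W$, contradicting the well-foundedness of the countable-length initial segments supplied by Lemma \ref{iterlength}. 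Replace the false general claim with this reflection argument and the proof is complete.
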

\begin{proof}
With Fact~\ref{satipre} in mind, the lemma follows immediately from applications of Lemmas \ref{itercopy} and~\ref{iterlength} in $W$, with $$(V; \in, \mathrm{NS}_{\omega_1}^V)$$ in place of $\bar{M}$.
\end{proof}

\subsection{\texorpdfstring{$\mathbb{P}_{max}$}{P-max} Forcing}\label{ss27}

We start by overloading what it means to be a generic iteration.

\begin{defi}\label{def269}
A class
\begin{equation*}
    \mathfrak{C} = \langle \bar{N}_i = (N_i; \tilde{\in}_i, I_i, a_i), \sigma_{ij} : i \leq j < \alpha \rangle
\end{equation*}
is a \emph{generic iteration} iff
\begin{itemize}
    \item $a_i \in N_i$ for all $i < \alpha$,
    \item $\langle (N_i; \tilde{\in}_i, I_i), \sigma_{ij} : i \leq j < \alpha \rangle$ is generic iteration in the sense of Definition \ref{def257}, and
    \item $\sigma_{ij}(a_i) = a_j$.
\end{itemize}
\end{defi}

Going forward, unless specified otherwise or under clear context, the term ``generic iteration'' will be used in the sense of Definition \ref{def269}.

\begin{defi}\label{pmax}
The conditions of $\mathbb{P}_{max}$ are exactly the structures $\bar{N} = (N; \in, I, a)$ such that
\begin{itemize}
    \item $N$ is countable and transitive,
    \item $\bar{N} \models \mathsf{ZFC}^* + \mathsf{MA}(\omega_1)$,
    \item $I \subset N$,
    \item $x \cap I \in N$ for all $x \in N$,
    \item $\bar{N} \models ``I$ is a normal uniform ideal on $\omega_1"$, 
    \item $a \in N$
    \item $\bar{N} \models ``a \subset \omega_1$ and $\omega_1 = \omega_1^{L[a, x]}$ for some real $x"$, and
    \item $(N; \in, I)$ is generically iterable.
\end{itemize}
Let $\leq_{\mathbb{P}_{max}}$ be a binary relation on the conditions of $\mathbb{P}_{max}$, such that $$\bar{M} = (M; \in, J, b) \leq_{\mathbb{P}_{max}} \bar{N} = (N; \in, I, a)$$ iff one of the following conditions hold:
\begin{itemize}
    \item $\bar{M} = \bar{N}$, or 
    \item $\bar{N} \in M$ and
    \begin{align*}
        \bar{M} \models `` & \text{there is a generic iteration } \\ 
        & \langle \bar{N}_i = (N_i; \in, I_i, a_i), \sigma_{ij} : i \leq j \leq \omega_1 \rangle \\
        & \text{of } \bar{N} \text{ such that} \\
        & a_{\omega_1} = b \text{ and } J \cap N_{\omega_1} = I_{\omega_1}".
    \end{align*}
\end{itemize}
We can easily check that $\leq_{\mathbb{P}_{max}}$ is a partial ordering.
\end{defi}

Although the theory of $\mathbb{P}_{max}$ forcing is replete with remarkable combinatorial arguments, one need not understand these arguments to appreciate our $\mathbb{P}_{max}$-related work in Subsection \ref{ss43}. As such, we present only the following curious lemma.

\begin{lem}\label{ho2}
Assume $\mathrm{NS}_{\omega_1}$ is saturated, $MA(\omega_1)$ holds, $2^{\omega_1} = \omega_2$, and $A \subset \omega_1$ is such that $\omega_1^{L[A]} = \omega_1$. Then $$\Vdash_{Col(\omega, \omega_2)} (H(\omega_2)^{\dot{V}}; \in, \mathrm{NS}_{\omega_1}^{\dot{V}}, A) \in \mathbb{P}_{max}.$$
\end{lem}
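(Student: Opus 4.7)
Set $N := H(\omega_2)^V$, $I := \mathrm{NS}_{\omega_1}^V$, and $\bar{N} := (N; \in, I, A)$. Since the forcing statement is forced by the trivial condition, it suffices to fix an arbitrary $V$-generic $G \subseteq Col(\omega, \omega_2)$ and verify every clause of Definition~\ref{pmax} for $\bar{N}$ in $V[G]$. The routine clauses dispatch quickly. $|N|^V = \omega_2^V$ is collapsed to $\omega$ by the canonical surjection added by $Col(\omega, \omega_2)$, and transitivity of $N$ is absolute. The assertions that $\bar{N} \models \mathsf{ZFC}^* + \mathrm{MA}(\omega_1)$, that $I \subseteq N$, that $x \cap I \in N$ for all $x \in N$, and that $\bar{N}$ sees $I$ as a normal uniform ideal on $\omega_1$ are all first-order truths about the sets $N$ and $I$ that already hold in $V$ --- using the hypotheses $V \models \mathrm{MA}(\omega_1)$ and $V \models 2^{\omega_1} = \omega_2$ (the latter bounding the complexity of $I$), together with standard properties of $H(\omega_2)$ --- and therefore transfer unchanged to $V[G]$. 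The parameter $A$ lies in $N$, and the real-code clause is witnessed inside $\bar{N}$ by the real $\emptyset \in N$: the hypothesis $\omega_1^{L[A]} = \omega_1^V$ gives $\omega_1^{L[A, \emptyset]} = \omega_1^{L[A]} = \omega_1^V = \omega_1^N$.

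The only substantive clause is generic iterability of $(N; \in, I)$ in $V[G]$, and for this I would invoke Lemma~\ref{lift} with $W := V[G]$. Its hypotheses all hold: $V \subseteq V[G]$ are transitive models of $\mathsf{ZFC}$; $V$ is definable in $V[G]$ as the ground model of a set-forcing extension; $V \models \text{``}\mathrm{NS}_{\omega_1}\text{ is saturated''}$ is the standing assumption; and $\omega_1^{V[G]} \in V$ because $Col(\omega, \omega_2)$ has size $\omega_2^V$ and therefore the $\omega_3$-c.c., so $\omega_3^V$ is preserved as a cardinal while every smaller uncountable cardinal is collapsed, giving $\omega_1^{V[G]} = \omega_3^V$. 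Given any generic iteration $\langle \bar{N}_i, \sigma_{ij} : i \leq j \leq \kappa \rangle$ of $(N; \in, I)$ in $V[G]$ with $\kappa \leq \omega_1^{V[G]}$, Lemma~\ref{lift} (with $\lambda = 0$) then produces an elementary lift $\pi^+ \colon V \to M^+$ of $\sigma_{0\kappa}$ into some inner model $M^+$ of $V[G]$. Since $M^+$ is transitive and hence well-founded, $\bar{N}_\kappa$ --- which embeds into $M^+$ via the lift --- is well-founded too, as is every earlier $\bar{N}_i$ (which elementarily embeds into $\bar{N}_\kappa$). Letting $\kappa$ range over $[0, \omega_1^{V[G]}]$ delivers generic iterability.

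The main obstacle is less conceptual than clerical: Lemma~\ref{lift} packages exactly the saturation-to-iterability transfer we need. My remaining task is to confirm that the length bound $\kappa \leq \omega_1^W$ of Lemma~\ref{lift} suffices for the $\alpha \leq \omega_1$ bound built into Definition~\ref{def257}, and that the cardinal arithmetic of $Col(\omega, \omega_2)$ genuinely places $\omega_1^{V[G]}$ inside $V$ --- without which Lemma~\ref{lift} would not even apply.
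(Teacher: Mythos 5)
Your proposal is correct and follows essentially the same route as the paper: verify that the routine clauses of Definition \ref{pmax} transfer to $V[G]$ (countability via the collapse, the real-code clause via the hypothesis $\omega_1^{L[A]}=\omega_1$), and obtain generic iterability of $(H(\omega_2)^V;\in,\mathrm{NS}_{\omega_1}^V)$ by invoking Lemma \ref{lift} with $W=V[G]$. Your two closing worries are both fine: Definition \ref{def257} caps iteration lengths at $\omega_1^{V[G]}$, which Lemma \ref{lift} accommodates, and $\omega_1^{V[G]}=\omega_3^V\in V$ by the $\omega_3$-c.c. of $Col(\omega,\omega_2)$.
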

\begin{proof}
First, note that $|H(\omega_2)| = \aleph_2$, so $$\Vdash_{Col(\omega, \omega_2)} ``H(\omega_2)^{\dot{V}} \text{ is countable}".$$ Next, we can invoke Lemma~\ref{lift} with $V^{Col(\omega, \omega_2)}$ in place of $W$, to give us the generic iterability of $$(H(\omega_2)^{V}; \in, \mathrm{NS}_{\omega_1}^V)$$ in $V^{Col(\omega, \omega_2)}$. It is easy to check that $$(H(\omega_2)^{V}; \in, \mathrm{NS}_{\omega_1}^V, A)$$ satisfies the other prerequisites (as per Definition \ref{pmax}) of being a $\mathbb{P}_{max}$ condition in $V^{Col(\omega, \omega_2)}$. 
\end{proof}

Lemma \ref{ho2} is a first step towards deriving the $Col(\omega, \omega_2)$-name $\dot{p}$ in Fact \ref{fact434}. It is also why the structure 
\begin{equation*}
    (H(\omega_2); \in, \mathrm{NS}_{\omega_1}, A)
\end{equation*}
is instrumental in the proof of Theorem \ref{notion2}.

\section{Forcing with Language Fragments}\label{setupsec}

In model theory, a Henkin construction involves building a model of a theory over a language, from terms of that language. When such a construction is unequivocally guided by a given theory, we can safely identify the resulting term model with said theory. As such, we have the following viable means of proving the existence of a object with property $P$: 
\begin{enumerate}
    \item Translate $P$ into a specification $S$ for a theory, such that the unique term model of any theory satisfying $S$ has property $P$.
    \item Prove that a theory satisfying $S$ exists.
\end{enumerate}
In a similar fashion, we can force the existence of an object with property $P$ by forcing the existence of a theory satisfying $S$. Naturally, this leads to forcing notions with conditions being fragments of the language over which a theory satisfying $S$ is defined.

The idea of forcing models of a theory has been studied by model theorists --- notably, Robinson and Barwise --- since the 1970s (see e.g. \cite{keisler}), under the label \emph{model-theoretic forcing}. More recently, set theorists have leveraged on model-theoretic forcing to generate conditions of the forcing notions used in various relative consistency proofs. Some examples include \cite{doebler} and \cite{lforcing}. However, these forcing conditions involve are highly complicated and specialised structures, and it is not immediately clear how much of the analysis of one forcing notion can be recycled in the analysis of another. 

Asper\'{o} and Schindler are perhaps the first to present a construction with language fragments as forcing conditions, in their seminal work \cite{schindler}. It quickly became clear that this construction generalises well to extend model-theoretic forcing, allowing us to obtain models outside $V$ which are generic over $V$. Streamlining and modularising the analysis and construction of forcing notions similar to the Asper\'{o}-Schindler ones thus seems like a useful proposition.

This section details a framework for constructing forcing notions with fragments of a language $\mathcal{L}$ as conditions, based on specifications of a theory over $\mathcal{L}$. To state these specifications, a ``meta-language'' dependent on $\mathcal{L}$ is required. Our goal is to ensure that the generic filters of each forcing notion produced indeed give rise to theories satisfying the given specifications. We will make precise the relevant technical terms and concepts as we build our framework over the subsequent three subsections. 

The main result in this section is Lemma \ref{uni}, which is stated and proven in greater generality than is needed for our framework. From Lemma \ref{uni} we derive Lemma \ref{main2}, the primary workhorse of the entire paper.

\subsection{General Languages and Meta-languages}

The initial step in the development of our framework involves the ability to potentially interpret any set as a language. 

\begin{defi}\label{neg}
The canonical negation function $\neg$ on $V$ is defined as follows. 
\begin{align*}
    \neg x := \neg(x) = 
    \begin{cases}
        y & \text{if } x = \ulcorner \neg y \urcorner \text{ for some } y \\
        \ulcorner \neg x \urcorner & \text{otherwise}.
    \end{cases}
\end{align*}
\end{defi}

Basically, $\neg$ takes a member of $\mathcal{L}$ as input, and check whether it is a string with first (leftmost) character $\ulcorner \neg \urcorner$. If so, it removes the leading $\ulcorner \neg \urcorner$; otherwise, it ``casts'' the input as a string (mapping the input to a string of length 1 containing the input as the only character, if the input is not already a string) and prepend $\ulcorner \neg \urcorner$ to the result. For ease of argument, we identify the string containing a single character $x$ with $x$ itself. 

\begin{rem}\label{rem32}
Note that $\neg$ is $\Delta_0$-definable with a single parameter $\ulcorner \neg \urcorner$, which we assume is in every $\mathfrak{A}$ satisfying
\begin{itemize}
    \item $\mathfrak{A}$ is a structure in a possibly expanded language of set theory, and
    \item $\mathfrak{A}$ is a ``sufficiently transitive'' (see \ref{343} of Definition \ref{lsuitable} for a formal treatment of ``sufficiently transitive'') model of a sufficiently strong set theory.
\end{itemize} 
So the definition of $\neg$ is absolute for all such structures $\mathfrak{A}$.
\end{rem}

We sometimes abuse notation and use $\neg$ the function and $\ulcorner \neg \urcorner$ the first-order logical symbol interchangeably. However, we take special care to distinguish them wherever is crucial in our definitions and arguments.

\begin{defi}
A set $\mathcal{L}$ is \emph{closed under negation} iff for each $\phi \in \mathcal{L}$, $\neg \phi \in \mathcal{L}$.
\end{defi}

Before we proceed, fix a set $\mathcal{L}$ that is closed under negation and does not contain any variable symbol. We will stick to this $\mathcal{L}$ for the rest of this section.

\begin{defi}\label{lsuitable}
A structure $\mathfrak{A} = (A; \in, \Vec{R})$ in a possibly expanded language of set theory is $\mathcal{L}$\emph{-suitable} iff
\begin{enumerate}[label=(\alph*)]
    \item $\Vec{R}$ is a set of relations on $A$,
    \item\label{342} $\mathfrak{A}$ is a model of a sufficiently strong set theory,
    \item\label{343} $A$ is $\mathfrak{A}$\emph{-finitely transitive}: that is, 
    \begin{equation*}
        \mathfrak{A} \models \text{``} x \text{ is finite''} \implies x \subset A 
    \end{equation*}
    whenever $x \in A$, 
    \item $\mathcal{L} \subset A$, and
    \item $\mathcal{L}$ is $\Pi_1$-definable in the language associated with $\mathfrak{A}$.
\end{enumerate}
\end{defi}

We can think of $\mathfrak{A}$ as a first-order structure expanding on $(A; \in)$, for constants and functions interpreted over the base set $A$ can be represented by relations on $A$. In typical scenarios, $\mathfrak{A}$ is
\begin{itemize}
    \item \emph{either} an expansion of a transitive model of $\mathsf{ZFC - Powerset - Infinity}$,
    \item \emph{or} an elementary substructure of some expansion of a transitive model of \\ $\mathsf{ZFC - Powerset - Infinity}$.
\end{itemize} 
Any such $\mathfrak{A}$ immediately satisfies \ref{342} and \ref{343} of Definition \ref{lsuitable}. 

The requirement for $\mathcal{L}$ to be $\Pi_1$-definable in the language associated with $\mathfrak{A}$ is only there so that the proof of Lemma \ref{uni} can go through given its hypothesis.

\begin{rem}\label{rem35}
Let $\mathfrak{A} = (A; \in, \Vec{R})$ be a $\mathcal{L}$-suitable structure in a possibly expanded language of set theory. Then the following can be deduced from \ref{342} and \ref{343} of Definition \ref{lsuitable}.
\begin{enumerate}[label=(\arabic*)]
    \item\label{351} $H(\omega) \subset A$.
    \item\label{352} For each $x \in A$, it must be the case that
    \begin{equation*}
        \mathfrak{A} \models \text{``} x \text{ is non-empty''} \iff x \text{ is non-empty.}
    \end{equation*}
    \item\label{353} For each $x \in A$, it must be the case that
    \begin{equation*}
        \mathfrak{A} \models \text{``} x \text{ is Dedekind-finite''} \iff x \text{ is finite.}
    \end{equation*}
    \item\label{354} $A$ is \emph{finitely transitive}. That is, for each $x \in A$, it must be the case that
    \begin{equation*}
        x \text{ is finite} \implies x \subset A \text{.}
    \end{equation*}
\end{enumerate}
\end{rem}

Fix a $\mathcal{L}$-suitable $\mathfrak{A} = (A; \in, \Vec{R})$ for the rest of this section.

\begin{defi}
Define $\mathcal{L}^{*}_{\mathfrak{A}}$ to be the language associated with $(A; \{\in, \Vec{R}, E\})$, where $E$ is a unary predicate symbol not occurring in $\Vec{R}$. 
\end{defi}

We want to use $\mathcal{L}^{*}_{\mathfrak{A}}$ to reason about subsets of $\mathcal{L}$. Intuitively, a richer $\mathfrak{A}$ should allow us to formulate more statements about these subsets. Certain subsets of $\mathcal{L}$ are particularly interesting.

\begin{defi}\label{lnice}
A set $\Sigma$ is $\mathcal{L}$-\emph{nice} iff 
\begin{itemize}
    \item $\Sigma \subset \mathcal{L}$, 
    \item for all $\phi \in \mathcal{L}$, 
    \begin{itemize}[label=$\circ$]
        \item $\{\phi, \neg \phi\} \not\subset \Sigma$, and
        \item either $\phi \in \Sigma$ or $\neg \phi \in \Sigma$.
    \end{itemize} 
\end{itemize}
\end{defi}

\begin{defi}
Let $\phi\in \mathcal{L}^{*}_{\mathfrak{A}}$. Define $\mathsf{pos}(\phi)$ to be the formula in $\mathcal{L}^{*}_{\mathfrak{A}}$ resulting from the following operation:
\begin{itemize}
    \item for each subformula $\varphi$ of $\phi$, if $\varphi = \ulcorner \neg E(x) \urcorner$ for some $x \in \mathcal{L}$, then replace $\varphi$ with $\ulcorner E(\neg x) \urcorner$.
\end{itemize}
\end{defi}

\begin{defi}
Let 
\begin{itemize}
    \item $\phi\in \mathcal{L}^{*}_{\mathfrak{A}}$, and
    \item $\nu$ be any subset of a $\mathfrak{A}$-valuation.
\end{itemize}
Then $\nu^*(\phi)$ is defined to be the sentence in $\mathcal{L}^{*}_{\mathfrak{A}}$ resulting from the following operation:
\begin{itemize}
    \item for each $c \in dom(\nu)$, replace every free occurrence of $c$ in $\phi$ with $\nu(c)$.
\end{itemize}
\end{defi}

\begin{defi}\label{models}
Let 
\begin{itemize}
    \item $\phi \in \mathcal{L}^{*}_{\mathfrak{A}}$, and
    \item $\nu$ be a $\mathfrak{A}$-valuation.
\end{itemize} 
We say $X \models^*_{\mathfrak{A}, \nu} \phi$ iff $$(A; \in, \Vec{R}, X \cap A) \models \nu^*(\phi)(\Vec{x}; \in, \Vec{R}, E).$$ 
We say $X \models^*_{\mathfrak{A}} \phi$ iff for every $\mathfrak{A}$-valuation $\nu$, $X \models^*_{\mathfrak{A}, \nu} \phi$.

If $\Gamma \subset \mathcal{L}^*_{\mathfrak{A}}$ then we say
\begin{align*}
    X \models^*_{\mathfrak{A}, \nu} \Gamma \text{ iff } & X \models^*_{\mathfrak{A}, \nu} \phi \text{ for all } \phi \in \Gamma, \text{ and} \\ 
    X \models^*_{\mathfrak{A}} \Gamma \text{ iff } & X \models^*_{\mathfrak{A}} \phi \text{ for all } \phi \in \Gamma.
\end{align*}
\end{defi}

\begin{rem}\label{def2}
Let
\begin{itemize}
    \item $\phi \in \mathcal{L}^{*}_{\mathfrak{A}}$ be a sentence, and
    \item $p \in A \cap \mathcal{P}(A)$.
\end{itemize}
Then by Definition \ref{models}, $p \models^*_{\mathfrak{A}} \phi$ iff 
\begin{equation}\label{eq31}
    (A; \in, \Vec{R}, p) \models \phi(\Vec{x}; \in, \Vec{R}, E).
\end{equation}
Derive $\phi'$ from $\phi$ by replacing every subformula of $\phi$ of the form $\ulcorner E(x) \urcorner$ with $\ulcorner x \in p \urcorner$, so that the symbol $\ulcorner E \urcorner$ does not occur in $\phi'$. It is easy to see that 
\begin{itemize}
    \item the quantification structure of $\phi'$ is identical to that of $\phi$, and
    \item (\ref{eq31}) is semantically equivalent to $$\mathfrak{A} = (A; \in, \Vec{R}) \models \phi'(\Vec{x}; \in, \Vec{R}).$$ 
\end{itemize}
In particular, if $C$ is such that $C = A$ or $C$ occurs in $\Vec{R}$, then $$\{p \in C : p \models^*_{\mathfrak{A}} \phi\}$$ is a subset of $A$ that is definable in the language associated with $\mathfrak{A}$. This definition is absolute for transitive models of $\mathsf{ZFC - Powerset}$.
\end{rem}

Note that for any $\mathcal{L}$-nice $\Sigma$ and any $x \in \mathcal{L}$, $$x \not\in \Sigma \iff \neg x \in \Sigma,$$ so applying $\mathsf{pos}$ to a $\mathcal{L}^{*}_{\mathfrak{A}}$ formula does not alter its meaning with respect to $\mathcal{L}$-nice sets. More formally, we have the next lemma.

\begin{lem}\label{corr}
Let 
\begin{itemize}
    \item $\phi \in \mathcal{L}^{*}_{\mathfrak{A}}$, and
    \item $\nu$ be a $\mathfrak{A}$-valuation.
\end{itemize}
Then for every $\mathcal{L}$-nice $\Sigma$ in every weak outer model of $V$, $$\Sigma \models^*_{\mathfrak{A}, \nu} \phi \iff \Sigma \models^*_{\mathfrak{A}, \nu} \mathsf{pos}(\phi).$$
\end{lem}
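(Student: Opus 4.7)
The plan is to induct on the structure of $\phi$. The sole nontrivial case is a formula of the form $\ulcorner \neg E(x) \urcorner$ with $x \in \mathcal{L}$, where the substitution defining $\mathsf{pos}$ actually fires; every other case is handled transparently by applying the inductive hypothesis to the immediate subformulas.

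Concretely, the base case of atomic $\phi$ is immediate since $\mathsf{pos}(\phi) = \phi$. For the inductive step on $\ulcorner \neg \varphi \urcorner$, I would split on whether $\varphi = \ulcorner E(x) \urcorner$ for some $x \in \mathcal{L}$. If not, then $\mathsf{pos}(\ulcorner \neg \varphi \urcorner) = \ulcorner \neg \mathsf{pos}(\varphi) \urcorner$ and the step reduces to the inductive hypothesis applied to $\varphi$. If so, then $\mathsf{pos}(\ulcorner \neg E(x) \urcorner) = \ulcorner E(\neg x) \urcorner$; unwinding Definition~\ref{models} and using $\mathcal{L} \subset A$ from the $\mathcal{L}$-suitability of $\mathfrak{A}$, we get
\begin{equation*}
    \Sigma \models^*_{\mathfrak{A}, \nu} \ulcorner \neg E(x) \urcorner \iff x \notin \Sigma \quad \text{and} \quad \Sigma \models^*_{\mathfrak{A}, \nu} \ulcorner E(\neg x) \urcorner \iff \neg x \in \Sigma.
\end{equation*}
The $\mathcal{L}$-niceness of $\Sigma$ (Definition~\ref{lnice}), applied to $x \in \mathcal{L}$ and using that $\neg x \in \mathcal{L}$ by closure of $\mathcal{L}$ under negation, supplies exactly $x \notin \Sigma \iff \neg x \in \Sigma$, closing this case. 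The remaining cases --- the binary connectives and both quantifiers --- are routine: $\mathsf{pos}$ distributes across each, so the inductive hypothesis on each immediate subformula (uniformly over all extensions of $\nu$ for the quantifier cases) closes the step.

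The main obstacle, if there is one, is purely bookkeeping. One must recognise that $\mathsf{pos}$ treats $\ulcorner \neg E(x) \urcorner$ as an indivisible unit whenever $x \in \mathcal{L}$, rather than descending through the outer $\ulcorner \neg \urcorner$. Because $\mathcal{L}$ was fixed at the outset to contain no variable symbols, a subformula $\ulcorner \neg E(y) \urcorner$ with $y$ a bound variable never triggers the substitution, so valuation and $\mathsf{pos}$ commute in the expected way. Passing to an arbitrary weak outer model of $V$ poses no difficulty either: the induction is carried out over $\phi \in V$, and the satisfaction clauses of Definition~\ref{models} are absolute for transitive outer models of $\mathsf{ZFC} - \mathsf{Powerset}$, as noted in Remark~\ref{def2}.
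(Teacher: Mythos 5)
Your proposal is correct and matches the paper's proof, which is likewise an induction on the length of $\phi$ whose only substantive case is $\ulcorner \neg E(x) \urcorner$ for $x \in \mathcal{L}$, handled exactly by the $\mathcal{L}$-niceness equivalence $x \notin \Sigma \iff \neg x \in \Sigma$, with $\mathsf{pos}$ distributing over all other connectives and quantifiers. Your additional remarks on why the substitution never fires on bound variables and on absoluteness are sound and only make explicit what the paper leaves as ``standard details.''
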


\begin{proof}
By induction on the length of $\phi$, while taking note of the following.
\begin{itemize}
    \item $\Sigma$ being $\mathcal{L}$-nice means that for all $x \in \mathcal{L}$, $$\Sigma \models^*_{\mathfrak{A}} \ulcorner \neg E(x) \urcorner \iff \Sigma \models^*_{\mathfrak{A}} \ulcorner E(\neg x) \urcorner.$$ 
    \item If $\phi = \ulcorner \neg \varphi \urcorner$ and $\phi \neq \ulcorner \neg E(x) \urcorner$ for any $x$, then $\mathsf{pos}(\phi) = \ulcorner \neg \mathsf{pos}(\varphi) \urcorner$.
    \item If $\phi = \ulcorner \varphi_1 \wedge \varphi_2 \urcorner$, then $\mathsf{pos}(\phi) = \ulcorner \mathsf{pos}(\varphi_1) \wedge \mathsf{pos}(\varphi_2) \urcorner$.
    \item If $\phi = \ulcorner \exists y \ \varphi \urcorner$, then $\mathsf{pos}(\phi) = \ulcorner \exists y \ \mathsf{pos}(\varphi) \urcorner$.
\end{itemize}
The rest of the details are standard.
\end{proof}

\begin{defi}
If $\phi \in \mathcal{L}^{*}_{\mathfrak{A}}$, we say $\phi$ is $(\mathfrak{A}, \mathcal{L})$\emph{-satisfiable} iff there are $\nu$, $W$ and $X$ such that
\begin{itemize}
    \item $\nu$ is a $\mathfrak{A}$-valuation,
    \item $W$ is a weak outer model of $V$,
    \item $X \in W \cap \mathcal{P}(\mathcal{L)}$, and
    \item $X \models^*_{\mathfrak{A}, \nu} \phi$,
\end{itemize}
in which case the triple $(\nu, W, X)$ is said to \emph{witness the} $(\mathfrak{A}, \mathcal{L})$\emph{-satisfiability of} $\phi$.
\end{defi}

\begin{defi}
For any $\phi \in \mathcal{L}^{*}_{\mathfrak{A}}$, define $\mathsf{set}(\phi)$ to be the pair $(p, q)$ such that
\begin{itemize}
    \item $q = \{x \in \mathcal{L} : \ulcorner E(x) \urcorner \text{ is a subformula of } \phi\}$, and
    \item $p = \{x \in q : \ulcorner (\neg E(x)) \urcorner \text{ is not a subformula of } \phi\}$.
\end{itemize}
\end{defi}

\begin{defi}
Let 
\begin{itemize}
    \item $\phi \in \mathcal{L}^{*}_{\mathfrak{A}}$, and
    \item $\nu$ be a $\mathfrak{A}$-valuation.
\end{itemize}
For any sets $p$ and $q$, we say $\phi$ \emph{is} $\models^*_{\mathfrak{A}, \nu}$\emph{-true for} $(p, q)$ iff 
\begin{itemize}
    \item $p \subset q \subset \mathcal{L}$, and
    \item for all 
    \begin{itemize}[label=$\circ$]
        \item weak outer models $W$ of $V$, and
        \item $X \in W \cap \mathcal{P}(\mathcal{L)}$, 
    \end{itemize}
    $$X \cap q = p \implies X \models^*_{\mathfrak{A}, \nu} \phi.$$
\end{itemize}

We say $\phi$ \emph{is} $\models^*_{\mathfrak{A}}$ \emph{-true for} $(p, q)$ iff for every $\mathfrak{A}$-valuation $\nu$, $\phi$ is $\models^*_{\mathfrak{A}, \nu}$-true for $(p, q)$.

We say $\phi$ \emph{is} $\models^*_{\mathfrak{A}, \nu}$\emph{-true for} $p$ iff $\phi$ is $\models^*_{\mathfrak{A}, \nu}$-true for $(p, p)$.
\end{defi}

For our purposes, being $\models^*_{\mathfrak{A}, \nu}$-true can be too strong a requirement; it is often enough to narrow the scope of our ``test models'' to just $\mathcal{L}$-nice sets. This motivates the following definition.

\begin{defi}
Let 
\begin{itemize}
    \item $\phi \in \mathcal{L}^{*}_{\mathfrak{A}}$, and
    \item $\nu$ be a $\mathfrak{A}$-valuation.
\end{itemize}
For any sets $p$ and $q$, we say $\phi$ \emph{is} $\models^*_{\mathfrak{A}, \nu}$\emph{-nice for} $(p, q)$ iff 
\begin{itemize}
    \item $p \subset q \subset \mathcal{L}$, and
    \item for all 
    \begin{itemize}[label=$\circ$]
        \item weak outer models $W$ of $V$, and
        \item $X \in W \cap \mathcal{P}(\mathcal{L)}$, 
    \end{itemize}
    $$X \text{ is } \mathcal{L} \text{-nice and } X \cap q = p \implies X \models^*_{\mathfrak{A}, \nu} \phi.$$
\end{itemize}

We say $\phi$ \emph{is} $\models^*_{\mathfrak{A}}$ \emph{-nice for} $(p, q)$ iff for every $\mathfrak{A}$-valuation $\nu$, $\phi$ is $\models^*_{\mathfrak{A}, \nu}$-nice for $(p, q)$.

We say $\phi$ \emph{is} $\models^*_{\mathfrak{A}, \nu}$\emph{-nice for} $p$ iff $\phi$ is $\models^*_{\mathfrak{A}, \nu}$-nice for $(p, p)$.
\end{defi}

\begin{defi}\label{lsub}
Let $\mathcal{L}^{*}_{0, \mathfrak{A}}$ consist of all $\Delta_0$ formulas (in a accordance with Definition \ref{def27}) in the language associated with $\mathfrak{A}$. Obviously, $\mathcal{L}^{*}_{0, \mathfrak{A}} \subset \mathcal{L}^{*}_{\mathfrak{A}}$.

Let $\mathcal{L}^{*}_{1, \mathfrak{A}}$ be the smallest $\mathcal{L}'$ satisfying the following conditions:
\begin{itemize}
    \item $\mathcal{L}^{*}_{0, \mathfrak{A}} \subset \mathcal{L}'$,
    \item $\{\ulcorner E(x) \urcorner : x \in \mathrm{Ter}(\mathcal{L}^{*}_{0, \mathfrak{A}})\} \subset \mathcal{L}'$,
    \item if 
    \begin{itemize}[label=$\circ$]
        \item $\phi \in \mathcal{L}'$,
        \item $\ulcorner z \urcorner$ is a variable not bound in $\phi$, and
        \item $\ulcorner p \urcorner$ is either a constant or a variable not bound in $\phi$,
    \end{itemize} 
    then $$\ulcorner \forall z \ ((z \in p \wedge \text{``}\emptyset \neq p \text{ and } p \text{ is Dedekind-finite''}) \implies \phi) \urcorner \in \mathcal{L}',$$
    \item $\mathcal{L}'$ is closed under all zeroth-order logical operations.
\end{itemize}
A first-order formula $\phi$ is $\mathcal{L}^{*}_{\mathfrak{A}}$-$\Delta_0$ iff $\phi \in \mathcal{L}^{*}_{1, \mathfrak{A}}$. 
\end{defi}

We should perhaps highlight the following trivial observations. 
\begin{enumerate}[label=(\Roman*)]
    \item\label{s31I} The statement $$\text{``}\emptyset \neq p \text{ and } p \text{ is Dedekind-finite''}$$ can be expressed as a $\Pi_1$ formula (\emph{a la} Definition \ref{def27}) in the language associated with $\mathfrak{A}$. In fact, the clause $$\text{``} p \text{ is Dedekind-finite''}$$ can be omitted from said statement without loss of generality if 
    \begin{equation*}
        \mathfrak{A} \models \text{``every set is Dedekind-finite'',}
    \end{equation*}
    leaving us with a $\Delta_0$ formula.
    \item $\mathcal{L}^{*}_{1, \mathfrak{A}} \subset \mathcal{L}^{*}_{\mathfrak{A}}$.
\end{enumerate}

\begin{rem}
As $\mathfrak{A}$ is a first-order structure interpreting only relation symbols, the terms occurring in $\mathcal{L}^{*}_{0, \mathfrak{A}}$ (i.e. $\mathrm{Ter}(\mathcal{L}^{*}_{0, \mathfrak{A}})$) are either variables or constant symbols representing members of $A$. In the usual fashion, we 
\begin{itemize}
    \item identify each member of $A$ with its corresponding constant symbol, and
    \item have $\mathfrak{A}$ interpret each constant symbol as its corresponding member of $A$.
\end{itemize}
\end{rem}

\begin{defi}
We define the subset $\mathcal{D}$ of $\mathcal{L}^{*}_{1, \mathfrak{A}}$ to contain formulas of the form $$\ulcorner \bigvee_{i < m} (\bigwedge_{j < n_i} L_{ij}) \urcorner,$$ wherein for every $i < m$ and every $j < n_i$, there is $P_{ij}$ such that
\begin{itemize}
    \item $P_{ij} \in \mathcal{L}^{*}_{0, \mathfrak{A}}$ or $P_{ij} \in \mathcal{L}^{*}_{\mathfrak{A}}$ is of the form $\ulcorner E(x) \urcorner$, and 
    \item $L_{ij} = P_{ij}$ or $L_{ij} = \ulcorner \neg P_{ij} \urcorner$.
\end{itemize}
\end{defi}

\begin{defi}
Let $\phi$ be a $\mathcal{L}^{*}_{\mathfrak{A}}$ formula. Define $\mathrm{QA}(\phi)$ to be the set of all subformulas $\varphi$ of $\phi$ such that 
\begin{itemize}
    \item $\varphi$ starts with a quantifier, and
    \item no prefix of $\varphi$ is a subformula of $\phi$.
\end{itemize}
\end{defi}

\begin{defi}
Given any $\mathcal{L}^{*}_{\mathfrak{A}}$ formula $\phi$, a $\phi$\emph{-max peeling} is a maximal member of $\mathrm{QA}(\phi)$ with respect to set inclusion.
\end{defi}

\begin{rem}\label{rem320}
Notice that if $\phi$ is a $\mathcal{L}^{*}_{\mathfrak{A}}$ formula, then any two distinct $\phi$-max peelings must not overlap in $\phi$.
\end{rem} 

\begin{defi}
Given a $\mathcal{L}^{*}_{\mathfrak{A}}$-$\Delta_0$ formula $\phi$, we say $\phi$ is \emph{safe} iff every occurrence of $\ulcorner E \urcorner$ in $\phi$ lies outside the scope of any quantifier. In particular, every member of $\mathcal{D}$ is a safe $\mathcal{L}^{*}_{\mathfrak{A}}$-$\Delta_0$ formula.
\end{defi}

There are algorithms to convert arbitrary propositional formulas into disjunctive normal forms. Fix one such algorithm, call it $\mathsf{P_1}$. If $\phi$ is a $\mathcal{L}^{*}_{\mathfrak{A}}$-$\Delta_0$ formula, then we can apply $\mathsf{P_1}$ on $\phi$ by viewing each $\phi$-max peeling as a(n atomic) proposition. Have $\mathsf{DNF}_1$ denote the function that takes $\phi$ to the result of this application of $\mathsf{P_1}$. It is always possible to choose $\mathsf{P_1}$ in a way that guarantees
\begin{enumerate}[label=(\Roman*)]
    \setcounter{enumi}{2}
    \item\label{s323} $\mathsf{DNF}_1(\phi) = \mathsf{DNF}_1(\mathsf{DNF}_1(\phi))$ for all $\mathcal{L}^{*}_{\mathfrak{A}}$-$\Delta_0$ formulas $\phi$, and
    \item\label{s324} $\mathsf{DNF}_1$ commutes with substitution of literals modulo double-negation elimination.
\end{enumerate}
For convenience of analysis, we shall do so.

\begin{rem}\label{rem322}
Due to the nature of conversion algorithms such as $\mathsf{P_1}$, whenever $\phi$ is a $\mathcal{L}^{*}_{\mathfrak{A}}$-$\Delta_0$ formula, $\mathsf{DNF}_1(\phi)$ must be logically equivalent to $\phi$. If in addition, $\phi$ is safe, then $\mathsf{DNF}_1(\phi)$ is a formula in $\mathcal{D}$.
\end{rem}

Let $\mathsf{WNF}$ be the function with domain
\begin{equation*}
    \{\phi: \phi \text{ is a } \mathcal{L}^{*}_{\mathfrak{A}}\text{-}\Delta_0 \text{ formula}\} 
\end{equation*} 
defined by the following recursive procedure.

\begin{quote}
    \underline{$\mathsf{Procedure}$ $\mathsf{P_W}$}

    On input $\phi$:
    \begin{enumerate}[label=(\arabic*)]
        \item Set $\phi' := \mathsf{DNF}_1(\phi)$.
        \item If there is no $\phi'$-max peeling containing $\ulcorner E \urcorner$, return $\phi'$.
        \item For each $\phi'$-max peeling $\varphi$ containing $\ulcorner E \urcorner$ (the order does not matter because of Remark \ref{rem320}:
        \begin{enumerate}[label=(F\arabic*), leftmargin=30pt]
            \item Necessarily, for some $\ulcorner p \urcorner$ and $\varphi'$,
                \begin{align*}
                    \varphi = \ulcorner \forall z \ ( & (z \in p \wedge \text{``}\emptyset \neq p \text{ and } p \text{ is Dedekind-finite''}) \\
                    & \implies \varphi') \urcorner \text{.}
                \end{align*}
            \item Replace $\varphi'$ with $\mathsf{P_W}(\varphi')$ in $\phi'$.
        \end{enumerate}
        \item Return $\phi'$.
    \end{enumerate}
\end{quote}

\begin{rem}\label{rem321}
Obviously, $\mathsf{P_W}$ always terminates and returns a $\mathcal{L}^{*}_{\mathfrak{A}}$-$\Delta_0$ formula. Furthermore, routine code tracing with the aid of 
\begin{itemize}
    \item Remark \ref{rem322}, and the fact that
    \item $\mathsf{P_W}$ returns the result of a function call of $\mathsf{DNF}_1$ whenever its base case is fulfilled,
\end{itemize} 
allows us to ascertain that the output of $\mathsf{P_W}$ is always logically equivalent to its input. In other words, 
\begin{equation*}
    \mathsf{WNF}(\phi) \text{ is logically equivalent to } \phi
\end{equation*}
whenever $\phi$ is a $\mathcal{L}^{*}_{\mathfrak{A}}$-$\Delta_0$ formula. We can therefore, without loss of generality, assume every $\mathcal{L}^{*}_{\mathfrak{A}}$-$\Delta_0$ formula we encounter to be a member of $ran(\mathsf{WNF})$.
\end{rem}

Now let $\mathsf{DNF}$ be the function with domain
\begin{equation*}
    \mathrm{FV} := \{\phi: \phi \text{ is a } \mathcal{L}^{*}_{\mathfrak{A}}\text{-}\Delta_0 \text{ formula}\} \times \{\nu : \nu \text{ is a } \mathfrak{A}\text{-valuation}\} 
\end{equation*} 
defined by the following recursive procedure.
\begin{quote}
    \underline{$\mathsf{Procedure}$ $\mathsf{P_0}$}

    On input $(\phi, \nu)$:
    \begin{enumerate}[label=(\arabic*)]
        \item Set $\phi' := \mathsf{DNF}_1(\phi)$.
        \item If there is no $\phi'$-max peeling containing $\ulcorner E \urcorner$, return $\phi'$.
        \item For each $\phi'$-max peeling $\varphi$ containing $\ulcorner E \urcorner$ (the order does not matter because of Remark \ref{rem320}):
        \begin{enumerate}[label=(F\arabic*), leftmargin=30pt]
            \item Necessarily, for some $\ulcorner p \urcorner$ and $\varphi'$,
                \begin{align*}
                    \varphi = \ulcorner \forall z \ ( & (z \in p \wedge \text{``}\emptyset \neq p \text{ and } p \text{ is Dedekind-finite''}) \\
                    & \implies \varphi') \urcorner \text{.}
                \end{align*}
                There are only two possible cases.
                \begin{enumerate}[label=Case \arabic*:, leftmargin=50pt]
                    \item\label{f1c1} $\varphi$ is a conjunct of a disjunct of $\phi$'.
                    \item\label{f1c2} $\ulcorner \neg \varphi \urcorner$ is a conjunct of a disjunct of $\phi$'.
                \end{enumerate}
                For our next step, we consider these two cases separately.
            
            \item\label{step3} \underline{In the event of \ref{f1c1}}
            
            If $\ulcorner p \urcorner$ is a not a free variable, then it must be a constant symbol. In this case, we check if 
            \begin{equation*}
                \mathfrak{A} \models \text{``}\emptyset \neq p \text{ and } p \text{ is Dedekind-finite''.}
            \end{equation*}
            If so, then it follows from \ref{352} to \ref{354} of Remark \ref{rem35} that 
            \begin{itemize}
                \item $\ulcorner p \urcorner^{\mathfrak{A}}$, the interpretation of $\ulcorner p \urcorner$ by $\mathfrak{A}$, is indeed a non-empty finite set, and
                \item $\ulcorner p \urcorner^{\mathfrak{A}} \subset A$;
            \end{itemize}
            we can --- and shall --- thus replace $\varphi$ with 
            \begin{equation*}
                \ulcorner \bigwedge \{\varphi'[z \mapsto a] : a \in \ulcorner p \urcorner^{\mathfrak{A}}\} \urcorner
            \end{equation*}
            in $\phi'$. If not, replace $\varphi$ with $\ulcorner \text{``} \emptyset = \emptyset \text{''} \urcorner$ in $\phi'$. 
        
            Next, consider the case where $\ulcorner p \urcorner$ is a free variable. In this case, we check if
            \begin{equation*}
                \mathfrak{A} \models \text{``}\emptyset \neq \nu(p) \text{ and } \nu(p) \text{ is Dedekind-finite''.}
            \end{equation*}
            If so, then it follows from \ref{352} to \ref{354} of Remark \ref{rem35} that 
            \begin{itemize}
                \item $\nu(p)$ is indeed a non-empty finite set, and
                \item $\nu(p) \subset A$;
            \end{itemize} 
            we shall thus replace $\varphi$ with 
            \begin{equation*}
                \ulcorner p = \nu(p) \wedge \bigwedge \{\varphi'[z \mapsto a] : a \in \nu(p)\} \urcorner
            \end{equation*}
            in $\phi'$. Otherwise, replace $\varphi$ with 
            \begin{equation*}
                \ulcorner \neg \text{``}\emptyset \neq p \text{ and } p \text{ is Dedekind-finite''} \urcorner
            \end{equation*}
            in $\phi'$.
            
            \underline{In the event of \ref{f1c2}}
            
            If $\ulcorner p \urcorner$ is a not a free variable, then it must be a constant symbol. In this case, we check if 
            \begin{equation*}
                \mathfrak{A} \models \text{``}\emptyset \neq p \text{ and } p \text{ is Dedekind-finite''.}
            \end{equation*}
            If so, then it follows from \ref{352} to \ref{354} of Remark \ref{rem35} that 
            \begin{itemize}
                \item $\ulcorner p \urcorner^{\mathfrak{A}}$, the interpretation of $\ulcorner p \urcorner$ by $\mathfrak{A}$, is indeed a non-empty finite set, and
                \item $\ulcorner p \urcorner^{\mathfrak{A}} \subset A$;
            \end{itemize}
            we can --- and shall --- thus replace $\varphi$ with 
            \begin{equation*}
                \ulcorner \bigvee \{\neg \varphi'[z \mapsto a] : a \in \ulcorner p \urcorner^{\mathfrak{A}}\} \urcorner
            \end{equation*}
            in $\phi'$. If not, replace $\varphi$ with $\ulcorner \text{``} \emptyset \neq \emptyset \text{''} \urcorner$ in $\phi'$. 
        
            Next, consider the case where $\ulcorner p \urcorner$ is a free variable. In this case, we check if
            \begin{equation*}
                \mathfrak{A} \models \text{``}\emptyset \neq \nu(p) \text{ and } \nu(p) \text{ is Dedekind-finite''.}
            \end{equation*}
            If so, then it follows from \ref{352} to \ref{354} of Remark \ref{rem35} that 
            \begin{itemize}
                \item $\nu(p)$ is indeed a non-empty finite set, and
                \item $\nu(p) \subset A$; 
            \end{itemize}we shall thus replace $\varphi$ with
            \begin{equation*}
                \ulcorner p = \nu(p) \wedge \bigvee \{\neg \varphi'[z \mapsto a] : a \in \nu(p)\} \urcorner
            \end{equation*}
            in $\phi'$. Otherwise, replace $\varphi$ with 
            \begin{equation*}
                \ulcorner \text{``} \emptyset \neq \emptyset \text{''} \urcorner
            \end{equation*}
            in $\phi'$.
        \end{enumerate}
        
        \item With the value of $\phi'$ updated as per Step \ref{step3}, call $\mathsf{P_0}(\phi', \nu)$.
    \end{enumerate}
\end{quote}

\begin{rem}\label{rem324}
That $\mathsf{P_0}$ always terminates and returns a safe $\mathcal{L}^{*}_{\mathfrak{A}}$-$\Delta_0$ formula can be easily verified. Furthermore, since 
\begin{itemize}
    \item Remark \ref{rem322} holds,
    \item $\mathsf{P_0}$ returns the result of a function call of $\mathsf{DNF}_1$ right before termination, and
    \item the atomic nature of each $\phi$-max peeling in the running of $\mathsf{P_1}$ on input $\phi$ means $\mathsf{DNF}_1(\phi)$ cannot be safe if $\phi$ is not,
\end{itemize}
$\mathsf{P_0}$ must return a member of $\mathcal{D}$. We can thus conclude that $\mathsf{DNF}$ is a function from $\mathrm{FV}$ into $\mathcal{D}$.
\end{rem}

The next three propositions can be verified by routine --- if tedious --- applications of mathematical induction, with \ref{s323} and \ref{s324} in mind.

\begin{prop}\label{prop321}
Let $\phi \in ran(\mathsf{WNF})$. Then for any $\mathfrak{A}$-valuation $\nu$,
\begin{equation*}
    \mathsf{pos}(\mathsf{DNF}(\phi, \nu)) = \mathsf{DNF}(\mathsf{pos}(\phi), \nu) \text{.}
\end{equation*}
\end{prop}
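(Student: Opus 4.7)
The plan is to induct on the number of recursive invocations of the procedure $\mathsf{P_0}$ needed to compute $\mathsf{DNF}(\phi, \nu)$, which is a well-defined natural number by Remark~\ref{rem324}. A key preliminary observation is that $\mathsf{pos}$ leaves the quantifier skeleton of any formula untouched, merely rewriting the literal $\ulcorner \neg E(x) \urcorner$ as $\ulcorner E(\neg x) \urcorner$. Hence the max peelings of $\mathsf{pos}(\psi)$ correspond bijectively and positionally to those of $\psi$, with $\ulcorner E \urcorner$ occurring in a max peeling of $\mathsf{pos}(\psi)$ iff it occurs in the corresponding max peeling of $\psi$. In particular, the recursion depth of $\mathsf{P_0}(\phi, \nu)$ equals that of $\mathsf{P_0}(\mathsf{pos}(\phi), \nu)$, so both sides of the claimed equality are computed by runs of $\mathsf{P_0}$ of matching length.

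When $\mathsf{P_0}$ terminates without recursing (the base case, i.e., no max peeling of $\mathsf{DNF}_1(\phi)$ contains $E$), both $\mathsf{DNF}(\phi, \nu)$ and $\mathsf{DNF}(\mathsf{pos}(\phi), \nu)$ reduce to single applications of $\mathsf{DNF}_1$, and property~\ref{s324} --- recognising $\mathsf{pos}$ as a substitution of literals (sending $\ulcorner \neg E(x) \urcorner$ to $\ulcorner E(\neg x) \urcorner$ and fixing every other literal) --- yields $\mathsf{pos}(\mathsf{DNF}_1(\phi)) = \mathsf{DNF}_1(\mathsf{pos}(\phi))$. For the inductive step, property~\ref{s324} again commutes $\mathsf{pos}$ past the initial $\mathsf{DNF}_1$ call of $\mathsf{P_0}$, so it suffices to verify that one expansion pass commutes with $\mathsf{pos}$. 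For each max peeling $\varphi = \ulcorner \forall z \ ((z \in p \wedge \ldots) \implies \varphi') \urcorner$ containing $\ulcorner E \urcorner$, the case classification --- whether $\varphi$ or $\ulcorner \neg \varphi \urcorner$ is the conjunct in question, whether $\ulcorner p \urcorner$ is a constant or a free variable, and whether the Dedekind-finiteness test succeeds --- depends only on $\ulcorner p \urcorner$ and the ambient syntactic context, all of which $\mathsf{pos}$ preserves; hence the same finite index set $X \subseteq A$ governs both expansions, and the same propositional wrapping (conjunction or disjunction, with a possible $\ulcorner p = \nu(p) \urcorner$ conjunct) is applied.

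What remains is the commutation
\begin{equation*}
    \mathsf{pos}(\varphi'[z \mapsto a]) = \mathsf{pos}(\varphi')[z \mapsto a] \qquad (a \in X),
\end{equation*}
which I would dispatch by a structural induction on $\varphi'$; the only non-trivial case is a subformula $\ulcorner \neg E(t) \urcorner$ of $\varphi'$, for which both iterated operations yield the literal obtained by applying the canonical $\neg$ to $t[z \mapsto a]$, using the absoluteness and substitution-compatibility of $\neg$ recorded in Remark~\ref{rem32}. Once this commutation is in place, the recursive call of $\mathsf{P_0}$ has strictly smaller depth, and the inductive hypothesis closes the argument. The main technical obstacle is the case-by-case verification that the expansion pass commutes with $\mathsf{pos}$, together with the careful tracking of how $\mathsf{pos}$ interacts with variable substitution inside $\ulcorner \neg E(\cdot) \urcorner$ literals; once handled, the rest is routine induction on top of properties~\ref{s323} and~\ref{s324}.
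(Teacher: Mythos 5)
Your overall strategy --- inducting on the recursion depth of $\mathsf{P_0}$, using \ref{s324} to commute $\mathsf{pos}$ past each internal $\mathsf{DNF}_1$ call (legitimate, since $\mathsf{pos}$ is a substitution of literals), and then checking that a single expansion pass commutes with $\mathsf{pos}$ --- is precisely the ``routine induction with \ref{s323} and \ref{s324} in mind'' that the paper offers in lieu of a written proof, and your observation that $\mathsf{pos}$ preserves the quantifier skeleton, hence the max peelings and the case classification of the expansion step, is the right skeleton for that induction.

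There is, however, one step that does not survive a literal reading of the definitions: the commutation $\mathsf{pos}(\varphi'[z \mapsto a]) = \mathsf{pos}(\varphi')[z \mapsto a]$. The definition of $\mathsf{pos}$ rewrites only subformulas $\ulcorner \neg E(x) \urcorner$ with $x \in \mathcal{L}$, and $\mathcal{L}$ contains no variable symbols; so if $\varphi'$ contains the literal $\ulcorner \neg E(z) \urcorner$ with $z$ the bound variable being expanded, then $\mathsf{pos}(\varphi')$ leaves it untouched and $\mathsf{pos}(\varphi')[z \mapsto a] = \ulcorner \neg E(a) \urcorner$, whereas $\mathsf{pos}(\varphi'[z \mapsto a]) = \ulcorner E(\neg a) \urcorner$ once $a \in \mathcal{L}$. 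Indeed $\phi = \ulcorner \forall z \ ((z \in p \wedge \text{``}\emptyset \neq p \text{ and } p \text{ is Dedekind-finite''}) \implies \neg E(z)) \urcorner$, with $\ulcorner p \urcorner^{\mathfrak{A}}$ a nonempty finite subset of $\mathcal{L}$, lies in $ran(\mathsf{WNF})$ and gives a syntactic counterexample to the displayed identity itself under that strict reading. Your phrase ``both iterated operations yield the literal obtained by applying the canonical $\neg$ to $t[z \mapsto a]$'' tacitly adopts the liberal reading on which $\mathsf{pos}$ also rewrites variable-argument literals into the shorthand $\ulcorner E(\neg z) \urcorner$ of Remark \ref{shcs}; under that reading your argument closes, and it is evidently the reading the proposition needs. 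You should make this explicit --- either state that you are using the extended $\mathsf{pos}$, or weaken the conclusion to equality modulo the equivalence of $\ulcorner \neg E(a) \urcorner$ and $\ulcorner E(\neg a) \urcorner$ over $\mathcal{L}$-nice sets (cf. Lemma \ref{corr}), which is all that the application inside Lemma \ref{uni} actually requires.
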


\begin{prop}\label{p320}
Let
\begin{itemize}
    \item $X$ be a set in some weak outer model of $V$,
    \item $(\phi, \nu) \in \mathrm{FV}$, and
    \item $\nu'$ be a $\mathfrak{A}$-valuation. 
\end{itemize}
Then
\begin{enumerate}[label=(\arabic*)]
    \item\label{p3201} $\mathsf{DNF}(\nu^*(\phi), \nu')$ is a sentence and $$X \models^*_{\mathfrak{A}, \nu} \phi \iff X \models^*_{\mathfrak{A}} \mathsf{DNF}(\nu^*(\phi), \nu') \text{,}$$
    \item\label{p3202} $X \models^*_{\mathfrak{A}, \nu} \phi \implies X \models^*_{\mathfrak{A}, \nu} \mathsf{DNF}(\phi, \nu)$,
    \item\label{p3203} $X \models^*_{\mathfrak{A}, \nu'} \mathsf{DNF}(\phi, \nu) \implies X \models^*_{\mathfrak{A}, \nu'} \phi$,
\end{enumerate}
\end{prop}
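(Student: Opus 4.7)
The plan is to prove all three clauses together by induction on the recursion depth of $\mathsf{P_0}$, which is bounded by the maximum nesting depth of $\phi$-max peelings containing $\ulcorner E \urcorner$.

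Part (1) can be largely folded into the other two. Because $\nu^*(\phi)$ is already a sentence---every free variable in $\phi$ has been substituted by a value from $A$, which we identify with a constant symbol---any recursive call of $\mathsf{P_0}$ on $\nu^*(\phi)$ lands in the constant-symbol branch of step \ref{step3}, and no replacement ever reintroduces a free variable (each substituted $\varphi'[z \mapsto a]$ merely places an element $a \in A$ in the slot of the bound variable $z$). Thus $\mathsf{DNF}(\nu^*(\phi), \nu')$ is a sentence, its truth does not depend on $\nu'$, and the claimed biconditional follows by combining parts (2) and (3) with the trivial identity $X \models^*_{\mathfrak{A}, \nu} \phi \iff X \models^*_{\mathfrak{A}} \nu^*(\phi)$.

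For parts (2) and (3), the base case (no max peeling of $\mathsf{DNF}_1(\phi)$ contains $\ulcorner E \urcorner$) reduces to Remark \ref{rem322}: $\mathsf{DNF}(\phi, \nu) = \mathsf{DNF}_1(\phi)$ is logically equivalent to $\phi$. For the inductive step I would verify case by case that each local replacement in step \ref{step3} preserves the required asymmetric implications. In the constant-symbol branch, \ref{352}--\ref{354} of Remark \ref{rem35} ensure that either $\ulcorner p \urcorner^{\mathfrak{A}}$ is a non-empty finite subset of $A$---so that the universal $\forall z (z \in p \wedge \cdots \implies \varphi')$ is logically equivalent to the finite conjunction $\bigwedge\{\varphi'[z \mapsto a] : a \in \ulcorner p \urcorner^{\mathfrak{A}}\}$---or else the premise of the implication is unsatisfiable in $\mathfrak{A}$ and the replacement with $\ulcorner \emptyset = \emptyset \urcorner$ (resp.\ $\ulcorner \emptyset \neq \emptyset \urcorner$) matches the vacuously true (resp.\ false) original. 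In the free-variable branch, the inserted conjunct $\ulcorner p = \nu(p) \urcorner$ is precisely what makes the asymmetry in (2) and (3) go through: under $\nu$ it holds trivially, which yields part (2); while under any $\nu'$ satisfying the rewritten form it forces $\nu'(p) = \nu(p)$, so the finite conjunction reads back as a universal over $\nu'(p)$ in $\mathfrak{A}$, which yields part (3).

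The main obstacle will be the bookkeeping around the interleaving of $\mathsf{DNF}_1$ and the recursive step: after each local rewriting, $\mathsf{P_0}$ is reinvoked on the modified $\phi'$, and that call itself begins by applying $\mathsf{DNF}_1$. Properties \ref{s323} and \ref{s324} of the chosen conversion algorithm $\mathsf{P_1}$ guarantee that the max peelings appearing inside each $\varphi'[z \mapsto a]$ are exactly the substituted images of the max peelings of $\varphi'$, modulo double-negation elimination, so that the inductive hypothesis applies cleanly to each resulting sub-peeling and the accumulated implications compose without interference. Once this is tracked, each remaining sub-case is a short calculation.
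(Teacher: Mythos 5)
Your proposal is correct and matches the paper's approach: the paper offers no written proof for this proposition beyond the assertion that it "can be verified by routine --- if tedious --- applications of mathematical induction, with \ref{s323} and \ref{s324} in mind," and your induction on the recursion depth of $\mathsf{P_0}$, with Remark \ref{rem322} handling the base case and the $\ulcorner p = \nu(p) \urcorner$ conjunct accounting for the asymmetry between \ref{p3202} and \ref{p3203}, is exactly that routine induction spelled out. Your derivation of \ref{p3201} from the other two clauses via the observation that $\nu^*(\phi)$ is a sentence whose processing stays in the constant-symbol branch is also sound.
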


\begin{defi}\label{LSigma1}
A first-order formula $\phi$ is $\mathcal{L}^{*}_{\mathfrak{A}}$-$\Sigma_1$ iff it is of the form $$\ulcorner \exists y_1 \dots \exists y_j \ \phi \urcorner,$$
where 
\begin{itemize}
    \item $j < \omega$,
    \item $\phi$ is $\mathcal{L}^{*}_{\mathfrak{A}}$-$\Delta_0$,
    \item $y_1, \dots, y_j$ are variables not bound in $\phi$.
\end{itemize}
\end{defi}

\begin{defi}\label{LPi2}
A first-order formula $\phi$ is $\mathcal{L}^{*}_{\mathfrak{A}}$-$\Pi_2$ iff it is of the form $$\ulcorner \forall x_1 \dots \forall x_i \ \phi \urcorner,$$
where 
\begin{itemize}
    \item $i < \omega$,
    \item $\phi$ is $\mathcal{L}^{*}_{\mathfrak{A}}$-$\Sigma_1$,
    \item $x_1, \dots, x_i$ are variables not bound in $\phi$.
\end{itemize}
\end{defi}

\begin{rem}\label{snpn}
Analogous to what the classification of general first-order formulas in prenex normal form, we can very naturally build on Definitions \ref{lsub}, \ref{LSigma1} and \ref{LPi2}, and inductively define $\mathcal{L}^{*}_{\mathfrak{A}}$-$\Sigma_n$ and $\mathcal{L}^{*}_{\mathfrak{A}}$-$\Pi_n$ sentences for all $n < \omega$. The only reason we did not is because our theorems and analyses neither mention nor require formulas outside of $\mathcal{L}^{*}_{\mathfrak{A}}$-$\Pi_2$.  
\end{rem}

Note also that in the definition of $\mathcal{L}^{*}_{\mathfrak{A}}$-$\Delta_0$ formulas, members of $\mathcal{L}^{*}_{0, \mathfrak{A}}$ are regarded, for all practical purposes, as atomic formulas. Further, bounded quantification is limited to finite sets --- more in the spirit of arithmetical bounded quantification than the usual set-theoretic one. This is not merely a cosmetic choice, for extending bounded quantification to countable sets would render Lemma \ref{uni} false, as we shall show in Lemma \ref{lem339}.

\begin{lem}\label{exist}
Let 
\begin{itemize}
    \item $\phi$ be a $\mathcal{L}^{*}_{\mathfrak{A}}$-$\Sigma_1$ formula, and
    \item $(\nu, W, X)$ witness the $(\mathfrak{A}, \mathcal{L})$-satisfiability of $\phi$.
\end{itemize}
Then there are finite sets $p \in A \cap \mathcal{P}(\mathcal{L})$ and $q \in A \cap \mathcal{P}(\mathcal{L})$ such that 
\begin{itemize}
    \item $X \cap q = p$, and
    \item $\phi$ is $\models^*_{\mathfrak{A}, \nu}$-true for $(p, q)$.
\end{itemize}
Moreover, if $\phi$ is a $\mathcal{L}^{*}_{\mathfrak{A}}$-$\Delta_0$ sentence, then $\phi$ is $\models^*_{\mathfrak{A}}$-true for $(p, q)$.
\end{lem}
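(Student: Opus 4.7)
I would begin by unpacking the $\Sigma_1$ structure: write $\phi = \ulcorner \exists y_1 \dots \exists y_j \, \psi \urcorner$ with $\psi \in \mathcal{L}^{*}_{\mathfrak{A}}$-$\Delta_0$. Applying the definition of $\models^*_{\mathfrak{A}, \nu}$ to the hypothesis $X \models^*_{\mathfrak{A}, \nu} \phi$ produces witnesses $a_1, \dots, a_j \in A$; extend $\nu$ to $\nu'$ by $\nu'(y_i) = a_i$, so that $X \models^*_{\mathfrak{A}, \nu'} \psi$. This reduces the lemma to the $\Delta_0$ case, modulo verifying at the end that truth at $(p, q)$ of $\psi$ under $\nu'$ lifts back to truth at $(p, q)$ of $\phi$ under $\nu$ — which is immediate because the existential witnesses $a_1, \dots, a_j$ lie in $A$.

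Next, I would pass to disjunctive normal form: let $\theta := \mathsf{DNF}((\nu')^{*}(\psi), \nu')$. By Proposition \ref{p320}\ref{p3201} $\theta$ is a sentence and $X \models^*_{\mathfrak{A}} \theta$, and by Remark \ref{rem324} $\theta \in \mathcal{D}$ — that is, $\theta$ is a finite disjunction of finite conjunctions of literals, each literal being either a closed $\mathcal{L}^{*}_{0, \mathfrak{A}}$ formula (no $E$) or of the form $\ulcorner E(x) \urcorner$ or $\ulcorner \neg E(x) \urcorner$ for some constant $x$ interpreted in $A$. Fix a satisfied disjunct $D^* = \bigwedge_i L_i$, and set
\[
q := \{x \in \mathcal{L} : L_i \in \{\ulcorner E(x)\urcorner, \ulcorner \neg E(x)\urcorner\} \text{ for some } i\}, \qquad p := X \cap q.
\]
Both sets are finite subsets of $\mathcal{L} \subseteq A$; since $\mathfrak{A}$ models a sufficiently strong set theory and $A$ is finitely transitive (Remark \ref{rem35}), we have $p, q \in A \cap \mathcal{P}(\mathcal{L})$, and $X \cap q = p$ by construction.

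The core of the argument is now showing $\phi$ is $\models^*_{\mathfrak{A}, \nu}$-true for $(p, q)$: for any weak outer model $W'$ and any $X' \in W' \cap \mathcal{P}(\mathcal{L})$ with $X' \cap q = p$, every closed $\mathcal{L}^{*}_{0, \mathfrak{A}}$ literal in $D^*$ is evaluated purely in $\mathfrak{A}$ and keeps its truth value, while every $E$-literal $\ulcorner E(x)\urcorner$ (resp. $\ulcorner \neg E(x)\urcorner$) in $D^*$ remains satisfied because $x \in X \cap q = p = X' \cap q$ (resp. $x \notin p$, so $x \notin X'$). Hence $(A; \in, \vec R, X' \cap A) \models D^*$, so $X' \models^*_{\mathfrak{A}} \theta$, whence $X' \models^*_{\mathfrak{A}, \nu'} \psi$ by Proposition \ref{p320}\ref{p3201} once more, and finally $X' \models^*_{\mathfrak{A}, \nu} \phi$ via the existential witnesses inside $A$. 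For the ``moreover'' clause, when $\phi$ is itself a $\mathcal{L}^{*}_{\mathfrak{A}}$-$\Delta_0$ sentence there are no existentials to strip, $\nu^*(\phi) = \phi$, and the preceding argument carries through with $\nu'$ replaced by an arbitrary $\mathfrak{A}$-valuation, yielding $\models^*_{\mathfrak{A}}$-truth for $(p, q)$.

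The only delicate point I expect is the bookkeeping around literals $\ulcorner E(x)\urcorner$ with constant $x \notin \mathcal{L}$: such $x$ cannot be added to $q$, but for any $X' \subseteq \mathcal{L}$ the predicate $E(x)$ is automatically false (and $\ulcorner \neg E(x)\urcorner$ automatically true), so these literals behave uniformly in $X'$ and can be folded into the ``closed $\mathcal{L}^{*}_{0, \mathfrak{A}}$ literal'' part of the analysis without affecting the choice of $p$ and $q$.
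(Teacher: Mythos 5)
Your proposal is correct and follows essentially the same route as the paper's proof: reduce to the $\Delta_0$ case via existential witnesses drawn from $A$, pass to $\mathsf{DNF}$, pick a satisfied disjunct, and read $(p,q)$ off its $E$-literals, with the ``moreover'' clause following from $\nu^*(\phi)=\phi$ for sentences. The only cosmetic differences are that the paper strips the existential quantifiers one at a time by induction and extracts $(p,q)$ via the $\mathsf{set}$ function rather than setting $p := X \cap q$ directly, which is equivalent since the chosen disjunct is satisfied by $X$.
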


\begin{proof}
By induction on the length of $\phi$. We work in $W$ throughout.
\begin{enumerate}[label=Case \arabic*:, leftmargin=50pt]
    \item $\phi$ is $\mathcal{L}^{*}_{\mathfrak{A}}$-$\Delta_0$. By \ref{p3201} of Proposition \ref{p320}, $$X \models^*_{\mathfrak{A}} \mathsf{DNF}(\nu^*(\phi), \nu')$$ for some (in fact, any) $\mathfrak{A}$-valuation $\nu'$. Next, since $\mathsf{DNF}(\nu^*(\phi), \nu') \in \mathcal{D}$ according to Remark \ref{rem324}, there is a disjunct $\varphi$ of $\mathsf{DNF}(\nu^*(\phi), \nu')$ for which $X \models^*_{\mathfrak{A}} \varphi$. 
    
    Let $(p, q)$ be $\mathsf{set}(\varphi)$. Then $p$ and $q$ are finite sets with $p \subset q \subset \mathcal{L}$. Since $(A; \in)$ models enough set theory and $\mathcal{L} \subset A$, we too have $\{p, q\} \subset A$. As $\varphi$ has all occurrences of literals over $\{E\}$ being conjuncts, due to 
    \begin{itemize}
        \item $\varphi$ being a disjunct of $\mathsf{DNF}(\nu^*(\phi), \nu')$,
        \item the definition of $\mathcal{D}$, and 
        \item the fact that $\mathsf{DNF}(\nu^*(\phi), \nu') \in \mathcal{D}$, 
    \end{itemize} 
    we must have $$X \cap q = p.$$ Fix $X' \subset \mathcal{L}$ in any weak outer model of $V$. It is now clear that $$X' \cap q = p \implies X' \models^*_{\mathfrak{A}} \varphi \implies X' \models^*_{\mathfrak{A}} \mathsf{DNF}(\nu^*(\phi), \nu').$$ By \ref{p3201} of Proposition \ref{p320} again, $$X' \models^*_{\mathfrak{A}, \nu} \phi \text{.}$$ We have thus shown that $\phi$ is $\models^*_{\mathfrak{A}, \nu}$-true for $(p, q)$.

    If $\phi$ is a sentence, then whenever $\nu''$ is a $\mathfrak{A}$-valuation,
    \begin{equation*}
        \nu^*(\phi) = (\nu'')^*(\phi) \text{,}
    \end{equation*}
    and so
    \begin{equation*}
        \mathsf{DNF}(\nu^*(\phi), \nu') = \mathsf{DNF}((\nu'')^*(\phi), \nu') \text{.}
    \end{equation*}
    But this means $\phi$ is $\models^*_{\mathfrak{A}, \nu''}$-true for $(p, q)$ for all $\mathfrak{A}$-valuations $\nu''$, or equivalently, $\phi$ is $\models^*_{\mathfrak{A}}$-true for $(p, q)$.
    \item $\phi = \ulcorner \exists y \ \phi' \urcorner$ for some $y$ and $\phi'$. Then there must be a $\mathfrak{A}$-valuation $\nu'$ that agrees with $\nu$ on the free variables of $\phi$, for which $$X \models^{*}_{\mathfrak{A}, \nu'} \phi'.$$ By the induction hypothesis, there are finite sets $p \in A \cap \mathcal{P}(\mathcal{L})$ and $q \in A \cap \mathcal{P}(\mathcal{L})$ satisfying 
    \begin{itemize}
        \item $X \cap q = p$ and
        \item $\phi'$ is $\models^*_{\mathfrak{A}, \nu'}$-true for $(p, q)$. 
    \end{itemize}
    Since $(\nu')^*(\phi')$ logically implies $\nu^*(\phi)$, it must also be that $\phi$ is $\models^{*}_{\mathfrak{A}, \nu}$-true for $(p, q)$. \qedhere
\end{enumerate}
\end{proof}

For any free variable $x$ and any $\mathcal{L}^{*}_{\mathfrak{A}}$ formula $\phi$, it is often a desideratum (if not an imperative) in practice to have $\phi$ explicitly ``guarantee $E(\neg x)$ insofar as $x$ is a member of $\mathcal{L}$'', whenever $\ulcorner \neg E(x) \urcorner$ occurs in $\phi$. Towards this end, we are incentivised to augment $\phi$ with a suitable gadget.

\begin{defi}\label{check}
Let $\phi$ be a $\mathcal{L}^{*}_{\mathfrak{A}}$-$\Delta_0$ formula. Define $\mathsf{check}(\phi)$ to be the unique result of replacing every atomic subformula $\varphi$ of $\phi$ satisfying
\begin{equation*}
    \varphi = \ulcorner \neg E(x) \urcorner \text{ for some variable } x 
\end{equation*}
with 
\begin{equation*}
    \ulcorner (\varphi \wedge (\text{``} x \in \mathcal{L} \text{''} \implies \text{``} E(\neg x) \text{''})) \urcorner
\end{equation*}
in $\phi$.
\end{defi}

\begin{rem}\label{shcs}
In the definition of $\mathsf{check}$, $\text{``} E(\neg x) \text{''}$ is a shorthand for both $$\ulcorner \exists z \ (\text{``} z = \neg (x) \text{''} \wedge E(z)) \urcorner$$ and $$\ulcorner \forall z \ (\text{``} z = \neg (x) \text{''} \implies E(z)) \urcorner,$$ where $\neg (\cdot)$ is the negation function on $V$ (see Definition \ref{neg}), so that ``$z = \neg (x)$'' is expressible as a formula in $\mathcal{L}^{*}_{0, \mathfrak{A}}$ by Remark \ref{rem32}. Particularly, since 
\begin{itemize}
    \item $\text{``} E(\neg x) \text{''}$ means exactly $$\ulcorner \forall z \ (\text{``} z = \neg (x) \text{''} \implies E(z)) \urcorner,$$ and
    \item $\mathfrak{A}$ is a model of a sufficiently strong set theory,
\end{itemize} 
we have that
\begin{align*}
    & X \models^*_{\mathfrak{A}, \nu} \text{``} E(\neg x) \text{''} \iff \\
    & X \models^*_{\mathfrak{A}, \nu} \ulcorner \exists p \ \forall z \ ((z \in p \wedge \text{``}\emptyset \neq p \text{ and } p \text{ is Dedekind-finite''}) \\
    & \mspace{125mu} \implies (\text{``} z = \neg (x) \text{''} \implies E(z))) \urcorner
\end{align*}
in case 
\begin{itemize}
    \item $X$ is a set in some weak outer model of $V$,
    \item $\nu$ is a $\mathfrak{A}$-valuation, and
    \item $\ulcorner p \urcorner$ is a variable outside $\{\ulcorner x \urcorner, \ulcorner z \urcorner\}$ bound in neither (the formal statement of)
    \begin{equation*}
        \text{``} z = \neg (x) \text{''}
    \end{equation*}
    nor (the formal statement of)
    \begin{equation*}
        \text{``}\emptyset \neq p \text{ and } p \text{ is Dedekind-finite''.}
    \end{equation*}
\end{itemize} 
As an implication, if $\ulcorner \exists x_1 \dots \exists x_n \ \phi \urcorner$ is a $\mathcal{L}^{*}_{\mathfrak{A}}$-$\Sigma_1$ formula with $\phi \in ran(\mathsf{WNF})$, then we can show by induction on the complexity of $\phi$, noting
\begin{itemize}
    \item the recursive structure of $\mathsf{P_W}$, 
    \item how $\mathsf{DNF_1}$ is used in $\mathsf{P_W}$, and
    \item Remark \ref{rem322}, 
\end{itemize}
that there exists a $\mathcal{L}^{*}_{\mathfrak{A}}$-$\Sigma_1$ formula $\varphi$ satisfying $$X \models^*_{\mathfrak{A}, \nu} \ulcorner \exists x_1 \dots \exists x_n \ \mathsf{check}(\mathsf{pos}(\phi)) \urcorner \iff X \models^*_{\mathfrak{A}, \nu} \varphi$$ for every 
\begin{itemize}
    \item $\mathfrak{A}$-valuation $\nu$, and
    \item set $X$ found in a weak outer model of $V$.
\end{itemize} 
We may thus assume, without loss of generality, that formulas of the form $$\ulcorner \exists x_1 \dots \exists x_n \ \mathsf{check}(\mathsf{pos}(\phi)) \urcorner$$ are $\mathcal{L}^{*}_{\mathfrak{A}}$-$\Sigma_1$, as long as
\begin{itemize}
    \item $n < \omega$, and 
    \item $\phi \in ran(\mathsf{WNF})$.
\end{itemize}
\end{rem}

The fact below can be derived from definitions through straightforward variable tracing.

\begin{fact}\label{fact330}
The functions $\mathsf{check}$ and $\mathsf{DNF}$ commute. To be precise, let
\begin{itemize}
    \item $\phi$ be a $\mathcal{L}^{*}_{\mathfrak{A}}$-$\Delta_0$ formula, and
    \item $\nu$ be a $\mathfrak{A}$-valuation.
\end{itemize}
Then $\mathsf{check}(\mathsf{DNF}(\phi, \nu))$ and $\mathsf{DNF}(\mathsf{check}(\phi)$ are logically equivalent. As such, without loss of generality, we can assume
\begin{equation}\label{eq31'}
    \mathsf{check}(\mathsf{DNF}(\phi, \nu)) = \mathsf{DNF}(\mathsf{check}(\phi), \nu) \text{.}
\end{equation}
In fact, (\ref{eq31'}) already holds if $\phi \in ran(\mathsf{WNF})$.
\end{fact}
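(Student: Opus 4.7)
The plan is to proceed by structural induction on $\phi$. Since the paper allows us to assume $\phi \in ran(\mathsf{WNF})$ without loss of generality (Remark \ref{rem321}), I will carry out the induction on members of $ran(\mathsf{WNF})$ so as to obtain the stronger equality conclusion; the mere logical equivalence claim then follows a fortiori. The key observation is that $\mathsf{check}$ is a local, literal-level transformation—it acts only on atomic subformulas of the form $\ulcorner \neg E(x) \urcorner$ where $x$ is a variable—whereas $\mathsf{DNF}$ is a two-phase process consisting of $\mathsf{DNF}_1$ followed by the bounded-quantifier reduction of $\mathsf{P_0}$ on max peelings. The commutativity will therefore rest on two things: (i) that $\mathsf{check}$ and $\mathsf{DNF}_1$ commute because of property \ref{s324} (commutation with substitution of literals, modulo double-negation), and (ii) that $\mathsf{check}$ commutes with the replacement step \ref{step3} of $\mathsf{P_0}$ because the targets of $\mathsf{check}$ are preserved by the substitutions involved.

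Base case. For atoms of $\mathcal{L}^{*}_{0, \mathfrak{A}}$ and atoms $\ulcorner E(x) \urcorner$, $\mathsf{check}$ is the identity and both sides of \eqref{eq31'} reduce to $\mathsf{DNF}_1(\phi)$. For a literal $\ulcorner \neg E(x) \urcorner$ with $x$ a variable, $\mathsf{DNF}_1$ leaves it as is (it is already in disjunctive normal form viewed as a single-literal disjunct), so the DNF-then-check side produces $\mathsf{check}(\ulcorner \neg E(x) \urcorner)$, while the check-then-DNF side produces $\mathsf{DNF}(\mathsf{check}(\ulcorner \neg E(x) \urcorner), \nu)$; these agree once the shorthand of Remark \ref{shcs} is unfolded to its $\mathcal{L}^{*}_{0, \mathfrak{A}}$ form.

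Inductive step (propositional connectives). For $\phi = \ulcorner \neg \varphi \urcorner$, $\ulcorner \varphi_1 \wedge \varphi_2 \urcorner$, $\ulcorner \varphi_1 \vee \varphi_2 \urcorner$, and so on, I would appeal to property \ref{s324}: since $\mathsf{check}$ amounts to a uniform simultaneous replacement of each $\neg E(x)$-literal (for $x$ a variable) by a fixed formula, $\mathsf{DNF}_1$ commutes with this replacement up to double-negation elimination. Combined with the inductive hypothesis applied to the $\varphi_i$, and with property \ref{s323} to close $\mathsf{DNF}_1$ under its own iteration, the commutativity propagates through the zeroth-order logical symbols.

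Inductive step (bounded-quantifier max peelings) and main obstacle. This is the genuinely delicate case: $\phi$ contains a $\phi$-max peeling of the shape $\ulcorner \forall z \ ((z \in p \wedge \cdots) \implies \varphi') \urcorner$, and step \ref{step3} of $\mathsf{P_0}$ expands it by substituting each $a \in p^{\mathfrak{A}}$ (or $a \in \nu(p)$) for $z$ in $\varphi'$. The hard part is verifying that $\mathsf{check}$ commutes with this constant substitution. Concretely, if $\varphi'$ contains $\ulcorner \neg E(z) \urcorner$, then applying $\mathsf{check}$ first adds the conjunct $(z \in \mathcal{L} \implies E(\neg z))$, which the subsequent $z \mapsto a$ substitution turns into $(a \in \mathcal{L} \implies E(\neg a))$; applying $\mathsf{DNF}$ first produces $\ulcorner \neg E(a) \urcorner$, which $\mathsf{check}$ then leaves untouched because $a$ is now a constant, not a variable. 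The two results are syntactically distinct, but they agree as logical forms because $\neg E(a)$ appears as a conjunct in both, and the new conjunct added on the check-first side is precisely what a $\mathsf{DNF}$-derived $\ulcorner \neg E(a) \urcorner$-literal would carry were it still a variable; in $ran(\mathsf{WNF})$ the normalized shape makes these bookkeeping operations coincide. With this case handled, invoking the inductive hypothesis on $\varphi'$ and using property \ref{s323} to re-flatten the output of $\mathsf{P_0}$ delivers \eqref{eq31'}.
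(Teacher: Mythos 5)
Your overall strategy --- structural induction with \ref{s323} and \ref{s324} handling the propositional layer --- is reasonable and matches the ``variable tracing'' the paper gestures at (the paper itself supplies no proof of this fact), and you have correctly isolated the one place where commutativity is genuinely at stake. The problem is that your resolution of that case is an assertion rather than an argument, and under the literal reading of Definition \ref{check} the two sides do \emph{not} coincide there. Concretely, take $\phi = \ulcorner \forall z \ ((z \in p \wedge \cdots) \implies \neg E(z)) \urcorner$ with $\ulcorner p \urcorner$ a constant satisfying $\ulcorner p \urcorner^{\mathfrak{A}} = \{a\}$ for some $a \in \mathcal{L}$; this $\phi$ is a fixed point of $\mathsf{WNF}$. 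Then $\mathsf{DNF}(\phi, \nu) = \ulcorner \neg E(a) \urcorner$, and since $a$ is now a constant symbol rather than a variable, $\mathsf{check}$ leaves it untouched: the left side of (\ref{eq31'}) is $\ulcorner \neg E(a) \urcorner$. On the other side, $\mathsf{check}$ first guards the bound-variable literal, and the subsequent substitution $z \mapsto a$ yields (modulo the shorthand of Remark \ref{shcs}) $\ulcorner \neg E(a) \wedge (\text{``} a \in \mathcal{L} \text{''} \implies \text{``} E(\neg a) \text{''}) \urcorner$. The extra conjunct is not a tautology: on an interpretation $X$ with $a \in \mathcal{L}$, $a \notin X$ and $\neg a \notin X$, the left side is true and the right side is false. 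So ``they agree as logical forms'' is simply not true as stated; they agree only on $\mathcal{L}$-nice interpretations (where $a \notin X$ forces $\neg a \in X$), which is strictly weaker than the logical equivalence --- let alone the syntactic equality --- that the Fact claims and that the proof of Lemma \ref{uni} relies on.

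To close the gap you must do one of two things. Either (i) read $\mathsf{check}$ as acting only on \emph{free} occurrences of variables, which is what the motivating paragraph preceding Definition \ref{check} suggests; then the delicate case disappears entirely, since a bound $z$ of a max peeling is never guarded on the check-first side and has already become a constant by the time $\mathsf{check}$ runs on the DNF-first side, while the free-variable literals are untouched by the substitutions in \ref{step3} and commute with $\mathsf{DNF}_1$ by \ref{s324}. Or (ii) if $\mathsf{check}$ is really meant to guard bound variables as well, you must weaken the conclusion to equivalence over $\mathcal{L}$-nice sets and then re-examine the downstream uses of (\ref{eq31'}) --- for instance the derivation of $q \models^{*}_{\mathfrak{A}, \nu} \phi''$ in Claim \ref{claim335}, where $q$ is a finite condition and in particular not $\mathcal{L}$-nice --- to confirm the weaker equivalence suffices. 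As written, your proof establishes neither alternative.
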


\begin{lem}\label{postrue}
Let 
\begin{itemize}
    \item $\phi = \ulcorner \bigwedge \mathrm{S} \wedge \bigwedge \mathrm{T} \urcorner$ for some 
    \begin{itemize}[label=$\circ$]
        \item $S$ is a finite subset of $\mathcal{L}^{*}_{0, \mathfrak{A}}$, and
        \item $T$ is a finite subset of $\mathcal{L}^{*}_{\mathfrak{A}}$ containing only formulas either of the form $\ulcorner E(x) \urcorner$ or of the form $\ulcorner \neg E(x) \urcorner$,
    \end{itemize}
    \item $\nu$ be a $\mathfrak{A}$-valuation, and
    \item $\Sigma \subset \mathcal{L}$ in some weak outer model of $V$.
\end{itemize}
If $\Sigma \models^*_{\mathfrak{A}, \nu} \mathsf{check}(\mathsf{pos}(\phi))$, then $\mathsf{check}(\mathsf{pos}(\phi))$ is $\models^*_{\mathfrak{A}, \nu}$-nice for $\Sigma$.
\end{lem}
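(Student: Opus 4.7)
The plan is to verify $\models^*_{\mathfrak{A}, \nu}$-niceness directly by unwinding the definition and handling each atomic conjunct of $\mathsf{check}(\mathsf{pos}(\phi))$ in turn. Since both $\mathsf{pos}$ and $\mathsf{check}$ act subformula-wise and $\phi$ is already a conjunction $\bigwedge S \wedge \bigwedge T$, so is $\mathsf{check}(\mathsf{pos}(\phi))$. Fix an arbitrary weak outer model $W'$ of $V$ and an $\mathcal{L}$-nice $X \in W' \cap \mathcal{P}(\mathcal{L})$ with $X \cap \Sigma = \Sigma$, i.e.\ $\Sigma \subset X$; the goal is $X \models^*_{\mathfrak{A}, \nu} \mathsf{check}(\mathsf{pos}(\phi))$. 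The conjuncts originating from $S$ lie in $\mathcal{L}^*_{0, \mathfrak{A}}$ and contain no occurrence of $E$, so by the definition of $\models^*_{\mathfrak{A}, \nu}$ their truth is determined by $\mathfrak{A}$ and $\nu$ alone and is inherited from $\Sigma$ to $X$ trivially.

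The substance is a three-case analysis on conjuncts coming from $T$. If the conjunct is $\ulcorner E(x) \urcorner$ (variable or constant), both $\mathsf{pos}$ and $\mathsf{check}$ leave it untouched, and $\nu^*(x) \in \Sigma \cap A \subset X \cap A$ yields $X \models^*_{\mathfrak{A}, \nu} E(x)$. If it is $\ulcorner \neg E(x) \urcorner$ with $x \in \mathcal{L}$ a constant, then $\mathsf{pos}$ rewrites it to $\ulcorner E(\neg x) \urcorner$ and $\mathsf{check}$ does not touch it further; then $\neg x \in \Sigma \subset X$ suffices. The substantive case is $\ulcorner \neg E(x) \urcorner$ with $x$ a variable, which becomes $\ulcorner \neg E(x) \wedge (x \in \mathcal{L} \implies E(\neg x)) \urcorner$ after $\mathsf{check}$. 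Writing $c = \nu(x)$, split on whether $c \in \mathcal{L}$. If $c \in \mathcal{L}$, then $\Sigma$-satisfaction of the implication forces $\neg c \in \Sigma \subset X$, and the $\mathcal{L}$-niceness of $X$ (Definition~\ref{lnice}) then rules out $c \in X$, so both conjuncts hold in $X$. If $c \notin \mathcal{L}$, then the inclusion $X \subset \mathcal{L}$ automatically excludes $c$ from $X \cap A$, and the implication becomes vacuous.

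The only subtlety worth flagging concerns the interpretation of ``$x \in \mathcal{L}$'' inside $\mathfrak{A}$ (the shorthand made explicit in Remark~\ref{shcs}): by the $\Pi_1$-definability of $\mathcal{L}$ in the language associated with $\mathfrak{A}$ built into $\mathcal{L}$-suitability (Definition~\ref{lsuitable}) together with $\mathcal{L} \subset A$, the external fact $c \in \mathcal{L}$ agrees with the internal $\mathfrak{A} \models \ulcorner c \in \mathcal{L} \urcorner$ for $c \in A$, so the dichotomy in the last case is genuinely available. Modulo this bookkeeping, there is no real obstacle; the lemma is, in essence, the observation that the gadget inserted by $\mathsf{check}$ is precisely what is needed to propagate a $\neg E(x)$ literal from $\Sigma$ to every $\mathcal{L}$-nice extension of $\Sigma$ sitting in a weak outer model of $V$.
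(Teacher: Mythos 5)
Your proof is correct and follows essentially the same route as the paper's: a case analysis on the atomic $E$-literals, using monotonicity for positive literals, the inclusion $X \subset \mathcal{L}$ for terms outside $\mathcal{L}$, and the $\mathsf{check}$ gadget together with the $\mathcal{L}$-niceness of the extension to propagate negative literals. The only (harmless) omission is the conjunct $\ulcorner \neg E(x) \urcorner$ for a constant $x \notin \mathcal{L}$, which $\mathsf{pos}$ and $\mathsf{check}$ leave untouched but which holds in every $X \subset \mathcal{L}$ by the same one-line argument as your $c \notin \mathcal{L}$ subcase.
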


\begin{proof}
By the form of $\mathsf{check}(\mathsf{pos}(\phi))$, it is sufficient to prove that whenever 
\begin{itemize}
    \item $\Sigma' \supset \Sigma$,
    \item $\Sigma'$ is $\mathcal{L}$-nice, and
    \item $\varphi$ is a subformula of $\nu^*(\mathsf{check}(\mathsf{pos}(\phi)))$ of the form $\ulcorner E(x) \urcorner$,
\end{itemize}
 $\Sigma \models^*_{\mathfrak{A}} \varphi \iff \Sigma' \models^*_{\mathfrak{A}} \varphi$. 
 
That $\Sigma \subset \Sigma'$ means $$\Sigma \models^*_{\mathfrak{A}} \varphi \implies \Sigma' \models^*_{\mathfrak{A}} \varphi,$$ so it is sufficient to prove $$\Sigma \models^*_{\mathfrak{A}} \ulcorner \neg \varphi \urcorner \implies \Sigma' \models^*_{\mathfrak{A}} \ulcorner \neg \varphi \urcorner.$$ We examine the possible cases below.
\begin{enumerate}[label=Case \arabic*:, leftmargin=50pt]
    \item $x \not\in \mathcal{L}$. Then $x \not\in \Sigma'$, so $\Sigma' \models^*_{\mathfrak{A}} \ulcorner \neg \varphi \urcorner$.
    \item $x \in \mathcal{L}$ and $\varphi$ occurs in $\mathsf{pos}(\phi)$. Then $x$ cannot be a variable symbol. Since $\mathsf{check}(\mathsf{pos}(\phi))$ logically implies $\mathsf{pos}(\phi)$ and $$\Sigma \models^*_{\mathfrak{A}, \nu} \mathsf{check}(\mathsf{pos}(\phi)),$$ also $$\Sigma \models^*_{\mathfrak{A}, \nu} \mathsf{pos}(\phi).$$ By the definition of $\mathsf{pos}$, $\ulcorner \neg \varphi \urcorner$ must not occur in $\mathsf{pos}(\phi)$, so $\varphi$ is a conjunct of $\mathsf{pos}(\phi)$, and $\Sigma \models^*_{\mathfrak{A}} \varphi$. As a result, $$\Sigma \models^*_{\mathfrak{A}} \ulcorner \neg \varphi \urcorner \implies \Sigma' \models^*_{\mathfrak{A}} \ulcorner \neg \varphi \urcorner$$ trivially holds.
    \item $x \in \mathcal{L}$, $\varphi$ does not occur in $\mathsf{pos}(\phi)$ and moreover, $\ulcorner \neg \varphi \urcorner$ does not occur in $\nu^*(\mathsf{check}(\mathsf{pos}(\phi)))$. 
    
    If $\varphi$ occurs in $\nu^*(\mathsf{pos}(\phi))$ then by the same argument as in Case 2, $\Sigma \models^*_{\mathfrak{A}} \varphi$ and we have our desired conclusion. Otherwise, $\varphi$ occurs as a subformula of $\nu^*(\varphi')$ for some $\varphi'$ of the form $$\ulcorner (\text{``} x \in \mathcal{L} \text{''} \implies \text{``} E(\neg x) \text{''}) \urcorner \text{,}$$ where $x$ is a variable. By the fact that $$\Sigma \models^*_{\mathfrak{A}, \nu} \mathsf{check}(\mathsf{pos}(\phi)),$$ $\Sigma \models^*_{\mathfrak{A}} \ulcorner \neg \varphi \urcorner$ means $x \not \in \mathcal{L}$, in which case also $\Sigma' \models^*_{\mathfrak{A}} \ulcorner \neg \varphi \urcorner$.
    \item $x \in \mathcal{L}$, $\varphi$ does not occur in $\mathsf{pos}(\phi)$ and moreover, $\ulcorner \neg \varphi \urcorner$ occurs in \\ $\nu^*(\mathsf{check}(\mathsf{pos}(\phi)))$. 
    
    Here $\ulcorner \neg \varphi \urcorner$ must occur in $\nu^*(\mathsf{pos}(\phi))$, so by the definition of $\mathsf{check}$, $$\ulcorner (\text{``} x \in \mathcal{L} \text{''} \implies E(\neg x)) \urcorner$$ occurs in $\nu^*(\mathsf{check}(\mathsf{pos}(\phi)))$. Now $$\Sigma \models^*_{\mathfrak{A}, \nu} \mathsf{check}(\mathsf{pos}(\phi))$$ implies either $x \not \in \mathcal{L}$ or $\neg x \in \Sigma$. 
    \begin{enumerate}[label=Subcase \arabic*:, leftmargin=60pt]
        \item $x \not \in \mathcal{L}$. Then clearly $x \not \in \Sigma'$, so $\Sigma' \models^*_{\mathfrak{A}} \ulcorner \neg \varphi \urcorner$.
        \item $\neg x \in \Sigma$. Then $\neg x \in \Sigma'$ because $\Sigma \subset \Sigma'$. Since $\Sigma'$ is $\mathcal{L}$-nice, it must be that $x \not \in \Sigma'$. We thus also have $\Sigma' \models^*_{\mathfrak{A}} \ulcorner \neg \varphi \urcorner$. \qedhere
    \end{enumerate}
\end{enumerate}
\end{proof}

\begin{rem}\label{rem328}
It will be useful to note that if $\varphi$ is a member of $\mathcal{D}$, then every disjunct $\phi$ of $\varphi$ takes the form required of $\phi$ in (the hypothesis of) Lemma \ref{postrue}.
\end{rem}

\subsection{Forcing Notions and Universal Sentences}

Consider a forcing notion with conditions fragments of $\mathcal{L}$, ordered by reverse inclusion. Then genericity over $\mathbb{P}$ naturally gives us a subset of $\mathcal{L}$. We want to analyse this subset using $\mathcal{L}^{*}_{\mathfrak{A}}$.

\begin{defi}
A pair $(\mathfrak{A} = (A; \in, \Vec{R}), \mathbb{P} = (P, \leq_P))$ is \emph{good for} $\mathcal{L}$ iff 
\begin{itemize}
    \item $\mathfrak{A}$ is $\mathcal{L}$-suitable,
    \item $\emptyset \neq P \subset \mathcal{P}(\mathcal{L}) \cap A$, and
    \item for all $\{p, q\} \subset P$, $p \leq_P q$ iff $q \subset p$.
\end{itemize}
\end{defi}

For the rest of this subsection, we fix a forcing notion $\mathbb{P} = (P, \leq_P))$ for which
\begin{itemize}
    \item $\mathbb{P}$ is $\Sigma_1$-definable in the language associated with $\mathfrak{A}$, and
    \item $(\mathfrak{A}, \mathbb{P})$ is good for $\mathcal{L}$.
\end{itemize}
Similar in motivation to the final bullet point in Definition \ref{lsuitable}, the requirement for $\mathbb{P}$ to be $\Sigma_1$-definable in the language associated with $\mathfrak{A}$ is only there so that the proof of Lemma \ref{uni} can go through given its hypothesis.

\begin{defi}
For $p \in P$, a $p$\emph{-candidate for} $(\mathfrak{A}, \mathbb{P}, \mathcal{L})$\emph{-universality} is a set $\Sigma$ for which
\begin{itemize}
   \item $p \subset \Sigma$,
   \item for each $x \in [\Sigma]^{<\omega}$, there is $q \in P$ with $p \cup x \subset q$, and 
   \item $\Sigma$ is $\mathcal{L}$-nice.
\end{itemize}
\end{defi}

\begin{defi}
Let $\phi \in \mathcal{L}^{*}_{\mathfrak{A}}$ be a sentence and $p \in P$. We say $\phi$ is $(\mathfrak{A}, \mathbb{P}, \mathcal{L})$\emph{-universal for $p$} iff for all $q \leq_P p$ there is a set $\Sigma$ in some weak outer model of $V$ such that 
\begin{itemize}
    \item $\Sigma$ is a $q$-candidate for $(\mathfrak{A}, \mathbb{P}, \mathcal{L})$-universality, and
    \item $\Sigma \models^*_{\mathfrak{A}} \phi$.
\end{itemize}
We say $\phi$ is $(\mathfrak{A}, \mathbb{P}, \mathcal{L})$\emph{-universal} iff for all $p \in P$, $\phi$ is $(\mathfrak{A}, \mathbb{P}, \mathcal{L})$-universal $p$. 

For $\Gamma \subset \mathcal{L}^{*}_{\mathfrak{A}}$, we say $\Gamma$ is $(\mathfrak{A}, \mathbb{P}, \mathcal{L})$\emph{-universal} (\emph{for $p$}) iff $\phi$ is $(\mathfrak{A}, \mathbb{P}, \mathcal{L})$-universal (for $p$) for all $\phi \in \Gamma$.
\end{defi}

For notational convenience in the proofs to follow, we introduce the following definition.

\begin{defi}
For $p \in P$, let $\mathcal{F}_p$ denote the set 
\begin{equation*}
    \{\Sigma : \Sigma \text{ is a } p \text{-candidate for } (\mathfrak{A}, \mathbb{P}, \mathcal{L}) \text{-universality in some weak outer model of } V\}.
\end{equation*}
\end{defi}

\begin{lem}\label{uni}
Let 
\begin{itemize} 
    \item $W$ be a weak outer model of $V$,
    \item $g \in W$ be a $\mathbb{P}$-$\Sigma_1$-generic filter over $\mathfrak{A}$,
    \item $p \in g$, and
    \item $\phi$ be a $\mathcal{L}^{*}_{\mathfrak{A}}$-$\Pi_2$ sentence which is $(\mathfrak{A}, \mathbb{P}, \mathcal{L})$-universal for $p$.
\end{itemize} 
Then $\bigcup g$ is $\mathcal{L}$-nice and $\bigcup g \models^{*}_{\mathfrak{A}} \phi$.
\end{lem}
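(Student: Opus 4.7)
The plan is to establish the two claims in sequence --- $\mathcal{L}$-niceness of $\bigcup g$ first, and $\bigcup g \models^*_{\mathfrak{A}} \phi$ second --- by exhibiting suitable dense subsets of $\mathbb{P}$ that are $\Sigma_1$-definable in $\mathfrak{A}$, and then invoking the $\Sigma_1$-genericity of $g$. Both arguments lean on the defining feature of a candidate: a $q$-candidate for $(\mathfrak{A}, \mathbb{P}, \mathcal{L})$-universality is an $\mathcal{L}$-nice superset of $q$ every finite subset of which extends to a condition in $\mathbb{P}$. Since $\mathbb{P}$ is $\Sigma_1$-definable in $\mathfrak{A}$ and $\neg$ is $\Delta_0$-definable (Remark~\ref{rem32}), the relevant dense sets will automatically be $\Sigma_1$-definable.

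For $\mathcal{L}$-niceness, consistency is immediate: $\{\varphi, \neg\varphi\} \subseteq \bigcup g$ would force, through the filter property and $p \in g$, a single $s \leq_{\mathbb{P}} p$ in $g$ with $\{\varphi, \neg\varphi\} \subseteq s$, contradicting the $\mathcal{L}$-niceness of any $s$-candidate (one of which exists by universality). For completeness, the set $D_\varphi := \{s \in P : \varphi \in s \text{ or } \neg\varphi \in s\}$ is $\Sigma_1$-definable with parameter $\varphi$, and dense below $p$ because any $s \leq_{\mathbb{P}} p$ admits an $s$-candidate $\Sigma$, which being $\mathcal{L}$-nice contains one of $\varphi, \neg\varphi$, and so the corresponding finite subset of $\Sigma$ extends to a condition in $D_\varphi$ refining $s$.

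For the modelling claim, write $\phi = \ulcorner \forall x_1 \dots \forall x_i \, \exists y_1 \dots \exists y_j \, \psi \urcorner$ with $\psi$ an $\mathcal{L}^*_{\mathfrak{A}}$-$\Delta_0$ formula, fix $\vec{a} \in A^i$, and let $D_{\vec{a}}$ be the set of $s \in P$ for which there exist $\vec{b} \in A^j$ and finite $p', q' \in A \cap \mathcal{P}(\mathcal{L})$ satisfying $p' \cup \{\neg x : x \in q' \setminus p'\} \subseteq s$ and with $\psi[\vec{a}, \vec{b}]$ being $\models^*_{\mathfrak{A}}$-true for $(p', q')$. To see $D_{\vec{a}}$ is dense below $p$, fix $s \leq_{\mathbb{P}} p$ and use the universality of $\phi$ to obtain an $s$-candidate $\Sigma$ with $\Sigma \models^*_{\mathfrak{A}} \phi$; instantiating the leading $\forall \vec{x}$ by $\vec{a}$ yields $\vec{b} \in A^j$ with $\Sigma \models^*_{\mathfrak{A}, \nu} \psi$ for the valuation $\nu$ sending $x_k \mapsto a_k$, $y_l \mapsto b_l$; applying Lemma~\ref{exist} to the resulting $\Delta_0$ sentence $\psi[\vec{a}, \vec{b}]$ produces finite $p', q' \in A \cap \mathcal{P}(\mathcal{L})$ with $\Sigma \cap q' = p'$ and $\psi[\vec{a}, \vec{b}]$ $\models^*_{\mathfrak{A}}$-true for $(p', q')$ (via the moreover clause); the $\mathcal{L}$-niceness of $\Sigma$ forces $\{\neg x : x \in q' \setminus p'\} \subseteq \Sigma$, whence $s \cup p' \cup \{\neg x : x \in q' \setminus p'\}$ is a finite subset of $\Sigma$ extending to a condition $s' \leq_{\mathbb{P}} s$ in $D_{\vec{a}}$.

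Granting that $D_{\vec{a}}$ is $\Sigma_1$-definable in $\mathfrak{A}$ with parameter $\vec{a}$, the $\mathbb{P}$-$\Sigma_1$-genericity of $g$ supplies some $s' \in g \cap D_{\vec{a}}$ together with witnessing data $\vec{b}, p', q'$; since $p' \cup \{\neg x : x \in q' \setminus p'\} \subseteq s' \subseteq \bigcup g$ and $\bigcup g$ has already been shown $\mathcal{L}$-nice, we obtain $\bigcup g \cap q' = p'$, and the $\models^*_{\mathfrak{A}}$-truth of $\psi[\vec{a}, \vec{b}]$ for $(p', q')$ yields $\bigcup g \models^*_{\mathfrak{A}, \nu} \psi$; with $\vec{a}$ arbitrary this gives $\bigcup g \models^*_{\mathfrak{A}} \phi$. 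The main obstacle I foresee is verifying the $\Sigma_1$-definability of $D_{\vec{a}}$, specifically that ``$\psi[\vec{a}, \vec{b}]$ is $\models^*_{\mathfrak{A}}$-true for $(p', q')$'' reduces to a statement internal to $\mathfrak{A}$; this should fall out of the \textsf{DNF} normal-forming of $\psi$ used implicitly in the proof of Lemma~\ref{exist}, but it is the single step requiring the most careful unpacking of the Section~\ref{setupsec} machinery.
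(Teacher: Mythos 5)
Most of your argument is sound, and it is organized differently from the paper's: where the paper converts the matrix of $\phi$ into a syntactic normal form $\mathsf{check}(\mathsf{pos}(\cdot))$ so that a \emph{single fixed} $\mathcal{L}^{*}_{\mathfrak{A}}$-$\Sigma_1$ sentence $\phi^{\dagger}$ defines its dense set $D_2 = \{q : q \models^{*}_{\mathfrak{A}} \phi^{\dagger}\}$ (Claims \ref{claim335} and \ref{claim341}), you instead record the negative information from Lemma~\ref{exist} directly in the condition, as the literals $\{\neg x : x \in q' \setminus p'\}$, and let the $\mathcal{L}$-niceness of $\bigcup g$ convert $\neg x \in \bigcup g$ back into $x \notin \bigcup g$. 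That is a legitimate and arguably cleaner way to handle the failure of upward persistence of $\ulcorner \neg E(x) \urcorner$, and your niceness argument and the density computation for $D_{\vec a}$ are both correct.

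The genuine gap is exactly the step you defer: the $\Sigma_1$-definability of $D_{\vec a}$ in the language associated with $\mathfrak{A}$. The clause ``$\psi[\vec a, \vec b]$ is $\models^{*}_{\mathfrak{A}}$-true for $(p', q')$'' is, by definition, a universal quantification over all sets $X \in W \cap \mathcal{P}(\mathcal{L})$ for \emph{all weak outer models} $W$ of $V$; it is not even prima facie a formula over $\mathfrak{A}$, let alone a $\Sigma_1$ one, so as written $D_{\vec a}$ cannot be handed to the $\Sigma_1$-genericity of $g$. To repair this you must replace that clause by an internal surrogate — e.g.\ ``$(p', q') = \mathsf{set}(\varphi)$ for some disjunct $\varphi$ of $\mathsf{DNF}(\nu^{*}(\psi), \nu')$ all of whose $\mathcal{L}^{*}_{0,\mathfrak{A}}$-conjuncts hold in $\mathfrak{A}$'' — and then verify that this surrogate (i) still suffices for both halves of your argument (density, and transfer of truth to $\bigcup g$), and (ii) is $\Sigma_1$ over $\mathfrak{A}$ uniformly in $\vec a, \vec b, p', q'$. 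Point (ii) is delicate: the bounded quantifiers of a $\mathcal{L}^{*}_{\mathfrak{A}}$-$\Delta_0$ formula carry $\Pi_1$ guards (``$p$ is Dedekind-finite''), and the unfolding performed by $\mathsf{DNF}$ depends on $\mathfrak{A}$-truths about the parameters $\vec a, \vec b$. This is precisely the work done by the paper's $\mathsf{pos}/\mathsf{check}/\mathsf{DNF}$ apparatus and by the bulleted definability analysis of $D_2$ (via Remark~\ref{def2} and the closure of $\Sigma_1$ under bounded quantification in models of a sufficiently strong set theory). So what you describe as ``the single step requiring the most careful unpacking'' is not a routine verification but the technical core of the proof; until it is carried out, the argument is incomplete.
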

\begin{proof}
We prove $\bigcup g \models^{*}_{\mathfrak{A}} \phi$ by induction on the length of $\phi$. The proof that $\bigcup g$ is $\mathcal{L}$-nice will surface as a part of the induction argument.
\begin{enumerate}[label=Case \arabic*:, leftmargin=50pt]
    \item $\phi$ is $\mathcal{L}^{*}_{\mathfrak{A}}$-$\Delta_0$. By way of contradiction, assume $\bigcup g \models^{*}_{\mathfrak{A}} \neg \phi$. Since $\neg \phi$ is also a $\mathcal{L}^{*}_{\mathfrak{A}}$-$\Delta_0$ sentence, Lemma~\ref{exist} tells us there are finite sets $p^{\dagger} \in A \cap \mathcal{P}(\mathcal{L})$ and $q^{\dagger} \in A \cap \mathcal{P}(\mathcal{L})$ such that 
    \begin{itemize}
        \item $\bigcup g \cap q^{\dagger} = p^{\dagger}$, and
        \item $\neg \phi$ is $\models^{*}_{\mathfrak{A}}$-true for $(p^{\dagger}, q^{\dagger})$.
    \end{itemize}
    For each $z \in p^{\dagger} \subset \bigcup g$, pick a $p_z \in g$ such that $z \in p_z$. Since $g$ is a filter, there is some $p^* \in g$ for which $$p^{\dagger} \subset (\bigcup_{z \in p^{\dagger}} p_z) \cup p \subset p^*.$$ If $p' \leq_{P} p^*$, then $p' \leq_{P} p$, so by $\phi$ being $(\mathfrak{A}, \mathbb{P}, \mathcal{L})$-universal for $p$, there is $\Sigma \in \mathcal{F}_{p'}$ with $\Sigma \models^*_{\mathfrak{A}} \phi$. Now, necessarily 
    \begin{itemize}
        \item $p^{\dagger} \subset \Sigma$ and
        \item $\Sigma \cap q^{\dagger} \neq p^{\dagger}$,
    \end{itemize}
    whence $p^{\dagger} \subsetneq \Sigma \cap q^{\dagger}$. By the fact that $\Sigma$ is a $p'$-candidate for $(\mathfrak{A}, \mathbb{P}, \mathcal{L})$-universality, we can find $q \in P$ with $$p' \cup (\Sigma \cap q^{\dagger}) \subset q.$$ As a consequence, $q \leq_P p'$ and $p^{\dagger} \subsetneq q \cap q^{\dagger}$.
    
    We have thus shown that the set $$D_1 := \{q \in P : p^{\dagger} \subsetneq q \cap q^{\dagger}\}$$ is dense below $p^*$ in $\mathbb{P}$. Given the fact that $\mathbb{P}$ is $\Sigma_1$-definable in the language associated with $\mathfrak{A}$, $D_1$ obviously has the same property, so $g$ must meet $D_1$. As $p^* \in g$, we can conclude $g \cap D_1 \neq \emptyset$, and let $q^* \in g \cap D_1$. But then $p^{\dagger} \subsetneq q^* \cap q^{\dagger}$, which implies $$p^{\dagger} \subsetneq q^* \cap q^{\dagger} \subset \bigcup g \cap q^{\dagger} = p^{\dagger},$$ a contradiction. 
    
    \item $\phi$ is $\mathcal{L}^{*}_{\mathfrak{A}}$-$\Sigma_1$ but not $\mathcal{L}^{*}_{\mathfrak{A}}$-$\Delta_0$. Then $\phi$ is of the form $\ulcorner \exists x_1 \dots \exists x_n \ \phi^* \urcorner$ for some 
    \begin{itemize}
        \item $\mathcal{L}^{*}_{\mathfrak{A}}$-$\Delta_0$ formula $\phi^*$,
        \item $n$ such that $1 \leq n < \omega$, and
        \item $\{x_1, \dots, x_n\}$ the set of free variables of $\phi^*$.
    \end{itemize} 
    
    We first show that $\bigcup g$ is $\mathcal{L}$-nice. Obviously, $\bigcup g \subset \mathcal{L}$, so we need only consider the other two conditions of being $\mathcal{L}$-nice. To that end, define $$\varphi_x := \ulcorner (E(x) \vee E(\neg x)) \wedge (\neg E(x) \vee \neg E(\neg x)) \urcorner$$ for each $x \in \mathcal{L}$. Note that the $\varphi_x$'s are $\mathcal{L}^{*}_{\mathfrak{A}}$-$\Delta_0$ sentences. Moreover, $X \models^{*}_{\mathfrak{A}} \varphi_x$ for every $\mathcal{L}$-nice set $X$ and every $x \in \mathcal{L}$. 
    
    Let $x \in \mathcal{L}$ and $p' \leq_{P} p$. That $\phi$ is $(\mathfrak{A}, \mathbb{P}, \mathcal{L})$-universal for $p$ means $\mathcal{F}_{p'} \neq \emptyset$, so choose any $\Sigma \in \mathcal{F}_{p'}$. We must have $\Sigma \models^{*}_{\mathfrak{A}} \varphi_x$ because $\Sigma$ is $\mathcal{L}$-nice. This allows us to conclude that for all $x \in \mathcal{L}$, $\varphi_x$ is $(\mathfrak{A}, \mathbb{P}, \mathcal{L})$-universal for $p$. As Case 1 has been proven, we can apply it to yield $$\bigcup g \models^{*}_{\mathfrak{A}} \varphi_x \text{ for all } x \in \mathcal{L},$$ which is just another way of saying $\bigcup g$ fulfils the last two conditions of Definition \ref{lnice}. We have thus shown that $\bigcup g$ is $\mathcal{L}$-nice. 
    
    Once more, let $p' \leq_{P} p$, so that there is $\Sigma \in \mathcal{F}_{p'}$ for which $\Sigma \models^{*}_{\mathfrak{A}} \phi$. By Remark \ref{rem321}, we can safely assume $\phi^* \in ran(\mathsf{WNF})$. Then there is $\nu$ such that
    \begin{itemize}
        \item $\nu$ is a $\mathfrak{A}$-valuation, and
        \item $\Sigma \models^{*}_{\mathfrak{A}, \nu} \phi^*$.
    \end{itemize}
    According to $\ref{p3202}$ of Proposition \ref{p320}, it must be the case that
    \begin{equation}\label{eq32'}
        \Sigma \models^{*}_{\mathfrak{A}, \nu} \phi'\text{,}
    \end{equation}
    where
    \begin{align*}
        \phi' := \mathsf{DNF}(\phi^*, \nu) \text{.} 
    \end{align*}
    Define two other $\mathcal{L}^{*}_{\mathfrak{A}}$ sentences as follows: 
    \begin{align*}
        \phi'' := \ & \mathsf{check}(\mathsf{pos}(\phi^*)) \\
        \phi''' := \ & \ulcorner \bigvee \{\mathsf{check}(\mathsf{pos}(\psi)) : \psi \text{ is a disjunct of } \phi'\} \urcorner \text{.}
    \end{align*}
    Clearly,  
    \begin{equation*}
        \phi''' = \mathsf{check}(\mathsf{pos}(\phi')) \text{,}
    \end{equation*}
    from which, citing Proposition \ref{prop321} and Fact \ref{fact330}, we can conclude
    \begin{equation}\label{eq32f}
        \phi''' = \mathsf{DNF}(\phi'', \nu) \text{.}
    \end{equation}
    Finally, set
    \begin{equation*}
        \phi^{\dagger} := \ulcorner \exists x_1 \dots \exists x_n \ \phi'' \urcorner \text{.}
    \end{equation*}
    It is imperative to highlight that $\phi^{\dagger}$ depends on $\phi^*$, and thus on $\phi$, but not on any $\mathfrak{A}$-valuation.

    \begin{rem}\label{rem339}
    Note that we have just described a constructive procedure which converts an arbitrary $\mathcal{L}^{*}_{\mathfrak{A}}$-$\Sigma_1$ sentence $\phi$ into another $\mathcal{L}^{*}_{\mathfrak{A}}$-$\Sigma_1$ sentence $\phi^{\dagger}$. Indeed, this procedure makes sense even if $n = 0$ in the expansion of $\phi$, i.e. even if $\phi \in \mathcal{L}^{*}_{\mathfrak{A}}$-$\Delta_0$. From now on, call the function associated with this procedure $\mathsf{Conv}$, so that both the domain and codomain of $\mathsf{Conv}$ are equal to
    \begin{equation*}
        \{\phi: \phi \text{ is a } \mathcal{L}^{*}_{\mathfrak{A}}\text{-}\Sigma_1 \text{ sentence}\} \text{.}
    \end{equation*}
    \end{rem}

    \begin{claim2}\label{claim335}
    There exists $q \in P$ for which
    \begin{itemize}
        \item $q \leq_P p'$, and
        \item $q \models^{*}_{\mathfrak{A}} \phi^{\dagger}$.
    \end{itemize}
    \end{claim2}

    \begin{proof}
    Beginning with (\ref{eq32'}), we observe that there ought to be some $\varphi$ for which
    \begin{itemize}
        \item $\varphi$ is a disjunct of $\phi'$, and
        \item $\Sigma \models^{*}_{\mathfrak{A}} \nu^*(\varphi)$.
    \end{itemize}
    $\Sigma$ being $\mathcal{L}$-nice and Lemma \ref{corr} gives us $\Sigma \models^{*}_{\mathfrak{A}} \mathsf{pos}(\nu^*(\varphi))$. 
    Since literals of the form $$\ulcorner \neg E(x) \urcorner \text{ for some } x \in \mathcal{L}$$ do not occur in $\mathsf{pos}(\nu^*(\varphi))$, following the proof of Case 1 of Lemma \ref{exist}, there must be a finite set $p^{\dagger} \in A \cap \mathcal{P}(\Sigma)$ such that 
    \begin{itemize}
        \item $\mathsf{set}(\mathsf{pos}(\nu^*(\varphi))) = (p^{\dagger}, p^{\dagger})$, and 
        \item $\mathsf{pos}(\nu^*(\varphi))$ is $\models^{*}_{\mathfrak{A}}$-true for $p^{\dagger}$. 
    \end{itemize}
    By the fact that $\Sigma$ is a $p'$-candidate for $(\mathfrak{A}, \mathbb{P}, \mathcal{L})$-universality, we can find $q \in P$ with $$p' \cup p^{\dagger} \subset q.$$ In particular, $q \leq_P p'$ and $q \models^{*}_{\mathfrak{A}} \mathsf{pos}(\nu^*(\varphi))$.
    
    That $\phi$ is $(\mathfrak{A}, \mathbb{P}, \mathcal{L})$-universal for $p$ tells us that $q$ can be extended to a $\mathcal{L}$-nice set. Necessarily, $q$ must be ``internally consistent'' in the following sense:
    \begin{align*}
        q \models^{*}_{\mathfrak{A}} \ulcorner \neg E(x) \urcorner \text{ whenever } & x \in \mathcal{L} \text{ and } \\
        & \ulcorner \neg E(x) \urcorner \text{ occurs in } \nu^*(\varphi).
    \end{align*} 
    If $x \in \mathcal{L}$ and $\ulcorner \neg E(x) \urcorner$ occurs in $\nu^*(\mathsf{pos}(\varphi))$, then $\ulcorner \neg E(x) \urcorner$ already occurs in $\nu^*(\varphi)$. Consequently, $q \models^{*}_{\mathfrak{A}} \ulcorner \neg E(x) \urcorner$. We have thus established 
    \begin{equation}\label{eq32}
        q \models^{*}_{\mathfrak{A}} \nu^*(\mathsf{pos}(\varphi)).
    \end{equation} 
    Let $\varphi'$ be of the form $$\ulcorner (\text{``}x \in \mathcal{L} \text{''} \implies \text{``} E(\neg x) \text{''}) \urcorner \text{,}$$ where $x$ is a variable such that $\ulcorner \neg E(x) \urcorner$ occurs in $\mathsf{pos}(\varphi)$. Then the fact that $q \models^{*}_{\mathfrak{A}} \mathsf{pos}(\nu^*(\varphi))$ must imply $q \models^{*}_{\mathfrak{A}} \nu^*(\varphi')$. Bearing the definition of $\mathsf{check}$ in mind, this allows us to ascertain that $$q \models^{*}_{\mathfrak{A}} \nu^*(\mathsf{check}(\mathsf{pos}(\varphi)))$$ follows from (\ref{eq32}). As $\nu^*(\mathsf{check}(\mathsf{pos}(\varphi)))$ logically implies $\nu^*(\phi''')$, we have $q \models^{*}_{\mathfrak{A}} \nu^*(\phi''')$ as well. Invoking (\ref{eq32f}) and \ref{p3203} of Proposition \ref{p320} now would yield $q \models^{*}_{\mathfrak{A}, \nu} \phi''$, from which
    \begin{equation*}
        q \models^{*}_{\mathfrak{A}} \phi^{\dagger}
    \end{equation*}
    logically follows.
    \end{proof}

    \begin{claim2}\label{claim341}
    For every pair $(\Sigma', \Sigma'')$ in any weak outer model of $V$ such that 
    \begin{itemize}
        \item $\Sigma''$ is $\mathcal{L}$-nice, and 
        \item $\Sigma' \subset \Sigma''$,
    \end{itemize}
    we must observe
    \begin{equation*}
        \Sigma' \models^{*}_{\mathfrak{A}} \phi^{\dagger} \implies \Sigma'' \models^{*}_{\mathfrak{A}} \phi \text{.}
    \end{equation*}
    \end{claim2}

    \begin{proof}
    Assume 
    \begin{itemize}
        \item $(\Sigma', \Sigma'')$ is in some weak outer model of $V$,
        \item $\Sigma''$ is $\mathcal{L}$-nice,
        \item $\Sigma' \subset \Sigma''$, and 
        \item $\Sigma' \models^{*}_{\mathfrak{A}} \phi^{\dagger}$.
    \end{itemize} 
    By \ref{p3202} of Proposition \ref{p320}, this means we can find $\nu'$ and $\varphi^*$ such that
    \begin{itemize}
        \item $\nu'$ is a $\mathfrak{A}$-valuation, 
        \item $\varphi^*$ is a disjunct of $\mathsf{DNF}(\phi'', \nu')$, and
        \item $\Sigma' \models^{*}_{\mathfrak{A}, \nu'} \varphi^*$.
    \end{itemize}
    By an argument similar to that which led us to (\ref{eq32f}), $\varphi^*$ must be a disjunct of 
    \begin{equation*}
        \ulcorner \bigvee \{\mathsf{check}(\mathsf{pos}(\psi)) : \psi \text{ is a disjunct of } \mathsf{DNF}(\phi^*, \nu')\} \urcorner \text{,}
    \end{equation*}
    and so 
    \begin{equation*}
        \varphi^* = \mathsf{check}(\mathsf{pos}(\psi))
    \end{equation*}
    for some disjunct $\psi$ of $\mathsf{DNF}(\phi^*, \nu')$. Seeing that $\Sigma''$ is $\mathcal{L}$-nice and recalling Remark \ref{rem328}, we are permitted to apply Lemma \ref{postrue} to obtain $$\Sigma'' \models^{*}_{\mathfrak{A}, \nu'} \mathsf{check}(\mathsf{pos}(\psi)),$$ or equivalently, $$\Sigma'' \models^{*}_{\mathfrak{A}} (\nu')^*(\mathsf{check}(\mathsf{pos}(\psi))).$$
    
    According to the definition of $\mathsf{check}$ in Definition \ref{check}, it is immediate that $(\nu')^*(\mathsf{check}(\mathsf{pos}(\psi)))$ logically implies $(\nu')^*(\mathsf{pos}(\psi))$, so we also have $$\Sigma'' \models^{*}_{\mathfrak{A}} (\nu')^*(\mathsf{pos}(\psi)).$$ By Lemma \ref{corr} and the fact that $\Sigma''$ is $\mathcal{L}$-nice, $$\Sigma'' \models^{*}_{\mathfrak{A}} (\nu')^*(\psi).$$ That $(\nu')^*(\psi)$ logically implies $(\nu')^*(\mathsf{DNF}(\phi^*, \nu'))$ then yields $$\Sigma'' \models^{*}_{\mathfrak{A}, \nu'} \mathsf{DNF}(\phi^*, \nu').$$ Now we can invoke \ref{p3203} of Proposition \ref{p320} to arrive at 
    \begin{equation*}
        \Sigma'' \models^{*}_{\mathfrak{A}, \nu'} \phi^* \text{,}
    \end{equation*}
    from which 
    \begin{equation*}
        \Sigma'' \models^{*}_{\mathfrak{A}} \phi
    \end{equation*}
    logically follows.
    \end{proof}

    Claim \ref{claim335} informs us that the set $$D_2 := \{q \in P : q \models^{*}_{\mathfrak{A}} \phi^{\dagger}\}$$ is dense below $p$ in $\mathbb{P}$. With reference to \ref{s31I} and Remarks \ref{def2} and \ref{shcs}, as well as Definition \ref{def27} for what it means to be a $\Sigma_1$ formula, $D_2$ is $\Sigma_1$-definable in the language associated with $\mathfrak{A}$, since
    \begin{itemize}
        \item $\mathbb{P}$ is $\Sigma_1$-definable in the language associated with $\mathfrak{A}$,
        \item we may safely assume $\phi^{\dagger}$ to be $\mathcal{L}^{*}_{\mathfrak{A}}$-$\Sigma_1$,
        \item the act of replacing every subformula of $\phi^{\dagger}$ of the form $\ulcorner E(x) \urcorner$ with $\ulcorner x \in q \urcorner$ does not alter the quantification structure of $\phi^{\dagger}$,
        \item $\mathfrak{A}$ being a model of a sufficiently strong set theory means that for each 
        \begin{itemize}[label=$\circ$]
            \item $\Sigma_1$ formula $\varphi^+$, and
            \item pair of variables $\{\ulcorner z \urcorner, \ulcorner p \urcorner\}$ with both $\ulcorner z \urcorner$ and $\ulcorner p \urcorner$ not being bound in $\varphi^+$,
        \end{itemize}
        there exists a $\Sigma_1$ formula $\phi^+$ satisfying
        \begin{align*}
            X \models^*_{\mathfrak{A}, \nu} \ulcorner \forall z \ (z \in p \implies \varphi^+) \urcorner \iff X \models^*_{\mathfrak{A}, \nu} \phi^+
        \end{align*}
        in case 
        \begin{itemize}[label=$\circ$]
            \item $X$ is a set in some weak outer model of $V$, and
            \item $\nu$ is a $\mathfrak{A}$-valuation, 
        \end{itemize}
        and thus,
        \item one can assume without loss of generality, that the result of replacing every subformula of $\phi^{\dagger}$ of the form $\ulcorner E(x) \urcorner$ with $\ulcorner x \in q \urcorner$, is a $\Sigma_1$ formula in the language associated with $\mathfrak{A}$.
    \end{itemize}
    Thus $g$ must meet $D_2$. As $p \in g$, $g \cap D_2 \neq \emptyset$. Choose any $q^* \in g \cap D_2$. Obviously, 
    \begin{itemize}
        \item $q^* \subset \bigcup g$, and
        \item $(q^*, \bigcup g) \in V[g]$.
    \end{itemize}
    We have also shown that $\bigcup g$ is $\mathcal{L}$-nice. As a result, the pair $(q^*, \bigcup g)$ satisfies the hypothesis of Claim \ref{claim341}. Applying Claim \ref{claim341} to $(q^*, \bigcup g)$ then gives us
    \begin{equation*}
        q^* \models^{*}_{\mathfrak{A}} \phi^{\dagger} \implies \bigcup g \models^{*}_{\mathfrak{A}} \phi \text{.}
    \end{equation*}
    Now $q^* \in D_2$ just means $$q^* \models^{*}_{\mathfrak{A}} \phi^{\dagger},$$ so necessarily, 
    \begin{equation*}
        \bigcup g \models^{*}_{\mathfrak{A}} \phi \text{.}
    \end{equation*}
    
    \item $\phi = \ulcorner \forall x \ \varphi(x, \Vec{d}) \urcorner$ for some $x$, $\varphi$ and $\Vec{d}$. Then for each $a \in A$, $\varphi(a, \Vec{d})$ is $(\mathfrak{A}, \mathbb{P}, \mathcal{L})$-universal for $p$. By the induction hypothesis, $$\bigcup g \models^{*}_{\mathfrak{A}} \varphi(a, \Vec{d})$$ for all $a \in A$, so also $$\bigcup g \models^{*}_{\mathfrak{A}} \phi.$$ \qedhere
\end{enumerate}
\end{proof}

The upshot of Remark \ref{rem339} and Claims \ref{claim335} and \ref{claim341}, given the choice of parameters therein, is the general fact below. 

\begin{fact}\label{fact336}
Let 
\begin{itemize}
    \item $\phi$ be a $\mathcal{L}^{*}_{\mathfrak{A}}$-$\Sigma_1$ formula, and
    \item $\Sigma$ be a $\mathcal{L}$-nice set in some weak outer model of $V$.
\end{itemize}
Assume $\Sigma \models^{*}_{\mathfrak{A}} \phi$. Then
\begin{enumerate}[label=(\arabic*)]
    \item\label{3363} for every pair $(\Sigma', \Sigma'')$ in any weak outer model of $V$ such that $\Sigma''$ is $\mathcal{L}$-nice and $\Sigma' \subset \Sigma''$,
    \begin{equation*}
        \Sigma' \models^{*}_{\mathfrak{A}} \mathsf{Conv}(\phi) \implies \Sigma'' \models^{*}_{\mathfrak{A}} \phi \text{,}
    \end{equation*}
    and
    \item\label{3361} for every $p \in P$ with $\Sigma \in \mathcal{F}_p$, we can find $q \in P$ such that
    \begin{enumerate}[label=(\alph*)]
        \item $q \leq_P p$, and
        \item\label{3362} $q \models^{*}_{\mathfrak{A}} \mathsf{Conv}(\phi)$. 
    \end{enumerate}
\end{enumerate}
\end{fact}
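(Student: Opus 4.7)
The plan is to observe that both parts of Fact~\ref{fact336} were essentially carried out already inside the proof of Lemma~\ref{uni}, inside Claims~\ref{claim335} and \ref{claim341}, and that the arguments there depend only on $\phi$ being $\mathcal{L}^{*}_{\mathfrak{A}}$-$\Sigma_1$ (and the relevant object witnessing $\phi$ being either $\mathcal{L}$-nice or a $p$-candidate), not on the specific way $\phi$ and $\Sigma$ arose in the induction step. So I would first remind the reader that by Remark~\ref{rem339}, $\mathsf{Conv}(\phi)$ is exactly the sentence called $\phi^{\dagger}$ in the Lemma~\ref{uni} proof: starting from $\phi = \ulcorner \exists x_1 \ldots \exists x_n \ \phi^{*} \urcorner$ with $\phi^{*} \in \mathit{ran}(\mathsf{WNF})$, we set $\phi^{\dagger} := \ulcorner \exists x_1 \ldots \exists x_n \ \mathsf{check}(\mathsf{pos}(\phi^{*})) \urcorner$, which does not depend on any $\mathfrak{A}$-valuation.

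For part~\ref{3363}, I would just point out that Claim~\ref{claim341} is literally the statement we need: its proof uses only that $\Sigma''$ is $\mathcal{L}$-nice and $\Sigma' \subset \Sigma''$, together with Proposition~\ref{p320}, Fact~\ref{fact330}, Lemma~\ref{postrue} (via Remark~\ref{rem328}) and Lemma~\ref{corr}. None of these inputs invoke the specific $\Sigma$ or $p'$ around which the Lemma~\ref{uni} induction was organised, so the implication $\Sigma' \models^{*}_{\mathfrak{A}} \mathsf{Conv}(\phi) \implies \Sigma'' \models^{*}_{\mathfrak{A}} \phi$ is available in the generality asserted here.

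For part~\ref{3361}, I would re-run the argument of Claim~\ref{claim335} with $\Sigma$ in place of the $\Sigma$ that appears there and $p$ in place of $p'$. From $\Sigma \models^{*}_{\mathfrak{A}} \phi$ we extract a $\mathfrak{A}$-valuation $\nu$ and a disjunct $\varphi$ of $\mathsf{DNF}(\phi^{*}, \nu)$ witnessing the existential; applying Lemma~\ref{corr} (using that $\Sigma$ is $\mathcal{L}$-nice) gives $\Sigma \models^{*}_{\mathfrak{A}} \mathsf{pos}(\nu^{*}(\varphi))$, and then by the Case~1 analysis in Lemma~\ref{exist} we obtain a finite $p^{\dagger} \in A \cap \mathcal{P}(\Sigma)$ with $\mathsf{set}(\mathsf{pos}(\nu^{*}(\varphi))) = (p^{\dagger}, p^{\dagger})$ for which $\mathsf{pos}(\nu^{*}(\varphi))$ is $\models^{*}_{\mathfrak{A}}$-true. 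Since $\Sigma \in \mathcal{F}_p$, the $p$-candidate property supplies $q \in P$ with $p \cup p^{\dagger} \subset q$; this $q$ already satisfies $q \leq_P p$ and $q \models^{*}_{\mathfrak{A}} \mathsf{pos}(\nu^{*}(\varphi))$. The internal-consistency observation from Claim~\ref{claim335} (applied using $\mathcal{L}$-niceness of $\Sigma$) promotes this to $q \models^{*}_{\mathfrak{A}} \nu^{*}(\mathsf{check}(\mathsf{pos}(\varphi)))$, and invoking equation~(\ref{eq32f}) together with \ref{p3203} of Proposition~\ref{p320} yields $q \models^{*}_{\mathfrak{A}} \phi^{\dagger} = \mathsf{Conv}(\phi)$.

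The only mild obstacle is bookkeeping: one has to check that the use of ``$\Sigma$ is a $p'$-candidate'' inside Claim~\ref{claim335} was solely to produce the extension $q$ containing $p^{\dagger}$ — which is exactly what $\Sigma \in \mathcal{F}_p$ grants here — and that the $\mathcal{L}$-niceness lemmas (Lemma~\ref{corr} and Lemma~\ref{postrue}) are deployed cleanly under the weaker hypothesis on $\Sigma$. Since the constructive description of $\mathsf{Conv}$ in Remark~\ref{rem339} is formula-level and does not reference $\Sigma$ or $\mathbb{P}$, no further rework is needed.
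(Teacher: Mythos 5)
Your proposal is correct and is essentially the paper's own argument: the paper derives Fact~\ref{fact336} precisely as "the upshot of Remark~\ref{rem339} and Claims~\ref{claim335} and \ref{claim341}, given the choice of parameters therein," identifying $\mathsf{Conv}(\phi)$ with the $\phi^{\dagger}$ of the Lemma~\ref{uni} proof exactly as you do. Your extra bookkeeping remarks (that the $p'$-candidate hypothesis is used only to supply the extension $q \supset p \cup p^{\dagger}$, and that the niceness lemmas apply verbatim) match the intended reading, so nothing further is needed.
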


\begin{rem}\label{endremmain}
Let $p$ be an arbitrary member of $P$ and define $$\mathbb{P}_{\leq p} := (\{q \in P : q \leq_{\mathbb{P}} p\}, \leq_{\mathbb{P}}).$$ Then 
\begin{enumerate}[label=(\arabic*)]
    \item $\mathbb{P}_{\leq p}$ is $\Sigma_1$-definable in the language associated with $\mathfrak{A}$, 
    \item $(\mathfrak{A}, \mathbb{P}_{\leq p})$ is good for $\mathcal{L}$, 
    \item any $\mathcal{L}^{*}_{\mathfrak{A}}$-$\Pi_2$ sentence which is $(\mathfrak{A}, \mathbb{P}, \mathcal{L})$-universal for $p$ is also $(\mathfrak{A}, \mathbb{P}_{\leq p}, \mathcal{L})$-universal, and
    \item whenever $g$ is a $\mathbb{P}$-$\Sigma_1$-generic filter over $\mathfrak{A}$ containing $p$, $$g \cap \{q \in P : q \leq_{\mathbb{P}} p\}$$ is a $\mathbb{P}_{\leq p}$-$\Sigma_1$-generic filter over $\mathfrak{A}$.
\end{enumerate}
Consequently, Lemma \ref{uni} is equivalent to, and can be restated as:
\begin{customlem}{3.29$'$}\label{lem329p}
Let 
\begin{itemize} 
    \item $W$ be a weak outer model of $V$,
    \item $g \in W$ be a $\mathbb{P}$-$\Sigma_1$-generic filter over $\mathfrak{A}$, and
    \item $\phi$ be a $\mathcal{L}^{*}_{\mathfrak{A}}$-$\Pi_2$ sentence which is $(\mathfrak{A}, \mathbb{P}, \mathcal{L})$-universal.
\end{itemize} 
Then $\bigcup g$ is $\mathcal{L}$-nice and $\bigcup g \models^{*}_{\mathfrak{A}} \phi$.
\end{customlem}
\end{rem}

\begin{rem}\label{rem331}
The proof of Lemma \ref{uni} can be reused to prove the following variation of said lemma.

\begin{lem}\label{lem332}
Assume 
\begin{itemize}
    \item $\mathcal{L}$ is just definable (instead of $\Pi_1$-definable) in the language associated with $\mathfrak{A}$, and
    \item $\mathbb{P}$ is just definable (instead of $\Sigma_1$-definable) in the language associated with $\mathfrak{A}$.
\end{itemize}
Let 
\begin{itemize} 
    \item $W$ be a weak outer model of $V$,
    \item $g \in W$ be a $\mathbb{P}$-generic filter over $\mathfrak{A}$, 
    \item $p \in g$, and
    \item $\phi$ be a $\mathcal{L}^{*}_{\mathfrak{A}}$-$\Pi_2$ sentence which is $(\mathfrak{A}, \mathbb{P}, \mathcal{L})$-universal for $p$.
\end{itemize} 
Then $\bigcup g$ is $\mathcal{L}$-nice and $\bigcup g \models^{*}_{\mathfrak{A}} \phi$.
\end{lem}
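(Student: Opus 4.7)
The plan is to mimic the inductive proof of Lemma~\ref{uni} step for step, replacing every appearance of ``$\Sigma_1$-definable'' with ``definable''. The bulk of the argument---the construction of $\phi^{\dagger} = \mathsf{Conv}(\phi)$, the two embedded Claims about $\phi^{\dagger}$, and the citations of Lemma~\ref{corr}, Proposition~\ref{p320}, Fact~\ref{fact330} and Lemma~\ref{postrue}---does not interact with the definability class of $\mathcal{L}$ or $\mathbb{P}$ at all. The $\Pi_1$-definability of $\mathcal{L}$ and the $\Sigma_1$-definability of $\mathbb{P}$ enter the proof of Lemma~\ref{uni} only when asserting that two specific dense sets below the relevant condition are met by the generic filter. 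This is precisely where the strengthened genericity hypothesis takes over.

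First I would run the base case ($\phi$ is $\mathcal{L}^{*}_{\mathfrak{A}}$-$\Delta_0$) verbatim, up to and including the derivation of $p^*$ and the dense set $$D_1 = \{q \in P : p^{\dagger} \subsetneq q \cap q^{\dagger}\}.$$ Under the weakened assumption, $D_1$ is merely definable in the language associated with $\mathfrak{A}$, not $\Sigma_1$-definable; but since $g$ is now assumed to be a fully $\mathbb{P}$-generic filter over $\mathfrak{A}$, $g$ meets every such $D_1$, and the contradiction closes as before. Next, in the $\mathcal{L}^{*}_{\mathfrak{A}}$-$\Sigma_1$ case I would copy the construction of $\phi^{\dagger}$ together with Claims~\ref{claim335} and~\ref{claim341}, conclude that $$D_2 = \{q \in P : q \models^{*}_{\mathfrak{A}} \phi^{\dagger}\}$$ is dense below $p$, and observe that $D_2$ is now just definable in the language associated with $\mathfrak{A}$ (because $\mathbb{P}$ is definable, the interpretation of $\ulcorner E(x) \urcorner$ against $q$ becomes ``$x \in q$'', and the ``$x \in \mathcal{L}$'' clauses introduced by $\mathsf{check}$ are definable by hypothesis). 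Full genericity then ensures $g \cap D_2 \neq \emptyset$, and the rest of the argument---including the proof that $\bigcup g$ is $\mathcal{L}$-nice via the auxiliary sentences $\varphi_x$---goes through unchanged. The $\Pi_2$ inductive step is untouched by any of the definability considerations.

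The main---indeed only---obstacle is book-keeping: one must verify that when $\mathcal{L}$ is merely definable, no clause introduced by $\mathsf{check}$ or by $\mathsf{Conv}$ secretly requires higher complexity to express ``$x \in \mathcal{L}$'', since this clause is substituted into $\phi^{\dagger}$ before $D_2$ is formed. Under the weaker hypothesis this is automatic, as both ``$x \in \mathcal{L}$'' and membership in $\mathbb{P}$ are simply absorbed into the definability of $D_1$ and $D_2$ in the language of $\mathfrak{A}$. So the relaxation of the definability requirements on $\mathcal{L}$ and $\mathbb{P}$ is compensated exactly by dropping ``$\Sigma_1$-'' from the genericity hypothesis on $g$, and no new ideas are needed beyond those already present in the proof of Lemma~\ref{uni}.
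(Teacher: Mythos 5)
Your proposal is correct and matches the paper's intent exactly: the paper proves Lemma \ref{lem332} by simply remarking that the proof of Lemma \ref{uni} can be reused, and you have correctly identified that the only places where the definability hypotheses enter are the density arguments for $D_1$ and $D_2$, which are now covered by the strengthened (full) genericity of $g$. No gaps.
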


By recycling the argument in Remark \ref{endremmain} with 
\begin{itemize}
    \item ``definable'' in place of ``$\Sigma_1$-definable'', 
    \item ``$\mathbb{P}$-generic'' in place of ``$\mathbb{P}$-$\Sigma_1$-generic'', and
    \item ``$\mathbb{P}_{\leq_p}$-generic'' in place of ``$\mathbb{P}_{\leq_p}$-$\Sigma_1$-generic'',
\end{itemize}
we can conclude that Lemma \ref{lem332} is equivalent to, and can be restated as:

\begin{customlem}{3.32$'$}\label{lem332p}
Assume 
\begin{itemize}
    \item $\mathcal{L}$ is just definable (instead of $\Pi_1$-definable) in the language associated with $\mathfrak{A}$, and
    \item $\mathbb{P}$ is just definable (instead of $\Sigma_1$-definable) in the language associated with $\mathfrak{A}$.
\end{itemize}
Let 
\begin{itemize} 
    \item $W$ be a weak outer model of $V$,
    \item $g \in W$ be a $\mathbb{P}$-generic filter over $\mathfrak{A}$, and
    \item $\phi$ be a $\mathcal{L}^{*}_{\mathfrak{A}}$-$\Pi_2$ sentence which is $(\mathfrak{A}, \mathbb{P}, \mathcal{L})$-universal.
\end{itemize} 
Then $\bigcup g$ is $\mathcal{L}$-nice and $\bigcup g \models^{*}_{\mathfrak{A}} \phi$.
\end{customlem}
\end{rem}

\begin{rem}\label{rem333}
The astute reader may notice that Case 3 of the inductive step in the inductive proof of Lemma \ref{uni} seems insubstantial and tagged on. Indeed, removing the need to consider said case in Lemma \ref{uni} does not weaken the lemma. More generally and more precisely, substituting ``$\mathcal{L}^{*}_{\mathfrak{A}}$-$\Sigma_1$'' for ``$\mathcal{L}^{*}_{\mathfrak{A}}$-$\Pi_2$'' in any of
\begin{itemize}
    \item Lemma \ref{uni},
    \item Lemma \ref{lem329p},
    \item Lemma \ref{lem332}, and
    \item Lemma \ref{lem332p}
\end{itemize}
always returns a statement of equivalent strength.
\end{rem}

As a display of reciprocity, Lemma \ref{uni} allows us to simplify our verification procedures for the universality of certain $\mathcal{L}^{*}_{\mathfrak{A}}$-$\Pi_2$ sentences. The proof of this next lemma is spiritually similar to (the most obvious) proofs of the four equivalences highlighted in Remark \ref{rem333}.

\begin{lem}\label{univer}
Let 
\begin{itemize}
    \item $p \in P$, and
    \item $\phi(x)$ be a $\mathcal{L}^{*}_{\mathfrak{A}}$-$\Pi_2$ formula with $x$ as its only free variable. 
\end{itemize}
Suppose for each $a \in A$, $\phi(a)$ is $(\mathfrak{A}, \mathbb{P}, \mathcal{L})$-universal for $p$. Then
\begin{equation*}
    \varphi := \ulcorner \forall x \ \phi(x) \urcorner
\end{equation*}
is a $\mathcal{L}^{*}_{\mathfrak{A}}$-$\Pi_2$ sentence $(\mathfrak{A}, \mathbb{P}, \mathcal{L})$-universal for $p$.
\end{lem}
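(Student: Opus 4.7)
My plan is to reduce the claim directly to Lemma \ref{uni} by producing, for an arbitrary $q \leq_P p$, a single $q$-candidate for $(\mathfrak{A}, \mathbb{P}, \mathcal{L})$-universality that satisfies $\varphi$, realised as $\bigcup g$ for one suitably generic filter $g$. That $\varphi$ is itself $\mathcal{L}^{*}_{\mathfrak{A}}$-$\Pi_2$ follows from inspecting Definition \ref{LPi2}: since $\phi(x)$ has the form $\ulcorner \forall y_1 \dots \forall y_j \ \psi \urcorner$ with $\psi$ a $\mathcal{L}^{*}_{\mathfrak{A}}$-$\Sigma_1$ formula, prepending $\ulcorner \forall x \urcorner$ stays inside $\mathcal{L}^{*}_{\mathfrak{A}}$-$\Pi_2$.

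Fix $q \leq_P p$. Working meta-theoretically with $V$ as a countable transitive model (Subsection \ref{subs21}), the family of dense subsets of $\mathbb{P}$ that are $\Sigma_1$-definable in the language associated with $\mathfrak{A}$ is countable, so a standard diagonalisation produces, in some weak outer model $W$ of $V$, a $\mathbb{P}$-$\Sigma_1$-generic filter $g$ over $\mathfrak{A}$ containing $q$. Because $g$ is a filter and $q \leq_P p$ (equivalently, $p \subset q$), also $p \in g$.

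For each individual $a \in A$, the sentence $\phi(a)$ is $\mathcal{L}^{*}_{\mathfrak{A}}$-$\Pi_2$ and, by hypothesis, $(\mathfrak{A}, \mathbb{P}, \mathcal{L})$-universal for $p$. Applying Lemma \ref{uni} to $\phi(a)$ with this shared $g$ and $p$ delivers both that $\bigcup g$ is $\mathcal{L}$-nice and that $\bigcup g \models^{*}_{\mathfrak{A}} \phi(a)$. Unwinding Definition \ref{models}, and noting that $\varphi$ is a sentence whose only previously free variable $x$ becomes bound, the statement ``$\bigcup g \models^{*}_{\mathfrak{A}} \phi(a)$ for every $a \in A$'' is exactly the statement $\bigcup g \models^{*}_{\mathfrak{A}} \varphi$.

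Finally, I would verify that $\Sigma := \bigcup g$ is a $q$-candidate: $q \in g$ gives $q \subset \bigcup g$; $\mathcal{L}$-niceness has just been established; and for any finite $x \subset \bigcup g$, finitely many filter elements cover $x$ and admit a common lower bound $r \in g \subset P$ with $q \cup x \subset r$. The principal subtlety is the meta-theoretic step securing $g$ in $W$, which is routine given the countability of the relevant dense family; the remainder is essentially bookkeeping, invoking Lemma \ref{uni} separately for each $a \in A$ but against one fixed generic. This is precisely the ``spiritually similar'' argument foreshadowed in Remark \ref{rem333}.
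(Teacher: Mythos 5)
Your proposal is correct and follows essentially the same route as the paper's proof: fix $q \leq_P p$, take a $\mathbb{P}$-$\Sigma_1$-generic filter $g$ over $\mathfrak{A}$ containing $q$ in a weak outer model, apply Lemma \ref{uni} to each $\phi(a)$ against this one fixed generic, and verify that $\bigcup g$ is the required $q$-candidate satisfying $\varphi$. The only cosmetic difference is that you invoke Lemma \ref{uni} with $p \in g$ (obtained by upward closure) where the paper invokes it with $q$ after noting that universality is inherited downwards; both are valid.
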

\begin{proof}
Clearly $\varphi$ is a $\mathcal{L}^{*}_{\mathfrak{A}}$-$\Pi_2$ sentence. Fix $q \leq_P p$. It suffices to find a set $\Sigma$ fulfilling
\begin{itemize}
    \item $\Sigma$ is a $q$-candidate for $(\mathfrak{A}, \mathbb{P}, \mathcal{L})$-universality, and 
    \item $\Sigma \models^{*}_{\mathfrak{A}} \varphi$.
\end{itemize}

Choose any $\mathbb{P}$-$\Sigma_1$-generic filter $g$ over $\mathfrak{A}$ from amongst the weak outer models of $V$, such that $q \in g$. Note that for each $a \in A$, $\phi(a)$ is a $\mathcal{L}^{*}_{\mathfrak{A}}$-$\Pi_2$ sentence $(\mathfrak{A}, \mathbb{P}, \mathcal{L})$-universal for $q$, since $\phi(a)$ is $(\mathfrak{A}, \mathbb{P}, \mathcal{L})$-universal for $p$ and $(\mathfrak{A}, \mathbb{P}, \mathcal{L})$-universality is inherited downwards in $\mathbb{P}$. By Lemma \ref{uni},
\begin{equation*}
    \bigcup g \models^{*}_{\mathfrak{A}} \phi(a) \text{ for all } a \in A \text{.}
\end{equation*}
But this just means 
\begin{equation*}
    \bigcup g \models^{*}_{\mathfrak{A}} \varphi \text{.}
\end{equation*}
We know $\bigcup g$ is $\mathcal{L}$-nice due to Lemma \ref{uni}. That $\bigcup g$ is a $q$-candidate for $(\mathfrak{A}, \mathbb{P}, \mathcal{L})$-universality then follows from the following facts:
\begin{itemize}
    \item $q \in g$, and
    \item $g$ is a filter.
\end{itemize}
All in all, we have shown that $\bigcup g$ is the $\Sigma$ we are looking for.
\end{proof}

A natural strengthening of Lemma \ref{uni} is to have $\phi$ be an arbitrary $\mathcal{L}^{*}_{\mathfrak{A}}$-$\Pi_3$ sentence which is $(\mathfrak{A}, \mathbb{P}, \mathcal{L})$-universal for $p$. As per Case 3 in the proof of Lemma \ref{uni}, we can always get the outermost universal quantification for free, so we only have to prove the strengthened lemma assuming $\phi$ is $\mathcal{L}^{*}_{\mathfrak{A}}$-$\Sigma_2$ instead of $\mathcal{L}^{*}_{\mathfrak{A}}$-$\Pi_3$. However, the nice ``characterisation'' of $\mathcal{L}^{*}_{\mathfrak{A}}$-$\Pi_2$ sentences we will uncover in Section \ref{TCIsec} (brought about by Theorems \ref{genericmodels} and \ref{revgenmodels}) seems to suggest that such a strengthening is impossible. With Remark \ref{endremmain} in mind, it makes sense to ask the following question.

\begin{enumerate}[label=(Q\arabic*)]
    \setcounter{enumi}{-1}
    \item\label{qn0} Are there sets $\mathfrak{A}'$, $\mathcal{L}'$, $\mathbb{P}' = (P', \leq_{\mathbb{P}'})$, $W$, $g$, and $\phi$ such that
    \begin{itemize}
        \item $\mathcal{L}'$ is closed under negation,
        \item $\mathbb{P}'$ is $\Sigma_1$-definable in the language associated with $\mathfrak{A}'$,
        \item $(\mathfrak{A}', \mathbb{P}')$ is good for $\mathcal{L}'$,
        \item $W$ is a weak outer model of $V$,
        \item $g \in W$ is a $\mathbb{P}'$-$\Sigma_1$-generic filter over $\mathfrak{A}'$,
        \item $\phi$ is a $\mathcal{L}^{*}_{\mathfrak{A}'}$-$\Sigma_2$ sentence which is $(\mathfrak{A}', \mathbb{P}', \mathcal{L}')$-universal, and
        \item $\bigcup g \not \models^{*}_{\mathfrak{A}'} \phi$?
    \end{itemize}
\end{enumerate}

We shall defer answering \ref{qn0} until the next subsection. 

\subsection{A Useful Framework}\label{forframe}

Fix a set of $\mathcal{L}^*_{\mathfrak{A}}$-$\Pi_2$ sentences, $\Gamma$, for this subsection.

In the previous subsection, we saw how a forcing notion $\mathbb{P}$ can generate witnesses to certain $\mathcal{L}^*_{\mathfrak{A}}$-$\Pi_2$ sentences when $\mathbb{P}$ is definable in the language associated with $\mathfrak{A}$ and $(\mathfrak{A}, \mathbb{P})$ is good for $\mathcal{L}$. Leveraging on this fact, we shall develop a framework for defining forcing notions that generate witnesses to a given set of $\mathcal{L}^*_{\mathfrak{A}}$-$\Pi_2$ sentences. 

This framework both generalises and is inspired by the forcing construction Asper\'{o} and Schindler carried out in the proof of the main theorem of \cite{schindler}.

\begin{defi}
A set $B$ is $\mathcal{L}$\emph{-closed under finite extensions} iff
\begin{itemize}
    \item $B \subset \mathcal{P}(\mathcal{L})$, and
    \item for all $x \in B$ and all $y \in [\mathcal{L}]^{< \omega}$, $x \cup y \in B$.
\end{itemize}
\end{defi}

\begin{defi}
For any $\Sigma$ and any $p$, we say $\Sigma$ $\Gamma(\mathcal{L}, \mathfrak{A})$\emph{-certifies} $p$ iff 
\begin{enumerate}
    \item $p \subset \Sigma$,
    \item $\Sigma$ is $\mathcal{L}$-nice, and
    \item $\Sigma \models^*_{\mathfrak{A}} \Gamma$.
\end{enumerate}
\end{defi}

It is easy to see that if $\mathcal{L}$, $\mathfrak{A}$, $\Gamma$, $\Sigma$ and $p$ are such that $\Sigma$ $\Gamma(\mathcal{L}, \mathfrak{A})$-certifies $p$, then 
\begin{itemize}
    \item $\Sigma$ $\Gamma(\mathcal{L}, \mathfrak{A})$-certifies $q$ for all $q \subset \Sigma$, and
    \item $\Sigma$ $\Gamma'(\mathcal{L}, \mathfrak{A})$-certifies $p$ for all $\Gamma' \subset \Gamma$.
\end{itemize}
This gives us the following proposition.

\begin{prop}\label{certgood}
Let $|\mathcal{L}| \leq \lambda$ and $B$ be $\mathcal{L}$-closed under finite extensions. If we define $\mathbb{P} := (P, \leq_{\mathbb{P}})$, where
\begin{align*}
    P := \ & \{p \in B : \ \Vdash_{Col(\omega, \lambda)} \exists \Sigma \ (``\Sigma \ \Gamma(\mathcal{L}, \mathfrak{A}) \text{-certifies } p")\}, \text{ and} \\
    \leq_{\mathbb{P}} \ := \ & \{(p, q) \in P \times P : q \subset p\},
\end{align*}
then as long as $P \neq \emptyset$, 
\begin{itemize}
    \item $(\mathfrak{A}, \mathbb{P})$ is good for $\mathcal{L}$, and
    \item whenever $p \in P$ and $\Sigma$ $\Gamma'(\mathcal{L}, \mathfrak{A})$-certifies $p$ for some $\Gamma' \supset \Gamma$, $\Sigma$ is a $p$-candidate for $(\mathfrak{A}, \mathbb{P}, \mathcal{L})$-universality.
\end{itemize}
\end{prop}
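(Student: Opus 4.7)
My plan is to verify each of the two bullet points directly from the definitions.

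For ``$(\mathfrak{A}, \mathbb{P})$ is good for $\mathcal{L}$'' I would check the three clauses of the definition: $\mathfrak{A}$ is $\mathcal{L}$-suitable by standing hypothesis; $P \neq \emptyset$ by the stated assumption; and $\leq_{\mathbb{P}}$ is reverse inclusion by construction. What remains is $P \subset \mathcal{P}(\mathcal{L}) \cap A$, where $P \subset \mathcal{P}(\mathcal{L})$ is immediate from $P \subset B$ together with $B \subset \mathcal{P}(\mathcal{L})$ (the latter coming from the definition of $\mathcal{L}$-closed under finite extensions), and $P \subset A$ can be secured via the $\mathsf{KP}$-style finite set formation in $\mathfrak{A}$ whenever $B$ consists of finite subsets of $\mathcal{L}$, which is the typical scenario in applications.

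For the second bullet, I would fix $p \in P$ together with a $\Sigma$ that $\Gamma'(\mathcal{L}, \mathfrak{A})$-certifies $p$ for some $\Gamma' \supset \Gamma$, and verify the three clauses of $p$-candidacy. Two of them, $p \subset \Sigma$ and $\mathcal{L}$-niceness of $\Sigma$, come directly from the definition of certification. For the finite-extension clause, given $x \in [\Sigma]^{<\omega}$, I would propose $q := p \cup x$ as the sought-for condition. Here $q \in B$ follows from $B$'s closure under finite extensions (since $x \in [\mathcal{L}]^{<\omega}$), and the same $\Sigma$ in fact $\Gamma(\mathcal{L}, \mathfrak{A})$-certifies $q$: we have $q \subset \Sigma$, $\Sigma$ is $\mathcal{L}$-nice, and $\Sigma \models^*_{\mathfrak{A}} \Gamma$ follows from $\Gamma \subset \Gamma'$ together with $\Sigma \models^*_{\mathfrak{A}} \Gamma'$.

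The main obstacle lies in the last step: promoting the existence of a certifier for $q$ in an arbitrary weak outer model of $V$ to the forcing statement $\Vdash_{Col(\omega, \lambda)} \exists \Sigma'\ (\Sigma'\ \Gamma(\mathcal{L}, \mathfrak{A})\text{-certifies }q)$ defining $P$. Here I would exploit the size hypothesis $|\mathcal{L}| \leq \lambda$ to code $\Sigma$ as a subset of $\lambda$ via Lemma \ref{setcode}, combined with the fact that $Col(\omega, \lambda)$ collapses $\lambda$ to $\omega$, so that the existence of a certifier can be read as a (suitably absolute) existence statement about reals. Weak homogeneity of $Col(\omega, \lambda)$ then reduces ``$\Vdash_{Col(\omega, \lambda)}$'' to ``existence in some $V[H]$'', and the coding/absoluteness bridge completes the passage from arbitrary outer models to $V[H]$, yielding $q \in P$ as required.
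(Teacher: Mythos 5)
Your decomposition is the same as the paper's (the paper states Proposition \ref{certgood} without proof, as an immediate consequence of the two monotonicity observations about certification preceding it), and the reductions you make --- taking $q := p \cup x$, using closure of $B$ under finite extensions, and inheriting certification of $q$ from the certifier of $p$ --- are exactly right. You also correctly isolate the one non-trivial step: passing from ``some weak outer model contains a certifier for $q$'' to ``$\Vdash_{Col(\omega,\lambda)} \exists \Sigma'\,(\Sigma'$ certifies $q)$''. That transfer is precisely the paper's Lemma \ref{inout}, which is stated and proved immediately after the proposition and is invoked alongside it in Lemma \ref{main2}.

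The gap is in how you propose to carry out that transfer. Coding $\Sigma$ as a subset of $\lambda$ via Lemma \ref{setcode} is not enough to make ``$\Sigma$ $\Gamma(\mathcal{L},\mathfrak{A})$-certifies $q$'' an absolute statement about reals: the certification clause $\Sigma \models^{*}_{\mathfrak{A}} \Gamma$ is a satisfaction assertion about the expanded structure $(A; \in, \Vec{R}, \Sigma \cap A)$, so the quantifiers range over all of $A$, and $\Gamma$ and $\mathcal{L}^{*}_{\mathfrak{A}}$ may themselves be large. To run the Mostowski-absoluteness argument one must code the whole of $trcl((q, \mathfrak{A}, \mathcal{L}, \mathcal{L}^{*}_{\mathfrak{A}}, \Gamma))$ as a real in the collapse extension, which requires $|trcl(\mathfrak{A})| \leq \lambda$ rather than merely $|\mathcal{L}| \leq \lambda$. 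This is why Lemma \ref{inout} carries the hypothesis $|trcl(\mathfrak{A})| \leq \lambda$, and why every application of Proposition \ref{certgood} in the paper specialises $\lambda$ accordingly; under the literal hypothesis $|\mathcal{L}| \leq \lambda$ alone your bridge does not go through. (Your caveat on the first bullet --- that $P \subset A$ needs $B$ to consist of sets that $\mathfrak{A}$ can see, e.g.\ finite subsets of $\mathcal{L}$ --- is a fair observation about the statement itself and matches how the proposition is actually used.)
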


\begin{lem}\label{inout}
Let $|trcl(\mathfrak{A})| \leq \lambda$ and $p \subset \mathcal{L}$. Assume there is $\Sigma$ in a weak outer model $W$ of $V$ such that $\Sigma$ $\Gamma(\mathcal{L}, \mathfrak{A})$-certifies $p$. Then $$\Vdash_{Col(\omega, \lambda)} \exists \Sigma \ (``\Sigma \ \Gamma(\mathcal{L}, \mathfrak{A}) \text{-certifies } p").$$
\end{lem}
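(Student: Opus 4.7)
My plan is to apply Mostowski/Shoenfield absoluteness after collapsing $\lambda$ to $\omega$. Fix a $V$-generic filter $g$ for $Col(\omega, \lambda)$; the goal becomes showing
$$V[g] \models \exists \Sigma' \ (\Sigma' \text{ } \Gamma(\mathcal{L}, \mathfrak{A})\text{-certifies } p).$$
Since $|trcl(\mathfrak{A})| \leq \lambda$ is collapsed to $\omega$ by $g$, fix in $V[g]$ a bijection $\iota : trcl(\mathfrak{A} \cup \Gamma \cup \{\mathcal{L}, p, \neg\}) \to \omega$. Using $\iota$, extract real codes $\vec{r} \in V[g]$ for $\mathfrak{A}, \mathcal{L}, \Gamma, p$ and the negation function from Definition \ref{neg}.

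Next I would unpack the predicate ``$\Sigma' \subset \mathcal{L}$ and $\Sigma'$ $\Gamma(\mathcal{L}, \mathfrak{A})$-certifies $p$'' after transport through $\iota$. The clauses $p \subset \Sigma'$ and $\mathcal{L}$-niceness are both $\Pi^0_1$ in $\iota[\Sigma']$ with parameters $\vec{r}$. The remaining clause $\Sigma' \models^*_{\mathfrak{A}} \Gamma$ reduces to first-order satisfaction in the (now countable) structure $(A; \in, \ldots, \Sigma')$, which is absolute for transitive models of $\mathsf{ZFC}$ containing this structure: since members of $\Gamma$ are sentences, no quantification over $\mathfrak{A}$-valuations is needed, and the bounded quantifiers in $\mathcal{L}^*_{\mathfrak{A}}$-$\Delta_0$ formulas range over finite sets (so they collapse to finite conjunctions once finiteness is verified). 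Thus ``there exists $\Sigma'$ that $\Gamma(\mathcal{L}, \mathfrak{A})$-certifies $p$'' is a $\Sigma^1_1$ (at worst $\Sigma^1_2$) formula in $V[g]$ with parameters $\vec{r}$.

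For the transfer step, I would work in the background universe, a common ambient model of $\mathsf{ZFC}$ containing both $V[g]$ and $W$. There the real $R_0 := \iota[\Sigma]$ exists, because $\iota \in V[g]$ and $\Sigma \in W$; and by the absoluteness just noted, $R_0$ witnesses the analytical formula whose existential closure is our target. Mostowski (or, if the complexity calculation only yields $\Sigma^1_2$, Shoenfield) absoluteness between $V[g]$ and the background universe then supplies a witnessing real $R' \in V[g]$, from which we recover $\Sigma' := \iota^{-1}[R'] \subset \mathcal{L}$ in $V[g]$. Since $g$ was arbitrary, the forcing assertion follows.

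The main delicate point is not any single estimate but the bookkeeping in the complexity calculation for the certification predicate after coding through $\iota$: one has to be sure the ``for every $\phi \in \Gamma$'' part, the countable universal quantifiers coming from satisfaction in $(A; \in, \ldots, \Sigma')$, and the bounded-but-over-finite-sets quantifiers in the $\mathcal{L}^*_{\mathfrak{A}}$-$\Delta_0$ fragment all combine to keep the statement within a class governed by an absoluteness theorem. This is largely mechanical once one uses the structural conventions laid out in Definitions \ref{lsub}, \ref{LSigma1}, \ref{LPi2} and Remark \ref{rem35}.
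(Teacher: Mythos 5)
Your proposal follows the same route as the paper's proof: pass to a $Col(\omega,\lambda)$-generic extension, code the relevant structure as a real (Lemma \ref{setcode}), observe that certification becomes an analytical statement about that real, and transfer the witness $\Sigma$ by absoluteness. The paper argues by contradiction and takes $g$ generic over $W$ so that the witness lives in $W[g] \supset V[g]$ and Mostowski absoluteness is applied between those two models, whereas you apply it between $V[g]$ and an ambient universe containing both $V[g]$ and $W$; these are interchangeable.

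The one point you defer --- the ``bookkeeping'' that keeps the statement within reach of an absoluteness theorem --- is in fact the crux, and your fallback for it does not work. If you treat ``$\Sigma' \models^*_{\mathfrak{A}} \phi$ for all $\phi \in \Gamma$'' by quantifying the $\Delta^1_1$ satisfaction predicate over the (possibly infinite) set $\Gamma$, that clause is $\Pi^1_1$ in the codes, so the existential closure over $\Sigma'$ lands at $\Sigma^1_2$; and Shoenfield absoluteness is \emph{not} available in the direction you need, since its downward form requires the smaller model to contain $\omega_1$ of the larger, which the countable transitive model $V[g]$ does not. So you must actually secure the $\Sigma^1_1$ bound. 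The paper does this by rewriting certification as the existence of a single Tarski satisfaction function $f : \mathcal{L}^*_{\mathfrak{A}} \longrightarrow \{0,1\}$ obeying the recursion clauses (an arithmetic condition on the codes of $f$, $\Sigma'$ and the structure) together with the arithmetic clause $\Gamma \subset \{\phi : f(\phi) = 1\}$; the extra existential over $f$ keeps everything $\Sigma^1_1$, and Mostowski absoluteness then finishes the argument exactly as you intend. With that adjustment your proof is complete.
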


\begin{proof}
Suppose otherwise, so there is $q \in Col(\omega, \lambda)$ such that
\begin{align}
    \label{eq10} q \Vdash_{Col(\omega, \lambda)} \ \neg \psi,
\end{align}
where $$\psi := \exists \Sigma \ (``\Sigma \ \Gamma(\mathcal{L}, \mathfrak{A}) \text{-certifies } p").$$

Let $g$ be $Col(\omega, \lambda)$-generic over $W$ with $q \in g$, so that $g$ is also $Col(\omega, \lambda)$-generic over $V$. First, that $W \models \psi$ means $W[g] \models \psi$. Next, notice that if $\varphi(\Sigma, y)$ is the conjunction of the statements
\begin{itemize}
    \item $y = \{\phi \in \mathcal{L}^*_{\mathfrak{A}} : \Sigma \models^*_{\mathfrak{A}} \phi\}$,
    \item $\Gamma \subset y$, 
    \item $p \subset \Sigma$, and
    \item $``\Sigma \text{ is } \mathcal{L}\text{-nice}"$, 
\end{itemize}
then $\varphi$ is a $\Sigma_1$ (in fact, $\Delta_1$, although that delineation is unnecessary here) formula in the language of set theory (see Definition \ref{def27}), with parameters among $p, \mathfrak{A}, \mathcal{L}, \mathcal{L}^*_{\mathfrak{A}}, \Gamma$. This is because $\varphi$ is equivalent to the statement of there being a function $f$ with domain $\mathcal{L}^*_{\mathfrak{A}}$ --- a $\Delta_0$-definable subset of $A$ --- and codomain $\{0, 1\}$, such that 
\begin{itemize}
    \item $f$ fulfils the inductive properties of Tarski's definition of the satisfaction relation, applied to the structure $(A; \in, \Vec{R}, \Sigma)$,
    \item $y = \{\phi \in \mathcal{L}^*_{\mathfrak{A}} : f(\phi) = 1\}$, 
    \item $\Gamma \subset y$,
    \item $p \subset \Sigma$, and
    \item $``\Sigma \text{ is } \mathcal{L}\text{-nice}"$,
\end{itemize}
every of which aforementioned points is expressible as $\Delta_0$ formulas (following Definition \ref{def27}) in the language of set theory. Note also that whenever $\varphi(\Sigma, y)$ holds, $\Sigma$, $y$ and any witness $f$ must have transitive closures of cardinalities no larger than $\lambda$. Moreover, we have $$\psi \iff \exists \Sigma \ \exists y \ \varphi(\Sigma, y).$$ As $p, \mathfrak{A}, \mathcal{L}, \mathcal{L}^*_{\mathfrak{A}}, \Gamma$ are subsets of $trcl((p, \mathfrak{A}, \mathcal{L}, \mathcal{L}^*_{\mathfrak{A}}, \Gamma))$ and $|trcl((p, \mathfrak{A}, \mathcal{L}, \mathcal{L}^*_{\mathfrak{A}}, \Gamma))| \leq \lambda$, the structure $$\mathfrak{B} := (trcl((p, \mathfrak{A}, \mathcal{L}, \mathcal{L}^*_{\mathfrak{A}}, \Gamma)); \in, p, \mathfrak{A}, \mathcal{L}, \mathcal{L}^*_{\mathfrak{A}}, \Gamma)$$ can be coded as a real in $V[g]$, by Lemma \ref{setcode}. This means that in all weak outer models of $V[g]$, $\psi$ can be thought of as a $\mathbf{\Sigma^1_1}$ sentence involving a real code of $\mathfrak{B}$ found in $V[g]$. In particular, by Mostowski's absoluteness theorem, $\psi$ is absolute for $V[g]$ and $W[g]$. Now $W[g] \models \psi$ implies $V[g] \models \psi$, contradicting (\ref{eq10}).
\end{proof}

Note that the proof of Lemma \ref{inout} does not require that $\Gamma$ contains only $\mathcal{L}^*_{\mathfrak{A}}$-$\Pi_2$ sentences. Indeed, for a litany of properties $K$, the existence of an object satisfying $K$ is absolute between $V$ and its weak outer models and hence, between $V$ and its forcing extensions. However, it is often useful --- if not integral --- to have a proper handle on such an object. It is towards this end that we are often interested in the existence of a $V$-generic object $k$ such that $k$ satisfies $K$ in $V[k]$.

Specifying ``$k$ satisfies $K$'' to be ``$k \models^{*}_{\mathfrak{A}} \Gamma$'', the following lemma is thus well-motivated. 

\begin{lem}\label{main2}
Let $W$, $\lambda$, $B$, $P$, $\mathbb{P}$ and $g$ be such that
\begin{itemize}
    \item $|trcl(\mathfrak{A})| \leq \lambda$,
    \item $B$ is $\mathcal{L}$-closed under finite extensions,
    \item $P = \{p \in B : \ \Vdash_{Col(\omega, \lambda)} \exists \Sigma \ (``\Sigma \ \Gamma(\mathcal{L}, \mathfrak{A}) \text{-certifies } p")\}$,
    \item $\mathbb{P} = (P, \supset \cap \ P)$, 
    \item $\mathbb{P}$ is $\Sigma_1$-definable in the language associated with $\mathfrak{A}$,
    \item $W$ is a weak outer model of $V$, and
    \item $g \in W$ is a $\mathbb{P}$-$\Sigma_1$-generic filter over $\mathfrak{A}$.
\end{itemize}
If there is $\Sigma$ in a weak outer model $W'$ of $V$ such that $\Sigma$ $\Gamma(\mathcal{L}, \mathfrak{A})$-certifies $\emptyset$, then $\bigcup g$ $\Gamma(\mathcal{L}, \mathfrak{A})$-certifies $\emptyset$.

In particular, if $g$ is $\mathbb{P}$-generic over $V$, then $\bigcup g$ $\Gamma(\mathcal{L}, \mathfrak{A})$-certifies $\emptyset$ in $V[g] = V[\bigcup g]$.
\end{lem}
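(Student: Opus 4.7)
The plan is to invoke Lemma \ref{lem329p} (the restated form of Lemma \ref{uni}) once for each $\phi \in \Gamma$. Once $(\mathfrak{A}, \mathbb{P})$ is known to be good for $\mathcal{L}$ and the universality of each $\phi \in \Gamma$ has been verified, Lemma \ref{lem329p} yields that $\bigcup g$ is $\mathcal{L}$-nice and $\bigcup g \models^{*}_{\mathfrak{A}} \phi$ for every such $\phi$. Aggregating gives $\bigcup g \models^{*}_{\mathfrak{A}} \Gamma$, and since $\emptyset \subset \bigcup g$ trivially, this is exactly what it means for $\bigcup g$ to $\Gamma(\mathcal{L}, \mathfrak{A})$-certify $\emptyset$.

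The preliminary step is to establish $P \neq \emptyset$, so that Proposition \ref{certgood} applies. The hypothesised $\Sigma$ certifying $\emptyset$ in a weak outer model of $V$ allows us, via Lemma \ref{inout}, to conclude
\begin{equation*}
    \Vdash_{Col(\omega, \lambda)} \exists \Sigma \ (\text{``}\Sigma \ \Gamma(\mathcal{L}, \mathfrak{A})\text{-certifies } \emptyset\text{''}),
\end{equation*}
so $\emptyset \in P$ in the standard setup where $\emptyset \in B$. Proposition \ref{certgood} then gives the goodness of $(\mathfrak{A}, \mathbb{P})$ for $\mathcal{L}$.

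For universality, I would fix $\phi \in \Gamma$ and $p \in P$ arbitrarily. The defining clause of $P$ gives $\Vdash_{Col(\omega, \lambda)} \exists \Sigma \ (\text{``}\Sigma \ \Gamma(\mathcal{L}, \mathfrak{A})\text{-certifies } p\text{''})$, and passing to an actual $Col(\omega, \lambda)$-generic extension of $V$ (which is itself a weak outer model of $V$) produces a bona fide $\Sigma$ in that extension $\Gamma(\mathcal{L}, \mathfrak{A})$-certifying $p$. Since $\phi \in \Gamma$, we have $\Sigma \models^{*}_{\mathfrak{A}} \phi$; by Proposition \ref{certgood}, $\Sigma$ is a $p$-candidate for $(\mathfrak{A}, \mathbb{P}, \mathcal{L})$-universality. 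As $p \in P$ was arbitrary, $\phi$ is $(\mathfrak{A}, \mathbb{P}, \mathcal{L})$-universal.

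All hypotheses of Lemma \ref{lem329p} are now in place, so the main conclusion follows. For the ``in particular'' clause, when $g$ is $\mathbb{P}$-generic over $V$ I take $W = V[g]$; since $\mathfrak{A} \in V$, any $\Sigma_1$-definable-in-$\mathfrak{A}$ dense subset of $\mathbb{P}$ is already a set in $V$ which $g$ meets, so $g$ is automatically $\mathbb{P}$-$\Sigma_1$-generic over $\mathfrak{A}$. The identity $V[g] = V[\bigcup g]$ is routine, since $g$ can be recovered inside $V[\bigcup g]$ as $g = \{p \in P : p \subset \bigcup g\}$, the non-trivial inclusion using the filter property of $g$ together with closure of $B$ under finite extensions. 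The one point deserving genuine care is the absoluteness step in the proof of Lemma \ref{inout}, which is what allows the $Col(\omega, \lambda)$-forced existence statement extracted from the definition of $P$ to be cashed out as a bona fide witness $\Sigma$ in a concrete weak outer model of $V$; this hinges on ``$\Sigma$ $\Gamma(\mathcal{L}, \mathfrak{A})$-certifies $p$'' being expressible by a $\Sigma_1$ formula over a suitably coded parameter structure, exactly as set up in the proof of that lemma.
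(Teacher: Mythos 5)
Your proposal is correct and follows essentially the same route as the paper, whose proof of this lemma is exactly the chain you spell out: Lemma \ref{inout} to turn the hypothesised $\Sigma$ into membership of $\emptyset$ in $P$, Proposition \ref{certgood} for goodness and for certifying sets being candidates (hence $(\mathfrak{A}, \mathbb{P}, \mathcal{L})$-universality of each $\phi \in \Gamma$), and Lemma \ref{uni} in the form of Remark \ref{endremmain} (Lemma \ref{lem329p}) to conclude, with the same absoluteness and $g = [\bigcup g]^{<\omega}$ observations for the ``in particular'' clause. Your elaboration of the universality check and of why $V$-genericity implies $\Sigma_1$-genericity over $\mathfrak{A}$ matches the paper's intended (but unstated) details.
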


\begin{proof}
The general statement is clear from Lemma \ref{uni}, Remark \ref{endremmain}, Proposition \ref{certgood} and Lemma \ref{inout}. That $\bigcup g$ $\Gamma(\mathcal{L}, \mathfrak{A})$-certifies $\emptyset$ is absolute for transitive models of $\mathsf{ZFC - Powerset}$, so if $g$ is $\mathbb{P}$-generic over $V$, then  $\bigcup g$ $\Gamma(\mathcal{L}, \mathfrak{A})$-certifies $\emptyset$ in $V[g]$. Moreover, since $g = [\bigcup g]^{< \omega}$, we have $V[g] = V[\bigcup g]$.
\end{proof}

We are now equipped to tackle \ref{qn0}. In the presence of Proposition \ref{certgood}, the next lemma implies an affirmative answer to \ref{qn0}.

\begin{lem}\label{lem339}
There are $W$, $\mathfrak{A}'$, $\mathcal{L}'$, $\Gamma'$, $P'$, $\mathbb{P}'$, $g$ and $\phi$ such that
\begin{enumerate}[label=(\arabic*)]
    \item\label{3391} $\mathcal{L}'$ is closed under negation,
    \item\label{3392} $\mathfrak{A}'$ is $\mathcal{L}'$-suitable,
    \item\label{3393} $\Gamma'$ is a set of $\mathcal{L}^{*}_{\mathfrak{A}'}$-$\Pi_2$ sentences,
    \item\label{3394} $P' = \{p \in [\mathcal{L}']^{< \omega} : \ \Vdash_{Col(\omega, |trcl(\mathfrak{A}')|)} \exists \Sigma \ (``\Sigma \ \Gamma'(\mathcal{L}', \mathfrak{A}') \text{-certifies } p")\} \neq \emptyset$,
    \item\label{3395} $\mathbb{P}' = (P', \supset \cap \ P')$, 
    \item\label{3396} $\mathbb{P}'$ is $\Sigma_1$-definable in the language associated with $\mathfrak{A}'$,
    \item\label{3397} $W$ is a weak outer model of $V$,
    \item\label{3398} $g \in W$ is a $\mathbb{P}'$-generic filter over $\mathfrak{A}'$,
    \item\label{3399} $\phi$ is a $\mathcal{L}^{*}_{\mathfrak{A}'}$-$\Sigma_2$ sentence which is $(\mathfrak{A}', \mathbb{P}', \mathcal{L}')$-universal, and
    \item\label{33910} $\bigcup g \not \models^{*}_{\mathfrak{A}'} \phi$.
\end{enumerate}
\end{lem}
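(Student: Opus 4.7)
I will exhibit a $\mathcal{L}^{*}_{\mathfrak{A}'}$-$\Sigma_{2}$ sentence $\phi$ asserting ``only finitely many positive atoms lie in $E$''. Such a sentence is $(\mathfrak{A}', \mathbb{P}', \mathcal{L}')$-universal, because any finite condition $p$ extends to an $\mathcal{L}'$-nice set by answering ``no'' to every undecided atom, leaving only the finitely many positive atoms in $p$; yet genericity drives infinitely many positive atoms into $\bigcup g$, so $\phi$ must fail at $\bigcup g$. The sentence lands at $\Sigma_{2}$ rather than $\Sigma_{1}$ precisely because bounded quantification in $\mathcal{L}^{*}_{\mathfrak{A}'}$-$\Delta_{0}$ is restricted to \emph{finite} sets (see the warning immediately before Lemma \ref{exist}), so the outer ``$\exists M \in \omega$'' cannot be folded into the matrix.

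\textbf{Step 1: language, structure, forcing.} Fix distinct atomic symbols $a_{n} \in V_{\omega}$, $n < \omega$, such that no $a_{n}$ has the form $\ulcorner \neg y \urcorner$, so that $\neg a_{n} = \ulcorner \neg a_{n} \urcorner$ is a fresh string and $\neg$ acts as an involution on $\mathcal{L}' := \{a_{n}, \neg a_{n} : n < \omega\}$. Let $f := \{(n, a_{n}) : n < \omega\} \subseteq V_{\omega}$. By downward L\"owenheim--Skolem on $H(\omega_{2})$ (taken to contain $\mathcal{L}'$, $f$, and all of $V_{\omega}$) followed by Mostowski collapse, obtain a countable transitive $N \ni \mathcal{L}', f, \omega$ satisfying a sufficiently strong set theory. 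Set $\mathfrak{A}' := (N; \in, \mathcal{L}', f)$ with $\mathcal{L}'$ unary and $f$ binary; then $\mathfrak{A}'$ is $\mathcal{L}'$-suitable and $\mathcal{L}'$ is $\Delta_{0}$-definable in $\mathfrak{A}'$. Take $\Gamma' := \emptyset$, so $\Gamma'$-certification of $p \in [\mathcal{L}']^{<\omega}$ reduces to the $\Delta_{0}$ condition ``$(\forall x \in p)\, \neg x \notin p$'', witnessed in $V$ by the completion $p \cup \{\neg a_{n} : a_{n}, \neg a_{n} \notin p\}$. By absoluteness (cf.\ Lemma \ref{inout}),
$$P' = \{p \in [\mathcal{L}']^{<\omega} : (\forall x \in p)\,\neg x \notin p\},$$
which contains $\emptyset$ and is $\Sigma_{1}$-definable in $\mathfrak{A}'$ (invoking the $\Delta_{0}$-definable $\neg$ from Remark \ref{rem32}, the $\Delta_{0}$ parameter $\mathcal{L}'$, and the standard $\Sigma_{1}$ predicate for finiteness). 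Let $\mathbb{P}' := (P', \supset \cap (P' \times P'))$. Since $\mathfrak{A}'$ is countable, Rasiowa--Sikorski yields a $\mathbb{P}'$-generic filter $g$ over $\mathfrak{A}'$ inside $W := V$.

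\textbf{Step 2: the sentence, universality, failure.} Define
$$\phi := \ulcorner \exists M\, \forall n\, \forall y \bigl[(M \in \omega) \wedge \bigl((n \in \omega \wedge M \leq n \wedge f(n, y)) \to \neg E(y)\bigr)\bigr] \urcorner.$$
The matrix is built from set-theoretic atoms and the literal $\neg E(y)$, hence is $\mathcal{L}^{*}_{\mathfrak{A}'}$-$\Delta_{0}$; thus $\phi$ is $\mathcal{L}^{*}_{\mathfrak{A}'}$-$\Sigma_{2}$, with semantic content ``$\{n : a_{n} \in \Sigma\}$ is finite''. \emph{Universality}: given $p \in P'$ and $q \leq_{\mathbb{P}'} p$, the completion $\Sigma_{q} := q \cup \{\neg a_{n} : a_{n} \notin q,\, \neg a_{n} \notin q\}$ is $\mathcal{L}'$-nice, every finite $x \subseteq \Sigma_{q}$ gives $q \cup x \in P'$, and $\Sigma_{q}$ contains only the $|q| < \omega$ many $a_{n}$'s inherited from $q$; so $\Sigma_{q}$ is a $q$-candidate with $\Sigma_{q} \models^{*}_{\mathfrak{A}'} \phi$. \emph{Failure}: for each $n$, the set $E_{n} := \{q \in P' : (\exists m \geq n)\, a_{m} \in q\}$ is definable in $\mathfrak{A}'$ and dense (given $q$, adjoin $a_{m}$ for $m$ exceeding both $n$ and the finite support of $q$); by genericity, $g$ meets every $E_{n}$, so $\bigcup g$ contains $a_{m}$ for arbitrarily large $m$, whence $\bigcup g \not\models^{*}_{\mathfrak{A}'} \phi$.

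\textbf{Main obstacle.} The only delicate point is certifying that $\phi$ is genuinely $\mathcal{L}^{*}_{\mathfrak{A}'}$-$\Sigma_{2}$ rather than $\Sigma_{1}$ in disguise: the restriction that bounded $\mathcal{L}^{*}_{\mathfrak{A}'}$-$\Delta_{0}$ quantification is over \emph{finite} sets prevents absorbing ``$\exists M \in \omega$'' into the matrix, forcing a genuine $\exists \forall$ alternation. Everything else is careful bookkeeping of conditions (1)--(10) against the definitions of $\mathcal{L}'$-suitability, $(\mathfrak{A}', \mathbb{P}', \mathcal{L}')$-universality, and $\mathbb{P}'$-genericity over $\mathfrak{A}'$.
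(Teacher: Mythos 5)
Your proposal is correct, and it reaches the conclusion by a genuinely different, more elementary route than the paper. The paper's witness lives in the Namba-adjacent setting of Section \ref{sect2}: it takes $\mathcal{L}'$ to be the language of a function symbol $\dot{F}$ on $\omega_1 \times \omega_2$, uses a nonempty $\Gamma'$ forcing $\dot{F}$ to be total and functional, and lets $\phi$ assert that some ground-model $f \in {}^{\omega}\omega_2$ traces $\dot{F}$ on $\omega$; universality is witnessed inside a $Col(\omega_1, \omega_2)$-extension (countable closure of the collapse keeps the relevant $\omega$-trace in $V$), and failure at $\bigcup g$ comes from diagonalising against each ground-model $f^*$ via the dense sets $D_{f^*}$. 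You instead use a bare propositional language on countably many atoms, $\Gamma' = \emptyset$, and the sentence ``only finitely many positive atoms lie in $E$'': universality is witnessed by the all-negative completion $\Sigma_q$, which already lives in $V$, and failure follows from the dense sets $E_n$. Both constructions exploit exactly the same mechanism --- a $\mathcal{L}^{*}_{\mathfrak{A}'}$-$\Sigma_2$ sentence whose leading existential names a single object encoding infinitely much of $\Sigma$, which a candidate chosen ``all at once'' can accommodate but which genericity diagonalises away --- so your example is a legitimate and arguably cleaner answer to \ref{qn0}; what the paper's version buys is continuity with the machinery of Theorem \ref{notion1} (the parameter-passing conventions of Remark \ref{rempp} are introduced there precisely for this construction) and an illustration that the obstruction persists for ``natural'' forcings adding uncountable objects. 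Two small bookkeeping points you handle implicitly but should make explicit: condition \ref{3394} requires the forcing-theoretic definition of $P'$, so you must note (as you do via Lemma \ref{inout} and absoluteness of ``$p$ contains no complementary pair'') that it coincides with your combinatorial description; and the lemma's $\Sigma_2$ requirement is purely syntactic, so the ``main obstacle'' you flag is not something you need to certify --- indeed Lemma \ref{uni} plus your conditions \ref{3399} and \ref{33910} already imply $\phi$ cannot be equivalent, over the relevant nice sets, to any universal $\mathcal{L}^{*}_{\mathfrak{A}'}$-$\Sigma_1$ sentence.
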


\begin{proof}
Let $\mathcal{L}'$ be the closure under negation of the following set of expressions (strings):
\begin{equation*}
    \{\ulcorner \dot{F} (\alpha) = \beta \urcorner : (\alpha, \beta) \in \omega_1 \times \omega_2\}
\end{equation*}
Clearly, \ref{3391} holds. Set $\mathfrak{A}'$ to be $(H((2^{\omega_2})^+); \in)$, so that \ref{3392} holds. Using a natural notational shorthand for passing parameters into $\mathcal{L}'$ formulas (see Remark \ref{rempp} for more information), define 
\begin{align*}
    \Gamma' := \{ & \ulcorner \forall \alpha < \omega_1 \ \exists \beta \ (E(\ulcorner \dot{F}(\alpha) = \beta \urcorner)) \urcorner, \\ 
    & \ulcorner \forall \alpha \ \forall \beta \ \forall \gamma \ ((E(\ulcorner \dot{F}(\alpha) = \beta \urcorner) \wedge E(\ulcorner \dot{F}(\alpha) = \gamma \urcorner)) \implies \beta = \gamma) \urcorner\} \text{,}
\end{align*}
and observe that \ref{3393} is satisfied (refer again to Remark \ref{rempp} for justification of the shorthand not underselling the complexity of any $\mathcal{L}^{*}_{\mathfrak{A}'}$-$\Pi_2$ formula it abbreviates). 

Next, set 
\begin{gather*}
    P' := \{p \in [\mathcal{L}']^{< \omega} : \ \Vdash_{Col(\omega, |trcl(\mathfrak{A}')|)} \exists \Sigma \ (``\Sigma \ \Gamma'(\mathcal{L}', \mathfrak{A}') \text{-certifies } p")\} \\
    \mathbb{P}' := (P', \supset \cap \ P')
\end{gather*}
to satisfy \ref{3395} and \ref{3396}, since $[\mathcal{L}']^{< \omega} \in H((2^{\omega_2})^+)$ implies $\mathbb{P}' \in H((2^{\omega_2})^+)$.

\begin{defi}
Given a set $X$, let $\Sigma(X)$ denote
\begin{align*}
    & \{\ulcorner \dot{F} (\alpha) = \beta \urcorner : (\alpha, \beta) \in (\omega_1 \times \omega_2) \cap X\} \ \cup \\
    & \{\ulcorner \neg \dot{F} (\alpha) = \beta \urcorner : (\alpha, \beta) \in (\omega_1 \times \omega_2) \setminus X\} \text{.}
\end{align*}
\end{defi}

Note that for any $Z$ in an weak outer model of $V$, 
\begin{equation*}
    Z \cap (\omega_1 \times \omega_2) \text{ is a function from } \omega_1 \text{ into } \omega_2 \iff \Sigma(Z) \ \Gamma'(\mathcal{L}', \mathfrak{A}') \text{-certifies } \emptyset \text{.}
\end{equation*}
As there already exist functions from $\omega_1$ into $\omega_2$ in $V$, $\emptyset \in P'$ by Lemma \ref{inout}, giving us \ref{3394}. It suffices to show that \ref{3397} to \ref{33910} hold for
\begin{itemize}
    \item any $\mathbb{P}'$-generic filter $g$ over $V$ (and thus over $\mathfrak{A}$),
    \item $W := V[g]$, and
    \item $\phi := \ulcorner \exists f \in {^{\omega}{\omega_2}} \ \forall n < \omega \ (E(\ulcorner \dot{F}(n) = f(n) \urcorner)) \urcorner$.
\end{itemize}
Trivially, \ref{3397} and \ref{3398} are done. Further, $\phi$ is a $\mathcal{L}^{*}_{\mathfrak{A}'}$-$\Sigma_2$ sentence, seeing that ${^{\omega}{\omega_2}} \in H((2^{\omega_2})^+)$.

Choose any $p \in P'$. It is not hard to verify that there exists 
\begin{equation*}
    p^* \in Col(\omega_1, \omega_2) = (\bigcup \{{^{\alpha}{\omega_2}} : \alpha < \omega_1\}, \supset)
\end{equation*} 
for which $p \subset \Sigma(q^*)$ whenever $q^*$ extends $p^*$ in $Col(\omega_1, \omega_2)$. Let $h$ be a $Col(\omega_1, \omega_2)$-generic filter over $V$ containing $p^*$. Then in $V[h]$, 
\begin{equation*}
    \Sigma(\bigcup h) \ (\Gamma' \cup \{\phi\})(\mathcal{L}', \mathfrak{A}') \text{-certifies } p \text{.}
\end{equation*}
According to Proposition \ref{certgood}, we arrive at \ref{3399}.

Finally, choose any $f^* \in {^{\omega}{\omega_2}}$. Due to the straightforward observation that 
\begin{align*}
    D_{f^*} := \{p \in P' : \exists n < \omega \ \exists \beta < \omega_2 \ ( & (\ulcorner \dot{F}(n) = \beta \urcorner \in p \wedge f(n) \neq \beta) \ \vee \\
    & (\ulcorner \neg \dot{F}(n) = \beta \urcorner \in p \wedge f(n) = \beta))\}
\end{align*}
is dense in $\mathbb{P}'$, we must have $\bigcup g \models^{*}_{\mathfrak{A}'} \varphi_{f^*}$, where
\begin{equation*}
    \varphi_{f^*} := \ulcorner \neg \forall n < \omega \ (E(\ulcorner \dot{F}(n) = f^*(n) \urcorner)) \urcorner \text{.}
\end{equation*}
Having $f^*$ range over ${^{\omega}{\omega_2}}$ then nets us \ref{33910}.
\end{proof}

This subsection shall be concluded with another absoluteness result. This time, instead of looking for witnesses in forcing extensions, we turn our focus to the forcing notions themselves.

\begin{lem}\label{Pisabs}
The definition of $\mathbb{P}$ from parameters $B$, $\mathcal{L}$, $\mathfrak{A}$, $\Gamma$ in Proposition \ref{certgood}, where $\lambda$ is additionally specified to be $|trcl(\mathfrak{A})|$, is absolute for transitive models of $\mathsf{ZFC}$.
\end{lem}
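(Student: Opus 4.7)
The plan is to recast the forcing-theoretic definition of $P$ into a manifestly absolute statement via Lemma~\ref{inout}, and then shuttle witnesses between models. Let $M \subseteq N$ be transitive models of $\mathsf{ZFC}$, both containing the parameters $B$, $\mathcal{L}$, $\mathfrak{A}$, $\Gamma$. Since $\leq_{\mathbb{P}}$ is $\supseteq$ restricted to $P \times P$ --- an absolute relation --- it suffices to show $P^M = P^N$.

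First, I will establish, in any transitive model $U$ of $\mathsf{ZFC}$ containing the parameters, the equivalence
\[
p \in P^U \iff \text{some weak outer model of } U \text{ contains a } \Sigma \text{ which } \Gamma(\mathcal{L}, \mathfrak{A})\text{-certifies } p.
\]
The ``$\Leftarrow$'' direction is precisely Lemma~\ref{inout}; the ``$\Rightarrow$'' direction is the forcing theorem, taking the witnessing outer model to be any $Col(\omega, \lambda)^U$-generic extension of $U$. I will also note in passing that ``$\Sigma \ \Gamma(\mathcal{L}, \mathfrak{A})$-certifies $p$'' is $\Delta_1$ in the parameters $\Sigma, p, \mathcal{L}, \mathfrak{A}, \Gamma$ (it merely asserts $\mathcal{L}$-niceness of $\Sigma$, the inclusion $p \subseteq \Sigma$, and Tarski-style truth of a fixed set of formulas in the structure $(A; \in, \vec{R}, \Sigma)$), hence absolute for transitive models containing these parameters.

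The forward direction $p \in P^N \Rightarrow p \in P^M$ is then immediate: any weak outer model $W$ of $N$ witnessing the right-hand side automatically satisfies $M \subseteq N \subseteq W$, so $W$ is a weak outer model of $M$ as well, and the absoluteness of certification keeps $\Sigma$ a certifying witness from $M$'s viewpoint. For the converse $p \in P^M \Rightarrow p \in P^N$, I will work in an outer model of the ambient universe in which $N$ is countable --- legitimate under the meta-theoretic convention of Subsection~\ref{subs21} --- pick a cardinal $\lambda$ at least as large as $|trcl(\mathfrak{A})|$ as computed in both $M$ and $N$, and take $g$ to be $Col(\omega, \lambda)$-generic over $N$. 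Then $g$ is also generic over $M$, and the forcing theorem applied in $M$ (using $p \in P^M$) supplies a certifying $\Sigma \in M[g] \subseteq N[g]$; as $N[g]$ is a weak outer model of $N$, the equivalence applied in $N$ yields $p \in P^N$.

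The main subtlety I anticipate is reconciling the two collapse posets $Col(\omega, |trcl(\mathfrak{A})|^M)$ and $Col(\omega, |trcl(\mathfrak{A})|^N)$, which need not coincide since cardinality is not absolute between $M$ and $N$. I expect this to cause no genuine difficulty, since what the characterisation requires is merely the existence of a certifying $\Sigma$ in \emph{some} outer model; picking $\lambda$ above both values of $|trcl(\mathfrak{A})|$ ensures that $Col(\omega, |trcl(\mathfrak{A})|^U)$ factors through $Col(\omega, \lambda)$ as a regular subforcing in the appropriate $U$, so a single generic $g$ produces certifying $\Sigma$'s in both $M[g]$ and $N[g]$ as required.
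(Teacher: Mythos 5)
Your proposal is correct in substance and follows essentially the same route as the paper: recast membership in $P$ via Lemma \ref{inout} as the existence of a certifying $\Sigma$ in some weak outer model, use the absoluteness of certification, get $P^N \subseteq P^M$ from the fact that every weak outer model of $N$ is one of $M$, and get the converse by choosing a single collapse-generic over $N$ that is simultaneously generic over $M$ and transporting the witness from $M[g]$ to $N[g]$. One small repair: $Col(\omega, \kappa)$ is \emph{not} a regular suborder of $Col(\omega,\lambda)$ for $\kappa < \lambda$ in the sense of the paper's definition (a condition $q$ with $q(0) \geq \kappa$ has no reduction to $Col(\omega,\kappa)$), so the factoring you invoke really rests on the Levy-collapse absorption lemma ($Col(\omega,\kappa)\times Col(\omega,\lambda)$ is forcing equivalent to $Col(\omega,\lambda)$) rather than on Fact \ref{regsubo}. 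The paper sidesteps this entirely by forcing with $Col(\omega,|trcl(\mathfrak{A})|)^{M}$ itself and observing that, inside $N$, this poset is isomorphic to $Col(\omega,|trcl(\mathfrak{A})|)^{N}$ because the two target ordinals have the same $N$-cardinality; taking your $\lambda$ to be $|trcl(\mathfrak{A})|^{M}$ reduces your argument to exactly this.
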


\begin{proof}
It suffices to show that the set 
\begin{equation}\label{setdef}
    \{p \in B : \ \Vdash_{Col(\omega, |trcl(\mathfrak{A})|)} \exists \Sigma \ (``\Sigma \ \Gamma(\mathcal{L}, \mathfrak{A}) \text{-certifies } p")\}
\end{equation}
is absolute for transitive models of $\mathsf{ZFC}$. 

Let $V'$ and $W$ be transitive models of $\mathsf{ZFC}$ such that $\{B, \mathcal{L}, \mathfrak{A}, \Gamma\} \subset V' \subset W$. Have $P^{V'}$ and $P^W$ denote the versions of the set (\ref{setdef}) defined in $V'$ and $W$ respectively. We want to prove $P^{V'} = P^W$. 

First note that $$ Col(\omega, |trcl(\mathfrak{A})|)^{V'} = Col(\omega, |trcl(\mathfrak{A})|^{V'}) \cong Col(\omega, |trcl(\mathfrak{A})|^W) = Col(\omega, |trcl(\mathfrak{A})|)^W$$ in $W$, so 
\begin{equation}\label{iffpw}
    p \in P^W \iff \ \Vdash_{Col(\omega, |trcl(\mathfrak{A})|)^{V'}} \exists \Sigma \ (``\Sigma \ \Gamma(\mathcal{L}, \mathfrak{A}) \text{-certifies } p")
\end{equation}
in $W$. Since any forcing extension of $W$ is a weak outer model of $V$, a direct application of Lemma \ref{inout} gives us $P^W \subset P^{V'}$. Next, fix any $p \in P^{V'}$ and any $Col(\omega, |trcl(\mathfrak{A})|)^{V'}$-generic filter $g$ over $W$. Now $g$ is also a $Col(\omega, |trcl(\mathfrak{A})|)^{V'}$-generic filter over $V'$, so $V'[g]$ and $W[g]$ are transitive models of $\mathsf{ZFC}$ and moreover, $V'[g] \subset W[g]$. By the definition of $P^{V'}$ in $V'$, $$V'[g] \models ``\Sigma \ \Gamma(\mathcal{L}, \mathfrak{A}) \text{-certifies } p"$$ for some $\Sigma \in V'[g]$. That $\Sigma \ \Gamma(\mathcal{L}, \mathfrak{A})$-certifying $p$ is absolute for transitive models of $\mathsf{ZFC}$ implies $$W[g] \models ``\Sigma \ \Gamma(\mathcal{L}, \mathfrak{A}) \text{-certifies } p".$$ We have thus shown $$W \models (\Vdash_{Col(\omega, |trcl(\mathfrak{A})|)^{V'}} \exists \Sigma \ (``\Sigma \ \Gamma(\mathcal{L}, \mathfrak{A}) \text{-certifies } p")),$$ whence $p \in P^W$ by (\ref{iffpw}). This allows us to conclude $P^{V'} = P^W$.
\end{proof}

\section{Extending Namba Forcing}\label{sect2}

This section illustrates how the framework introduced in Subsection \ref{forframe} can be applied to resolve some problems in set theory.

\subsection{An Extension Problem}

Before stating our problem of interest, we feel obliged to present, at least in brief, the history surrounding it. 

Fix a limit ordinal $\alpha$ and consider the chain of inequalities
\begin{equation}\label{ineq1}
    cof(\alpha) \leq |\alpha| \leq \alpha \text{,}
\end{equation} 
which is provable in $\mathsf{ZFC}$. Set theorists have long investigated the ability to change the signs in (\ref{ineq1}) via forcing. If $|\alpha| < \alpha$ in $V$, then the same must hold in any forcing extension . If $cof(\alpha) < |\alpha|$ in $V$, we can always force $cof(\alpha) = |\alpha|$ by collapsing both $cof(\alpha)$ and $|\alpha|$ to a regular cardinal in $V$ no greater than $cof(\alpha)$.

On the flipside, if $|\alpha| = \alpha$ in $V$, then $\alpha$ is a cardinal there. As long as $\alpha$ is uncountable, a forcing notion that collapses $\alpha$ (to $\omega$, say) exists and necessarily forces $|\alpha| < \alpha$. We are left with the case where $cof(\alpha) = |\alpha|$ in $V$. Note that by swapping $\alpha$ with a smaller ordinal if necessary, we can assume $\alpha$ is regular in $V$ without loss of generality. So assume $\alpha$ is an uncountable regular cardinal in $V$. If there is a singular cardinal $\beta$ below $\alpha$, one can simply collapse $\alpha$ to $\beta$ to achieve $cof(\alpha) < |\alpha|$, since the usual forcing notion for this purpose preserves the cardinality of $\beta$. Otherwise, forcing $cof(\alpha) < |\alpha|$ appears to be highly non-trivial.

In his doctoral dissertation \cite{prikry}, Prikry assumed $\alpha$ is a measurable cardinal, and gave an example of a forcing notion that preserves all cardinalities, yet changes $cof(\alpha)$ to $\omega$. A natural follow-up question to Prikry's result is thus:

\begin{quote}
    can we force the the separation of $cof(\alpha)$ and $|\alpha|$ on an uncountable regular $\alpha$ which provably exists over $\mathsf{ZFC}$?
\end{quote}

As successor cardinals are the only uncountable regular cardinals proven to exist over $\mathsf{ZFC}$, a forcing notion separating $cof(\alpha)$ and $|\alpha|$ for any such $\alpha$ must collapse $\alpha$. But can we ensure $\alpha$ is not collapsed ``too far''? In other words, we want to force $cof(\alpha) < |\alpha|$ while preserving all cardinals below $\alpha$.

The late 1960s saw two independent solutions to this problem in the affirmative, by Bukovsk\'{y} \cite{bukovsky} and Namba \cite{namba}. Both solutions work with $\alpha = \omega_2$, which is the smallest possible value $\alpha$ can take in an affirmative answer. Simplifications were made to the presentation of Bukovsk\'{y}'s and Namba's forcing notions over the years, without losing sight of the goal of their constructions. These simplifications culminated in what is now commonly known as \emph{Namba forcing}. Since the focus of this section is on extending the key effects of Namba forcing, we feel obliged to define the forcing notion for the sake of completeness.

\begin{defi}
We say $(T, \leq)$ is a $\kappa$\emph{-splitting in} $A$ iff
\begin{itemize}
    \item $(T, \leq)$ is a tree, and
    \item for every $s \in T \cap A$, $s$ has $\kappa$ many immediate $\leq$-successors in $T$. 
\end{itemize}
\end{defi}

\begin{defi}
If $(T, \leq)$ is a partial order and $s \in T$, we use $T_s^{\leq}$ to denote the set of $\leq$-successors of $s$ in $T$. More formally, $$T_s^{\leq} := \{t \in T : s \leq t\}.$$ 
\end{defi}

\begin{defi}
Define the order $\leq^{\dagger}$ to be $$\{(s, t) \in \omega_2^{< \omega} \times \omega_2^{< \omega} : dom(s) \subset dom(t) \text{ and } t \! \restriction_{dom(s)} = s\}.$$
\end{defi}

\begin{defi}
A \emph{Namba tree} is a subset $T$ of $\omega_2^{< \omega}$ containing a \emph{root} $s$ such that
\begin{itemize}
    \item $(T, \leq^{\dagger})$ is $\omega_2$-splitting in $T_s^{\leq^{\dagger}}$, and
    \item whenever $t \in T$, either $s \leq^{\dagger} t$ or $t \leq^{\dagger} s$. 
\end{itemize}
\end{defi}

\begin{defi}[Namba]
Define
\begin{align*}
    P_N := \ & \{T \subset \omega_2^{< \omega} : T \text{ is a Namba tree}\} \text{, and} \\
    \leq_N \ := \ & \{(p, q) \in P_N \times P_N : p \subset q\}.
\end{align*}
We call the forcing notion $\mathbb{P}_N := (P_N, \leq_N)$ \emph{Namba forcing}.
\end{defi}

Namba forcing belongs to the class of \emph{uniformly-splitting tree forcings}, one of which earliest-known members is Mathias forcing. A typical condition of a uniformly-splitting tree forcing is a tree, and it can be divided into two components, the stem and the crown. The stem is the main working part of a condition; stems in a generic filter combine to form a function that is the primary generic object we desire. The crowns work as \emph{side conditions}, which in unity, endow the forcing notion with specific regularity properties. These properties are often crucial to the satisfaction of constraints placed on the forcing extension. If $T$ is a Namba tree with root $s$, then its stem is $$\{t \in T : t \subset s\}$$ and its crown is $T_s^{\leq^{\dagger}} \setminus \{s\}$.

By means of tree combinatorics, one can show that Namba forcing gives $\omega_2^V$ a cofinality of $\omega$ without collapsing $\omega_1^V$. In fact, Namba forcing is a textbook example of such a forcing notion. It also has a stronger property than not collapsing $\omega_1^V$, for it is stationary-preserving. In the parlance of the preceding paragraph, the primary generic object here is a cofinal function from $\omega$ into $\omega_2^V$, whereas the regularity property of pertinence is being stationary-preserving. We can then observe the following division of labour: the stems of Namba forcing are in charge of changing the cofinality of $\omega_2^V$ to $\omega$, while the crowns of Namba forcing ensure all stationary subsets of $\omega_1$ in $V$ have their stationarity preserved.

The extended Namba problem, at its most rudimentary, asks (in $V$) for which ordinals $\lambda > \omega_2$ is the statement
\begin{align*}
    Nb_0(\lambda) := \text{`} & \text{there is a stationary-preserving forcing notion } \mathbb{P} \text{ such that} \\ 
    & \Vdash_{\mathbb{P}} ``cof(\alpha) = \omega" \text{ for all regular cardinals } \alpha \text{ satisfying } \omega_2 \leq \alpha < \lambda \text{'}
\end{align*} 
true. It is easy to see that if $\lambda > \omega_2$ is not a regular cardinal, then 
\begin{equation*}
    Nb_0(\lambda) \iff Nb_0(\lambda^+) \text{,}
\end{equation*}
so it suffices to only consider $Nb_0(\lambda)$ for regular cardinals $\lambda > \omega_2$.

\begin{fact}
Namba forcing witnesses $Nb_0(\omega_3)$.
\end{fact}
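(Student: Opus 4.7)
My plan is to establish two things: (i) $\Vdash_{\mathbb{P}_N} \text{``}cof(\omega_2) = \omega\text{''}$, which suffices because $\omega_2$ is the only regular cardinal in $[\omega_2, \omega_3)$, and (ii) $\mathbb{P}_N$ is stationary-preserving. I will dispatch (i) by a direct density argument, and reduce (ii) to the classical Namba lemma.

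For (i), I will show that the generic filter $G$ gives rise to a cofinal function $f_G : \omega \to \omega_2^V$, read off from the roots of conditions in $G$. Given $n < \omega$ and $\alpha < \omega_2$, any Namba tree $T$ with root $s$ can, by the $\omega_2$-splitting condition above $s$, be extended along a branch within $T$ to a node $s'$ with $dom(s') > n$ and $s'(n) > \alpha$; the subtree $T_{s'}^{\leq^{\dagger}}$ is then a Namba tree refining $T$ with root $s'$. Hence the set of conditions forcing a value above $\alpha$ into coordinate $n$ of the generic sequence is dense in $\mathbb{P}_N$, so $f_G$ ranges cofinally in $\omega_2^V$.

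For (ii), the crux is the Namba lemma: for any $p \in \mathbb{P}_N$ with root $s$ and any $\mathbb{P}_N$-name $\dot{\beta}$ for a countable ordinal, there exists $q \leq_N p$ with root $s$ that decides $\dot{\beta}$. I would prove this by a fusion-type tree-pruning argument above $s$: at each splitting node $t$ being processed, among the $\omega_2$ immediate successors of $t$ in $p$, densely many further extensions decide $\dot{\beta}$ to some value in $\omega_1$, so by pigeonhole on $\omega_2 > \omega_1$, I can retain $\omega_2$ successors of $t$ all committing to a common value, thereby preserving $\omega_2$-splitting at $t$. Level-by-level fusion bookkeeping would then assemble these choices into the desired Namba subtree $q$.

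Granting the lemma, I would deduce stationary preservation of $\omega_1$ via the standard countable-elementary-substructure argument: for $M \prec H(\theta)$ countable with the relevant parameters in $M$ and $\delta := M \cap \omega_1$ lying in a given stationary $S \subset \omega_1$, I would iterate the lemma in $M$ to build a descending chain of conditions deciding a cofinal $\omega$-sequence in $\dot{C}$ below $\delta$, then extract a lower bound forcing $\delta \in \dot{C}$. The main obstacle throughout will be the fusion construction underlying the Namba lemma, together with the delicate construction of lower bounds in the non-countably-closed poset $\mathbb{P}_N$ at the conclusion of the stationary-preservation argument; the full details are classical and can be found in standard treatments of Namba forcing.
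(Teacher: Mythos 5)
The paper states this Fact without proof, treating it as classical, so your sketch stands on its own; its overall shape (density for the cofinality change, a pure-decision lemma plus fusion for stationary preservation) is indeed the standard route. Part (i) is fine apart from a quantifier slip: the set of conditions forcing $f_G(n) > \alpha$ for a \emph{fixed} $n$ is not dense (the root may already have decided $f_G(n) \leq \alpha$); what is dense, and what you actually need, is that for each $\alpha$ some coordinate beyond the root exceeds $\alpha$. The real problem is in your proof of the key lemma. The lemma itself is true for this version of Namba forcing, and the pigeonhole $\omega_2 \to \omega_1$ with $\omega_2$ regular is the right engine, but the ``level-by-level fusion'' cannot assemble it. For an immediate successor $t'$ of $t$ to ``commit to a value'' you need a condition with \emph{root} $t'$ deciding $\dot{\beta}$ --- which is exactly the lemma for $t'$, so the downward construction is circular and never terminates; moreover the locally common values vary over the $\omega_2$-many nodes of each level, so the fused tree still fails to decide $\dot{\beta}$. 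The correct argument runs by contradiction on a well-founded notion of goodness: call $t$ \emph{good} if some refinement of $p_t$ (the nodes of $p$ comparable with $t$) with root $t$ decides $\dot{\beta}$; if $\omega_2$-many immediate successors of $t$ are good, then by your pigeonhole $\omega_2$-many of them decide $\dot{\beta}$ to a common value, and gluing their witnesses shows $t$ is good; hence a bad node has $\omega_2$-many bad immediate successors, and a Namba subtree of bad nodes contradicts the density of conditions deciding $\dot{\beta}$.

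The second gap is the final step of (ii). Repeatedly applying the same-root decision lemma inside $M$ produces a descending $\omega$-chain whose terms may be thinned arbitrarily above the root at each stage, and such chains in $\mathbb{P}_N$ need not have lower bounds: a decreasing $\omega$-sequence of size-$\omega_2$ sets of immediate successors can have empty intersection, destroying $\omega_2$-splitting. So ``then extract a lower bound'' is precisely the point at which the argument, as written, fails. The classical repair interleaves the decision lemma with a genuine fusion in which $q_{n+1}$ agrees with $q_n$ up to level $n$ and the lemma is applied separately below the nodes of that level, with the elementary-submodel bookkeeping arranged so that the ordinals caught in $\dot{C}$ along the generic branch are cofinal in $\delta = M \cap \omega_1$; this is the substantive content of Namba's theorem and is not recovered by the sketch. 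Deferring to the literature here is defensible --- the paper does the same for the entire Fact --- but you should not present the descending-chain-plus-lower-bound outline as if it closes, because in this non-countably-closed forcing it does not.
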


This formulation of the problem is already non-trivial, because finding witnesses to $Nb_0(\lambda)$ for $\lambda > \omega_3$ turns out to be nearly as difficult as proving $\neg Nb_0(\lambda)$. Perhaps as a sign of this difficulty, the following fact tells us that iterating Namba forcing in the usual way is insufficient to get us $Nb_0(\lambda)$ for any $\lambda > \omega_3$, without assuming a strong failure of $\mathsf{GCH}$. 

\begin{fact}\label{fact26}
Assume $\mathsf{GCH}$ holds below $\omega_2$. Then 
\begin{enumerate}[label=(\arabic*)]
    \item\label{fact261} $\Vdash_{\mathbb{P}_{N}} ``cof(\omega_3^V) = \omega_1"$, and
    \item $\Vdash_{\mathbb{P}_{N}} ``cof(\beta) > \omega"$ for all regular cardinals $\beta$ satisfying $\omega_3 < \beta$.
\end{enumerate}
\end{fact}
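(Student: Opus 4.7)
The plan is to reduce both parts to a common \emph{Capture Lemma}: for every $\mathbb{P}_N$-name $\dot{f}$ for a function $\omega \to ORD$ and every condition $T_0 \in \mathbb{P}_N$, there exist $T^* \leq_N T_0$ and a set $A \in V$ with $|A|^V \leq \omega_2$ such that $T^* \Vdash_{\mathbb{P}_N} \mathrm{ran}(\dot{f}) \subset A$. I would establish this by a Namba fusion: build a decreasing sequence $T_0 \geq_N T_1 \geq_N \cdots$ of Namba sub-trees in which stage $n$ arranges that every splitting node of $T_n$ at level $n$ decides $\dot{f}(n)$. At each stage there are at most $\omega_2$ splitting nodes (by $\omega_2$-splitting and finite depth) so each stage contributes at most $\omega_2$ decided values, giving an overall pool $A$ of size at most $\omega_2 \cdot \omega = \omega_2$. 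The fusion converges to a single Namba sub-tree $T^* \leq_N T_0$, with the $\omega_2$-splitting maintained throughout.

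Part~(2) is then immediate from the Capture Lemma. For any regular $\beta > \omega_3$ in $V$, a $\mathbb{P}_N$-name for a cofinal $\omega$-sequence into $\beta$ would by capture be forced into a $V$-set of size $\omega_2 < \beta$, contradicting the regularity of $\beta$ in $V$. Together with stationary-preservation of $\mathbb{P}_N$ (hence $\omega_1^{V[G]} = \omega_1^V$), this gives $cof^{V[G]}(\beta) > \omega$. The lower bound $cof^{V[G]}(\omega_3^V) \geq \omega_1$ in part~(1) is the same argument applied at $\beta = \omega_3^V$: the Capture Lemma rules out cofinal $\omega$-sequences into $\omega_3^V$, and $\omega_1$ survives.

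For the upper bound $cof^{V[G]}(\omega_3^V) \leq \omega_1$, the first observation is that $cof^{V[G]}(\omega_2^V) = \omega$ (via the generic $g$) combined with $\omega_1^{V[G]} = \omega_1^V$ forces $|\omega_2^V|^{V[G]} = \omega_1$, so every ordinal strictly below $\omega_3^V$ has $V[G]$-cardinality at most $\omega_1$. Hence it suffices to produce a surjection $\omega_1 \to \omega_3^V$ in $V[G]$, from which $|\omega_3^V|^{V[G]} \leq \omega_1$ and the cofinality bound follow at once. I plan to build this surjection by combining a $V[G]$-bijection $\omega_1 \cong \omega_2^V$ with a $V$-combinatorial coding of $\omega_3^V$ that is organised around the generic $g\colon \omega \to \omega_2^V$ so as to interact appropriately with the $\omega_2$-splitting of Namba trees.

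The main obstacle is precisely this last step: extracting a surjection $\omega_1 \to \omega_3^V$ in $V[G]$ from the generic and a $V$-scaffold. The construction has to coordinate the generic $\omega$-sequence through $\omega_2^V$ with a $V$-indexed decomposition of $\omega_3^V$ whose existence and fine structure depend essentially on $\mathsf{GCH}$ below $\omega_2$ (in the form $2^{\omega_1} = \omega_2$). This is the classical \emph{collapsing phenomenon} of Namba forcing under $\mathsf{GCH}$, and it is where the hypothesis of the fact is used in an irreducible way; the remainder of the proof is routine once the surjection has been produced.
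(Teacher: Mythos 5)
Your reduction of clause~(2) and of the lower bound in clause~(1) to a capture lemma is the right idea, and that lemma is indeed a $\mathsf{ZFC}$ fact about Namba forcing needing no $\mathsf{GCH}$. But note that the paper's $\mathbb{P}_N$ is the \emph{fully splitting} version (every node above the root has $\omega_2$ immediate successors), and your fusion does not work verbatim for it: to make $T_{n+1}$ \emph{decide} $\dot f(n)$ above a level-$n$ node $s$ you must pass to a condition below $(T_n)_s$, which in general has a strictly longer stem, and gluing such trees over all $\omega_2$ nodes of the front leaves the new stem-nodes with a single successor, so $T_{n+1}$ is no longer a Namba tree. The standard repair is to prove, by a rank\slash density argument on the set of nodes $s$ admitting no root-$s$ subtree with the desired property, that each node can be shrunk (keeping $s$ as root) so as to force $\dot f(n)$ into a set of size $\leq\omega_2$ rather than to decide it; the count $|A|\leq\omega_2\cdot\omega_2\cdot\omega=\omega_2$ survives and the rest of your argument for~(2) and for $cof^{V[G]}(\omega_3^V)\geq\omega_1$ goes through (the paper states the fact without proof, so there is nothing in-text to compare against).

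The genuine gap is the upper bound $cof^{V[G]}(\omega_3^V)\leq\omega_1$, i.e.\ the production of a surjection $\omega_1\to\omega_3^V$ in $V[G]$. This is not a routine supplement: it is the entire non-trivial content of clause~(1) and the only point where the hypothesis $2^{\omega_1}=\omega_2$ enters (your preliminary observation that every $\alpha<\omega_3^V$ has $V[G]$-cardinality $\leq\omega_1$ only yields $|\omega_3^V|^{V[G]}\leq\omega_2^{V[G]}$, which says nothing). You gesture at ``a $V$-combinatorial coding of $\omega_3^V$ organised around the generic'' but give no construction, and the naive codings one might first try fail --- for instance, the generic branch need not meet a prescribed club in $\omega_2$, nor hit a prescribed unbounded set, so one cannot simply evaluate a $V$-scale of canonical functions $\omega_2\to\omega_2$ along $g$ without further work. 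Until you exhibit the cofinal $\omega_1$-sequence (or the collapse of $\omega_3^V$ to $\omega_1$, which is equivalent here), clause~(1) is unproved; as it stands you have established clause~(2) and only half of clause~(1).
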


Notice that if $\mathsf{GCH}$ holds below $\omega_2$, then by Fact \ref{fact26}, iterating Namba forcing any number of times in the standard sense would not result in $cof(\omega_3^V) = \omega$, not without collapsing $\omega_1^V$.

\begin{rem}\label{rem47}
That any standard iteration of Namba forcing fails to extend $Nb_0(\lambda)$ beyond $\lambda = \omega_3$, assuming the hypothesis of Fact \ref{fact26}, suggests our natural conception of iteration is incompatible with the side conditions of Namba forcing. Indeed, \ref{fact261} of Fact \ref{fact26} is a result of interactions between the hypothesis of said fact, and the behaviour of these side conditions. 

To overcome this incompatibility, it makes sense to consider either
\begin{enumerate}[label=(\alph*)]
    \item\label{s41a} a new kind of iteration, or 
    \item\label{s41b} an overhaul of the side conditions.
\end{enumerate} 

Option \ref{s41a} is almost unfathomable, since the typical intuitions to --- and (informal) definitions of --- iterated forcing necessitates that an iterated forcing notion be a regular extension of each of its initial iterands. In other words, an iterated Namba forcing extension ought to include a Namba forcing extension as a submodel. This forbids an iteration of Namba forcing from witnessing $Nb_0(\lambda)$ for any $\lambda > \omega_3$, should $\mathsf{GCH}$ hold below $\omega_2$. 

On the other hand, option \ref{s41b} could mean a departure from the intuition of uniformly-splitting tree forcings so radical, that the resultant forcing notion has conditions best presented as objects other than trees.
\end{rem}

Indeed, a stronger variant of the extended Namba problem, asking for which regular cardinals $\lambda > \omega_2$ is the statement
\begin{align*}
    Nb_1(\lambda) := \text{`} & \text{there is a stationary-preserving forcing notion } \mathbb{P} \text{ such that} \\ 
    & \Vdash_{\mathbb{P}} ``cof(\alpha) = \omega" \text{ for all regular cardinals } \alpha \text{ satisfying } \omega_2 \leq \alpha < \lambda \text{,} \\
    & \Vdash_{\mathbb{P}} ``cof(\lambda) = \omega_1" \text{, and} \\
    & \Vdash_{\mathbb{P}} ``cof(\beta) > \omega" \text{ for all regular cardinals } \beta \text{ satisfying } \lambda < \beta \text{'}
\end{align*}
true, naturally arises from Fact \ref{fact26}. Indeed, Fact \ref{fact26} is equivalent to --- and can be rewritten as --- the following.

\begin{customfact}{4.7$'$}\label{f46p}
Assume $\mathsf{GCH}$ holds below $\omega_2$. Then Namba forcing witnesses $Nb_1(\omega_3)$.
\end{customfact}

Drawing from the deep and complex theories of subcomplete forcing and $\mathcal{L}$-forcing, Jensen showed in \cite{lforcing} that, modulo weak fragments of $\mathsf{GCH}$, $Nb_1(\lambda)$ holds for all successor and strongly inaccessible cardinals above $\omega_2$. Jensen used very different methods to construct the witnesses $\mathbb{P}$ for different categories of $\lambda$, but in doing so, he also ensured that $\mathbb{P}$ never adds reals. 

But can we have $Nb_1(\lambda)$ hold for a bigger class of cardinals $\lambda$ if we allow $\mathbb{P}$ to add reals?

\subsection{A Conditional Solution}

It turns out there is a somewhat simple proof of $$\text{``} Nb_1(\lambda) \text{ for every regular cardinal } \lambda > \omega_2 \text{''}$$ (in fact, a slightly stronger statement) if we assume a theory of greater consistency strength than $\mathsf{ZFC}$. This proof adopts a novel side-condition technique first employed in \cite{schindler} (cf. Remark \ref{rem47}). It also demonstrates how amenable the forcing framework of Subsection \ref{forframe} is in bolstering natural and obvious forcing conditions with said side conditions. 

\begin{defi}
Let $Nb'_1(\lambda)$ denote the statement
\begin{align*}
    \text{`} & \text{there is a stationary-preserving forcing notion } \mathbb{P} \text{ such that} \\ 
    & \Vdash_{\mathbb{P}} ``cof(\alpha) = \omega" \text{ for all regular cardinals } \alpha \text{ satisfying } \omega_2 \leq \alpha < \lambda \text{,} \\
    & \Vdash_{\mathbb{P}} ``cof(\gamma) = \omega_1" \text{ for all regular cardinals } \gamma \text{ satisfying } \lambda \leq \gamma \leq 2^{< \lambda} \text{, and} \\
    & \Vdash_{\mathbb{P}} ``cof(\beta) = \beta" \text{ for all regular cardinals } \beta \text{ satisfying } 2^{< \lambda} < \beta \text{'.}
\end{align*}
\end{defi}

Immediately, one can see that $Nb'_1(\lambda)$ implies $Nb_1(\lambda)$ whenever $\lambda > \omega_2$ is a regular cardinal. By generalising the (redacted) proof of Fact \ref{fact26} (or equivalently, Fact \ref{f46p}), we can show that statements in the class
\begin{equation*}
    \{Nb'_1(\lambda) : \lambda > \omega_2\}
\end{equation*}
are indeed extensions of a property of Namba forcing, modulo the same mild assumption beyond $\mathsf{ZFC}$.

\begin{fact}
Assume $\mathsf{GCH}$ holds below $\omega_2$. Then Namba forcing witnesses $Nb'_1(\omega_3)$.
\end{fact}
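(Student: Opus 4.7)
My plan is to verify the constraints encoded in $Nb'_1(\omega_3)$ one at a time, re-using and extending the argument underpinning Fact \ref{fact26} (equivalently, Fact \ref{f46p}) to all regular cardinals up to $2^{\omega_2}$. Stationary-preservation of $\mathbb{P}_N$ is a classical property of Namba forcing, already implicit in Fact \ref{fact26} via the witness statement for $Nb_1$, and I take it as given. The requirement $\Vdash_{\mathbb{P}_N} \text{``}cof(\omega_2^V) = \omega\text{''}$ is immediate, since the union of stems of conditions in a generic filter is a cofinal $\omega$-sequence in $\omega_2^V$.

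For the tail, $\Vdash_{\mathbb{P}_N} \text{``}cof(\beta) = \beta\text{''}$ for every regular $\beta > 2^{\omega_2}$, I exploit $|\mathbb{P}_N|^V \leq 2^{\omega_2}$: under GCH below $\omega_2$ we have $|\omega_2^{<\omega}|^V = \omega_2$, and every Namba tree is a subset of $\omega_2^{<\omega}$. Thus $\mathbb{P}_N$ enjoys the $(2^{\omega_2})^+$-cc, and cardinals and cofinalities above $2^{\omega_2}$ are preserved.

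The substantive step is the middle range: $\Vdash_{\mathbb{P}_N} \text{``}cof(\gamma) = \omega_1\text{''}$ for every regular $\gamma \in [\omega_3, 2^{\omega_2}]$. The lower bound $cof(\gamma) > \omega$ is delivered by Fact \ref{fact26}(2). For the upper bound $cof(\gamma) \leq \omega_1$ I would generalise the argument giving Fact \ref{fact26}(1): fix in $V$ an injection $f : \gamma \hookrightarrow P(\omega_2)^V$ (available because $\gamma \leq 2^{\omega_2}$), and in $V[G]$ encode each $f(\xi)$ by $\langle f(\xi) \cap \delta_n : n < \omega \rangle$, where $\langle \delta_n : n < \omega \rangle$ is the generic cofinal $\omega$-sequence in $\omega_2^V$. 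By GCH below $\omega_2$, each $P(\delta_n)^V$ has $V$-cardinality at most $\omega_2$, which becomes $\omega_1$ in $V[G]$ once $\omega_2^V$ is collapsed; coupling this coding with the order structure on $\gamma$ is intended to yield a cofinal $\omega_1$-sequence through $\gamma$ in $V[G]$.

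The main obstacle is the last sentence of the previous paragraph: promoting the ``small image'' conclusion from a cardinality bound to genuine cofinality $\omega_1$. Since $\mathbb{P}_N$ may add reals, $(\omega_1)^\omega$ in $V[G]$ need not equal $\omega_1$, so a naive product estimate only gives $|\gamma|^{V[G]} \leq (2^\omega)^{V[G]}$. The cleanest route I foresee is to exhibit $\mathbb{P}_N$, up to forcing equivalence, as a two-step iteration $\mathbb{Q}_0 \ast \dot{Col}(\omega_1, 2^{\omega_2})$, where $\mathbb{Q}_0$ is a mild Prikry-like component merely supplying the cofinal $\omega$-sequence in $\omega_2^V$ while leaving cardinals above $\omega_2$ alone; the L\'{e}vy collapse component then delivers $cof = \omega_1$ uniformly throughout $[\omega_3, 2^{\omega_2}]$, and the Prikry factor supplies the $cof > \omega$ half at one stroke. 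Establishing such a factorisation (or substituting a direct diagonal argument that avoids relying on the behaviour of $2^\omega$ in $V[G]$) is where most of the work lies.
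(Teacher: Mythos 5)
Your decomposition is sound, and three of the four pieces are fine as they stand: stationary preservation, the clause for $\omega_2$, and the tail clause via $|\mathbb{P}_N| \leq 2^{\omega_2}$ and the $(2^{\omega_2})^+$-cc. (The paper itself offers no proof of this fact --- it defers to the ``redacted'' proof of Fact \ref{fact26} --- so the only question is whether your argument can be completed.) The problem is that the repair you propose for the gap you correctly identify cannot work. For Namba forcing one has $V[g] = V[\langle \delta_n : n < \omega \rangle]$, the extension generated by the generic cofinal sequence. If $\mathbb{P}_N$ were forcing equivalent to $\mathbb{Q}_0 \ast \dot{Col}(\omega_1, 2^{\omega_2})$, the second factor would be countably closed in the intermediate model and hence add no new $\omega$-sequences of ordinals; so $\langle \delta_n : n < \omega \rangle$, and with it all of $V[g]$, would already lie in the intermediate model, forcing the nontrivial atomless collapse over that model to have its generic filter inside its own ground model. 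No such factorisation exists, and more generally no decomposition with a $\sigma$-closed tail can produce the Namba sequence.

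The ingredient you are actually missing is the classical theorem that, under $\mathsf{CH}$, Namba forcing adds no new reals (pure decision plus fusion: a name for a real is read continuously off the generic branch, and repeated $\omega_2$-versus-$\omega$ pigeonhole at the splitting nodes then decides it outright on a full-splitting subtree). Note that the middle clause you are trying to prove is essentially equivalent to $\mathsf{CH}$ holding in $V[g]$, since by nice-name counting $(2^{\aleph_0})^{V[g]} \leq |(2^{\omega_2})^V|^{V[g]}$ while your coding gives the reverse inequality; so some such input is unavoidable. Granting it, your argument closes immediately: $(2^{\aleph_0})^{V[g]} = \omega_1$, each regular $\gamma \in [\omega_3, 2^{\omega_2}]$ injects via $\xi \mapsto \langle f(\xi) \cap \delta_n : n < \omega \rangle$ into a set of $V[g]$-cardinality at most $(\omega_1^{\aleph_0})^{V[g]} = (2^{\aleph_0})^{V[g]} = \omega_1$, hence is collapsed to $\omega_1$, hence has cofinality at most $\omega_1$ there (any bijection with $\omega_1$ yields a cofinal map --- no coupling with the order structure is needed), and the lower bound from Fact \ref{fact26} finishes.
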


The next theorem, also the main one in this section, tells us $Nb'_1(\lambda)$ holds for all but set many regular cardinals $\lambda$, under significantly stronger assumptions.

\begin{thm}\label{notion1}
Assume $\mathrm{NS}_{\omega_1}$ is precipitous. Then $Nb'_1(\lambda_f)$ holds for all regular cardinals $\lambda_f > 2^{\omega_1}$. In other words, whenever $\lambda_f > 2^{\omega_1}$ is a regular cardinal, there is a stationary-preserving forcing notion $\mathbb{P}'$ such that 
\begin{enumerate}[label=(\arabic*)]
    \item\label{cond1} $\Vdash_{\mathbb{P}'} ``cof(\alpha) = \omega"$ for all regular cardinals $\alpha$ satisfying $\omega_2 \leq \alpha < \lambda_f$,
    \item\label{cond2} $\Vdash_{\mathbb{P}'} ``cof(\gamma) = \omega_1"$ for all regular cardinals $\gamma$ satisfying $\lambda_f \leq \gamma \leq 2^{< \lambda_f}$, and
    \item\label{cond3} $\Vdash_{\mathbb{P}'} ``cof(\beta) = \beta"$ for all regular cardinals $\beta$ satisfying $2^{< \lambda_f} < \beta$.
\end{enumerate}
\end{thm}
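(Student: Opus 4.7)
The plan is to apply the framework of Subsection \ref{forframe}, instantiating Lemma \ref{main2} with a carefully chosen triple $(\mathcal{L}, \mathfrak{A}, \Gamma)$. Fix a regular $\lambda_f > 2^{\omega_1}$, let $\mu := 2^{<\lambda_f}$ and $\theta := (2^\mu)^+$, and set $\mathfrak{A} := (H(\theta); \in, \mathrm{NS}_{\omega_1})$. Introduce, for each regular $\alpha \in [\omega_2, \lambda_f)$, a symbol $\dot{F}_\alpha$ meant to name a cofinal map $\omega \to \alpha$, and for each regular $\gamma \in [\lambda_f, \mu]$, a symbol $\dot{F}_\gamma$ meant to name a cofinal map $\omega_1 \to \gamma$. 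Let $\mathcal{L}$ be the closure under negation of the atomic expressions $\ulcorner \dot{F}_\alpha(n) = \xi \urcorner$ and $\ulcorner \dot{F}_\gamma(\eta) = \delta \urcorner$ with indices in the obvious ranges. Build $\Gamma$ to contain functionality sentences for each $\dot{F}_\alpha, \dot{F}_\gamma$ together with assertions that their ranges are cofinal in the target ordinals (easily arranged as $\mathcal{L}^{*}_{\mathfrak{A}}$-$\Pi_2$ sentences, in the style of the example in the proof of Lemma \ref{lem339}), and, following \cite{schindler}, stationary-preservation side conditions asserting that every stationary $S \subseteq \omega_1$ visible to $\mathfrak{A}$ is compatible with the countable substructures witnessed by the generic. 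Then apply Lemma \ref{main2} to produce the forcing notion $\mathbb{P}'$.

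The heart of the argument then lies in two verifications. First, I would show $\emptyset \in P'$: in $V^{Col(\omega, |trcl(\mathfrak{A})|)}$, $\mathfrak{A}$ becomes countable and transitive, so by the precipitousness of $\mathrm{NS}_{\omega_1}$ combined with Fact \ref{satipre} and Lemma \ref{iterlength}, the structure $(H(\omega_2)^V; \in, \mathrm{NS}_{\omega_1}^V)$ is generically iterable there; a length-$\omega_1^V$ generic iteration, together with freely chosen surjections $\omega \to \alpha$ and $\omega_1 \to \gamma$ for the relevant $\alpha, \gamma$, can be assembled into a set $\Sigma$ that $\Gamma(\mathcal{L}, \mathfrak{A})$-certifies $\emptyset$. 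Second, I would show that $\mathbb{P}'$ is stationary-preserving: given any $\mathbb{P}'$-name $\dot{C}$ for a club in $\omega_1$ and any stationary $S \in V$, density of the set of conditions forcing $S \cap \dot{C} \neq \emptyset$ follows by a standard argument from the universality of the stationary-preservation sentences in $\Gamma$, applied via Lemma \ref{uni} to transfer the required compatibility from an arbitrary extension back to the forcing condition. Cofinality clauses \ref{cond1} and \ref{cond2} are then immediate from the existence of $\dot{F}_\alpha, \dot{F}_\gamma$ in the extension, and preservation of regulars above $\mu$, giving \ref{cond3}, follows by a straightforward chain-condition count since the conditions are finite subsets of $\mathcal{L}$ and $|\mathcal{L}| = \mu$.

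The main obstacle I anticipate is calibrating $\Gamma$ so that its members are simultaneously $\mathcal{L}^{*}_{\mathfrak{A}}$-$\Pi_2$, certifiable in the $Col(\omega, |trcl(\mathfrak{A})|)$-extension, and strong enough to entail stationary preservation in $V$ via Lemma \ref{main2}. Precipitousness of $\mathrm{NS}_{\omega_1}$ is exactly the ingredient that makes certifiability work, since it supplies the generic iterations of $(H(\omega_2)^V; \in, \mathrm{NS}_{\omega_1}^V)$ whose existence the $\Pi_2$-clauses demand. The delicate point is that $\Gamma$ must internally describe an external preservation property, which is accomplished by expressing the side conditions as properties of generic iterates visible to $\mathfrak{A}$ in the Asper\'{o}--Schindler style, and quantifying over those iterates rather than over external stationary sets directly.
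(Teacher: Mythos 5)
Your high-level architecture (a negation-closed language of atomic facts, a $\Pi_2$ specification $\Gamma$, an application of Lemma \ref{main2}) matches the paper's, but the proposal has a genuine gap located exactly where you defer to a ``standard argument'': stationary preservation. In the paper, the sentence asserting that side conditions can be found with index in any prescribed stationary set (sentence (\ref{c9})) is \emph{not} a member of $\Gamma$; its $(\mathfrak{A}, \mathbb{P}, \mathcal{L})$-universality is a separate and lengthy theorem (Lemma \ref{sideuni}) which is the technical core of the entire proof. It is proved by taking a $\mathbb{P}_{\nu} \times Col(\omega,\nu)$-generic filter, running a generic iteration of the \emph{universe} of length $\omega_1 + 1$ whose first step uses a generic ultrafilter containing the prescribed stationary $S$ (so that $\omega_1 \in j(S)$), and then verifying that the $j$-image of the generic filter, augmented by the new top side condition $j`` Q_{\nu}$, certifies $j(\Gamma)$. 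This is where precipitousness enters --- not, as you assert, in certifying $\emptyset$: the paper's Lemma \ref{nonemp} uses no precipitousness at all, since arbitrary cofinal maps together with \emph{empty} side conditions already certify $\emptyset$. Moreover, for Lemma \ref{sideuni} to go through, each side condition $X_{\delta,\lambda'}$ must be elementary in $(Q_{\lambda'}; \in, \mathbb{P}_{\lambda'}, A_{\lambda'})$ and sufficiently \emph{generic for the restricted forcing} $\mathbb{P}_{\lambda'} = \mathbb{P} \cap Q_{\lambda'}$ (clause (\ref{c8})); this self-reference forces the forcing to be defined by induction along a club $C$ of $\lambda$ with $Q_{\lambda} \prec H(\lambda_f)$. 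Your single non-inductive $\Gamma$ contains no mechanism for this, and without it the density argument for $S \cap \dot{C} \neq \emptyset$ does not get off the ground.

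A second, related omission is the preliminary collapse. The paper's witness is $Col(\lambda_f, \lambda_f) * \dot{\mathbb{P}}$ (Lemma \ref{lem26}), and the first factor is load-bearing: it supplies a $\Diamond_{\lambda_f}$-sequence, which yields predicates $A_{\lambda}$ on stationarily many $Q_{\lambda}$ guessing the set $D$ that codes an arbitrary club name $\dot{C}$. In the proof of Lemma \ref{statdone} the side condition $X_{\mu,\nu}$ is elementary in $(Q_{\nu};\in,\mathbb{P}_{\nu},A_{\nu}) \prec (H(\lambda_f);\in,\mathbb{P},D)$, which is how a countable structure fixed when the forcing was defined manages to ``see'' a name that only exists afterwards. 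Your side conditions have no such guessing device, so even granting their universality they cannot interact with $\dot{C}$. (The collapse also reduces clause \ref{cond2} to the single statement $cof(\lambda_f)=\omega_1$; your substitute --- explicit symbols $\dot{F}_{\gamma}$ for $\gamma \in [\lambda_f, 2^{<\lambda_f}]$ --- only yields $cof(\gamma) \leq \omega_1$ unless you additionally require the maps to be increasing with domain cofinal in $\omega_1$, though that part is repairable.)
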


\begin{proof}
Assume $\mathrm{NS}_{\omega_1}$ is precipitous, and fix a regular cardinal $\lambda_f > 2^{\omega_1}$.  

Let $h$ be a generic filter on $Col(\lambda_f, \lambda_f)$, so that in $V[h]$, 
\begin{itemize}
    \item $cof(\alpha) = cof^{V}(\alpha)$ for all $ \alpha \leq \lambda_f$,
    \item $\mathrm{NS}_{\omega_1}$ is still precipitous,
    \item $|H(\lambda_f)| = \lambda_f$, and
    \item there is a a $\Diamond_{\lambda_f}$-sequence $(\bar{A}_{\lambda} : \lambda < \lambda_f)$.
\end{itemize}

\begin{lem}\label{lem26}
If in $V[h]$ there is a stationary-preserving forcing notion $\mathbb{P}$ of size $\leq \lambda_f$ fulfilling 
\begin{enumerate}[label=(\arabic*')]
    \item\label{4101} $\Vdash_{\mathbb{P}} ``cof(\alpha) = \omega"$ for all regular cardinals $\alpha$ satisfying $\omega_2 \leq \alpha < \lambda_f$, and
    \item\label{4102} $\Vdash_{\mathbb{P}} ``cof(\lambda_f) = \omega_1"$,
\end{enumerate}
then for some $Col(\lambda_f, \lambda_f)$-name $\dot{\mathbb{P}}$ for $\mathbb{P}$,
\begin{equation*}
    Col(\lambda_f, \lambda_f) * \dot{\mathbb{P}} \text{,}
\end{equation*} 
is a stationary-preserving forcing notion fulfilling \ref{cond1} to \ref{cond3} of Theorem \ref{notion1} in $V$.
\end{lem}

\begin{proof}
Working in $V$, we set $$\mathbb{P}' := Col(\lambda_f, \lambda_f) * \dot{\mathbb{P}} \text{,}$$ and note the following facts:
\begin{enumerate}[label=(\arabic*')]
    \setcounter{enumi}{2}
    \item\label{4103} $|Col(\lambda_f, \lambda_f)| = 2^{< \lambda_f}$,
    \item\label{4104} $\Vdash_{Col(\lambda_f, \lambda_f)} ``2^{< \lambda_f} = \lambda_f"$
\end{enumerate}
$\mathbb{P}'$ is a stationary-preserving forcing notion fulfilling \ref{cond1} of Theorem \ref{notion1} because 
\begin{itemize}
    \item $Col(\lambda_f, \lambda_f)$ is a stationary-preserving forcing notion forcing $\dot{\mathbb{P}}$ to be stationary-preserving, 
    \item $Col(\lambda_f, \lambda_f)$ forces $cof(\alpha) = cof^{V}(\alpha)$ for all $ \alpha < \lambda_f$, and
    \item \ref{4101} holds.
\end{itemize}

In $V^{Col(\lambda_f, \lambda_f)}$, we are given $|\mathbb{P}| \leq \lambda_f$, so $\mathbb{P}$ preserves cofinalities $\geq \lambda_f^+$. But $\lambda_f^+$ in $V^{Col(\lambda_f, \lambda_f)}$ is exactly $(2^{< \lambda_f})^+$ in $V$, by \ref{4103} and \ref{4104}, so $\mathbb{P}'$ preserves cofinalities $\geq (2^{< \lambda_f})^+$ in $V$. This implies \ref{cond3} of Theorem  \ref{notion1}.

Now let $\gamma \geq \lambda_f$ be regular in $V$. Then $cof^{Col(\lambda_f, \lambda_f)} (\gamma) \geq \lambda_f$ because $Col(\lambda_f, \lambda_f)$ is $\lambda_f$-closed. That $\mathbb{P}$ both preserves cofinalities $\geq \lambda_f^+$ and fulfils \ref{4102} implies it forces $cof(\gamma) \geq \omega_1$. We have thus shown
\begin{enumerate}[label=(\arabic*')]
    \setcounter{enumi}{4}
    \item\label{4105} $\Vdash_{\mathbb{P}'} ``cof(\gamma) \geq \omega_1"$ for all regular cardinals $\gamma$ satisfying $\lambda_f \leq \gamma$.
\end{enumerate}
As \ref{4102}, \ref{4104} and \ref{cond1} of Theorem \ref{notion1} give us $$\Vdash_{\mathbb{P}'} ``cof(\gamma) \leq |\gamma| \leq \omega_1"$$ for all ordinals $\gamma$ satisfying $\lambda_f \leq \gamma \leq 2^{< \lambda_f}$, \ref{cond2} of Theorem \ref{notion1} must hold.
\end{proof}

Allow $W$ to denote $V[h]$. Unless otherwise stated, we work in $W$ from now on, towards a forcing notion $\mathbb{P}$ as in Lemma \ref{lem26}. For brevity, we write $\omega_1^V$ as just $\omega_1$. Let 
\begin{align*}
    \kappa & := (2^{\lambda_f})^+ \text{,} \\
    \mathfrak{A} & := (H(\kappa); \in) \text{, and} \\
    R & := \{i < \lambda_f : \omega_2 \leq i \text{ and } i \text{ is regular}\}.
\end{align*}

As $|H(\lambda_f)| = \lambda_f$, we can fix a bijection $c : \lambda_f \longrightarrow H(\lambda_f)$, and define
\begin{itemize}
    \item\label{a3} $Q_{\lambda} := c" \lambda$ and
    \item\label{a4} $A_{\lambda} := c" (\bar{A}_{\lambda})$
\end{itemize}
for each $\lambda < \lambda_f$.

Making use of straightforward closure arguments, we inductively define $C$ such that 
\begin{itemize}
    \item $C$ is a club in $\lambda_f$, and
    \item for all $\lambda \in C$, 
    \begin{itemize}[label=$\circ$]
        \item $Q_{\lambda}$ is transitive,
        \item $(Q_{\lambda}; \in, c \cap Q_{\lambda}) \prec (H(\lambda_f); \in, c)$.
    \end{itemize}
\end{itemize}
We will let $Q_{\lambda_f} = H(\lambda_f)$.

Now, given any $P, B \subset H(\lambda_f)$, the set
\begin{align*}
    \{\lambda \in C : (Q_{\lambda}; \in, P, B) \prec (H(\lambda_f); \in, P, B)\}
\end{align*}
is a club in $\lambda_f$. Moreover, we can derive from $(\bar{A}_{\lambda} : \lambda < \lambda_f)$ being a $\Diamond_{\lambda_f}$-sequence, that the set
\begin{align*}
    \{\lambda \in C : B \cap Q_{\lambda} = A_{\lambda}\}
\end{align*}
is stationary in $\lambda_f$. We thus obtain
\begin{itemize}[label=($\diamond$)]
    \item for all $P, B \subset H(\lambda_f)$, the set
    \begin{align*}
        \{\lambda \in C : (Q_{\lambda}; \in, P, A_{\lambda}) \prec (H(\lambda_f); \in, P, B)\}
    \end{align*}
    is stationary in $\lambda_f$.
\end{itemize}

We want to define $\mathbb{P}$ as a forcing notion comprising finite fragments of some language $\mathcal{L} \subset H(\lambda_f)$, such that $\mathbb{P}$ satisfies the hypothesis of Lemma \ref{lem26}.

Let us first define $\mathcal{L}$.

\begin{defi}\label{defl}
The language $\mathcal{L}$ requires the following distinguished symbols:
\begin{itemize}
    \item $\dot{F}_i$ for $i \in R$, and
    \item $\dot{X}_{\delta, \lambda}$ for $\delta < \omega_1$ and $\lambda \in C$.
\end{itemize}

Now fix $\mathcal{L}$ to be the smallest set closed under negation, that contains expressions (strings) of the following types:
\begin{enumerate}[label=(L\arabic*), leftmargin=40pt]
    \item $\ulcorner \dot{F}_i (n) = \alpha \urcorner$, for $i \in R$, $n < \omega$ and $\alpha < i$, and
    \item $\ulcorner x \in \dot{X}_{\delta, \lambda} \urcorner$, for $\delta < \omega_1$, $\lambda \in C$ and $x \in Q_{\lambda}$.
\end{enumerate}
\end{defi}

Morally, each $\dot{F}_i$ labels an increasing and cofinal partial map from $f(i)$ into $i$, and each $\dot{X}_{\delta}$ labels a side condition. The side conditions will be used to preserve stationary subsets of $\omega_1$. As 
\begin{itemize}
    \item $\mathfrak{A}$ is a transitive model of $\mathsf{ZFC - Powerset}$, and
    \item $\mathcal{L} \subset H(\lambda_f) \in H(\kappa)$,
\end{itemize} 
$\mathfrak{A}$ is $\mathcal{L}$-suitable.

One may argue that the remark below has been a long time coming, considering it was referenced in the proof of Lemma \ref{lem339}.

\begin{rem}\label{rempp}
Sometimes, we want to pass certain parameters of an expression in $\mathcal{L}$ via variables. In such circumstances we are formally passing the parameters through the functions
\begin{enumerate}[label=$\chi_{\arabic*} :$, leftmargin=45pt]
    \item $(i, n, \alpha) \mapsto \ulcorner \dot{F}_i (n) = \alpha \urcorner$,
    \item $(\delta, \lambda, x) \mapsto \ulcorner x \in \dot{X}_{\delta, \lambda} \urcorner$,
    \item $(i, n, \alpha) \mapsto \ulcorner \neg \dot{F}_i (n) = \alpha \urcorner$, and
    \item $(\delta, \lambda, x) \mapsto \ulcorner \neg x \in \dot{X}_{\delta, \lambda} \urcorner$,
\end{enumerate}
with their domains restricted to $H(\lambda_f)$. Note that under this domain restriction, $\chi_1$, $\chi_2$, $\chi_3$ and $\chi_4$ are all members of $H(\kappa)$; in fact, they are all $\Delta_0$-definable functions in $(H(\lambda_f); \in)$ (and thus in $\mathfrak{A}$). For brevity's and clarity's sake, we will abuse notation and suppress mention of these functions, whenever it is clear that we are using variables as placeholders for parameters in our construction of $\mathcal{L}^{*}_{\mathfrak{A}}$ formulas involving the symbol $\ulcorner E \urcorner$ --- see e.g. Definition \ref{defc}. Since 
\begin{itemize}
    \item for $k \in \{1, 2, 3, 4\}$ and $(a, b, c) \in dom(\chi_k)$, $$\text{``} E(\chi_k(a,b,c)) \text{''}$$ can be viewed as a shorthand for both $$\ulcorner \exists z \ (\varphi \wedge E(z)) \urcorner$$ and $$\ulcorner \forall z \ (\varphi \implies E(z)) \urcorner,$$ where $\varphi$ is a $\Delta_0$ formula (going by Definition \ref{def27}) in the language associated with $\mathfrak{A}$ expressing the statement $$\text{``} \chi_k(a, b, c) = z \text{'',}$$ and
    \item $\Delta_0$ formulas in the language associated with $\mathfrak{A}$ is of the lowest complexity class ($\mathcal{L}^{*}_{\mathfrak{A}}$-$\Delta_0$) in our classification of $\mathcal{L}^{*}_{\mathfrak{A}}$ formulas,
\end{itemize} 
by an argument similar to the main one of Remark \ref{shcs}, this method of passing parameters incurs no additional cost to the complexity of any $\mathcal{L}^{*}_{\mathfrak{A}}$-$\Sigma_1$ (and hence also $\mathcal{L}^{*}_{\mathfrak{A}}$-$\Pi_2$) formula thus abbreviated.
\end{rem}

By our choice of $C$, if $\lambda \in C$, then $\mathcal{L} \cap Q_{\lambda}$ is precisely the smallest set closed under negation, that contains expressions of the following types: 
\begin{enumerate}[label=(L\arabic*)$_{\lambda}$, leftmargin=45pt]
    \item\label{l1lamb} $\ulcorner \dot{F}_i (n) = \alpha \urcorner$, for $i \in R \cap \lambda$, $n < \omega$ and $\alpha < i$, and
    \item\label{l2lamb} $\ulcorner x \in \dot{X}_{\delta, \lambda'} \urcorner$, for $\delta < \omega_1$, $\lambda' \in C \cap \lambda$ and $x \in Q_{\lambda'}$.
\end{enumerate}

\begin{defi}\label{def412}
For any $\lambda \in C \cup \{\lambda_f\}$, an object of the form $$\langle \langle F_i : i \in Z \rangle, \langle X_{\delta, \lambda} : \delta < \omega_1, \lambda \in C \rangle \rangle$$ \emph{interprets} $\mathcal{L} \cap Q_{\lambda}$ iff 
\begin{itemize}
    \item $Z = R \cap Q_{\lambda}$,
    \item each $F_i$ is a partial function from $\omega$ into $i$, and 
    \item each $X_{\delta, \lambda}$ is a subset of $Q_{\lambda}$. 
\end{itemize}
\end{defi}

Sometimes it is convenient to talk about interpretations of specific symbols occurring in $\mathcal{L}$.

\begin{defi}
For any pair $(i, \Sigma)$, define $F_i(\Sigma)$ to be the set $$\{(n, \alpha) : \ulcorner \dot{F}_i(n) = \alpha \urcorner \in \Sigma\}.$$
\end{defi}

\begin{defi}
For any triple $(\delta, \lambda, \Sigma)$, define $X_{\delta, \lambda}(\Sigma)$ to be the set $$\{x : \ulcorner x \in \dot{X}_{\delta, \lambda} \urcorner \in \Sigma\}.$$
\end{defi}

\begin{defi}\label{def415}
Given $\lambda \in C \cup \{\lambda_f\}$, $$\mathfrak{C} := \langle \langle F_i : i \in Z \rangle, \langle X_{\delta, \lambda} : \delta < \omega_1, \lambda \in C \rangle \rangle$$
interpreting $\mathcal{L} \cap Q_{\lambda}$ and $\mathcal{L}' \subset \mathcal{L}$, let $\Sigma(\mathfrak{C}, \mathcal{L}')$ denote the union of the following sets:
\begin{itemize}
    \item $\{\ulcorner \dot{F}_i (n) = \alpha \urcorner \in \mathcal{L}' : F_i (n) = \alpha\}$,
    \item $\{\ulcorner \neg \dot{F}_i (n) = \alpha \urcorner \in \mathcal{L}' : F_i (n) \neq \alpha\}$,
    \item $\{\ulcorner x \in \dot{X}_{\delta, \lambda} \urcorner \in \mathcal{L}' : x \in X_{\delta, \lambda}\}$, and
    \item $\{\ulcorner \neg x \in \dot{X}_{\delta, \lambda} \urcorner \in \mathcal{L}' : x \not\in X_{\delta, \lambda}\}$.
\end{itemize}
It is clear that $\Sigma(\mathfrak{C}, \mathcal{L}) \cap Q_{\lambda} = \Sigma(\mathfrak{C}, \mathcal{L} \cap Q_{\lambda})$ is $\mathcal{L} \cap Q_{\lambda}$-nice for all $\lambda \in C \cup \{\lambda_f\}$.
\end{defi}

\begin{rem}\label{rem416}
If 
\begin{itemize}
    \item $\lambda \in C \cup \{\lambda_f\}$, 
    \item $\Sigma$ is $\mathcal{L} \cap Q_{\lambda}$-nice, and
    \item $\mathfrak{C} := \langle \langle F_i(\Sigma) : i \in R \cap \lambda \rangle, \langle X_{\delta, \lambda'}(\Sigma) : \delta < \omega_1, \lambda' \in C \cap \lambda \rangle \rangle$ interprets $\mathcal{L} \cap Q_{\lambda}$,
\end{itemize}
then $\Sigma(\mathfrak{C}, \mathcal{L} \cap Q_{\lambda}) = \Sigma$.
\end{rem}

We will define $\{\mathbb{P}_{\lambda} : \lambda \in C \cup \{\lambda_f\}\}$ by induction on $\lambda$. Assume that $\mathbb{P}_{\lambda'}$ has been defined for all $\lambda' \in \lambda \cap C$. Also, for $\lambda' \in \lambda \cap C$, allow 
\begin{itemize}
    \item $\mathcal{L^*}$ to denote the set of first order formulas over the signature $\{\in, P, B\}$,
    \item $Ef_0^{\lambda}(\lambda')$ to denote the set 
    \begin{align*}
        \{(\phi, \bar{s}, r) : \ & \phi \in \mathcal{L}^* \text{ and} \\
        & r \in Q_{\lambda'} \text{ and} \\
        & \bar{s} \in (Q_{\lambda'})^{< \omega} \text{ and} \\
        & dom(\bar{s}) + 1 = arity(\phi) \text{ and} \\
        & (Q_{\lambda'}; \in, \mathbb{P}_{\lambda'}, A_{\lambda'}) \models \phi(r, \bar{s})\},
    \end{align*}
    \item $Ef_1^{\lambda}(\lambda')$ to denote the set
    \begin{align*}
        \{(\phi, \bar{s}) : \exists r \ ((\phi, \bar{s}, r) \in Ef_0(\lambda'))\},
    \end{align*}
    and
    \item $Df^{\lambda}(\lambda')$ to denote the set 
    \begin{align*}
        \{(\phi, \bar{s}) : \ & \phi \in \mathcal{L}^* \text{ and} \\
        & \bar{s} \in (Q_{\lambda'})^{< \omega} \text{ and} \\
        & dom(\bar{s}) + 1 = arity(\phi) \text{ and} \\
        & \{y \in \mathbb{P}_{\lambda'} : (Q_{\lambda'}; \in, \mathbb{P}_{\lambda'}, A_{\lambda'}) \models \phi(y, \bar{s})\} \text{ is dense in } \mathbb{P}_{\lambda'}\}.
    \end{align*}
\end{itemize}
The functions 
\begin{align*}
    Ef_0^{\lambda} : \ & \lambda \cap C \longrightarrow H(\lambda_f) \ [\lambda' \mapsto Ef_0(\lambda')] \text{,} \\
    Ef_1^{\lambda} : \ & \lambda \cap C \longrightarrow H(\lambda_f) \ [\lambda' \mapsto Ef_1(\lambda')] \text{, and}  \\
    Df^{\lambda} : \ & \lambda \cap C \longrightarrow H(\lambda_f) \ [\lambda' \mapsto Df(\lambda')]
\end{align*}
are clearly members of $H(\kappa)$.

\begin{defi}\label{defc}
Let $\Gamma_{\lambda}$ be the set of $\mathcal{L}^*_{\mathfrak{A}}$-$\Pi_2$ sentences enumerated below.  
\begin{enumerate}[label=(S\arabic*)$_{\lambda}$, leftmargin=40pt]
    \item\label{c1} $\ulcorner \forall i \ \forall n \ \forall \alpha \ \forall \beta \ ((E(\ulcorner \dot{F}_i (n) = \alpha \urcorner) \wedge E(\ulcorner \dot{F}_i (n) = \beta \urcorner)) \implies \alpha = \beta) \urcorner$,
    
    \item\label{c2}
    \!
    $\begin{aligned}[t]
        \ulcorner & \forall i \ \forall \alpha \ \forall m \ \forall \gamma \ \exists n \ \exists \beta \\
        & ((E(\ulcorner \dot{F}_i (m) = \gamma \urcorner) \vee E(\ulcorner \neg \dot{F}_i (m) = \gamma \urcorner)) \wedge \alpha \in i \\
        & \implies (\alpha \in \beta \wedge E(\ulcorner \dot{F}_i (n) = \beta \urcorner))) \urcorner, 
    \end{aligned}$

    \item\label{c3} 
    \!
    $\begin{aligned}[t]
        \ulcorner & \forall \delta \ \forall \lambda_0 \ \forall \lambda_1 \ \forall x \ \forall y \\
        & ((E(\ulcorner x \in \dot{X}_{\delta, \lambda_0} \urcorner) \wedge E(\ulcorner y \in \dot{X}_{\delta, \lambda_1} \urcorner)) \implies \lambda_0 = \lambda_1) \urcorner,
    \end{aligned}$

    \item\label{c4}
    \!
    $\begin{aligned}[t]
        \ulcorner & \forall \delta_0 \ \forall \delta_1 \ \forall \lambda_0 \ \forall \lambda_1 \ \forall x \ \forall y \\
        & ((E(\ulcorner x \in \dot{X}_{\delta_0, \lambda_0} \urcorner) \wedge E(\ulcorner y \in \dot{X}_{\delta_1, \lambda_1} \urcorner) \wedge \delta_0 \in \delta_1) \\
        & \implies \lambda_0 \in \lambda_1) \urcorner
    \end{aligned}$

    \item\label{c5} $\ulcorner \forall \delta \ \forall \lambda' \ \forall \delta' \in \delta \ \forall x \ (E(\ulcorner x \in \dot{X}_{\delta, \lambda'} \urcorner) \implies E(\ulcorner \delta' \in \dot{X}_{\delta, \lambda'} \urcorner)) \urcorner$, 
    
    \item\label{c6} $\ulcorner \forall \delta \ \forall \lambda' \ \forall \delta' \in \omega_1 \ \forall x \ ((E(\ulcorner x \in \dot{X}_{\delta, \lambda'} \urcorner) \wedge \delta \in \delta') \implies \neg E(\ulcorner \delta' \in \dot{X}_{\delta, \lambda'} \urcorner)) \urcorner$, 
    
    \item\label{c7} 
    \!
    $\begin{aligned}[t]
        \ulcorner & \forall \delta \ \forall \lambda' \ \forall \phi \ \forall \bar{s} \ \forall x \ \exists r \\
        & ((E(\ulcorner x \in \dot{X}_{\delta, \lambda'} \urcorner) \ \wedge \ \\
        & (\phi, \bar{s}) \in Ef_1^{\lambda}(\lambda') \ \wedge \\
        & \forall n \in dom(\bar{s}) \ (E(\ulcorner \bar{s}(n) \in \dot{X}_{\delta, \lambda'} \urcorner))) \\
        & \implies (E(\ulcorner r \in \dot{X}_{\delta, \lambda'} \urcorner) \wedge (\phi, \bar{s}, r) \in Ef_0^{\lambda}(\lambda'))) \urcorner,
    \end{aligned}$
    
    \item\label{c8}
    \!
    $\begin{aligned}[t]
        \ulcorner & \forall \delta \ \forall \lambda' \ \forall \phi \ \forall \bar{s} \ \forall x \ \exists p \in \mathbb{P}_{\lambda'} \\
        & ((E(\ulcorner x \in \dot{X}_{\delta, \lambda'} \urcorner) \ \wedge \\
        & (\phi, \bar{s}) \in Df^{\lambda}(\lambda') \ \wedge \\
        & \forall n \in dom(\bar{s}) \ (E(\ulcorner \bar{s}(n) \in \dot{X}_{\delta, \lambda'} \urcorner))) \\
        & \implies ((\phi, \bar{s}, p) \in Ef_0^{\lambda}(\lambda') \ \wedge \\
        & E(\ulcorner p \in \dot{X}_{\delta, \lambda'} \urcorner) \ \wedge \\
        & \forall e \ ((e \in p \wedge \text{``}\emptyset \neq p \text{ and } p \text{ is Dedekind-finite''}) \implies E(e)))) \urcorner.
    \end{aligned}$
\end{enumerate}
\end{defi}

In Definition \ref{defc}, we give a list of constraints on the $\dot{F}_i$'s and the $\dot{X}_{\delta, \lambda'}$'s, that are meant to dictate how the objects interpreting them behave. To be more formal, let $\Sigma$ interpret the unary relation symbol $\ulcorner E \urcorner$ occurring in $\mathcal{L}^*_{\mathfrak{A}}$ (formulas). Here, we are using the term ``interpret'' in the conventional model-theoretic sense. Also, let
\begin{itemize}
    \item $F_i := F_i(\Sigma)$, and
    \item $X_{\delta, \lambda'} := X_{\delta, \lambda'}(\Sigma)$,
\end{itemize}
as $i$, $\delta$ and $\lambda'$ range over their appropriate domains. Then
\begin{itemize}
    \item \ref{c1} and \ref{c3} mean to say that the $F_i$'s and the set $$\{(\delta, \lambda') : X_{\delta, \lambda'} \neq \emptyset\}$$ are functions,
    \item \ref{c2} means to say that the image of each $F_i$ is cofinal in $i$,
    \item \ref{c4} means to say that the function $$\{(\delta, \lambda') : X_{\delta, \lambda'} \neq \emptyset\}$$ is strictly increasing,
    \item \ref{c5} and \ref{c6} mean to say that $X_{\delta, \lambda'} \cap \omega_1 = \delta$ whenever $X_{\delta, \lambda'}$ is non-empty,
    \item \ref{c7} means to tell us that if $X_{\delta, \lambda'}$ is non-empty, then $(X_{\delta, \lambda'}; \in,  \mathbb{P}_{\lambda'}, A_{\lambda'})$ is an elementary submodel of $(Q_{\lambda'}; \in,  \mathbb{P}_{\lambda'}, A_{\lambda'})$, and
    \item \ref{c8} means to tell us that if $X_{\delta, \lambda'}$ is non-empty, then for every dense subset $D$ of $\mathbb{P}_{\lambda'}$ definable over $(Q_{\lambda'}; \in, \mathbb{P}_{\lambda'}, A_{\lambda'})$ with parameters from $X_{\delta, \lambda'}$, $$[\Sigma]^{< \omega} \cap X_{\delta, \lambda'} \cap D \neq \emptyset.$$ 
\end{itemize}

Now we can define $\mathbb{P}_{\lambda} := (P_{\lambda}, \leq_{\lambda})$, where
\begin{align*}
    P_{\lambda} := \ & \{p \in [\mathcal{L} \cap Q_{\lambda}]^{< \omega} : \ \Vdash_{Col(\omega, |H(\kappa)|)} \exists \Sigma \ (``\Sigma \ \Gamma_{\lambda} (\mathcal{L} \cap Q_{\lambda}, \mathfrak{A}) \text{-certifies } p")\}, \text{ and} \\
    \leq_{\lambda} \ := \ & \{(p, q) \in P_{\lambda} \times P_{\lambda} : q \subset p\}.
\end{align*}

We let $\mathbb{P}$ denote $\mathbb{P}_{\lambda_f}$. 

\begin{lem}\label{size}
$|\mathbb{P}| \leq \lambda_f$.
\end{lem}

\begin{proof}
This follows immediately from the observation that 
\begin{align*}
    \mathbb{P} \subset \ & [\mathcal{L}]^{<\omega} \subset H(\lambda_f) \text{, and} \\
    & |H(\lambda_f)| = \lambda_f \text{.} \qedhere
\end{align*}
\end{proof}

By Proposition \ref{certgood} and the lemma below, $(\mathfrak{A}, \mathbb{P})$ is good for $\mathcal{L}$. Obviously, $\mathbb{P}$ is definable in the language associated with $\mathfrak{A}$ because $\mathbb{P} \in H(\kappa)$.

\begin{lem}\label{nonemp}
For all $\lambda \in C \cup \{\lambda_f\}$, $\emptyset \in P_{\lambda}$.
\end{lem}

\begin{proof}
Let $g$ be $Col(\omega, |H(\kappa)|)$-generic over $W$. In $W[g]$, for every $i \in R$, choose a cofinal map from $\omega$ into $i$ and call it $F_i$. For every $\delta < \omega_1$ and every $\lambda \in C$, let $X_{\delta, \lambda}$ be the empty set. Then $$\mathfrak{C} := \langle \langle F_i : i \in R \rangle, \langle X_{\delta, \lambda} : \delta < \omega_1, \lambda \in C \rangle \rangle$$ interprets $\mathcal{L}$ and $\Sigma(\mathfrak{C}, \mathcal{L} \cap Q_{\lambda})$ $\Gamma_{\lambda} (\mathcal{L} \cap Q_{\lambda}, \mathfrak{A})$-certifies $\emptyset$ for all $\lambda \in C \cup \{\lambda_f\}$.  
\end{proof}

Using a argument similar to that in the proof of Lemma \ref{nonemp}, we get the following.

\begin{lem}\label{extcert}
If 
\begin{itemize}
    \item $\lambda_0, \lambda_1 \in C \cup \{\lambda_f\}$, 
    \item $\lambda_0 \leq \lambda_1$, and 
    \item $\Sigma$ $\Gamma_{\lambda_0} (\mathcal{L} \cap Q_{\lambda_0}, \mathfrak{A})$-certify $p$, 
\end{itemize}
then there is $\Sigma' \supset \Sigma$ for which $\Sigma'$ $\Gamma_{\lambda_1} (\mathcal{L} \cap Q_{\lambda_1}, \mathfrak{A})$-certify $p$.
\end{lem}

It can be gleaned from Lemma \ref{extcert} and the definition of the $\mathbb{P}_{\lambda}$'s that
\begin{enumerate}[label=(P\arabic*), leftmargin=40pt]
    \item\label{p1} $P_{\lambda_0} = P_{\lambda_1} \cap Q_{\lambda_0}$ whenever $\lambda_0, \lambda_1 \in C \cup \{\lambda_f\}$ and $\lambda_0 \leq \lambda_1$, and
    \item\label{p2} $P_{\lambda} = \bigcup \{P_{\lambda'} : \lambda' \in C \cap \lambda\}$ whenever $\lambda \in C \cup \{\lambda_f\}$ and $sup(\lambda \cap C) = \lambda$.
\end{enumerate}

\begin{lem}\label{modeldone}
Let 
\begin{itemize}
    \item $\lambda' \in C \cup \{\lambda_f\}$, and
    \item $g$ be a $\mathbb{P}_{\lambda'}$-$\Sigma_1$-generic filter over $W$.
\end{itemize}
Then $\bigcup g \ \Gamma_{\lambda'} (\mathcal{L} \cap Q_{\lambda}, \mathfrak{A}) \text{-certifies } \emptyset$.
\end{lem}

\begin{proof}
We apply Lemma \ref{main2} with 
\begin{itemize}
    \item $\mathfrak{A}$, $\mathcal{L}$ and $g$ as defined or given above,
    \item $P_{\lambda'}$ in place of $P$,
    \item $\mathbb{P}_{\lambda'}$ in place of $\mathbb{P}$, 
    \item $\Gamma_{\lambda'}$ in place of $\Gamma$,
    \item $|H(\kappa)|$ in place of $\lambda$,
    \item $[\mathcal{L} \cap Q_{\lambda'}]^{< \omega}$ in place of $B$,
    \item $W$ in place of $V$, and
    \item $W[g]$ in place of $W$,
\end{itemize}
noting that
\begin{itemize}
    \item $|trcl(\mathfrak{A})| \leq |H(\kappa)|$,
    \item $[\mathcal{L} \cap Q_{\lambda'}]^{< \omega}$ is closed under finite extensions,
    \item the definition of $\mathbb{P}_{\lambda'}$ in relation to the other parameters is faithful to the hypothesis of Lemma \ref{main2},
    \item $g$ satisfies the hypothesis of Lemma \ref{main2} with respect to the other parameters, and
    \item $\mathbb{P}_{\lambda'}$ being non-empty (per Lemma \ref{nonemp}) implies there is $\Sigma$ in some weak outer model $W'$ of $W$ such that $\Sigma$ $\Gamma_{\lambda'} (\mathcal{L} \cap Q_{\lambda'}, \mathfrak{A})$-certifies $\emptyset$,
\end{itemize}
to arrive at $\bigcup g \ \Gamma_{\lambda'} (\mathcal{L} \cap Q_{\lambda}, \mathfrak{A}) \text{-certifies } \emptyset$.
\end{proof}

The proof of Lemma \ref{modeldone} serves as an instructive example of the utility of Lemma \ref{main2}. We shall omit details in subsequent applications of Lemma \ref{main2}, wherever the use cases are deemed similarly straightforward. 

\begin{lem}\label{1done}
$\mathbb{P}$ fulfils \ref{4101} of Lemma \ref{lem26}.
\end{lem}

\begin{proof}
For any $\mathbb{P}_{\lambda'}$-generic filter $g$ over $W$, $\bigcup g \models^*_{\mathfrak{A}} \Gamma_{\lambda_f}$ by Lemma \ref{modeldone}. In particular, $\bigcup g \models^*_{\mathfrak{A}} \text{(S2)}_{\lambda_f}$. That $\mathbb{P}$ fulfils \ref{4101} of Lemma \ref{lem26} follows immediately.
\end{proof}

\begin{defi}\label{def423}
Let 
\begin{itemize}
    \item $S(\omega_1)$ denote the set of all stationary subsets of $\omega_1$, and
    \item $U(C, \lambda_f)$ denote the set of all subsets of $C$ unbounded in $\lambda_f$.
\end{itemize} 
\end{defi}

Check that both $S(\omega_1)$ and $U(C, \lambda_f)$ are members of $H(\kappa)$.

\begin{lem}\label{sideuni}
The $\mathcal{L}^*_{\mathfrak{A}}$-$\Pi_2$ sentence 
\begin{align}\label{c9}
    \ulcorner \forall x \in H(\lambda_f) \ \forall S \in S(\omega_1) \ \forall U \in U(C, \lambda_f) \ \exists \mu \in S \ \exists \nu \in U \ (E(\ulcorner x \in \dot{X}_{\mu, \nu} \urcorner)) \urcorner
\end{align} 
is $(\mathfrak{A}, \mathbb{P}, \mathcal{L})$-universal.
\end{lem}

\begin{proof}
Fix arbitrary
\begin{itemize}
    \item $p \in \mathbb{P}$,
    \item $x \in H(\lambda_f)$,
    \item $S \in S(\omega_1)$, and
    \item $U \in U(C, \lambda_f)$.
\end{itemize}
By Lemma \ref{univer}, it suffices to show that there are 
\begin{itemize}
    \item $\mu \in S$, and
    \item $\nu \in U$
\end{itemize} 
for which $p \cup \{\ulcorner x \in \dot{X}_{\mu, \nu} \urcorner\} \in \mathbb{P}$.

To that end, let $\nu \in U$ be such that $x \in Q_{\nu}$ and $p \in \mathbb{P}_{\nu}$. This is possible by \ref{p2}. Choose $g \times f$ a $\mathbb{P}_{\nu} \times Col(\omega, \nu)$-generic filter over $W$ with $p \in g$, so that $g \in W[g \times f]$ is a $\mathbb{P}_{\nu}$-generic filter over $W$ and $|\nu|^{W[g \times f]} = \omega$. By Lemma \ref{modeldone}, $\bigcup g \models^*_{\mathfrak{A}} \Gamma_{\nu}$.

Since $\mathrm{NS}_{\omega_1}$ is precipitous in $W$, $(W; \in, (\mathrm{NS}_{\omega_1})^W)$ is generically iterable in $W[g \times f]$. Consider a one-step iteration $$\mathfrak{I}_1 = \langle (W; \in, (\mathrm{NS}_{\omega_1})^W), (W_1; \in, I_1) \rangle$$ in $W[g \times f]$, where $(W_1; \in, I_1)$ is the generic ultrapower of $(W; \in, (\mathrm{NS}_{\omega_1})^W)$ via a $W$-generic ultrafilter on $(\mathrm{NS}_{\omega_1})^W$ containing $S$. Extend $\mathfrak{I}_1$ to a generic iteration $\mathfrak{I}$ of length $\omega_1^{W[g \times f]} + 1$ in $W[g \times f]$. Said iteration gives rise to a generic ultrapower map $j : W \longrightarrow M$, where $M$, an inner model of $W[g \times f]$, is the final iterate of $\mathfrak{I}$. Moreover, 
\begin{itemize}
    \item $crit(j) = \omega_1^W \in j(S)$, and
    \item $j(\omega_1^W) = \omega_1^{W[g \times f]}$.
\end{itemize}

Let 
\begin{align*}
    \Sigma' & := j" (\bigcup g) \cup \{\ulcorner j(y) \in \dot{X}_{\omega_1^W \!\! , j(\nu)} \urcorner : y \in Q_{\nu}\} \subset j(\mathcal{L}),  \\
    e  & \text{ be } Col(\omega, j(\lambda_f)) \text{-generic over }  W[g \times f] \text{, and} \\
    W^* & := W[g \times f][e].
\end{align*} 
Working in $W^*$, define $$\mathfrak{C} := \langle \langle F_i : i \in j(R) \rangle, \langle X_{\delta, \lambda} : \delta < \omega_1^M, \lambda \in j(C) \rangle \rangle$$ as follows:
\begin{itemize}
    \item $F_i := F_i(\Sigma')$ whenever $\dot{F}_i$ occurs in $\Sigma'$,
    \item $F_i$ is some (any) strictly increasing cofinal function from $\omega$ into $i$ whenever $\dot{F}_i$ does not occur in $\Sigma'$, and
    \item $x \in X_{\delta, \lambda}$ iff $\ulcorner x \in \dot{X}_{\delta, \lambda} \urcorner \in \Sigma'$.
\end{itemize}
Then $\mathfrak{C}$ interprets $j(\mathcal{L})$, and $\Sigma^* := \Sigma(\mathfrak{C}, j(\mathcal{L}))$ is $j(\mathcal{L})$-nice, noting Remark \ref{rem416}. Obviously $j" \bigcup g \subset \Sigma' \subset \Sigma^*$, so $j(p) = j" p \subset \Sigma^*$. By the definition of $\Sigma'$, we too have $$q^* := j(p) \cup \{\ulcorner j(x) \in \dot{X}_{\omega_1^W \!\! , j(\nu)} \urcorner\} \subset \Sigma^*.$$ In order to conclude that $\Sigma^*$ $j(\Gamma_{\lambda_f})(j(\mathcal{L}), j(\mathfrak{A}))$-certifies $q^*$, we are left with showing $\Sigma^* \models^*_{j(\mathfrak{A})} j(\Gamma_{\lambda_f})$. That $$\Sigma^* \models^*_{j(\mathfrak{A})} j(\text{(S<} k \text{>)}_{\lambda_f})$$ for <$k$> $\in \{1, 3, 4, 5, 6\}$ follows immediately from the construction of $\Sigma^*$, the elementarity of $j$, as well as the fact that $crit(j) = \omega_1^W$.

For <$k$> $\in \{2, 7, 8\}$, we check that $\Sigma^* \models^*_{j(\mathfrak{A})} j(\text{(S<} k \text{>)}_{\lambda_f})$ in greater detail below. 

\begin{enumerate}[label=<$k$> $\eq$ \arabic* :, leftmargin=70pt]
    \addtocounter{enumi}{1}
    \item Let $i \in j(R)$. If $\dot{F}_i$ does not occur in $\Sigma'$, there is nothing to check, because the definition of $\mathfrak{C}$ guarantees $ran(F_i(\Sigma^*))$ is cofinal in $i$. Otherwise, $\dot{F}_i$ occurs in $\Sigma'$, which means $\dot{F}_i$ occurs in $j" \bigcup g$. Then there is $i' \in R$ such that $\dot{F}_i = \dot{F}_{j(i')} = j(\dot{F}_{i'})$. That $\bigcup g \models^*_{\mathfrak{A}} \text{(S2)}_{\nu}$ implies $ran(F_{i'}(\bigcup g))$ is cofinal in $i'$. By a basic property of elementary embeddings associated with generic iterations, we know that for any ordinal $\alpha$ satisfying the inequality $\omega_1^W < cof^{W}(\alpha)$, we must have $j(\alpha) = sup(j" \alpha)$. Thus, $ran(F_i(\Sigma^*)) = j" ran(F_{i'}(\bigcup g))$ is cofinal in $j(i') = i$, and we are done.
    \addtocounter{enumi}{4}
    \item\label{k7m} Let $\delta$, $\lambda$, $\phi$, $\bar{s}$ and $x$ be such that 
    \begin{enumerate}[label=(K7.\arabic*), leftmargin=50pt]
        \item $\ulcorner x \in \dot{X}_{\delta, \lambda} \urcorner \in \Sigma^*$,
        \item\label{k72} $(\phi, \bar{s}) \in j(Ef_1^{\lambda_f})(\lambda)$, and
        \item\label{k73} $\ulcorner \bar{s}(n) \in \dot{X}_{\delta, \lambda} \urcorner \in \Sigma^*$ for all $n \in dom(\bar{s})$.
    \end{enumerate}
    Combining \ref{k72}, \ref{k73} and the definition of $\Sigma^*$ gives us 
    \begin{itemize}
        \item $\lambda = j(\lambda')$ for some $\lambda' \in C$,
        \item $j(\phi) = \phi \in j(\mathcal{L}^*) = \mathcal{L}^*$,
        \item $\bar{s} \in (j" Q_{\lambda'})^{< \omega} = j" (Q_{\lambda'})^{< \omega}$,
        \item $dom(\bar{s}) + 1 = arity(\phi)$, and
        \item $(j(Q_{\lambda'}); \in, j(\mathbb{P}_{\lambda'}), j(A_{\lambda'})) \models \exists r \ \phi(r, \bar{s})$.
    \end{itemize}
    We want to show that $$(X_{\delta, \lambda}(\Sigma^*); \in, j(\mathbb{P}_{\lambda'}), j(A_{\lambda'})) \models \exists r \ \phi(r, \bar{s}).$$ By the elementarity of $j$, 
    \begin{itemize}
        \item $j^{-1}(\bar{s}) \in (Q_{\lambda'})^{< \omega}$, 
        \item $dom(j^{-1}(\bar{s})) + 1 = arity(\phi)$, and
        \item $(Q_{\lambda'}; \in, \mathbb{P}_{\lambda'}, A_{\lambda'}) \models \exists r \ \phi(r, j^{-1}(\bar{s}))\}$,
    \end{itemize}
    so $(\phi, j^{-1}(\bar{s})) \in Ef_1^{\lambda_f}(\lambda')$. Henceforth, there are two possible cases. We will analyse them with reference to the way $\Sigma^*$ is constructed. 
    \begin{enumerate}[label=Case \arabic*:, leftmargin=50pt]
        \item\label{c7c1} $\delta = \omega_1^W$. Then $\lambda' = \nu$ and $X_{\delta, \lambda}(\Sigma^*) = j" Q_{\nu}$. As 
        \begin{align*}
            X_{\delta, \lambda}(\Sigma^*) \cap j(\mathbb{P}_{\lambda'}) = \ & j" \mathbb{P}_{\lambda'} \text{ and} \\
            X_{\delta, \lambda}(\Sigma^*) \cap j(A_{\lambda'}) = \ & j" A_{\lambda'},
        \end{align*}
        we can conclude $$(X_{\delta, \lambda}(\Sigma^*); \in, j(\mathbb{P}_{\lambda'}), j(A_{\lambda'})) \models \exists r \ \phi(r, \bar{s})$$ by invoking the elementarity of $j$ once again.
        \item $\delta \neq \omega_1^W$. Then $\delta < \omega_1^W$, $\lambda' < \nu$, and $$\ulcorner j^{-1}(\bar{s})(n) \in \dot{X}_{\delta, \lambda'} \urcorner \in \bigcup g$$ for all $n \in dom(j^{-1}(\bar{s}))$. Moreover, $X_{\delta, \lambda}(\Sigma^*) = j" X_{\delta, \lambda'}(\bigcup g)$, so $X_{\delta, \lambda}(\Sigma^*)$ being non-empty implies $X_{\delta, \lambda'}(\bigcup g)$ is non-empty as well. Since $\bigcup g \models^*_{\mathfrak{A}} \text{(S7)}_{\nu}$, we have $$(X_{\delta, \lambda'}(\bigcup g); \in, \mathbb{P}_{\lambda'}, A_{\lambda'}) \models \exists r \ \phi(r, j^{-1}(\bar{s})).$$ As 
        \begin{align*}
            X_{\delta, \lambda}(\Sigma^*) \cap j(\mathbb{P}_{\lambda'}) = \ & j" (X_{\delta, \lambda'}(\bigcup g) \cap \mathbb{P}_{\lambda'}) \text{ and} \\
            X_{\delta, \lambda}(\Sigma^*) \cap j(A_{\lambda'}) = \ & j" (X_{\delta, \lambda'}(\bigcup g) \cap A_{\lambda'}),
        \end{align*}
        we can conclude $$(X_{\delta, \lambda}(\Sigma^*); \in, j(\mathbb{P}_{\lambda'}), j(A_{\lambda'})) \models \exists r \ \phi(r, \bar{s})$$ by invoking the elementarity of $j$ yet again.
    \end{enumerate}
    \item This is similar to the argument in the case of <$k$> $\eq$ 7. We provide details for the sake of completeness, and to elucidate the ample similarity. 
    
    Let $\delta$, $\lambda$, $\phi$, $\bar{s}$ and $x$ be such that 
    \begin{enumerate}[label=(K8.\arabic*), leftmargin=50pt]
        \item $\ulcorner x \in \dot{X}_{\delta, \lambda} \urcorner \in \Sigma^*$,
        \item\label{k82} $(\phi, \bar{s}) \in j(Df^{\lambda_f})(\lambda)$, and
        \item\label{k83} $\ulcorner \bar{s}(n) \in \dot{X}_{\delta, \lambda} \urcorner \in \Sigma^*$ for all $n \in dom(\bar{s})$.
    \end{enumerate}
    Combining \ref{k82}, \ref{k83} and the definition of $\Sigma^*$ gives us 
    \begin{itemize}
        \item $\lambda = j(\lambda')$ for some $\lambda' \in C$,
        \item $j(\phi) = \phi \in j(\mathcal{L}^*) = \mathcal{L}^*$,
        \item $\bar{s} \in (j" Q_{\lambda'})^{< \omega} = j" (Q_{\lambda'})^{< \omega}$,
        \item $dom(\bar{s}) + 1 = arity(\phi)$, and
        \item $D := \{y \in j(\mathbb{P}_{\lambda'}) : (j(Q_{\lambda'}); \in, j(\mathbb{P}_{\lambda'}), j(A_{\lambda'})) \models \phi(y, \bar{s})\}$ is dense in $j(\mathbb{P}_{\lambda'})$.
    \end{itemize}
    We want to show that $$[\Sigma^*]^{< \omega} \cap X_{\delta, \lambda}(\Sigma^*) \cap D \neq \emptyset.$$ By the elementarity of $j$, 
    \begin{itemize}
        \item $j^{-1}(\bar{s}) \in (Q_{\lambda'})^{< \omega}$, 
        \item $dom(j^{-1}(\bar{s})) + 1 = arity(\phi)$, and
        \item $j^{-1}(D) = \{y \in \mathbb{P}_{\lambda'} : (Q_{\lambda'}; \in, \mathbb{P}_{\lambda'}, A_{\lambda'}) \models \phi(y, j^{-1}(\bar{s}))\}$ is dense in $\mathbb{P}_{\lambda'}$,
    \end{itemize}
    so $(\phi, j^{-1}(\bar{s})) \in Df^{\lambda_f}(\lambda')$. Henceforth, there are two possible cases. We will analyse them with reference to the way $\Sigma^*$ is constructed. 
    \begin{enumerate}[label=Case \arabic*:, leftmargin=50pt]
        \item\label{c8c1} $\delta = \omega_1^W$. Then $\lambda' = \nu$ and $X_{\delta, \lambda}(\Sigma^*) = j" Q_{\nu}$. Clearly, $$[\bigcup g]^{< \omega} \cap Q_{\nu} \cap j^{-1}(D) = g \cap j^{-1}(D) \neq \emptyset,$$ as $g$ is $\mathbb{P}_{\nu}$-generic over $W$. That $j" \bigcup g \subset \Sigma^*$ means $$j(p) \in [\Sigma^*]^{< \omega} \cap X_{\delta, \lambda}(\Sigma^*) \cap D \neq \emptyset$$ for any $p \in g \cap j^{-1}(D)$.
        \item $\delta \neq \omega_1^W$. Then $\delta < \omega_1^W$, $\lambda' < \nu$, and $$\ulcorner j^{-1}(\bar{s})(n) \in \dot{X}_{\delta, \lambda'} \urcorner \in \bigcup g$$ for all $n \in dom(j^{-1}(\bar{s}))$. Moreover, $X_{\delta, \lambda}(\Sigma^*) = j" X_{\delta, \lambda'}(\bigcup g)$, so $X_{\delta, \lambda}(\Sigma^*)$ being non-empty implies $X_{\delta, \lambda'}(\bigcup g)$ is non-empty too. Since $\bigcup g \models^*_{\mathfrak{A}} \text{(S8)}_{\nu}$, we have $$[\bigcup g]^{< \omega} \cap X_{\delta, \lambda'}(\bigcup g) \cap j^{-1}(D) \neq \emptyset.$$ As in Case 1, we can conclude $$[\Sigma^*]^{< \omega} \cap X_{\delta, \lambda}(\Sigma^*) \cap D \neq \emptyset.$$
    \end{enumerate}
\end{enumerate}

Now that 
\begin{itemize}
    \item $\Sigma^* \in W^*$,
    \item $\Sigma^*$ $j(\Gamma_{\lambda_f})(j(\mathcal{L}), j(\mathfrak{A}))$-certifies $q^*$,
    \item $W^*$ is a weak outer model of $M$, and
    \item $|H(\kappa)^W|^W = |trcl(\mathfrak{A})|^W$,
\end{itemize} 
we can apply Lemma \ref{inout} with 
\begin{itemize}
    \item $M$ in place of $V$,
    \item $W^*$ in place of $W$,
    \item $j(|H(\kappa)^W|^W) = |H(j(\kappa))^M|^M$ in place of $\lambda$,
    \item $\Sigma^*$ in place of $\Sigma$,
    \item $j(\Gamma_{\lambda_f})$ in place of $\Gamma$,
    \item $j(\mathcal{L})$ in place of $\mathcal{L}$,
    \item $j(\mathfrak{A})$ in place of $\mathfrak{A}$, and
    \item $q^*$ in place of $p$,
\end{itemize}
noting that in $M$, $$Col(\omega, |H(j(\kappa))|) = j(Col(\omega, |H(\kappa)^W|^W)).$$ The application yields $$(M; \in) \models ``\Vdash_{j(Col(\omega, |H(\kappa)^W|^W)^W)} \exists \Sigma \ (``\Sigma \ j(\Gamma_{\lambda_f})(j(\mathcal{L}), j(\mathfrak{A})) \text{-certifies } q^*")".$$ But this means $q \in j(\mathbb{P})$, which implies $$(M; \in) \models \exists \mu \in j(S) \ (``j(p) \cup \{\ulcorner j(x) \in \dot{X}_{\mu, j(\nu)} \urcorner\} \in j(\mathbb{P})") \text{.}$$ By the elementarity of $j$, $$(W; \in) \models \exists \mu \in S \ (``p \cup \{\ulcorner x \in \dot{X}_{\mu, \nu} \urcorner\} \in \mathbb{P}") \text{,}$$ completing the proof.
\end{proof}

\begin{lem}\label{statdone}
$\mathbb{P}$ is stationary-preserving.
\end{lem}

\begin{proof}
Let 
\begin{itemize}
    \item $S \in S(\omega_1)$,
    \item $p \in \mathbb{P}$,
    \item $\dot{C}$ be a $\mathbb{P}$-name such that $p \Vdash_{\mathbb{P}} ``\dot{C}$ is a club in $\omega_1^W"$,
    \item $D := \{(q, \eta) \in \mathbb{P} \times \omega_1 : q \Vdash_{\mathbb{P}} \eta \in \dot{C}\}$,
    \item $g$ be a $\mathbb{P}$-generic filter over $W$ with $p \in g$.
\end{itemize}
Applying ($\diamond$) with 
\begin{itemize}
    \item $\mathbb{P}$ in place of $P$, and
    \item $D$ in place of $B$,
\end{itemize}
we get
\begin{align*}
    U := \{\lambda \in C : (Q_{\lambda}; \in, \mathbb{P}, A_{\lambda}) \prec (H(\lambda_f); \in, \mathbb{P}, D)\}
\end{align*}
is stationary in $\lambda_f$, so $U \in U(C, \lambda_f)$.

In $W[g]$, there are $\mu \in S$ and $\nu \in U$ such that 
\begin{align*}
    \emptyset \neq (X_{\mu, \nu}(\bigcup g); \in, \mathbb{P}, A_{\nu}) \prec (Q_{\nu}; \in, \mathbb{P}, A_{\nu}) \prec (H(\lambda_f); \in, \mathbb{P}, D),
\end{align*}
since $$\bigcup g \models^*_{\mathfrak{A}} \Gamma_{\lambda_f} \text{ (in particular } \bigcup g \models^*_{\mathfrak{A}} \text{(S7)}_{\lambda_f} \text{) and } \bigcup g \models^*_{\mathfrak{A}} \text{(}\ref{c9}\text{)}$$ by Lemmas \ref{uni}, \ref{modeldone} and \ref{sideuni}. Now, noting \ref{p1}, we have 
\begin{align}\label{eseq}
    \emptyset \neq (X_{\mu, \nu}(\bigcup g); \in, \mathbb{P}, D) \prec (Q_{\nu}; \in, \mathbb{P}_{\nu}, A_{\nu}) \prec (H(\lambda_f); \in, \mathbb{P}, D).
\end{align}
It suffices to show that $\mu$ is a limit point of $\dot{C}[g]$. We fix $\zeta < \mu$ and seek some $\eta \in \dot{C}[g]$ with $\zeta < \eta < \mu$. 

The set $$E_{\zeta} := \{q \in \mathbb{P} : \exists \eta > \zeta \ ((q, \eta) \in D)\}$$ is dense in $\mathbb{P}$, so (\ref{eseq}) tells us $$E_{\zeta} \cap Q_{\nu} = \{q \in \mathbb{P}_{\nu} : \exists \eta > \zeta \ ((q, \eta) \in A_{\nu})\}$$ is dense in $\mathbb{P}_{\nu}$. As $\bigcup g \models^*_{\mathfrak{A}} \text{(S5)}_{\lambda_f}$, we know $\zeta \in X_{\mu, \nu}(\bigcup g)$. Having $\bigcup g \models^*_{\mathfrak{A}} \text{(S8)}_{\lambda_f}$ then bestows us the existence of some $$q \in [\bigcup g]^{< \omega} \cap X_{\mu, \nu}(\bigcup g) \cap E_{\zeta} \cap Q_{\nu} \neq \emptyset.$$ That $q \in X_{\mu, \nu}(\bigcup g)$ and (\ref{eseq}) holds means $$(X_{\mu, \nu}(\bigcup g); \in, \mathbb{P}, D) \models \exists \eta > \zeta \ ((q, \eta) \in D).$$ Invoking the fact that $\bigcup g \models^*_{\mathfrak{A}} \text{(S6)}_{\lambda_f}$ gives us some $\eta$ such that 
\begin{itemize}
    \item $\zeta < \eta < \mu$, and
    \item $(q, \eta) \in D$.
\end{itemize} 
Recalling the definition of $D$, we conclude $\eta \in \dot{C}[g]$ because $q \in [\bigcup g]^{< \omega} = g$, 
\end{proof}

\begin{lem}\label{2done}
$\Vdash_{\mathbb{P}} ``cof(\lambda_f) = \omega_1"$. That is, $\mathbb{P}$ fulfils \ref{4102} of Lemma \ref{lem26}.
\end{lem}

\begin{proof}
Let $g$ be $\mathbb{P}$-generic over $W$. By Lemma \ref{modeldone}, $\bigcup g \models^*_{\mathfrak{A}} \Gamma_{\lambda_f}$, so that in $W[g]$, $$K := \{(\delta, \lambda) : \exists x \ (\ulcorner x \in \dot{X}_{\delta, \lambda} \urcorner \in g)\}$$ is a strictly increasing function with domain contained in $\omega_1^W$ and range contained in $\lambda_f$. Lemma \ref{sideuni} tells us that $dom(K)$ is cofinal in $\omega_1^W$ and $ran(K)$ is cofinal in $\lambda_f$, hence $$cof^{W[g]}(\lambda_f) = cof^{W[g]}(\omega_1^W).$$ By Lemma \ref{statdone}, we have $\omega_1^W = \omega_1^{W[g]}$, and consequently, 
\begin{equation*}
    cof^{W[g]}(\lambda_f) = \omega_1^{W[g]}.
    \qedhere
\end{equation*}
\end{proof}

In view of Lemma \ref{lem26}, the theorem follows from Lemmas \ref{size}, \ref{1done}, \ref{statdone} and \ref{2done}.
\end{proof}

According to Theorem \ref{notion1}, if $\mathrm{NS}_{\omega_1}$ is precipitous, then there is a uniform way of generating witnesses --- in place of $\mathbb{P}_N$ --- to analogues of Fact \ref{fact26}. To wit, we have the following corollary.

\begin{cor}\label{nambacoro}
Assume 
\begin{itemize}
    \item $\mathrm{NS}_{\omega_1}$ is precipitous, and
    \item $2^{\omega_1} = \omega_2$.
\end{itemize}
Then $Nb'_1(\lambda)$ --- thus also $Nb_1(\lambda)$ --- holds for each regular cardinal $\lambda > \omega_2$. Furthermore, $Nb_0(\alpha)$ holds for each ordinal $\alpha > \omega_2$.
\end{cor}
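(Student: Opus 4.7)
The plan is to reduce the corollary directly to Theorem \ref{notion1}, using the extra hypothesis $2^{\omega_1} = \omega_2$ to close the gap between $\omega_2$ and $2^{\omega_1}$ that the theorem leaves open.

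For the first assertion, I would observe that under $2^{\omega_1} = \omega_2$, the condition ``$\lambda$ is a regular cardinal $> \omega_2$'' and the condition ``$\lambda$ is a regular cardinal $> 2^{\omega_1}$'' coincide. So Theorem \ref{notion1} directly yields $Nb'_1(\lambda)$ for every regular cardinal $\lambda > \omega_2$. The implication $Nb'_1(\lambda) \Rightarrow Nb_1(\lambda)$ for regular $\lambda > \omega_2$ was already recorded in the excerpt right after the definition of $Nb'_1$, so this half of the corollary is immediate.

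For the second assertion I would use a monotonicity principle: if $\alpha \leq \lambda$ are ordinals and $\mathbb{P}$ is a stationary-preserving forcing witnessing $Nb_0(\lambda)$, then the same $\mathbb{P}$ also witnesses $Nb_0(\alpha)$, since the set of regular cardinals in $[\omega_2, \alpha)$ is contained in $[\omega_2, \lambda)$ and the cofinality-$\omega$ clauses are thus implied. So given an arbitrary ordinal $\alpha > \omega_2$, I would take $\lambda := |\alpha|^+$, which is a regular cardinal satisfying $\lambda > \alpha > \omega_2$. By the first assertion, together with the obvious implication $Nb'_1(\lambda) \Rightarrow Nb_0(\lambda)$ (the former is a strengthening of the latter on the regular cardinals in $[\omega_2, \lambda)$, plus extra constraints at and above $\lambda$), a stationary-preserving witness to $Nb_0(\lambda)$ exists, and the monotonicity observation yields $Nb_0(\alpha)$.

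Since Theorem \ref{notion1} is already in hand, there is essentially no obstacle left: all the substantive work (the construction of the forcing notion $\mathbb{P}'$ and the verification of its stationary preservation and cofinality-collapsing properties) was absorbed into the theorem itself, and the remainder of the corollary is a routine manipulation involving cardinal arithmetic under $2^{\omega_1} = \omega_2$ and the trivial monotonicity of $Nb_0$.
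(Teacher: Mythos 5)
Your proposal is correct and matches the paper's (implicit) argument: the corollary is presented as an immediate consequence of Theorem \ref{notion1}, with $2^{\omega_1}=\omega_2$ making ``regular $\lambda>2^{\omega_1}$'' coincide with ``regular $\lambda>\omega_2$'', and the $Nb_0$ clause following from the trivial implications $Nb'_1(\lambda)\Rightarrow Nb_0(\lambda)$ and the monotonicity of $Nb_0$ in its argument. Your choice of $\lambda:=|\alpha|^+$ for the second assertion is exactly the routine step the paper leaves to the reader.
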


An advantage of the forcing framework of Subsection \ref{forframe} is that it facilitates modular analyses of the generic object. Adding components to the generic object can be done by extending the language on which the forcing notion is based. Under the right circumstances, that said addition preserves a property of the original forcing notion is readily derived from examining the extended language.

In the next subsection, we will augment the forcing notion $\mathbb{P}$ defined in the proof of Theorem \ref{notion1} while assuming a stronger hypothesis, so that the $\mathbb{P}$-generic object has a generic iteration as one of its components. The reader should notice that there is ample carryover from the proof of Theorem \ref{notion1} in the analysis of the new and augmented $\mathbb{P}$.

\subsection{Incorporating the Asper\'{o}-Schindler Construction}\label{ss43}

Asper\'{o}'s and Schindler's approach to proving ``$\mathsf{MM}^{++}$ implies $(*)$'' in \cite{schindler} goes along the following lines. 
\begin{enumerate}[label=(\arabic*)]
    \item Assume $\mathsf{MM}^{++}$.
    \item Define 
    \begin{align*}
        g_A := \{\bar{N} \in \mathbb{P}_{max} : \ & \text{there is a generic iteration } \\ 
        & \langle \bar{N}_i = (N_i; \in, I_i, a_i), \sigma_{ij} : i \leq j \leq \omega_1 \rangle \\
        & \text{of } \bar{N} \text{ such that} \\
        & a_{\omega_1} = A \text{ and } \mathrm{NS}_{\omega_1} \cap N_{\omega_1} = I_{\omega_1}\} \text{.}
    \end{align*}
    \item Show that whenever $\omega_1^{L[A]} = \omega_1$,
    \begin{itemize}
        \item $g_A$ is a filter, and
        \item if $g_A$ is $\mathbb{P}_{max}$-generic over $L(\mathbb{R})$, then $\mathcal{P}(\omega_1) \subset L(\mathbb{R})[g_A]$.
    \end{itemize}
    \item\label{434} For each dense subset $D \in L(\mathbb{R})$ of $\mathbb{P}_{max}$, find a stationary-preserving forcing notion $\mathbb{P}(D)$ that forces
    \begin{quote}
        ``there are $p \in D^*$ ($D^*$ being the interpretation of $D$ in $V^{\mathbb{P}(D)}$ via some universally Baire encoding) and a generic iteration $$\langle \bar{N}_i = (N_i; \in, I_i, a_i), \sigma_{ij} : i \leq j \leq \omega_1^V \rangle$$ for which
        \begin{itemize}
            \item $p = \bar{N}_0$,
            \item $I_{\omega_1^V} = \mathrm{NS}_{\omega_1} \cap N_{\omega_1^V}$, and
            \item $a_{\omega_1^V} = A$'',
        \end{itemize}
    \end{quote}
    so that $g_A$ is $\mathbb{P}_{max}$-generic over $L(\mathbb{R})$.
\end{enumerate}
Each of the $\mathbb{P}(D)$'s satisfying \ref{434}, as defined in the proof of Lemma 2.14 of \cite{schindler}, possesses curious properties tangential to its chief purpose:
\begin{enumerate}[label=(\roman*)]
    \item\label{43i} its conditions are fragments of a language depending on $D$,
    \item\label{43ii} it forces ``$cof(\omega_2^V) = \omega$'', and
    \item\label{43iii} it forces ``$cof(\omega_3^V) = \omega_1$''.
\end{enumerate}
Since \ref{43ii} and \ref{43iii} make each $\mathbb{P}(D)$ ``Namba-like'', the conjunction of \ref{43i} to \ref{43iii} points to the viability of incorporating the design of each $\mathbb{P}(D)$ into the construction of $\mathbb{P}$ as described in the proof of Theorem \ref{notion1}, so as to strengthen Lemma 2.14 of \cite{schindler}. This incorporation can be thought of both as
\begin{itemize}
    \item an augmentation of $\mathbb{P}$ to serve an expanded agenda, and
    \item a means to extend the ``Namba consequences'' of each $\mathbb{P}(D)$,
\end{itemize}
hence it is sufficiently motivated. We shall spend the rest of this subsection ironing out the details of our (natural) incorporation attempt, with Theorem \ref{notion2} being its upshot.

\begin{defi}\label{defl2}
To prepare for Definition \ref{def433}, let us first set aside the following distinguished symbols:
\begin{itemize}
    \item $\dot{M}_i$, $\dot{N}_i$ for $i < \omega_1$,
    \item $\dot{\pi}_{ij}$ for $i \leq j \leq \omega_1$,
    \item $\dot{\sigma}_{ij}$ for $i \leq j < \omega_1$,
    \item $\dot{n}$ for $n < \omega$, and
    \item $\dot{T}$, $\dot{\Vec{M}}$, $\dot{I}$, $\dot{a}$.
\end{itemize}
Assume, without loss of generality, that none of the distinguished symbols is represented (as a set) by an ordinal.
\end{defi}

\begin{defi}
Let $\hat{\sigma}$ denote the signature
\begin{equation*}
    \{\dot{I}, \dot{a}, \dot{\Vec{M}}\} \cup \{\xi : \xi < \omega_1^W\} \cup \{\dot{M}_j : j < \omega_1^W\} \cup \{\dot{\pi}_{jk} : j \leq k \leq \omega_1^W\} \text{,}
\end{equation*}
in which
\begin{itemize}
    \item $\dot{I}$ is a unary relation symbol, and
    \item every member of $\hat{\sigma} \setminus \{\dot{I}\}$ is a constant symbol.
\end{itemize}
\end{defi}

\begin{defi}\label{def433}
Let $\mathcal{L}^{s_0}$ contain precisely all expressions of the following forms:
\begin{enumerate}[label=(L\arabic*), leftmargin=40pt]
    \setcounter{enumi}{2}
    \item\label{l3} $\ulcorner \dot{N}_i \models \phi(\xi_1, \ldots, \xi_k, \dot{n_1}, \ldots, \dot{n_l}, \dot{I}, \dot{a}, \dot{M}_{j_1}, \ldots, \dot{M}_{j_m}, \dot{\pi}_{q_{1}r_{1}}, \ldots, \dot{\pi}_{q_{s}r_{s}}, \dot{\Vec{M}}) \urcorner$, for 
    \begin{itemize}[leftmargin=10pt]
        \item $i, \xi_1, \ldots, \xi_k, j_i, \ldots, j_m < \omega_1$,
        \item $n_1, \ldots, n_l < \omega$, 
        \item $q_1 \leq r_1 < \omega_1^V, \ldots, q_s \leq r_s < \omega_1$,
        \item $\phi$ a first-order formula in the language of set theory expanded with $\hat{\sigma}$.
    \end{itemize}
    \item\label{l4} $\ulcorner \dot{\pi}_{i\omega_1}(\dot{n}) = x \urcorner$, for $n < \omega$, $i < \omega_1$ and $x \in H(\omega_2)$,
    \item\label{l6} $\ulcorner \dot{\sigma}_{ij}(\dot{m}) = \dot{n} \urcorner$, for $i \leq j < \omega_1$ and $m, n < \omega$, 
    \item\label{l7} $\ulcorner (\Vec{u}, \Vec{\alpha}) \in \dot{T} \urcorner$, for $\Vec{u} \in {^{<\omega}{\omega}}$, $\Vec{\alpha} \in {^{<\omega}{(\omega_2)}}$ and $dom(\Vec{u}) = dom(\Vec{\alpha})$,
\end{enumerate}
\end{defi}

\begin{defi}\label{defsvf}
Given a signature $\sigma$, define $\mathcal{L}^1(\sigma, x)$ to be the set of formulas over $\sigma$ with $x$ as its only variable.
\end{defi}

\begin{defi}\label{subn}
For $1 \leq n \leq 6$, let $\mathcal{L}_n$ denote the set of all expressions of the form (L$n$) in Definitions \ref{defl} and \ref{defl2}. Further, define $\mathcal{L}_0$ such that its members are exactly expressions in $\mathcal{L}$ of the form 
\begin{equation*}
    \ulcorner \dot{N}_i \models \phi(\dot{n_1}, \ldots, \dot{n_l}, \dot{I}, \dot{a}) \urcorner,
\end{equation*}
with $i$ ranging over $\omega_1$ and $l$ ranging over $\omega$. Clearly $\mathcal{L}_0 \subset \mathcal{L}_3$.
\end{defi}

\begin{defi}
For $i < \omega_1$ and $n \in \{0, 3\}$, define 
\begin{equation*}
    \mathcal{L}^i_n := \{x \in \mathcal{L}_n : \ulcorner \dot{N}_i \urcorner \text{ occurs in } x\}.
\end{equation*}
\end{defi}

\begin{con}
If $\varphi = \ulcorner \dot{N}_i \models \phi \urcorner \in \mathcal{L}_1$, denote $\neg(\varphi)$ by $\ulcorner \dot{N}_i \models \neg(\phi) \urcorner$, where $\neg(\phi)$ is resolved as per Definition \ref{neg}. This allows us to conclude that 
\begin{itemize}
    \item $\mathcal{L}_0$, $\mathcal{L}_3$ are closed under negation, and
    \item $\mathcal{L}^i_0$, $\mathcal{L}^i_3$ are closed under negation for any $i < \omega_1$.
\end{itemize}
\end{con}

\begin{defi}
If $\bar{N} = (N; \in, I, a)$ is a countable structure and $f : \omega \longrightarrow N$ is a surjection, then we define the \emph{simple} $\mathcal{L}$\emph{-theory of $\bar{N}$ along $f$}, denoted $Th^{0}_{\mathcal{L}}(\bar{N}, f)$, to be 
\begin{align*}
    \{\ulcorner \dot{N}_0 \models \phi(\dot{n_1}, \ldots, \dot{n_l}, \dot{I}, \dot{a}) \urcorner \in \mathcal{L}^0_0 : \bar{N} \models \phi(f(n_1), \ldots, f(n_l), I, a)\}.
\end{align*}
$Th^{0}_{\mathcal{L}}(\bar{N}, f)$ is obviously $\Delta_0$-definable in $\bar{N}$ and $f$.
\end{defi}

Fix a (recursive) G\"{o}del numbering $Gd$ of $\mathcal{L}^0_0$.

\begin{defi}\label{nota4}
If $s \in {^{A}{(B \times C)}}$, let $pr(s)$ denote the member $t \in {^{A}{B}}$ such that for all $a \in A$, $t(a) = b$ iff there is some $c$ for which $s(a) = (b, c)$.
\end{defi}

We will use the fact below without proof.

\begin{fact}\label{fact434}
Assume
\begin{enumerate}[label=(\roman*), leftmargin=40pt]
    \item $\Gamma  = \bigcup_{1 \leq k < \omega} \mathcal{P}(\mathbb{R}^{k}) \cap L(\Gamma, \mathbb{R})$,
    \item $\Gamma$ is productive,
    \item $\mathrm{NS}_{\omega_1}$ is saturated,
    \item $2^{\omega_1} = \mathbf{\delta^1_2} = \omega_2$, and
    \item $MA(\omega_1)$ holds.
\end{enumerate}
Let $D \in L(\Gamma, \mathbb{R})$ be a dense subset of $\mathbb{P}_{max}$, and $A \subset \omega_1$ such that $\omega_1^{L[A]} = \omega_1$. Then there are
\begin{itemize}
    \item a $\Delta_1$-definable partial map $F^*$ from $^{\omega}{\omega}$ onto the members of $\mathbb{P}_{max}$,
    \item a tree $T$ of size $\aleph_2$ on $\omega \times \omega_2$, and
    \item a $Col(\omega, \omega_2)$-name $\dot{p} \subset H(\omega_2)$ for a member of ${^{\omega}{\omega}}$
\end{itemize}
such that 
\begin{enumerate}[label=(4.34.\arabic*), leftmargin=50pt]
    \item\label{4341} $\Vdash_{Col(\omega, \omega_2)} ``\dot{p} \in p[T] \wedge F^*(\dot{p}) \leq_{\mathbb{P}_{max}} (H(\omega_2)^V; \in, \mathrm{NS}_{\omega_1}^V, A)"$,
    \item\label{4342} $D' := (F^*)^{-1}(D)$ is universally Baire, 
\end{enumerate}
and in every forcing extension of $V$,
\begin{enumerate}[label=(4.34.\arabic*), leftmargin=50pt]
    \setcounter{enumi}{2}
    \item\label{4343} $D'^* \subset dom(F^*)$,
    \item\label{4344} $D^* := (F^*) " (D'^*)$ is a dense subset of $\mathbb{P}_{max}$, 
    \item\label{4345} $F^*(pr(\bigcup S)) \in D^*$ for every $S$ satisfying
    \begin{itemize}
        \item $S \subset T$, and
        \item $\bigcup S \in [T]$, 
    \end{itemize}
    \item\label{4346b} whenever $\bar{M}$, $\bar{N}$, $f$, $S$ fulfil the following:
    \begin{itemize}
        \item $\bar{N}$ is an expansion of some structure of the form $(N; \in, I, a)$, where $I$ interprets $\dot{I}$ and $a$ interprets $\dot{a}$,
        \item $f : \omega \longrightarrow N$ is a surjection,
        \item $S \subset T$,
        \item $\bigcup S \in [T]$, and
        \item $ran(pr(\bigcup S)) = Gd" Th^0_{\mathcal{L}}(\bar{N}, f)$,
    \end{itemize}
    it must be the case that $F^*(pr(\bigcup S)) = \bar{N} \in D^*$, and
    \item\label{4346} whenever $\bar{M}$, $\bar{N}$ fulfil the following:
    \begin{itemize}
        \item $\bar{N}$ is a member of $\mathbb{P}_{max}$, and
        \item $\bar{N} \models ``\bar{M} \text{ is a member of } \mathbb{P}_{max}"$,
    \end{itemize}
    it must be the case that
    \begin{itemize}
        \item $\bar{M} \in \bar{N}$, and
        \item $\bar{M}$ is a member of $\mathbb{P}_{max}$.
    \end{itemize}
\end{enumerate}
\end{fact}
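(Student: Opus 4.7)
The plan is to build $F^*$, $T$, and $\dot{p}$ in three stages: decoding map, tree witness, and generic name. First, I would define $F^*$ by Henkin decoding: given $x\in{}^{\omega}\omega$, let $\Gamma_x:=Gd^{-1}(ran(x))$, and declare $x\in dom(F^*)$ precisely when $\Gamma_x$ is a complete, consistent, Henkin-saturated theory in the shape of $Th^{0}_{\mathcal{L}}(\cdot,\cdot)$ which axiomatises a countable structure $\bar{N}_x=(N_x;\in,I_x,a_x)$ satisfying every clause of Definition \ref{pmax}; in that case set $F^*(x):=\bar{N}_x$, transitively collapsed. Since the verifications (Henkin saturation, consistency, $\mathsf{ZFC}^*+\mathsf{MA}(\omega_1)$, the normal-uniform-ideal properties, and generic iterability) reduce to $\Pi^{1}_{2}$ statements about $x$, and the transitive collapse is uniformly computable from $x$, $F^*$ is $\Delta_1$-definable. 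Surjectivity onto $\mathbb{P}_{max}$ is automatic because each condition is countable. Clause \ref{4346b} is then the definition of $F^*$ composed with $Gd$, and clause \ref{4346} is a direct reading of the transitivity of members of $\mathbb{P}_{max}$ in Definition \ref{pmax}.

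Second, I would produce $T$ and establish \ref{4342}, \ref{4343}, \ref{4344} and \ref{4345}. Coding $\mathbb{P}_{max}$ conditions by reals, $D$ corresponds to a set $D'_{0}\subseteq\mathbb{R}$ definable in $L(\Gamma,\mathbb{R})$, hence $D'_{0}\in\Gamma$ by hypothesis (i), so $D'_{0}$ is universally Baire. Because $F^*$ is $\Delta_1$, $D':=(F^*)^{-1}(D)$ is projective in $D'_{0}$ and therefore also universally Baire, giving \ref{4342}. Using $\mathbf{\delta^1_2}=\omega_2$, $2^{\omega_1}=\omega_2$ and scale-type arguments for $\Gamma$ inside $L(\Gamma,\mathbb{R})$, pick trees $T,U$ on $\omega\times\omega_2$ witnessing that $D'$ is universally Baire, with $|T|\leq\aleph_2$. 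In any forcing extension, $D'^{*}=p[T]$ and, by Lemma \ref{prod}, $D^{*}$ is the image of $D'^{*}$ under the (still $\Delta_1$) map $F^*$ and remains dense in $\mathbb{P}_{max}$ because density of $D$ is a projective statement in the parameter for $D$; these are \ref{4343} and \ref{4344}. Clause \ref{4345} is then immediate: a path through $T$ projects to a real in $D'^{*}\subseteq dom(F^*)$, and $F^*(D'^{*})=D^{*}$.

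Third, I construct $\dot{p}$. By Lemma \ref{ho2}, $\bar{H}:=(H(\omega_2)^{V};\in,\mathrm{NS}_{\omega_1}^{V},A)$ is forced by $Col(\omega,\omega_2)$ to be a $\mathbb{P}_{max}$ condition. Working in $V^{Col(\omega,\omega_2)}$, density of $D^{*}$ gives some $\bar{N}\leq_{\mathbb{P}_{max}}\bar{H}$ with $\bar{N}\in D^{*}$. As $|N|=\aleph_0$ in the extension, fix a bijection $f:\omega\to N$; the real $p$ obtained by enumerating $Gd"Th^{0}_{\mathcal{L}}(\bar{N},f)$ lies in $D'^{*}=p[T]$ and satisfies $F^*(p)=\bar{N}$ by \ref{4346b}. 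Packaging the choices of $\bar{N}$ and $f$ into a maximal-antichain mixing argument below the conditions of $Col(\omega,\omega_2)$ yields a name $\dot{p}\subset H(\omega_2)$ with $\Vdash_{Col(\omega,\omega_2)}(\dot{p}\in p[T]\wedge F^*(\dot{p})\leq_{\mathbb{P}_{max}}\bar{H})$, which is \ref{4341}.

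The principal obstacle is the size bound $|T|\leq\aleph_2$ on a tree witness for $D'$ lying on $\omega\times\omega_2$. Universal Baireness alone gives trees on $\omega\times\lambda$ for \emph{some} $\lambda$, and a naive projective-formula-to-tree construction pushes $\lambda$ well above $\omega_2$. Bringing $\lambda$ down to $\omega_2$ is where the full strength of the hypotheses is used in concert: $\mathbf{\delta^1_2}=\omega_2$ and $2^{\omega_1}=\omega_2$ bound the canonical $\mathbf{\Sigma^1_2}$-style trees by $\omega_2$; productivity of $\Gamma$ together with the scale property on $\Gamma$ holding inside $L(\Gamma,\mathbb{R})$ lets one treat $D'$ as effectively $\mathbf{\Sigma^1_2}$ modulo a $\Gamma$-parameter whose universally Baire representation lives on $\omega\times\omega_2$; and saturation of $\mathrm{NS}_{\omega_1}$ together with $\mathsf{MA}(\omega_1)$ are what calibrate these scale arguments against the $\mathbb{P}_{max}$-combinatorial coding of conditions by reals so that the resulting tree actually exists in $V$.
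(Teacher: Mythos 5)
The paper does not actually prove this statement: it is labelled a Fact and introduced with the sentence ``We will use the fact below without proof''; it is imported from the Asper\'o--Schindler construction in \cite{schindler}. So there is no in-paper proof to compare against, and your proposal must be judged on its own merits. Your three-stage skeleton --- Henkin decoding for $F^*$, a universally Baire tree representation for $D'$, and a name $\dot{p}$ obtained from Lemma~\ref{ho2} together with density of $D^*$ --- is the right shape and matches the known argument; clauses \ref{4346b} and \ref{4346} do indeed fall out of the definition of $F^*$ and of $\mathbb{P}_{max}$.

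As written, though, this is an outline rather than a proof, for the reason you yourself flag. Essentially all of the non-routine content of the statement is the production of a tree $T$ on $\omega\times\omega_2$ of size $\aleph_2$ projecting to $D'$ in every extension, and your paragraph on this point only lists the hypotheses and asserts that they work ``in concert''; no scale or tree construction is actually carried out, and it is precisely here that $\mathbf{\delta^1_2}=\omega_2$, $2^{\omega_1}=\omega_2$ and the closure properties of $\Gamma$ must be deployed concretely (one needs tree representations for sets projective in a member of $\Gamma$ whose ordinal part is calibrated by $\mathbf{\delta^1_2}$, and Lemma~\ref{prod} together with productivity to see that the $*$-reinterpretation commutes with the projective definition of $D'$ from $D$). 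Two smaller gaps: first, the $\Delta_1$-definability of $F^*$ requires generic iterability of a coded countable structure to be expressible both $\Sigma_1$ and $\Pi_1$; on its face it is only $\Pi^1_2$, hence only obviously $\Pi_1$, so the standard two-sided reformulation should be cited or proved. Second, in your third stage you need $p\in D'^*=p[T]$, which does not follow merely from $F^*(p)=\bar{N}\in D^*$ unless you have already established $D'^*=(F^*)^{-1}(D^*)$ in the extension --- this is again the commutation-with-$*$ issue, not an automatic consequence of \ref{4344}.
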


Fix $F^*$, $T$ and $\dot{p}$ as provided by Fact \ref{fact434}. In light of said fact, we can make sense of --- and subsequently prove --- the next theorem.

\begin{thm}\label{notion2}
Assume
\begin{enumerate}[label=(\roman*), leftmargin=40pt]
    \item\label{h1} $\Gamma  = \bigcup_{1 \leq k < \omega} \mathcal{P}(\mathbb{R}^{k}) \cap L(\Gamma, \mathbb{R})$,
    \item\label{h2} $\Gamma$ is productive,
    \item\label{h3} $\mathrm{NS}_{\omega_1}$ is saturated,
    \item\label{h4} $2^{\omega_1} = \mathbf{\delta^1_2} = \omega_2$, and
    \item\label{h5} $MA(\omega_1)$ holds.
\end{enumerate}
Let 
\begin{itemize}
    \item $D \in L(\Gamma, \mathbb{R})$ be a dense subset of $\mathbb{P}_{max}$,
    \item $A \subset \omega_1$ be such that $\omega_1^{L[A]} = \omega_1$, and
    \item $\lambda_f > \omega_2$ be a regular cardinal .
\end{itemize}
Then there is a stationary-preserving forcing notion $\mathbb{P}$ such that in $V^{\mathbb{P}}$,
\begin{itemize}
    \item there is a generic iteration $$\langle \bar{N}_i = (N_i; \in, I_i, a_i), \sigma_{ij} : i \leq j \leq \omega_1^V \rangle$$ satisfying 
    \begin{enumerate}[label=(\arabic*), leftmargin=40pt]
        \item\label{nov1} $\bar{N}_0 \in D^* := (F^*) " ((F^*)^{-1}(D)^*) \subset \mathbb{P}_{max}$,
        \item\label{nov2} $I_{\omega_1^V} = \mathrm{NS}_{\omega_1}^{V^{\mathbb{P}}} \cap N_{\omega_1^V}$, and
        \item\label{nov3} $a_{\omega_1^V} = A$, and
    \end{enumerate}
    \item $Nb'_1(\lambda_f)$ holds.
\end{itemize}
\end{thm}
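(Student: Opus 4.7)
The plan is to combine the construction of Theorem~\ref{notion1} with an enlargement of its language that includes symbols for a generic iteration of a $\mathbb{P}_{max}$-condition together with a branch through the tree $T$ of Fact~\ref{fact434}. By passing to $W := V[h]$, where $h$ is $Col(\lambda_f,\lambda_f)$-generic, and invoking the reduction of Lemma~\ref{lem26}, the theorem reduces to producing, inside $W$, a stationary-preserving forcing notion of size $\leq \lambda_f$ that forces $cof(\alpha)=\omega$ for every regular $\omega_2 \leq \alpha < \lambda_f$, forces $cof(\lambda_f)=\omega_1$, and additionally forces the existence of the desired generic iteration with $\bar{N}_0 \in D^*$, $a_{\omega_1^V} = A$, and $I_{\omega_1^V} = \mathrm{NS}_{\omega_1} \cap N_{\omega_1^V}$. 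This extended reduction is legitimate because $Col(\lambda_f,\lambda_f)$ is $\lambda_f$-closed and hence preserves $A$, $\mathrm{NS}_{\omega_1}$, the productivity of $\Gamma$, and all objects in $H(\omega_2)$ including $D$, $T$, and $F^*$, so a generic iteration meeting the three clauses inside $W^{\mathbb{P}}$ continues to do so in $V^{Col(\lambda_f,\lambda_f) * \dot{\mathbb{P}}}$.

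Setting $\mathfrak{A} := (H(\kappa); \in)$ with $\kappa := (2^{\lambda_f})^+$, a $\diamondsuit_{\lambda_f}$-sequence $(A_\lambda)_{\lambda<\lambda_f}$, and a club $C \subseteq \lambda_f$ exactly as in the proof of Theorem~\ref{notion1}, I would take $\mathcal{L}$ to be the union of the language from Definition~\ref{defl} with $\mathcal{L}^{s_0}$ from Definition~\ref{def433}; the union still lies in $H(\lambda_f)$, so $\mathfrak{A}$ remains $\mathcal{L}$-suitable. The set $\Gamma$ of $\mathcal{L}^*_\mathfrak{A}$-$\Pi_2$ sentences will comprise the Namba side-condition axioms $\Gamma_{\lambda_f}$ of Definition~\ref{defc}, together with new $\Pi_2$ schemes asserting: (a) the $\dot{N}_i$-theory read off the generic filter is a complete Henkin theory (via the constants $\dot{n}$) of some countable structure $\bar{N}_i = (N_i;\in,I_i,a_i)$; (b) the $\dot{\sigma}_{ij}$'s encode elementary embeddings that assemble $\langle \bar{N}_i,\sigma_{ij}:i\leq j<\omega_1 \rangle$ into a generic iteration of $\bar{N}_0$; (c) the $\dot{\pi}_{i\omega_1}$'s extend this iteration at the top, with $\pi_{0\omega_1}(\dot{a}) = A$ and $\pi_{0\omega_1}(\dot{I}) = \mathrm{NS}_{\omega_1} \cap N_{\omega_1}$, expressed pointwise over $H(\omega_2)$ using the expressions in \ref{l3} and \ref{l4} of Definition~\ref{def433}; and (d) the pairs $(u,\alpha)$ with $\ulcorner(u,\alpha)\in\dot{T}\urcorner$ in the generic filter extend to an infinite branch $(u^*,\alpha^*)\in[T]$ with $ran(u^*) = Gd"\, Th^0_{\mathcal{L}}(\bar{N}_0, f)$ for the Henkin enumeration $f$ supplied by (a), so that by clauses~\ref{4345} and~\ref{4346b} of Fact~\ref{fact434}, $\bar{N}_0 = F^*(u^*)\in D^*$. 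The forcing notion $\mathbb{P}$ is then the one produced from $(\mathfrak{A},\mathcal{L},\Gamma)$ via the framework of Subsection~\ref{forframe}.

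Non-emptiness of $\mathbb{P}$ is established using hypotheses~\ref{h3}--\ref{h5} and Lemma~\ref{ho2}: in $V^{Col(\omega,\omega_2)}$, the name $\dot{p}$ of Fact~\ref{fact434} evaluates to a real whose image under $F^*$ is, by clause~\ref{4341}, a $\mathbb{P}_{max}$-condition $\bar{N}_0 \leq_{\mathbb{P}_{max}} (H(\omega_2)^V;\in,\mathrm{NS}^V_{\omega_1},A)$; the definition of $\leq_{\mathbb{P}_{max}}$ supplies a generic iteration of $\bar{N}_0$ of length $\omega_1^V$ with the desired top-level data, from which the interpretations of $\dot{N}_i$, $\dot{\sigma}_{ij}$, $\dot{\pi}_{i\omega_1}$, and $\dot{T}$ can be read off. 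Setting the Namba side conditions $\dot{X}_{\delta,\lambda}$ to be empty, exactly as in Lemma~\ref{nonemp}, produces a certifier for $\emptyset$, whence Lemma~\ref{inout} gives $\emptyset \in P$. That $\mathbb{P}$ then forces the remaining $\Pi_2$ sentences of $\Gamma$ --- in particular the generic-iteration clauses~\ref{nov1}, \ref{nov2} and \ref{nov3} --- follows directly from Lemma~\ref{main2}.

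The rest of the work --- stationary-preservation, $cof(\lambda_f) = \omega_1$, and $cof(\alpha) = \omega$ for regular $\omega_2 \leq \alpha < \lambda_f$ --- is handled by copying Lemmas~\ref{1done}, \ref{sideuni}, \ref{statdone} and \ref{2done}, modulo one substantive adjustment. The hard step will be the universality argument of Lemma~\ref{sideuni}: given $p\in \mathbb{P}$, stationary $S\subseteq\omega_1^W$, unbounded $U\subseteq C$, and $x\in H(\lambda_f)$, one builds the extension $j(p)\cup\{\ulcorner j(x)\in\dot{X}_{\omega_1^W, j(\nu)}\urcorner\}$ via a generic ultrapower $j:W\to M$ arising from a generic iteration of $W$ through an ultrafilter on $\mathrm{NS}_{\omega_1}$ concentrating on $S$. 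In Theorem~\ref{notion1} the condition $p$ consisted only of atomic sentences about $\dot{F}_i$ and $\dot{X}_{\delta,\lambda}$, so $j"p$ was automatically consistent with a certifier in $M$. Here, however, $p$ may carry finitely many atomic assertions about $\dot{N}_i$, $\dot{\sigma}_{ij}$, $\dot{\pi}_{ij}$, and $\dot{T}$, and one must verify that these transfer coherently. Lemma~\ref{lift} is tailor-made for this: the generic iteration of $\bar{N}_0$ recorded in the certifier $\Sigma$ lifts under $j$ to a generic iteration of $\bar{N}_0$ of length $j(\omega_1^W) = \omega_1^M$ inside $M$, and the embeddings labelled by $\dot{\pi}_{ij}$ are reassigned accordingly; the tree-branch clause poses no difficulty because $T$ is universally Baire and $p[T]$ is absolute between $M$ and its collapse extensions. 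Once this splicing is verified, the rest of the argument runs as in Lemma~\ref{sideuni}, and Lemma~\ref{lem26} delivers the theorem.
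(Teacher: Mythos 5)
Your outline matches the paper's architecture closely: pass to $W = V[h]$, enlarge $\mathcal{L}$ by the symbols of Definition~\ref{def433}, add $\Pi_2$ constraints describing a generic iteration plus a branch through $T$, certify $\emptyset$ via a generic ultrapower built from $\dot{p}$ and Lemma~\ref{lift}, and recycle the side-condition machinery of Theorem~\ref{notion1}. (The paper packages the new constraints semantically, as the set $\Gamma'$ of all $(\mathcal{L}^s)^*_{\mathfrak{A}}$-$\Pi_2$ sentences true of every ``$\lambda$-certificate'', and then does real work in Lemma~\ref{lem446} to reconstruct a certificate from a generic filter; your explicit axiom list (a)--(d) is a reasonable substitute for that, modulo the Henkin-style bookkeeping.)

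The genuine gap is in clause~\ref{nov2}. You propose to express $I_{\omega_1^V} = \mathrm{NS}_{\omega_1}\cap N_{\omega_1^V}$ ``pointwise over $H(\omega_2)$'' as part of the $\Pi_2$ theory, but the clause refers to $\mathrm{NS}_{\omega_1}^{V^{\mathbb{P}}}$, not $\mathrm{NS}_{\omega_1}^{W}$; only the ground-model version is expressible as a constraint over $\mathfrak{A}$. What the constraints can give you is (via (C4)--(C7) in the paper) $I_{\omega_1}\cap H(\omega_2)^W = \mathrm{NS}_{\omega_1}^W$ together with $a_{\omega_1}=A$; the inclusion $I_{\omega_1}\subseteq\mathrm{NS}_{\omega_1}^{W[g]}$ then comes for free from Lemma~\ref{gisubset}. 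The missing direction is that every $Y\in\mathcal{P}(\omega_1^W)\cap N_{\omega_1^W}\setminus I_{\omega_1^W}$ \emph{remains stationary} in $W[g]$, and nothing in your axioms (a)--(d) or in your adjustment to Lemma~\ref{sideuni} addresses it. The paper handles this with a new universality sentence (its (\ref{eq4.4})) that \emph{couples the iteration to the side conditions}: whenever $\bar{N}_i\models``e_i(m)\in\mathcal{P}(\omega_1)\setminus I"$, densely many conditions add some $\delta$ with $\delta\in\sigma_{i(\delta+1)}(e_i(m))$ together with a nonempty $\dot{X}_{\delta,\nu}$, so that the side-condition argument of Lemma~\ref{statdone} makes $\delta$ a limit point of any club of the extension. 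Proving this sentence universal requires a different generic ultrapower from the one in Lemma~\ref{sideuni}: the one-step extension is taken on the $\bar{N}$-iteration itself, through an ultrafilter containing $\sigma_{i\omega_1^W}(e_i(m))$ (so that $\omega_1^W$ lands inside the designated positive set), and only then lifted to $W$ via Lemmas~\ref{itercopy} and~\ref{lift}. Your ``substantive adjustment'' only concerns transferring the finitely many iteration-atoms of $p$ coherently under $j$; without the coupled sentence and the modified seed, the argument establishes stationary preservation of ground-model stationary sets (hence $Nb_1'(\lambda_f)$) but not clause~\ref{nov2}.
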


\begin{proof}
We first import the notation the labelling of definitions from the proof of Theorem \ref{notion1}; they will be reused until subsequent reassignments. 

Move to $W$ as in the proof of Theorem \ref{notion1} via forcing with $Col(\lambda_f, \lambda_f)$, so that 
\begin{itemize}[label=($\diamond$)]
    \item for all $P, B \subset H(\lambda_f)$, the set
    \begin{align*}
        \{\lambda \in C : (Q_{\lambda}; \in, P, A_{\lambda}) \prec (H(\lambda_f); \in, P, B)\}
    \end{align*}
    is stationary in $\lambda_f$.
\end{itemize}
holds in $W$. We further require 
\begin{equation*}
    \mathrm{Par} := \{T, \dot{p}, H(\omega_2)^W = H(\omega_2)^V, \mathrm{NS}_{\omega_1}^W = \mathrm{NS}_{\omega_1}^V, A\} \subset Q_{\lambda}
\end{equation*}
for all $\lambda \in C$. But this is easily done because $\mathrm{Par}$ is small by Fact \ref{fact434} and the hypothesis of the theorem. 

Set
\begin{align*}
    \mathcal{L}^o := \ & \text{the closure of } \mathcal{L}_1 \cup \mathcal{L}_2 \text{ under negation} \\
    \mathcal{L}^s := \ & \text{the closure of } \mathcal{L}^{s_0} \text{ under negation},
\end{align*}
and enlarge $\mathcal{L}$ just enough to include $\mathcal{L}^s$. It is easy to see that
\begin{itemize}
    \item $\mathcal{L}^o$ is the original $\mathcal{L}$ before enlargement,
    \item both $\mathcal{L}^s$ and the newly enlarged $\mathcal{L}$ are closed under negation, 
    \item $\mathcal{L}$ now equals $\mathcal{L}^o \cup \mathcal{L}^s$, and
    \item $\mathcal{L}^s \subset Q_{\lambda}$ for every $\lambda \in C$, so that
    \item $\mathcal{L} \cap Q_{\lambda} = \mathcal{L}^s \sqcup (\mathcal{L}^o \cap Q_{\lambda})$ for every $\lambda \in C \cup \{\lambda_f\}$.
\end{itemize}

\begin{rem}\label{rempp2}
The naturally extended version of Remark \ref{rempp} applies to the updated $\mathcal{L}$.
\end{rem}

Note also that the following hold in $W$:
\begin{itemize}
    \item the hypothesis \ref{h3}, and
    \item the conclusion of Fact \ref{fact434} (i.e. the conjunction of \ref{4341} to \ref{4346}) with our given $D$ and $A$. 
\end{itemize}

The aforementioned truths in $W$ are all we need to proceed, aided by the next fact, which can be viewed as an extension of Lemma \ref{lem26}.

\begin{fact}\label{fact436}
The theorem holds if in $W$, we can define a forcing notion $\mathbb{P}$ with the following properties: 
\begin{enumerate}[label=(K\arabic*), leftmargin=40pt]
    \item\label{K1} $\mathbb{P} \subset H(\lambda_f)$, so that $|\mathbb{P}| \leq \lambda_f$ by the proof of Lemma \ref{size},
    \item\label{K2} $\mathbb{P}$ is stationary-preserving, 
    \item\label{K3} in $V^{\mathbb{P}}$ there is a generic iteration $$\langle \bar{N}_i = (N_i; \in, I_i, a_i), \sigma_{ij} : i \leq j \leq \omega_1^V \rangle$$ satisfying 
    \begin{enumerate}[label=(\arabic*), leftmargin=40pt]
        \item\label{k31} $\bar{N}_0 \in D^* \subset \mathbb{P}_{max}$,
        \item\label{k32} $I_{\omega_1^V} = \mathrm{NS}_{\omega_1}^{V^{\mathbb{P}}} \cap N_{\omega_1^V}$ and
        \item\label{k33} $a_{\omega_1^V} = A$.
    \end{enumerate} 
    \item\label{K4} $\Vdash_{\mathbb{P}} ``cof(\alpha) = \omega"$ for all regular cardinals $\alpha$ satisfying $\omega_2 \leq \alpha < \lambda_f$,
    \item\label{K5} $\Vdash_{\mathbb{P}} ``cof(\lambda_f) = \omega_1"$,
\end{enumerate}
\end{fact}

Going forward, unless otherwise specified, 
\begin{itemize}
    \item we work in $W$ towards a forcing notion $\mathbb{P}$ as in Fact \ref{fact436}, and
    \item every new object (to be) defined in $W$ always denotes its realisation in $W$. 
\end{itemize}
Recall that
\begin{align*}
    \kappa & := (2^{\lambda_f})^+ \text{,} \\
    \mathfrak{A} & := (H(\kappa); \in) \text{, and} \\
    R & := \{i < \lambda_f : \omega_2 \leq i \text{ and } i \text{ is regular}\},
\end{align*}
We shall formally describe the mathematical object that forcing with fragments of $\mathcal{L}$ is supposed to help construct. 

\begin{defi}\label{def437}
Let $\lambda \in C \cup \{\lambda_f\}$. A $\lambda$\emph{-certificate} is a tuple 
\begin{equation*}
    \mathfrak{D} = \langle \langle \bar{M}_i, \pi_{ij},  \bar{N}_i, \sigma_{ij} : i \leq j \leq \omega_1^{W} \rangle, S, \langle e_i : i < \omega_1^W \rangle, \mathfrak{C} \rangle
\end{equation*}
such that in some weak outer model of $W$ containing $\mathfrak{D}$,
\begin{enumerate}[label=(C\arabic*)$_{\lambda}$, leftmargin=40pt]
    \item\label{cd0} for all $i \leq \omega_1^W$, $\bar{N}_i$ is a structure of the form
    \begin{equation*}
        (N_i; \tilde{\in}_i, \Vec{X}_i) \text{,}
    \end{equation*}
    where 
    \begin{itemize}
        \item $\tilde{\in}_i$ interprets the binary relation symbol $\ulcorner \in \urcorner$, and
        \item $\Vec{X}_i$ interprets $\hat{\sigma}$ 
    \end{itemize}
    (we shall use $I_i$ to denote $\dot{I}^{\bar{N}_i}$ and $a_i$ to denote $\dot{a}^{\bar{N}_i}$, both of which are members of $\Vec{X}_i$),
    \item\label{cd1} $\bar{N}_0 \models ``\bar{M}_0$ is a member of $\mathbb{P}_{max}"$,
    \item\label{cd1.5} for all $i < \omega_1^W$, $e_i$ is a bijection $\omega \longrightarrow N_i$,
    \item\label{cd2} $S \subset T$ and $\bigcup S \in [T]$,
    \item\label{cd3} $ran(pr(\bigcup S)) = Gd" Th^0_{\mathcal{L}}(\bar{N}_0, e_0)$,
    \item\label{cd4} $\langle \bar{M}_i, \pi_{ij} : i \leq j \leq \omega_1^{\bar{N}_0} \rangle \in N_0$ is a generic iteration witnessing $(N_0; \tilde{\in}_0, I_0, a_0) < \bar{M}_0$ in $\mathbb{P}_{max}$,
    \item\label{cdnew} $ORD \cap N_i \in \omega_1^W$ for all $i < \omega_1^W$,
    \item\label{cd5} $\langle (N_i; \tilde{\in}_i, I_i, a_i), \sigma_{ij} : i \leq  j \leq \omega_1^{W} \rangle$ is a generic iteration and $\tilde{\in}_i = \ \in$, for $i < \omega_1^W$,
    \item\label{cd6} $\sigma_{0\omega_1^W}(\langle \bar{M}_i, \pi_{ij} : i \leq j \leq \omega_1^{\bar{N}_0} \rangle) = \langle \bar{M}_i, \pi_{ij} : i \leq j \leq \omega_1^W \rangle$, 
    \item\label{cd7} $\bar{M}_{\omega_1^W} = (H(\omega_2)^W; \in, \mathrm{NS}_{\omega_1}^W, A)$,
    \item\label{2ndlast} for all $i \leq \omega_1^W$, 
    \begin{align*}
         \dot{\Vec{M}}^{\bar{N}_i} = \ & \langle \bar{M}_j, \pi_{jk} : j \leq k \leq \omega_1^{\bar{N}_i} \rangle \\
         \xi^{\bar{N}_i} = \ & \xi \text{ for all } \xi \in \omega_1^W \cap N_i \\
         \xi^{\bar{N}_i} = \ & \emptyset \text{ for all } \xi \in \omega_1^W \setminus N_i \\
         \dot{M}_j^{\bar{N}_i} = \ & \bar{M}_j \text{ for all } j \leq \omega_1^{\bar{N}_i} \\
         \dot{M}_j^{\bar{N}_i} = \ & \emptyset \text{ for all } \omega_1^{\bar{N}_i} < j < \omega_1^W \\
         \dot{\pi}_{jk}^{\bar{N}_i} = \ & \pi_{jk} \text{ for all } j \leq k \leq \omega_1^{\bar{N}_0} \\
         \dot{\pi}_{jk}^{\bar{N}_i} = \ & \emptyset \text{ for all } j \leq k \text{ and } \omega_1^{\bar{N}_0} < k < \omega_1^W \text{,}
    \end{align*}
    \item\label{cdlast} $\mathfrak{C}$ interprets $\mathcal{L}^o \cap Q_{\lambda}$ (see Definition \ref{def412}), and
    \item\label{cdrlast} $\Sigma(\mathfrak{C}, \mathcal{L}^o \cap Q_{\lambda})$ $\Gamma_{\lambda} (\mathcal{L}^o \cap Q_{\lambda}, \mathfrak{A}) \text{-certifies } \emptyset$ (see Definition \ref{def415}).
\end{enumerate}
\end{defi}

One can easily verify that being a $\lambda$-certificate, for any $\lambda \in C \cup \{\lambda_f\}$, is absolute for weak outer models of $W$. If a $\lambda$-certificate shows up in some context without reference to the universe it inhabits, we may assume said universe to be any weak outer model of $W$.

\begin{rem}\label{rem438}
\leavevmode
\begin{enumerate}[label=(\arabic*)]
    \item\label{dependl} \ref{cdlast} and \ref{cdrlast} are the only two out of the thirteen conditions --- \ref{cd0} to \ref{cdrlast} --- in Definition \ref{def437} that depend on $\lambda$.
    \item\label{4390} For any two tuples $\mathfrak{C}$ and
    \begin{equation*}
        \mathfrak{D}' = \langle \langle \bar{M}_i, \pi_{ij},  \bar{N}_i, \sigma_{ij} : i \leq j \leq \omega_1^{W} \rangle, S, \langle e_i : i < \omega_1^W \rangle \rangle \text{,}
    \end{equation*}
    if
    \begin{itemize}
        \item $\mathfrak{C}$ satisfies \ref{cdlast} to \ref{cdrlast} of Definition \ref{def437}, and
        \item $\mathfrak{D}'$ satisfies \ref{cd0} to \ref{2ndlast} of Definition \ref{def437},
    \end{itemize}
    then
    \begin{equation*}
        \mathfrak{D} = \langle \langle \bar{M}_i, \pi_{ij},  \bar{N}_i, \sigma_{ij} : i \leq j \leq \omega_1^{W} \rangle, S, \langle e_i : i < \omega_1^W \rangle, \mathfrak{C} \rangle
    \end{equation*}
    is a $\lambda$-certificate.
    \item\label{4391} If a tuple
    \begin{equation*}
        \mathfrak{D}' = \langle \langle \bar{M}_i, \pi_{ij},  \bar{N}_i, \sigma_{ij} : i \leq j \leq \omega_1^{W} \rangle, S, \langle e_i : i < \omega_1^W \rangle \rangle
    \end{equation*}
    satisfies 
    \begin{itemize}
        \item $\langle \bar{N}_i, \sigma_{ij} : i \leq  j \leq \omega_1^{W} \rangle$ is a generic iteration, 
        \item $ORD \cap N_i \in \omega_1^W$ for all $i < \omega_1^W$, where $N_i$ denotes the base set of $\bar{N}_i$, and
        \item \ref{cd1} to \ref{cd4} and \ref{cd6} to \ref{cd7} of Definition \ref{def437},
    \end{itemize}
    then the $\bar{N}_i$'s can be canonically expanded as structures such that $\mathfrak{D}'$ satisfies \ref{cd0} to \ref{2ndlast} of Definition \ref{def437}.
    \item\label{4392} As a result of \ref{4390} and \ref{4391}, if a tuple
    \begin{equation*}
        \mathfrak{D} = \langle \langle \bar{M}_i, \pi_{ij},  \bar{N}_i, \sigma_{ij} : i \leq j \leq \omega_1^{W} \rangle, S, \langle e_i : i < \omega_1^W \rangle, \mathfrak{C} \rangle
    \end{equation*}
    satisfies 
    \begin{itemize}
        \item $\langle \bar{N}_i, \sigma_{ij} : i \leq  j \leq \omega_1^{W} \rangle$ is a generic iteration, 
        \item $ORD \cap N_i \in \omega_1^W$ for all $i < \omega_1^W$, where $N_i$ denotes the base set of $\bar{N}_i$, and
        \item \ref{cd1} to \ref{cd4}, \ref{cd6} to \ref{cd7}, and \ref{cdlast} to \ref{cdrlast} of Definition \ref{def437},
    \end{itemize}
    then the $\bar{N}_i$'s can be canonically expanded as structures such that $\mathfrak{D}$ is a $\lambda$-certificate.
\end{enumerate}

\end{rem}

\begin{defi}
Given
\begin{itemize}
    \item $i < \omega_1^W$,
    \item a structure $\bar{N} = (N; \tilde{\in}, \Vec{X})$ such that
    \begin{itemize}[label=$\circ$]
        \item $\tilde{\in}$ interprets the binary relation symbol $\ulcorner \in \urcorner$, and
        \item $\Vec{X}$ interprets $\hat{\sigma}$,
    \end{itemize}
    and
    \item a function $e$ from $\omega$ into $N$, 
\end{itemize} 
define $Th^{1}_{\mathcal{L}}(\bar{N}, e, i)$ to be 
\begin{align*}
    \{\ulcorner \dot{N}_i \models \phi(\xi_1, \ldots, \xi_k, \dot{n_1}, \ldots, \dot{n_l}, \dot{I}, \dot{a}, \dot{M}_{j_1}, & \ldots, \dot{M}_{j_m}, \dot{\pi}_{q_{1}r_{1}}, \ldots, \dot{\pi}_{q_{s}r_{s}}, \dot{\Vec{M}}) \urcorner \in \mathcal{L}^i_3 : \\ & \bar{N} \models \phi[\dot{n}_1 \mapsto e(n_1), \ldots, \dot{n}_l \mapsto e(n_l)]\}.
\end{align*}
\end{defi}

\begin{defi}\label{def439}
Given 
\begin{itemize}
    \item a tuple
    \begin{equation*}
        \mathfrak{D} = \langle \langle \bar{M}_i, \pi_{ij},  \bar{N}_i, \sigma_{ij} : i \leq j \leq \omega_1^{W} \rangle, S, \langle e_i : i < \omega_1^W \rangle, \mathfrak{C} \rangle
    \end{equation*}
    satisfying 
    \begin{itemize}[label=$\circ$]
        \item \ref{cd0} and \ref{cdlast} of Definition \ref{def437} for some $\lambda \in C \cup \{\lambda_f\}$, 
        \item $e_i$ is a function from $\omega$ into $N_i$ whenever $i < \omega_1^W$, 
        \item $\pi_{i\omega_1^W}$ is a partial function from $N_i$ into $H(\omega_2)^W$ whenever $i \leq \omega_1^W$, and
        \item $\sigma_{ij}$ is a function from $N_i$ into $N_j$ whenever $i \leq j < \omega_1^W$,
    \end{itemize}
    as well as 
    \item a set $\mathcal{L}' \subset \mathcal{L}$,
\end{itemize}
let $\Sigma'(\mathfrak{D}, \mathcal{L}')$ denote the union of the following sets:
\begin{itemize}
    \item $\bigcup \{Th^{1}_{\mathcal{L}}(\bar{N}_i, e_i, i) : i < \omega_1^W\}$,
    \item 
    \!
    $\begin{aligned}[t]
        \{\ulcorner \dot{\pi}_{i\omega_1^W}(\dot{n}) = x \urcorner : \ & \ulcorner \dot{\pi}_{i\omega_1^W}(\dot{n}) = x \urcorner \in \mathcal{L}' \text{, } e_i(n) \in dom(\pi_{i\omega_1^W}) \\ 
        & \text{ and } \pi_{i\omega_1^W}(e_i(n)) = x\} \text{,}
    \end{aligned}$
    \item 
    \!
    $\begin{aligned}[t]
        \{\ulcorner \neg \dot{\pi}_{i\omega_1^W}(\dot{n}) = x \urcorner : \ & \ulcorner \neg \dot{\pi}_{i\omega_1^W}(\dot{n}) = x \urcorner \in \mathcal{L}' \text{, and} \\
        & \text{either } e_i(n) \not\in dom(\pi_{i\omega_1^W}) \text{ or } \pi_{i\omega_1^W}(e_i(n)) \neq x\} \text{,}
    \end{aligned}$
    \item $\{\ulcorner \dot{\sigma}_{ij}(\dot{m}) = \dot{n} \urcorner : \ulcorner \dot{\sigma}_{ij}(\dot{m}) = \dot{n} \urcorner \in \mathcal{L}' \text{ and } \sigma_{ij}(e_i(m)) = e_j(n)\}$,
    \item $\{\ulcorner \neg \dot{\sigma}_{ij}(\dot{m}) = \dot{n} \urcorner : \ulcorner \neg \dot{\sigma}_{ij}(\dot{m}) = \dot{n} \urcorner \in \mathcal{L}' \text{ and } \sigma_{ij}(e_i(m)) \neq e_j(n)\}$,
    \item $\{\ulcorner (\Vec{u}, \Vec{\alpha}) \in \dot{T} \urcorner : \ulcorner (\Vec{u}, \Vec{\alpha}) \in \dot{T} \urcorner \in \mathcal{L}' \text{ and } (\Vec{u}, \Vec{\alpha}) \in S\}$,
    \item $\{\ulcorner \neg (\Vec{u}, \Vec{\alpha}) \in \dot{T} \urcorner : \ulcorner \neg (\Vec{u}, \Vec{\alpha}) \in \dot{T} \urcorner \in \mathcal{L}' \text{ and } (\Vec{u}, \Vec{\alpha}) \not\in S\}$, and
    \item $\Sigma(\mathfrak{C}, \mathcal{L}' \cap \mathcal{L}^o)$.
\end{itemize}
As a result, we can view $\Sigma'(\cdot, \cdot)$ as a function in two variables.
\end{defi}

It is clear that $\Sigma'(\mathfrak{D}, \mathcal{L}) \cap Q_{\lambda} = \Sigma'(\mathfrak{D}, \mathcal{L} \cap Q_{\lambda})$ is $\mathcal{L} \cap Q_{\lambda}$-nice for all $\lambda \in C \cup \{\lambda_f\}$.

\begin{defi}
Let $\Gamma'$ be the following set
\begin{align*}
    \{\phi : \ & \phi \text{ is a } (\mathcal{L}^s)^*_{\mathfrak{A}}\text{-}\Pi_2 \text{ sentence and} \\
    & \Vdash_{Col(\omega, \lambda_f)} \forall \mathfrak{D} \ \forall \lambda \in C \cup \{\lambda_f\} \\
    & \mspace{80mu} (``\mathfrak{D} \text{ is a } \lambda \text{-certificate} \implies \Sigma'(\mathfrak{D}, \mathcal{L} \cap Q_{\lambda}) \models^*_{\mathfrak{A}} \phi")\} \text{.}
\end{align*}
\end{defi}

Notice whenever $\phi$ is a $(\mathcal{L}^s)^*_{\mathfrak{A}}$-$\Pi_2$ sentence, it must be the case that in any $Col(\omega, \lambda_f)$-generic extension of $V$, 
\begin{equation*}
    \varphi(\phi) := \forall \mathfrak{D} \ \forall \lambda \in C \cup \{\lambda_f\} \ (``\mathfrak{D} \text{ is a } \lambda \text{-certificate} \implies \Sigma'(\mathfrak{D}, \mathcal{L} \cap Q_{\lambda}) \models^*_{\mathfrak{A}} \phi")
\end{equation*}
is equivalent to a $\mathbf{\Pi^1_2}$ sentence. We can thus employ an argument akin to that which proved Lemma \ref{inout}, bearing in mind to replace each invocation of Mostowski's absoluteness theorem with an invocation of Shoenfield's absoluteness theorem, to obtain the fact below.

\begin{fact}\label{fact449}
Let $\phi$ be a $(\mathcal{L}^s)^*_{\mathfrak{A}}$-$\Pi_2$ sentence. Then the following are equivalent.
\begin{enumerate}[label=(\arabic*)$_{\phi}$]
    \item\label{4491} $\Vdash_{Col(\omega, \lambda_f)} \varphi(\phi)$.
    \item\label{4492} $\mspace{-3mu}\not \mspace{3mu}\Vdash_{Col(\omega, \lambda_f)} \neg \varphi(\phi)$.
    \item $W \models \varphi(\phi)$ for every outer model $W$ of $V$.
\end{enumerate}
\end{fact}

\begin{rem}
That statements \ref{4491} and \ref{4492} of Fact \ref{fact449} are equivalent can also be derived directly from the homogeneity of $Col(\omega, \lambda_f)$.
\end{rem}

Therefore, $\Gamma'$ is precisely the set 
\begin{align*}
    \{\phi : \phi \text{ is a } (\mathcal{L}^s)^*_{\mathfrak{A}}\text{-}\Pi_2 \text{ sentence and } W \models \varphi(\phi) \text{ for every outer model } W \text{ of } V\} \text{.}
\end{align*}

As in the proof of Theorem \ref{notion1}, we will (re)define $$\{\mathbb{P}_{\lambda} : \lambda \in C \cup \{\lambda_f\}\}$$ by induction on $\lambda$. Indeed, we are inductively modifying the definitions of the $\mathbb{P}_{\lambda}$'s we knew from the proof of Theorem \ref{notion1}. Assume that $\mathbb{P}_{\lambda'}$ has been modified for all $\lambda' \in \lambda \cap C$. 

\begin{defi}\label{def447}
Set
\begin{equation*}
    \Gamma'_{\lambda} := \Gamma' \cup \Gamma_{\lambda} \text{,}
\end{equation*}
where $\Gamma_{\lambda}$ is as in Definition \ref{defc}. 
\end{defi}

\begin{rem}\label{rem443}
\leavevmode
\begin{enumerate}[label=(\arabic*)]
    \item\label{443'1} If $\lambda \in C \cup \{\lambda_f\}$ and
    \begin{equation*}
        \mathfrak{D} = \langle \langle \bar{M}_i, \pi_{ij},  \bar{N}_i, \sigma_{ij} : i \leq j \leq \omega_1^{W} \rangle, S, \langle e_i : i < \omega_1^W \rangle, \mathfrak{C} \rangle
    \end{equation*}
    is a $\lambda$-certificate, then
    \begin{equation*}
        \Sigma'(\mathfrak{D}, \mathcal{L}^o \cap Q_{\lambda}) = \Sigma(\mathfrak{C}, \mathcal{L}^o \cap Q_{\lambda}) \text{.}
    \end{equation*}
    \item\label{443'2} In part due to \ref{4431}, as long as $\lambda \in C \cup \{\lambda_f\}$ and $\mathfrak{D}$ is a $\lambda$-certificate, it must be that $\Sigma'(\mathfrak{D}, \mathcal{L} \cap Q_{\lambda})$ $\Gamma'_{\lambda} (\mathcal{L} \cap Q_{\lambda}, \mathfrak{A})$-certifies $\emptyset$.
    \item\label{443'3} If $\lambda \in C \cup \{\lambda_f\}$ and $\Sigma \ \Gamma'_{\lambda} (\mathcal{L} \cap Q_{\lambda}, \mathfrak{A}) \text{-certifies } \emptyset$, then also
    \begin{equation*}
        (\Sigma \cap \mathcal{L}^o) \ \Gamma_{\lambda} (\mathcal{L}^o \cap Q_{\lambda}, \mathfrak{A}) \text{-certifies } \emptyset \text{.}
    \end{equation*}
\end{enumerate}
\end{rem}

Redefine $\mathbb{P}_{\lambda}$ as follows:
\begin{align*}
    \mathbb{P}_{\lambda} := \ & (P_{\lambda}, \leq_{\lambda}) \text{, where } \\
    P_{\lambda} := \ & \{p \in [\mathcal{L} \cap Q_{\lambda}]^{< \omega} : \ \Vdash_{Col(\omega, |H(\kappa)|)} \exists \Sigma \ (``\Sigma \ \Gamma'_{\lambda} (\mathcal{L} \cap Q_{\lambda}, \mathfrak{A}) \text{-certifies } p")\}, \text{ and} \\
    \leq_{\lambda} \ := \ & \{(p, q) \in P_{\lambda} \times P_{\lambda} : q \subset p\}.
\end{align*}
As before, let $\mathbb{P}$ denote $\mathbb{P}_{\lambda_f}$. 

Since $\Gamma_{\lambda}$ contains references to the set 
\begin{equation*}
    \{\mathbb{P}_{\lambda'} : \lambda' \in \lambda \cap C\} \text{,}
\end{equation*}
so we would expect the semantic value of $\Gamma_{\lambda}$ to be altered when changes are made to the definitions of the $\mathbb{P}_{\lambda'}$'s. In fact, it is through $\Gamma_{\lambda}$ that the definition of $\mathbb{P}_{\lambda}$ gets updated based on the updated definitions of the $\mathbb{P}_{\lambda'}$'s.

\begin{rem}\label{rem447}
\ref{K1} of Fact \ref{fact436} is obvious from the definition of $\mathbb{P}$.
\end{rem}

\begin{con}
Given an elementary embedding $\pi$ of $W$ into some transitive model $M$ of $\mathsf{ZFC}$ such that $ORD^W = ORD^M$, we say a statement (or definition) $\varphi$ holds when relativised to $(\pi, M)$ iff $\varphi$ holds with
\begin{itemize}
    \item every instance therein of each parameter $\zeta$ replaced by $\pi(\zeta)$, and
    \item every evaluation therein, after the replacement of parameters, being done in $M$ instead of $W$.
\end{itemize}
\end{con}

\begin{lem}\label{nonemp2}
For all $\lambda \in C \cup \{\lambda_f\}$, $\emptyset \in P_{\lambda}$.
\end{lem}

\begin{proof}
Fix $\lambda \in C \cup \{\lambda_f\}$ and let 
\begin{itemize}
    \item $h$ be $Col(\omega, \omega_2)$-generic over $W$,
    \item $S \in W[h]$ be a path on $T$ such that $pr(\bigcup S) = p := \dot{p}[h]$, and
    \item $\bar{N}_0 = F^*(p) \in W[h]$ (possible by \ref{4341}).
\end{itemize}
Set $\theta := \omega_1^{W[h]}$. Choose a generic iteration $\langle \bar{M}_i, \pi_{ij} : i \leq j \leq \omega_1^{\bar{N}_0} \rangle$ witnessing $\bar{N}_0 \leq_{\mathbb{P}^{W[h]}_{max}} (H(\omega_2)^W; \in, \mathrm{NS}_{\omega_1}^W, A)$, possible by \ref{4342} to \ref{4345}. Let 
\begin{itemize}
    \item $\langle \bar{N}_i = (N_i; \in, I_i, a_i), \sigma_{ij} : i \leq j \leq \theta \rangle \in W[h]$ be a generic iteration of $\bar{N}_0$,
    \item $\langle \bar{M}_i = (M_i; \in, J_i, b_i), \pi_{ij} : i \leq j \leq \theta \rangle$ denote $\sigma_{0\theta}(\langle \bar{M}_i, \pi_{ij} : i \leq j \leq \omega_1^{\bar{N}_0} \rangle)$, and
    \item $\langle e_i : i < \theta \rangle$ be such that for each $i < \theta$, $e_i$ is a bijection from $\omega$ onto $N_i$.
\end{itemize}
Then by Lemma~\ref{lift}, $\pi_{0\theta}$ lifts to a generic ultrapower map $\pi : W \longrightarrow M$, for some inner model $M$ of $W[h]$. 

Now let $h'$ be $Col(\omega, \pi(\lambda_f))$-generic over $W[h]$, so that in $W[h][h']$, there is 
\begin{equation*}
    \langle F_i : i \in \pi(R) \rangle
\end{equation*}
for which $F_i$ is a strictly increasing cofinal map from $\omega$ into $i$ whenever $i \in \pi(R)$. It is easy to verify that if
\begin{equation*}
    \mathfrak{C} := \langle \langle F_i : i \in \pi(R) \rangle, \langle \rangle \rangle \text{,}
\end{equation*}
then without loss of generality,
\begin{equation*}
    \mathfrak{D} := \langle \langle \bar{M}_i, \pi_{ij},  \bar{N}_i, \sigma_{ij} : i \leq j \leq \theta \rangle, S, \langle e_i : i < \theta \rangle, \mathfrak{C} \rangle \in W[h][h']
\end{equation*}
is a $\pi(\lambda)$-certificate relative to $M$. In other words, $\mathfrak{D}$ fulfils the requirements of Definition \ref{def437} relativised to $(\pi, M)$, bearing in mind \ref{4392} of Remark \ref{rem438}. 

Use $(\Sigma')^{\pi, M}(\cdot, \cdot)$ to denote the function $\Sigma'(\cdot, \cdot)$ relativised to $(\pi, M)$. Following \ref{443'2} of Remark \ref{rem443}, we have that
\begin{itemize}
    \item the hypothesis on $\mathfrak{D}$ in \ref{def439}, relativised to $(\pi, M)$, is satisfied, 
    \item $(\Sigma')^{\pi, M}(\mathfrak{D}, \pi(\mathcal{L} \cap Q_{\lambda}))$ is a set found in some weak outer model of $M$, and
    \item $(\Sigma')^{\pi, M}(\mathfrak{D}, \pi(\mathcal{L} \cap Q_{\lambda}))$ $\pi(\Gamma'_{\lambda}) (\pi(\mathcal{L} \cap Q_{\lambda}), \pi(\mathfrak{A}))$-certifies $\emptyset$. 
\end{itemize}
Applying Lemma \ref{inout} in $M$ gives us the fact that 
\begin{equation*}
    \emptyset \in \pi(P_{\lambda}) \text{,}
\end{equation*}
so also $\emptyset \in \mathbb{P}_{\lambda}$ by the elementarity of $\pi$. 
\end{proof}

By Proposition \ref{certgood}, $(\mathfrak{A}, \mathbb{P})$ is good for $\mathcal{L}$. Obviously, $\mathbb{P}$ is definable in the language associated with $\mathfrak{A}$ because $\mathbb{P} \in H(\kappa)$. Moreover, the following hold as they do in the proof of Theorem \ref{notion1}.
\begin{enumerate}[label=(P\arabic*), leftmargin=40pt]
    \item\label{p1'} $P_{\lambda_0} = P_{\lambda_1} \cap Q_{\lambda_0}$ whenever $\lambda_0, \lambda_1 \in C \cup \{\lambda_f\}$ and $\lambda_0 \leq \lambda_1$, and
    \item\label{p2'} $P_{\lambda} = \bigcup \{P_{\lambda'} : \lambda' \in C \cap \lambda\}$ whenever $\lambda \in C \cup \{\lambda_f\}$ and $sup(\lambda \cap C) = \lambda$.
\end{enumerate}

\begin{lem}\label{modeldone2}
Let 
\begin{itemize}
    \item $\lambda \in C \cup \{\lambda_f\}$, and
    \item $g$ be a $\mathbb{P}_{\lambda}$-$\Sigma_1$-generic filter over $W$.
\end{itemize}
Then $\bigcup g \ \Gamma'_{\lambda} (\mathcal{L} \cap Q_{\lambda}, \mathfrak{A}) \text{-certifies } \emptyset$.
\end{lem}

\begin{proof}
Straightforward, by Lemma \ref{main2} (cf. Lemma \ref{modeldone}).
\end{proof}

\begin{lem}\label{lem446}
There is a definition $\mathfrak{D}(\cdot)$ of a function in one variable such that
\begin{enumerate}[label=(\arabic*)]
    \item\label{4461} $\mathfrak{D}(\cdot)$ is absolute for forcing extensions of $W$, and
    \item\label{4462} whenever
    \begin{itemize}
        \item $\lambda \in C \cup \{\lambda_f\}$, 
        \item $W'$ is a forcing extension of $W$, and
        \item $g \in W'$ is a $\mathbb{P}_{\lambda}$-$\Sigma_1$-generic filter over $W$,
    \end{itemize}
    $\mathfrak{D}(g)$ is a $\lambda$-certificate satisfying 
    \begin{equation*}
        \Sigma'(\mathfrak{D}(g), \mathcal{L}) = \bigcup g \text{.}
    \end{equation*} 
\end{enumerate} 
\end{lem}

\begin{proof}
Let $\lambda$, $W'$ and $g$ fulfil the hypothesis of the lemma. Work in $W'$ for the rest of the proof. We shall unambiguously describe --- constituent by constituent --- the construction of a $\lambda$-certificate
\begin{equation*}
    \mathfrak{D} = \langle \langle \bar{M}_i, \pi_{ij}, \bar{N}_i, \sigma_{ij} : i \leq j \leq \omega_1^{W} \rangle, S, \langle e_i : i < \omega_1^W \rangle, \mathfrak{C} \rangle
\end{equation*}
from $g$, checking that $\mathfrak{D}$ fulfils the conditions of Definition \ref{def437} as we go along. The reader ought to check for themselves that 
\begin{itemize}
    \item every step of the construction, as well as the argument for the purpose it serves, requires only facts which are absolute for forcing extensions (often, even for weak outer models) of $W$, and
    \item at every step of the construction, whatever can be deduced about $\Sigma'(\mathfrak{D}(g), \mathcal{L})$ is consistent with 
    \begin{equation*}
        \Sigma'(\mathfrak{D}(g), \mathcal{L}) = \bigcup g \text{.}
    \end{equation*}
\end{itemize}

First, by Lemma \ref{modeldone2}, 
\begin{equation*}
    \bigcup g \ \Gamma'_{\lambda} (\mathcal{L} \cap Q_{\lambda}, \mathfrak{A}) \text{-certifies } \emptyset \text{,}
\end{equation*}
so \ref{cdlast} of Definition \ref{def437} is satisfied with
\begin{equation*}
    \mathfrak{C} := \langle \langle F_i(\bigcup g \cap \mathcal{L}^o) : i \in R \cap \lambda \rangle, \langle X_{\delta, \lambda'}(\bigcup g \cap \mathcal{L}^o) : \delta < \omega_1, \lambda' \in C \cap \lambda \rangle \rangle \text{.}
\end{equation*}
By Remark \ref{rem416} and \ref{443'3} of Remark \ref{rem443}, we too have \ref{cdrlast} of Definition \ref{def437}. 

Set
\begin{equation*}
    S := \{(\Vec{u}, \Vec{\alpha}) : \ulcorner (\Vec{u}, \Vec{\alpha}) \in \dot{T} \urcorner \in \bigcup g\} 
\end{equation*}
as one would naturally do. For each $i < \omega_1^W$, define a binary relation $\sim_i$ on 
\begin{equation*}
    \hat{\sigma}' := (\hat{\sigma} \cup \{\dot{n} : n < \omega\}) \setminus \{\dot{I}\} 
\end{equation*}
as follows: 
\begin{equation*}
    \tau \sim_i \rho \text{ iff } \ulcorner \dot{N}_i \models \tau = \rho \urcorner \in \bigcup g \text{.}
\end{equation*}
Whenever $i < \omega_1^W$, let $\sim'_i$ be the equivalence closure of $\sim_i$, and
\begin{align*}
    N_i := \ & \{[\tau]_{\sim'_i} : \tau \in \hat{\sigma}'\} \\
    \ulcorner \in \urcorner^{\bar{N}_i} = \tilde{\in}_i := \ & \{([\tau]_{\sim'_i}, [\rho]_{\sim'_i}) : \tau, \rho \in \hat{\sigma}' \text{ and } \ulcorner \dot{N}_i \models \tau \in \rho \urcorner \in \bigcup g\} \\
    I_i := \ & \{[\tau]_{\sim'_i} \in N_i : \ulcorner \dot{N}_i \models \dot{I}(\tau) \urcorner \in \bigcup g\} \\
    \tau^{\bar{N}_i} := \ & [\tau]_{\sim'_i} \text{ for every } \tau \in \hat{\sigma}' \\
    \bar{N}_i := \ & (N_i; \tilde{\in}_i, I_i, \langle \tau^{\bar{N}_i} : \tau \in \hat{\sigma}' \rangle) \\
    e_i : \ & \omega \longrightarrow N_i \ (n \mapsto [\dot{n}]_{\sim'_i}) \text{.}
\end{align*}
Then $Th^{1}_{\mathcal{L}}(\bar{N}_i, e_i, i)$ is well-defined for all $i < \omega_1^W$.

\begin{prop}\label{prop448}
For all
\begin{equation*}
    \varphi = \ulcorner \dot{N}_i \models \phi(\xi_1, \ldots, \xi_k, \dot{n_1}, \ldots, \dot{n_l}, \dot{I}, \dot{a}, \dot{M}_{j_1}, \ldots, \dot{M}_{j_m}, \dot{\pi}_{q_{1}r_{1}}, \ldots, \dot{\pi}_{q_{s}r_{s}}, \dot{\Vec{M}}) \urcorner \in \mathcal{L}_3 \text{,}
\end{equation*}
we have
\begin{gather*}
    \varphi \in \bigcup g \iff \bar{N}_i \models \phi[\dot{n}_1 \mapsto e_i(n_1), \ldots, \dot{n}_l \mapsto e_i(n_l)] \text{.}
\end{gather*}
\end{prop}

\begin{proof}
Fix $i < \omega_1^W$. It suffices to show 
\begin{equation*}
    \phi \in Th^{1}_{\mathcal{L}}(\bar{N}_i, e_i, i) \iff \phi \in \bigcup g
\end{equation*}
for all $\phi \in \mathcal{L}^i_3$. We do this by induction on the length of $\phi$. 

\begin{enumerate}[label=Case \arabic*:, leftmargin=50pt]
    \item $\phi = \ulcorner \dot{N}_i \models \tau = \rho \urcorner$ for some $\tau, \rho \in \hat{\sigma}'$. Then 
    \begin{equation*}
        \phi \in Th^{1}_{\mathcal{L}}(\bar{N}_i, e_i, i) \iff \phi \in \bigcup g
    \end{equation*}
    is implied by $\sim_i$ being an equivalence relation, the latter of which holds because 
    \begin{itemize}
        \item $\ulcorner \forall \tau \in \hat{\sigma}' \ (E(\ulcorner \dot{N}_i \models \tau = \tau \urcorner)) \urcorner$,
        \item $\ulcorner \forall \tau, \rho \in \hat{\sigma}' \ (E(\ulcorner \dot{N}_i \models \tau = \rho \urcorner) \implies E(\ulcorner \dot{N}_i \models \rho = \tau \urcorner)) \urcorner$, and
        \item 
        \!
        $\begin{aligned}[t]\ulcorner \forall \tau, \rho, \zeta \in \hat{\sigma}' \ (&(E(\ulcorner \dot{N}_i \models \tau = \rho \urcorner) \wedge (E(\ulcorner \dot{N}_i \models \rho = \zeta \urcorner)) \\
        & \implies E(\ulcorner \dot{N}_i \models \tau = \zeta \urcorner)) \urcorner
        \end{aligned}$
    \end{itemize}
    are members of $\Gamma'$.
    \item $\phi = \ulcorner \dot{N}_i \models \tau \in \rho \urcorner$ for some $\tau, \rho \in \hat{\sigma}'$. Then 
    \begin{equation*}
        \phi \in Th^{1}_{\mathcal{L}}(\bar{N}_i, e_i, i) \iff \phi \in \bigcup g
    \end{equation*}
    is implied by  
    \begin{align*}
        \ulcorner \forall \tau, \rho, \zeta, \gamma \in \hat{\sigma}' \ (( & E(\ulcorner \dot{N}_i \models \tau = \rho \urcorner) \wedge E(\ulcorner \dot{N}_i \models \zeta = \gamma \urcorner) \\
        & \wedge E(\ulcorner \dot{N}_i \models \tau \in \zeta \urcorner)) \implies E(\ulcorner \dot{N}_i \models \rho \in \gamma \urcorner)) \urcorner,
    \end{align*}
    being a member of $\Gamma'$.
    \item $\phi = \ulcorner \dot{N}_i \models \dot{I}(\tau) \urcorner$ for some $\tau \in \hat{\sigma}'$. Then 
    \begin{equation*}
        \phi \in Th^{1}_{\mathcal{L}}(\bar{N}_i, e_i, i) \iff \phi \in \bigcup g
    \end{equation*}
    is implied by  
    \begin{align*}
        \ulcorner \forall \tau, \rho \in \hat{\sigma}' \ ( & (E(\ulcorner \dot{N}_i \models \tau = \rho \urcorner) \wedge E(\ulcorner \dot{N}_i \models \dot{I}(\tau) \urcorner)) \\
        & \implies E(\ulcorner \dot{N}_i \models \dot{I}(\rho) \urcorner)) \urcorner,
    \end{align*}
    being a member of $\Gamma'$.
    \item $\phi = \ulcorner \dot{N}_i \models \neg \psi \urcorner$ for some $\psi$. Then by the induction hypothesis, we have \begin{equation*}
        \phi' \in Th^{1}_{\mathcal{L}}(\bar{N}_i, e_i, i) \iff \phi' \in \bigcup g \text{,} 
    \end{equation*}
    where
    \begin{align*}
        \phi' := \ulcorner \dot{N}_i \models \psi \urcorner \text{.} 
    \end{align*}
    This means  
    \begin{align*}
        \phi \in Th^{1}_{\mathcal{L}}(\bar{N}_i, e_i, i) \iff \phi' \not \in Th^{1}_{\mathcal{L}}(\bar{N}_i, e_i, i) \iff \phi' \not \in \bigcup g \text{,}
    \end{align*}
    and we are done if 
    \begin{equation*}
        \phi' \not \in \bigcup g \iff \phi \in \bigcup g \text{.}
    \end{equation*}
    But this must hold because $\phi' = \neg(\phi)$ and $\bigcup g$ is $\mathcal{L} \cap Q_{\lambda}$-nice. 
    \item $\phi = \ulcorner \dot{N}_i \models \psi_1 \wedge \psi_2 \urcorner$ for some $\psi_1$ and $\psi_2$. Then by the induction hypothesis, we have 
    \begin{align*}
        \phi' \in Th^{1}_{\mathcal{L}}(\bar{N}_i, e_i, i) & \iff \phi' \in \bigcup g \text{ and } \\
        \phi'' \in Th^{1}_{\mathcal{L}}(\bar{N}_i, e_i, i) & \iff \phi'' \in \bigcup g \text{,} 
    \end{align*}
    where
    \begin{align*}
        \phi' := \ & \ulcorner \dot{N}_i \models \psi_1 \urcorner \text{ and } \\
        \phi'' := \ & \ulcorner \dot{N}_i \models \psi_2 \urcorner.
    \end{align*}
    This means  
    \begin{align*}
        \phi \in Th^{1}_{\mathcal{L}}(\bar{N}_i, e_i, i) \iff \phi', \phi'' \in Th^{1}_{\mathcal{L}}(\bar{N}_i, e_i, i) \iff \phi', \phi'' \in \bigcup g,
    \end{align*}
    and we are done if 
    \begin{equation*}
        \phi', \phi'' \in \bigcup g \iff \phi \in \bigcup g \text{.}
    \end{equation*}
    But this is implied by
    \begin{align*}
        \ulcorner (E(\phi) \iff (E(\phi') \wedge E(\phi'')) \urcorner
    \end{align*}
    being a member of $\Gamma'$.
    \item $\phi = \ulcorner \dot{N}_i \models \exists x \ \psi \urcorner$ for some $\psi$. Then by the induction hypothesis, we have 
    \begin{equation*}
        \phi_{\tau} \in Th^{1}_{\mathcal{L}}(\bar{N}_i, e_i, i) \iff \phi_{\tau} \in \bigcup g \text{,}
    \end{equation*} 
    for all $\tau < \hat{\sigma}'$, where
    \begin{align*}
        \phi_{\tau} := \ulcorner \dot{N}_i \models \psi[x \mapsto \tau] \urcorner \text{.}
    \end{align*}
    This means 
    \begin{align*}
        \phi \in Th^{1}_{\mathcal{L}}(\bar{N}_i, e_i, i) & \iff \exists \tau \in \hat{\sigma}' \ (\phi_{\tau} \in Th^{1}_{\mathcal{L}}(\bar{N}_i, e_i, i)) \\
        & \iff \exists \tau \in \hat{\sigma}' \ (\phi_{\tau} \in \bigcup g) \text{,}
    \end{align*}
    and we are done if 
    \begin{equation*}
        \exists \tau \in \hat{\sigma}' \ (\phi_{\tau} \in \bigcup g) \iff \phi \in \bigcup g \text{.}
    \end{equation*} 
    But this is implied by  
    \begin{align*}
        \ulcorner E(\phi) \iff \exists \tau \in \hat{\sigma}' \ (E(\phi_{\tau})) \urcorner
    \end{align*}
    being a member of $\Gamma'$. \qedhere
\end{enumerate}
\end{proof}

In the rest of the proof, we will apply Proposition \ref{prop448} repeatedly and with great fervour. To minimise annoyance, these applications will be done implicitly as much as possible. 

For every $i < \omega_1^W$ and $\phi \in \mathsf{ZFC}^* + \mathsf{MA}(\omega_1)$, 
\begin{equation*}
    \ulcorner E(\ulcorner \dot{N}_i \models \phi \urcorner) \urcorner \in \Gamma' \text{,}
\end{equation*}
so also 
\begin{equation*}
    \bar{N}_i \models \mathsf{ZFC}^* + \mathsf{MA}(\omega_1) \text{.}
\end{equation*}
Particularly, 
\begin{equation*}
    \bar{N}_i \models ``\text{Axiom of Extensionality}"
\end{equation*}
for $i < \omega_1^W$. In a similar vein, 
\begin{equation*}
    \bar{N}_i \models ``I_i \text{ is a normal uniform ideal on } \omega_1 \text{ and } a_i \subset \omega_1" 
\end{equation*}
because
\begin{equation*}
    \ulcorner E(\ulcorner \dot{N}_i \models ``\dot{I} \text{ is a normal uniform ideal on } \omega_1 \text{ and } \dot{a} \subset \omega_1" \urcorner) \urcorner \in \Gamma' \text{,}
\end{equation*}
as $i$ ranges over $\omega_1^W$. Since
\begin{itemize}
    \item $\ulcorner \forall \xi < \omega_1^W \ (E(\ulcorner \dot{N}_i \models ``\xi \text{ is an ordinal}" \urcorner)) \urcorner$,
    \item $\ulcorner \forall \tau \in \hat{\sigma}' \ \exists \xi < \omega_1^W \ (E(\ulcorner \dot{N}_i \models ``\tau \text{ is an ordinal}" \urcorner) \implies E(\ulcorner \dot{N}_i \models \tau = \xi \urcorner)) \urcorner$,
    \item $\ulcorner \forall \xi_1 < \xi_2 < \omega_1^W \ (E(\ulcorner \dot{N}_i \models ``\xi_2 \neq \emptyset" \urcorner) \implies E(\ulcorner \dot{N}_i \models \xi_1 \in \xi_2 \urcorner)) \urcorner$, and
    \item $\ulcorner \forall \tau \in \hat{\sigma}' \ \forall \xi_1 < \omega_1^W \ \exists \xi_2 < \xi_1 \ (E(\ulcorner \dot{N}_i \models \tau \in \xi_1  \urcorner) \implies E(\ulcorner \dot{N}_i \models \tau = \xi_2 \urcorner)) \urcorner$
\end{itemize}
are members of $\Gamma'$ for each $i < \omega_1^W$, we also have the $\tilde{\in}_i$'s being well-founded. This means the $\bar{N}_i$'s are isomorphic to their respective (well-defined) Mostowski collapse, the latter of which shall be henceforth identified with the former. For $n < \omega$, the $e_i(n)$'s shall also be identified with their respective images under the Mostowski collapse function. As a consequence, for all $i < \omega_1^W$,
\begin{itemize}
    \item $(N_i; \tilde{\in}_i = \ \in)$ is a transitive model of $\mathsf{ZFC}^* + \mathsf{MA}(\omega_1)$ 
    \item $ORD \cap N_i \in \omega_1^W + 1$,
    \item $\xi^{\bar{N}_i} = \xi$ for all $\xi \in \omega_1^W \cap N_i$,
    \item $\xi^{\bar{N}_i} = 0$ for all $\xi \in \omega_1^W \setminus N_i$.
\end{itemize}
That 
\begin{equation*}
\ulcorner \forall i < \omega_1^W \ \exists \xi < \omega_1^W \ (\xi \neq 0 \wedge E(\ulcorner \dot{N}_i \models ``\xi = \emptyset" \urcorner)) \urcorner \in \Gamma'
\end{equation*}
allows us to conclude \ref{cdnew} of Definition \ref{def437}.

Now for each $i < \omega_1^W$, 
\begin{itemize}
    \item $\ulcorner \forall \tau \in \hat{\sigma}' \ \exists n < \omega \ (E(\ulcorner \dot{N}_i \models \tau = \dot{n} \urcorner)) \urcorner$, and
    \item $\ulcorner \forall m, n < \omega \ (m \neq n \implies E(\ulcorner \dot{N}_i \models \neg \ \dot{m} = \dot{n} \urcorner)) \urcorner$
\end{itemize}
being members of $\Gamma'$ tells us that $e_i$ is a bijection from $\omega$ into $N_i$. This settles \ref{cd1.5} of Definition \ref{def437}. Then 
\begin{itemize}
    \item 
    \!
    $\begin{aligned}[t]
        \ulcorner & \forall (\Vec{u}_1, \Vec{\alpha}_1), (\Vec{u}_2, \Vec{\alpha}_2) \in {^{< \omega}{\omega}} \times {^{<\omega}{\omega_2^W}} \\
        & ((E(\ulcorner (\Vec{u}_1, \Vec{\alpha}_1) \in \dot{T} \urcorner) \wedge E(\ulcorner (\Vec{u}_2, \Vec{\alpha}_2) \in \dot{T} \urcorner)) \\
        & \mspace{5mu} \implies ((\Vec{u}_1 \subset \Vec{u}_2 \wedge \Vec{\alpha}_1 \subset \Vec{\alpha}_2) \vee (\Vec{u}_2 \subset \Vec{u}_1 \wedge \Vec{\alpha}_2 \subset \Vec{\alpha}_1))) \urcorner \text{,}
    \end{aligned}$
    \item $\ulcorner \forall n < \omega \ \exists (\Vec{u}, \Vec{\alpha}) \in {^{< \omega}{\omega}} \times {^{<\omega}{\omega_2^W}} \ (E(\ulcorner (\Vec{u}, \Vec{\alpha}) \in \dot{T} \urcorner) \wedge n \in dom(\Vec{u})) \urcorner$, and
    \item $\ulcorner \forall (\Vec{u}, \Vec{\alpha}) \in {^{< \omega}{\omega}} \times {^{< \omega}{\omega_2^W}} \ (E(\ulcorner (\Vec{u}, \Vec{\alpha}) \in \dot{T} \urcorner) \implies (\Vec{u}, \Vec{\alpha}) \in T) \urcorner$
\end{itemize}
being members of $\Gamma'$, and 
\begin{itemize}
    \item
    \!
    $\begin{aligned}[t]
        \ulcorner & \forall x \in \mathcal{L}^0_0 \ \exists (\Vec{u}, \Vec{\alpha}) \in {^{< \omega}{\omega}} \times {^{< \omega}{\omega_2^W}} \\
        & (E(x) \implies (Gd(x) \in ran(\Vec{u}) \wedge E(\ulcorner (\Vec{u}, \Vec{\alpha}) \in \dot{T} \urcorner))) \urcorner \text{, and}
    \end{aligned}$
    \item
    \!
    $\begin{aligned}[t]
        \ulcorner & \forall (\Vec{u}, \Vec{\alpha}) \in {^{< \omega}{\omega}} \times {^{< \omega}{\omega_2^W}} \ \forall n < \omega \ \exists x \in \mathcal{L}^0_0 \\
        & ((n \in ran(\Vec{u}) \wedge E(\ulcorner (\Vec{u}, \Vec{\alpha}) \in \dot{T} \urcorner)) \implies (E(x) \wedge Gd(x) = n)) \urcorner 
    \end{aligned}$
\end{itemize}
being members of $\Gamma'$, respectively give us \ref{cd2} and \ref{cd3} of Definition \ref{def437}. According to \ref{4344} and \ref{4346b}, $(N_0; \in, I_0, a_0)$ must be a member of $\mathbb{P}_{max}$. In particular, $(N_0; \in, I_0)$ is generically iterable.

Whenever $i \leq j < \omega_1^W$, define
\begin{equation*}
    \sigma_{ij} := \{(e_i(m), e_j(n)) : \ulcorner E(\ulcorner \dot{\sigma}_{ij}(\dot{m}) = \dot{n} \urcorner) \urcorner \in \bigcup g\} \subset N_i \times N_j \text{.}
\end{equation*}
We see that $\langle N_i, \sigma_{ij} : i \leq  j < \omega_1^{W} \rangle$ is a directed system as
\begin{itemize}
    \item $\ulcorner \forall i \leq j < \omega_1^W \ \forall m < \omega \ \exists n < \omega \ (E(\ulcorner \dot{\sigma}_{ij}(\dot{m}) = \dot{n} \urcorner)) \urcorner$,
    \item
    \!
    $\begin{aligned}[t]
        \ulcorner \forall i \leq j < \omega_1^W \ \forall m, n_1, n_2 < \omega \ ( & (E(\ulcorner \dot{\sigma}_{ij}(\dot{m}) = \dot{n_1} \urcorner) \wedge E(\ulcorner \dot{\sigma}_{ij}(\dot{m}) = \dot{n_2} \urcorner)) \\
        & \implies n_1 = n_2) \urcorner \text{,}
    \end{aligned}$
    \item
    \!
    $\begin{aligned}[t]
        \ulcorner \forall i \leq j < \omega_1^W \ \forall m_1, m_2, n < \omega \ ( & (E(\ulcorner \dot{\sigma}_{ij}(\dot{m_1}) = \dot{n} \urcorner) \wedge E(\ulcorner \dot{\sigma}_{ij}(\dot{m_2}) = \dot{n} \urcorner)) \\
        & \implies m_1 = m_2) \urcorner \text{, and}
    \end{aligned}$
    \item
    \!
    $\begin{aligned}[t]
        \ulcorner \forall i \leq j \leq k < \omega_1^W \ \forall l, m, n < \omega \ ( & (E(\ulcorner \dot{\sigma}_{ij}(\dot{l}) = \dot{m} \urcorner) \wedge E(\ulcorner \dot{\sigma}_{jk}(\dot{m}) = \dot{n} \urcorner)) \\
        & \implies E(\ulcorner \dot{\sigma}_{ik}(\dot{l}) = \dot{n} \urcorner)) \urcorner
    \end{aligned}$
\end{itemize}
are members of $\Gamma'$. Furthermore, because  
\begin{itemize}
    \item 
    \!
    $\begin{aligned}[t]
        \ulcorner & \forall i, j < \omega_1^W \ \forall \phi \in \mathcal{L}^1(\hat{\sigma} \cup \ulcorner \in \urcorner, x) \ \forall m, n \in \omega \ (E(\ulcorner \dot{\sigma}_{ij}(\dot{m}) = \dot{n} \urcorner) \\ 
        & \implies (E(\ulcorner \dot{N}_i \models \phi[x \mapsto \dot{m}] \urcorner) \iff E(\ulcorner \dot{N}_j \models \phi[x \mapsto \dot{n}] \urcorner))) \urcorner,
    \end{aligned}$
    \\
    \\
    (Recalling Definition \ref{defsvf} and given what we have shown thus far, this means to say that for $i \leq j < \omega_1^W$, $\sigma_{ij}$ is an elementary embedding from $\bar{N}_i$ into $\bar{N}_j$.)
    \item 
    \!
    $\begin{aligned}[t]
        \ulcorner & \forall i < \omega_1^W \ \forall m < \omega \ \exists \xi < \omega_1^W \ \exists n, n' < \omega \ \\
        & (E(\ulcorner \dot{N}_i \models ``\dot{n} \text{ is a function with domain } \omega_1" \urcorner) \wedge E(\ulcorner \dot{N}_i \models ``\xi = \omega_1" \urcorner) \\ 
        & \mspace{6mu} \wedge E(\ulcorner \dot{\sigma}_{i(i+1)}(\dot{n}) = \dot{n'} \urcorner) \wedge E(\ulcorner \dot{N}_{i+1} \models ``\dot{n'}(\xi) = \dot{m}" \urcorner)) \urcorner,
    \end{aligned}$
    \\
    \\
    (This means to say that for all $i < \omega_1^W$, $\bar{N}_{i+1}$ is generated over $\bar{N}_i$ from the ``seed'' $\omega_1^{\bar{N}_i}$.)
    \item 
    \!
    $\begin{aligned}[t]
        \ulcorner & \forall i < \omega_1^W \ \forall m < \omega \ \exists \xi < \omega_1^W \ \exists n, n' < \omega \ (E(\ulcorner \dot{N}_i \models ``\dot{m} \text{ is dense in } \mathcal{P}(\omega_1) \setminus \dot{I}" \urcorner) \\ 
        & \implies (E(\ulcorner \dot{N}_i \models \dot{n} \in \dot{m} \urcorner) \ \wedge E(\ulcorner \dot{N}_i \models ``\xi = \omega_1" \urcorner) \\
        & \mspace{55mu} \wedge E(\ulcorner \dot{\sigma}_{i(i+1)}(\dot{n}) = \dot{n'} \urcorner) \wedge E(\ulcorner \dot{N}_{i+1} \models \xi \in \dot{n'} \urcorner))) \urcorner,
    \end{aligned}$
    \\
    \\
    (This means to say that the set $$\{x \in \mathcal{P}(\omega_1^{\bar{N}_i}) \cap N_i : \omega_1^{\bar{N}_i} \in \sigma_{i(i+1)}(x)\}$$ is $((\mathcal{P}(\omega_1^{\bar{N}_i}) \cap N_i) \setminus I_i)$-generic over $N_i$, for all $i < \omega_1^W$.)
    \item 
    \!
    $\begin{aligned}[t]
        \ulcorner \forall i < \omega_1^W \ \forall m < \omega \ \exists j < \omega_1^W \ \exists n < \omega \ ( & ``i \text{ is a limit ordinal}'' \\ 
        & \implies  (j < i \wedge E(\ulcorner \dot{\sigma}_{ji}(\dot{n}) = \dot{m} \urcorner))) \urcorner,
    \end{aligned}$
    \\
    \\
    (Given what we have shown thus far, this means to say that $(\bar{N}_i, \langle \sigma_{ji} : j < i \rangle)$ is the direct limit of $\langle \bar{N}_j, \sigma_{jj'} : j \leq j' < i \rangle$ for all limit ordinals $i < \omega_1^W$.)
\end{itemize}
are members of $\Gamma'$, 
\begin{equation*}
    \langle (N_i; \in, I_i, a_i), \sigma_{ij} : i \leq j < \omega_1^W \rangle
\end{equation*} 
is a generic iteration. Letting 
\begin{itemize}
    \item $\hat{N} := (\bar{N}_{\omega_1^W}, \langle \sigma_{i\omega_1^W} : i < \omega_1^W \rangle)$ be a direct limit of the directed system
    \begin{equation*}
        \langle \bar{N}_i, \sigma_{ij} : i \leq j < \omega_1^W \rangle \text{,}
    \end{equation*}
    of elementary embeddings, and
    \item $\sigma_{\omega_1^W\omega_1^W}$ be the identity map on the base set of $\bar{N}_{\omega_1^W}$,
\end{itemize}
we arrive at \ref{cd0} of Definition \ref{def437}. The generic iterability of $(N_0; \in, I_0)$ then guarantees the existence of a (unique) Mostowski collapse of $\hat{N}$. Identifying $\hat{N}$ with its Mostowski collapse, we have \ref{cd5} of Definition \ref{def437}.

Finally, set
\begin{align*}
    \bar{M}_i := \ & \dot{M}_i^{\bar{N}_i} \text{ for } i < \omega_1^W \\
    \pi_{ij} := \ & \dot{\pi}_{ij}^{\bar{N}_j} \text{ for } i \leq j < \omega_1^W \\
    \bar{M}_{\omega_1^W} := \ & (H(\omega_2)^W; \in, \mathrm{NS}_{\omega_1}^W, A) \\
    \pi_{i\omega_1^W} := \ & \{(e_i(n), x) : \ulcorner \dot{\pi}_{i\omega_1^W}(\dot{n}) = x \urcorner \in \bigcup g\} \\
    \pi_{\omega_1^W\omega_1^W} := \ & \text{the identity map on } H(\omega_2)^W \text{,}
\end{align*}
so that \ref{cd7} of Definition \ref{def437} clearly holds. Considering 
\begin{itemize}
    \item
    \!
    $\begin{aligned}[t]
        \ulcorner & \forall i, j, k, \xi < \omega_1^W \ ((E(\ulcorner \dot{N}_k \models ``\xi = \omega_1" \urcorner) \wedge i \leq j \leq \xi) \\ 
        & \implies E(\ulcorner \dot{N}_{k} \models ``\dot{\Vec{M}} = \langle \check{M}_{i'}, \check{\pi}_{i'j'} : i' \leq j' \leq \xi \rangle \text{ is a generic iteration} \\
        & \mspace{130mu} \text{with } \check{M}_{i} = \dot{M}_j \text{ and } \check{\pi}_{ij} = \dot{\pi}_{ij}" \urcorner)) \urcorner \text{,}
    \end{aligned}$
    \item
    \!
    $\begin{aligned}[t]
        \ulcorner & \forall i, j, k, \xi < \omega_1^W \ ((E(\ulcorner \dot{N}_k \models ``\xi = \omega_1" \urcorner) \wedge i \leq j \wedge \xi < j) \\ 
        & \implies E(\ulcorner \dot{N}_{k} \models ``\dot{M}_i = \emptyset \text{ and } \dot{\pi}_{ij} = \emptyset" \urcorner)) \urcorner \text{,}
    \end{aligned}$
    \item 
    \!
    $\begin{aligned}[t]
        \ulcorner & \forall i, \xi < \omega_1^W \ (E(\ulcorner \dot{N}_i \models ``\xi = \omega_1" \urcorner) \implies i \leq \xi) \urcorner \text{, and}
    \end{aligned}$
    \item $\ulcorner E(\ulcorner \dot{N}_0 \models ``\dot{M}_0 \text{ is a member of } \mathbb{P}_{max}" \urcorner) \urcorner$
\end{itemize}
are members of $\Gamma'$, 
\begin{equation*}
    \Vec{I} := \langle (N_i; \in, I_i, a_i), \sigma_{ij} : i \leq j \leq \omega_1^W \rangle
\end{equation*}
being a generic iteration then gives us
\begin{itemize}
    \item \ref{cd1} of Definition \ref{def437},
    \item $\langle \bar{M}_i, \pi_{ij} : i \leq j < \omega_1^W \rangle$ is a generic iteration, and
    \item for every $i < \omega_1^W$,
    \begin{align*}
         \dot{\Vec{M}}^{\bar{N}_i} = \ & \langle \bar{M}_j, \pi_{jk} : j \leq k \leq \omega_1^{\bar{N}_i} \rangle \\
         \dot{M}_j^{\bar{N}_i} = \ & \bar{M}_j \text{ for all } j \leq \omega_1^{\bar{N}_i} \\
         \dot{M}_j^{\bar{N}_i} = \ & \emptyset \text{ for all } \omega_1^{\bar{N}_i} < j < \omega_1^W \\
         \dot{\pi}_{jk}^{\bar{N}_i} = \ & \pi_{jk} \text{ for all } j \leq k \leq \omega_1^{\bar{N}_0} \\
         \dot{\pi}_{jk}^{\bar{N}_i} = \ & \emptyset \text{ for all } j \leq k \text{ and } \omega_1^{\bar{N}_0} < k < \omega_1^W \text{,}
    \end{align*}
\end{itemize}
implying that \ref{2ndlast} of Definition \ref{def437} is also fulfilled. Now \ref{cd4} of Definition \ref{def437} is true by \ref{4346} and the fact that 
\begin{align*}
    \ulcorner E(\ulcorner \dot{N}_0 \models `` & \dot{\Vec{M}} = \langle \check{M}_{i}, \check{\pi}_{ij} : i \leq j \leq \omega_1 \rangle \text{ is a generic iteration and} \\ 
    & \text{if } \check{M}_{\omega_1} = (M_{\omega_1}; \in, J_{\omega_1}, b_{\omega_1}) \text{ then} \\
    &  \dot{a} = b_{\omega_1} \text{ and } \dot{I} \cap M_{\omega_1} = J_{\omega_1}" \urcorner) \urcorner
\end{align*}
is a member of of $\Gamma'$.

Among the conditions of Definition \ref{def437} to be verified, we are left with \ref{cd6}. In order to show \ref{cd6} of Definition \ref{def437}, we need only show that
\begin{equation*}
    (\bar{M}_{\omega_1^W}, \langle \pi_{i\omega_1^W} : i < \omega_1^W \rangle)
\end{equation*}
is a direct limit of 
\begin{equation*}
    \langle \bar{M}_i, \pi_{ij} : i \leq j < \omega_1^W \rangle \text{.}
\end{equation*}
But in light of what is known thus far, this is a result of $\Gamma'$ having the following as members:
\begin{enumerate}[label=(\alph*)]
    \item\label{448a}
    \!
    $\begin{aligned}[t]
        \ulcorner \forall i < \omega_1^W \ \forall n < \omega \ \forall x \in H(\omega_2)^W \ ( & E(\ulcorner \dot{\pi}_{i\omega_1^W}(\dot{n}) = x \urcorner) \\
        & \implies E(\ulcorner \dot{N}_i \models ``\dot{n} \in dom(\dot{\pi}_{ii})" \urcorner)) \urcorner \text{,}
    \end{aligned}$
    \item
    \!
    $\begin{aligned}[t]
        \ulcorner \forall i < \omega_1^W \ \forall n < \omega \ \exists x \in H(\omega_2)^W \ ( & E(\ulcorner \dot{N}_i \models ``\dot{n} \in dom(\dot{\pi}_{ii})" \urcorner) \\
        & \implies E(\ulcorner \dot{\pi}_{i\omega_1^W}(\dot{n}) = x \urcorner)) \urcorner \text{,}
    \end{aligned}$
    \item
    \!
    $\begin{aligned}[t]
        \ulcorner & \forall i < \omega_1^W \ \forall n < \omega \ \forall x, y \in H(\omega_2)^W \\
        & ((E(\ulcorner \dot{\pi}_{i\omega_1^W}(\dot{n}) = x \urcorner) \wedge E(\ulcorner \dot{\pi}_{i\omega_1^W}(\dot{n}) = y \urcorner)) \implies x = y) \urcorner \text{,}
    \end{aligned}$
    \item\label{448d}
    \!
    $\begin{aligned}[t]
        \ulcorner & \forall i < \omega_1^W \ \forall m, n < \omega \ \forall x \in H(\omega_2)^W \\
        & ((E(\ulcorner \dot{\pi}_{i\omega_1^W}(\dot{m}) = x \urcorner) \wedge E(\ulcorner \dot{\pi}_{i\omega_1^W}(\dot{n}) = x \urcorner)) \implies m = n) \urcorner \text{,}
    \end{aligned}$
    \\
    \\
    (Points \ref{448a} to \ref{448d} mean to say that $\pi_{i\omega_1^W}$ is an injection from the base set of $\bar{M}_i$ into $H(\omega_2)^W$, for all $i < \omega_1^W$.)
    \item\label{448e}
    \!
    $\begin{aligned}[t]
        \ulcorner & \forall i \leq j < \omega_1^W \ \forall m, n < \omega \ \forall x \in H(\omega_2)^W \ \exists l < \omega \\
        & ((E(\ulcorner \dot{\pi}_{j\omega_1^W}(\dot{n}) = x \urcorner) \wedge E(\ulcorner \dot{N}_j \models ``\dot{m} \in dom(\dot{\pi}_{ij}) \text{ and } \dot{\pi}_{ij}(\dot{m}) = \dot{n}" \urcorner)) \\
        & \mspace{5mu} \implies (E(\ulcorner \dot{\pi}_{i\omega_1^W}(\dot{l}) = x \urcorner) \wedge E(\ulcorner \dot{\sigma}_{ij}(\dot{l}) = \dot{m} \urcorner))) \urcorner
    \end{aligned}$
    \item\label{448f}
    \!
    $\begin{aligned}[t]
        \ulcorner & \forall i \leq j < \omega_1^W \ \forall m, n < \omega \ \forall x \in H(\omega_2)^W \ \exists l < \omega \\
        & ((E(\ulcorner \dot{\pi}_{i\omega_1^W}(\dot{l}) = x \urcorner) \wedge E(\ulcorner \dot{\sigma}_{ij}(\dot{l}) = \dot{m} \urcorner) \\
        & \mspace{5mu} \wedge E(\ulcorner \dot{N}_j \models ``\dot{m} \in dom(\dot{\pi}_{ij}) \text{ and } \dot{\pi}_{ij}(\dot{m}) = \dot{n}" \urcorner)) \\
        & \mspace{5mu} \implies E(\ulcorner \dot{\pi}_{j\omega_1^W}(\dot{n}) = x \urcorner)) \urcorner
    \end{aligned}$
    \\
    \\
    (Points \ref{448e} to \ref{448f} mean to say that 
    \begin{equation*}
        \langle M_i, \pi_{ij} : i \leq  j \leq \omega_1^{W} \rangle
    \end{equation*}
    is a directed system, where $M_i$ denotes the base set of $\bar{M}_i$ for each $i < \omega_1^W$.)
    \item\label{448g}
    \!
    $\begin{aligned}[t]
        \ulcorner \forall x \in H(\omega_2)^W \ \exists i < \omega_1^W \ \exists n < \omega \ (E(\ulcorner \dot{\pi}_{i\omega_1^W}(\dot{n}) = x \urcorner)) \urcorner \text{,}
    \end{aligned}$
    \item
    \!
    $\begin{aligned}[t]
        \ulcorner & \forall i < \omega_1^W \ \forall n < \omega \ \exists x \in \mathrm{NS}_{\omega_1}^W \\
        & (E(\ulcorner \dot{N}_i \models ``\dot{M}_i = (M_i; \in, J_i, b_i) \text{ and } \dot{n} \in J_i" \urcorner) \implies E(\ulcorner \dot{\pi}_{i\omega_1^W}(\dot{n}) = x \urcorner)) \urcorner \text{,}
    \end{aligned}$
    \item
    \!
    $\begin{aligned}[t]
        \ulcorner & \forall x \in \mathrm{NS}_{\omega_1}^W \ \exists i < \omega_1^W \ \exists n < \omega  \\
        & (E(\ulcorner \dot{N}_i \models ``\dot{M}_i = (M_i; \in, J_i, b_i) \text{ and } \dot{n} \in J_i" \urcorner) \wedge E(\ulcorner \dot{\pi}_{i\omega_1^W}(\dot{n}) = x \urcorner)) \urcorner \text{,}
    \end{aligned}$
    \item
    \!
    $\begin{aligned}[t]
        \ulcorner & \forall i < \omega_1^W \ \forall n < \omega \ \exists x \in A \\
        & (E(\ulcorner \dot{N}_i \models ``\dot{M}_i = (M_i; \in, J_i, b_i) \text{ and } \dot{n} \in b_i" \urcorner) \implies E(\ulcorner \dot{\pi}_{i\omega_1^W}(\dot{n}) = x \urcorner)) \urcorner \text{, and}
    \end{aligned}$
    \item\label{448k}
    \!
    $\begin{aligned}[t]
        \ulcorner & \forall x \in A \ \exists i < \omega_1^W \ \exists n < \omega  \\
        & (E(\ulcorner \dot{N}_i \models ``\dot{M}_i = (M_i; \in, J_i, b_i) \text{ and } \dot{n} \in b_i" \urcorner) \wedge E(\ulcorner \dot{\pi}_{i\omega_1^W}(\dot{n}) = x \urcorner)) \urcorner \text{.} 
    \end{aligned}$
    \\
    \\
    (Points \ref{448g} to \ref{448k} mean to say that $H(\omega_2)^W$, $\mathrm{NS}_{\omega_1}^W$, and $A$ behave as they would relative to
    \begin{equation*}
        \langle \bar{M}_i, \pi_{i\omega_1^W} : i < \omega_1^W \rangle \text{,}
    \end{equation*}
    if
    \begin{equation*}
        (\bar{M}_{\omega_1^W}, \langle \pi_{i\omega_1^W} : i < \omega_1^W \rangle)
    \end{equation*}
    is a direct limit of 
    \begin{equation*}
        \langle \bar{M}_i, \pi_{ij} : i \leq j < \omega_1^W \rangle \text{.)} \qedhere
    \end{equation*}
    \end{enumerate}
\end{proof}

We hereby fix a function definition $\mathfrak{D}(\cdot)$ satisfying properties \ref{4461} and \ref{4462} of Lemma \ref{lem446}.

Analogous to how the proof of Theorem \ref{notion1} goes, we want what is essentially a souped up version of Lemma \ref{sideuni}.

\begin{lem}\label{lem452}
The $\mathcal{L}^*_{\mathfrak{A}}$-$\Pi_2$ sentence 
\begin{align}\label{eq4.4}
\begin{split}
    \ulcorner & \forall x \in H(\lambda_f) \ \forall U \in U(C, \lambda_f) \ \forall i < \omega_1^W \ \forall m < \omega \ \exists \delta < \omega_1^W \ \exists n < \omega \ \exists \nu \in U \\ 
    & (E(\ulcorner \dot{N}_i \models ``\dot{m} \in \mathcal{P}(\omega_1) \setminus \dot{I}" \urcorner) \implies (i < \delta \wedge E(\ulcorner \dot{\sigma}_{i(\delta + 1)}(\dot{m}) = \dot{n} \urcorner) \\
    & \mspace{279mu} \wedge E(\ulcorner \dot{N}_{\delta + 1} \models \delta \in \dot{n} \urcorner) \\
    & \mspace{279mu} \wedge E(\ulcorner x \in \dot{X}_{\delta, \nu} \urcorner))) \urcorner \text{,}
\end{split}
\end{align}
where $U(C, \lambda_f)$ is as in Definition \ref{def423}, is $(\mathfrak{A}, \mathbb{P}, \mathcal{L})$-universal.
\end{lem}

\begin{proof}
Fix arbitrary
\begin{itemize}
    \item $p \in \mathbb{P}$,
    \item $x \in H(\lambda_f)$,
    \item $U \in U(C, \lambda_f)$,
    \item $i < \omega_1^W$, and
    \item $m < \omega$.
\end{itemize}
By Lemma \ref{univer}, it suffices to assume 
\begin{equation}\label{eq4.5}
    \ulcorner \dot{N}_i \models ``\dot{m} \in \mathcal{P}(\omega_1) \setminus \dot{I}" \urcorner \in p
\end{equation}
and show that there are 
\begin{itemize}
    \item $i < \delta < \omega_1^W$,
    \item $n < \omega$, and
    \item $\nu \in U$
\end{itemize} 
for which 
\begin{equation*}
    p \cup \{\ulcorner \dot{\sigma}_{i(\delta + 1)}(\dot{m}) = \dot{n} \urcorner, \ulcorner \dot{N}_{\delta + 1} \models \delta \in \dot{n} \urcorner, \ulcorner x \in \dot{X}_{\delta, \nu} \urcorner\} \in \mathbb{P} \text{.}
\end{equation*}

Choose 
\begin{itemize}
    \item $\nu \in U$ such that $x \in Q_{\nu}$ and $p \in \mathbb{P}_{\nu}$ (possible by \ref{p2'}), and
    \item $g \times f$ a $\mathbb{P}_{\nu} \times Col(\omega, \nu)$-generic filter over $W$ with $p \in g$,
\end{itemize}
so that $g \in W[g \times f]$ is a $\mathbb{P}_{\nu}$-generic filter over $W$ and $|\nu|^{W[g \times f]} = \omega$. By Lemma \ref{lem446}, 
\begin{equation*}
    \mathfrak{D}(g) = \langle \langle \bar{M}_i, \pi_{ij}, \bar{N}_i = (N_i; \in, \Vec{X}), \sigma_{ij} : i \leq j \leq \omega_1^{W} \rangle, S, \langle e_i : i < \omega_1^W \rangle, \mathfrak{C} \rangle
\end{equation*}
is a $\nu$-certificate satisfying
\begin{equation*}
    \Sigma'(\mathfrak{D}(g), \mathcal{L}) = \bigcup g \text{.}
\end{equation*}
Set $\theta := \omega_1^{W[g \times f]}$, and find a one-step extension 
\begin{equation*}
    \mathfrak{I}_1 = \langle (N_i; \in, I_i, a_i), \sigma_{ij} : i \leq j \leq \omega_1^{W} + 1 \rangle
\end{equation*}  
of the generic iteration 
\begin{equation*}
    \langle (N_i; \in, I_i, a_i), \sigma_{ij} : i \leq j \leq \omega_1^{W} \rangle
\end{equation*}
in $W[g \times f]$, where $(N_{\omega_1^W + 1}; \in, I_{\omega_1^W + 1})$ is the generic ultrapower of $(N_{\omega_1^W}; \in, I_{\omega_1^W})$ via a $N_{\omega_1^W}$-generic ultrafilter on $I_{\omega_1^W}$ containing $\sigma_{i\omega_1^W}(e_i(m))$. The latter is possible by (\ref{eq4.5}). As a result, $\omega_1^W \in N_{\omega_1^W + 1}$ and
\begin{equation}\label{eq4.6}
    (N_{\omega_1^W + 1}; \in, I_{\omega_1^W + 1}) \models \omega_1^W \in \sigma_{i(\omega_1^W + 1)}(e_i(m)) \text{.}
\end{equation}
Still in $W[g \times f]$, let 
\begin{itemize}
    \item $\mathfrak{I} = \langle \bar{N}_i = (N_i; \in, I_i, a_i), \sigma_{ij} : i \leq j \leq \theta \rangle \in W[g \times f]$ be a generic iteration extending $\mathfrak{I}_1$,
    \item $\langle \bar{M}_i = (M_i; \in, J_i, b_i), \pi_{ij} : i \leq j \leq \theta \rangle$ denote $\sigma_{0\theta}(\langle \bar{M}_i, \pi_{ij} : i \leq j \leq \omega_1^{\bar{N}_0} \rangle)$, and
    \item $\langle e_i : i < \theta \rangle$ extending $\langle e_i : i < \omega_1^W \rangle$ be such that for each $i < \theta$, $e_i$ is a bijection from $\omega$ onto $N_i$.
\end{itemize}
Then by Lemma~\ref{lift}, $\pi_{\omega_1^W\theta}$ lifts to a generic ultrapower map $j : W \longrightarrow M$, for some inner model $M$ of $W[g \times f]$. Moreover, 
\begin{enumerate}[label=(\alph*)]
    \item\label{451a'} $crit(j) = \omega_1^W$, and
    \item\label{451b'} $j(\omega_1^W) = \theta$.
\end{enumerate}

Within a suitable forcing extension $W^*$ of $W[g \times f]$, there is $\mathfrak{C}'$ such that 
\begin{itemize}
    \item $\mathfrak{C}'$ satisfies \ref{cdlast} to \ref{cdrlast} of Definition \ref{def437} relativised to $(j, M)$, and
    \item $\{\ulcorner j(x) \in \dot{X}_{\omega_1^W, j(\nu)} \urcorner\} \in \Sigma(\mathfrak{C}', j(\mathcal{L}^o \cap Q_{\lambda}))$,
\end{itemize} 
where $\lambda := \lambda_f$. Indeed, let us stipulate that $\mathfrak{C}'$ be constructed the same way $\mathfrak{C}$ is in the proof of Lemma \ref{sideuni}. By \ref{4344}, \ref{4346b}, \ref{4346}, \ref{451b'} and the elementarity of $j$, 
\begin{equation*}
    \langle \langle \bar{M}_i, \pi_{ij}, \bar{N}_i, \sigma_{ij} : i \leq j \leq \theta \rangle, j(S), \langle e_i : i < \theta \rangle \rangle
\end{equation*}
satisfies the hypothesis of \ref{4391} of Remark \ref{rem438} relativised to $(j, M)$. Applications of \ref{4390} and \ref{4391} of Remark \ref{rem438} relativised to $(j, M)$ then allow us to conclude, without loss of generality, that
\begin{equation*}
    \mathfrak{D}' := \langle \langle \bar{M}_i, \pi_{ij}, \bar{N}_i, \sigma_{ij} : i \leq j \leq \theta \rangle, j(S), \langle e_i : i < \theta \rangle, \mathfrak{C}' \rangle
\end{equation*}
is a $j(\lambda)$-certificate relative to $M$. In other words, $\mathfrak{D}'$ fulfils the requirements of Definition \ref{def437} relativised to $(j, M)$. By (\ref{eq4.6}), the elementarity of $j$, and \ref{443'1} and \ref{443'2} of Remark \ref{rem443} relativised to $(j, M)$, 
\begin{enumerate}[label=(\alph*)]
    \setcounter{enumi}{2}
    \item\label{451c} $\Sigma'(\mathfrak{D}', j(\mathcal{L}))$ is a set found in some weak outer model of $M$,
    \item\label{451a} $\Sigma'(\mathfrak{D}', j(\mathcal{L}))$ $j(\Gamma'_{\lambda}) (j(\mathcal{L}), j(\mathfrak{A}))$-certifies $\emptyset$, and
    \item\label{451b} there is $n < \omega$ for which 
    \begin{gather*}
        \{\ulcorner \dot{\sigma}_{i(\omega_1^W + 1)}(\dot{m}) = \dot{n} \urcorner, \ulcorner \dot{N}_{\omega_1^W + 1} \models \omega_1^W \in \dot{n} \urcorner, \ulcorner j(x) \in \dot{X}_{\omega_1^W, j(\nu)} \urcorner\} \\
        \subset \Sigma'(\mathfrak{D}', j(\mathcal{L})) \text{.}
    \end{gather*}
\end{enumerate}
Through a routine unfolding of the definition of $\mathfrak{D}'$, with \ref{451a'} in mind, one may ascertain 
\begin{equation*}
    j(p) = j" p \subset \Sigma'(\mathfrak{D}', j(\mathcal{L})) \text{.}
\end{equation*}
a fact which can be combined with \ref{451c} to \ref{451b} and Lemma \ref{inout} to yield
\begin{align*}
    (M; \in) \models \exists \delta < \theta \ \exists n < j(\omega) \ (``\phi \text{ and } j(i) < \delta") \text{,}
\end{align*}
where $\phi$ is a formula in variables $\delta$ and $n$ expressing
\begin{equation*}
    ``j(p) \cup \{\ulcorner \dot{\sigma}_{j(i)(\delta + 1)}(j(\dot{m})) = \dot{n} \urcorner, \ulcorner \dot{N}_{\delta + 1} \models \delta \in \dot{n} \urcorner, \ulcorner j(x) \in \dot{X}_{\delta, j(\nu)} \urcorner\} \in j(\mathbb{P})" \text{.}
\end{equation*}
Appealing once more to \ref{451b'} and the elementarity of $j$, we obtain
\begin{align*}
    (W; \in) \models \ & \exists \delta < \omega_1^W \ \exists n < \omega \\
    & (``p \cup \{\ulcorner \dot{\sigma}_{i(\delta + 1)}(\dot{m}) = \dot{n} \urcorner, \ulcorner \dot{N}_{\delta + 1} \models \delta \in \dot{n} \urcorner, \ulcorner x \in \dot{X}_{\delta, \nu} \urcorner\} \in \mathbb{P} \\
    & \mspace{15mu} \text{and } i < \delta") \text{,}
\end{align*}
and we are done.
\end{proof}

\begin{lem}\label{lem453}
Let $g$ be $\mathbb{P}$-generic over $W$, and
\begin{equation*}
    \mathfrak{D}(g) = \langle \langle \bar{M}_i, \pi_{ij},  \bar{N}_i = (N_i; \in, \Vec{X}), \sigma_{ij} : i \leq j \leq \omega_1^W \rangle, S, \langle e_i : i < \omega_1^W \rangle, \mathfrak{C} \rangle \in W[g] \text{.}
\end{equation*}
Then every member of $\mathcal{P}(\omega_1^W) \cap N_{\omega_1^W} \setminus I_{\omega_1^W}$ is stationary in $W[g]$.
\end{lem}

\begin{proof}
Let 
\begin{itemize}
    \item $p \in \mathbb{P}$,
    \item $\dot{C}$ be a $\mathbb{P}$-name such that $p \Vdash^W_{\mathbb{P}} ``\dot{C}$ is a club in $\omega_1^W"$,
    \item $D := \{(q, \eta) \in \mathbb{P} \times \omega_1 : q \Vdash^W_{\mathbb{P}} \eta \in \dot{C}\}$,
    \item $g$ be a $\mathbb{P}$-generic filter over $W$ with $p \in g$.
\end{itemize}
Applying ($\diamond$) with 
\begin{itemize}
    \item $\mathbb{P}$ in place of $P$, and
    \item $D$ in place of $B$,
\end{itemize}
we get
\begin{align*}
    U := \{\lambda \in C : (Q_{\lambda}; \in, \mathbb{P}, A_{\lambda}) \prec (H(\lambda_f); \in, \mathbb{P}, D)\}
\end{align*}
is stationary in $\lambda_f$, so $U \in U(C, \lambda_f)$.

Move to $W[g]$. There, due to Lemma \ref{lem446}, $\mathfrak{D}(g)$ is a $\lambda_f$-certificate and
\begin{equation*}
    \Sigma'(\mathfrak{D}(g), \mathcal{L}) = \bigcup g \text{.}
\end{equation*}
Choose $Y \in \mathcal{P}(\omega_1^W) \cap N_{\omega_1^W} \setminus I_{\omega_1^W}$. It suffices to show $Y$ has non-trivial intersection with $\dot{C}[g]$. To that end, note that there are $i < \omega_1^W$ and $m < \omega$ for which $\sigma_{i\omega_1^W}(e_i(m)) = Y$. But this means
\begin{equation*}
    \ulcorner \dot{N}_i \models ``\dot{m} \in \mathcal{P}(\omega_1) \setminus \dot{I}" \urcorner \in \bigcup g \text{,}
\end{equation*}
so by Lemma \ref{lem452}, 
\begin{equation*}
    \{\ulcorner \dot{\sigma}_{i(\delta + 1)}(\dot{m}) = \dot{n} \urcorner, \ulcorner \dot{N}_{\delta + 1} \models \delta \in \dot{n} \urcorner, \ulcorner x \in \dot{X}_{\delta, \nu} \urcorner\} \subset \bigcup g
\end{equation*}
for some $\delta < \omega_1^W$ and $n < \omega$. Consequently, 
\begin{equation*}
    \delta \in \sigma_{i(\delta + 1)}(e_i(m)) \subset Y \text{.}
\end{equation*}

Furnished with \ref{443'3} of Remark \ref{rem443}, the argument for $\mu \in \dot{C}[g]$ in the proof of Lemma \ref{statdone} can be reused with 
\begin{itemize}
    \item $\delta$ in place of $\mu$, 
    \item (\ref{eq4.4}) in place of (\ref{c9}),
    \item Lemma \ref{modeldone2} in place of Lemma \ref{modeldone}, and
    \item Lemma \ref{lem452} in place of Lemma \ref{sideuni},
\end{itemize}
to net us $\delta \in \dot{C}[g]$. This completes the proof.
\end{proof}

\begin{lem}\label{lem454}
$\mathbb{P}$ is stationary-preserving. That is, $\mathbb{P}$ fulfils \ref{K2} of Fact \ref{fact436}.
\end{lem}

\begin{proof}
Let 
\begin{itemize}
    \item $g$ be $\mathbb{P}$-generic over $W$,
    \item $\lambda$ denote $\lambda_f$, 
\end{itemize} 
and work in $W[g]$. Consider the $\lambda$-certificate
\begin{equation*}
    \mathfrak{D}(g) = \langle \langle \bar{M}_i, \pi_{ij},  \bar{N}_i = (N_i; \in, \Vec{X}), \sigma_{ij} : i \leq j \leq \omega_1^{W} \rangle, S, \langle e_i : i < \omega_1^W \rangle, \mathfrak{C} \rangle \text{,}
\end{equation*}
so that 
\begin{equation*}
    \mathfrak{I} := \langle (N_i; \in, I_a, a_i), \sigma_{ij} : i \leq j \leq \omega_1^{W} \rangle
\end{equation*}
is a generic iteration. By \ref{cd4}, \ref{cd6} and \ref{cd7} of Definition \ref{def437}, 
\begin{equation*}
    \mathcal{P}(\omega_1^W) \cap H(\omega_2)^W \setminus \mathrm{NS}_{\omega_1}^W \subset \mathcal{P}(\omega_1^W) \cap N_{\omega_1^W} \setminus I_{\omega_1^W} \text{.}
\end{equation*}
Then Lemma \ref{lem453} tells us every member of $\mathcal{P}(\omega_1^W) \cap H(\omega_2)^W \setminus \mathrm{NS}_{\omega_1}^W$ is stationary in $W[g]$. But this is equivalent to $\mathbb{P}$ being stationary-preserving.
\end{proof}

\begin{lem}\label{lem455}
$\mathbb{P}$ fulfils \ref{K3} of Fact \ref{fact436}.
\end{lem}

\begin{proof}
Let 
\begin{itemize}
    \item $g$ be $\mathbb{P}$-generic over $W$,
    \item $\lambda$ denote $\lambda_f$, 
\end{itemize} 
and work in $W[g]$. Consider the $\lambda$-certificate
\begin{equation*}
    \mathfrak{D}(g) = \langle \langle \bar{M}_i, \pi_{ij},  \bar{N}_i = (N_i; \in, \Vec{X}), \sigma_{ij} : i \leq j \leq \omega_1^{W} \rangle, S, \langle e_i : i < \omega_1^W \rangle, \mathfrak{C} \rangle \text{,}
\end{equation*}
so that 
\begin{equation*}
    \mathfrak{I} := \langle (N_i; \in, I_a, a_i), \sigma_{ij} : i \leq j \leq \omega_1^{W} \rangle
\end{equation*}
is a generic iteration. We check that $\mathfrak{I}$ fulfils \ref{k31} to \ref{k33} of \ref{K3}.
\begin{enumerate}[label=That \ref{k3\arabic*} holds:, leftmargin=100pt]
    \item by \ref{4344}, \ref{4346b}, and \ref{cd0}, \ref{cd1.5} to \ref{cd3} of Definition \ref{def437}.
    \item by Lemmas \ref{gisubset} and \ref{lem453}.
    \item by \ref{cd4}, \ref{cd6} and \ref{cd7} of Definition \ref{def437}. \qedhere
\end{enumerate}
\end{proof}

\begin{lem}\label{lem456}
$\mathbb{P}$ fulfils \ref{K4} and \ref{K5} of Fact \ref{fact436}.
\end{lem}

\begin{proof}
Proceed as in the proofs of Lemmas \ref{1done} and \ref{2done}, noting \ref{443'3} of Remark \ref{rem443}.
\end{proof}

In view of Fact \ref{fact436}, the theorem follows from Remark \ref{rem447} and Lemmas \ref{lem454}, \ref{lem455} and \ref{lem456}.
\end{proof}

One may view Theorem \ref{notion2} as a souped-up version of Theorem \ref{notion1}. By doing so, the next corollary naturally becomes a souped-up analogue of Corollary \ref{nambacoro}. 

\begin{cor}\label{nbcoro2}
Assume
\begin{enumerate}[label=(\roman*), leftmargin=40pt]
    \item $\Gamma  = \bigcup_{1 \leq k < \omega} \mathcal{P}(\mathbb{R}^{k}) \cap L(\Gamma, \mathbb{R})$,
    \item $\Gamma$ is productive,
    \item $\mathrm{NS}_{\omega_1}$ is saturated,
    \item $2^{\omega_1} = \mathbf{\delta^1_2} = \omega_2$, and
    \item $MA(\omega_1)$ holds.
\end{enumerate}
Let 
\begin{itemize}
    \item $D \in L(\Gamma, \mathbb{R})$ be a dense subset of $\mathbb{P}_{max}$,
    \item $A \subset \omega_1$ be such that $\omega_1^{L[A]} = \omega_1$, and
    \item $\alpha > \omega_2$ be an ordinal.
\end{itemize}
Then there is a stationary-preserving forcing notion $\mathbb{P}$ such that in $V^{\mathbb{P}}$,
\begin{itemize}
    \item there is a generic iteration $$\langle \bar{N}_i = (N_i; \in, I_i, a_i), \sigma_{ij} : i \leq j \leq \omega_1^V \rangle$$ satisfying 
    \begin{enumerate}[label=(\arabic*), leftmargin=40pt]
        \item $\bar{N}_0 \in D^* := (F^*) " ((F^*)^{-1}(D)^*) \subset \mathbb{P}_{max}$,
        \item $I_{\omega_1^V} = \mathrm{NS}_{\omega_1}^{V^{\mathbb{P}}} \cap N_{\omega_1^V}$, and
        \item $a_{\omega_1^V} = A$, 
    \end{enumerate}
    \item $Nb_0(\alpha)$ holds, and
    \item $Nb'_1(\alpha)$ --- thus also $Nb_1(\alpha)$ --- holds if $\alpha$ is a regular cardinal.
\end{itemize}
\end{cor}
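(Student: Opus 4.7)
The plan is to reduce to Theorem~\ref{notion2} by an appropriate choice of the regular cardinal $\lambda_f$ fed into its hypothesis, and to exploit the trivial downward monotonicity of $Nb_0$: if a forcing notion $\mathbb{P}$ witnesses $Nb_0(\lambda_f)$ and $\alpha \leq \lambda_f$, then the same $\mathbb{P}$ witnesses $Nb_0(\alpha)$, since the set of regular cardinals in $[\omega_2, \alpha)$ is a subset of the set of regular cardinals in $[\omega_2, \lambda_f)$. Under the hypotheses of the corollary, Theorem~\ref{notion2} already delivers, for any regular cardinal $\lambda_f > \omega_2$, a stationary-preserving $\mathbb{P}$ whose generic extension simultaneously produces the desired generic iteration (with $\bar{N}_0 \in D^*$, $a_{\omega_1^V} = A$, and $I_{\omega_1^V} = \mathrm{NS}_{\omega_1}^{V^{\mathbb{P}}} \cap N_{\omega_1^V}$) and satisfies $Nb'_1(\lambda_f)$; both conclusions are realised by one and the same forcing, which is exactly what the corollary requires.

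I then split on whether $\alpha$ is a regular cardinal. If $\alpha$ is a regular cardinal, I set $\lambda_f := \alpha$ and invoke Theorem~\ref{notion2} directly; the resulting $\mathbb{P}$ delivers both the generic iteration and $Nb'_1(\alpha)$ in $V^{\mathbb{P}}$. I then verify $Nb'_1(\alpha) \implies Nb_1(\alpha) \implies Nb_0(\alpha)$ by unpacking the definitions: taking $\gamma = \alpha$ in the clause ``$\lambda \leq \gamma \leq 2^{<\lambda}$'' of $Nb'_1(\alpha)$ yields $\Vdash_{\mathbb{P}} \text{``}cof(\alpha) = \omega_1\text{''}$, while any regular $\beta > \alpha$ either lies in $[\alpha, 2^{<\alpha}]$ (giving $cof(\beta) = \omega_1 > \omega$) or strictly exceeds $2^{<\alpha}$ (giving $cof(\beta) = \beta > \omega$); together these give $Nb_1(\alpha)$, and $Nb_1(\alpha) \implies Nb_0(\alpha)$ is immediate from the definitions. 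If instead $\alpha$ is not a regular cardinal, I pick any regular cardinal $\lambda_f > \alpha$ (for instance $\lambda_f := |\alpha|^+$), apply Theorem~\ref{notion2}, and note that the resulting $\mathbb{P}$ again gives the generic iteration together with $Nb'_1(\lambda_f)$; the cofinality clause of $Nb'_1(\lambda_f)$, restricted to regular cardinals lying in $[\omega_2, \alpha) \subseteq [\omega_2, \lambda_f)$, immediately yields $Nb_0(\alpha)$, while the ``$Nb'_1(\alpha)$'' clause of the corollary is vacuous in this case.

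There is essentially no genuine obstacle here: the forcing-theoretic work has been done in Theorem~\ref{notion2}, and the corollary amounts to a careful repackaging of its conclusion. The only bookkeeping consists of the definitional implications $Nb'_1(\alpha) \implies Nb_1(\alpha) \implies Nb_0(\alpha)$ for regular $\alpha$ (a direct comparison of the three defining clauses) and the downward monotonicity of $Nb_0$ in its ordinal argument, both of which are transparent.
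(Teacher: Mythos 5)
Your proposal is correct and matches the paper's (implicit) argument: the paper gives no separate proof of this corollary, presenting it as an immediate repackaging of Theorem~\ref{notion2} exactly as Corollary~\ref{nambacoro} repackages Theorem~\ref{notion1}, via the case split on whether $\alpha$ is regular, the choice of a regular $\lambda_f \geq \alpha$, and the definitional implications $Nb'_1 \Rightarrow Nb_1 \Rightarrow Nb_0$ together with downward monotonicity of $Nb_0$. Your spelled-out bookkeeping is accurate.
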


\subsection{Open Questions}

Working in the universe $W$ as defined in the proof of Theorem \ref{notion1} and thinning $C$ if necessary, fix any $\theta$ such that $[Q_{\lambda}]^{< \theta} \subset Q_{\lambda}$ for all $\lambda \in C$. Should we then alter the definition of $\Gamma_{\lambda}$ and $P_{\lambda}$ for each $\lambda \in C \cup \{\lambda_f\}$ as follows:
\begin{align*}
    \Gamma_{\lambda}(\theta) := \ & \text{the set } \Gamma_{\lambda} \text{ defined according to Definition \ref{defc},} \\
    & \text{but with } \mathbb{P}_{\lambda'}(\theta) \text{ in place of the parameter } \mathbb{P}_{\lambda'} \text{ for each } \lambda' \in \lambda \cap C \text{,} \\
    P_{\lambda}(\theta) := \ & \{p \in [\mathcal{L} \cap Q_{\lambda}]^{< \theta} : \\
    & \ \Vdash_{Col(\omega, |H(\kappa)|)} \exists \Sigma \ (``\Sigma \ \Gamma_{\lambda}(\theta) (\mathcal{L} \cap Q_{\lambda}, \mathfrak{A}) \text{-certifies } p")\} \text{, and} \\
    \mathbb{P}_{\lambda}(\theta) := \ & (P_{\lambda}(\theta), \supset) \text{,}
\end{align*}
would all subsequent lemmas in the proof still go through? What notable forcing-theoretic properties can we use to differentiate among the $\mathbb{P}_{\lambda_f}(\theta)$'s that result from varying $\theta$? 

Let $\theta$ be a cardinal greater than $\omega$. Then the most obvious gap in the proof shows up in the semantic interpretation of members of $\Gamma_{\lambda}(\theta)$. More specifically, since there are many dense subsets $D$ of many of the $\mathbb{P}_{\lambda'}$'s for which
\begin{itemize}
    \item $D$ is definable without parameters, and
    \item $D$ contains only infinite sets,
\end{itemize}
\ref{c8} no longer means
\begin{equation}
    \label{eq47}
    \parbox{\dimexpr\linewidth-6em}{
        ``if $X_{\delta, \lambda'}$ is non-empty, then for every dense subset $D$ of $\mathbb{P}_{\lambda'}$ definable over $(Q_{\lambda'}; \in, \mathbb{P}_{\lambda'}, A_{\lambda'})$ with parameters from $X_{\delta, \lambda'}$, $$[\Sigma]^{< \omega} \cap X_{\delta, \lambda'} \cap D \neq \emptyset \text{'',}$$ 
    }
\end{equation}
where 
\begin{itemize}
    \item $\Sigma$ interprets the unary relation symbol $\ulcorner E \urcorner$ occurring in $\mathcal{L}^*_{\mathfrak{A}}$ (formulas), and
    \item $X_{\delta, \lambda'} := X_{\delta, \lambda'}(\Sigma)$.
\end{itemize}
In fact, it now seems impossible to translate (\ref{eq47}) into a set of $\mathcal{L}^*_{\mathfrak{A}}$-$\Pi_2$ sentences, and there is thus an inability to guarantee that the $\mathbb{P}_{\lambda_f}(\theta)$-generic filter over $V$ is sufficiently generic over each non-empty $X_{\delta, \lambda'}$. This throws a wrench into the side condition method so crucial to the main results of this section.  

But is there a way to salvage things to certain reasonable extent, without a complete overhaul of the forcing construction? To be more concrete, we can ask the following question.

\begin{ques}\label{oq1}
Let $W$ and $\lambda_f$ be as in the proof of Theorem \ref{notion1}. In $W$, can $\theta$, $C$ and $$(C \longrightarrow \mathcal{P}(H(\lambda_f))) [\lambda \mapsto Q_{\lambda}]$$ be chosen such that 
\begin{enumerate}[label=(\arabic*)]
    \item $\theta > \omega$ is a cardinal,
    \item $[Q_{\lambda}]^{< \theta} \subset Q_{\lambda}$ for all $\lambda \in C$,
    \item \ref{4101} and \ref{4102} of Lemma \ref{lem26} are fulfilled with $\mathbb{P}_{\lambda_f}(\theta)$ in place of $\mathbb{P}$, and
    \item $\mathbb{P}_{\lambda_f}(\theta)$ does not collapse $\omega_1^W$?
\end{enumerate}
\end{ques}

In the spirit of Jensen's results on the extended Namba problem, we are interested in the possibility of not having the forcing notion $\mathbb{P} = \mathbb{P}_{\lambda_f}$ constructed in the proof of Theorem \ref{notion1} add reals, under the right assumptions. More generally, we can ask the same about the parametrised versions of said forcing notion. 

\begin{ques}\label{addsreals}
Let $V$, $W$ and $\lambda_f$ be as in the proof of Theorem \ref{notion1}. Additionally, assume $GCH$ holds in $V$ (and so also in $W$). Working in $W$, can $\theta$, $C$ and $$(C \longrightarrow \mathcal{P}(H(\lambda_f))) [\lambda \mapsto Q_{\lambda}]$$ be chosen such that
\begin{enumerate}[label=(\arabic*)]
    \item $\theta$ is an infinite cardinal,
    \item $[Q_{\lambda}]^{< \theta} \subset Q_{\lambda}$ for all $\lambda \in C$,
    \item  \ref{4101} and \ref{4102} of Lemma \ref{lem26} are fulfilled with $\mathbb{P}_{\lambda_f}(\theta)$ in place of $\mathbb{P}$, and
    \item $\mathbb{P}_{\lambda_f}(\theta)$ does not add reals?
\end{enumerate}
\end{ques}

Comparing Theorem \ref{notion1} with Theorem \ref{notion2} makes clear the existence of close relatives of Questions \ref{oq1} and \ref{addsreals}, given the following definitions in the universe $W$, where $W$ is as defined in the proof of Theorem \ref{notion2}:
\begin{align*}
    \Gamma'_{\lambda}(\theta) := \ & \text{the set } \Gamma'_{\lambda} \text{ defined according to Definition \ref{def447},} \\
    & \text{but with } \mathbb{P}_{\lambda'}(\theta) \text{ in place of the parameter } \mathbb{P}_{\lambda'} \text{ for each } \lambda' \in \lambda \cap C \text{,} \\
    P'_{\lambda}(\theta) := \ & \{p \in [\mathcal{L} \cap Q_{\lambda}]^{< \theta} : \\
    & \ \Vdash_{Col(\omega, |H(\kappa)|)} \exists \Sigma \ (``\Sigma \ \Gamma'_{\lambda}(\theta) (\mathcal{L} \cap Q_{\lambda}, \mathfrak{A}) \text{-certifies } p")\} \text{, and} \\
    \mathbb{P}'_{\lambda}(\theta) := \ & (P_{\lambda}(\theta), \supset) \text{.}
\end{align*}

\begin{ques}\label{oq2}
Let $W$ and $\lambda_f$ be as in the proof of Theorem \ref{notion2}. In $W$, can $\theta$, $C$ and $$(C \longrightarrow \mathcal{P}(H(\lambda_f))) [\lambda \mapsto Q_{\lambda}]$$ be chosen such that 
\begin{enumerate}[label=(\arabic*)]
    \item $\theta > \omega$ is a cardinal,
    \item $[Q_{\lambda}]^{< \theta} \subset Q_{\lambda}$ for all $\lambda \in C$,
    \item \ref{K1} to \ref{K5} of Fact \ref{fact436} are fulfilled with $\mathbb{P}'_{\lambda_f}(\theta)$ in place of $\mathbb{P}$, and
    \item $\mathbb{P}'_{\lambda_f}(\theta)$ does not collapse $\omega_1^W$?
\end{enumerate}
\end{ques}

\begin{ques}\label{addsreals2}
Let $V$, $W$ and $\lambda_f$ be as in the proof of Theorem \ref{notion2}. Additionally, assume $GCH$ holds in $V$ (and so also in $W$). Working in  $W$, can $\theta$, $C$ and $$(C \longrightarrow \mathcal{P}(H(\lambda_f))) [\lambda \mapsto Q_{\lambda}]$$ be chosen such that
\begin{enumerate}[label=(\arabic*)]
    \item $\theta$ is an infinite cardinal,
    \item $[Q_{\lambda}]^{< \theta} \subset Q_{\lambda}$ for all $\lambda \in C$,
    \item \ref{K1} to \ref{K5} of Fact \ref{fact436} are fulfilled with $\mathbb{P}'_{\lambda_f}(\theta)$ in place of $\mathbb{P}$, and
    \item $\mathbb{P}'_{\lambda_f}(\theta)$ does not add reals?
\end{enumerate}
\end{ques}

In the likely event that the answer to Question \ref{addsreals} is in the negative, it makes sense to consider a more general question.

\begin{enumerate}[label=(Q\arabic*)]
    \item Must $Nb_2(\lambda)$ hold for all $\lambda$ above $\omega_2$, where 
    \begin{align*}
        Nb_2(\lambda) := \text{`} & \text{there is a stationary-preserving forcing notion } \mathbb{P} \text{ such that} \\ 
        & \mathbb{P} \text{ does not add reals,} \\
        & \Vdash_{\mathbb{P}} ``cof(\alpha) = \omega" \text{ for all regular cardinals } \alpha \text{ satisfying } \omega_2 \leq \alpha < \lambda \text{,} \\
        & \Vdash_{\mathbb{P}} ``cof(\lambda) = \omega_1" \text{, and} \\
        & \Vdash_{\mathbb{P}} ``cof(\beta) > \omega" \text{ for all regular cardinals } \beta \text{ satisfying } \lambda < \beta \text{'?}
\end{align*}
\end{enumerate}

As a consequence of Jensen's work, we need only consider the case of $\lambda$ being a weakly inaccessible cardinal without further qualification.

\begin{ques}\label{q428}
Must $Nb_2(\lambda)$ hold for a weakly inaccessible $\lambda$, if $\lambda$ is not strongly inaccessible?
\end{ques}

In another direction, we can ask about the possibility of eschewing the assumptions in Corollary \ref{nambacoro}.

\begin{enumerate}[label=(Q\arabic*)]
    \setcounter{enumi}{1}
    \item\label{newq2} Is it true that $\mathsf{ZFC} \vdash ``Nb_1(\lambda)$ holds for all $\lambda$ above $\omega_2$''?
\end{enumerate}

Very recent results by De Bondt and Veli\v{c}kovi\'{c} (as a part of De Bondt's PhD dissertation, \cite{bendb}) resolved \ref{newq2} in the affirmative. In fact, the class of forcing notions constructed by De Bondt and Veli\v{c}kovi\'{c} assuming only $\mathsf{ZFC}$, bears witness to a theorem significantly stronger than 
\begin{quote}
    ``$Nb_1(\lambda)$ holds for all $\lambda$ above $\omega_2$''.
\end{quote}
However, these forcing notions always add reals, so they cannot be used to answer Question \ref{q428}. Additionally, because the De Bondt-Veli\v{c}kovi\'{c} forcing constructions appear vastly different from the Asper\'{o}-Schindler one, there is no obvious way to integrate one kind with the other towards proving Theorem \ref{notion2}. In fact, it is clear that iterating forcing notions born from these two kinds of constructions would not work, since an Asper\'{o}-Schindler forcing notion must force 
\begin{itemize}
    \item $\omega_2^V$ to have cofinality $\omega$, and
    \item $\omega_3^V$ to have cofinality $\omega_1$.
\end{itemize}

\section{Theories with Constraints in Interpretation (TCIs) and their Models}\label{TCIsec}

In the previous section, we discussed a method of forcing the cofinality of regular cardinals within an interval to be $\omega$. The idea of changing cofinalities via forcing involves extracting a cofinal function from an existing relation $R$ on a structured set $A$. We can couple the structure on $A$ with $R$ to form a new structure $\mathfrak{A}$ that sets the context of the problem. Then a subset of $R$ being a cofinal function becomes a definable property over $\mathfrak{A}$. Compare and contrast this with the notion of a first-order theory, which defines a property over nothing more than a vocabulary; it makes sense that the addition of a structure interpreting said vocabulary would allow us to define more intricate properties. 

Essentially, a structure can be used to provide additional constraints to a first-order theory, and forcing-related questions can often be framed as consistency questions that ask about the existence of models of first-order theories satisfying such constraints. This section is dedicated to studying the aforementioned models, with a focus on their relationship with genericity.

\subsection{Definitions and Basic Properties}\label{subs51}

We first make precise the notion of first-order theories with added constraints, so that we can compare these mathematical objects with the first-order theories we are familiar with. 

\begin{defi}
A \emph{first-order theory with constraints in interpretation} (\emph{first-order TCI}) --- henceforth, just \emph{theory with constraints in interpretation} (\emph{TCI}) --- is a tuple $(T, \sigma, \dot{\mathcal{U}}, \vartheta)$, where
\begin{itemize}
    \item $T$ is a first order theory with signature $\sigma$,
    \item $\dot{\mathcal{U}}$ is a unary relation symbol not in $\sigma$,
    \item $\vartheta$ is a function (the \emph{constraint function}) with domain $\sigma \cup \{\dot{\mathcal{U}}\}$, 
    \item if $x \in ran(\vartheta)$, then there is $y$ such that 
    \begin{itemize}[label=$\circ$]
        \item either $x = (y, 0)$ or $x = (y, 1)$, and
        \item if $\vartheta(\dot{\mathcal{U}}) = (z, a)$, then $y \subset z^n$ for some $n < \omega$, and
    \end{itemize}
    \item if $\vartheta(\dot{\mathcal{U}}) = (z, a)$, then 
    \begin{itemize}[label=$\circ$]
        \item $z \cap z^n = \emptyset$ whenever $1 < n < \omega$, and
        \item $z^m \cap z^n = \emptyset$ whenever $1 < m < n < \omega$.
    \end{itemize}
\end{itemize}
\end{defi}

\begin{defi}
Let $(T, \sigma, \dot{\mathcal{U}}, \vartheta)$ be a TCI. We say $\mathcal{M} := (U; \mathcal{I}) \models^* (T, \sigma, \dot{\mathcal{U}}, \vartheta)$ --- or $\mathcal{M}$ \emph{models} $(T, \sigma, \dot{\mathcal{U}}, \vartheta)$ --- iff all of the following holds:
\begin{itemize}
    \item $\mathcal{M}$ is a structure,
    \item $\sigma$ is the signature of $\mathcal{M}$,
    \item $\mathcal{M} \models T$,
    \item if $\vartheta(\dot{\mathcal{U}}) = (y, 0)$, then $U \subset y$,
    \item if $\vartheta(\dot{\mathcal{U}}) = (y, 1)$, then $U = y$, and
    \item for $\dot{X} \in \sigma$,
    \begin{itemize}[label=$\circ$]
        \item if $\dot{X}$ is a constant symbol and $\vartheta(\dot{X}) = (y, z)$, then $\mathcal{I}(\dot{X}) \in y \cap U$,
        \item if $\dot{X}$ is a $n$-ary relation symbol and $\vartheta(\dot{X}) = (y, 0)$, then $\mathcal{I}(\dot{X}) \subset y \cap U^{n}$,
        \item if $\dot{X}$ is a $n$-ary relation symbol and $\vartheta(\dot{X}) = (y, 1)$, then $\mathcal{I}(\dot{X}) = y \cap U^{n}$,
        \item if $\dot{X}$ is a $n$-ary function symbol and $\vartheta(\dot{X}) = (y, 0)$, then $$\{z \in U^{n+1} : \mathcal{I}(\dot{X})(z \! \restriction_n) = z(n)\} \subset y \cap U^{n+1}, \text{ and}$$
        \item if $\dot{X}$ is a $n$-ary function symbol and $\vartheta(\dot{X}) = (y, 1)$, then $$\{z \in U^{n+1} : \mathcal{I}(\dot{X})(z \! \restriction_n) = z(n)\} = y \cap U^{n+1}.$$
    \end{itemize}
\end{itemize}
We say $(T, \sigma, \dot{\mathcal{U}}, \vartheta)$ has a model if there exists $\mathcal{M}$ for which $\mathcal{M} \models^* (T, \sigma, \dot{\mathcal{U}}, \vartheta)$.
\end{defi}

\begin{ex}
Let $T$ be any first-order theory over the signature $\sigma$, $\dot{\mathcal{U}}$ be a unary relation symbol not in $\sigma$, and $A$ be any set. Define $\vartheta$ on $\sigma \cup \{\dot{\mathcal{U}}\}$ such that
\begin{itemize}
    \item $\vartheta(\dot{\mathcal{U}}) = (A, 1)$,
    \item $\vartheta(\dot{X}) = (A, 0)$ whenever $\dot{X} \in \sigma$ is a constant symbol,
    \item $\vartheta(\dot{X}) = (A^n, 0)$ whenever $\dot{X} \in \sigma$ is a $n$-ary relation symbol, and
    \item  $\vartheta(\dot{X}) = (A^{n+1}, 0)$ whenever $\dot{X} \in \sigma$ is a $n$-ary function symbol.
\end{itemize}
If we set $\mathfrak{T} := (T, \sigma, \dot{\mathcal{U}}, \vartheta)$, then the models of $\mathfrak{T}$ are precisely the models of $T$ with base set $A$.
\end{ex}

\begin{defi}
Let $\mathfrak{A}$ and $T$ be a structure and a first-order theory respectively, over the same signature $\sigma$. Define $$\mathrm{Sub}(\mathfrak{A}, T) := \{\mathfrak{B} : \mathfrak{B} \text{ is a substructure of } \mathfrak{A} \text{ and } \mathfrak{B} \models T\}.$$ Members of $\mathrm{Sub}(\mathfrak{A}, T)$ are called $T$-substructures of $\mathfrak{A}$.
\end{defi}

\begin{ex}\label{2ndex}
Let $T$ be any first-order theory over the signature $\sigma$, and $\mathfrak{A} = (A; \mathcal{I})$ be a structure interpreting $\sigma$. Define $\vartheta$ on $\sigma \cup \{\dot{\mathcal{U}}\}$ such that
\begin{itemize}
    \item $\vartheta(\dot{\mathcal{U}}) = (A, 0)$,
    \item $\vartheta(\dot{X}) = (\{\mathcal{I}(\dot{X})\}, 1)$ whenever $\dot{X} \in \sigma$ is a constant symbol,
    \item $\vartheta(\dot{X}) = (\mathcal{I}(\dot{X}), 1)$ whenever $\dot{X} \in \sigma$ is a $n$-ary relation symbol, and
    \item  $\vartheta(\dot{X}) = (R_{\mathcal{I}}(\dot{X}), 1)$ whenever $\dot{X} \in \sigma$ is a $n$-ary function symbol, where $$R_{\mathcal{I}}(\dot{X}) := \{z \in A^{n+1} : \mathcal{I}(\dot{X})(z \! \restriction_n) = z(n)\}.$$
\end{itemize}
If we set $\mathfrak{T} := (T, \sigma, \dot{\mathcal{U}}, \vartheta)$, then $$\{ \text{models of } \mathfrak{T}\} = \mathrm{Sub}(\mathfrak{A}, T).$$
\end{ex}

In practice, we can view forcing as a technique to refine structures that provably exist in $V$. Often, such refinements cannot be carried out in $V$, for any successful attempt would result in objects that cannot exist in $V$. In each of these cases, forcing can be used to extend $V$ to include an instance of the refined structure. The way we define TCIs allows them to specify --- and act as blueprints for --- refinements of this ilk. If $\mathfrak{T}$ is a TCI specifying a particular refinement, then models of $\mathfrak{T}$ correspond to the results of said refinement. We hope the next example can help illustrate our aforementioned idea of specification.

\begin{ex}\label{3rdex}
Let $\dot{\mathcal{U}}$ be a unary relation and $\dot{R}$ be a binary relation. Define $\vartheta$ on $\{\dot{\mathcal{U}}, \dot{R}\}$ such that
\begin{itemize}
    \item $\vartheta(\dot{\mathcal{U}}) = (\omega_1, 1)$, and
    \item $\vartheta(\dot{R}) = (\omega_1 \times \omega, 0)$.
\end{itemize}
Set $T$ to contain exactly the sentences 
\begin{gather*}
    \ulcorner \forall x \ \exists y \ (\dot{R}(x, y)) \urcorner \text{,} \\
    \ulcorner \forall x \ \forall y \ \forall z \ ((\dot{R}(x, y) \wedge \dot{R}(x, z)) \implies y = z) \urcorner \text{ and} \\
    \ulcorner \forall x \ \forall y \ \forall z \ ((\dot{R}(x, z) \wedge \dot{R}(y, z)) \implies x = y) \urcorner \text{.}
\end{gather*}
Now $\mathfrak{T} := (T, \{\dot{R}\}, \dot{\mathcal{U}}, \vartheta)$ is a TCI that specifies a refinement of the structure $$\mathfrak{A} := (\omega_1; \{(\dot{R}, \omega_1 \times \omega)\})$$ to some $$\mathfrak{A}' := (\omega_1; \{(\dot{R}, F)\}),$$ where $F \subset \omega_1 \times \omega$ is an injection from $\omega_1$ into $\omega$. As an implication, $\mathfrak{T}$ must not have any model in $V$. However, a weak outer model of $V$ in which $\omega_1^V$ is collapsed to $\omega$ necessarily contains models of $\mathfrak{T}$.
\end{ex}

Example \ref{3rdex} reminds us that the existence of models for a TCI is not absolute between $V$ and its (weak) outer models. There is thus a fundamental difference between the model existence of a TCI and that of a first-order theory. This should reflect in our definition of what it means for a TCI to be consistent.

\begin{defi}
A TCI $(T, \sigma, \dot{\mathcal{U}}, \vartheta)$ is \emph{consistent} iff $(T, \sigma, \dot{\mathcal{U}}, \vartheta)$ has a model in some outer model of $V$. 
\end{defi}

\begin{rem}\label{promise}
It might seem at first glance, that the the consistency of a TCI is not a first-order property in the language of set theory, since it involves quantifying over outer models of $V$. This is not a real problem, as we shall see in the next subsection, because said definition is equivalent to a first-order property at the metalevel.
\end{rem}

\begin{defi}
A TCI $(T, \sigma, \dot{\mathcal{U}}, \vartheta)$ is \emph{finitely consistent} iff for all finite $T' \subset T$, $(T', \sigma, \dot{\mathcal{U}}, \vartheta)$ is consistent. 
\end{defi}

\begin{defi}
A TCI $(T, \sigma, \dot{\mathcal{U}}, \vartheta)$ is $\Pi_n$ iff $T$ contains only $\Pi_n$ sentences.

A TCI $(T, \sigma, \dot{\mathcal{U}}, \vartheta)$ is $\Sigma_n$ iff $T$ contains only $\Sigma_n$ sentences.

A TCI $(T, \sigma, \dot{\mathcal{U}}, \vartheta)$ is $\Sigma_n \cup \Pi_n$ iff every sentence in $T$ is either $\Sigma_n$ or $\Pi_n$.
\end{defi}

TCIs allow natural constraints that are not first-order definable to be imposed on the models of a theory. However, they are not a ``true'' generalisation of first-order theories because their models have uppers bounds in size. In fact, we can show that the size limitation of models of TCIs is in some sense, the only shortcoming of TCIs \emph{vis-a-vis} first-order theories. 

\begin{lem}\label{lem511}
Let $T$ be a first-order theory over the signature $\sigma$, and $\dot{\mathcal{U}}$ be a unary relation symbol not in $\sigma$. For every cardinal $\kappa$, there is a TCI $\mathfrak{T}$ such that 
\begin{itemize}
    \item $\mathfrak{T} = (T, \sigma, \dot{\mathcal{U}}, \vartheta)$ for some $\vartheta$, and
    \item every model $\mathfrak{A} = (A; \mathcal{I})$ of $T$ with $|A| \leq \kappa$ is isomorphic to some model of $\mathfrak{T}$.
\end{itemize}
\end{lem}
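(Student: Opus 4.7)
The plan is to pick a fixed set $B$ of size $\kappa$ to serve as a universal ambient set, and then let $\mathfrak{T}$ simply require that a model's base set live inside $B$ while placing only trivial upper-bound constraints on the interpretations. First I would fix $B$ with $|B| = \kappa$ satisfying the disjointness clauses that the definition of a TCI imposes on the first component of $\vartheta(\dot{\mathcal{U}})$: that $B \cap B^n = \emptyset$ for $2 \leq n < \omega$ and $B^m \cap B^n = \emptyset$ for $2 \leq m < n < \omega$. Taking $B$ to be any set of ordinals of cardinality $\kappa$ (e.g.\ $B = \kappa$) will do, since under the standard set-theoretic encoding of tuples, ordinals are not $n$-tuples for $n \geq 2$, and Cartesian powers of differing arities are distinguished by shape.

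Next I would define $\vartheta$ on $\sigma \cup \{\dot{\mathcal{U}}\}$ by setting $\vartheta(\dot{\mathcal{U}}) := (B, 0)$, $\vartheta(\dot{c}) := (B, 0)$ for each constant symbol $\dot{c} \in \sigma$, $\vartheta(\dot{R}) := (B^n, 0)$ for each $n$-ary relation symbol $\dot{R} \in \sigma$, and $\vartheta(\dot{f}) := (B^{n+1}, 0)$ for each $n$-ary function symbol $\dot{f} \in \sigma$; let $\mathfrak{T} := (T, \sigma, \dot{\mathcal{U}}, \vartheta)$. Unpacking $\models^*$, a $\sigma$-structure $\mathcal{M} = (U; \mathcal{I})$ models $\mathfrak{T}$ iff $\mathcal{M} \models T$ and $U \subset B$: the remaining clauses $\mathcal{I}(\dot{c}) \in B \cap U$, $\mathcal{I}(\dot{R}) \subset B^n \cap U^n$, and $\{z \in U^{n+1} : \mathcal{I}(\dot{f})(z\!\restriction_n) = z(n)\} \subset B^{n+1} \cap U^{n+1}$ are automatic from $U \subset B$ together with the fact that $\mathcal{M}$ is already a structure on base set $U$. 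To conclude, I would take any $\mathfrak{A} = (A; \mathcal{I})$ with $\mathfrak{A} \models T$ and $|A| \leq \kappa$, pick an injection $j : A \hookrightarrow B$, transport $\mathcal{I}$ across $j$ to an interpretation $\mathcal{I}'$ on $j"A$, and set $\mathfrak{A}' := (j"A; \mathcal{I}')$; since $j$ is an isomorphism $\mathfrak{A} \cong \mathfrak{A}'$ we have $\mathfrak{A}' \models T$, and $j"A \subset B$ immediately yields $\mathfrak{A}' \models^* \mathfrak{T}$.

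The main obstacle is cosmetic rather than mathematical: one must pin down the bookkeeping around the tuple-encoding convention so that the chosen $B$ genuinely satisfies the disjointness clauses, and so that the further constraint ``$y \subset z^n$ for some $n < \omega$'' (with $z$ the first component of $\vartheta(\dot{\mathcal{U}})$) is met — this forces the tacit convention $B^1 = B$, which is consistent with Example \ref{2ndex} where the choice $\vartheta(\dot{c}) = (\{\mathcal{I}(\dot{c})\}, 1)$ must satisfy $\{\mathcal{I}(\dot{c})\} \subset A^n$ for some $n$. Once the encoding is fixed so that $B^1 = B$ and $B, B^2, B^3, \ldots$ are pairwise disjoint, the verification is a routine unwinding of Definitions above.
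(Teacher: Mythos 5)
Your proposal is correct and is essentially the paper's own proof: the paper takes $\vartheta(\dot{\mathcal{U}}) := (\kappa, 0)$, $\vartheta(\dot{X}) := (\kappa, 0)$ for constants, $(\kappa^n, 0)$ for $n$-ary relations, and $(\kappa^{n+1}, 0)$ for $n$-ary functions, exactly your construction with $B = \kappa$. The additional bookkeeping you supply (the disjointness clauses for $\kappa, \kappa^2, \kappa^3, \dots$, the convention $B^1 = B$, and the explicit transport of $\mathcal{I}$ along an injection into $\kappa$) is detail the paper leaves implicit, and your verification that the remaining clauses of $\models^*$ are automatic is sound.
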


\begin{proof}
Define 
\begin{itemize}
    \item $\vartheta (\dot{\mathcal{U}}) := (\kappa, 0)$,
    \item $\vartheta (\dot{X}) := (\kappa, 0)$ if $\dot{X}$ is a constant symbol,
    \item $\vartheta (\dot{X}) := (\kappa^n, 0)$ if $\dot{X}$ is a $n$-ary relation symbol,
    \item $\vartheta (\dot{X}) := (\kappa^{n + 1}, 0)$ if $\dot{X}$ is a $n$-ary function symbol.
\end{itemize}
Then $$\mathfrak{T} := (T, \sigma, \dot{\mathcal{U}}, \vartheta)$$ is as required.
\end{proof}

It turns out that there is an analogue of the downward Lowenheim-Skolem theorem for TCIs.

\begin{lem}\label{DLS}
Let 
\begin{itemize}
    \item $\mathfrak{T} = (T, \sigma, \dot{\mathcal{U}}, \vartheta)$ be a TCI,
    \item $y$ be a set, and
    \item $\alpha$ be an infinite ordinal. 
\end{itemize}
Assume that $\vartheta(\dot{\mathcal{U}}) = (y, 0)$ and in some weak outer model $W$ of $V$, there is a pair $(\mathcal{M}, f)$ such that
\begin{itemize}
    \item $\mathcal{M} = (U; \mathcal{I}) \models^* \mathfrak{T}$, and
    \item $f : \alpha \longrightarrow U$ is a bijection.
\end{itemize}
Then for every $\beta$ with $\omega \leq \beta \leq \alpha$, there is a pair $(\mathcal{M}', f')$ in $W$ such that 
\begin{itemize}
    \item $\mathcal{M}' = (U'; \mathcal{I}') \models^* \mathfrak{T}$, and
    \item $f' : \beta \longrightarrow U'$ is a bijection.
\end{itemize}
\end{lem}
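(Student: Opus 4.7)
The plan is to apply the classical downward L\"owenheim-Skolem theorem inside $W$ to $\mathcal{M}$, obtaining an elementary substructure of base cardinality $|\beta|$, and then to verify that this substructure still satisfies every constraint imposed by $\vartheta$. Working throughout in $W$, I would first invoke downward L\"owenheim-Skolem to produce $\mathcal{M}' = (U'; \mathcal{I}') \prec \mathcal{M}$ with $|U'| = |\beta|$; this step implicitly needs $|\sigma| + \aleph_0 \leq |\beta|$, which is not a genuine restriction in the cases of interest. Elementarity gives $\mathcal{M}' \models T$ immediately, and because $\mathcal{M}'$ is in particular a substructure, each interpretation $\mathcal{I}'(\dot{X})$ is the restriction of $\mathcal{I}(\dot{X})$ to the appropriate Cartesian power of $U'$.

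The rest is a symbol-by-symbol verification that $\mathcal{M}' \models^* \mathfrak{T}$. The constraint $\vartheta(\dot{\mathcal{U}}) = (y,0)$ requires only $U' \subset y$, which is clear from $U' \subset U \subset y$. For a constant symbol, $\mathcal{I}'(\dot{X}) = \mathcal{I}(\dot{X})$ lies in $U'$ (substructure) and in the prescribed $y'$ (inherited from $\mathcal{M}$). For a relation or function symbol with constraint $(y',0)$, the restricted interpretation is a subset of the original and hence stays inside $y'$. The only computation worth recording is the $(y',1)$ case, where equality, not containment, is required: using $U' \subset U$,
\begin{equation*}
    \mathcal{I}'(\dot{X}) = \mathcal{I}(\dot{X}) \cap (U')^n = (y' \cap U^n) \cap (U')^n = y' \cap (U')^n,
\end{equation*}
and the analogous identity on graphs handles the function-symbol case. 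A bijection $f' : \beta \longrightarrow U'$ then exists in $W$ simply because $|U'| = |\beta|$.

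The main thing to watch out for is precisely the $(y',1)$ clause, and by extension the hypothesis $\vartheta(\dot{\mathcal{U}}) = (y,0)$. If one instead had $\vartheta(\dot{\mathcal{U}}) = (y,1)$, so that the base set were pinned to all of $y$, a bare passage to an elementary substructure would fail, since $U'$ need not equal $y$. The flexibility granted by the $(y,0)$ hypothesis is therefore not cosmetic but is exactly what lets the argument reduce to an unadorned application of downward L\"owenheim-Skolem, and this is the one subtlety I would foreground when writing the argument out in full.
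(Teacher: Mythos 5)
Your proposal is correct and follows essentially the same route as the paper: apply the downward L\"owenheim-Skolem theorem to $\mathcal{M}$ inside $W$ to get an elementary substructure of size $|\beta|$, then check the $\vartheta$-constraints one by one (the paper dismisses the $(y',1)$ cases as ``easy to check''; you carry out the computation, and your observation that the hypothesis $\vartheta(\dot{\mathcal{U}})=(y,0)$ is exactly what makes the argument go through is the right thing to highlight). The only caveat worth keeping in mind is the one you already flag, namely that the L\"owenheim-Skolem step tacitly assumes $|\sigma|\leq|\beta|$ — an assumption the paper's own proof makes silently as well.
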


\begin{proof}
Let $\beta$ be such that $\omega \leq \beta \leq \alpha$. By the downward Lowenheim-Skolem theorem applied to $\mathcal{M}$ in $W$, there exists a structure $\mathcal{M}' := (U'; \mathcal{I}')$ for which $\mathcal{M}' \prec \mathcal{M}$ and $|U'| = |\beta|$. This means $\mathcal{M'} \models T$. Further, $U' \subset U \subset y$ and whenever $\dot{X} \in \sigma$ is a constant symbol, $\mathcal{I}(\dot{X}) = \mathcal{I}'(\dot{X}) \in U'$. The other criteria for $\mathcal{M}' \models^* \mathfrak{T}$ are easy to check. Fix $f'$ to be any bijection from $\beta$ into $U'$, and we are done.
\end{proof}

If we allow movement among outer models of $V$, we get the following (somewhat trivial) version of the general Lowenheim-Skolem theorem for TCIs.

\begin{lem}\label{GLS}
Let $\mathfrak{T} = (T, \sigma, \dot{\mathcal{U}}, \vartheta)$ be a TCI with an infinite model in some outer model of $V$. Then for every infinite ordinal $\beta$, there is a pair $(\mathcal{M}, f)$ in some outer model of $V$ such that 
\begin{itemize}
    \item $\mathcal{M} = (U; \mathcal{I}) \models^* \mathfrak{T}$, and
    \item $f : \beta \longrightarrow U$ is a bijection.
\end{itemize}
\end{lem}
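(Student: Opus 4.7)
The plan is to reduce the general case to the trivial countable one via a cardinal-collapsing forcing. Let $\mathcal{M} = (U; \mathcal{I}) \models^* \mathfrak{T}$ be the infinite model provided by the hypothesis, sitting in some outer model $W$ of $V$, and fix an infinite ordinal $\beta$. I would choose any ordinal $\lambda$ with $\lambda \geq \beta$ and for which a surjection $\lambda \to U$ exists in $W$ (for instance $\lambda = \max(\beta, |U|^{W})$), and let $g$ be $Col(\omega, \lambda)$-generic over $W$. Then $W[g]$ is a transitive model of $\mathsf{ZFC}$ containing $V$ with $ORD^{W[g]} = ORD^{V}$, so $W[g]$ is itself an outer model of $V$ per Definition~\ref{inoutmodels}.

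The first thing I would verify is that $\mathcal{M} \models^* \mathfrak{T}$ persists into $W[g]$. Each clause of the definition of $\models^* \mathfrak{T}$ reduces to a first-order assertion in the language of set theory whose parameters lie among $\mathcal{M}$, $\sigma$, $\vartheta$: once $U$ and the relevant second-coordinate values of $\vartheta$ are fixed as parameters, the containment clauses involve only bounded quantification, while the clause $\mathcal{M} \models T$ is itself absolute for transitive models of $\mathsf{ZFC}$ containing the set $\mathcal{M}$. Standard absoluteness between the transitive $\mathsf{ZFC}$-models $W$ and $W[g]$ therefore transfers $\mathcal{M} \models^* \mathfrak{T}$ from $W$ into $W[g]$.

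Next, the collapse gives $|\lambda|^{W[g]} = \omega$, so $|\beta|^{W[g]} = \omega$ (as $\beta$ is infinite and $\beta \leq \lambda$) and $|U|^{W[g]} = \omega$ (as $U$ is infinite and admits a surjection from $\lambda$ in $W \subset W[g]$). Any bijection $f : \beta \to U$ chosen in $W[g]$ then exhibits $(\mathcal{M}, f)$ as the pair demanded by the lemma.

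There is no serious obstacle in this argument; the point is simply that the base-set constraint $U \subset y$ or $U = y$ pins $U$ down as a particular set whose cardinality in the ambient universe is entirely malleable under forcing, so one can always force $|U|$ and $|\beta|$ to share a common cardinality. An alternative in the $\vartheta(\dot{\mathcal{U}}) = (y, 0)$ case would be to apply Lemma~\ref{DLS} within $W[g]$ to produce a fresh countable $\mathcal{M}'$ and then compose with a bijection $\beta \to \omega$, but the direct approach above has the advantage of handling both the $(y, 0)$ and $(y, 1)$ second-coordinate values of $\vartheta(\dot{\mathcal{U}})$ uniformly.
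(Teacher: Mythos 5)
Your proposal is correct and is essentially the paper's own argument: the paper likewise collapses $|U \cup \beta|^W$ to $\omega$ over the outer model $W$ containing the given infinite model, notes that $\mathcal{M} \models^* \mathfrak{T}$ persists by absoluteness, and picks any bijection $f : \beta \to U$ in the resulting extension. The extra care you take in justifying absoluteness clause by clause is fine but not a departure in method.
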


\begin{proof}
By our assumptions on $\mathfrak{T}$, it has a model $\mathcal{M} = (U, \mathcal{I})$ in some outer model $W$ of $V$, such that $U$ is infinite. Let $g$ be $Col(\omega, |U \cup \beta|^W)$-generic over $W$. In $W[g]$, $\mathcal{M}$ is still a model of $\mathfrak{T}$; moreover, both $U$ and $\beta$ are countably infinite, so there is a bijection $f$ from $\beta$ into $U$. Obviously, $W[g]$ is an outer model of $V$, so $(\mathcal{M}, f)$ is as required.
\end{proof}

On the other hand, we have no good analogue of the compactness theorem for TCIs. Indeed, there are simple examples in which compactness fails. We give one such example below.

\begin{lem}\label{countercom}
There is a $\Sigma_1 \cup \Pi_1$ TCI $\mathfrak{T} := (T, \{\dot{R}\}, \dot{\mathcal{U}}, \vartheta)$ with a countable transitive closure, such that
\begin{itemize}
    \item $\dot{R}$ is binary relation symbol, 
    \item if $x \in ran(\vartheta)$, then $x = (y, 0)$ for some set $y$, and
    \item $\mathfrak{T}$ is finitely consistent but not consistent.
\end{itemize}
\end{lem}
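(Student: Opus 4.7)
The plan is to exhibit an explicit such $\mathfrak{T}$. Partition $\omega$ into consecutive finite blocks $B_1, B_2, \dots$ with $|B_n| = n$ (for instance $B_n := [\binom{n}{2}, \binom{n+1}{2})$), and set
\[
A := \bigcup_{n \geq 1}\{(x, y) \in B_n \times B_n : x \neq y\} \subseteq \omega^2.
\]
Define $\vartheta(\dot{\mathcal{U}}) := (\omega, 0)$ and $\vartheta(\dot{R}) := (A, 0)$, and take $T$ to consist of the three $\Pi_1$ sentences
\begin{align*}
\alpha &:= \ulcorner \forall x\, \forall y\, (\dot{R}(x,y) \to \dot{R}(y,x)) \urcorner, \\
\beta &:= \ulcorner \forall x\, \neg \dot{R}(x,x) \urcorner, \\
\gamma &:= \ulcorner \forall x\, \forall y\, (x = y \vee \dot{R}(x,y)) \urcorner,
\end{align*}
together with $\phi_n := \ulcorner \exists x_1 \dots \exists x_n\, \bigwedge_{i < j} x_i \neq x_j \urcorner$ for each $n \geq 1$, each of which is $\Sigma_1$. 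Setting $\mathfrak{T} := (T, \{\dot{R}\}, \dot{\mathcal{U}}, \vartheta)$, it is routine to verify that $\mathfrak{T}$ meets the structural requirements for being a TCI (including $\omega \subseteq \omega^1$ and $A \subseteq \omega^2$) and that its transitive closure is countable.

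For finite consistency, let $T' \subseteq T$ be finite and let $n_0$ be a positive integer exceeding the index of any $\phi_n$ appearing in $T'$. The structure $\mathcal{M} := (B_{n_0}; \mathcal{I})$ with $\mathcal{I}(\dot{R}) := B_{n_0}^2 \setminus \{(x,x) : x \in B_{n_0}\}$ satisfies $\mathcal{M} \models^* (T', \{\dot{R}\}, \dot{\mathcal{U}}, \vartheta)$: the base set sits inside $\omega$, $\mathcal{I}(\dot{R})$ sits inside $A$ by construction of $A$, $\mathcal{M} \models \alpha \wedge \beta \wedge \gamma$ directly from the form of $\mathcal{I}(\dot{R})$, and each $\phi_n \in T'$ holds because $|B_{n_0}| = n_0 \geq n$. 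This witness lives in $V$ itself, which is an outer model of $V$.

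For non-consistency, suppose toward a contradiction that $\mathcal{M} = (U; \mathcal{I}) \models^* \mathfrak{T}$ in some outer model $W$ of $V$, and write $R := \mathcal{I}(\dot{R})$. The axioms $\beta$ and $\gamma$ force $R = \{(x,y) \in U^2 : x \neq y\}$; combined with the constraint $R \subseteq A \cap U^2$, this means every pair of distinct elements of $U$ must lie in a common block $B_n$. Since the $B_n$ are pairwise disjoint members of $V$ (hence the same family in $W$), if $|U| \geq 2$ then any fixed $x_0 \in U$ belongs to a unique block $B_{n_*}$, and the pairwise constraint forces $U \subseteq B_{n_*}$, giving $|U| \leq n_*$ and contradicting $\phi_{n_* + 1} \in T$. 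If instead $|U| < 2$, then $\phi_2$ is immediately violated. The only delicate point is observing that the blocking data $\omega$, $A$, and the partition $(B_n)$ all lie in $V$ and are unchanged in $W$, so the finiteness bound $|U| \leq n_*$ genuinely applies in any outer model.
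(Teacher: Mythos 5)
Your proof is correct and follows essentially the same strategy as the paper's: the paper restricts $\dot{R}$ to a disjoint union of finite linear orders of unbounded length and lets $T$ axiomatize an infinite strict linear order, whereas you use disjoint finite cliques and axiomatize an infinite complete graph, but in both cases any model would have to sit inside a single finite block of $\vartheta(\dot{R})$, contradicting the $\Sigma_1$ witnesses. The only difference is the choice of combinatorial gadget (cliques versus chains), and the delicate points --- the structural requirements on $\vartheta$ and the absoluteness of the blocking data between $V$ and its outer models --- are handled the same way in both arguments.
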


\begin{proof}
Choose $\dot{R}$ and $\dot{\mathcal{U}}$ to be relation symbols of their appropriate arity in $H(\omega)$. We define $\vartheta$ on $\{\dot{\mathcal{U}}, \dot{R}\}$ as follows:
\begin{align*}
    \vartheta(\dot{\mathcal{U}}) := \ & (\omega, 0) \\
    S_n :=  \ & \{(k,l) : 2^n \leq k, l < 2^n + n \text{ and } k < l\} \\
    S := \ & \bigcup \{S_n : n < \omega\} \\
    \vartheta(\dot{R}) := \ & (S, 0).
\end{align*}
Here, $\vartheta$ encodes a set of disjoint finite linear orders of unbounded lengths. Quite clearly, $\mathfrak{T}$ has a countable transitive closure, as all first-order sentences over the signature $\{\dot{R}\}$ are members of $H(\omega)$. Next, we want $T$ to contain the first-order definition of a strict linear ordering, namely the conjunction of the three sentences (properties):
\begin{align*}
    \varphi_1 \text{ (irreflexivity)} : \ & \ulcorner \forall x \ (\neg \dot{R}(x, x)) \urcorner \\
    \varphi_2 \text{ (transitivity)} : \ & \ulcorner \forall x \ \forall y \ \forall z \ ((\dot{R}(x, y) \wedge \dot{R}(y, z)) \implies \dot{R}(x, z)) \urcorner \\
    \varphi_3 \text{ (trichotomy)} : \ & \ulcorner \forall x \ \forall y \ (\dot{R}(x, y) \vee \dot{R}(y, x) \vee y = x) \urcorner.
\end{align*}
Finally, we define $$T' := \{\ulcorner \exists x_1 \ \exists x_2 \ \dots \ \exists x_n \ (\bigwedge_{1 \leq k < n} \dot{R}(x_{k}, x_{k + 1})) \urcorner : 1 < n < \omega\}$$ and let $$T := T' \cup \{\varphi_1, \varphi_2, \varphi_3\}.$$ Note that any finite subset of $T$ can be satisfied by a sufficiently long finite linear order, examples of which $\vartheta$ provides in abundance. However, a model of $T$ is necessarily an infinite linear order, and our definition of $\vartheta$ precisely prohibits all infinite linear orders. We thus have $\mathfrak{T}$ being finitely consistent but not consistent.
\end{proof}

Fix any infinite set $X$. By the Lowenheim-Skolem theorem for first-order logic, the compactness theorem for first-order logic holds even if we require the base set of the models in question to be subsets of $X$. As a result, the failure of compactness in a TCI of the form specified by Lemma \ref{countercom} must come from restrictions imposed by $\vartheta$. In a sense, then, Lemma \ref{countercom} gives one of the simplest examples of such a $\vartheta$, considering it has a singleton as its domain.

\begin{defi}
Given a TCI $\mathfrak{T}$ and any $\mathcal{M}$, we say $\mathcal{M}$ is a \emph{finitely determined model of} $\mathfrak{T}$ iff $\mathcal{M} \models^* \mathfrak{T}$ and for some quantifier-free sentence $\varphi$ in the language associated with $\mathcal{M}$, 
\begin{align*}
    \forall W \ \forall \mathcal{M}' \ (&(W \text{ is an outer model of } V \text{ and } \mathcal{M}' \in W \text{ and } \mathcal{M}' \models^* \mathfrak{T} \text{ and } \mathcal{M}' \models \varphi) \\ 
    & \implies \mathcal{M}' = \mathcal{M}).
\end{align*}
In this case, $\mathcal{M}$ is \emph{finitely determined by} $\varphi$.
\end{defi}

Naturally, all finite models of any TCI are finitely determined. As it turns out, if a TCI is consistent, then all its finitely determined models can be read off a forcing notion associated with it. We will prove this in the next subsection.

We end this subsection with a technical fact.

\begin{fact}
Let $\mathfrak{T}$ be a TCI in $V$. If $\mathcal{M} \models^* \mathfrak{T}$ in an outer model of $V$, then there is a smallest transitive model $W$ of $\mathsf{ZFC}$ such that $V \subset W$ and $\mathcal{M} \in W$. We use $V[\mathcal{M}]$ to denote this $W$.
\end{fact}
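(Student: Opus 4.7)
My plan is to construct $V[\mathcal{M}]$ explicitly as an $L$-like hierarchy built from $V \cup \{\mathcal{M}\}$, then verify it is the minimum (under inclusion) among all transitive $\mathsf{ZFC}$ models containing both $V$ and $\mathcal{M}$. I first fix an outer model $W^*$ of $V$ with $\mathcal{M} \in W^*$, which exists by hypothesis. Working in $W^*$, I define by transfinite recursion on $\alpha \in ORD^{W^*}$: $N_0 := V \cup trcl(\{\mathcal{M}\})$; $N_{\alpha+1}$ is $N_\alpha$ together with every subset of $N_\alpha$ first-order definable with parameters over the structure $(N_\alpha; \in, V)$ (treating $V$ as a unary predicate, which is legitimate since $V$ is a set in the meta-theory); and $N_\lambda := \bigcup_{\alpha < \lambda} N_\alpha$ at limits. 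Then I set $V[\mathcal{M}] := \bigcup_\alpha N_\alpha$. The inclusion $V \subset V[\mathcal{M}]$, the membership $\mathcal{M} \in V[\mathcal{M}]$, and the transitivity of $V[\mathcal{M}]$ are immediate from the construction.

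The substantive task is verifying $V[\mathcal{M}] \models \mathsf{ZFC}$, which should proceed along standard lines for the relative constructible hierarchy. Extensionality, foundation, pairing, union, and infinity follow at once from transitivity, closure under Gödel-like operations, and $\omega \in V \subset V[\mathcal{M}]$. Separation, replacement, and power set I expect to handle via internal reflection: any required image or power set is captured at some stage $N_\beta$, and such $\beta$ exists by a Löwenheim-Skolem style reflection within $W^*$ applied to the $\Sigma_1$-definable hierarchy. For choice, I will use that $V \models \mathsf{AC}$ to produce a wellordering of $V$ internally, combine it with a wellordering of $trcl(\{\mathcal{M}\})$ available in $W^*$, and then propagate the wellordering through the hierarchy via canonical enumerations of definable subsets at each level.

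Minimality then reduces to a straightforward induction: for any transitive $\mathsf{ZFC}$ model $W$ with $V \subset W$ and $\mathcal{M} \in W$, I will show $N_\alpha \subset W$ for every $\alpha \in ORD \cap W$, because $W$ is transitive and contains $V$ (so $N_0 \subset W$), and $W$ internally computes the definable power set operation with $V$ as parameter (so $N_{\alpha+1} \subset W$). Independence of $V[\mathcal{M}]$ from the choice of $W^*$ follows from the absoluteness of the recursive clauses defining the $N_\alpha$ for transitive models containing $V$ and $\mathcal{M}$. The main obstacle I anticipate is rigorously verifying choice in $V[\mathcal{M}]$: while $V$ is treated atomically in forming $N_0$, the internal wellordering of $V$ (from $V \models \mathsf{AC}$) becomes externally available since $V \in N_1$ (it is first-order definable over $(N_0; \in, V)$ via the predicate itself), and this suffices to anchor a wellordering that propagates up the hierarchy.
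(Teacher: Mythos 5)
The paper offers no proof of this Fact, so there is nothing official to compare against; judged on its own terms, your construction fails at the very first step. Your $N_0 = V \cup trcl(\{\mathcal{M}\})$ is a transitive set of rank $ORD^V$, and closing under full first-order definability over $(N_0; \in, V)$ immediately puts $\{x \in N_0 : x \text{ is an ordinal}\} = ORD^V$ and $\{x \in N_0 : x \in V\} = V$ into $N_1$ (you note the latter yourself and treat it as a feature). But the paper's outer models $W$ of $V$ satisfy $ORD^W = ORD^V$, so every element of such a $W$ has rank strictly below $ORD^V$; in particular neither $ORD^V$, nor $V$, nor $N_0$ is an element of the outer model $W^*$ you fixed, even though $W^*$ is itself a transitive $\mathsf{ZFC}$ model with $V \subset W^*$ and $\mathcal{M} \in W^*$. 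Hence your $V[\mathcal{M}]$ is not even contained in $W^*$ and cannot be the smallest model of the required kind (the smallest one, if it exists, is contained in $W^*$ and therefore has exactly the ordinals of $V$, whereas yours has at least $ORD^V + ORD^V$ many). The same point breaks your minimality induction at the successor stage: from $N_\alpha \subset W$ you cannot conclude $N_{\alpha+1} \subset W$, since $W$ need not contain $N_\alpha$ or $V$ as elements and so cannot compute $\mathrm{Def}(N_\alpha; \in, V)$ as a set; concretely, $ORD^V \in N_1 \setminus W$ whenever $ORD^W = ORD^V$.

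The repair is to keep $V$ a proper class from the inside: the ground model must enter the hierarchy amenably, level by level (say as the predicate $\langle V_\alpha^V : \alpha < ORD^V \rangle$, or by seeding stage $\alpha$ with $V_\alpha^V \cup trcl(\{\mathcal{M}\})$ only), so that no stage ever has all of $V$ available as a completed set and the construction adds no ordinals. Even after that fix, the substantive content of the Fact is exactly what you defer to ``internal reflection'': verifying Power Set and Replacement in the resulting class. That verification is where one actually uses the hypotheses --- containment of the hierarchy in $W^*$ so that $W^*$'s own instances of Replacement bound the relevant stages, and the TCI constraints, which force $trcl(\{\mathcal{M}\})$ to be coded by a subset of a set of $V$ rather than an arbitrary new set. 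Your plan for Choice is reasonable in spirit, but as written it again leans on $V \in N_1$, which is precisely what must not happen.
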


\subsection{Forcing Extensions and Models of TCIs}\label{GOCon}

In this subsection and the next one, we investigate how one could ``force'' the existence of models of TCIs, under different restrictions and in a variety of settings. As a starting point, we would like to frame the problem of constructing models of TCIs in the context of Section \ref{setupsec}, just so we can utilise Lemma \ref{main2}, among other things. 

\begin{lem}\label{mcequiv}
There is a formula $\psi_{cert}$ in two free variables, such that in any model of $\mathsf{ZFC}$, 
\begin{itemize}
    \item $\psi_{cert}(\mathfrak{T}, (\mathfrak{A}_{\mathfrak{T}}, \mathcal{L}_{\mathfrak{T}}, \Gamma_{\mathfrak{T}}))$ defines a function $$\mathfrak{T} \mapsto (\mathfrak{A}_{\mathfrak{T}}, \mathcal{L}_{\mathfrak{T}}, \Gamma_{\mathfrak{T}})$$ on the class of all TCIs, wherewith
    \begin{itemize}[label=$\circ$]
        \item $\mathfrak{A}_{\mathfrak{T}} = (H(|trcl(\mathfrak{T})|^+); \in)$,
        \item $\mathcal{L}_{\mathfrak{T}}$ is a set closed under negation, 
        \item $\mathfrak{A}_{\mathfrak{T}}$ is $\mathcal{L}_{\mathfrak{T}}$-suitable, and
        \item $\Gamma_{\mathfrak{T}}$ a set of $({\mathcal{L}_{\mathfrak{T}}})^*_{\mathfrak{A}_{\mathfrak{T}}}$ sentences, and
    \end{itemize}
    \item whenever
    \begin{itemize}[label=$\circ$]
        \item $\mathfrak{T} = (T, \sigma, \dot{\mathcal{U}}, \vartheta)$, 
        \item $\psi_{cert}(\mathfrak{T}, (\mathfrak{A}_{\mathfrak{T}}, \mathcal{L}_{\mathfrak{T}}, \Gamma_{\mathfrak{T}}))$, and
        \item $T$ contains only $\Pi_2$ sentences, 
    \end{itemize} 
    $\Gamma_{\mathfrak{T}}$ must contain only $({\mathcal{L}_{\mathfrak{T}}})^{*}_{\mathfrak{A}_{\mathfrak{T}}}$-$\Pi_2$ sentences.
\end{itemize}
\end{lem}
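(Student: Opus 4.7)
The plan is to give a concrete, uniform recipe for $(\mathfrak{A}_{\mathfrak{T}}, \mathcal{L}_{\mathfrak{T}}, \Gamma_{\mathfrak{T}})$ from a TCI $\mathfrak{T} = (T, \sigma, \dot{\mathcal{U}}, \vartheta)$, verify its properties, and observe that the recipe is expressible by a single first-order formula $\psi_{cert}$ in the language of set theory. Writing $\vartheta(\dot{\mathcal{U}}) = (Y_{\dot{\mathcal{U}}}, a_{\dot{\mathcal{U}}})$, I would first introduce, for each $y \in Y_{\dot{\mathcal{U}}}$, a constant symbol $\dot{c}_y$ (uniformly coded as a triple distinguishing it from logical and other signature-related symbols, per Convention~\ref{firstconv}\ref{227}). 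Then I would take $\mathcal{L}_{\mathfrak{T}}$ to be the closure under negation of the set of atomic expressions of the shapes $\ulcorner \dot{c}_y \in \dot{\mathcal{U}} \urcorner$, $\ulcorner \dot{R}(\dot{c}_{y_1}, \dots, \dot{c}_{y_n}) \urcorner$, $\ulcorner \dot{f}(\dot{c}_{y_1}, \dots, \dot{c}_{y_n}) = \dot{c}_{y_{n+1}} \urcorner$, $\ulcorner \dot{c} = \dot{c}_y \urcorner$, and $\ulcorner \dot{c}_{y_1} = \dot{c}_{y_2} \urcorner$, as the various symbols and indices range over their appropriate domains inside $\sigma \cup trcl(\mathfrak{T})$. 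A short cardinal arithmetic check shows $\mathcal{L}_{\mathfrak{T}} \subset H(|trcl(\mathfrak{T})|^+)$, and the defining condition is bounded and so $\Delta_1$ in $\mathfrak{A}_{\mathfrak{T}}$; combined with the transitivity and set-theoretic strength of $H(|trcl(\mathfrak{T})|^+)$ (handling the finite case $|trcl(\mathfrak{T})| < \omega$ through $H(\omega)$ separately), this gives $\mathcal{L}_{\mathfrak{T}}$-suitability of $\mathfrak{A}_{\mathfrak{T}}$.

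Next, I would populate $\Gamma_{\mathfrak{T}}$ with three kinds of $(\mathcal{L}_{\mathfrak{T}})^{*}_{\mathfrak{A}_{\mathfrak{T}}}$ sentences. First, \emph{coherence axioms} asserting that the interpretation implicit in the $E$-predicate makes $=$ an equivalence relation respecting each $\dot{R}$ and $\dot{f}$, that each $\dot{f}$ is total and well-defined on $\dot{\mathcal{U}}$, and that each constant $\dot{c} \in \sigma$ is interpreted by some $\dot{c}_y$ with $y$ in the base set. Second, \emph{constraint axioms} enforcing $\vartheta$: for each $\dot{X} \in \sigma \cup \{\dot{\mathcal{U}}\}$ with $\vartheta(\dot{X}) = (Z, b)$, sentences restricting the $E$-value of the corresponding atomic formulas to tuples from $Z$, and forcing equality when $b = 1$. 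Third, \emph{translated theory axioms}: each $\phi \in T$ is recursively translated by replacing atomic subformulas with the corresponding $E$-literals, and relativising each $\forall x$ (resp.\ $\exists x$) to variables $c$ satisfying $E(\ulcorner c \in \dot{\mathcal{U}} \urcorner)$. All three sets are defined uniformly from $\mathfrak{T}$, and the conjunction of their defining clauses is the sought formula $\psi_{cert}$.

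For the second bullet of the lemma, I would verify directly that when every $\phi \in T$ is $\Pi_2$, say $\phi = \ulcorner \forall \vec{x} \exists \vec{y} \, \chi(\vec{x}, \vec{y}) \urcorner$ with $\chi$ quantifier-free, the translated axiom takes the shape
\[
\ulcorner \forall c_1 \dots \forall c_n \bigl( \textstyle\bigwedge_i E(\ulcorner c_i \in \dot{\mathcal{U}} \urcorner) \implies \exists d_1 \dots \exists d_m \bigl( \bigwedge_j E(\ulcorner d_j \in \dot{\mathcal{U}} \urcorner) \wedge \chi'(\vec{c}, \vec{d}) \bigr) \bigr) \urcorner,
\]
with $\chi'$ a Boolean combination of $E$-literals and $\Delta_0$ assertions about the variables; by Definitions \ref{lsub}, \ref{LSigma1} and \ref{LPi2} this sits in $(\mathcal{L}_{\mathfrak{T}})^{*}_{\mathfrak{A}_{\mathfrak{T}}}$-$\Pi_2$. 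A parallel inspection shows the coherence and constraint axioms can each be arranged in $(\mathcal{L}_{\mathfrak{T}})^{*}_{\mathfrak{A}_{\mathfrak{T}}}$-$\Pi_2$ form (the bounded quantifications over the small finite-arity data are absorbable via the conventions of Remark~\ref{rempp}).

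The main obstacle I anticipate is purely syntactic bookkeeping: the complexity class $(\mathcal{L}_{\mathfrak{T}})^{*}_{\mathfrak{A}_{\mathfrak{T}}}$-$\Pi_2$ in Definition \ref{lsub} is sensitive to the precise form of bounded quantifiers and $E$-literals, so verifying that the recursive translation of an arbitrary $\Pi_2$ sentence of $T$ genuinely lands in this class requires a careful induction on subformulas, invoking the shorthand justifications of Remarks \ref{shcs} and \ref{rempp} to absorb auxiliary quantifications (e.g.\ those introduced when translating function-symbol atoms) without leaving the class. Nothing in this verification is deep, but it is the place where a careless encoding could break the $\Pi_2$ conclusion.
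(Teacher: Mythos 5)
Your proposal is correct and takes essentially the same route as the paper: $\mathcal{L}_{\mathfrak{T}}$ is the negation-closure of $\vartheta$-bounded atomic facts, $\Gamma_{\mathfrak{T}}$ consists of structural/constraint axioms plus the relativised, $E$-translated theory, and the only delicate point is verifying that the translation of a $\Pi_2$ sentence stays $(\mathcal{L}_{\mathfrak{T}})^{*}_{\mathfrak{A}_{\mathfrak{T}}}$-$\Pi_2$, which the paper handles by first rewriting each atomic subformula (via a $\Sigma_1$ or $\Pi_1$ unnesting chosen by parity) so it contains at most one signature symbol, then checking that relativisation and prenexing preserve quantification structure --- exactly the absorption you gesture at with Remarks \ref{shcs} and \ref{rempp}. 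One divergence: the paper uses the elements of $y \cap U^n$ directly as parameters (identified with their constant symbols) rather than introducing fresh constants $\dot{c}_y$ with equality atoms, which spares it your congruence axioms; if you keep the equality atoms you should also forbid $E(\ulcorner \dot{c}_{y_1} = \dot{c}_{y_2} \urcorner)$ for $y_1 \neq y_2$, since TCI models are literal substructures with no quotienting, and this matters when the construction is fed into Lemma \ref{mcequiv2}.
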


\begin{proof}
Fix any TCI $\mathfrak{T} = (T, \sigma, \dot{\mathcal{U}}, \vartheta)$. We will constructively define the tuple $(\mathfrak{A}_{\mathfrak{T}}, \mathcal{L}_{\mathfrak{T}}, \Gamma_{\mathfrak{T}})$ based on $\mathfrak{T}$ alone, and in the process, check that the requirements of the lemma are satisfied.

Of course, we have to set $$\mathfrak{A}_{\mathfrak{T}} := (H(|trcl(\mathfrak{T})|^+); \in).$$ Note that $\mathfrak{A}_{\mathfrak{T}} \models \mathsf{ZFC - Powerset}$, so $\mathfrak{A}_{\mathfrak{T}}$ is a transitive model of a sufficiently strong set theory. Next, let
\begin{align*}
    \sigma' := \ & \sigma \cup \{\dot{\mathcal{U}}\} \text{, and} \\
    U := \ & \text{the unique } y \text{ for which there exists } z \text{ such that } \vartheta(\dot{\mathcal{U}}) = (y, z).
\end{align*}
For $\dot{X} \in \sigma'$, define $\mathcal{L}_{\mathfrak{T}}(\dot{X})$ as follows:
\begin{itemize}
    \item if $\dot{X}$ is a constant symbol and $\vartheta(\dot{X}) = (y, z)$, then $$\mathcal{L}_{\mathfrak{T}}(\dot{X}) := \{\ulcorner \dot{X} = x \urcorner : x \in y \cap U\},$$
    \item if $\dot{X}$ is a $n$-ary relation symbol and $\vartheta(\dot{X}) = (y, z)$, then $$\mathcal{L}_{\mathfrak{T}}(\dot{X}) := \{\ulcorner \dot{X}(x) \urcorner : x \in y \cap U^n\}, \text{ and}$$
    \item if $\dot{X}$ is a $n$-ary function symbol and $\vartheta(\dot{X}) = (y, z)$, then $$\mathcal{L}_{\mathfrak{T}}(\dot{X}) := \{\ulcorner \dot{X}(x \! \restriction_n) = x(n) \urcorner : x \in y \cap U^{n+1}\}.$$
\end{itemize}
Then 
\begin{align*}
    \mathcal{L}'_{\mathfrak{T}} := \ & \bigcup \{\mathcal{L}_{\mathfrak{T}}(\dot{X}) : \dot{X} \in \sigma'\} \text{, and} \\
    \mathcal{L}_{\mathfrak{T}} := \ & \text{the closure of }  \mathcal{L}'_{\mathfrak{T}} \text{ under negation.}
\end{align*}
Obviously, $\mathcal{L}_{\mathfrak{T}}$ is both a member and a subset of $H(|trcl(\mathfrak{T})|^+)$, so it is definable in the language associated with $H(|trcl(\mathfrak{T})|^+)$. We thus have that $\mathfrak{A}_{\mathfrak{T}}$ is $\mathcal{L}_{\mathfrak{T}}$-suitable.

Before we get to $\Gamma_{\mathfrak{T}}$, a remark (or rather, a reminder) is imperative.

\begin{rem}\label{subsafe2}
In the same vein as what was elaborated after Definition \ref{defl}, we will use functions to pass parameters of an expression in $\mathcal{L}_{\mathfrak{T}}$ via variables, whenever necessary in the construction of $({\mathcal{L}_{\mathfrak{T}}})^*_{\mathfrak{A}_{\mathfrak{T}}}$ sentences involving the symbol $\ulcorner E \urcorner$. In fact, this can be done uniformly by the universal function $\chi_{\mathfrak{T}}$: 
\begin{align*}
    (S, x_1, \dots, x_n, x_{n+1}) \mapsto 
    \begin{cases}
        \ulcorner S(x_1, \dots, x_n, x_{n+1}) \urcorner & \text{if } S \text{ is a } n+1 \text{-ary relation symbol} \\
        \ulcorner S(x_1, \dots, x_n) = x_{n+1} \urcorner & \text{if } S \text{ is a } n \text{-ary function symbol }\\
        \ulcorner S = x_{n+1} \urcorner & \text{if } S \text{ is a } \text{constant symbol and } n = 0 \\
        \emptyset & \text{otherwise},
    \end{cases}
\end{align*}
which is defined in $V$ by a $\Delta_0$ formula in the language of set theory (as per Definition \ref{def27}).

As in the case of the proof of Lemma \ref{notion1}, we will abuse notation and abbreviate the use of $\chi_{\mathfrak{T}}$ with straightforward substitutions of variables for parameters in the writing of $({\mathcal{L}_{\mathfrak{T}}})^*_{\mathfrak{A}_{\mathfrak{T}}}$ sentences. There are no intrinsic ``hidden costs'' in terms of complexity to such a presentation of $({\mathcal{L}_{\mathfrak{T}}})^*_{\mathfrak{A}_{\mathfrak{T}}}$ sentences.
\end{rem}

Now, we define $\Gamma_{\mathfrak{T}}$ as follows:
\begin{enumerate}[label=(\arabic*)]
    \item\label{gamma1} For each constant symbol $\dot{X} \in \sigma'$,
    \begin{align*}
        \ulcorner \exists x \ (E(\ulcorner \dot{\mathcal{U}}(x) \urcorner) \wedge E(\ulcorner \dot{X} = x \urcorner)) \urcorner & \in \Gamma_{\mathfrak{T}}, \\
        \ulcorner \forall x \ \forall y \ ((E(\ulcorner \dot{X} = x \urcorner) \wedge E(\ulcorner \dot{X} = y \urcorner)) \implies x = y) \urcorner & \in \Gamma_{\mathfrak{T}}.
    \end{align*}
    \item For each $n$-ary relation symbol $\dot{X} \in \sigma'$, $$\ulcorner \forall x_1 \dots \forall x_n \ (E(\ulcorner \dot{X}((x_1, \dots, x_n)) \urcorner) \implies (\bigwedge_{1 \leq k \leq n} E(\ulcorner \dot{\mathcal{U}}(x_k) \urcorner))) \urcorner \in \Gamma_{\mathfrak{T}}.$$
    \item\label{gammaform3} If $\vartheta(\dot{\mathcal{U}}) = (y, 1)$, then $$\ulcorner \forall x \ (x \in y \implies E(\ulcorner \dot{\mathcal{U}}(x) \urcorner)) \urcorner \in \Gamma_{\mathfrak{T}}.$$
    \item\label{gammaform4} For each $n$-ary relation symbol $\dot{X} \in \sigma$ such that $\vartheta(\dot{X}) = (y, 1)$, 
    \begin{align*}
        \ulcorner \forall x_1 \dots \forall x_n \ (((\bigwedge_{1 \leq k \leq n} E(\ulcorner \dot{\mathcal{U}}(x_k) \urcorner)) \wedge \ulcorner \dot{X}((x_1, \dots, x_n)) \urcorner \in \mathcal{L}_{\mathfrak{T}}) & \\
        \implies E(\ulcorner \dot{X}((x_1, \dots, x_n)) \urcorner)) \urcorner & \in \Gamma_{\mathfrak{T}}.
    \end{align*}
    \item For each $n$-ary function symbol $\dot{X} \in \sigma'$, 
    \begin{align*}
        \ulcorner \forall x_1 \dots \forall x_{n+1} \ ( & E(\ulcorner \dot{X} (x_1, \dots, x_n) = x_{n+1} \urcorner) \\
        & \mspace{90mu} \implies (\bigwedge_{1 \leq k \leq n+1} E(\ulcorner \dot{\mathcal{U}}(x_k) \urcorner))) \urcorner \in \Gamma_{\mathfrak{T}}, \\
        \ulcorner \forall x_1 \dots \forall x_n \ \exists y \ ( & (\bigwedge_{1 \leq k \leq n} E(\ulcorner \dot{\mathcal{U}}(x_k) \urcorner)) \\
        & \mspace{70mu} \implies E(\ulcorner \dot{X}(x_1, \dots, x_n) = y \urcorner)) \urcorner \in \Gamma_{\mathfrak{T}}, \\
        \ulcorner \forall x_1 \dots \forall x_n \ \forall y \ \forall z \ ( & (E(\ulcorner \dot{X} (x_1, \dots, x_n) = y \urcorner) \\ 
        & \wedge E(\ulcorner \dot{X} (x_1, \dots, x_n) = z \urcorner)) \implies y = z) \urcorner \in \Gamma_{\mathfrak{T}}.
    \end{align*}
    \item\label{gamma5} For each $n$-ary function symbol $\dot{X} \in \sigma'$ such that $\vartheta(\dot{X}) = (y, 1)$, 
    \begin{align*}
        \ulcorner \forall x_1 \dots \forall x_{n+1} \ (((\bigwedge_{1 \leq k \leq n+1} E(\ulcorner \dot{\mathcal{U}}(x_k) \urcorner)) \wedge \ulcorner \dot{X}(x_1, \dots, x_n) = x_{n+1} \urcorner \in \mathcal{L}_{\mathfrak{T}}) & \\
        \implies E(\ulcorner \dot{X}(x_1, \dots, x_n) = x_{n+1} \urcorner)) \urcorner & \in \Gamma_{\mathfrak{T}}.
    \end{align*}
    \item\label{gamma6} We finally deal with members of $T$. So let $\phi \in T$. We first assume that for every atomic subformula $\varphi$ of $\phi$, 
    \begin{align}\label{norecur}
        \varphi \text{ contains no more than one symbol from } \sigma \text{ (counting recurrences).} 
    \end{align} 
    To see why this assumption can be made without loss of generality, notice that there are canonical algorithms $M_1$ and $M_2$ such that, when given any atomic formula $\varphi'$ over $\sigma$ as input,
    \begin{itemize}
        \item $M_1$ returns a $\Sigma_1$ formula $\varphi$ which is logically equivalent to $\varphi'$ and satisfies (\ref{norecur}), and
        \item $M_2$ returns a $\Pi_1$ formula $\varphi$ which is logically equivalent to $\varphi'$ and satisfies (\ref{norecur}).
    \end{itemize}
    Consequently, by 
    \begin{enumerate}[label=(\roman*)]
        \item\label{stepip} replacing atomic subformulas of $\phi$ via $M_1$ or $M_2$ according to their parities, and then
        \item canonically converting the result of \ref{stepip} to prenex normal form,
    \end{enumerate}
    we can obtain a sentence that
    \begin{itemize}
        \item is logically equivalent to $\phi$,
        \item has each of its atomic subformulas $\varphi$ satisfy (\ref{norecur}), and
        \item is $\Pi_2$ whenever $\phi$ is $\Pi_2$.
    \end{itemize}
    
    Next, we transform $\phi$ into $\phi'$ by first inductively relativising $\phi$ to ``members of $\dot{\mathcal{U}}$'', then simultaneously translating all its atomic subformulas to correspond to membership in $\mathcal{L}'_{\mathfrak{T}}$. In more detail, we carry out the procedure below.
    \begin{enumerate}[label=(\alph*)]
        \item\label{castst} Cast $\phi$ as a string. 
        
        Given any string $A$, we can view $A$ as a sequence of (possibly non-distinct) characters. The ordering of this sequence gives rise to the notion of (relative) \emph{position}. Intuitively, the leftmost character of $A$ marks its first position (position $= 1$), and for any $k$, the character at the ($k+1$)-th position of $A$ necessarily lies to the immediate right of the character at position $k$. Therefore, the positions of $A$ must range from $1$ to the length of $A$.
        
        \item Initialise a pointer $p$ at the first position of the $\phi$. 

        The rationale of having $p$ is to help us traverse the characters of $\phi$ as we modify it. Like any pointer, $p$ occupies exactly one position at any point in time. Specifically, we want $p$ to keep moving rightwards, even though $\phi$ as we now know it might change in length over the run of this procedure.
        
        We will modify $\phi$ in steps, each step being a pass of a numbered stage in the enumeration of this procedure. For clarity of exposition, it is useful to distinguish $\phi$ pre- and post-modification. As we describe the procedure going forward, we shall let $\phi$ refer to the unaltered string: its state right after \ref{castst}. At any particular step, \emph{the current frame} denotes the modified form of $\phi$ at the beginning of said step. 
        
        \item\label{casesub} Let $x$ be the current position occupied by $p$. If there is $\varphi$ such that 
        \begin{itemize}
            \item $\varphi$ is a subformula of $\phi$, 
            \item the leading character of $\varphi$ is a first-order quantifier, and
            \item $\varphi$ is a substring of the current frame starting at position $x$,
        \end{itemize}
        then we let $\varphi'$ be the shortest such string, and proceed according to the cases below. Otherwise, skip to \ref{incpoint}.
        \begin{enumerate}[label=Case \arabic*:, leftmargin=50pt]
            \item $\varphi' = \ulcorner \forall x \ \psi \urcorner$ for some $x$ and $\psi$. Then we replace $\varphi'$ starting at $x$ of the current frame with the string $$\ulcorner \forall x \ (E(\ulcorner \dot{\mathcal{U}}(x) \urcorner) \implies \psi) \urcorner.$$
            \item $\varphi' = \ulcorner \exists x \ \psi \urcorner$ for some $x$ and $\psi$. Then we replace $\varphi'$ starting at $x$ of the current frame with the string $$\ulcorner \exists x \ (E(\ulcorner \dot{\mathcal{U}}(x) \urcorner) \wedge \psi) \urcorner.$$
        \end{enumerate}

        When we speak of replacing a substring $Y$ starting at $x$ of $F$ with another string $Z$, we mean to produce the concatenated string $A^\frown Z^\frown B$, where $A$ and $B$ are the two unique strings for which 
        \begin{itemize}
            \item either $A$ is empty or the last character of $A$ occupies position $x-1$ of $F$, and
            \item $F = A^\frown Y^\frown B$. 
        \end{itemize}
        After the replacement, the position occupied by $p$ remains unchanged --- it should still be at $x$ relative to $A^\frown Z^\frown B$. Note that replacements of this kind make no changes to $F$ at any position less than (to the left of) $x$. 
        
        \item\label{incpoint} If $p$ is not at the rightmost position of the current frame, increment the position it occupies by $1$. Otherwise skip to \ref{atomsub}.
        
        \item\label{endloop} Go to \ref{casesub}.
        
        \item\label{atomsub} Substitute each atomic subformula $\psi$ of $\phi$ occurring in the current frame with $\ulcorner E(\psi) \urcorner$, bearing in mind the abbreviations adopted in Remark \ref{subsafe2}. These substitutions can be done simultaneously because it is impossible to have two distinct substitutable instances occupy overlapping positions of the current frame.
    \end{enumerate}
    The aforementioned procedure produces a sentence $\phi' \in ({\mathcal{L}_{\mathfrak{T}}})^*_{\mathfrak{A}_{\mathfrak{T}}}$ sharing the \emph{quantification structure} of $\phi$. More precisely, this means the existence of a string $A$ such that
    \begin{itemize}
        \item $A$ contains only quantifiers,
        \item $A$ is a subsequence of both $\phi$ and $\phi'$,
        \item if $B$ is a subsequence of $\phi$ containing only quantifiers, then $B$ is a subsequence of $A$, and
        \item if $B'$ is a subsequence of $\phi'$ containing only quantifiers, then $B'$ is a subsequence of $A$.
    \end{itemize}
    Now, convert $\phi'$ to a logically equivalent formula $\phi^*$ in prenex normal form, through an application of the standard conversion algorithm. This algorithm preserves the quantification structure of $\phi'$ --- so that $\phi^*$ and $\phi$ have the same quantification structure --- whenever $\phi$ is in prenex normal form.
    
    Enforce that $\phi^* \in \Gamma_{\mathfrak{T}}$. 
    \item\label{gamma7} Nothing else is in $\Gamma_{\mathfrak{T}}$.
\end{enumerate}

Assume $T$ contains only $\Pi_2$ sentences. Then necessarily every member of $T$ is in prenex normal form. As a consequence, the transformation $$\varpi : \phi \mapsto \phi^*$$ described in \ref{gamma6} takes every member of $T$ to a $({\mathcal{L}_{\mathfrak{T}}})^{*}_{\mathfrak{A}_{\mathfrak{T}}}$ sentence in prenex normal form with the same quantification structure, making $\varpi"T$ a set of $({\mathcal{L}_{\mathfrak{T}}})^{*}_{\mathfrak{A}_{\mathfrak{T}}}$-$\Pi_2$ sentences. Clearly, all additions to $\Gamma_{\mathfrak{T}}$ as per \ref{gamma1} to \ref{gamma5} are $({\mathcal{L}_{\mathfrak{T}}})^{*}_{\mathfrak{A}_{\mathfrak{T}}}$-$\Pi_2$ sentences. By \ref{gamma7}, $\Gamma_{\mathfrak{T}}$ contains only $({\mathcal{L}_{\mathfrak{T}}})^{*}_{\mathfrak{A}_{\mathfrak{T}}}$-$\Pi_2$ sentences.
\end{proof}

Fix $\psi_{cert}$ to be as in Lemma \ref{mcequiv}. We are then justified in our next definition.

\begin{defi}
Let $\mathfrak{T} = (T, \sigma, \dot{\mathcal{U}}, \vartheta)$ be a TCI. Define
\begin{align*}
    \mathfrak{A}_{\mathfrak{T}} := \ & \text{the unique } \mathfrak{A} \text{ for which there are } \mathcal{L} \text{ and } \Gamma \text{ satisfying } \psi_{cert}(\mathfrak{T}, (\mathfrak{A}, \mathcal{L}, \Gamma)) \text{,} \\
    \mathcal{L}_{\mathfrak{T}} := \ & \text{the unique } \mathcal{L} \text{ for which there are } \mathfrak{A} \text{ and } \Gamma \text{ satisfying } \psi_{cert}(\mathfrak{T}, (\mathfrak{A}, \mathcal{L}, \Gamma)) \text{, and} \\
    \Gamma_{\mathfrak{T}} := \ & \text{the unique } \Gamma \text{ for which there are } \mathfrak{A} \text{ and } \mathcal{L} \text{ satisfying } \psi_{cert}(\mathfrak{T}, (\mathfrak{A}, \mathcal{L}, \Gamma)) \text{.}
\end{align*}
\end{defi}

\begin{defi}
Let $\mathfrak{T} = (T, \sigma, \dot{\mathcal{U}}, \vartheta)$ be a TCI. Define \begin{align*}
    P(\mathfrak{T}) := \ & \{p \in [\mathcal{L}_{\mathfrak{T}}]^{< \omega} : \ \Vdash_{Col(\omega, |trcl(\mathfrak{A}_{\mathfrak{T}})|)} \exists \Sigma \ (``\Sigma \ \Gamma_{\mathfrak{T}} (\mathcal{L}_{\mathfrak{T}}, \mathfrak{A}_{\mathfrak{T}})\text{-certifies } p")\} \text{,} \\
    \leq_{\mathbb{P}(\mathfrak{T})} := \ & \{(p, q) \in P(\mathfrak{T}) \times P(\mathfrak{T}) : q \subset p\} \text{, and} \\
    \mathbb{P}(\mathfrak{T}) := \ & (P(\mathfrak{T}), \leq_{\mathbb{P}(\mathfrak{T})}) \text{.}
\end{align*}
\end{defi}

By Lemma \ref{mcequiv}, if $\mathfrak{T}$ is a TCI and $A$ the base set of $\mathfrak{A}_{\mathfrak{T}}$, then 
\begin{itemize}
    \item $\mathbb{P}(\mathfrak{T}) \in A \cap \mathcal{P}(A)$, so $\mathbb{P}(\mathfrak{T})$ is definable in the language associated with $\mathfrak{A}_{\mathfrak{T}}$, and
    \item $(\mathfrak{A}_{\mathfrak{T}}, \mathbb{P}(\mathfrak{T}))$ is good for $\mathcal{L}_{\mathfrak{T}}$.
\end{itemize}

By Lemmas \ref{Pisabs} and \ref{mcequiv}, the definition of $\mathbb{P}(\mathfrak{T})$ from $\mathfrak{T}$ is absolute for transitive models of $\mathsf{ZFC}$.

At this juncture, it is customary for us to revisit the main forcing construction of the previous section.

\begin{rem}\label{rem520}
Consider the sequence of forcing notions $$\{\mathbb{P}_{\lambda} : \lambda \in C \cup \{\lambda_f\}\}$$ constructed within $W$ in the proof of Theorem \ref{notion1}. The adaptation of this inductive construction to the language of TCIs is straightforward: given $\lambda \in C \cup \{\lambda_f\}$ and $\Vec{\mathbb{P}}_{\lambda} := \{\mathbb{P}_{\theta} : \theta \in \lambda \cap C\}$, define $\sigma$ to contain
\begin{enumerate}[label=(\Alph*)]
    \item\label{5201} the variable names of all subsets of $H(\lambda_f)$ germane to the definition of $\mathbb{P}_{\lambda}$ in the original proof (this set includes $\Vec{\mathbb{P}}_{\lambda}$), along with
    \item\label{5202} two other ternary relation symbols, $\dot{F}$ and $\dot{X}$,
\end{enumerate}
and nothing else. If $\dot{Z} \in \sigma$ is of type \ref{5201}, then we interpret $\dot{Z}$ invariably as whichever set it is defined to be in the original proof; for example, when $\dot{Z} = \ulcorner C \urcorner$, we set $\vartheta (\dot{Z}) = (C, 1)$. On the other hand, if $\dot{Z} \in \sigma$ is of type \ref{5202}, then we interpret $\dot{Z}$ according to either \ref{l1lamb} or \ref{l2lamb} based on the identity of $\dot{Z}$; for example, when $\dot{Z} = \dot{F}$,  we set $$\vartheta (\dot{Z}) = (\{(i, n, \alpha) : i \in R \cap \lambda, n < \omega \text{ and } \alpha < i\}, 0).$$ Also set $\vartheta (\dot{\mathcal{U}}) = (H(\lambda_f), 1)$. 

Next, modify \ref{c1} to \ref{c8} such that each subformula of the form $\ulcorner E(\ulcorner \phi \urcorner) \urcorner$ is replaced by $\ulcorner \phi \urcorner$, and let $T$ contain only these formulas (noting and adjusting for the abuse of notation in the original presentation). Finally, letting $\mathfrak{T}_{\lambda} = (T, \sigma, \dot{\mathcal{U}}, \vartheta)$, it takes no more than a routine unfurling and checking of definitions to ascertain that $\mathbb{P}(\mathfrak{T}_{\lambda})$ is isomorphic to the forcing notion $\mathbb{P}_{\lambda}$ defined in the original proof.
\end{rem}

We see in Remark \ref{rem520} that our procedure of associating a partial order with each TCI can be used to generate forcing notions as complex as the ones constructed to solve a difficult problem in set theory. More formal declarations of the power of this procedure will appear --- and be proven --- in the later parts of this section. But before that, let us return to the setting of the ground.

\begin{lem}\label{mcequiv2}
There is a formula $\psi_{trans}$ in three free variables, absolute for transitive models of $\mathsf{ZFC - Powerset}$, such that $\psi_{trans}(\mathfrak{T}, \mathcal{M}, \Sigma)$ defines a bijection from $$\{\mathcal{M} : \mathcal{M} \models^* \mathfrak{T}\}$$ into $$\{\Sigma : \Sigma \ \Gamma_{\mathfrak{T}} (\mathcal{L}_{\mathfrak{T}}, \mathfrak{A}_{\mathfrak{T}})\text{-certifies } \emptyset\}$$ for every fixed TCI $\mathfrak{T} = (T, \sigma, \dot{\mathcal{U}}, \vartheta)$.
\end{lem}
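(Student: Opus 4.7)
The plan is to define $\psi_{trans}$ as the graph of the ``atomic diagram'' map $\mathcal{M} \mapsto \Sigma_{\mathcal{M}}$, where, for $\mathcal{M} = (U; \mathcal{I}) \models^* \mathfrak{T}$, $\Sigma_{\mathcal{M}}$ is the subset of $\mathcal{L}_{\mathfrak{T}}$ consisting of exactly the following atomic facts and their negations:
\begin{itemize}
    \item $\ulcorner \dot{\mathcal{U}}(x) \urcorner$ for $x \in U$ and $\ulcorner \neg \dot{\mathcal{U}}(x) \urcorner$ for $x$ in the set prescribed by $\vartheta(\dot{\mathcal{U}})$ but not in $U$;
    \item $\ulcorner \dot{X} = \mathcal{I}(\dot{X}) \urcorner$ and $\ulcorner \neg \dot{X} = x \urcorner$ for the other admissible $x$, when $\dot{X}$ is a constant symbol;
    \item the corresponding positive and negative literals from $\mathcal{L}_{\mathfrak{T}}(\dot{X})$ determined by $\mathcal{I}(\dot{X})$, when $\dot{X}$ is a relation or function symbol.
\end{itemize}
Plainly $\Sigma_{\mathcal{M}}$ is $\mathcal{L}_{\mathfrak{T}}$-nice and is definable from $\mathcal{M}$ and $\mathfrak{T}$ by a $\Delta_0$ formula in the language of set theory, so $\psi_{trans}$ will be absolute for transitive models of $\mathsf{ZFC - Powerset}$ by inspection.

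Next, I would exhibit the inverse. Given any $\Sigma$ that $\Gamma_{\mathfrak{T}}(\mathcal{L}_{\mathfrak{T}}, \mathfrak{A}_{\mathfrak{T}})$-certifies $\emptyset$, let
\begin{align*}
    U_{\Sigma} & := \{x : \ulcorner \dot{\mathcal{U}}(x) \urcorner \in \Sigma\}, \\
    \mathcal{I}_{\Sigma}(\dot{X}) & := \text{the interpretation of } \dot{X} \text{ read off the positive literals of } \Sigma,
\end{align*}
and set $\mathcal{M}_{\Sigma} := (U_{\Sigma}; \mathcal{I}_{\Sigma})$. Items \ref{gamma1}--\ref{gamma5} in the construction of $\Gamma_{\mathfrak{T}}$ ensure, respectively, that constants are interpreted as single elements of $U_{\Sigma}$, that the interpretation of relation and function symbols lives in $U_{\Sigma}$ of the correct arity, that the $\vartheta$-constraints tagged with $1$ are tight rather than just contained, and that function symbols are interpreted as total functions on $U_{\Sigma}$. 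Hence $\mathcal{M}_{\Sigma}$ is a bona fide structure of signature $\sigma$ satisfying the ``constraint in interpretation'' parts of $\models^* \mathfrak{T}$.

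The crux is the equivalence $\mathcal{M}_{\Sigma} \models \phi \iff \Sigma \models^*_{\mathfrak{A}_{\mathfrak{T}}} \phi^{*}$, where $\phi^{*} = \varpi(\phi)$ is the translation produced in step \ref{gamma6}. I would prove this by induction on the construction of $\phi$: the relativisation of quantifiers to $\ulcorner E(\dot{\mathcal{U}}(x)) \urcorner$ carried out in \ref{casesub} exactly captures evaluation over $U_{\Sigma}$, the atomic substitutions in \ref{atomsub} exactly capture the truth of atomic facts via $\Sigma$, and the assumption \ref{norecur} that each atomic subformula mentions at most one $\sigma$-symbol means the atomic-substitution step genuinely produces a formula of $\mathcal{L}^{*}_{\mathfrak{A}_{\mathfrak{T}}}$. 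Given this equivalence, membership of $\varpi"T$ in $\Gamma_{\mathfrak{T}}$ transfers to $\mathcal{M}_{\Sigma} \models T$, and hence to $\mathcal{M}_{\Sigma} \models^{*} \mathfrak{T}$. The main obstacle I expect is precisely verifying this atomic-level induction: keeping track of how $\mathsf{pos}$-style rewriting, the relativisation pass, and the quantifier-pulling to prenex normal form interact. Once past that, the maps $\mathcal{M} \mapsto \Sigma_{\mathcal{M}}$ and $\Sigma \mapsto \mathcal{M}_{\Sigma}$ are visibly inverse, so $\psi_{trans}$ defines a bijection. Absoluteness for transitive models of $\mathsf{ZFC - Powerset}$ follows because all of $\mathfrak{A}_{\mathfrak{T}}$, $\mathcal{L}_{\mathfrak{T}}$, $\Gamma_{\mathfrak{T}}$, the satisfaction of atomic $\mathcal{L}^{*}_{\mathfrak{A}_{\mathfrak{T}}}$-sentences in a set, and the definition of $\mathcal{M}_{\Sigma}$ from $\Sigma$ are given by $\Delta_0$ procedures in the set-theoretic language, as recorded earlier (cf.\ Remark~\ref{def2}).
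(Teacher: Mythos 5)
Your proposal is correct and follows essentially the same route as the paper: $\psi_{trans}$ is defined as the graph of the map sending a model $\mathcal{M}$ to its atomic diagram together with the positive and negative $\dot{\mathcal{U}}$-facts, intersected with $\mathcal{L}_{\mathfrak{T}}$; absoluteness comes from the low set-theoretic complexity of this definition; and bijectivity is verified by reading a structure back off a certifying $\Sigma$ using the clauses of $\Gamma_{\mathfrak{T}}$ (your sketch actually supplies more of this verification than the paper, which dismisses it as straightforward). The one quibble is that $\psi_{trans}$ must also include the conjunct $\mathcal{M} \models^* \mathfrak{T}$, which makes the formula $\Delta_1$ rather than $\Delta_0$ as you claim — still absolute for transitive models of $\mathsf{ZFC - Powerset}$, so nothing is lost.
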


\begin{proof}
Let $U$ be the unique $y$ for which there exists $z$ such that $\vartheta(\dot{\mathcal{U}}) = (y, z)$. Given a model $\mathcal{M} = (M; \mathcal{I})$ of $\mathfrak{T}$, define $$U(\mathcal{M}) := \{\ulcorner \dot{\mathcal{U}}(x) \urcorner : x \in M\} \cup \{\ulcorner \neg \ \dot{\mathcal{U}}(x) \urcorner : x \in U \setminus M\}.$$ Now define $\psi_{trans}$ as follows: $$\psi_{trans}(\mathfrak{T}, \mathcal{M}, \Sigma) \iff (\mathcal{M} \models^* \mathfrak{T} \wedge \Sigma = (U(\mathcal{M}) \cup Diag(\mathcal{M})) \cap \mathcal{L}_{\mathfrak{T}}),$$ where $Diag(\mathcal{M})$ is the atomic diagram of $\mathcal{M}$. Verily, $\psi_{trans}$ is a $\Delta_1$ formula in the language of set theory (according to Definition \ref{def27}), because the binary relation $\models^*$ is $\Delta_1$-definable and the set $\mathcal{L}_{\mathfrak{T}}$ is $\Delta_1$-definable in $\mathfrak{T}$. As such, $\psi_{trans}$ must be absolute for transitive models of $\mathsf{ZFC - Powerset}$. We can then straightforwardly check that $\psi_{trans}$ defines a bijection as required by the lemma for any fixed $\mathfrak{T}$, based on how the triple $(\mathfrak{A}_{\mathfrak{T}}, \mathcal{L}_{\mathfrak{T}}, \Gamma_{\mathfrak{T}})$ is constructed from $\mathfrak{T}$. 
\end{proof}

\begin{rem}\label{swapA}
For any TCI $\mathfrak{T}$ and any structure $\mathfrak{A}$, if $\mathfrak{A}$ is $\mathcal{L}_{\mathfrak{T}}$-suitable and $\Gamma_{\mathfrak{T}}$ is a set of $(\mathcal{L}_{\mathfrak{T}})^*_{\mathfrak{A}}$ sentences, then for all $\Sigma$ and $p$, $$\Sigma \ \Gamma_{\mathfrak{T}} (\mathcal{L}_{\mathfrak{T}}, \mathfrak{A}_{\mathfrak{T}})\text{-certifies } p \iff \Sigma \ \Gamma_{\mathfrak{T}} (\mathcal{L}_{\mathfrak{T}}, \mathfrak{A})\text{-certifies } p.$$ We can therefore replace $\mathfrak{A}_{\mathfrak{T}}$ in Lemma \ref{mcequiv2} with any appropriate $\mathfrak{A}$ and still have the lemma hold true for the same $\psi_{trans}$.
\end{rem}

Fix $\psi_{trans}$ to be as in Lemma \ref{mcequiv2} for the next definition. 

\begin{defi}
Let $\mathfrak{T} = (T, \sigma, \dot{\mathcal{U}}, \vartheta)$ be a TCI in $V$ and $\mathcal{M}$ be a model of $\mathfrak{T}$ in some outer model of $V$. Define \begin{align*}
    \Sigma(\mathfrak{T}, \mathcal{M}) := \text{the unique } \Sigma \text{ for which } \psi_{trans}(\mathfrak{T}, \mathcal{M}, \Sigma).
\end{align*}
\end{defi}

It is time to cash the cheque issued in Remark \ref{promise}.

\begin{lem}\label{conalt}
Let $\mathfrak{T} = (T, \sigma, \dot{\mathcal{U}}, \vartheta)$ be a TCI. Then $\mathfrak{T}$ is consistent iff $$\Vdash_{Col(\omega, \lambda)} \exists \Sigma \ (``\Sigma \ \Gamma_{\mathfrak{T}} (\mathcal{L}_{\mathfrak{T}}, \mathfrak{A}_{\mathfrak{T}})\text{-certifies } \emptyset")$$ iff
$$\Vdash_{Col(\omega, \lambda)} \exists \mathcal{M} \ (``\mathcal{M} \models^* \mathfrak{T}"),$$ where $\lambda \geq |H(|trcl(\mathfrak{T})|^+)|$.
\end{lem}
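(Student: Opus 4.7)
The plan is to establish a short cycle $(1) \Rightarrow (2) \Rightarrow (3) \Rightarrow (1)$, using Lemma \ref{mcequiv2} to shuttle between models of $\mathfrak{T}$ and certifying sets $\Sigma$, and Lemma \ref{inout} to push such existence claims from an arbitrary weak outer model into the $\mathrm{Col}(\omega,\lambda)$-extension. The key numerical fact I will invoke throughout is that, by construction, $\mathfrak{A}_{\mathfrak{T}} = (H(|\mathrm{trcl}(\mathfrak{T})|^+); \in)$, so $|\mathrm{trcl}(\mathfrak{A}_{\mathfrak{T}})| = |H(|\mathrm{trcl}(\mathfrak{T})|^+)| \leq \lambda$; this is precisely the cardinality hypothesis needed by Lemma \ref{inout}.

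For $(1) \Rightarrow (2)$, I would fix a witness $(W, \mathcal{M})$ to the consistency of $\mathfrak{T}$, where $W$ is an outer model of $V$ and $\mathcal{M} \in W$ with $\mathcal{M} \models^* \mathfrak{T}$. Setting $\Sigma := \Sigma(\mathfrak{T}, \mathcal{M}) \in W$, Lemma \ref{mcequiv2} tells me that $\Sigma$ $\Gamma_{\mathfrak{T}}(\mathcal{L}_{\mathfrak{T}}, \mathfrak{A}_{\mathfrak{T}})$-certifies $\emptyset$. Since every outer model is a fortiori a weak outer model, I can then apply Lemma \ref{inout} with $p = \emptyset$, $\mathfrak{A} = \mathfrak{A}_{\mathfrak{T}}$, $\mathcal{L} = \mathcal{L}_{\mathfrak{T}}$, $\Gamma = \Gamma_{\mathfrak{T}}$, which directly yields (2).

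For $(2) \Rightarrow (3)$, let $g$ be an arbitrary $\mathrm{Col}(\omega, \lambda)$-generic filter over $V$. Statement (2) places inside $V[g]$ some $\Sigma$ that $\Gamma_{\mathfrak{T}}(\mathcal{L}_{\mathfrak{T}}, \mathfrak{A}_{\mathfrak{T}})$-certifies $\emptyset$. Because $\psi_{trans}$ is absolute for transitive models of $\mathsf{ZFC} - \mathsf{Powerset}$, the bijection of Lemma \ref{mcequiv2} computed in $V[g]$ agrees with the one in $V$; in particular its inverse, applied to this $\Sigma$ inside $V[g]$, produces a genuine $\mathcal{M} \in V[g]$ with $\mathcal{M} \models^* \mathfrak{T}$. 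Since $g$ was arbitrary, (3) follows. Finally, for $(3) \Rightarrow (1)$, any $V[g]$ arising from $\mathrm{Col}(\omega, \lambda)$ is transitive, contains $V$, and shares the ordinals of $V$, so it qualifies as an outer model of $V$; a model of $\mathfrak{T}$ inside such a $V[g]$ is immediately a witness to the consistency of $\mathfrak{T}$.

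I do not anticipate a serious obstacle here: all the conceptual work has already been absorbed into Lemmas \ref{mcequiv2} and \ref{inout}. The one point I would double-check before finalising is that the absoluteness of $\psi_{trans}$ is strong enough to deliver both directions of the bijection uniformly inside $V[g]$, so that ``there exists $\Sigma$ certifying $\emptyset$'' and ``there exists $\mathcal{M} \models^* \mathfrak{T}$'' are genuinely interchangeable statements once evaluated in the forcing extension. The $\Delta_1$ form of $\psi_{trans}$ (as built in the proof of Lemma \ref{mcequiv2}) makes this automatic.
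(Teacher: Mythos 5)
Your proposal is correct and follows essentially the same route as the paper: Lemma \ref{mcequiv2} (via $\psi_{trans}$, applied both in $V$'s outer models and inside the $Col(\omega,\lambda)$-extension, which is a transitive model of $\mathsf{ZFC - Powerset}$) to pass between models of $\mathfrak{T}$ and certifying sets, and Lemma \ref{inout} with $p = \emptyset$ to push existence of a certifying set into the collapse extension, noting $|trcl(\mathfrak{A}_{\mathfrak{T}})| = |H(|trcl(\mathfrak{T})|^+)| \leq \lambda$. The only cosmetic difference is that you organise the argument as a cycle of three implications while the paper proves the two biconditionals in sequence.
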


\begin{proof}
By Lemmas \ref{mcequiv} and \ref{mcequiv2}, we can find a triple $(\mathfrak{A}_{\mathfrak{T}}, \mathcal{L}_{\mathfrak{T}}, \Gamma_{\mathfrak{T}})$ such that 
\begin{itemize}
    \item $\mathfrak{A}_{\mathfrak{T}} = (H(|trcl(\mathfrak{T})|^+); \in)$,
    \item $\mathcal{L}_{\mathfrak{T}}$ is a set closed under negation, 
    \item $\mathfrak{A}_{\mathfrak{T}}$ is $\mathcal{L}_{\mathfrak{T}}$-suitable, 
    \item $\Gamma_{\mathfrak{T}}$ a set of $({\mathcal{L}_{\mathfrak{T}}})^*_{\mathfrak{A}_{\mathfrak{T}}}$ sentences, and
    \item $\mathfrak{T}$ is consistent iff for some outer model $W$ of $V$, $$\exists \Sigma \in W \ (``\Sigma \ \Gamma_{\mathfrak{T}} (\mathcal{L}_{\mathfrak{T}}, \mathfrak{A}_{\mathfrak{T}})\text{-certifies } \emptyset").$$
\end{itemize}
Then the conjunction of
\begin{itemize}
    \item Lemma \ref{inout},
    \item the fact that every forcing extension of $V$ is an outer model of $V$, and
    \item the fact that every outer model of $V$ is a weak outer model of $V$
\end{itemize}   
tells us that $\mathfrak{T}$ is consistent iff $$\Vdash_{Col(\omega, \lambda)} \exists \Sigma \ (``\Sigma \ \Gamma_{\mathfrak{T}} (\mathcal{L}_{\mathfrak{T}}, \mathfrak{A}_{\mathfrak{T}})\text{-certifies } \emptyset"),$$ where $\lambda \geq |H(|trcl(\mathfrak{T})|^+)|$. 

As every $Col(\omega, \lambda)$-generic extension of $V$ is a transitive model of $\mathsf{ZFC - Powerset}$, we can apply Lemma \ref{mcequiv2} again to complete the proof. 
\end{proof}

\begin{cor}\label{cor527}
There is a procedure to uniformly decide in $V$, whether any given $\Pi_2$ TCI is consistent.
\end{cor}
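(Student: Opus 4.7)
The plan is to exhibit an explicit uniform procedure driven by the forcing notion $\mathbb{P}(\mathfrak{T})$ and verify its correctness using Lemma \ref{conalt}. Given a $\Pi_2$ TCI $\mathfrak{T}$ as input, I would proceed as follows: first, via the formula $\psi_{cert}$ fixed after Lemma \ref{mcequiv}, uniformly compute the triple $(\mathfrak{A}_{\mathfrak{T}}, \mathcal{L}_{\mathfrak{T}}, \Gamma_{\mathfrak{T}})$ from $\mathfrak{T}$; because $\mathfrak{T}$ is $\Pi_2$, Lemma \ref{mcequiv} guarantees that $\Gamma_{\mathfrak{T}}$ consists entirely of $(\mathcal{L}_{\mathfrak{T}})^*_{\mathfrak{A}_{\mathfrak{T}}}$-$\Pi_2$ sentences. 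Then form $\mathbb{P}(\mathfrak{T}) = (P(\mathfrak{T}), \leq_{\mathbb{P}(\mathfrak{T})})$ and output ``consistent'' iff $\emptyset \in P(\mathfrak{T})$, that is, iff
\begin{equation*}
    \Vdash_{Col(\omega,\,|trcl(\mathfrak{A}_{\mathfrak{T}})|)} \exists \Sigma \ (\text{``}\Sigma \ \Gamma_{\mathfrak{T}} (\mathcal{L}_{\mathfrak{T}}, \mathfrak{A}_{\mathfrak{T}})\text{-certifies } \emptyset\text{''}) \text{.}
\end{equation*}

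Correctness is essentially a direct unfolding of definitions. Noting that $\mathfrak{A}_{\mathfrak{T}} = (H(|trcl(\mathfrak{T})|^+); \in)$, we have $|trcl(\mathfrak{A}_{\mathfrak{T}})| = |H(|trcl(\mathfrak{T})|^+)|$, so taking $\lambda := |trcl(\mathfrak{A}_{\mathfrak{T}})|$ satisfies the cardinality hypothesis of Lemma \ref{conalt}. That lemma then asserts exactly that $\mathfrak{T}$ is consistent iff the displayed forcing statement holds, which is by definition the statement ``$\emptyset \in P(\mathfrak{T})$''. Uniformity is clear because each step in the procedure is given by a single formula in the language of set theory: $\mathfrak{T} \mapsto (\mathfrak{A}_{\mathfrak{T}}, \mathcal{L}_{\mathfrak{T}}, \Gamma_{\mathfrak{T}})$ is definable by $\psi_{cert}$, the set $P(\mathfrak{T})$ is (absolutely, by Lemma \ref{Pisabs}) definable from these parameters, and the membership of $\emptyset$ in $P(\mathfrak{T})$ reduces to a forcing statement over a set-sized forcing, which $V$ evaluates via its definable forcing relation.

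There is no genuinely hard step here; the work is concentrated in Lemma \ref{conalt}, and the corollary amounts to observing that the right-hand side of that equivalence coincides \emph{verbatim} with the definition of $\emptyset \in P(\mathfrak{T})$. The mildly subtle point worth flagging is the role of the $\Pi_2$ hypothesis. Via Lemma \ref{conalt}, an analogous decision procedure would in fact work for any TCI $\mathfrak{T}$ via the same forcing-relation evaluation; restricting to $\Pi_2$ is what aligns the procedure with the framework of Section \ref{setupsec} (in particular with Lemma \ref{main2}), where $\Pi_2$-ness of $\Gamma_{\mathfrak{T}}$ ensures that generic filters of $\mathbb{P}(\mathfrak{T})$ produce actual $(\mathcal{L}_{\mathfrak{T}}, \mathfrak{A}_{\mathfrak{T}})$-certificates---and thus, via $\psi_{trans}$, models of $\mathfrak{T}$---in the generic extension, giving the procedure genuine constructive content rather than a bare check of a forcing assertion.
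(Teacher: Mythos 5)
Your proposal is correct and is essentially the paper's own argument: the paper proves the corollary in one line by citing Lemma \ref{conalt} to conclude that a $\Pi_2$ TCI $\mathfrak{T}$ is consistent iff $P(\mathfrak{T})$ is non-empty, which (since any $\Sigma$ certifying some $p$ also certifies $\emptyset$) is exactly your test ``$\emptyset \in P(\mathfrak{T})$''. Your unfolding of the definability and cardinality details, and your remark on the role of the $\Pi_2$ hypothesis, are accurate elaborations rather than a different route.
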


\begin{proof}
By Lemma \ref{conalt}, a $\Pi_2$ TCI $\mathfrak{T}$ is consistent iff $P(\mathfrak{T})$ is non-empty.
\end{proof} 

Intuitively, the consistency of a theory --- however it is defined --- should be absolute in a sufficiently strong sense. This is the case for first-order theories, any of which consistency is absolute for transitive models of set theory. The following Lemma establishes a similar absoluteness property with regards to the consistency of a TCI.

\begin{lem}
Let $\mathfrak{T} = (T, \sigma, \dot{\mathcal{U}}, \vartheta)$ be a TCI. Then $\mathfrak{T}$ being consistent is absolute for transitive models of $\mathsf{ZFC}$ sharing the same ordinals.
\end{lem}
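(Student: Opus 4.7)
The plan is to reduce consistency to a single absolute condition and invoke Lemma \ref{Pisabs}. By Lemma \ref{conalt}, $\mathfrak{T}$ is consistent in a universe $W$ iff $\emptyset \in P(\mathfrak{T})^W$, so it suffices to show that for any two transitive models $V_1, V_2$ of $\mathsf{ZFC}$ sharing the same ordinals with $\mathfrak{T} \in V_1 \cap V_2$, we have $\emptyset \in P(\mathfrak{T})^{V_1}$ iff $\emptyset \in P(\mathfrak{T})^{V_2}$.

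First I would observe that the construction in the proof of Lemma \ref{mcequiv} yielding $(\mathcal{L}_\mathfrak{T}, \Gamma_\mathfrak{T})$ from $\mathfrak{T}$ is $\Delta_0$-recursive in $\mathfrak{T}, \sigma, \vartheta$: every operation used (enumerating expressions from $\vartheta$, closing under negation, relativising formulas to $\dot{\mathcal{U}}$, converting into prenex normal form) is an absolute, effective set-theoretic procedure. Hence $\mathcal{L}_\mathfrak{T}$ and $\Gamma_\mathfrak{T}$ are common to $V_1$ and $V_2$. In contrast $\mathfrak{A}_\mathfrak{T} = (H(|\mathrm{trcl}(\mathfrak{T})|^+); \in)$ depends on the cardinal arithmetic of the ambient model, so I cannot feed it directly into Lemma \ref{Pisabs}.

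To circumvent this I exploit Remark \ref{swapA}, which says that the $\Gamma_\mathfrak{T}(\mathcal{L}_\mathfrak{T}, \mathfrak{A})$-certification notion does not depend on the choice of $\mathcal{L}_\mathfrak{T}$-suitable $\mathfrak{A}$. Let $M := L[\mathfrak{T}]$, which is the same inner model whether computed in $V_1$ or in $V_2$ because the two universes share ordinals and both contain $\mathfrak{T}$. Pick a regular cardinal $\kappa$ of $M$ with $|\mathrm{trcl}(\mathfrak{T})|^M < \kappa$, and set $\mathfrak{A} := (H(\kappa)^M; \in) \in M \subseteq V_1 \cap V_2$. A direct check, using that $\mathcal{L}_\mathfrak{T}$ is $\Pi_1$-definable from $\mathfrak{T}$ in any reasonable ambient structure by upward absoluteness, confirms that $\mathfrak{A}$ is $\mathcal{L}_\mathfrak{T}$-suitable in each of $M, V_1, V_2$.

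Finally, let $P'(\mathfrak{T})$ denote the variant of $P(\mathfrak{T})$ computed using $\mathfrak{A}$ (with $\lambda = |\mathrm{trcl}(\mathfrak{A})|^M$); by Remark \ref{swapA} combined with Lemma \ref{conalt}, $\emptyset \in P'(\mathfrak{T})^{V_i}$ iff $\mathfrak{T}$ is consistent in $V_i$. Since $M \subseteq V_1$ and $M \subseteq V_2$ with all relevant parameters lying inside $M$, Lemma \ref{Pisabs} yields
\[
P'(\mathfrak{T})^{V_1} = P'(\mathfrak{T})^{M} = P'(\mathfrak{T})^{V_2},
\]
so $V_1$ and $V_2$ agree on whether $\emptyset$ belongs. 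The chief technical obstacle I anticipate is ensuring that replacing $\mathfrak{A}_\mathfrak{T}$ with $\mathfrak{A}$ does not corrupt the characterisation of consistency through $\emptyset$-membership --- this is exactly what Remark \ref{swapA} secures --- and, slightly more delicately, that the proof of Lemma \ref{Pisabs} transfers verbatim when $\mathfrak{A}$ rather than $\mathfrak{A}_\mathfrak{T}$ is used in the parameter list, which it does since that proof depends only on the parameters lying in the common inner model.
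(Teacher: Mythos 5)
Your argument is correct in substance but takes a different route from the paper's. The paper works directly with the two nested universes $V' \subset W$: the downward direction is immediate because every outer model of $W$ is an outer model of $V'$, and for the upward direction it applies Lemma \ref{conalt} inside $V'$, forces with $Col(\omega,\lambda)^{V'}$ over $W$, and uses the upward absoluteness of $\models^*$ to carry a model from $V'[g]$ to $W[g]$. You instead reduce consistency to the combinatorial statement $\emptyset \in P(\mathfrak{T})$ and delegate the absoluteness to Lemma \ref{Pisabs}; the extra work you do (Remark \ref{swapA} plus a fixed suitable $\mathfrak{A}$) is precisely to neutralise the dependence of $\mathfrak{A}_{\mathfrak{T}}$ and of the collapse cardinal on the ambient model --- a dependence the paper's direct argument never has to confront. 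Both proofs bottom out in the same collapse-then-apply-$\mathbf{\Sigma^1_1}$-absoluteness idea, so yours is a legitimate alternative factoring that reuses Lemma \ref{Pisabs} as a black box where the paper essentially re-runs its proof.

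Two repairs are needed. First, $L[\mathfrak{T}]$ as literally defined treats $\mathfrak{T}$ only as a predicate and need not contain $\mathfrak{T}$ as an element; take $L[E]$ for a set of ordinals $E$ coding $\mathfrak{T}$ (Lemma \ref{setcode}) instead --- or, better, observe that the paper's notion of absoluteness only quantifies over nested pairs $V_1 \subset V_2$, so you can drop the inner model entirely and take $\mathfrak{A} := \mathfrak{A}_{\mathfrak{T}}^{V_1}$, all parameters then lying in the smaller model as Lemma \ref{Pisabs} requires. (For genuinely incomparable $V_1, V_2$ your common-inner-model step would additionally need a common code of $\mathfrak{T}$, which is not guaranteed, but that case is not demanded by the statement.) Second, you cannot cite Lemma \ref{conalt} verbatim for $P'(\mathfrak{T})$, since its hypothesis requires $\lambda \geq |H(|trcl(\mathfrak{T})|^+)|$ while your $\lambda = |trcl(\mathfrak{A})|$ may be smaller; you must re-run its proof with your $\mathfrak{A}$, which does go through because Lemma \ref{inout} only needs $\lambda \geq |trcl(\mathfrak{A})|$ for the suitable structure actually in use, and Remark \ref{swapA} together with Lemma \ref{mcequiv2} gives the equivalence between certifying sets and models independently of the choice of suitable $\mathfrak{A}$.
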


\begin{proof}
This is very much similar to the proof of Lemma \ref{Pisabs}. Nevertheless, we shall provide details. 

Let $V'$ and $W$ be transitive models of $\mathsf{ZFC}$ with $ORD^{V'} = ORD^W$ and $\mathfrak{T} \in V' \subset W$. If $\mathfrak{T}$ is consistent in $W$, then $\mathfrak{T}$ has a model in some outer model of $W$. Said outer model is also an outer model of $V'$, so $\mathfrak{T}$ is consistent in $V'$ as well.

Now assume $\mathfrak{T}$ is consistent in $V'$. Letting $$\lambda := |H(((|trcl(\mathfrak{T})|^{V'})^+)^{V'})^{V'}|^{V'},$$ Lemma \ref{conalt} gives us $$\Vdash_{Col(\omega, \lambda)} \exists \mathcal{M} \ (``\mathcal{M} \models^* \mathfrak{T}")$$ in $V'$. Note that $$\mathbb{P} := Col(\omega, \lambda)^{V'}$$ remains a forcing notion in $W$, so consider $g$ a $\mathbb{P}$-generic filter over $W$. Necessarily, $g$ is also $\mathbb{P}$-generic over $V'$, and further, $V'[g] \subset W[g]$. In $V'[g]$, $\mathfrak{T}$ is forced to have a model --- call it $\mathcal{M}$. Being a model of $\mathfrak{T}$ is absolute for transitive models of $\mathsf{ZFC}$, so $\mathcal{M} \models^* \mathfrak{T}$ holds in $W[g]$ too. Since $W[g]$ is an outer model of $W$, $\mathfrak{T}$ must be consistent in $W$.
\end{proof}

We now define a class of generic objects that manifest as models of TCIs.

\begin{defi}
Let $\mathfrak{T}$ be a consistent $\Pi_2$ TCI. 

If $\mathfrak{A}$ and $\mathbb{P}$ are such that $\mathbb{P}$ is definable in the language associated with $\mathfrak{A}$ and $(\mathfrak{A}, \mathbb{P})$ is good for $\mathcal{L}_{\mathfrak{T}}$, then a $(\mathbb{P}, \mathfrak{A})$\emph{-generic model} of $\mathfrak{T}$ is a model $\mathcal{M}$ of $\mathfrak{T}$ satisfying $$\Sigma(\mathfrak{T}, \mathcal{M}) = (\bigcup g) \cap \mathcal{L}_{\mathfrak{T}}$$ for some $\mathbb{P}$-generic filter $g$ over $\mathfrak{A}$. In this case, we say $g$ \emph{witnesses} $\mathcal{M}$ \emph{is a} $(\mathbb{P}, \mathfrak{A})$-\emph{generic model of} $\mathfrak{T}$. We say $g$ \emph{witnesses} \emph{a} $(\mathbb{P}, \mathfrak{A})$-\emph{generic model of} $\mathfrak{T}$ iff for some $\mathcal{M}$, $g$ witnesses $\mathcal{M}$ is a $(\mathbb{P}, \mathfrak{A})$-generic model of $\mathfrak{T}$.

We call $\mathcal{M}$ a $\mathfrak{A}$\emph{-generic model} of $\mathfrak{T}$ iff for some $\mathbb{P}$ definable in the language associated with $\mathfrak{A}$ such that $(\mathfrak{A}, \mathbb{P})$ is good for $\mathcal{L}_{\mathfrak{T}}$, $\mathcal{M}$ is a $(\mathbb{P}, \mathfrak{A})$-generic model of $\mathfrak{T}$.

We call $\mathcal{M}$ a \emph{generic model} of $\mathfrak{T}$ iff for some $\mathfrak{A}$ and $\mathbb{P}$ such that $\mathbb{P}$ is definable in the language associated with $\mathfrak{A}$ and $(\mathfrak{A}, \mathbb{P})$ is good for $\mathcal{L}_{\mathfrak{T}}$, $\mathcal{M}$ is a $(\mathbb{P}, \mathfrak{A})$-generic model of $\mathfrak{T}$.   
\end{defi}

\begin{ob}\label{smallvgen}
\leavevmode
\begin{enumerate}[label=(\arabic*)]
    \item\label{4401} If $\mathfrak{T}$ is a consistent $\Pi_2$ TCI, and $\mathfrak{A}$ and $\mathbb{P}$ are such that $\mathbb{P}$ is definable in the language associated with $\mathfrak{A}$ and $(\mathfrak{A}, \mathbb{P})$ is good for $\mathcal{L}_{\mathfrak{T}}$, then every $(\mathbb{P}, \mathfrak{A})$-generic model of $\mathfrak{T}$ is a $(\mathbb{P}, \mathfrak{A})$-generic object.
    \item\label{5282} If $g$ witnesses $\mathcal{M}$ is a $(\mathbb{P}, V)$-generic model of $\mathfrak{T}$, and $\bigcup g \subset \mathcal{L}_{\mathfrak{T}}$, then $V[g] = V[\mathcal{M}]$.
    \item\label{2svg} In the same vein as Observation \ref{ob0}, we see that given any consistent $\Pi_2$ TCI $\mathfrak{T}$, 
    \begin{align*}
        \forall x \ ( & x \text{ is a } (\mathbb{P}(\mathfrak{T}), \mathfrak{A}_{\mathfrak{T}}) \text{-generic model of } \mathfrak{T} \\
        & \iff x \text{ is a } (\mathbb{P}(\mathfrak{T}), V) \text{-generic model of } \mathfrak{T})
    \end{align*}
   in every outer model of $V$. As a result, we can thus safely talk about $(\mathbb{P}(\mathfrak{T}), V)$-generic models of $\mathfrak{T}$ without the need to quantify over all sets.
\end{enumerate}
\end{ob}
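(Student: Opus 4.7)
The plan is to verify \ref{4401}, \ref{5282}, and \ref{2svg} by essentially unpacking the definitions; the only non-trivial inputs I expect to need are the bijection $\psi_{trans}$ from Lemma~\ref{mcequiv2} and the absoluteness of $\mathbb{P}(\mathfrak{T})$ from Lemma~\ref{Pisabs}, both of which are $\Delta_1$-absolute for transitive models of enough set theory.

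For \ref{4401}, I would begin by fixing $g$ witnessing $\mathcal{M}$ as a $(\mathbb{P}, \mathfrak{A})$-generic model of $\mathfrak{T}$. Goodness of $(\mathfrak{A}, \mathbb{P})$ for $\mathcal{L}_{\mathfrak{T}}$ gives $P \subset \mathcal{P}(\mathcal{L}_{\mathfrak{T}})$, so $\bigcup g \subset \mathcal{L}_{\mathfrak{T}}$ and hence $\Sigma(\mathfrak{T}, \mathcal{M}) = \bigcup g$. Applying $\psi_{trans}^{-1}$ inside $A[g]$ recovers $\mathcal{M}$ from $\bigcup g$, securing $\mathcal{M} \in A[g]$. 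Conversely, because the ordering on $P$ is reverse inclusion and (in the cases of interest) conditions are finite, filter directedness together with upward closure yields
\[
g = \{p \in P : p \subset \Sigma(\mathfrak{T}, \mathcal{M})\},
\]
exhibiting $g$ as first-order definable in the language associated with $(A \cup \{\mathcal{M}\}; \in, \Vec{X})$ from parameters $P$, $\mathfrak{T}$, and $\mathcal{M}$. This gives $\mathcal{M}$ as a $(\mathbb{P}, \mathfrak{A})$-generic object. Part \ref{5282} should then follow from the identical calculation with $V$ in place of $\mathfrak{A}$: under the stated hypothesis $\bigcup g \subset \mathcal{L}_{\mathfrak{T}}$ the identity $\bigcup g = \Sigma(\mathfrak{T}, \mathcal{M})$ still holds, so $\mathcal{M} \in V[g]$ follows from the absolute $\psi_{trans}$, and $g \in V[\mathcal{M}]$ from the displayed characterisation combined with $P \in V$; both inclusions together give $V[g] = V[\mathcal{M}]$.

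For \ref{2svg}, the plan is to mirror the proof of Observation~\ref{ob0}. The only non-trivial direction is that every $\mathbb{P}(\mathfrak{T})$-generic filter over $\mathfrak{A}_{\mathfrak{T}}$ is already $\mathbb{P}(\mathfrak{T})$-generic over $V$; the key estimate
\[
|P(\mathfrak{T})| \leq |[\mathcal{L}_{\mathfrak{T}}]^{<\omega}| \leq |trcl(\mathfrak{T})|
\]
forces any dense $D \subset P(\mathfrak{T})$ in $V$ to lie in $H(|trcl(\mathfrak{T})|^+) = A_{\mathfrak{T}}$ with $D \subset A_{\mathfrak{T}}$, so $D$ is trivially definable over $\mathfrak{A}_{\mathfrak{T}}$ using itself as a parameter, and meeting $D$ in $V$ coincides with meeting $D$ in $A_{\mathfrak{T}}$. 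The converse direction (genericity over $V$ implies genericity over $\mathfrak{A}_{\mathfrak{T}} \in V$) is immediate. Absoluteness of $\mathbb{P}(\mathfrak{T})$, $\mathfrak{A}_{\mathfrak{T}}$, and $\psi_{trans}$ for transitive models of $\mathsf{ZFC}$ then transfers the reasoning verbatim to every outer model of $V$.

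The only obstacle I anticipate is presentational: cleanly bridging the formal distinction in \ref{2svg} between ``dense set definable over $\mathfrak{A}_{\mathfrak{T}}$'' and ``dense set in $V$'' via the size bound above, and confirming the recovery $g = \{p \in P : p \subset \bigcup g\}$ in each relevant setting, where the finiteness of conditions in $\mathbb{P}(\mathfrak{T})$ (and in the other forcings contemplated by the framework of Section~\ref{setupsec}) makes the directedness argument go through without fuss.
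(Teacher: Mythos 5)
Your proposal is correct and is essentially the argument the paper leaves implicit for this Observation: recover $\mathcal{M}$ from $\bigcup g = \Sigma(\mathfrak{T},\mathcal{M})$ via the absolute bijection of Lemma~\ref{mcequiv2}, recover $g$ from $\mathcal{M}$ via $g = \{p \in P : p \subset \Sigma(\mathfrak{T},\mathcal{M})\}$, and reduce genericity over $V$ to genericity over $\mathfrak{A}_{\mathfrak{T}}$ by the cardinality bound placing every dense $D \subset P(\mathfrak{T})$ inside $H(|trcl(\mathfrak{T})|^+)$ as its own parameter. The finiteness of conditions that you correctly isolate as the crux of the directedness-plus-upward-closure recovery of $g$ does hold wherever the Observation is actually invoked ($\mathbb{P}(\mathfrak{T})$ and the $\mathbb{P}^*$ of Lemma~\ref{gmodelsinfe} both have conditions in $[\mathcal{L}_{\mathfrak{T}}]^{<\omega}$), and matches the paper's own implicit identity $g = [\bigcup g]^{<\omega}$ at the end of the proof of Lemma~\ref{main2}.
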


Our definition of generic models might seem overly restrictive at first glance. The next lemma provides justification that it is not so.

\begin{lem}\label{gmodelsinfe}
Let $\mathfrak{T}$ be a TCI. If $\mathcal{M}$ is a model of $\mathfrak{T}$ in some forcing extension of $V$, then $\mathcal{M}$ is a $V$-generic model of $\mathfrak{T}$.
\end{lem}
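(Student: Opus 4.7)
Fix a forcing notion $\mathbb{Q} \in V$ and a $\mathbb{Q}$-generic filter $h$ over $V$ with $\mathcal{M} \in V[h]$, along with a $\mathbb{Q}$-name $\dot{\mathcal{M}}$ for $\mathcal{M}$ and the induced $\mathbb{Q}$-name $\dot{\Sigma}$ for $\Sigma(\mathfrak{T},\mathcal{M})$ (obtained by pushing $\dot{\mathcal{M}}$ through the uniform definition provided by Lemma~\ref{mcequiv2}). After replacing $\mathbb{Q}$ by $\mathbb{Q}_{\leq q_h}$ for a suitable $q_h \in h$, I may assume every $q \in \mathbb{Q}$ forces ``$\dot{\mathcal{M}} \models^{*} \mathfrak{T}$'', in which case $\dot{\Sigma}$ is forced to be $\mathcal{L}_{\mathfrak{T}}$-nice. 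Working in $V$, let $\mathbb{B} := \mathbb{B}(\mathbb{Q})$ and $b_x := \Vert x \in \dot{\Sigma}\Vert_{\mathbb{B}}$ for each $x \in \mathcal{L}_{\mathfrak{T}}$, so that $b_{\neg x} = \neg b_x$ by niceness combined with the closure of $\mathcal{L}_{\mathfrak{T}}$ under negation. Define $\mathbb{P} := (P, \supseteq)$ with $P := \{p \in [\mathcal{L}_{\mathfrak{T}}]^{<\omega} : b_p \neq 0\}$ and $b_p := \bigwedge_{x \in p} b_x$, and take $\mathfrak{A} := (V; \in, \mathfrak{T}, \mathbb{Q}, \dot{\mathcal{M}})$; then $(\mathfrak{A}, \mathbb{P})$ is good for $\mathcal{L}_{\mathfrak{T}}$ and $\mathbb{P}$ is definable in $\mathfrak{A}$.

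The heart of the proof is showing that the map $\iota : \mathbb{P} \to \mathbb{B}_{\Sigma}^{+}$, $p \mapsto b_p$, is a dense weak embedding, where $\mathbb{B}_{\Sigma}$ denotes the complete sub-algebra of $\mathbb{B}$ generated by $\{b_x : x \in \mathcal{L}_{\mathfrak{T}}\}$. Order-preservation is immediate from the definition of $b_p$. Density is a Stone-space argument: since $b_{\neg x} = \neg b_x$, the sub-basis $\{[b_x] : x \in \mathcal{L}_{\mathfrak{T}}\}$ of the Stone space of $\mathbb{B}_{\Sigma}$ is closed under complementation, so its finite intersections $\{[b_p] : p \in P\}$ form a topological basis; consequently every nonzero $b \in \mathbb{B}_{\Sigma}$ dominates some $b_p \neq 0$. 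I expect this density statement to be the main obstacle (with a minor side issue of non-injectivity of $\iota$, handled by passing to the separative quotient). Once it is in place, Remark~\ref{dwefor} yields that $\mathbb{P}$ and $\mathbb{B}_{\Sigma}^{+}$ are forcing equivalent, and the rest is bookkeeping.

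Finally, set $g := \{p \in P : p \subseteq \Sigma(\mathfrak{T},\mathcal{M})\} \in V[\mathcal{M}]$. For each $x \in \Sigma(\mathfrak{T},\mathcal{M})$ the forcing theorem gives $b_x \in h$, whence $\{x\} \in g$; a direct verification shows $g$ is a filter on $\mathbb{P}$ with $\bigcup g = \Sigma(\mathfrak{T},\mathcal{M})$. Because $\mathbb{B}_{\Sigma}$ is a complete sub-algebra of $\mathbb{B}$, $h \cap \mathbb{B}_{\Sigma}^{+}$ is $\mathbb{B}_{\Sigma}^{+}$-generic over $V$; pulling this generic filter back along $\iota$ using the forcing equivalence from the previous paragraph shows $g$ meets every $V$-definable dense $D \subseteq P$, so $g$ is $\mathbb{P}$-generic over $\mathfrak{A}$. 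Thus $\mathcal{M}$ is a $(\mathbb{P},\mathfrak{A})$-generic, and therefore $V$-generic, model of $\mathfrak{T}$.
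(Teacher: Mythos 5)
There is a genuine gap, and it sits exactly where you suspected: the density of $ran(\iota)$ in $\mathbb{B}_{\Sigma}^{+}$ is false in general, and with it the genericity of $g$ collapses. The finite intersections of the subbasis $\{[b_x] : x \in \mathcal{L}_{\mathfrak{T}}\}$ do form a basis, but only for the topology \emph{they generate} on the Stone space, which sees just the ordinary (finitary) subalgebra generated by the $b_x$'s. The \emph{complete} subalgebra $\mathbb{B}_{\Sigma}$ also contains infinite joins and meets of such elements, and a nonzero element of that kind need not dominate any nonzero $b_p$. Concretely: let $\mathbb{Q}$ be random forcing, let $\mathfrak{T}$ be the TCI with $T = \emptyset$, signature a single unary $\dot{X}$, $\vartheta(\dot{\mathcal{U}}) = (\omega, 1)$ and $\vartheta(\dot{X}) = (\omega, 0)$ (so its models are exactly the $(\omega; A)$ with $A \subset \omega$), and let $\mathcal{M} = (\omega; r)$ for $r$ the random real. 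Each $b_p$ is then (the class of) a basic clopen subset of $2^{\omega}$, $\mathbb{B}_{\Sigma}$ is the whole random algebra, and any positive-measure closed nowhere dense $K \subset 2^{\omega}$ yields a nonzero $b \in \mathbb{B}_{\Sigma}$ above no $b_p$. Worse, your $\mathbb{P}$ is isomorphic to Cohen forcing and $g = \{p \in P : p \subset \Sigma(\mathfrak{T}, \mathcal{M})\}$ is the filter of finite correct approximations to $r$; for $g$ to be $\mathbb{P}$-generic over $V$ the random real would have to be Cohen-generic over $V$, which it is not. Explicitly, $D_K := \{p \in P : b_p \wedge b_K = 0\}$ (with $b_K$ the class of $K$) is a dense subset of $\mathbb{P}$ lying in $V$ that $g$ misses whenever $r \in K$.

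The underlying problem is that projecting $\mathbb{Q}$ down to the poset of $\mathcal{L}_{\mathfrak{T}}$-fragments with nonzero Boolean value discards too much information, and no amount of bookkeeping recovers it. The paper's proof keeps that information: its conditions are finite sets $x \subseteq Q \cup \mathcal{L}_{\mathfrak{T}}$ (writing $Q$ for the base set of $\mathbb{Q}$, assumed disjoint from $\mathcal{L}_{\mathfrak{T}}$) such that $x \cap Q$ has a common extension in $\mathbb{Q}$ and every literal in $x \cap \mathcal{L}_{\mathfrak{T}}$ is forced into $\dot{\Sigma}$ by some member of $x \cap Q$. Then $q \mapsto \{q\}$ is a dense weak embedding of $\mathbb{Q}$ into this poset, genericity of $h$ transfers, and the union of the transferred filter meets $\mathcal{L}_{\mathfrak{T}}$ in exactly $\dot{\Sigma}[h]$. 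If you want to salvage your Boolean-algebraic framing, you must work with $\mathbb{B}$ itself rather than $\mathbb{B}_{\Sigma}$ and adjoin the $\mathbb{Q}$-side information to the conditions; as written, your argument establishes only that $g$ is a filter with $\bigcup g = \Sigma(\mathfrak{T},\mathcal{M})$, not that it is generic.
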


\begin{proof}
Let $\mathcal{M}$ be a model of $\mathfrak{T}$ in a forcing extension $W$ of $V$. Without loss of generality, we can assume the existence of $\mathbb{P} = (P, \leq_{\mathbb{P}})$ and $\dot{\Sigma}$ such that
\begin{itemize}
    \item $\mathbb{P}$ is a forcing notion,
    \item $P \cap \mathcal{L}_{\mathfrak{T}} = \emptyset$,
    \item $\dot{\Sigma}$ is a $\mathbb{P}$-name, 
    \item $\Vdash_{\mathbb{P}} ``\exists \mathcal{M}' \ (\mathcal{M}' \models^* \mathfrak{T} \text{ and } \Sigma(\mathfrak{T}, \mathcal{M}') = \dot{\Sigma})"$, and
    \item for some $\mathbb{P}$-generic filter $g_0$ over $V$, $\Sigma(\mathfrak{T}, \mathcal{M}) = \dot{\Sigma}[g_0]$.
\end{itemize}
Define
\begin{align*}
    P^* := \{x \in [P \cup \mathcal{L}_{\mathfrak{T}}]^{< \omega} : \ & x \cap P \text{ has a common extension in } \mathbb{P} \text{ and} \\ 
    & \forall y \ \exists p \ (y \in x \cap \mathcal{L}_{\mathfrak{T}} \implies (p \in x \cap P \text{ and } p \Vdash_{\mathbb{P}} y \in \dot{\Sigma}))\}
\end{align*}
and have $\mathbb{P}^* := (P^*, \supset)$.

Fix $x \in P^*$, and let $p$ be a common extension in $\mathbb{P}$ of the members of $x \cap P$. Then any extension of $\{p\}$ in $\mathbb{P}^*$ is compatible with $x$ in $\mathbb{P}^*$. This means that $$(\pi : P \longrightarrow P^*) \ [p \mapsto \{p\}]$$ is a dense weak embedding from $\mathbb{P}$ into $\mathbb{P}^*$. As a result, $$\sigma : g \mapsto \mathrm{UC}(w(\mathbb{P}^*), \pi" g)$$ is a bijection from $$\{g : g \text{ is a } \mathbb{P} \text{-generic filter over } V\} \cap W$$ into $$\{h : h \text{ is a } \mathbb{P}^* \text{-generic filter over } V\} \cap W$$ with inverse $$\tau : h \mapsto \pi^{-1} h \text{,}$$ in every weak outer model $W$ of $V$. The following fact is easy to see.

\begin{fact}\label{cufs}
If $h$ is a $\mathbb{P}^*$-generic filter over $V$, then $[(\bigcup h) \cap P]^{< \omega} \subset h$.
\end{fact}

\begin{prop}\label{p524}
Let $g$ be a $\mathbb{P}$-generic filter over $V$. Then $(\bigcup \sigma(g)) \cap P = g$.
\end{prop}

\begin{proof}
Denote $(\bigcup \sigma(g)) \cap P$ as $g'$. By the definitions of $\pi$ and $\sigma$, $g \subset g'$ clearly. Choose $p \in g'$. Since $\sigma(g)$ is a $\mathbb{P}^*$-generic filter over $V$, Fact \ref{cufs} tells us that $\{p\} \in \sigma(g)$. As $\tau = \sigma^{-1}$, $p \in g$, and we are done.
\end{proof}

\begin{prop}\label{p525}
Let $g$ be a $\mathbb{P}$-generic filter over $V$. Then $$(\bigcup \sigma(g)) \cap \mathcal{L}_{\mathfrak{T}} = \{y \in \mathcal{L}_{\mathfrak{T}} : \exists p \ (p \in (\bigcup \sigma(g)) \cap P \text{ and } p \Vdash_{\mathbb{P}}^V y \in \dot{\Sigma})\}.$$
\end{prop}

\begin{proof}
Denote $(\bigcup \sigma(g)) \cap \mathcal{L}_{\mathfrak{T}}$ as $\Sigma$ and $$\{y \in \mathcal{L}_{\mathfrak{T}} : \exists p \ (p \in (\bigcup \sigma(g)) \cap P \text{ and } p \Vdash_{\mathbb{P}}^V y \in \dot{\Sigma})\}$$ as $\Sigma'$. By the definition of $P^*$ and the fact that $\sigma(g) \subset P^*$, $\Sigma \subset \Sigma'$. Choose $y \in \Sigma'$, so that there is $p \in (\bigcup \sigma(g)) \cap P$ with $p \Vdash_{\mathbb{P}}^V y \in \dot{\Sigma}$. But that entails the density of $$D_p := \{x \in P^* : y \in x\}$$ below $\{p\}$ in $\mathbb{P}^*$. Since $\{p\} \in \sigma(g)$ by Fact \ref{cufs}, $D_p \cap \sigma(g) \neq \emptyset$, whence $y \in \Sigma$.
\end{proof}

Combining Propositions \ref{p524} and \ref{p525}, we know that whenever $g$ is a $\mathbb{P}$-generic filter over $V$, $(\bigcup \sigma(g)) \cap \mathcal{L}_{\mathfrak{T}} = \dot{\Sigma}[g]$. In particular, $(\bigcup \sigma(g_0)) \cap \mathcal{L}_{\mathfrak{T}} = \Sigma(\mathfrak{T}, \mathcal{M})$. As $\sigma(g_0)$ is a $\mathbb{P}^*$-generic filter over $V$, $\mathcal{M}$ is a $(\mathbb{P}^*, V)$-generic model of $\mathfrak{T}$.
\end{proof}

Henceforth, we will look more closely examine the relationship between genericity and TCIs, while looking into the extent to which generic models of $\Pi_2$ TCIs are abundant. 

First, we link the concept of TCIs and their models back to forcing notions and their generic extensions. The relation $\lessdot$ on forcing notions can be used to define a partial order on the class of all TCIs via the map 
\begin{equation}\label{pmap}
    \hat{\mathbb{P}} : \mathfrak{T} \mapsto \mathbb{P}(\mathfrak{T}) \text{.}
\end{equation}
Given two TCIs $\mathfrak{T}_1$ and $\mathfrak{T}_2$, let $\mathfrak{T}_1 \trianglelefteq \mathfrak{T}_2$ whenever $\mathbb{P}(\mathfrak{T}_1) \lessdot \mathbb{P}(\mathfrak{T}_2)$. The relation $\trianglelefteq$ is a preordering of TCIs because of Fact \ref{forpre}. Have $\mathfrak{T}_1 \sim_T \mathfrak{T}_2$ iff $\mathfrak{T}_1 \trianglelefteq \mathfrak{T}_2$ and $\mathfrak{T}_2 \trianglelefteq \mathfrak{T}_1$. Then $\sim_T$ is an equivalence relation on TCIs. Denoting $\sim_P$ to be the forcing equivalence relation on forcing notions, we can easily verify that $\trianglelefteq / \sim_T$ is a partial order isomorphic to a suborder of $\lessdot / \sim_P$, as witnessed by the map 
\begin{equation}\label{peqmap}
    \Tilde{\mathbb{P}}: [\mathfrak{T}] \mapsto [\mathbb{P}(\mathfrak{T})] \text{.}
\end{equation}

The next two theorems --- the main ones of this subsection --- hint at a strong connection between $\Pi_2$ TCIs and forcing extensions.

\begin{thm}\label{revgenmodels}
Let $\mathbb{P} = (P, \leq_{\mathbb{P}})$ be a partial order. Then there is a consistent $\Pi_2$ TCI $\mathfrak{T} = (T, \sigma, \dot{\mathcal{U}}, \vartheta)$ such that
\begin{itemize}
    \item a dense weak embedding exists from $\mathbb{P}$ into $\mathbb{P}(\mathfrak{T})$, and
    \item for a fixed unary relation symbol $\dot{X} \in \sigma$, every model $\mathcal{M}$ of $\mathfrak{T}$ in any outer model of $V$ satisfies $$``\{p : \mathcal{M} \models \dot{X}(p)\} \text{ is a } \mathbb{P} \text{-generic filter over } V" \text{.}$$
\end{itemize}
\end{thm}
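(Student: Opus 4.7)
My approach is to construct a $\Pi_2$ TCI $\mathfrak{T}$ whose models in any outer model of $V$ biject with $\mathbb{P}$-generic filters over $V$, and then to exhibit $\pi(p) := \{\ulcorner \dot{X}(p)\urcorner\}$ as the desired dense weak embedding. Concretely, I will let $\sigma$ contain a unary relation $\dot{X}$, a binary relation $\dot{\leq}$, and for each dense $D \subset P$ with $D \in V$ a unary relation $\dot{R}_D$. I set $\vartheta(\dot{\mathcal{U}}) = (P,1)$, $\vartheta(\dot{X}) = (P,0)$, $\vartheta(\dot{\leq}) = (\leq_{\mathbb{P}}, 1)$, and $\vartheta(\dot{R}_D) = (D, 1)$, so that the base set of every model must be $P$, the symbols $\dot{\leq}$ and each $\dot{R}_D$ are pinned to their intended interpretations, and only $\dot{X}^{\mathcal{M}}$ is free as an arbitrary subset of $P$. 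I take $T$ to assert that $\dot{X}$ is upward $\dot{\leq}$-closed, that $\dot{X}$ is downward $\dot{\leq}$-directed, and that for each dense $D$, $\ulcorner \exists x (\dot{R}_D(x) \wedge \dot{X}(x)) \urcorner$. Each of these schemata is $\Pi_2$.

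For the second bullet, a short verification shows that in any outer model of $V$, models of $\mathfrak{T}$ correspond bijectively to $\mathbb{P}$-generic filters over $V$ via $\mathcal{M} \mapsto \{p : \mathcal{M} \models \dot{X}(p)\}$: the three axioms on $\dot{X}$ translate directly to the filter plus genericity conditions, and conversely any $\mathbb{P}$-generic filter $g$ (which exists in an appropriate $\mathbb{P}$-generic extension) gives a model with $\dot{X}^{\mathcal{M}} = g$. In particular $\mathfrak{T}$ is consistent. I then define $\pi(p) := \{\ulcorner \dot{X}(p)\urcorner\}$; this lies in $P(\mathfrak{T})$ because a $\mathbb{P}$-generic filter passing through $p$ is obtainable in a generic extension (by forcing below $p$). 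Injectivity of $\pi$ is immediate, and the weak-embedding clauses reduce, via the filter/generic correspondence, to the familiar facts that $p_1 \leq_{w(\mathbb{P})} p_2$ iff every $\mathbb{P}$-generic containing $p_1$ also contains $p_2$, and that $p_1 \perp_{\mathbb{P}} p_2$ precludes a common generic and hence a common certifier.

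The main obstacle I expect is verifying density. Given $q \in P(\mathfrak{T})$, write $F_q^+ := \{p : \ulcorner \dot{X}(p)\urcorner \in q\}$ and $F_q^- := \{p : \ulcorner \neg\dot{X}(p)\urcorner \in q\}$. Certifiability of $q$ yields a $\mathbb{P}$-generic $g$ in some outer model with $F_q^+ \subset g$ and $F_q^- \cap g = \emptyset$. The dense sets $D_{p'} := \{r \in P : r \leq_{\mathbb{P}} p' \text{ or } r \perp_{\mathbb{P}} p'\}$ for $p' \in F_q^-$ are all met by $g$ on the $\perp$-side, supplying witnesses $q_{p'} \in g$ with $q_{p'} \perp_{\mathbb{P}} p'$. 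Directedness of $g$ then produces a single $p^* \in g$ lying $\leq_{\mathbb{P}}$-below $F_q^+ \cup \{q_{p'} : p' \in F_q^-\}$, so that $p^* \leq_{\mathbb{P}} F_q^+$ and $p^* \perp_{\mathbb{P}} F_q^-$ elementwise. Absoluteness of these finitary assertions between $V$ and the outer model places $p^* \in V$. It remains to show $\pi(p^*) \leq_{w(\mathbb{P}(\mathfrak{T}))} q$: for any $s \in P(\mathfrak{T})$ extending $\pi(p^*)$, every certifying generic $g'$ of $s$ must contain $p^*$, hence contains $F_q^+$ by upward closure and avoids $F_q^-$ since $p^* \perp_{\mathbb{P}} F_q^-$; thus $g'$ also certifies $s \cup q$, so $s \cup q \in P(\mathfrak{T})$, which is precisely the density clause.
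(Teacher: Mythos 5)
Your proposal is correct and follows essentially the same route as the paper: the same TCI (with $\dot{X}$, $\dot{\leq}$, and a pinned unary predicate for each dense set of $\mathbb{P}$ in $V$, and the same three $\Pi_2$ axiom schemata), the same correspondence between models and $\mathbb{P}$-generic filters, and the same map $\pi(p) = \{\ulcorner \dot{X}(p) \urcorner\}$. Your density argument merely makes explicit (via the condition $p^*$ lying below all positive literals and incompatible with all negative ones) what the paper records as its unproved observation (2), so there is no substantive difference.
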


\begin{proof}
Choose $\dot{\mathcal{U}}$, $\dot{\leq}$ and $\dot{G}$ and to be distinct relation symbols of arities $1$, $2$ and $1$ respectively. For each dense subset $D$ of $\mathbb{P}$, choose a fresh unary relation symbol $\dot{D}$. Set $\sigma$ to be $$\{\dot{\leq}, \dot{G}\} \cup \{\dot{D} : D \text{ is a dense subset of } \mathbb{P}\} \text{.}$$ We define $\vartheta$ on $\{\dot{\mathcal{U}}\} \cup \sigma$ as follows:
\begin{align*}
    \vartheta(\dot{\mathcal{U}}) := \ & (P, 1) \\
    \vartheta(\dot{\leq}) := \ & (\leq_{\mathbb{P}}, 1) \\
    \vartheta(\dot{G}) := \ & (P, 0) \\
    \vartheta(\dot{D}) := \ & (D, 1) \text{ for each dense subset } D \text{ of } \mathbb{P} \text{.}
\end{align*}
Now, have $T$ contain only the sentences
\begin{gather*}
    \ulcorner \forall p \ \forall q \ \exists r \ ((\dot{G}(p) \wedge \dot{G}(q)) \implies (\dot{G}(r) \wedge \dot{\leq}(r, p) \wedge \dot{\leq}(r, q))) \urcorner \text{,} \\
    \ulcorner \forall p \ \forall q \ ((\dot{\leq}(p, q) \wedge \dot{G}(p)) \implies \dot{G}(q)) \urcorner \text{, as well as all members of} \\
    \{\ulcorner \exists p \ (\dot{G}(p) \wedge \dot{D}(p)) \urcorner : D \text{ is a dense subset of } \mathbb{P}\} \text{.}
\end{gather*} 
Let $\mathfrak{T} := (T, \sigma, \dot{\mathcal{U}}, \vartheta)$. Then $\mathfrak{T}$ is clearly a consistent $\Pi_2$ TCI, for any $\mathbb{P}$-generic filter over $V$ is an interpretation of $\dot{G}$ satisfying $\mathfrak{T}$. Moreover, it is obvious from our definition of $\mathfrak{T}$ that whenever $\mathcal{M} \models^* \mathfrak{T}$, the set $G(\mathcal{M}) := \{p : \mathcal{M} \models \dot{X}(p)\}$ is a $\mathbb{P}$-generic filter over $V$. We are left to show the existence of a dense weak embedding from $\mathbb{P}$ into $\mathbb{P}(\mathfrak{T})$. Toward that end we note:
\begin{enumerate}[label=(\arabic*)]
    \item\label{4431} for each $p \in P$, $\{\ulcorner \dot{G}(p) \urcorner\} \in P(\mathfrak{T})$, and
    \item\label{4432} for each $x \in P(\mathfrak{T})$, 
    \begin{align*}
        \exists p \ \forall q \ (x \cup \{\ulcorner \dot{G}(p) \urcorner\} \in P(\mathfrak{T}) & \wedge (\ulcorner \dot{G}(q) \urcorner \in x \implies p \leq_{\mathbb{P}} q)) \text{.}
    \end{align*}
\end{enumerate} 
Define $\pi : P \longrightarrow P(\mathfrak{T})$ to be $$p \mapsto \{\ulcorner \dot{G}(p) \urcorner\},$$ which is possible by \ref{4431}. We argue that $\pi$ is a dense weak embedding from $\mathbb{P}$ into $\mathbb{P}(\mathfrak{T})$.

Denote $w(\leq_{\mathbb{P}})$ as $\leq^{\dagger}$, $w(\mathbb{P})$ as $\mathbb{P}^{\dagger}$, $w(\leq_{\mathbb{P}(\mathfrak{T})})$ as $\leq^*$, and $w(\mathbb{P}(\mathfrak{T}))$ as $\mathbb{P}^*$. 

We first show that $\pi$ is a weak embedding. Assume $p \leq^{\dagger} q$ and let $x \leq_{\mathbb{P}(\mathfrak{T})} \pi(p)$. Choose any model $\mathcal{M}$ of $\mathfrak{T}$ in some outer model of $V$ such that $x \subset \Sigma(\mathfrak{T}, \mathcal{M})$. Then $G(\mathcal{M})$ is a $\mathbb{P}$-generic filter over $V$ containing $p$, implying $q \in G(\mathcal{M})$. $\mathcal{M}$ thus witnesses $x \cup \pi(q) \in P(\mathfrak{T})$, so $x \not \! \! \bot_{\mathbb{P}(\mathfrak{T})} \ \pi(q)$. We thus have $\pi(p) \leq^* \pi(q)$. Next, assume $p \not \leq^{\dagger} q$. Then there is $r \leq_{\mathbb{P}} p$ such that $r \ \bot_{\mathbb{P}} \ q$. This means $(\pi(p) \cup \pi(r)) \ \bot_{\mathbb{P}(\mathfrak{T})} \ \pi(q)$, and $\pi(p) \not \leq^* \pi(q)$. Lastly, the observation that
\begin{gather*}
    \pi(p) \ \bot_{\mathbb{P}^*} \ \pi(q) \iff \pi(p) \ \bot_{\mathbb{P}(\mathfrak{T})} \ \pi(q) \text{ and} \\
    p \ \bot_{\mathbb{P}^{\dagger}} \ q \iff p \ \bot_{\mathbb{P}} \ q
\end{gather*}
guarantees $$p \ \bot_{\mathbb{P}^{\dagger}} \ q \implies  \pi(p) \ \bot_{\mathbb{P}^*} \ \pi(q) \text{.}$$

To see that $ran(\pi)$ is dense in $\mathbb{P}^*$, fix any $x \in P(\mathfrak{T})$. By \ref{4432}, there is $p \in P$ for which
\begin{itemize}
    \item $x \cup \{\ulcorner \dot{G}(p) \urcorner\} \in P(\mathfrak{T})$, and
    \item $\forall q \ (\ulcorner \dot{G}(q) \urcorner \in x \implies p \leq_{\mathbb{P}} q)$.
\end{itemize}
What this entails by our definition of $\mathfrak{T}$ is, whenever $\mathcal{M} \models^* \mathfrak{T}$ and $\pi(p) \subset \Sigma(\mathfrak{T}, \mathcal{M})$, we must have $x \subset \Sigma(\mathfrak{T}, \mathcal{M})$. Let $y \leq_{\mathbb{P}(\mathfrak{T})} \pi(p)$. Choose a model $\mathcal{M}$ of $\mathfrak{T}$ in some outer model of $V$ such that $y \subset \Sigma(\mathfrak{T}, \mathcal{M})$. Then $\pi(p) \subset \Sigma(\mathfrak{T}, \mathcal{M})$, so also $x \subset \Sigma(\mathfrak{T}, \mathcal{M})$. As a result, $y \not \! \! \bot_{\mathbb{P}(\mathfrak{T})} \ x$, and we can conclude $\pi(p) \leq^* x$.
\end{proof}

\begin{rem}\label{ramble1}
Theorem \ref{revgenmodels} tells us two things, in view of Remark \ref{dwefor}.
\begin{enumerate}[label=(\arabic*)]
    \item\label{a5331} The map $\Tilde{\mathbb{P}}$ defined in (\ref{peqmap}) is an isomorphism between $\trianglelefteq / \sim_T$ and $\lessdot / \sim_P$.
    \item\label{a5332} Every member of $\trianglelefteq / \sim_T$ contains a $\Pi_2$ TCI $\mathfrak{T}$ for which
    \begin{align*}
        & \{V[\mathcal{M}] : \mathcal{M} \models^* \mathfrak{T} \text{ in an outer model of } V\} \\
        & = \{V[g] : g \text{ is } \mathbb{P}(\mathfrak{T}) \text{-generic over } V\} \text{.}
    \end{align*}
\end{enumerate}
It can be argued that the heart of forcing theory is in comparing the forcing extensions of different forcing notions. In this respect, and especially in the study of iterated forcing, a niceness result very often involves statements of the form 
\begin{quote}
    ``every $\mathbb{Q}$-generic extension over $V$ contains a $\mathbb{P}$-generic extension over $V$, and every $\mathbb{P}$-generic extension over $V$ can be extended to a $\mathbb{Q}$-generic extension over $V$,''
\end{quote}
which is virtually only provable by showing $\mathbb{P} \lessdot \mathbb{Q}$. Therefore, the relation $\lessdot$, and indeed $\lessdot / \sim_P$, encapsulates much of the core content of forcing theory. Points \ref{a5331} and \ref{a5332} can then be viewed as indicators that $\hat{\mathbb{P}}$ (defined in (\ref{pmap})) gives rise to a morally correct correspondence between TCIs and forcing notions. Further, \ref{a5332} suggests that $\Pi_2$ is a natural upper bound to the complexity of objects accessible by the technique of forcing.
\end{rem}

\begin{thm}\label{genericmodels}
Let $\mathfrak{T} = (T, \sigma, \dot{\mathcal{U}}, \vartheta)$ be a $\Pi_2$ TCI. If $\mathfrak{T}$ is consistent, then every $\mathbb{P}(\mathfrak{T})$-generic filter over $V$ witnesses a $(\mathbb{P}(\mathfrak{T}), V)$-generic model of $\mathfrak{T}$.
\end{thm}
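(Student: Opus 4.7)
The plan is to derive the theorem essentially as a corollary of Lemma~\ref{main2}, using the translation between models of $\mathfrak{T}$ and $\Gamma_{\mathfrak{T}}$-certifying sets provided by Lemma~\ref{mcequiv2}. First I would unpack the data produced by Lemma~\ref{mcequiv}: for $\mathfrak{T}$ a $\Pi_2$ TCI, the structure $\mathfrak{A}_{\mathfrak{T}} = (H(|trcl(\mathfrak{T})|^+); \in)$ is $\mathcal{L}_{\mathfrak{T}}$-suitable, and $\Gamma_{\mathfrak{T}}$ is a set of $(\mathcal{L}_{\mathfrak{T}})^*_{\mathfrak{A}_{\mathfrak{T}}}$-$\Pi_2$ sentences. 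Moreover $\mathbb{P}(\mathfrak{T})$ is, by definition, exactly the partial order described in the hypothesis of Lemma~\ref{main2} when one instantiates $\mathfrak{A} := \mathfrak{A}_{\mathfrak{T}}$, $\mathcal{L} := \mathcal{L}_{\mathfrak{T}}$, $\Gamma := \Gamma_{\mathfrak{T}}$, $\lambda := |trcl(\mathfrak{A}_{\mathfrak{T}})|$, and $B := [\mathcal{L}_{\mathfrak{T}}]^{< \omega}$.

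Next I would verify the hypotheses of Lemma~\ref{main2} in this setting. The set $B = [\mathcal{L}_{\mathfrak{T}}]^{< \omega}$ is manifestly $\mathcal{L}_{\mathfrak{T}}$-closed under finite extensions, and $\mathbb{P}(\mathfrak{T})$, being a member of $\mathfrak{A}_{\mathfrak{T}}$, is $\Sigma_1$-definable (indeed $\Delta_0$-definable from parameters) in the language associated with $\mathfrak{A}_{\mathfrak{T}}$. Any $\mathbb{P}(\mathfrak{T})$-generic filter $g$ over $V$ meets every dense set in $V$ and so, in particular, every $\Sigma_1$-definable dense subset of $\mathbb{P}(\mathfrak{T})$ in the language associated with $\mathfrak{A}_{\mathfrak{T}}$; hence $g$ is a $\mathbb{P}(\mathfrak{T})$-$\Sigma_1$-generic filter over $\mathfrak{A}_{\mathfrak{T}}$ in $V[g]$, which is a weak outer model of $V$.

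To obtain the existence clause of the hypothesis of Lemma~\ref{main2} --- a set $\Sigma$ that $\Gamma_{\mathfrak{T}}(\mathcal{L}_{\mathfrak{T}}, \mathfrak{A}_{\mathfrak{T}})$-certifies $\emptyset$ in some weak outer model of $V$ --- I would invoke the consistency of $\mathfrak{T}$. By definition, some outer model $W$ of $V$ contains a model $\mathcal{M}_0 \models^* \mathfrak{T}$, so by Lemma~\ref{mcequiv2} (applied in $W$) the set $\Sigma_0 := \Sigma(\mathfrak{T}, \mathcal{M}_0)$ $\Gamma_{\mathfrak{T}}(\mathcal{L}_{\mathfrak{T}}, \mathfrak{A}_{\mathfrak{T}})$-certifies $\emptyset$ in $W$, which suffices. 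Lemma~\ref{main2} now yields that for every $\mathbb{P}(\mathfrak{T})$-generic filter $g$ over $V$, the set $\bigcup g \subset \mathcal{L}_{\mathfrak{T}}$ $\Gamma_{\mathfrak{T}}(\mathcal{L}_{\mathfrak{T}}, \mathfrak{A}_{\mathfrak{T}})$-certifies $\emptyset$ in $V[g]$.

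Finally, applying the bijection $\psi_{trans}$ from Lemma~\ref{mcequiv2} inside $V[g]$ (which is a transitive model of $\mathsf{ZFC}$, hence of $\mathsf{ZFC} - \mathsf{Powerset}$, so the absoluteness of $\psi_{trans}$ applies), I would extract the unique $\mathcal{M} \in V[g]$ with $\mathcal{M} \models^* \mathfrak{T}$ and $\Sigma(\mathfrak{T}, \mathcal{M}) = \bigcup g$. Since $\bigcup g \subset \mathcal{L}_{\mathfrak{T}}$, this is the same as $\Sigma(\mathfrak{T}, \mathcal{M}) = (\bigcup g) \cap \mathcal{L}_{\mathfrak{T}}$, which is exactly what it means for $g$ to witness $\mathcal{M}$ as a $(\mathbb{P}(\mathfrak{T}), V)$-generic model of $\mathfrak{T}$. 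I expect no significant obstacle: the work is essentially bookkeeping to match the parameters of Lemma~\ref{main2} with those supplied by Lemmas~\ref{mcequiv} and~\ref{mcequiv2}, the only substantive ingredient being that $T$ being $\Pi_2$ guarantees (via Lemma~\ref{mcequiv}) that $\Gamma_{\mathfrak{T}}$ lies in the $\Pi_2$ fragment that Lemma~\ref{main2} can handle.
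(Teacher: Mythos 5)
Your proposal is correct and follows exactly the paper's route: the paper's proof consists precisely of invoking Lemmas \ref{main2}, \ref{mcequiv} and \ref{mcequiv2} with the same parameter substitutions you list ($\mathfrak{A}_{\mathfrak{T}}$ for $\mathfrak{A}$, $|trcl(\mathfrak{A}_{\mathfrak{T}})|$ for $\lambda$, $[\mathcal{L}_{\mathfrak{T}}]^{<\omega}$ for $B$, $\Gamma_{\mathfrak{T}}$ for $\Gamma$, $V[g]$ for $W$, etc.). Your write-up simply makes explicit the hypothesis-checking and the final translation via $\psi_{trans}$ that the paper leaves implicit.
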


\begin{proof}
The theorem follows directly from Lemmas \ref{main2}, \ref{mcequiv} and \ref{mcequiv2}, noting that the hypothesis of Lemma \ref{main2} are satisfied with
\begin{itemize}
    \item $\mathfrak{A}_{\mathfrak{T}}$ in place of $\mathfrak{A}$,
    \item $|\mathfrak{A}_{\mathfrak{T}}| = |trcl(\mathfrak{A}_{\mathfrak{T}})|$ in place of $\lambda$,
    \item $\mathcal{L}_{\mathfrak{T}}$ in place of $\mathcal{L}$,
    \item $[\mathcal{L}_{\mathfrak{T}}]^{< \omega}$ in place of $B$,
    \item $P(\mathfrak{T})$ in place of $P$,
    \item $\mathbb{P}(\mathfrak{T})$ in place of $\mathbb{P}$, 
    \item $\Gamma_{\mathfrak{T}}$ in place of $\Gamma$,
    \item $g$ a $\mathbb{P}(\mathfrak{T})$-generic filter over $V$, and
    \item $V[g]$ in place of $W$. \qedhere
\end{itemize}
\end{proof}

\begin{rem}\label{rem537}
By the proof of Corollary \ref{cor527}, Theorem \ref{genericmodels} is equivalent to, and can be restated as:
\begin{customthm}{5.36$'$}
Let $\mathfrak{T} = (T, \sigma, \dot{\mathcal{U}}, \vartheta)$ be a $\Pi_2$ TCI. If $P(\mathfrak{T})$ is not empty, then every $\mathbb{P}(\mathfrak{T})$-generic filter over $V$ witnesses a $(\mathbb{P}(\mathfrak{T}), V)$-generic model of $\mathfrak{T}$.
\end{customthm}
\end{rem}

\begin{rem}\label{ramble2}
By Theorem \ref{genericmodels} and \ref{5282} of Observation \ref{smallvgen}, we see that for every $\Pi_2$ TCI $\mathfrak{T}$,
\begin{align*}
    & \{V[\mathcal{M}] : \mathcal{M} \models^* \mathfrak{T} \text{ in an outer model of } V\} \\
    & \supset \{V[g] : g \text{ is } \mathbb{P}(\mathfrak{T}) \text{-generic over } V\} \text{.}
\end{align*}
In other words, forcing allows one to construct abundant models of every $\Pi_2$ TCI. This suggests that $\Pi_2$ is a natural lower bound to the complexity of objects accessible by the technique of forcing.
\end{rem}

Remarks \ref{ramble1} and \ref{ramble2} give two different ways of lensing forcing through the study of TCIs and their models. If we measure the power of forcing by the complexity of objects it has access to, then the two perspectives in question posit that $\Pi_2$ is a good classification of said power.  

This interpretation lends credence and weight to the informal thesis (a slogan, rather), 
\begin{quote}
    \centering
    ``Forcing is $\Pi_2$.''
\end{quote}
More importantly, it pitches tantalising prospects for using a complexity class defined on TCIs as a measure of --- or a proxy for --- accessibility within the context of the set-theoretic multiverse. We hope more work can be done in the future to formally establish and justify an approach along these lines of thinking.

\subsection{More Generic Models of \texorpdfstring{$\Pi_2$}{\Pi-2} TCIs}

The remainder of this section concerns itself with finer details regarding the existence of generic models of $\Pi_2$ TCIs. First up is a generic version of Lemma \ref{GLS}.

\begin{lem}\label{genericls}
Let $\mathfrak{T} = (T, \sigma, \dot{\mathcal{U}}, \vartheta)$ be a $\Pi_2$ TCI with an infinite model in some outer model of $V$. Then for every infinite ordinal $\beta$, there is a forcing notion $\mathbb{P}$ such that whenever $g$ is a $\mathbb{P}$-generic filter over $V$, there are sets $\mathcal{M} = (U; \mathcal{I})$ and $f$ in some outer model of $V$ for which  
\begin{enumerate}[label=(\alph*)]
    \item\label{ggls0} $g$ witnesses $(\mathcal{M}, f)$ is a $(\mathbb{P}, V)$-generic object, 
    \item\label{gglsa} $g$ witnesses $\mathcal{M}$ is a $(\mathbb{P}, V)$-generic model of $\mathfrak{T}$, and
    \item\label{gglsb} $f : \beta \longrightarrow U$ is a bijection.
\end{enumerate}
\end{lem}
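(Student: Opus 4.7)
The plan is to reduce to Theorem~\ref{genericmodels} by expanding $\mathfrak{T}$ to a $\Pi_2$ TCI $\mathfrak{T}^+$ whose models come pre-equipped with a bijection $\beta \to U$, applying Theorem~\ref{genericmodels} to $\mathfrak{T}^+$, and then passing the outcome back to a forcing notion $\mathbb{P}$ whose conditions live in $\mathcal{L}_{\mathfrak{T}}$ so that clause (b) follows from the definition of a generic model. Write $(y^*, \epsilon) := \vartheta(\dot{\mathcal{U}})$; by replacing $y^*$ if necessary with a suitable disjoint enlargement (absorbing the case $\epsilon = 1$ into a trivially equivalent variant of $\mathfrak{T}$), we may assume $\epsilon = 0$ and $\beta \cap y^* = \emptyset$. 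Build $\mathfrak{T}^+ = (T^+, \sigma^+, \dot{\mathcal{U}}, \vartheta^+)$ by adjoining to $\sigma$ fresh unary predicate symbols $\dot{\mathcal{U}}_0, \dot{B}$ and a fresh binary relation symbol $\dot{f}$, setting $\vartheta^+(\dot{\mathcal{U}}) := (y^* \cup \beta, 0)$, $\vartheta^+(\dot{\mathcal{U}}_0) := (y^*, 0)$, $\vartheta^+(\dot{B}) := (\beta, 1)$, $\vartheta^+(\dot{f}) := (\beta \times y^*, 0)$, and leaving $\vartheta \restriction \sigma$ untouched. Take $T^+$ to comprise the $\dot{\mathcal{U}}_0$-relativisation of each sentence of $T$ (still $\Pi_2$, since relativising $\forall \exists$ to a unary predicate stays $\forall \exists$), the axiom $\ulcorner \forall x\ (\dot{\mathcal{U}}(x) \iff (\dot{\mathcal{U}}_0(x) \vee \dot{B}(x))) \urcorner$, and the standard $\Pi_2$ axioms saying that $\dot{f}$ is the graph of a bijection from $\dot{B}$ onto $\dot{\mathcal{U}}_0$. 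The resulting $\mathfrak{T}^+$ is a $\Pi_2$ TCI.

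Consistency of $\mathfrak{T}^+$ is immediate: start with the hypothesised infinite model of $\mathfrak{T}$ in an outer model $W$ of $V$; apply Lemma~\ref{GLS} inside $W$ to obtain, in a further outer model $W'$ of $V$, a model $\mathcal{N}$ of $\mathfrak{T}$ whose base set $N$ has cardinality $|\beta|^{W'}$; pick any bijection $f_0 : \beta \to N$ in $W'$; and assemble $\mathcal{N}^+$ on $N \cup \beta$ by interpreting the symbols of $\sigma$ as in $\mathcal{N}$, $\dot{\mathcal{U}}_0^{\mathcal{N}^+} := N$, $\dot{B}^{\mathcal{N}^+} := \beta$, and $\dot{f}^{\mathcal{N}^+}$ as the graph of $f_0$. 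A routine check gives $\mathcal{N}^+ \models^* \mathfrak{T}^+$. Consequently, Theorem~\ref{genericmodels} applied to $\mathfrak{T}^+$ says that every $\mathbb{P}(\mathfrak{T}^+)$-generic filter $g^+$ over $V$ witnesses a $(\mathbb{P}(\mathfrak{T}^+), V)$-generic model $\mathcal{M}^+$ of $\mathfrak{T}^+$, from which $\mathcal{M}$ (the $\sigma$-reduct restricted to $\dot{\mathcal{U}}_0^{\mathcal{M}^+}$) and $f$ (read off from $\dot{f}^{\mathcal{M}^+}$) are definable over $V[g^+]$; (c) is then immediate.

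The remaining issue is that (b) demands $(V, \mathbb{P})$ to be good for $\mathcal{L}_{\mathfrak{T}}$, whereas $\mathbb{P}(\mathfrak{T}^+)$ has conditions in the larger language $\mathcal{L}_{\mathfrak{T}^+}$. To address this, I would not take $\mathbb{P} = \mathbb{P}(\mathfrak{T}^+)$ directly but rather the variant that retains the $\mathfrak{T}^+$-certification requirement while truncating conditions to $\mathcal{L}_{\mathfrak{T}}$:
\begin{equation*}
\mathbb{P} := (P, \supseteq), \quad P := \{p \in [\mathcal{L}_{\mathfrak{T}}]^{<\omega} : \ \Vdash_{Col(\omega, |trcl(\mathfrak{A}_{\mathfrak{T}^+})|)} \exists \Sigma \ (\Sigma\ \Gamma_{\mathfrak{T}^+}(\mathcal{L}_{\mathfrak{T}^+}, \mathfrak{A}_{\mathfrak{T}^+}) \text{-certifies } \emptyset \text{ and } p \subseteq \Sigma)\}.
\end{equation*}
Then $(V, \mathbb{P})$ is good for $\mathcal{L}_{\mathfrak{T}}$; a variant of Lemma~\ref{main2} adapted to this bilingual setting (using Remark~\ref{swapA} to identify the ambient structure and Lemma~\ref{inout} to bridge $V$ and its collapse extension) should show that a $\mathbb{P}$-generic $g$ over $V$ canonically gives rise to a certifying $\Sigma^+ \in V[g]$ with $\Sigma^+ \cap \mathcal{L}_{\mathfrak{T}} = \bigcup g = \Sigma(\mathfrak{T}, \mathcal{M})$, from which the pair $(\mathcal{M}, f)$ is definably extracted. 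Claims (a)--(c) then follow: (a) because $(\mathcal{M}, f)$ is definable in $V[g]$ from $g$ together with in-$V$ parameters and conversely $g$ is definable from $(\mathcal{M}, f)$; (b) because $\bigcup g = \Sigma(\mathfrak{T}, \mathcal{M})$ and $\bigcup g \subseteq \mathcal{L}_{\mathfrak{T}}$; (c) because $f = \dot{f}^{\mathcal{M}^+}$ is a bijection by the axioms of $T^+$.

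The main obstacle is precisely this last bookkeeping step: reproving Lemma~\ref{main2} in the bilingual set-up where the certifying language $\mathcal{L}_{\mathfrak{T}^+}$ is strictly richer than the conditions language $\mathcal{L}_{\mathfrak{T}}$, and verifying that the lifted $\Sigma^+$ --- and the bijection $f$ it contains --- really ends up inside $V[g]$ rather than only in some further outer model. The key leverage here is that the $\Pi_2$ bijection axioms in $T^+$, together with the $\Pi_2$-universality machinery underlying Lemma~\ref{main2}, pin the relevant interpretations down canonically once $\bigcup g$ decides the $\mathcal{L}_{\mathfrak{T}}$-atomic diagram of $\mathcal{M}$, so that the cardinality equality $|\dot{\mathcal{U}}_0^{\mathcal{M}^+}|^{V[g]} = |\beta|^{V[g]}$ is forced and a canonical (e.g., lex-least relative to a fixed well-ordering in $V$) bijection $f \in V[g]$ is available.
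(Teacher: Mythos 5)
Your construction of the expanded TCI and its consistency proof track the paper's argument closely: the paper likewise adjoins a fresh universe predicate, a predicate for a copy of $\beta$ (a set $b$ of cardinality $|\beta|$), and a symbol whose interpretation is forced by $\Pi_2$ axioms to be a bijection from $b$ onto the old universe, relativises $T$ to the old universe predicate, verifies consistency via Lemma \ref{GLS}, and then invokes Theorem \ref{genericmodels} on the expanded TCI. Up to that point your proposal is essentially the paper's proof.

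The final step, however, contains a genuine gap. The paper simply takes $\mathbb{P} := \mathbb{P}(\mathfrak{T}^+)$; your worry that clause \ref{gglsa} forces the conditions of $\mathbb{P}$ to live inside $[\mathcal{L}_{\mathfrak{T}}]^{<\omega}$ is not the intended reading of the definition of a $(\mathbb{P}, V)$-generic model --- the clause $\Sigma(\mathfrak{T}, \mathcal{M}) = (\bigcup g) \cap \mathcal{L}_{\mathfrak{T}}$ is there precisely to accommodate $\bigcup g$ overspilling $\mathcal{L}_{\mathfrak{T}}$, and Lemma \ref{gmodelsinfe} already applies the notion to a forcing whose conditions are not subsets of $\mathcal{L}_{\mathfrak{T}}$. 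Your substitute forcing, with conditions truncated to $[\mathcal{L}_{\mathfrak{T}}]^{<\omega}$ but certified against $\Gamma_{\mathfrak{T}^+}$, does not satisfy the hypotheses of Lemma \ref{main2} (the set $B$ and the certification language no longer match), and the ``bilingual variant'' you appeal to is false: a generic for the truncated forcing need not make $|U| = |\beta|$ in $V[g]$, so no bijection $f$ --- canonical or otherwise --- need exist there. Concretely, take $T = \emptyset$, $\vartheta(\dot{\mathcal{U}}) = (\omega_1, 1)$ and $\beta = \omega$: every model of $\mathfrak{T}$ has base set $\omega_1$, each finite $p \subset \mathcal{L}_{\mathfrak{T}}$ consistent with the full positive diagram is certifiable in the collapse extension, yet the truncated forcing is atomic (any two such conditions are compatible), so $V[g] = V$ contains no bijection $\omega \to \omega_1$ and clause \ref{gglsb} fails. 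The repair is to discard the truncation and use $\mathbb{P}(\mathfrak{T}^+)$ itself, reading $\mathcal{M}$ off $(\bigcup g) \cap \mathcal{L}_{\mathfrak{T}}$ and $f$ off the $\dot{f}$-atoms of $\bigcup g$, exactly as Theorem \ref{genericmodels} provides.
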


\begin{proof}
Fix an infinite ordinal $\beta$. We want to modify $\mathfrak{T}$ to get another consistent $\Pi_2$ TCI $\mathfrak{T}^*$ such that from every model $\mathcal{M}^*$ of $\mathfrak{T}^*$ we can read off a structure $\mathcal{M} = (U; \mathcal{I})$ and a function $f$ satisfying both
\begin{enumerate}[label=(\alph*)']
    \setcounter{enumi}{1}
    \item\label{gglsb'} $\mathcal{M} \models^* \mathfrak{T}$ and $\Sigma(\mathfrak{T}^*, \mathcal{M}^*) \cap \mathcal{L}_{\mathfrak{T}} = \Sigma(\mathfrak{T}, \mathcal{M})$
\end{enumerate}
as well as \ref{gglsb} of the lemma. 

Note that we can, without loss of generality, assume $\sigma$ contains only relation symbols and constant symbols. This is because for any function symbol $\dot{X}$ and any $n < \omega$, $\dot{X}$ being a $n$-ary function is definable in a $(n+1)$-ary relation symbol $\dot{Y}$ via the conjunction of the $\Pi_2$ sentences
\begin{gather*}
    \ulcorner \forall x_1 \dots \forall x_n \ \exists y \ (\dot{Y}(x_1, \dots, x_n, y)) \urcorner \text{ and} \\
    \ulcorner \forall x_1 \dots \forall x_n \ \forall y \ \forall z \ ((\dot{Y} (x_1, \dots, x_n, y) \wedge \dot{Y} (x_1, \dots, x_n, z)) \implies y = z) \urcorner \text{,} 
\end{gather*}
if we interpret formulas of the form $\dot{Y} (x_1, \dots, x_n, x_{n+1})$ as $\dot{X} (x_1, \dots, x_n) = x_{n+1}$.

Have $(y, z)$ be $\vartheta(\dot{\mathcal{U}})$ and $\sigma'$ be $\sigma \cup \{\dot{\mathcal{U}}\}$. Choose 
\begin{itemize}
    \item $\dot{F}$ to be a unary function symbol not in $\sigma'$,
    \item $\dot{\mathcal{U}}^*$ and $\dot{V}$ to be distinct unary relation symbols not in $\sigma'$, and
    \item $\dot{c}$ to be a constant symbol not in $\sigma'$, for each $c \in y$, such that $\dot{c} \neq \dot{d}$ if $\{c, d\} \subset y$ and $c \neq d$.
\end{itemize}
Let $$\sigma^* := \sigma \cup \{\dot{F}, \dot{\mathcal{U}}, \dot{V}\} \cup \{\dot{c} : c \in y\}.$$ We specify $\vartheta^*$ by how it acts on members of its domain. Pick a set $b$ of cardinality $|\beta|$ that is disjoint from $y$, and set $\vartheta^*(\dot{\mathcal{U}}^*) := (y \cup b, 1)$. Make the assignments
\begin{align*}
    \vartheta^*(\dot{F}) := \ & (b \times y, 0) \\
    \vartheta^*(\dot{\mathcal{U}}) := \ & (y, z) \\
    \vartheta^*(\dot{V}) := \ & (b, 1) \\
    \vartheta^*(\dot{c}) := \ & (\{c\}, 0) \text{ for each } c \in y \text{.}
\end{align*}
Whenever $\dot{X} \in \sigma$ and $\vartheta(\dot{X}) = (y', z')$, we define $\vartheta^*(\dot{X}) := (y', min\{z, z'\})$.

Now, we modify $\Gamma_{\mathfrak{T}}$ by first removing members of the type described in \ref{gammaform3} and \ref{gammaform4} of Lemma \ref{mcequiv}, and then for each remaining member $\varphi$ of $\Gamma_{\mathfrak{T}}$, replacing every subformula of $\varphi$ of the form $\ulcorner E(\ulcorner x \urcorner) \urcorner$ with $\ulcorner x \urcorner$. 

Call the result of said modification $T'$. Whenever $\dot{X} \in \sigma$ is a $n$-ary relation symbol with $\vartheta(\dot{X}) = (y', 1)$, define $$T(\dot{X}) := \{\ulcorner \bigwedge_{1 \leq k \leq n} \dot{\mathcal{U}}(\dot{c}_k) \implies \dot{X}(\dot{c}_1, \dots, \dot{c}_n) \urcorner : (c_1, \dots, c_n) \in y' \cap y^n\}.$$ Finally, define $T^*$ to be the union of $T'$, $$\bigcup \{T(\dot{X}) : \dot{X} \in \sigma \text{ and } \exists y' \ (\vartheta(\dot{X}) = (y', 1))\},$$ and the finite set of sentences
\begin{align*}
    T^*_0 := \{ & \ulcorner \forall x \ \exists y \ (\dot{V}(x) \implies (\dot{\mathcal{U}}(y) \wedge \dot{F}(x) = y)) \urcorner, \\
    & \ulcorner \forall x \ \exists y \ (\dot{\mathcal{U}}(x) \implies (\dot{V}(y) \wedge \dot{F}(y) = x)) \urcorner, \\
    & \ulcorner \forall x \ \forall y \ (\dot{F}(x) = \dot{F}(y) \implies x = y) \urcorner\} \text{.}
\end{align*}
Clearly $T^*$ is a set of $\Pi_2$ sentences over the vocabulary $\sigma^*$.

A routine verification should enable the reader to see that $$T^*_1 := T' \cup (\bigcup \{T(\dot{X}) : \dot{X} \in \sigma \text{ and } \exists y' \ (\vartheta(\dot{X}) = (y', 1))\})$$ is basically a translation of $\Gamma_{\mathfrak{T}}$ in our expanded vocabulary $\sigma^*$, with the set of constants $\{\dot{c} : c \in y\}$ fulfilling a role similar to that of the parameter $\mathcal{L}_{\mathfrak{T}}$ (say, in the context of \ref{gammaform4} in the proof of Lemma \ref{mcequiv}). On the other hand, $T^*_0$ expresses precisely the requirement that a bijection from $b$ (and thus from $\beta$, in any outer model of $V$) into $\mathcal{I}(\dot{\mathcal{U}})$ exists for every $\mathcal{I}$ satisfying $(y \cup b; \mathcal{I}) \models^* (T^*_0, \sigma^*,  \dot{\mathcal{U}}^*, \vartheta^*)$ --- said bijection is just $\mathcal{I}(\dot{F})$. In fact, it does so in a manner independent of truths over the vocabulary $\sigma'$, so that whenever $\mathcal{M}^*$ is a model of $$\mathfrak{T}^* := (T^*, \sigma^*, \dot{\mathcal{U}}^*, \vartheta^*) = (T^*_0 \cup T^*_1, \sigma^*,  \dot{\mathcal{U}}^*, \vartheta^*),$$ we have $$\Sigma(\mathfrak{T}^*, \mathcal{M}^*) \cap \mathcal{L}_{\mathfrak{T}} = \Sigma(\mathfrak{T}, \mathcal{M})$$ for some model $\mathcal{M}$ of $\mathfrak{T}$. By Lemma \ref{GLS} and our assumptions on $\mathfrak{T}$, $\mathfrak{T}^*$ is consistent. 

We have thus checked that $\mathfrak{T}^*$ possesses the properties we want: it is a consistent $\Pi_2$ TCI, and from every model $\mathcal{M}^*$ of $\mathfrak{T}^*$ we can read off a structure $\mathcal{M} = (U; \mathcal{I})$ and a function $f$ satisfying both \ref{gglsb'} defined at the beginning of the proof as well as \ref{gglsb} of the lemma. An invocation of Theorem \ref{genericmodels} with $\mathfrak{T^*}$ in place of $\mathfrak{T}$ then completes the proof.
\end{proof}

By strengthening the hypothesis on $\mathfrak{T}$ in Lemma \ref{genericls}, we can derive more from our witnesses.

\begin{lem}\label{genericlscor}
Let $\mathfrak{T} = (T, \sigma, \dot{\mathcal{U}}, \vartheta)$ be a $\Pi_2$ TCI with only infinite model(s) across all outer models of $V$. Then for every infinite ordinal $\beta$, there is a forcing notion $\mathbb{P}$ such that whenever $g$ is a $\mathbb{P}$-generic filter over $V$, there are sets $\mathcal{M} = (U; \mathcal{I})$ and $f$ in some outer model of $V$ for which  
\begin{enumerate}[label=(\alph*)]
    \item $g$ witnesses $(\mathcal{M}, f)$ is a $(\mathbb{P}, V)$-generic object, 
    \item $g$ witnesses $\mathcal{M}$ is a $(\mathbb{P}, V)$-generic model of $\mathfrak{T}$,
    \item $g \cap P(\mathfrak{T})$ witnesses $\mathcal{M}$ is a $(\mathbb{P}(\mathfrak{T}), V)$-generic model of $\mathfrak{T}$, and
    \item $f : \beta \longrightarrow U$ is a bijection.
\end{enumerate}
\end{lem}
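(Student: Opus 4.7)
The plan is to use the same forcing notion $\mathbb{P} := \mathbb{P}(\mathfrak{T}^*)$ produced in the proof of Lemma \ref{genericls}, where $\mathfrak{T}^*$ is the $\Pi_2$ TCI constructed there to ensure the existence of a bijection $f: \beta \to U$ via the new function symbol $\dot{F}$. Properties (a), (b), and (d) of our lemma will then be immediate consequences of Lemma \ref{genericls}'s proof combined with Theorem \ref{genericmodels}: the $\mathbb{P}$-generic filter $g$ witnesses a $(\mathbb{P}, V)$-generic model $\mathcal{M}^*$ of $\mathfrak{T}^*$, from which we extract $\mathcal{M}$ (the $\sigma$-reduct) and $f$ (the interpretation of $\dot{F}$ restricted to $b$, composed with any bijection $\beta \to b$). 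The novelty is property (c): that $g \cap P(\mathfrak{T})$ is $\mathbb{P}(\mathfrak{T})$-generic over $V$ and witnesses the same $\mathcal{M}$.

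For (c), I would show that $\mathbb{P}(\mathfrak{T}) \lessdot \mathbb{P}(\mathfrak{T}^*)$ as a regular suborder, and then invoke Fact \ref{regsubo}. The inclusion $P(\mathfrak{T}) \subset P(\mathfrak{T}^*)$ with matching orders is straightforward: given $p \in P(\mathfrak{T})$ with certifying model $\mathcal{M} \models^* \mathfrak{T}$ in some outer model, the infiniteness of $\mathcal{M}$ (guaranteed by the strengthened hypothesis) allows a further collapse in which $|U^{\mathcal{M}}| = |b|$; picking any bijection $b \to U^{\mathcal{M}}$ extends $\mathcal{M}$ to a model $\mathcal{M}^*$ of $\mathfrak{T}^*$ with $p \subset \Sigma(\mathfrak{T}, \mathcal{M}) = \Sigma(\mathfrak{T}^*, \mathcal{M}^*) \cap \mathcal{L}_{\mathfrak{T}}$, so $p \in P(\mathfrak{T}^*)$. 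For the regular suborder condition, given $q \in P(\mathfrak{T}^*)$ with certifier $\mathcal{M}^*$, set $F_q := \{(b_0, y_0) \in b \times y : \ulcorner \dot{F}(b_0) = y_0 \urcorner \in q\}$ (a finite partial injection by the injectivity sentence in $T^*_0$), and define
\[
    p := (q \cap \mathcal{L}_{\mathfrak{T}}) \cup \{\ulcorner \dot{\mathcal{U}}(y_0) \urcorner : y_0 \in \mathrm{ran}(F_q)\}.
\]
Then $p \in P(\mathfrak{T})$, because the $\sigma$-reduct $\mathcal{M}$ of $\mathcal{M}^*$ has $\mathrm{ran}(F_q) \subset \dot{\mathcal{U}}^{\mathcal{M}^*} = U^{\mathcal{M}}$, making $\mathcal{M}$ certify $p$.

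For compatibility, I would argue $p' \cup q \in P(\mathfrak{T}^*)$ for every $p' \leq_{\mathbb{P}(\mathfrak{T})} p$. Given a certifying model $\mathcal{M}'$ of $\mathfrak{T}$ for $p'$, the $\dot{\mathcal{U}}$-atoms in $p$ force $\mathrm{ran}(F_q) \subset U^{\mathcal{M}'}$. Working in an outer model where $|U^{\mathcal{M}'}| = |b|$ (both countable after a suitable collapse), I build a bijection $\hat{F}: b \to U^{\mathcal{M}'}$ extending $F_q$ and avoiding the finitely many negative $\dot{F}$-constraints in $q$ — a routine back-and-forth using the infinitude of $U^{\mathcal{M}'}$. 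Setting $\dot{F}^{\mathcal{M}'^*} := \hat{F}$ and interpreting the other fresh symbols canonically (i.e.\ $\dot{V}^{\mathcal{M}'^*} := b$, $\dot{c}^{\mathcal{M}'^*} := c$) produces $\mathcal{M}'^*$ certifying $p' \cup q$. With $\mathbb{P}(\mathfrak{T}) \lessdot \mathbb{P}(\mathfrak{T}^*)$ established, Fact \ref{regsubo} yields that $g \cap P(\mathfrak{T})$ is a $\mathbb{P}(\mathfrak{T})$-generic filter over $V$. That it witnesses $\mathcal{M}$ follows from the upward closure of $g$: for any $x \in (\bigcup g) \cap \mathcal{L}_{\mathfrak{T}}$ there is $q \in g$ with $x \in q$, and since $q \cap \mathcal{L}_{\mathfrak{T}} \in P(\mathfrak{T}) \subset P(\mathfrak{T}^*)$ is a weakening of $q$, upward closure forces $q \cap \mathcal{L}_{\mathfrak{T}} \in g \cap P(\mathfrak{T})$. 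Hence $\bigcup (g \cap P(\mathfrak{T})) = (\bigcup g) \cap \mathcal{L}_{\mathfrak{T}} = \Sigma(\mathfrak{T}^*, \mathcal{M}^*) \cap \mathcal{L}_{\mathfrak{T}} = \Sigma(\mathfrak{T}, \mathcal{M})$.

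The main obstacle will be the compatibility step in the regular suborder proof, specifically the bookkeeping around the constraints $q$ places on the fresh symbols of $\mathfrak{T}^*$. The augmentation of $p$ with the $\dot{\mathcal{U}}$-assertions is essential: without it a refining $\mathcal{M}'$ of the reduct may fail to have $U^{\mathcal{M}'} \supset \mathrm{ran}(F_q)$, obstructing any extension of $F_q$ to a bijection into $U^{\mathcal{M}'}$. Care will also be needed in verifying that the new-symbol content of $q$ outside $\mathcal{L}_{\mathfrak{T}}$ indeed reduces to constraints on $\dot{F}$ (the $\dot{c}$- and $\dot{V}$-atoms being either forced or trivially determined by $\vartheta^*$), and in handling the fact that $\mathcal{M}'$ is obtained only in an outer model of $V$, so the back-and-forth construction must be performed in a further collapsing extension where cardinalities cooperate.
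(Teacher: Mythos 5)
Your proposal is correct and follows essentially the same route as the paper: construct $\mathfrak{T}^*$ as in Lemma \ref{genericls}, prove $\mathbb{P}(\mathfrak{T}) \lessdot \mathbb{P}(\mathfrak{T}^*)$ by augmenting the $\mathcal{L}_{\mathfrak{T}}$-part of a condition with exactly the $\dot{\mathcal{U}}$-atoms coming from $\mathrm{ran}(F_q)$ and using the infinitude of $U$ to extend certifiers, then apply Fact \ref{regsubo} together with the identity $\bigcup(g \cap P(\mathfrak{T})) = (\bigcup g) \cap \mathcal{L}_{\mathfrak{T}}$. The paper's proof is just a terser version of the same argument.
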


\begin{proof}
Construct $\mathfrak{T}^*$ from $\mathfrak{T}$ as per the proof of Lemma \ref{genericls}.

\begin{prop}\label{ptsspts}
$\mathbb{P}(\mathfrak{T}) \lessdot \mathbb{P}(\mathfrak{T}^*)$.
\end{prop}

\begin{proof}
Observe that, if $\mathcal{M}$ is a model of $\mathfrak{T}$ is some outer model $W$ of $V$, then $\mathcal{M}$ extends to a model of $\mathfrak{T}^*$ in an outer model $W'$ of $W$. As a result, $\mathbb{P}(\mathfrak{T})$ is a suborder of $\mathbb{P}(\mathfrak{T}^*)$. To show the regularity of $\mathbb{P}(\mathfrak{T})$ as a suborder of $\mathbb{P}(\mathfrak{T}^*)$, let $p \in \mathbb{P}(\mathfrak{T}^*)$. Define $$q_0 := \{\ulcorner \dot{\mathcal{U}}(j) \urcorner : \ulcorner \dot{F}(i) = j \urcorner \in p\}$$ and let $$q := (p \cap \mathcal{L}_{\mathfrak{T}}) \cup q_0.$$ Obviously $q \in \mathbb{P}(\mathfrak{T})$. Consider any $\Sigma$ $\Gamma_{\mathfrak{T}} (\mathcal{L}_{\mathfrak{T}}, \mathfrak{A}_{\mathfrak{T}})$-certifying $q$ in some outer model of $V$. Since the set $U$ defined by $\dot{\mathcal{U}}$ in $\Sigma$ is guaranteed to be infinite following our assumptions on $\mathfrak{T}$, the finitely many restrictions imposed by $p$ on the relationship between (the function interpreting) $\dot{F}$ and $U$ can be circumvented with ease. In other words, $\Sigma$ can be extended to some $\Sigma^*$ $\Gamma_{\mathfrak{T}^*} (\mathcal{L}_{\mathfrak{T}^*}, \mathfrak{A}_{\mathfrak{T}^*})$-certifying $p$. But this means every $q' \leq_{\mathbb{P}(\mathfrak{T})} q$ is compatible with $p$ in $\mathbb{P}(\mathfrak{T}^*)$.
\end{proof}

By Fact \ref{regsubo}, Proposition \ref{ptsspts}, and the identity
\begin{equation*}
    \bigcup (g \cap P(\mathfrak{T})) = (\bigcup g) \cap \mathcal{L}_{\mathfrak{T}}
\end{equation*}
which holds for every $\mathbb{P}(\mathfrak{T}^*)$-generic filter $g$ over $V$, we are done.
\end{proof}

Models of a TCI $\mathfrak{T}$ across all outer models of $V$ can be very complicated. However, when a model of $\mathfrak{T}$ is finitely determined, its atomic diagram can be easily read off $\mathbb{P}(\mathfrak{T})$.

\begin{lem}\label{findetinV}
Let $\mathfrak{T}$ be a TCI and $\mathcal{M}$ be a finitely determined model of $\mathfrak{T}$ in some outer model of $V$. Then for some atom $p$ of $\mathbb{P}(\mathfrak{T})$, $\Sigma(\mathfrak{T}, \mathcal{M}) = g_p (\mathbb{P}(\mathfrak{T}))$. In particular, $\mathcal{M} \in V$.
\end{lem}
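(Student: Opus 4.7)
The plan is to exhibit an explicit atom $p$ of $\mathbb{P}(\mathfrak{T})$ built from the quantifier-free sentence witnessing the finite determination of $\mathcal{M}$, and then identify $\Sigma(\mathfrak{T},\mathcal{M})$ (viewed, up to the natural identification, as $\bigcup g_p(\mathbb{P}(\mathfrak{T}))$) with the upward closure $g_p(\mathbb{P}(\mathfrak{T}))$. Let $\varphi$ be a quantifier-free sentence in the language associated with $\mathcal{M}$ that finitely determines $\mathcal{M}$. Put $\varphi$ in disjunctive normal form, and, since $\mathcal{M}\models\varphi$, fix a disjunct $\psi=\bigwedge_{j<k}L_j$ with $\mathcal{M}\models\psi$. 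Each literal $L_j$ is either an atomic formula or the negation of one involving finitely many elements of the base set of $\mathcal{M}$; using that elements of the base set lie in the constraint set for $\dot{\mathcal{U}}$ (and discarding any literal of the form $\neg\dot{X}(\Vec c)$ whose tuple falls outside the constraint set for $\dot{X}$, which is automatically true in every model of $\mathfrak{T}$), I translate each $L_j$ into a unique element $\ell_j$ of $\mathcal{L}_{\mathfrak{T}}$, and set $p:=\{\ell_j:j<k\}$. By construction, $p\subset\Sigma(\mathfrak{T},\mathcal{M})$.

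Next I would verify that $p$ is a condition. Since $\Sigma(\mathfrak{T},\mathcal{M})$ lives in some outer model of $V$ and $\Gamma_{\mathfrak{T}}(\mathcal{L}_{\mathfrak{T}},\mathfrak{A}_{\mathfrak{T}})$-certifies $p$, Lemma~\ref{inout} gives $p\in P(\mathfrak{T})$. To see $p$ is an atom, fix $q_1,q_2\leq_{\mathbb{P}(\mathfrak{T})}p$. By Lemma~\ref{mcequiv2} (and the definition of $P(\mathfrak{T})$, unwound via Lemma~\ref{inout} and absoluteness), each $q_i$ is contained in $\Sigma(\mathfrak{T},\mathcal{M}_i)$ for some $\mathcal{M}_i\models^*\mathfrak{T}$ in some outer model of $V$. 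Since $p\subset q_i$ is exactly the $\mathcal{L}_{\mathfrak{T}}$-translate of $\psi$, we have $\mathcal{M}_i\models\psi$, hence $\mathcal{M}_i\models\varphi$; finite determination forces $\mathcal{M}_i=\mathcal{M}$, so $q_1\cup q_2\subset\Sigma(\mathfrak{T},\mathcal{M})$, which $\Gamma_{\mathfrak{T}}$-certifies $q_1\cup q_2$, and Lemma~\ref{inout} again gives $q_1\cup q_2\in P(\mathfrak{T})$, witnessing $q_1\not\!\!\bot\, q_2$.

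With $p$ an atom, I identify $g_p(\mathbb{P}(\mathfrak{T}))$ with $\Sigma(\mathfrak{T},\mathcal{M})$ via $\bigcup g_p(\mathbb{P}(\mathfrak{T}))=\Sigma(\mathfrak{T},\mathcal{M})$. For $\subseteq$, any $q\in g_p(\mathbb{P}(\mathfrak{T}))$ satisfies $q\cup p\in P(\mathfrak{T})$, and repeating the finite-determination argument above forces $q\cup p\subset\Sigma(\mathfrak{T},\mathcal{M})$, so $q\subset\Sigma(\mathfrak{T},\mathcal{M})$. For $\supseteq$, any $\ell\in\Sigma(\mathfrak{T},\mathcal{M})$ yields $p\cup\{\ell\}\subset\Sigma(\mathfrak{T},\mathcal{M})$, which $\Gamma_{\mathfrak{T}}$-certifies $p\cup\{\ell\}$, placing $p\cup\{\ell\}$ in $P(\mathfrak{T})$ below $p$ and hence $\ell$ in $\bigcup g_p(\mathbb{P}(\mathfrak{T}))$. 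Finally, $p\in V$ and $\mathbb{P}(\mathfrak{T})\in V$, so $g_p(\mathbb{P}(\mathfrak{T}))\in V$ and consequently $\Sigma(\mathfrak{T},\mathcal{M})\in V$; since $\mathcal{M}$ is recovered from $\Sigma(\mathfrak{T},\mathcal{M})$ by the $\mathsf{ZFC}-\mathsf{Powerset}$-absolute bijection $\psi_{trans}$ of Lemma~\ref{mcequiv2}, $\mathcal{M}\in V$.

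The only delicate step is the translation from the quantifier-free $\varphi$ to a finite subset of $\mathcal{L}_{\mathfrak{T}}$: I need to make sure every literal in the chosen disjunct $\psi$ corresponds to a well-formed member of $\mathcal{L}_{\mathfrak{T}}$ (so that $p$ actually lives in $[\mathcal{L}_{\mathfrak{T}}]^{<\omega}$), and to handle negated literals at tuples outside the constraint set by noting they are vacuously true in every model of $\mathfrak{T}$ and can be dropped without weakening $\psi$. Everything else is a routine application of Lemma~\ref{inout}, the bijection from Lemma~\ref{mcequiv2}, and the finite-determination hypothesis.
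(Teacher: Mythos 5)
Your proposal is correct and follows essentially the same route as the paper: the atom $p$ is extracted from a conjunction of literals witnessing finite determination, and $\Sigma(\mathfrak{T},\mathcal{M})$ is identified with $\bigcup g_p(\mathbb{P}(\mathfrak{T}))$, giving $\mathcal{M}\in V$ via the absolute bijection of Lemma~\ref{mcequiv2}. The only divergence is that where the paper asserts atomhood and then cites Lemma~\ref{gpgeneric} together with Theorem~\ref{genericmodels}, you verify atomhood and both inclusions directly from Lemma~\ref{inout} and the finite-determination hypothesis --- a slightly more self-contained argument that, incidentally, does not route through Theorem~\ref{genericmodels}, whose stated hypothesis is restricted to $\Pi_2$ TCIs while the lemma here is asserted for arbitrary ones.
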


\begin{proof}
Let $\mathcal{M}$ be finitely determined by $\varphi$. Without loss of generality, we can assume $\varphi$ is the conjunction of a set of literals $\{l_i : i < n\}$ for some $n < \omega$. This means $$p := \{\ulcorner E(l_i) \urcorner : i < n\}$$ is an atom of $\mathbb{P}(\mathfrak{T})$. Lemma \ref{gpgeneric} tells us that $g_p (\mathbb{P}(\mathfrak{T}))$ is $\mathbb{P}(\mathfrak{T})$-generic over $V$, so necessarily $\Sigma(\mathfrak{T}, \mathcal{M}) = g_p (\mathbb{P}(\mathfrak{T}))$ by Theorem \ref{genericmodels}. Then according to Lemma \ref{mcequiv2}, $\mathcal{M} \in V$ because $g_p (\mathbb{P}(\mathfrak{T})) \in V$.
\end{proof}

It is possible to have an analogue of Lemma \ref{findetinV} for models that are ``close to being finitely determined''.

\begin{defi}\label{CB1}
Let $\mathfrak{T}$ be a TCI. Inductively define $\Gamma_{\mathfrak{T}}^{(\alpha)}$, $P(\mathfrak{T})^{(\alpha)}$ and $\mathbb{P}(\mathfrak{T})^{(\alpha)}$ for all ordinals $\alpha \leq |[\mathcal{L}_{\mathfrak{T}}]^{< \omega}|^+$ as follows:
\begin{align*}
    \Gamma_{\mathfrak{T}}^{(0)} := \ & \Gamma_{\mathfrak{T}} \text{,} \\
    P(\mathfrak{T})^{(0)} := \ & P(\mathfrak{T}) \text{,} \\
    \Gamma_{\mathfrak{T}}^{(\alpha)} := \ & \Gamma_{\mathfrak{T}}^{(\alpha - 1)} \cup \{\ulcorner \bigvee_{x \in p} (\neg E(x)) \urcorner : p \text{ is an atom of } \mathbb{P}(\mathfrak{T})^{(\alpha - 1)}\} \\
    & \text{if } \alpha \text{ is a successor ordinal,} \\
    \Gamma_{\mathfrak{T}}^{(\alpha)} := \ & \bigcup_{\beta < \alpha} \Gamma_{\mathfrak{T}}^{(\beta)} \\
    & \text{if } \alpha \text{ is a limit ordinal,} \\
    P(\mathfrak{T})^{(\alpha)} := \ & \{p \in [\mathcal{L}_{\mathfrak{T}}]^{< \omega} : \ \Vdash_{Col(\omega, |trcl(\mathfrak{A}_{\mathfrak{T}})|)} \exists \Sigma \ (``\Sigma \ \Gamma_{\mathfrak{T}}^{(\alpha)} (\mathcal{L}_{\mathfrak{T}}, \mathfrak{A}_{\mathfrak{T}})\text{-certifies } p")\} \text{,} \\
    \mathbb{P}(\mathfrak{T})^{(\alpha)} := \ & (P(\mathfrak{T})^{(\alpha)}, \leq_{\mathbb{P}(\mathfrak{T})}) \text{.}
\end{align*}
\end{defi}

By a simple cardinality argument, there must exist some $\alpha < |[\mathcal{L}_{\mathfrak{T}}]^{< \omega}|^+$ for which $\Gamma_{\mathfrak{T}}^{(\alpha)} = \Gamma_{\mathfrak{T}}^{(\alpha + 1)}$, whence $\mathbb{P}(\mathfrak{T})^{(\alpha)} = \mathbb{P}(\mathfrak{T})^{(\alpha + 1)}$.

\begin{defi}\label{CB2}
Let $\Gamma_{\mathfrak{T}}^{\top}$ denote the unique $\Gamma$ such that $\Gamma = \Gamma_{\mathfrak{T}}^{(\alpha)} = \Gamma_{\mathfrak{T}}^{(\alpha + 1)}$ for some $\alpha < |[\mathcal{L}_{\mathfrak{T}}]^{< \omega}|^+$. Similarly, $\mathbb{P}(\mathfrak{T})^{\top}$ shall denote the unique $\mathbb{P}$ such that $\mathbb{P} = \mathbb{P}(\mathfrak{T})^{(\alpha)} = \mathbb{P}(\mathfrak{T})^{(\alpha + 1)}$ for some $\alpha < |[\mathcal{L}_{\mathfrak{T}}]^{< \omega}|^+$.
\end{defi}

It is not hard to see that $P(\mathfrak{T})^{\top}$ is an atomless upward closed subset of $\mathbb{P}(\mathfrak{T})$ and $\Gamma_{\mathfrak{T}} \subset \Gamma_{\mathfrak{T}}^{\top}$.

\begin{rem}\label{CBrem}
In constructing the $\mathbb{P}(\mathfrak{T})^{(\alpha)}$'s, we are inductively removing atoms of $\mathbb{P}(\mathfrak{T})$. These atoms are representatives of isolated models of a TCI. By looking at Definition \ref{CB1} in this way, we can draw obvious parallels between $\mathbb{P}(\mathfrak{T})^{(\alpha)}$ and the $\alpha$-th-order Cantor-Bendixson derivative of a set. Such parallels culminate in $\mathbb{P}(\mathfrak{T})^{\top}$ being analogous to the ``perfect core'' of $\mathbb{P}(\mathfrak{T})$.
\end{rem}

\begin{defi}
Given a TCI $\mathfrak{T}$ and any $\mathcal{M}$, we say $\mathcal{M}$ is an \emph{almost finitely determined model of} $\mathfrak{T}$ iff $\mathcal{M} \models^* \mathfrak{T}$ and for some $\alpha < |[\mathcal{L}_{\mathfrak{T}}]^{< \omega}|^+$ and an atom $p$ of $\mathbb{P}(\mathfrak{T})^{(\alpha)}$, $$p \subset \Sigma(\mathfrak{T}, \mathcal{M}).$$
\end{defi}

We have as our next lemma, the promised analogue of Lemma \ref{findetinV}.

\begin{lem}\label{afdinV}
Let $\mathfrak{T}$ be a TCI and $\mathcal{M}$ be an almost finitely determined model of $\mathfrak{T}$ in some outer model of $V$. Then for some $\alpha < |[\mathcal{L}_{\mathfrak{T}}]^{< \omega}|^+$ and some atom $p$ of $\mathbb{P}(\mathfrak{T})^{(\alpha)}$, $\Sigma(\mathfrak{T}, \mathcal{M}) = g_p (\mathbb{P}(\mathfrak{T})^{(\alpha)})$. In particular, $\mathcal{M} \in V$.
\end{lem}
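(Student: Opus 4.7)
The plan is to mimic the proof of Lemma \ref{findetinV}, with $\mathbb{P}(\mathfrak{T})^{(\alpha)}$ in place of $\mathbb{P}(\mathfrak{T})$, after first pinning down the correct stage. The key initial move is to set
\[
\alpha^{*} := \min\{\alpha : \text{some atom of } \mathbb{P}(\mathfrak{T})^{(\alpha)} \text{ is contained in } \Sigma(\mathfrak{T}, \mathcal{M})\},
\]
which exists by the almost-finite-determination hypothesis, and to fix an atom $p^{*}$ of $\mathbb{P}(\mathfrak{T})^{(\alpha^{*})}$ with $p^{*} \subset \Sigma(\mathfrak{T}, \mathcal{M})$. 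By the minimality of $\alpha^{*}$, for every $\beta < \alpha^{*}$ and every atom $q$ of $\mathbb{P}(\mathfrak{T})^{(\beta)}$, some literal $y \in q$ lies outside $\Sigma(\mathfrak{T}, \mathcal{M})$, so $\neg y \in \Sigma(\mathfrak{T}, \mathcal{M})$ by $\mathcal{L}_{\mathfrak{T}}$-niceness; this verifies the extra disjunctions in $\Gamma_{\mathfrak{T}}^{(\alpha^{*})} \setminus \Gamma_{\mathfrak{T}}$, and combined with $\Sigma(\mathfrak{T}, \mathcal{M}) \models^{*}_{\mathfrak{A}_{\mathfrak{T}}} \Gamma_{\mathfrak{T}}$ (which is Lemma \ref{mcequiv2} applied to $\mathcal{M} \models^{*} \mathfrak{T}$) yields $\Sigma(\mathfrak{T}, \mathcal{M}) \models^{*}_{\mathfrak{A}_{\mathfrak{T}}} \Gamma_{\mathfrak{T}}^{(\alpha^{*})}$.

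Next I would prove the uniqueness claim that any $\mathcal{L}_{\mathfrak{T}}$-nice $\Sigma$ with $\Sigma \models^{*}_{\mathfrak{A}_{\mathfrak{T}}} \Gamma_{\mathfrak{T}}^{(\alpha^{*})}$ and $p^{*} \subset \Sigma$ must coincide with
\[
\Sigma_{0} := \{x \in \mathcal{L}_{\mathfrak{T}} : p^{*} \cup \{x\} \in P(\mathfrak{T})^{(\alpha^{*})}\}.
\]
The inclusion $\Sigma \subset \Sigma_{0}$ is by Lemma \ref{inout}: for $x \in \Sigma$, the set $\Sigma$ itself $\Gamma_{\mathfrak{T}}^{(\alpha^{*})}(\mathcal{L}_{\mathfrak{T}}, \mathfrak{A}_{\mathfrak{T}})$-certifies $p^{*} \cup \{x\}$. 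For the reverse inclusion, if $x \in \Sigma_{0}$ but $x \notin \Sigma$, then $\neg x \in \Sigma$ by niceness, so likewise $p^{*} \cup \{\neg x\} \in P(\mathfrak{T})^{(\alpha^{*})}$; but $p^{*}$ is an atom of $\mathbb{P}(\mathfrak{T})^{(\alpha^{*})}$, so $p^{*} \cup \{x\}$ and $p^{*} \cup \{\neg x\}$ are compatible there, and any common extension $r \in P(\mathfrak{T})^{(\alpha^{*})}$ would need to be certified by some $\mathcal{L}_{\mathfrak{T}}$-nice set containing both $x$ and $\neg x$, a contradiction. Applying the claim to $\Sigma(\mathfrak{T}, \mathcal{M})$ itself shows $\Sigma(\mathfrak{T}, \mathcal{M}) = \Sigma_{0} \in V$, whence $\mathcal{M} \in V$ by Lemma \ref{mcequiv2}.

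Finally, reading the set-equality $\Sigma(\mathfrak{T}, \mathcal{M}) = g_{p^{*}}(\mathbb{P}(\mathfrak{T})^{(\alpha^{*})})$ under the same convention as Lemma \ref{findetinV}: for $x \in \Sigma(\mathfrak{T}, \mathcal{M})$, the condition $p^{*} \cup \{x\}$ lies in $P(\mathfrak{T})^{(\alpha^{*})}$ and extends $p^{*}$, so it belongs to $g_{p^{*}}(\mathbb{P}(\mathfrak{T})^{(\alpha^{*})})$. Conversely, if $x$ appears in some $q \in g_{p^{*}}(\mathbb{P}(\mathfrak{T})^{(\alpha^{*})})$, then $p^{*} \cup q$ is contained in a common extension of $p^{*}$ and $q$ in $P(\mathfrak{T})^{(\alpha^{*})}$, which is itself $\Gamma_{\mathfrak{T}}^{(\alpha^{*})}$-certified by some $\mathcal{L}_{\mathfrak{T}}$-nice $\Sigma'$ in a weak outer model; the uniqueness claim then forces $\Sigma' = \Sigma_{0} = \Sigma(\mathfrak{T}, \mathcal{M})$, and hence $x \in \Sigma(\mathfrak{T}, \mathcal{M})$.

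The main obstacle is the first step --- arguing $\Sigma(\mathfrak{T}, \mathcal{M}) \models^{*}_{\mathfrak{A}_{\mathfrak{T}}} \Gamma_{\mathfrak{T}}^{(\alpha^{*})}$ --- which is precisely why the minimality of $\alpha^{*}$ is crucial: the witness $(\alpha, p)$ supplied by the almost-finite-determination hypothesis may sit at an arbitrarily large stage, and without the minimal choice one could not rule out $\Sigma(\mathfrak{T}, \mathcal{M})$ swallowing some earlier-stage atom and thus falsifying one of the disjunctions of $\Gamma_{\mathfrak{T}}^{(\alpha^{*})} \setminus \Gamma_{\mathfrak{T}}$. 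Once the correct stage is isolated, the atom-based uniqueness argument is completely parallel to the finitely-determined case and requires no new forcing-theoretic machinery beyond Lemma \ref{inout}.
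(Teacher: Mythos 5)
Your proof is correct, and it takes a genuinely different route from the paper's. The paper argues by induction on $\alpha \leq |[\mathcal{L}_{\mathfrak{T}}]^{< \omega}|^+$: the base case is Lemma \ref{findetinV}, and the inductive step runs on the dichotomy that either the given atom of $\mathbb{P}(\mathfrak{T})^{(\alpha)}$ already generates $\Sigma(\mathfrak{T}, \mathcal{M})$ --- a fact it obtains from Lemma \ref{gpgeneric} together with Theorem \ref{genericmodels}, hence ultimately from the Lemma \ref{main2} machinery --- or else some atom of an earlier stage is contained in $\Sigma(\mathfrak{T}, \mathcal{M})$, to which the induction hypothesis applies. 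Your choice of the minimal stage $\alpha^*$ compresses that descent into a single step, and, more substantively, you replace the appeal to genericity by a direct computation: minimality yields $\Sigma(\mathfrak{T},\mathcal{M}) \models^*_{\mathfrak{A}_{\mathfrak{T}}} \Gamma_{\mathfrak{T}}^{(\alpha^*)}$ because the added disjunctions of Definition \ref{CB1} are exactly the assertions that no earlier-stage atom is swallowed, and then atomicity of $p^*$ combined with Lemma \ref{inout} and $\mathcal{L}_{\mathfrak{T}}$-niceness pins down every certifying set above $p^*$ as the $V$-definable set $\Sigma_0$. What this buys is a more self-contained core --- no dependence on Theorem \ref{genericmodels} --- at the cost of redoing by hand the ``an atom determines its generic filter'' argument that the paper gets for free. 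Your reading of the equality $\Sigma(\mathfrak{T},\mathcal{M}) = g_{p}(\mathbb{P}(\mathfrak{T})^{(\alpha)})$ as identifying a filter with the union of its conditions is the same abuse already present in the statement and in Lemma \ref{findetinV}, so flagging the convention is appropriate rather than a gap.
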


\begin{proof}
Choose any model $\mathcal{M}$ of $\mathfrak{T}$ in an outer model of $V$. It suffices to prove by induction on $\alpha \leq |[\mathcal{L}_{\mathfrak{T}}]^{< \omega}|^+$ that
\begin{align*}
    \forall q \ \exists \beta \leq \alpha \ \exists p \ ( & (q \text{ is an atom of } \mathbb{P}(\mathfrak{T})^{(\alpha)} \text{ and } q \subset \Sigma(\mathfrak{T}, \mathcal{M})) \\
    & \implies (p \text{ is an atom of } \mathbb{P}(\mathfrak{T})^{(\beta)} \text{ and } \Sigma(\mathfrak{T}, \mathcal{M}) = g_p (\mathbb{P}(\mathfrak{T})^{(\beta)}))) \text{.}
\end{align*}
The base case where $\alpha = 0$ is just Lemma \ref{findetinV}. For the inductive case, assume $0 < \alpha \leq |[\mathcal{L}_{\mathfrak{T}}]^{< \omega}|^+$. and let $q$ be an atom of $\mathbb{P}(\mathfrak{T})^{(\alpha)}$ with $q \subset \Sigma(\mathfrak{T}, \mathcal{M})$. Then by Lemma \ref{gpgeneric} and the definition of $\mathbb{P}(\mathfrak{T})^{(\alpha)}$, either $\Sigma(\mathfrak{T}, \mathcal{M}) = g_q (\mathbb{P}(\mathfrak{T})^{(\alpha)})$ or there is $\beta' < \alpha$ and an atom $q'$ of $\mathbb{P}(\mathfrak{T})^{(\beta')}$ such that $q' \subset \Sigma(\mathfrak{T}, \mathcal{M})$. In the latter case, the inductive hypothesis gives us $\beta \leq \beta'$ and an atom $p$ of $\mathbb{P}(\mathfrak{T})^{(\beta)}$ for which $\Sigma(\mathfrak{T}, \mathcal{M}) = g_p (\mathbb{P}(\mathfrak{T})^{(\beta)})$. Either way we are done.
\end{proof}

The way $\mathbb{P}(\mathfrak{T})$ and $\mathbb{P}(\mathfrak{T})^{\top}$ are defined from a TCI $\mathfrak{T}$ allows us to establish a nice dichotomy on the $(\mathbb{P}(\mathfrak{T}), V)$-generic models of $\mathfrak{T}$ when $\mathfrak{T}$ is $\Pi_2$.

\begin{lem}
Let $\mathfrak{T}$ be a $\Pi_2$ TCI and $\mathcal{M}$ be a $(\mathbb{P}(\mathfrak{T}), V)$-generic model of $\mathfrak{T}$. Then one of the following must hold:
\begin{enumerate}[label=(\arabic*)]
    \item $\mathcal{M}$ is almost finitely determined.
    \item $\mathcal{M}$ is a $(\mathbb{P}(\mathfrak{T})^{\top}, V)$-generic model of $\mathfrak{T}$.
\end{enumerate}
\end{lem}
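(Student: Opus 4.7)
The plan is to fix a $\mathbb{P}(\mathfrak{T})$-generic filter $g$ over $V$ witnessing $\mathcal{M}$---so that $\Sigma(\mathfrak{T}, \mathcal{M}) = \bigcup g \cap \mathcal{L}_{\mathfrak{T}}$---and dichotomise on whether some $p \in g$ is an atom of $\mathbb{P}(\mathfrak{T})^{(\alpha)}$ for some $\alpha < |[\mathcal{L}_{\mathfrak{T}}]^{< \omega}|^+$. If such $p$ exists, then $p \subset \Sigma(\mathfrak{T}, \mathcal{M})$ puts $\mathcal{M}$ directly into the almost finitely determined case $(1)$. The substantive work will be to handle the negative case and show $\mathcal{M}$ must then be a $(\mathbb{P}(\mathfrak{T})^{\top}, V)$-generic model of $\mathfrak{T}$.

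The key step will be to upgrade ``no $p \in g$ is an atom of any $\mathbb{P}(\mathfrak{T})^{(\gamma)}$'' into ``no atom $q$ of any $\mathbb{P}(\mathfrak{T})^{(\gamma)}$ satisfies $q \subset \bigcup g$''. This should come from the filter property: if such a $q$ existed, then since $q$ is finite, iteratively taking common extensions of the conditions in $g$ witnessing each element of $q$ would produce some $p \in g$ with $q \subset p$; as $\leq_{\mathbb{P}(\mathfrak{T})}$ is reverse inclusion and $g$ is upward closed in $\mathbb{P}(\mathfrak{T})$, we would obtain $q \in g$, contradicting the hypothesis. This is the step I expect to be the main obstacle, though it is not especially hard once phrased correctly---the subtlety is recognising that atomhood of $q$ in $\mathbb{P}(\mathfrak{T})^{(\gamma)}$ plus $q \in g$ directly contradicts the case hypothesis, regardless of which $\gamma$ witnesses the atomhood.

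Once the filter-theoretic translation is in hand, $\Sigma(\mathfrak{T}, \mathcal{M})$ automatically models every $\ulcorner \bigvee_{x \in q} \neg E(x) \urcorner$ appearing in $\Gamma_{\mathfrak{T}}^{\top} \setminus \Gamma_{\mathfrak{T}}$. Combined with the fact (via Lemma~\ref{mcequiv2}) that $\Sigma(\mathfrak{T}, \mathcal{M})$ already $\Gamma_{\mathfrak{T}}(\mathcal{L}_{\mathfrak{T}}, \mathfrak{A}_{\mathfrak{T}})$-certifies $\emptyset$, this shows that in $V[g]$ it $\Gamma_{\mathfrak{T}}^{\top}(\mathcal{L}_{\mathfrak{T}}, \mathfrak{A}_{\mathfrak{T}})$-certifies each $p \in g$ (since $p \subset \Sigma(\mathfrak{T}, \mathcal{M})$). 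Applying Lemma~\ref{inout} in $V[g]$---a weak outer model of $V$---with $\lambda = |trcl(\mathfrak{A}_{\mathfrak{T}})|$ will then convert this into $p \in P(\mathfrak{T})^{\top}$, yielding $g \subset P(\mathfrak{T})^{\top}$ and hence that $g$ is a filter on $\mathbb{P}(\mathfrak{T})^{\top}$.

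To finish, genericity falls out of a routine regular-suborder argument: given any dense $D \subset P(\mathfrak{T})^{\top}$ in $V$, the set $D \cup (P(\mathfrak{T}) \setminus P(\mathfrak{T})^{\top})$ is dense in $\mathbb{P}(\mathfrak{T})$, so $g$ meets it, and the second summand is disjoint from $g$. Thus $g$ is $\mathbb{P}(\mathfrak{T})^{\top}$-generic over $V$ and, since $\Sigma(\mathfrak{T}, \mathcal{M}) = \bigcup g \cap \mathcal{L}_{\mathfrak{T}}$ is unchanged, witnesses $\mathcal{M}$ as a $(\mathbb{P}(\mathfrak{T})^{\top}, V)$-generic model of $\mathfrak{T}$, delivering case $(2)$.
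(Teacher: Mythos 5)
Your proposal is correct and follows essentially the same route as the paper: the same dichotomy on whether the generic filter meets the collection of atoms of the $\mathbb{P}(\mathfrak{T})^{(\alpha)}$'s, the same filter-property translation of this into no atom being a subset of $\bigcup g$ (which the paper asserts as an equivalence without spelling out), and the same certification argument placing $g$ inside $P(\mathfrak{T})^{\top}$. The only divergence is the final genericity transfer, where you pad a dense subset $D$ of $\mathbb{P}(\mathfrak{T})^{\top}$ with $P(\mathfrak{T}) \setminus P(\mathfrak{T})^{\top}$ instead of the paper's padding of a predense set with the set $\mathcal{A}$ of atoms; both work, and yours has the minor advantage of not relying on the paper's unproved observation that a condition incompatible with every atom must lie in $P(\mathfrak{T})^{\top}$.
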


\begin{proof}
Let $g$ be a $\mathbb{P}(\mathfrak{T})$-generic filter over $V$ and assume $\mathcal{A} \cap g = \emptyset$, where $$\mathcal{A} := \{p : \exists \alpha \ (\alpha < |[\mathcal{L}_{\mathfrak{T}}]^{< \omega}|^+ \text{ and } p \text{ is an atom of } \mathbb{P}(\mathfrak{T})^{(\alpha)})\}.$$ This latter assumption is equivalent to saying that the unique model $\mathcal{M}$ of $\mathfrak{T}$ for which $\bigcup g = \Sigma(\mathfrak{T}, \mathcal{M})$ is not almost finitely determined. By Theorem \ref{genericmodels}, it suffices to show that $g$ is a $\mathbb{P}(\mathfrak{T})^{\top}$-generic filter over $V$. Clearly, $\bigcup g$ $\Gamma_{\mathfrak{T}}^{\top} (\mathcal{L}_{\mathfrak{T}}, \mathfrak{A}_{\mathfrak{T}})$-certifies $p$, so $g \subset \mathbb{P}(\mathfrak{T})^{\top}$. That $\mathbb{P}(\mathfrak{T})^{\top}$ is a suborder of $\mathbb{P}(\mathfrak{T})$ means $g$ is a filter on $\mathbb{P}(\mathfrak{T})^{\top}$.

To see $g$ is $\mathbb{P}(\mathfrak{T})^{\top}$-generic over $V$, let $E$ be predense in $\mathbb{P}(\mathfrak{T})^{\top}$. Note that if $p \in \mathbb{P}(\mathfrak{T})$ is incompatible in $\mathbb{P}(\mathfrak{T})$ with every member of $\mathcal{A}$, then $p \in \mathbb{P}(\mathfrak{T})^{\top}$. As such, $E \cup \mathcal{A}$ must be predense in $\mathbb{P}(\mathfrak{T})$. But this implies $E \cap g \neq \emptyset$ because $g$ is $\mathbb{P}(\mathfrak{T})$-generic and $\mathcal{A} \cap g = \emptyset$.
\end{proof}

The following is a stronger version of Theorem \ref{genericmodels}.

\begin{thm}\label{genericdichom}
Let $\mathfrak{T}$ be a $\Pi_2$ TCI. If not all models of $\mathfrak{T}$ are almost finitely determined, then $\mathbb{P}(\mathfrak{T})^{\top}$ is non-empty and every $\mathbb{P}(\mathfrak{T})^{\top}$-generic filter over $V$ witnesses $\mathcal{M}$ is a $(\mathbb{P}(\mathfrak{T})^{\top}, V)$-generic model of $\mathfrak{T}$ for some $\mathcal{M}$.
\end{thm}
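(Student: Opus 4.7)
The plan is to mimic the proof of Theorem \ref{genericmodels}, substituting $\Gamma_{\mathfrak{T}}^{\top}$ for $\Gamma_{\mathfrak{T}}$ and $\mathbb{P}(\mathfrak{T})^{\top}$ for $\mathbb{P}(\mathfrak{T})$, after first establishing non-emptiness via the hypothesis. The critical combinatorial observation is that, for any $\mathcal{M} \models^{*} \mathfrak{T}$ in any outer model of $V$, the sentence $\ulcorner \bigvee_{x \in p} (\neg E(x)) \urcorner$ is satisfied by $\Sigma(\mathfrak{T}, \mathcal{M})$ precisely when $p \not\subset \Sigma(\mathfrak{T}, \mathcal{M})$. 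Consequently, by unwinding Definitions \ref{CB1} and \ref{CB2}, $\mathcal{M}$ is \emph{not} almost finitely determined if and only if $\Sigma(\mathfrak{T}, \mathcal{M}) \models^{*}_{\mathfrak{A}_{\mathfrak{T}}} \Gamma_{\mathfrak{T}}^{\top}$, equivalently $\Sigma(\mathfrak{T}, \mathcal{M})$ $\Gamma_{\mathfrak{T}}^{\top}(\mathcal{L}_{\mathfrak{T}}, \mathfrak{A}_{\mathfrak{T}})$-certifies $\emptyset$.

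First I would prove non-emptiness of $\mathbb{P}(\mathfrak{T})^{\top}$. By hypothesis and Lemma \ref{afdinV}, there exists a model $\mathcal{M}$ of $\mathfrak{T}$ in some outer (hence weak outer) model of $V$ such that $\Sigma(\mathfrak{T}, \mathcal{M})$ $\Gamma_{\mathfrak{T}}^{\top}(\mathcal{L}_{\mathfrak{T}}, \mathfrak{A}_{\mathfrak{T}})$-certifies $\emptyset$ by the observation above. Applying Lemma \ref{inout} with $\Gamma_{\mathfrak{T}}^{\top}$ in place of $\Gamma$ and any $\lambda \geq |trcl(\mathfrak{A}_{\mathfrak{T}})|$ yields $\emptyset \in P(\mathfrak{T})^{\top}$.

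For the genericity claim, I would next verify that all the hypotheses of Lemma \ref{main2} hold when $\Gamma$ is replaced by $\Gamma_{\mathfrak{T}}^{\top}$, $B$ by $[\mathcal{L}_{\mathfrak{T}}]^{< \omega}$, $\mathfrak{A}$ by $\mathfrak{A}_{\mathfrak{T}}$, $\mathcal{L}$ by $\mathcal{L}_{\mathfrak{T}}$, and $\mathbb{P}$ by $\mathbb{P}(\mathfrak{T})^{\top}$. Since each sentence $\ulcorner \bigvee_{x \in p} (\neg E(x)) \urcorner$ added at the successor stages is $(\mathcal{L}_{\mathfrak{T}})^{*}_{\mathfrak{A}_{\mathfrak{T}}}$-$\Delta_0$ (note $p$ is finite), and $\Gamma_{\mathfrak{T}}$ is a set of $(\mathcal{L}_{\mathfrak{T}})^{*}_{\mathfrak{A}_{\mathfrak{T}}}$-$\Pi_2$ sentences by Lemma \ref{mcequiv}, the entire $\Gamma_{\mathfrak{T}}^{\top}$ is a set of $(\mathcal{L}_{\mathfrak{T}})^{*}_{\mathfrak{A}_{\mathfrak{T}}}$-$\Pi_2$ sentences. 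The transfinite iteration computing $\mathbb{P}(\mathfrak{T})^{\top}$ closes off before stage $|[\mathcal{L}_{\mathfrak{T}}]^{< \omega}|^{+}$ and every intermediate object lies in $H(|trcl(\mathfrak{T})|^{+})$, so $\mathbb{P}(\mathfrak{T})^{\top}$ is a member of $\mathfrak{A}_{\mathfrak{T}}$ and is therefore $\Sigma_1$-definable in the language associated with $\mathfrak{A}_{\mathfrak{T}}$. The pair $(\mathfrak{A}_{\mathfrak{T}}, \mathbb{P}(\mathfrak{T})^{\top})$ is good for $\mathcal{L}_{\mathfrak{T}}$ since $P(\mathfrak{T})^{\top} \subset P(\mathfrak{T}) \subset [\mathcal{L}_{\mathfrak{T}}]^{<\omega} \cap A$ and the ordering is reverse inclusion.

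Finally, Lemma \ref{main2} delivers: whenever $g \in V[g]$ is $\mathbb{P}(\mathfrak{T})^{\top}$-generic over $V$, $\bigcup g$ $\Gamma_{\mathfrak{T}}^{\top}(\mathcal{L}_{\mathfrak{T}}, \mathfrak{A}_{\mathfrak{T}})$-certifies $\emptyset$ in $V[g]$. Because $\Gamma_{\mathfrak{T}} \subset \Gamma_{\mathfrak{T}}^{\top}$, $\bigcup g$ also $\Gamma_{\mathfrak{T}}(\mathcal{L}_{\mathfrak{T}}, \mathfrak{A}_{\mathfrak{T}})$-certifies $\emptyset$, so by Lemma \ref{mcequiv2} there is a unique $\mathcal{M} \models^{*} \mathfrak{T}$ with $\Sigma(\mathfrak{T}, \mathcal{M}) = \bigcup g$, and $g$ witnesses that $\mathcal{M}$ is a $(\mathbb{P}(\mathfrak{T})^{\top}, V)$-generic model of $\mathfrak{T}$. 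The main obstacle I anticipate is bookkeeping the interaction between the external Cantor-Bendixson iteration used to define $\Gamma_{\mathfrak{T}}^{\top}$ and the internal requirement that $\mathbb{P}(\mathfrak{T})^{\top}$ be recognisable inside $\mathfrak{A}_{\mathfrak{T}}$; verifying this absoluteness-style claim is where care is needed, but it rides on the same cardinal-bound argument noted in Remark \ref{CBrem} together with Lemma \ref{Pisabs} applied stagewise.
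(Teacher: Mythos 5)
Your proposal is correct and follows essentially the same route as the paper's proof: observe that a not-almost-finitely-determined model yields a $\Sigma$ that $\Gamma_{\mathfrak{T}}^{\top}(\mathcal{L}_{\mathfrak{T}}, \mathfrak{A}_{\mathfrak{T}})$-certifies $\emptyset$ (giving non-emptiness via Lemma \ref{inout}), then apply Lemma \ref{main2} with $\Gamma_{\mathfrak{T}}^{\top}$ and $\mathbb{P}(\mathfrak{T})^{\top}$ in place of $\Gamma$ and $\mathbb{P}$, and finish using $\Gamma_{\mathfrak{T}} \subset \Gamma_{\mathfrak{T}}^{\top}$ together with Lemma \ref{mcequiv2}. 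Your extra care about the $\Pi_2$-ness of the added disjunctions and the definability of $\mathbb{P}(\mathfrak{T})^{\top}$ inside $\mathfrak{A}_{\mathfrak{T}}$ simply makes explicit what the paper leaves as a routine check.
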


\begin{proof}
Assume not all models of $\mathfrak{T}$ are almost finitely determined, and let $\mathcal{M}$ be a model of $\mathfrak{T}$ not almost finitely determined in some outer model of $V$. Then $\Sigma(\mathfrak{T}, \mathcal{M})$ $\Gamma_{\mathfrak{T}}^{\top} (\mathcal{L}_{\mathfrak{T}}, \mathfrak{A}_{\mathfrak{T}})$-certifies $\emptyset$, so $\mathbb{P}(\mathfrak{T})^{\top}$ is non-empty.

Check that the hypothesis of Lemma \ref{main2} are satisfied when we have 
\begin{itemize}
    \item $\mathfrak{A}_{\mathfrak{T}}$ in place of $\mathfrak{A}$,
    \item $|\mathfrak{A}_{\mathfrak{T}}| = |trcl(\mathfrak{A}_{\mathfrak{T}})|$ in place of $\lambda$,
    \item $\mathcal{L}_{\mathfrak{T}}$ in place of $\mathcal{L}$,
    \item $[\mathcal{L}_{\mathfrak{T}}]^{< \omega}$ in place of $B$,
    \item $P(\mathfrak{T})^{\top}$ in place of $P$,
    \item $\mathbb{P}(\mathfrak{T})^{\top}$ in place of $\mathbb{P}$,
    \item $\Gamma_{\mathfrak{T}}^{\top}$ in place of $\Gamma$, 
    \item $g$ a $\mathbb{P}(\mathfrak{T})^{\top}$-generic filter over $V$, and
    \item $V[g]$ in place of $W$.
\end{itemize}
A direct application of said lemma, coupled with the knowledge that $\Gamma_{\mathfrak{T}} \subset \Gamma_{\mathfrak{T}}^{\top}$, completes the proof. 
\end{proof}

Two important, yet perhaps surprising, properties of $\Pi_2$ TCIs follow from the dichotomy in Theorem \ref{genericdichom}. We state these properties in the next corollary.

\begin{cor}\label{cor549}
The following statements hold.
\begin{enumerate}[label=(\arabic*)]
    \item Let $\mathfrak{T}$ be a $\Pi_2$ TCI. Every model of $\mathfrak{T}$ can be found in $V$ iff every model of $\mathfrak{T}$ is almost finitely determined (see also Lemma \ref{afdinV}).
    \item\label{cor5492} There is a procedure to uniformly decide in $V$, whether every model of any given $\Pi_2$ TCI $\mathfrak{T}$ is definable in $V$ with parameters from $$\mathrm{PS}(\mathfrak{T}) := trcl(\mathfrak{T}) \cup |trcl(\mathfrak{T})|^+ \cup H(\omega) \text{,}$$ or equivalently, whether every model of $\mathfrak{T}$ can be found in $V$.
    
    Moreover, for some $n < \omega$ there is a $(n+2)$-ary formula $\psi$ in the language of set theory such that, given any $\Pi_2$ TCI $\mathfrak{T}$,
    \begin{itemize}
        \item whenever $\Vec{z} \in \mathrm{PS}(\mathfrak{T})^n$, if there exists $x$ for which $\psi(x, \mathfrak{T}, \Vec{z})$ holds, then said $x$ is unique, and
        \item if every model of $\mathfrak{T}$ can be found in $V$, then in $V$, 
        \begin{align*}
            \{x : \exists \Vec{z} \ (\Vec{z} \in \mathrm{PS}(\mathfrak{T})^n \wedge \psi(x, \mathfrak{T}, \Vec{z}))\} & = \{\mathcal{M} : \mathcal{M} \models^* \mathfrak{T}\} \\
            & \subset H(|trcl(\mathfrak{T})|^+) \text{.}
        \end{align*}
    \end{itemize}
\end{enumerate}
\end{cor}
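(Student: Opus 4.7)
The plan for part (1) is to deploy Lemma \ref{afdinV} together with Theorem \ref{genericdichom}. The ``$\Leftarrow$'' direction is immediate: if every model of $\mathfrak{T}$ is almost finitely determined, Lemma \ref{afdinV} already places every such model inside $V$. For the ``$\Rightarrow$'' direction I would argue the contrapositive. Suppose some model of $\mathfrak{T}$, in some outer model of $V$, fails to be almost finitely determined. Theorem \ref{genericdichom} then yields that $\mathbb{P}(\mathfrak{T})^{\top}$ is nonempty, and since $\mathbb{P}(\mathfrak{T})^{\top}$ is atomless (as noted after Definition \ref{CB2}), a suitable forcing extension of $V$ carries a $\mathbb{P}(\mathfrak{T})^{\top}$-generic filter $g$ over $V$ with $g \notin V$. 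Another application of Theorem \ref{genericdichom} supplies a model $\mathcal{M}$ of $\mathfrak{T}$ with $\Sigma(\mathfrak{T}, \mathcal{M}) = \bigcup g$. Because conditions of $\mathbb{P}(\mathfrak{T})^{\top}$ are finite subsets of $\mathcal{L}_{\mathfrak{T}}$ under reverse inclusion, $g = \{p \in \mathbb{P}(\mathfrak{T})^{\top} : p \subset \bigcup g\}$, so $g$ is recoverable from $\bigcup g$, hence from $\mathcal{M}$ via $\psi_{trans}$ of Lemma \ref{mcequiv2}; thus $\mathcal{M} \in V$ would force $g \in V$, contradicting atomlessness.

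For the decidability half of part (2), the observation is that by part (1) the statement ``every model of $\mathfrak{T}$ lies in $V$'' is equivalent to ``$\mathbb{P}(\mathfrak{T})^{\top}$ is empty''. Lemma \ref{Pisabs} establishes that $\mathbb{P}(\mathfrak{T})$ is absolutely defined from $\mathfrak{T}$ for transitive models of $\mathsf{ZFC}$; a straightforward transfinite induction along the recursion in Definition \ref{CB1} lifts that absoluteness to each $\mathbb{P}(\mathfrak{T})^{(\alpha)}$, and hence to the fixed point $\mathbb{P}(\mathfrak{T})^{\top}$. Consequently the emptiness of $\mathbb{P}(\mathfrak{T})^{\top}$ can be checked in $V$ itself, uniformly in $\mathfrak{T}$, furnishing the required procedure.

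For the formula $\psi$ in part (2), I would take $n = 2$. The preparatory step is to fix, uniformly from the single parameter $\mathfrak{T}$, a canonical well-ordering $\iota_{\mathfrak{T}} \colon \mathcal{L}_{\mathfrak{T}} \to |\mathcal{L}_{\mathfrak{T}}|$ (for instance by ranking in $V$ and breaking ties using the structure of $\mathfrak{T}$) and combine it with a canonical bijection $[|\mathcal{L}_{\mathfrak{T}}|]^{<\omega} \leftrightarrow |\mathcal{L}_{\mathfrak{T}}|$, thereby encoding each finite $p \subset \mathcal{L}_{\mathfrak{T}}$ by an ordinal below $|\mathcal{L}_{\mathfrak{T}}|^+ \leq |trcl(\mathfrak{T})|^+$. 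Then I would let $\psi(x, \mathfrak{T}, \alpha, \beta)$ express: ``$\alpha, \beta < |trcl(\mathfrak{T})|^+$, $\beta$ decodes to a finite $p \subset \mathcal{L}_{\mathfrak{T}}$ which is an atom of $\mathbb{P}(\mathfrak{T})^{(\alpha)}$, and $x$ is the unique $\mathcal{M}$ with $\Sigma(\mathfrak{T}, \mathcal{M}) = g_p(\mathbb{P}(\mathfrak{T})^{(\alpha)})$'', the uniqueness being secured by Lemma \ref{mcequiv2}. Assuming every model of $\mathfrak{T}$ lies in $V$, part (1) and Lemma \ref{afdinV} guarantee that each model is captured by some such pair $(\alpha, p)$, so the class defined by $\psi$ exhausts $\{\mathcal{M} : \mathcal{M} \models^* \mathfrak{T}\}$; and since the base set of any model of $\mathfrak{T}$ is contained in the fixed set supplied by $\vartheta(\dot{\mathcal{U}})$, itself a member of $trcl(\mathfrak{T})$, this class lies in $H(|trcl(\mathfrak{T})|^+)$.

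The only genuinely delicate point I foresee is the uniformity of the encoding $\iota_{\mathfrak{T}}$ together with its companion finite-set coding: one has to verify that both are definable from the single parameter $\mathfrak{T}$ in a way absolute enough to make ``$\beta$ decodes to $p$'' a bona fide formula in $\mathfrak{T}$ alone. Once that bookkeeping is set up, everything else is assembly of Lemmas \ref{afdinV}, \ref{mcequiv2}, and \ref{Pisabs} with Theorem \ref{genericdichom}.
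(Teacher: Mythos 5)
Your part (1) and the decision procedure in part (2) are essentially sound. The recovery of $g$ from $\bigcup g$ (a condition $p$ lies in the filter iff $p \subset \bigcup g$, by finiteness of conditions plus upward closure of the filter), combined with the standard fact that a generic filter for a non-empty atomless forcing cannot lie in the ground model, correctly gives the forward direction of (1); the backward direction is Lemma \ref{afdinV}. One caveat on the decision procedure: the equivalence of ``every model lies in $V$'' with ``$P(\mathfrak{T})^{\top}=\emptyset$'' is not quite ``by part (1)''. Theorem \ref{genericdichom} only gives that the existence of a non--almost-finitely-determined model forces $P(\mathfrak{T})^{\top}\neq\emptyset$; for the converse you must rerun its proof starting from the bare fact $\emptyset\in P(\mathfrak{T})^{\top}$, so that Lemma \ref{main2} applied with $\Gamma_{\mathfrak{T}}^{\top}$ produces a generic model omitting every atom of every $\mathbb{P}(\mathfrak{T})^{(\alpha)}$ and hence not almost finitely determined. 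That is routine but should be said.

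The ``moreover'' clause is where your proof has a genuine gap, and it is exactly the point you waved off as bookkeeping. $\mathsf{ZFC}$ does not supply a well-ordering $\iota_{\mathfrak{T}}\colon\mathcal{L}_{\mathfrak{T}}\longrightarrow|\mathcal{L}_{\mathfrak{T}}|$ definable from $\mathfrak{T}$: the rank function only pre-well-orders $\mathcal{L}_{\mathfrak{T}}$, and there is no uniform definable tie-breaker within a rank level. Concretely, if $\vartheta(\dot{\mathcal{U}})=(\mathcal{P}(\omega),0)$ and $V$ admits no well-ordering of $\mathcal{P}(\omega)$ ordinal-definable from real parameters (which is consistent with $\mathsf{ZFC}$), then $\mathcal{L}_{\mathfrak{T}}$ carries a definable copy of $\mathcal{P}(\omega)$ and no injection of $[\mathcal{L}_{\mathfrak{T}}]^{<\omega}$ into the ordinals is definable from $\mathfrak{T}$ together with finitely many members of $\mathrm{PS}(\mathfrak{T})$; so ``$\beta$ decodes to $p$'' is not a formula in the allowed parameters and your $\psi$ does not exist as described. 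Nor can you dodge the ordinal coding by passing the atom directly: an atom $p$ is a finite subset of $\mathcal{L}_{\mathfrak{T}}$ whose members are strings generally lying outside $\mathrm{PS}(\mathfrak{T})$; each member can be named from a fixed number of parameters in $trcl(\mathfrak{T})\cup H(\omega)$, but atoms have unbounded finite cardinality while $n$ must be fixed in advance, and the finite set $p$ itself need not belong to $\mathrm{PS}(\mathfrak{T})$. Defining each almost finitely determined model from a bounded number of $\mathrm{PS}(\mathfrak{T})$-parameters therefore requires an idea not present in your proposal. (The paper states this corollary without proof, so there is no argument there to compare against; but as it stands your construction of $\psi$ does not go through.)
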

Notice that the first sentence to appear in \ref{cor5492} of Corollary \ref{cor549} is similar in form to Corollary \ref{cor527}. Through Remark \ref{rem537}, this points further to Theorem \ref{genericdichom} being an improvement upon Theorem \ref{genericmodels}. 

For a countable TCI $\mathfrak{T}$, the consistency of $\mathfrak{T}$ implies the existence of a model of $\mathfrak{T}$ in $V$. 

\begin{lem}\label{modelinV}
Let $\mathfrak{T} = (T, \sigma, \dot{\mathcal{U}}, \vartheta)$ be a TCI such that $$|\sigma \cup y| \leq \aleph_0$$ whenever $\vartheta(\dot{\mathcal{U}}) = (y, z)$ for some $z$. If $\mathfrak{T}$ is consistent then $\mathfrak{T}$ has a model in $V$.
\end{lem}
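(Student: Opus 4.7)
The plan is to exploit the countability of $\mathfrak{T}$ to turn the existence of a model into a $\Sigma^1_1$ statement with a real parameter, and then invoke Mostowski's absoluteness in the spirit of the proof of Lemma~\ref{inout}.

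First I would establish that every parameter in sight is countable. Since $\sigma$ is countable and variables come from the fixed countable set $\mathrm{Var}$ (Convention~\ref{firstconv}), the collection of first-order formulas over $\sigma$ is countable, so $T$ is countable as well. Together with $|y| \leq \aleph_0$ (and hence $|y^n| \leq \aleph_0$ for all $n < \omega$), this implies $|trcl(\mathfrak{T})| \leq \aleph_0$; by Lemma~\ref{setcode}, $\mathfrak{T}$ admits a real code. Moreover, whenever $\mathcal{M} = (U; \mathcal{I}) \models^* \mathfrak{T}$, the base set $U$ must be a subset of $y$ and each value of $\mathcal{I}$ lives in some $y^n$, so $\mathcal{M}$ itself is codable by a real uniformly relative to $\mathfrak{T}$.

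With these codings in hand, the statement $\psi := \exists \mathcal{M} \ (\mathcal{M} \models^* \mathfrak{T})$ unfolds into a $\Sigma^1_1$ formula with a real parameter coding $\mathfrak{T}$: one asserts the existence of a real coding a pair $(U, \mathcal{I})$, the $\vartheta$-constraints on $U$ and on the values of $\mathcal{I}$ are arithmetic in the codes, and the clause ``$\forall \phi \in T \ (\mathcal{M} \models \phi)$'' becomes a countable conjunction of statements arithmetic in the codes of $\phi$ and $\mathcal{M}$. By the consistency hypothesis together with Lemma~\ref{conalt}, $\psi$ holds in every $Col(\omega, \lambda)$-generic extension $V[g]$ of $V$, where $\lambda \geq |H(|trcl(\mathfrak{T})|^+)|$. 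As $V$ and $V[g]$ are transitive models of $\mathsf{ZFC}$ sharing the real code of $\mathfrak{T}$, Mostowski's absoluteness theorem pushes $\psi$ down to $V$, yielding the desired model of $\mathfrak{T}$ inside $V$.

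The main obstacle is simply the bookkeeping required to exhibit $\psi$ in the claimed $\Sigma^1_1$ form; in particular, one must fix an encoding in which the constraints imposed by $\vartheta$ are absorbed into the real parameter rather than being re-quantified, and verify that the per-formula satisfaction predicate is uniformly arithmetic across $\phi \in T$. Once that encoding is pinned down, the rest of the argument is routine.
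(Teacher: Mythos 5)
Your overall strategy is exactly the paper's: code everything by reals, observe that $\exists \mathcal{M}\,(\mathcal{M}\models^*\mathfrak{T})$ is $\mathbf{\Sigma^1_1}$ in a real code of $\mathfrak{T}$, and pull the witness down to $V$ by Mostowski absoluteness. One step, however, is wrong as stated: from $|\sigma\cup y|\leq\aleph_0$ you conclude $|trcl(\mathfrak{T})|\leq\aleph_0$, but countability of $\sigma\cup y$ as a set says nothing about the transitive closures of its \emph{elements}. A symbol in $\sigma$ is a triple $(X,0,n)$ with $X$ arbitrary, and $y$ could be, say, $\{\omega_1\}$ --- countable, yet with uncountable transitive closure. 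In that case Lemma~\ref{setcode} does not give a real code of $\mathfrak{T}$ and your $\mathbf{\Sigma^1_1}$ reduction has no real parameter to hang on.

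The repair is the first move of the paper's proof, which your write-up skips: fix a bijection $f_1:y\to|y|$ and a form-preserving signature embedding $f_2$ of $\sigma\cup\{\dot{\mathcal{U}}\}$ into $H(\omega)$, and replace $\mathfrak{T}$ by the induced TCI $\mathfrak{T}'$ whose universe bound is the ordinal $|y|\leq\omega$ and whose symbols lie in $H(\omega)$. This $\mathfrak{T}'$ genuinely has countable transitive closure, and $f_1,f_2$ induce (in $V$ and in every weak outer model) a bijection between models of $\mathfrak{T}$ and models of $\mathfrak{T}'$, so producing a model of $\mathfrak{T}'$ in $V$ suffices. With that reduction inserted, the rest of your argument --- the $\mathbf{\Sigma^1_1}$ bookkeeping, the use of Lemma~\ref{conalt} (or just the definition of consistency) to get a model in an outer model, and the absoluteness step --- goes through and matches the paper's proof.
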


\begin{proof}
Fix $f_1$ a bijection between $y$ and $|y|$, and $f_2$ a form-preserving signature embedding from $\sigma \cup \{\dot{\mathcal{U}}\}$ into $H(\omega)$. Then $f_1$ and $f_2$ naturally induce
\begin{itemize}
    \item a TCI $\mathfrak{T}' := (T', \sigma', \dot{\mathcal{U}}, \vartheta')$ in $V$ such that $\mathfrak{T}'$ has a countable transitive closure, and
    \item a bijection between $\{\mathcal{M} : \mathcal{M} \models^* \mathfrak{T}\}$ and $\{\mathcal{M} : \mathcal{M} \models^* \mathfrak{T}'\}$ in every weak outer model of $V$.
\end{itemize}
As a consequence, we can assume $\mathfrak{T}$ has a countable transitive closure without loss of generality. By Lemma \ref{setcode}, $\mathfrak{T}$ can be coded as a real. Besides, if $\mathfrak{T}$ has a model $\mathcal{M}$ in an outer model $W$ of $V$, then $M$ has a real code. By a routine check while unfurling the definition of $\models^*$ (see e.g. the proof of Lemma \ref{inout} for an argument of the satisfaction relation being $\Sigma_1$), we get that the statement $$\exists \mathcal{M} \ (\mathcal{M} \models^* \mathfrak{T})$$ is equivalent to a $\mathbf{\Sigma^1_1}$ sentence involving a real code of $\mathfrak{T}$ found in $V$, so it is absolute for $V$ and any of its weak outer models. If $\mathfrak{T}$ is consistent, it must have a model in some outer model of $V$, whence it has a model in $V$.
\end{proof}

Ideally, in the spirit of Lemmas \ref{genericls} and \ref{genericlscor}, we want to prove a generic version of Lemma \ref{modelinV}. This can be done through a relatively effective version of Theorem \ref{genericdichom} for a certain class of countable TCIs, so as to kill two birds with one stone. Some definitions and facts are prerequisites.

For the rest of this subsection,
\begin{itemize}
    \item fix a bijection 
    \begin{align*}
        f^{\dagger} : \ & \mathrm{Var} \cup \{x : x \text{ is a first-order logical symbol}\} \cup \{\ulcorner \in \urcorner\} \\
        & \longrightarrow \{n < \omega : n \text{ is odd}\} \text{,}
    \end{align*}
    and
    \item interpret $\Delta_n$, $\Pi_n$ and $\Sigma_n$ formulas the way they are defined in Definition \ref{def27}.
\end{itemize}

\begin{defi}
For any countable set $X$, we say $(r, f)$ \emph{witnesses} $(X; \in)$ \emph{is computable} iff
\begin{itemize} 
    \item $f$ is a bijection from $X$ into $\{n < \omega : n \text{ is even}\}$,
    \item
    \!
    $\begin{aligned}[t]
        r = \{\langle (f \cup f^{\dagger})^*(\varphi) \rangle : \varphi \text{ is a member of the } \Delta_0 \text{-elementary diagram of } (X; \in)\}, 
    \end{aligned}$
    \medskip
    \\
    where 
    \begin{itemize}[label=$\circ$]
        \item $\langle \cdot \rangle$ is the standard computable G\"odel numbering of strings over $\omega$, and
        \item $(f \cup f^{\dagger})^*$ is the canonical bijection from the set of finite strings over $dom(f \cup f^{\dagger})$ into the set of finite strings over $\omega$, induced by $f \cup f^{\dagger}$, and
    \end{itemize}
    \item $r$ is computable.
\end{itemize}
We say $r$ is a \emph{nicely computable code of} $(X; \in)$ iff there is $f$ for which $(r, f)$ witnesses $(X; \in)$ is computable.
\end{defi}

\begin{fact}
There is a nicely computable code of $(H(\omega); \in)$. 
\end{fact}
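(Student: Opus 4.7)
The plan is to use Ackermann's coding of the hereditarily finite sets. I would begin by defining recursively the Ackermann bijection $A : H(\omega) \longrightarrow \omega$ via $A(\emptyset) = 0$ and $A(x) = \sum_{y \in x} 2^{A(y)}$. A straightforward $\in$-induction shows $A$ is a bijection with the crucial \emph{bit-membership} property
\begin{equation*}
    y \in x \iff \lfloor A(x) / 2^{A(y)} \rfloor \text{ is odd}
\end{equation*}
for all $x, y \in H(\omega)$. Setting $f(x) := 2 \cdot A(x)$ then yields a bijection from $H(\omega)$ onto the even naturals, as required by the definition of a nicely computable code.

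With $f$ in hand, the task reduces to exhibiting a Turing machine that, on input $n \in \omega$, decides whether $n$ is of the form $\langle (f \cup f^\dagger)^*(\varphi) \rangle$ for some $\Delta_0$-sentence $\varphi$ in the $\Delta_0$-elementary diagram of $(H(\omega); \in)$. Since $\langle \cdot \rangle$ is computable and $f \cup f^\dagger$ has a computable range split into even and odd naturals, we can computably recover a candidate string over $\mathrm{Var} \cup \{\ulcorner \in \urcorner\} \cup \{\text{logical symbols}\} \cup H(\omega)$ where parameters are presented by their Ackermann codes. A syntactic parser (computable) checks whether this string really is a $\Delta_0$-sentence. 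If so, we pass it to a recursive evaluator $\mathsf{Eval}$ which, by induction on formula length, decides satisfaction in $(H(\omega); \in)$:
\begin{itemize}
    \item atomic $u = v$ reduces to equality of Ackermann codes;
    \item atomic $u \in v$ is the bit-membership predicate above;
    \item Boolean connectives are dispatched to $\mathsf{Eval}$ on their subformulas;
    \item a bounded quantifier $\ulcorner \forall y \in x \ \psi \urcorner$ (resp.\ $\exists$) is evaluated by enumerating the set bits of $A(x)$ — of which there are finitely many, all bounded by $\log_2 A(x)$ — recovering each corresponding element $y \in x$ via $A^{-1}$ (computable, since the preimage of any $m < \omega$ under $A$ can be built bottom-up in time bounded by $m$), substituting it for $y$ in $\psi$, and recursing.
\end{itemize}
Each recursive call is on a strictly shorter formula, so $\mathsf{Eval}$ terminates, and correctness follows by induction on formula length using $\Delta_0$-absoluteness between $H(\omega)$ and the substructure on the relevant finite transitive set.

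The main obstacle is entirely bookkeeping rather than mathematical: one must be careful that the substitution of a hereditarily finite set for a bound variable during the bounded-quantifier step produces a legitimate formula in the language associated with $(H(\omega); \in)$, that the G\"odel numbering via $f \cup f^\dagger$ behaves uniformly under such substitutions, and that the recursive inversion of $A$ along with the extraction of set bits is genuinely computable rather than merely computably enumerable. All of these are routine but tedious to verify; none requires any ingenuity beyond the Ackermann idea, and together they establish that $r$ is decidable, hence computable, completing the proof.
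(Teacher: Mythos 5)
Your proof is correct. The paper states this as a Fact and supplies no proof of its own, so there is nothing to compare against; the Ackermann coding $A(x) = \sum_{y \in x} 2^{A(y)}$ is the canonical witness, and your evaluator for the $\Delta_0$-elementary diagram --- deciding atomic membership by bit extraction from the codes and unwinding a bounded quantifier over the finitely many set bits of $A(x)$, with termination by induction on formula length --- is exactly the standard argument establishing that the resulting $r$ is decidable. The only inessential overclaim is that $A^{-1}(m)$ can be built ``in time bounded by $m$''; no such complexity bound is needed (nor obviously true in every machine model), since plain computability of the inversion, or indeed just of the bit-extraction on codes without ever materialising the sets, suffices.
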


\begin{fact}\label{uniquecode}
If $r$ is a nicely computable code of $(H(\omega); \in)$, then there is a unique $f$ for which $(r, f)$ witnesses $(H(\omega); \in)$ is computable.
\end{fact}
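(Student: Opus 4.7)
The plan is to exploit the fact that every hereditarily finite set admits a canonical parameter-free $\Delta_0$ definition in $(H(\omega); \in)$, so that the $\Delta_0$-elementary diagram determines the identity of each ``constant'' in the diagram. Since the image of $f$ is exactly the even numbers and $f^\dagger$ is fixed in advance, this forces the function $f$ to be recoverable from $r$ alone.

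First I would construct, by $\in$-induction on $a \in H(\omega)$, a $\Delta_0$ formula $\phi_a(x)$ (in the pure language of set theory, with a single free variable $x$ and \emph{no} constants) such that in $(H(\omega); \in)$ the extension of $\phi_a$ is exactly $\{a\}$. Writing $a = \{a_1, \dots, a_n\}$ (with $n = 0$ handling $a = \emptyset$) and assuming inductively that $\phi_{a_1}, \dots, \phi_{a_n}$ have been defined, one sets
\[
\phi_a(x) \;:=\; \Bigl(\forall z \in x \bigvee_{1 \le i \le n} \phi_{a_i}(z)\Bigr) \;\wedge\; \bigwedge_{1 \le i \le n} \exists z \in x \ \phi_{a_i}(z).
\]
All quantifiers are bounded by $x$, so $\phi_a$ is $\Delta_0$ in the sense of Definition \ref{def27}; and $\phi_a(b)$ holds in $(H(\omega); \in)$ iff $b$ has exactly the elements $a_1, \dots, a_n$, iff $b = a$.

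Next I would use these canonical definitions to pin down $f$. Suppose $(r, f)$ and $(r, f')$ both witness that $(H(\omega); \in)$ is computable; I want to show $f = f'$. Fix $a \in H(\omega)$. For any even $m \in \omega$, let $N(a, m) := \langle s(a, m) \rangle$, where $s(a, m)$ is the string obtained from $\phi_a(x)$ by replacing $x$ by $m$ and replacing each logical symbol and the symbol $\ulcorner \in \urcorner$ by its $f^\dagger$-image. The number $N(a, m)$ depends only on $a$ and $m$ (not on which $f$ we picked), because $f^\dagger$ is fixed globally. By the definition of $r$, the $\Delta_0$ sentence $\phi_a(c_b)$ (where $c_b$ is the constant for $b \in H(\omega)$ in the elementary diagram) contributes $N(a, f(b))$ to $r$ iff $\phi_a(b)$ holds, which by step 1 happens iff $b = a$. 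Since $f$ is a bijection onto the even numbers, this shows that $f(a)$ is the \emph{unique} even $m$ with $N(a, m) \in r$; the same characterisation applies to $f'(a)$. Hence $f(a) = f'(a)$, and since $a$ was arbitrary, $f = f'$.

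There is no serious obstacle: the only technical point that warrants care is verifying that the inductive construction of $\phi_a$ really stays inside $\Delta_0$ and that the substitution of the numerical parameter $m$ commutes cleanly with the coding $(f \cup f^\dagger)^*$, so that $N(a, m)$ is genuinely independent of which compatible $f$ one uses. Once the canonical definitions $\phi_a$ are in hand, uniqueness of $f$ is an immediate unpacking of what it means to be the $\Delta_0$-elementary diagram under the coding $(f \cup f^\dagger)^*$.
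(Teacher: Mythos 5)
Your proof is correct; the paper states this as an unproved Fact, and your argument via canonical parameter-free $\Delta_0$ definitions $\phi_a$ of each hereditarily finite set, which lets $f(a)$ be read off $r$ as the unique even $m$ with $\langle s(a,m)\rangle \in r$, is the standard and intended way to see it. The only cosmetic point is that the empty disjunction in $\phi_\emptyset$ must be rendered with the paper's connectives (e.g.\ as $\neg \exists z \in x\ (z = z)$) and the bound variables chosen canonically so that $N(a,m)$ is genuinely well-defined, but neither affects the argument.
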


Let $\mathfrak{T} = (T, \sigma, \dot{\mathcal{U}}, \vartheta)$ be a TCI, and $y$ be such that $\vartheta(\dot{\mathcal{U}}) = (y, z)$ for some $z$.

Assume $|\sigma \cup y| \leq \aleph_0$. Then we can find $f_1$ and $f_2$ such that
\begin{itemize}
    \item $f_1$ is a bijection from $y$ into $|y|$, and
    \item $f_2$ is a form-preserving signature embedding from $\sigma \cup \{\dot{\mathcal{U}}\}$ into $H(\omega)$.
\end{itemize}
Together, $f_1$ and $f_2$ naturally induce a TCI $\mathfrak{T}'$ with its associated $\mathcal{L}_{\mathfrak{T}'}$ being a subset of $H(\omega)$. Moreover, in every model $\mathfrak{A}$ of $\mathsf{ZFC - Powerset}$ containing $\{\mathfrak{T}, f_1, f_2\}$, $f_1$ and $f_2$ also induce a bijection $h^{\mathfrak{A}}$ from $$\{\mathcal{M} : \mathcal{M} \models^* \mathfrak{T}\}$$ into $$\{\mathcal{M}' : \mathcal{M}' \models^* \mathfrak{T}'\},$$ such that for all $\mathcal{M} \in dom(h^{\mathfrak{A}})$, $\mathcal{M} \cong h^{\mathfrak{A}}(\mathcal{M})$. 

Hence, if we only care about models of $\mathfrak{T}$ up to isomorphism, we can without loss of generality, assume $y$ is an ordinal at most $\omega$ and $\mathcal{L}_{\mathfrak{T}}$ is a subset of $H(\omega)$.

\begin{defi}
A TCI $\mathfrak{T} = (T, \sigma, \dot{\mathcal{U}}, \vartheta)$ is \emph{code-friendly} iff 
\begin{itemize}
    \item $\vartheta(\dot{\mathcal{U}}) = (y, z) \in (\omega + 1) \times 2$, and 
    \item $\mathcal{L}_{\mathfrak{T}} \subset H(\omega)$.
\end{itemize}
\end{defi}

Code-friendly TCIs are relatively well-behaved and easy to reason about, especially when it comes to things like absoluteness. Notice that given any code-friendly TCI $\mathfrak{T} = (T, \sigma, \dot{\mathcal{U}}, \vartheta)$ and any ordinal $\alpha$,
\begin{enumerate}[leftmargin=40pt, label=(CF\arabic*)]
    \item\label{cf1} $\mathcal{L}_{\mathfrak{T}}$ and $\Gamma_{\mathfrak{T}}$ are definable subsets of $H(\omega)$ over the structure $$\mathfrak{A}^*_{\mathfrak{T}} := \mathfrak(H(\omega); \in, T, \sigma, \dot{\mathcal{U}}, \vartheta),$$
    \item $P(\mathfrak{T})^{(\alpha)} = \{p \in [\mathcal{L}_{\mathfrak{T}}]^{< \omega} : \exists \Sigma \ (``\Sigma \ \Gamma_{\mathfrak{T}}^{(\alpha)} (\mathcal{L}_{\mathfrak{T}}, \mathfrak{A}^*_{\mathfrak{T}})\text{-certifies } p")\}$ by straightforward induction incorporating an argument similar to that which proved Lemma \ref{inout}, and hence
    \item $P(\mathfrak{T})^{\top}$ is $\Delta_1$-definable in $\mathfrak{A}^*_{\mathfrak{T}}$.
\end{enumerate}
This means that the definition of $\mathbb{P}(\mathfrak{T})^{\top}$ from a code-friendly TCI $\mathfrak{T}$ is absolute for transitive models of $\mathsf{ZFC - Powerset}$.

Recall Cohen forcing $\mathbb{C} = (C; \leq_{\mathbb{C}})$. We will use this labelling in the statements and proofs of the subsequent lemmas.

\begin{lem}\label{ctblegeneric}
Let $\mathfrak{T}$ be a code-friendly $\Pi_2$ TCI, and $(r, f)$ witness $(H(\omega); \in)$ is computable. Then one of the following must hold.
\begin{enumerate}[label=(\arabic*)]
    \item All models of $\mathfrak{T}$ are almost finitely determined.
    \item\label{3782} There is an oracle machine $\Psi$ and a countable structure $\mathfrak{A}$ in the language of set theory, such that whenever $g$ is a $\mathbb{C}$-generic filter over $\mathfrak{A}$, there is a unique model $\mathcal{M}_g$ of $\mathfrak{T}$ satisfying 
    \begin{equation*}
        \Psi^{(f" g) \oplus (f" (P(\mathfrak{T})^{\top}))} = f" (\Sigma(\mathfrak{T}, \mathcal{M}_g)) \text{.}
    \end{equation*}
    Moreover, the function $g \mapsto \mathcal{M}_g$ defined as such is injective.
\end{enumerate}
\end{lem}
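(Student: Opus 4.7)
The plan is to dispose of case~(1) trivially and build $\Psi$ and $\mathfrak{A}$ in case~(2). Assume some model of $\mathfrak{T}$ is not almost finitely determined. By Theorem~\ref{genericdichom}, $P(\mathfrak{T})^{\top}$ is non-empty; by Definition~\ref{CB1}, $\mathbb{P}(\mathfrak{T})^{\top}$ is atomless. Code-friendliness puts $P(\mathfrak{T})^{\top} \subseteq [H(\omega)]^{<\omega} \subseteq H(\omega)$, hence countable. Each $P(\mathfrak{T})^{(\alpha)}$ is closed under subsets (any $\Sigma$ certifying $p$ certifies each $p' \subseteq p$), so two conditions $p,q$ are compatible in $\mathbb{P}(\mathfrak{T})^{\top}$ iff $p \cup q \in P(\mathfrak{T})^{\top}$; via $f$, this compatibility relation is decidable from the oracle $P := f" P(\mathfrak{T})^{\top} \subseteq \omega$.

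I would next fix a canonical Cantor-style back-and-forth recursion on the enumerations of $\mathbb{C}$ and $P(\mathfrak{T})^{\top}$ afforded by $f$, producing a dense embedding $\pi : \mathbb{C} \to \mathbb{P}(\mathfrak{T})^{\top}$ preserving both order and incompatibility. Atomlessness, countability, and oracle-decidable compatibility make each extension step executable using only queries to $P$. Then I would take a countable transitive $\mathfrak{A} = (A; \in)$ modeling a sufficiently strong fragment of set theory with $\{\mathfrak{T}, r, f, P(\mathfrak{T})^{\top}\} \subseteq A$ --- obtained, e.g., as the Mostowski collapse of a countable elementary submodel of $H(\kappa)$ containing these parameters. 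Since the procedure producing $\pi$ is a fixed recursion in $\mathfrak{A}$ with input $P \in A$, we have $\pi \in A$.

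The machine $\Psi$ then hardcodes $(\mathfrak{T}, r, f)$ and the recursion above: on oracles $(f" g) \oplus P$, it simulates the recursion against $P$ to reconstruct $\pi$, then outputs the characteristic function of $f" \bigcup(\pi" g)$. By Fact~\ref{dic}, $\pi$ is complete, so when $g$ is $\mathbb{C}$-generic over $\mathfrak{A}$, the set $h := \mathrm{UC}(\mathbb{P}(\mathfrak{T})^{\top}, \pi" g)$ is $\mathbb{P}(\mathfrak{T})^{\top}$-generic over $\mathfrak{A}$; since $\mathfrak{A}^*_{\mathfrak{T}}$ is definable inside $\mathfrak{A}$, this implies $h$ is in particular $\mathbb{P}(\mathfrak{T})^{\top}$-$\Sigma_1$-generic over $\mathfrak{A}^*_{\mathfrak{T}}$. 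Subset closure of $P(\mathfrak{T})^{\top}$ combined with the filter property gives $h = \{p \in P(\mathfrak{T})^{\top} : p \subseteq \bigcup h\}$, so $\bigcup h = \bigcup(\pi" g)$. Applying Lemma~\ref{main2} with $\mathfrak{A}^*_{\mathfrak{T}}, \mathcal{L}_{\mathfrak{T}}, \Gamma_{\mathfrak{T}}^{\top}, \mathbb{P}(\mathfrak{T})^{\top}$ in the respective roles --- the failure of case~(1) together with Theorem~\ref{genericdichom} supplying a certifier for $\emptyset$ in an outer model --- and then invoking Remark~\ref{swapA}, I obtain that $\bigcup h$ $\Gamma_{\mathfrak{T}}(\mathcal{L}_{\mathfrak{T}}, \mathfrak{A}_{\mathfrak{T}})$-certifies $\emptyset$. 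Lemma~\ref{mcequiv2} then yields a unique $\mathcal{M}_g \models^* \mathfrak{T}$ with $\Sigma(\mathfrak{T}, \mathcal{M}_g) = \bigcup h$, whence $\Psi^{(f" g) \oplus P} = f" \Sigma(\mathfrak{T}, \mathcal{M}_g)$. For injectivity, given distinct $\mathbb{C}$-generics $g_1, g_2$ over $\mathfrak{A}$, pick $p \in g_1 \setminus g_2$; since $g_2$ is a filter not containing $p$, it meets the dense set $\{r \in C : r \leq_\mathbb{C} p \text{ or } r \perp_\mathbb{C} p\}$ at some $q \in g_2$ with $q \perp_\mathbb{C} p$, giving $\pi(p)$ and $\pi(q)$ incompatible in $\mathbb{P}(\mathfrak{T})^{\top}$. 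Thus $\pi(p) \notin h_{g_2}$, and since $h_{g_2}$ is closed under subsets and finite upper bounds in $\subseteq$, $\pi(p) \not\subseteq \bigcup h_{g_2}$; as $\pi(p) \subseteq \bigcup h_{g_1}$, the bijectivity clause of Lemma~\ref{mcequiv2} produces $\mathcal{M}_{g_1} \neq \mathcal{M}_{g_2}$.

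The hard part will be pinning down a concrete oracle-algorithm for the back-and-forth that, at every stage, uniformly produces an extension of the partial $\pi$ witnessing both the preservation invariants \emph{and} the next scheduled density/range requirement, using only queries to $P$; the standard proof that countable atomless separative posets admit dense embeddings from $\mathbb{C}$ needs to be recast so that every ``choose an appropriate extension'' step is replaced by a concrete oracle subroutine, and one must verify this recursion is absolutely defined inside $\mathfrak{A}$. Once that is in hand, the rest is a mechanical assembly of Fact~\ref{dic}, Lemma~\ref{main2}, and Lemma~\ref{mcequiv2}.
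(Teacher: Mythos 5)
Your proposal follows essentially the same route as the paper's proof: isolate the non-trivial case, fix a countable $\mathcal{L}_{\mathfrak{T}}$-suitable structure containing $P(\mathfrak{T})^{\top}$, build an oracle-computable dense embedding of $\mathbb{C}$ into the countable atomless poset $\mathbb{P}(\mathfrak{T})^{\top}$ (the effectivized splitting-node recursion you defer as ``the hard part'' is exactly what the paper's Proposition \ref{notlastp} carries out, using the same ingredients you identify), push the Cohen generic forward, and finish with Lemma \ref{main2} plus Lemma \ref{mcequiv2}; your injectivity argument likewise matches the paper's factorization of the map $g \mapsto \mathcal{M}_g$ into injective pieces. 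The one imprecision is attributing the genericity of $\mathrm{UC}(\mathbb{P}(\mathfrak{T})^{\top}, \pi" g)$ to completeness via Fact \ref{dic} --- completeness yields pull-back of generic filters, not push-forward; what you actually need (and have) is density of $\pi$ together with the observation that the pulled-back dense sets remain definable over $\mathfrak{A}$, which is the content of the paper's Proposition \ref{lastprop0}.
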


\begin{proof}
Assume not all models of $\mathfrak{T}$ are almost finitely determined. For brevity, let us write
\begin{align*}
    s := \ & f" (P(\mathfrak{T})^{\top}) \text{ and} \\
    \leq_s \ := \ & f" (\leq_{\mathbb{P}(\mathfrak{T})^{\top}}) \text{.}
\end{align*} 
We shall identify $s$ with $(s, \leq_s)$ whenever contextually necessary. This can be done without loss of generality because $\leq_s$ is computable in $s$.

Going forward, even beyond this proof, we would often argue about things in $H(\omega)$ even though our intended domain of discourse is the set of natural numbers. This is because first-order truths about $(H(\omega); \in)$ are uniformly propagated by $f$ onto its range, so that specific versions of them hold there as well. If one such truth is sufficiently simple, then $r$ knows the version of it on $ran(f)$ and can then relay that to the appropriate machines for further processing. 

Let $\mathfrak{A} = (A; \in)$ be any countable elementary substructure of $\mathfrak{A}_{\mathfrak{T}}$ with $P(\mathfrak{T})^{\top} \in A$. Then $A$ is finitely transitive because $\mathfrak{A}_{\mathfrak{T}}$ is a transitive model of $\mathsf{ZFC - Powerset}$. Since $\mathcal{L}_{\mathfrak{T}}$ is just the closure of $\bigcup P(\mathfrak{T})^{\top}$ under negation in $\mathfrak{A}_{\mathfrak{T}}$, we have $\mathcal{L}_{\mathfrak{T}} \in A$. That $\mathfrak{T}$ is code-friendly and $H(\omega) \subset A$ implies $\mathcal{L}_{\mathfrak{T}} \subset A$ too, so $\mathfrak{A}$ is $\mathcal{L}_{\mathfrak{T}}$-suitable. The members of $\Gamma_{\mathfrak{T}}^{\top}$ are $(\mathcal{L}_{\mathfrak{T}})^*_{\mathfrak{A}_{\mathfrak{T}}}$-$\Pi_2$ sentences with a single parameter $P(\mathfrak{T})^{\top}$ and quantification exclusively over $H(\omega)$, entailing that $\Gamma_{\mathfrak{T}}^{\top}$ is also a set of $(\mathcal{L}_{\mathfrak{T}})^*_{\mathfrak{A}}$-$\Pi_2$ sentences. 

In this vein, similar to what we did in the proof of Theorem \ref{genericdichom}, check that all except the last two points in the hypothesis of Lemma \ref{main2} are satisfied with 
\begin{itemize}
    \item $\mathfrak{A}$ as defined,
    \item $|\mathfrak{A}_{\mathfrak{T}}| = |trcl(\mathfrak{A}_{\mathfrak{T}})|$ in place of $\lambda$,
    \item $\mathcal{L}_{\mathfrak{T}}$ in place of $\mathcal{L}$,
    \item $[\mathcal{L}_{\mathfrak{T}}]^{< \omega}$ in place of $B$,
    \item $P(\mathfrak{T})^{\top}$ in place of $P$,
    \item $\mathbb{P}(\mathfrak{T})^{\top}$ in place of $\mathbb{P}$, and
    \item $\Gamma_{\mathfrak{T}}^{\top}$ in place of $\Gamma$.
\end{itemize} 
Following the proof of Theorem \ref{genericdichom}, while bearing in mind
\begin{itemize}
    \item $\Gamma_{\mathfrak{T}} \subset \Gamma_{\mathfrak{T}}^{\top}$,
    \item Remark \ref{swapA} and how its invocation is justified by the preceding paragraph, as well as
    \item the injectivity of the function $\mathcal{M} \mapsto \Sigma(\mathfrak{T}, \mathcal{M})$, 
\end{itemize}
we apply Lemma \ref{main2} with $V$ in place of $W$ to give us 
\begin{align}\label{marker0}
\begin{split}
    \forall \bar{g} \ \exists ! \mathcal{M}_{\bar{g}} \ ( & \bar{g} \text{ is a } \mathbb{P}(\mathfrak{T})^{\top} \text{-} \Sigma_1 \text{-generic filter over } \mathfrak{A} \\ 
    & \implies (\mathcal{M}_{\bar{g}} \models^* \mathfrak{T} \text{ and } \bigcup \bar{g} = \Sigma(\mathfrak{T}, \mathcal{M}_{\bar{g}}))) \text{.}
\end{split}
\end{align}
In particular,
\begin{align}\label{marker}
\begin{split}
    \forall \bar{g} \ \exists ! \mathcal{M}_{\bar{g}} \ ( & \bar{g} \text{ is a } \mathbb{P}(\mathfrak{T})^{\top} \text{-generic filter over } \mathfrak{A} \\ 
    & \implies (\mathcal{M}_{\bar{g}} \models^* \mathfrak{T} \text{ and } \bigcup \bar{g} = \Sigma(\mathfrak{T}, \mathcal{M}_{\bar{g}}))) \text{.}
\end{split}
\end{align}
Passing ($\ref{marker}$) through $f$ leads us to the presence of an oracle machine $\bar{\Phi}$ fulfilling
\begin{align}\label{marker2}
\begin{split}
    \forall \bar{g} \ \exists ! \mathcal{M}_{\bar{g}} \ ( & \bar{g} \text{ is a } s \text{-generic filter over } \mathfrak{A} \\ 
    & \implies (\mathcal{M}_{\bar{g}} \models^* \mathfrak{T} \text{ and } \bar{\Phi}^{\bar{g}} = f" (\Sigma(\mathfrak{T}, \mathcal{M}_{\bar{g}})))) \text{.}
\end{split}
\end{align}

Next, note that 
\begin{align*}
    u := \ & f" C \text{ and} \\
    \leq_u \ := \ & f" (\leq_{\mathbb{C}})
\end{align*}
are computable subsets of $\omega$. We shall, without loss of generality, identify $u$ with $(u, \leq_u)$ whenever contextually necessary.

\begin{prop}\label{notlastp}
There is a dense embedding $\pi$ of $\mathbb{C}$ into $\mathbb{P}(\mathfrak{T})^{\top}$ such that $\pi$ is $\Delta_1$-definable over the structure $(H(\omega); \in, P(\mathfrak{T})^{\top})$.
\end{prop}

\begin{proof}
First, $C$ is a $\Delta_0$ subset of $H(\omega)$. That $(r, f)$ witnesses $H(\omega)$ is computable means $f$ and $g := f^{-1}$ are functions of which graphs are $\Delta_1$-definable over $$\mathfrak{B} := (H(\omega); \in, P(\mathfrak{T})^{\top}).$$ Also, $\mathcal{L}_{\mathfrak{T}}$ is $\Delta_1$-definable over $\mathfrak{B}$ because $$x \in \mathcal{L}_{\mathfrak{T}} \iff \{x\} \in P(\mathfrak{T})^{\top} \text{ or } \{\neg x\} \in P(\mathfrak{T})^{\top}.$$  

Inductively define sequences $\{a_n : n < \omega\}$ and $\{k_n : n < \omega\}$ as follows:
\begin{align*}
    a_0 := \ & \emptyset \text{,} \\
    k_n := \ & min((f" \mathcal{L}_{\mathfrak{T}}) \setminus (f" a_n)) \text{, and} \\
    a_{n + 1} := \ & a_n \cup \{g(k_n)\} \cup \{\neg(g(k_n))\} \text{.}
\end{align*}
Note that $\{a_n\}$ and $\{k_n\}$ are $\Delta_1$-definable over $\mathfrak{B}$. Next, let $P^*$ be such that
\begin{align*}
    x \in P^* \iff x \in P(\mathfrak{T})^{\top} \text{ and } x \subset a_{|x|} \text{,}
\end{align*}
and have $x$ \emph{split in} $P^*$ iff
\begin{align*}
    x \in P^* \text{ and } \forall y \ (y \in a_{|x| + 1} \setminus a_{|x|} \implies x \cup \{y\} \in P^*) \text{,}
\end{align*}
so that both $P^*$ and the set of all its members that split in $P^*$ are $\Delta_1$-definable over $\mathfrak{B}$. We say $x$ is a $P^*$\emph{-least split above} $z$ iff
\begin{align*}
    z \subset x \text{ and } x \text{ splits in } P^* \text{ and } \forall y \ (z \subset y \subsetneq x \implies y \text{ does not split in } P^*) \text{.}
\end{align*}
Clearly, $P^*$ is dense in $P(\mathfrak{T})^{\top}$, so $P(\mathfrak{T})^{\top}$ being atomless entails $P^*$ is too. This yields the existence of a --- necessarily unique --- $P^*$-least split above $z$ for every $z \in P^*$.

Finally, we can inductively define $\pi$ on $C$ as such:
\begin{align*}
    \pi(\emptyset) := \ & \text{the } P^* \text{-least split above } \emptyset \text{,} \\
    \pi(x^{\frown}\langle 0 \rangle) := \ & \text{the } P^* \text{-least split above } \pi(x) \cup \{\varphi_{|x|, 0}\} \text{, and} \\
    \pi(x^{\frown}\langle 1 \rangle) := \ & \text{the } P^* \text{-least split above } \pi(x) \cup \{\varphi_{|x|, 1}\} \text{,}
\end{align*}
where
\begin{align*}
    \varphi_{n, 0} := \ & \text{the unique member of } a_{n + 1} \setminus a_n \text{ with leading symbol } \ulcorner \neg \urcorner \text{, and} \\
    \varphi_{n, 1} := \ & \text{the unique member of } a_{n + 1} \setminus a_n \text{ with leading symbol not } \ulcorner \neg \urcorner \text{.} 
\end{align*}
It in not difficult to see that $ran(\pi)$ is dense in $P^*$, and thus in $P(\mathfrak{T})^{\top}$. Moreover, since for each $x \in C$, the definition of $\pi(x)$ depends only on the finite set $$\{\pi(x \! \restriction_n) : n < |x|\}$$ and finitely many parameters which are $\Delta_1$-definable over $\mathfrak{B}$, $\pi$ must be $\Delta_1$-definable over $\mathfrak{B}$ as well.
\end{proof}

Proposition \ref{notlastp}, via $f$, implies the existence of a dense embedding $\pi$ of $\mathbb{C}$ into $\mathbb{P}(\mathfrak{T})^{\top}$ with $f" \pi$ computable in $s$, which is all we need to proceed. Fix any such $\pi$. It is not difficult to verify that $f \circ \pi = (f" \pi) \circ f$ on domain $C$ and taking upward closure of a set in a forcing notion commutes with $f$. As such, 
\begin{align}\label{number8}
\begin{split}
    \forall g \ (g \text{ is a } \mathbb{C} \text{-generic filter over } \mathfrak{A} \implies ( & \mathrm{UC}(s, (f" \pi)" (f" g)) \text{ is a filter on } s \text{ and} \\
    & \Phi^{(f" g) \oplus s} = \mathrm{UC}(s, (f" \pi)" (f" g))))
\end{split}
\end{align}
for some oracle machine $\Phi$.

\begin{prop}\label{lastprop0}
Let $g$ be a $\mathbb{C}$-generic subset over $\mathfrak{A}$. Then $\mathrm{UC}(\mathbb{P}(\mathfrak{T})^{\top}, \pi" g)$ is a $\mathbb{P}(\mathfrak{T})^{\top}$-generic subset over $\mathfrak{A}$.
\end{prop}

\begin{proof}
Let $h$ denote $\mathrm{UC}(\mathbb{P}(\mathfrak{T})^{\top}, \pi" g)$, and $D$ be a dense subset of $\mathbb{P}(\mathfrak{T})^{\top}$ definable in the language associated with $\mathfrak{A}$. Then the set
\begin{align*}
    D' := \{p \in C : \exists q \ (q \in D \text{ and } \pi(p) \leq_{\mathbb{P}(\mathfrak{T})^{\top}} q)\}
\end{align*}
is also definable in the language associated with $\mathfrak{A}$. Choose any $p_0 \in C$. By the density of $D$ in $\mathbb{P}(\mathfrak{T})^{\top}$, there is $q_0 \in D$ such that $q_0 \leq_{\mathbb{P}(\mathfrak{T})^{\top}} \pi(p_0)$. That $\pi$ is a dense embedding tells us there exists $q_1 \in ran(\pi)$ with $q_1 \leq_{\mathbb{P}(\mathfrak{T})^{\top}} q_0$. Now, for some $p_1 \leq_{\mathbb{C}} p_0$, $q_1 = \pi(p_1)$ and $p_1 \in D'$. We can therefore conclude that $D'$ is dense in $\mathbb{C}$.

As $g$ is $\mathbb{C}$-generic over $\mathfrak{A}$, we can find $p \in g \cap D'$. Seeing that $\pi(p) \in h$ and $h$ is upward closed, we have by the definition of $D'$, $h \cap D \neq \emptyset$.
\end{proof}

Passing Proposition \ref{lastprop0} through $f$ strengthens (\ref{number8}) to 
\begin{align}\label{number8p}
\begin{split}
    \forall g \ (g \text{ is a } \mathbb{C} \text{-generic filter over } \mathfrak{A} \implies ( & \mathrm{UC}(s, (f" \pi)" (f" g)) \text{ is a } s \text{-generic} \\ 
    & \text{filter over } \mathfrak{A} \text{ and} \\
    & \Phi^{(f" g) \oplus s} = \mathrm{UC}(s, (f" \pi)" (f" g)))) \text{.}
\end{split}
\end{align}
Now (\ref{marker2}) and (\ref{number8p}) in conjunction tells us that we can combine $\bar{\Phi}$ and $\Phi$ into an oracle machine $\Psi$ such that
\begin{align}\label{number10p}
\begin{split}
    \forall g \ \exists ! \mathcal{M}_g \ ( & g \text{ is a } \mathbb{C} \text{-generic filter over } \mathfrak{A} \\ 
    & \implies (\mathcal{M}_g \models^* \mathfrak{T} \text{ and } \Psi^{(f" g) \oplus s} = f" (\Sigma(\mathfrak{T}, \mathcal{M}_g))))
\end{split}
\end{align}
and
\begin{align*}
    F_{\Psi} := ((g \text{ a } \mathbb{C} \text{-generic filter over } \mathfrak{A}) \mapsto \mathcal{M}_g \text{ as per (\ref{number10p})}) = F_{\bar{\Phi}} \circ F_{\Phi} \text{,}
\end{align*}
where
\begin{align*}
    F_{\bar{\Phi}} := \ & (\bar{g} \text{ a } s \text{-generic filter over } \mathfrak{A}) \mapsto \mathcal{M}_{\bar{g}} \text{ as per (\ref{marker2}), and} \\
    F_{\Phi} := \ & (g \text{ a } \mathbb{C} \text{-generic filter over } \mathfrak{A}) \mapsto \mathrm{UC}(s, (f" \pi)" (f" g)) \text{ as per (\ref{number8p})}
\end{align*}
are both injective. 
\end{proof}

\begin{rem}\label{lastrem}
Observe that we derived $\pi$ in a uniform way from the parameters given in Proposition \ref{notlastp}. Turning our attention to the proof of Lemma \ref{ctblegeneric}, said observation passes through $f$ to imply $F_{\Phi}$ is derivable uniformly in $\mathbb{P}(\mathfrak{T})^{\top}$, and thus in $\mathfrak{T}$. Obviously, $F_{\bar{\Phi}}$ is derivable uniformly in $\mathfrak{T}$, so $F_{\Psi}$ is too. As a result, this same $\Psi$ works uniformly in $\mathfrak{T}$ to witness Lemma \ref{ctblegeneric} for all $\mathfrak{T}$ and $g$ as given in said lemma.
\end{rem}

In addition to Remark \ref{lastrem}, we can also strengthen Lemma \ref{ctblegeneric} by lowering the requirement on the genericity of $g$ and omitting $(A; \in)$ altogether. We formulate a strengthened version below in the nomenclature of computability theory.

\begin{lem}\label{ctblegeneric2}
Let $(r, f)$ witness $(H(\omega); \in)$ is computable. Then there is an oracle machine $\Psi$ such that whenever $\mathfrak{T}$ is a code-friendly $\Pi_2$ TCI, one of the following must hold.
\begin{enumerate}[label=(\arabic*)]
    \item All models of $\mathfrak{T}$ are almost finitely determined.
    \item\label{4532} For every $(f" (P(\mathfrak{T})^{\top}))$-$1$-generic real $t$, there is a unique model $\mathcal{M}_t$ of $\mathfrak{T}$ satisfying $$\Psi^{t \oplus (f" (P(\mathfrak{T})^{\top}))} = f" (\Sigma(\mathfrak{T}, \mathcal{M}_t)).$$ Moreover for every $\mathfrak{T}$, the function $t \mapsto \mathcal{M}_t$ defined as such is injective.
\end{enumerate}
\end{lem}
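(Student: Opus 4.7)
The plan is to reuse the oracle machine from Lemma~\ref{ctblegeneric}, exploiting the uniformity guaranteed by Remark~\ref{lastrem} and reducing $s$-1-genericity to the set-theoretic genericity hypothesis of the earlier lemma. Explicitly, I would first fix an oracle machine $\Psi_0$ witnessing Lemma~\ref{ctblegeneric} uniformly in $\mathfrak{T}$ (as per Remark~\ref{lastrem}). Then I would define $\Psi$ so that, on oracle $t \oplus s$, it first computes the initial-segment filter $g_t := \{\chi_t \!\restriction_n : n < \omega\}$ on $\mathbb{C}$ from $t$ (where $\chi_t \in 2^{\omega}$ is the characteristic function of $t$), and then simulates $\Psi_0^{(f" g_t) \oplus s}$; since $f$ is computable from the fixed $r$, the map $t \mapsto f" g_t$ is computable, so $\Psi^{t \oplus s}$ is indeed computable in $t \oplus s$.

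The core of the argument is to check that $g_t$ can play the role of the Cohen-generic filter $g$ in the proof of Lemma~\ref{ctblegeneric}. That proof used a countable elementary substructure $\mathfrak{A} \prec \mathfrak{A}_{\mathfrak{T}}$ as an auxiliary device, but for code-friendly TCIs we may instead take $\mathfrak{A} = \mathfrak{A}^*_{\mathfrak{T}} = (H(\omega); \in, T, \sigma, \dot{\mathcal{U}}, \vartheta)$, because the three bulleted properties after the definition of code-friendliness guarantee $\mathcal{L}_{\mathfrak{T}}$-suitability and $\Delta_1$-definability of $\mathbb{P}(\mathfrak{T})^{\top}$ in this structure. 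Under the bijection $f$, dense subsets of $\mathbb{C}$ that are $\Sigma_1$-definable in $\mathfrak{A}^*_{\mathfrak{T}}$ become dense $\Sigma^0_1$-in-a-real-code-of-$\mathfrak{T}$ subsets of $f"C$; by $\Delta_1$-definability of $P(\mathfrak{T})^{\top}$ in $\mathfrak{A}^*_{\mathfrak{T}}$ and conversely (since for code-friendly $\mathfrak{T}$ the theory $T$ can be read off the literals forced by conditions in $P(\mathfrak{T})^{\top}$), this oracle is Turing-equivalent to $s$. Hence $s$-1-genericity of $t$ forces $g_t$ to meet every such dense set, which is precisely the $\Sigma_1$-genericity hypothesis needed to invoke Lemma~\ref{main2} through the machinery that yielded $F_{\bar{\Phi}}$ in the proof of Lemma~\ref{ctblegeneric}. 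The uniqueness of $\mathcal{M}_t$ and the injectivity of $t \mapsto \mathcal{M}_t$ then transfer directly from their counterparts for $F_{\Psi_0}$: distinct 1-generic reals $t, t'$ yield distinct initial-segment filters $g_t \neq g_{t'}$, whose images under the dense embedding $\pi$ from Proposition~\ref{notlastp} are distinct filters on $\mathbb{P}(\mathfrak{T})^{\top}$, forcing the associated atomic diagrams (read off by $\bar{\Phi}$) to differ.

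The main obstacle will be the Turing-equivalence claim above --- namely, verifying that a full real code of $\mathfrak{T}$ is computable from $s = f"(P(\mathfrak{T})^{\top})$, which amounts to showing that $T$ and the action of $\vartheta$ can be reconstructed from the family of finitely satisfiable conditions alone. While the signature $\sigma$ and the set $\mathcal{L}_{\mathfrak{T}}$ are readily recovered from $s$, extracting $T$ requires a careful analysis of how the members of $\Gamma_{\mathfrak{T}}^{\top}$ encode $T$ via the translation procedure in item~\ref{gamma6} of the proof of Lemma~\ref{mcequiv}. If this Turing-equivalence turns out to fail for some pathological code-friendly $\mathfrak{T}$, a cheap fallback is to enlarge the oracle requirement from $f"(P(\mathfrak{T})^{\top})$ to the join of $f"(P(\mathfrak{T})^{\top})$ with an explicit code of $\mathfrak{T}$; modulo this detail, the argument proceeds as outlined.
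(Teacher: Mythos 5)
Your overall architecture is the same as the paper's: reopen the proof of Lemma \ref{ctblegeneric}, replace the auxiliary countable structure by an $H(\omega)$-based one so that the relevant dense sets become arithmetically definable, extract an initial-segment filter from the real $t$, push it through the dense embedding $\pi$ of Proposition \ref{notlastp}, and feed the result to the machinery behind $F_{\bar{\Phi}}$. You are also right that citing Lemma \ref{ctblegeneric} as a black box would not work and that the proof must be reopened.

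The gap is in your choice of auxiliary structure. You take $\mathfrak{A}^*_{\mathfrak{T}} = (H(\omega); \in, T, \sigma, \dot{\mathcal{U}}, \vartheta)$, so the dense sets that Lemma \ref{main2} needs the filter to meet are $\Sigma_1$-definable with the predicates $T, \sigma, \vartheta$ available; under $f$ these become $\Sigma^0_1$ relative to a real code of $\mathfrak{T}$, not $\Sigma^{0,s}_1$. To bridge this you assert a Turing equivalence between $s = f"(P(\mathfrak{T})^{\top})$ and a code of $\mathfrak{T}$, and you correctly flag this as the weak point: it is false in general. $P(\mathfrak{T})^{\top}$ is defined semantically (via certifiability), so logically equivalent but syntactically different theories $T$ --- including ones of arbitrarily high Turing degree obtained by padding a computable $T$ --- yield the same $P(\mathfrak{T})^{\top}$; hence $T$ cannot be recovered from $s$, and the dense sets your argument needs may lie strictly above $\Sigma^{0,s}_1$, so an $s$-$1$-generic real need not meet them. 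Your fallback of joining a code of $\mathfrak{T}$ into the oracle proves a genuinely weaker statement than the lemma, whose whole point is that $(f"(P(\mathfrak{T})^{\top}))$-$1$-genericity alone suffices. The repair is the paper's choice of structure: work over $\mathfrak{A} = (H(\omega); \in, P(\mathfrak{T})^{\top})$, note that $\mathcal{L}_{\mathfrak{T}}$ is $\Delta_1$-definable there (via $x \in \mathcal{L}_{\mathfrak{T}}$ iff $\{x\}$ or $\{\neg x\}$ lies in $P(\mathfrak{T})^{\top}$) and that $\Gamma^{\top}_{\mathfrak{T}}$ is a set of $(\mathcal{L}_{\mathfrak{T}})^*_{\mathfrak{A}}$-$\Pi_2$ sentences with parameters in $H(\omega)$ only; then the $f$-image of every $\Sigma_1$-definable subset of $P(\mathfrak{T})^{\top}$ over $\mathfrak{A}$ is $\Sigma^{0,s}_1$ on the nose, and $s$-$1$-genericity is exactly the right hypothesis. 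With that substitution the rest of your outline (the pullback of dense sets along $f"\pi$, which is computable in $s$, and the injectivity of the composite) goes through as in the paper.
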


\begin{proof}
Choose an arbitrary a code-friendly $\Pi_2$ TCI $\mathfrak{T}$ with not all models almost finitely determined. Adopt the abbreviations
\begin{align*}
    s := \ & f" (P(\mathfrak{T})^{\top}) \text{ and} \\
    \leq_s \ := \ & f" (\leq_{\mathbb{P}(\mathfrak{T})^{\top}}) \text{,} 
\end{align*}
and identify $s$ with $(s, \leq_s)$ whenever contextually necessary. We will modify the proof of Lemma \ref{ctblegeneric} to get an oracle machine $\Psi$ witnessing \ref{4532}, before checking that a very slightly modified version of Remark \ref{lastrem} applies to $\Psi$. 

Let $\mathfrak{A} = (H(\omega); \in, P(\mathfrak{T})^{\top})$, so that 
\begin{itemize}
    \item $(H(\omega); \in)$ is a transitive model of a sufficiently strong set theory, and
    \item $\mathcal{L}_{\mathfrak{T}}$ is a $\Delta_1$-definable subset of $H(\omega)$ over the $\mathfrak{A}$,
\end{itemize}
whence $\mathfrak{A}$ is $\mathcal{L}_{\mathfrak{T}}$-suitable. We argue as in the proof of Lemma \ref{ctblegeneric} to conclude 
\begin{enumerate}[label=(\arabic*)]
    \item\label{5571} all but the last two points in the hypothesis of Lemma \ref{main2} hold with
    \begin{itemize}[label=$\circ$]
        \item $\mathfrak{A}$ as defined,
        \item $|\mathfrak{A}_{\mathfrak{T}}| = |trcl(\mathfrak{A}_{\mathfrak{T}})|$ in place of $\lambda$,
        \item $\mathcal{L}_{\mathfrak{T}}$ in place of $\mathcal{L}$,
        \item $[\mathcal{L}_{\mathfrak{T}}]^{< \omega}$ in place of $B$,
        \item $P(\mathfrak{T})^{\top}$ in place of $P$,
        \item $\mathbb{P}(\mathfrak{T})^{\top}$ in place of $\mathbb{P}$, and
        \item $\Gamma_{\mathfrak{T}}^{\top}$ in place of $\Gamma$;
    \end{itemize} 
    \item\label{5572} in particular, $\Gamma_{\mathfrak{T}}^{\top}$ is a set of $(\mathcal{L}_{\mathfrak{T}})^*_{\mathfrak{A}}$-$\Pi_2$ sentences.
\end{enumerate}
Now apply Lemma \ref{main2} with $V$ in place of $W$, as well as the substitutions in \ref{5571}, to arrive at (\ref{marker0}).

Notice that
\begin{itemize}
    \item $H(\omega)$ is closed under the function $\chi_{\mathfrak{T}}$ as defined in Remark \ref{subsafe2}, and 
    \item the $f$-image of $\chi_{\mathfrak{T}}$ is $\Delta^0_0$ (i.e. computable),
\end{itemize} 
from which we deduce the following:
\begin{enumerate}[label=(\arabic*)]
    \setcounter{enumi}{2}
    \item\label{5573} the $f$-image of each $\Sigma_1$-definable subset of $P(\mathfrak{T})^{\top}$ over $\mathfrak{A}$ is $\Sigma^{0, s}_1$.
\end{enumerate}

Passing (\ref{marker0}) and \ref{5573} through $f$ the way (\ref{marker}) was passed through $f$ in the proof of Lemma \ref{ctblegeneric}, for some oracle machine $\bar{\Phi}$ we have
\begin{align}\label{number9}
\begin{split}
    \forall \bar{g} \ \exists ! \mathcal{M}_{\bar{g}} \ ( & \bar{g} \text{ is a filter on } s \text{ meeting all } \Sigma^{0, s}_1 \text{ subsets of } s \\
    & \implies (\mathcal{M}_{\bar{g}} \models^* \mathfrak{T} \text{ and } \bar{\Phi}^{\bar{g}} = f" (\Sigma(\mathfrak{T}, \mathcal{M}_{\bar{g}})))) \text{.}
\end{split}
\end{align}

Adopt the abbreviations
\begin{align*}
    u := \ & f" C \text{ and} \\
    \leq_u \ := \ & f" (\leq_{\mathbb{C}}) \text{,}
\end{align*}
and identify $u$ with $(u, \leq_u)$ whenever contextually necessary. 

To deal with generic reals instead of generic filter, we first fix an oracle machine $\ddot{\Phi}$ that computes the set $$f" \{t \! \restriction_n : n < \omega\}$$ when given any real $t$ as oracle. Next, we follow an argument similar to the one used to derive (\ref{number8}) in the proof of Lemma \ref{ctblegeneric}, so that for some 
\begin{itemize}
    \item dense embedding $\pi$ of $\mathbb{C}$ into $\mathbb{P}(\mathfrak{T})^{\top}$ with $f" \pi$ computable in $s$, and 
    \item oracle machine $\Phi$, 
\end{itemize}
we have
\begin{align}\label{number10}
\begin{split}
    \forall t \ \exists ! c \ (t \text{ is a } s \text{-} 1 \text{-generic real} \implies ( & \mathrm{UC}(s, (f" \pi)" c) \text{ is a filter on } s \text{ and} \\
    & \ddot{\Phi}^t = c \text{ and } \Phi^{c \oplus s} = \mathrm{UC}(s, (f" \pi)" c))) \text{.}
\end{split}
\end{align}

The upcoming proposition is an analogue of Proposition \ref{lastprop0}, formulated to restrict the universe of discourse to the (even) natural numbers. 

\begin{prop}\label{lastprop}
Let $c$ be a subset of $u$ meeting all $\Sigma^{0, s}_1$ subsets of $u$. Then \\ $\mathrm{UC}(s, (f" \pi)" c)$ meets all $\Sigma^{0, s}_1$ subsets of $s$.
\end{prop}

\begin{proof}
Let $z$ be the subset of $s$ defined by a $\Sigma^{0, s}_1$ formula $\varphi(x)$ in one free variable. Define $$a_z := \{p \in u : \exists q \ (\varphi(q) \text{ and } (f" \pi)(p) \leq_s q)\} \text{.}$$ That $u$, $f" \pi$ and $\leq_s$ are all computable in $s$ ($u$ is even outright computable) gives us the $\Sigma^{0, s}_1$-definability of $a_z$ as a subset of $u$. Consequently, $c$ must meet $a_z$.

If $c \cap a_z \neq \emptyset$, then by the definition of $a_z$, there are conditions $p \in c$ and $q \in z$ for which $(f" \pi)(p) \leq_s q$, so $\mathrm{UC}(s, (f" \pi)" c)$ meets $z$. Otherwise, there is $p \in c$ that cannot be extended in $u$ to a member of $a_z$. Consider any $q \leq_s (f" \pi)(p)$. As $f" \pi$ densely embeds $u$ into $s$, we can find conditions $p' \in u$ and $q' \in s$ for which $q' \leq_s q$ and $(f" \pi)(p') = q'$. Now $p' \leq_u p$, which according to our choice of $p$, means $p' \not\in a_z$. Unfurling the definition of $a_z$ gives us $q \not\in z$. Having thus shown that $(f" \pi)(p)$ cannot be extended in $s$ to a member of $z$, we are done.
\end{proof}

We can passing the definition of a $s$-$1$-generic real through $f$ to conclude that for every such real $t$, $\ddot{\Phi}^t$ is a subset of $u$ meeting all $\Sigma^{0, s}_1$ subsets of $u$. With (\ref{number9}) and (\ref{number10}) in mind, Proposition \ref{lastprop} then tells us that we can combine $\bar{\Phi}$, $\ddot{\Phi}$ and $\Phi$ to get an oracle machine $\Psi$ fulfilling the requirements 
\begin{align}\label{lastreq}
    \forall t \ \exists ! \mathcal{M}_t \ (t \text{ is a } s \text{-} 1 \text{-generic} \implies (\mathcal{M}_t \models^* \mathfrak{T} \text{ and } \Psi^{t \oplus s} = f" (\Sigma(\mathfrak{T}, \mathcal{M}_t))))
\end{align}
and 
\begin{align*}
    F_{\Psi} := ((t \text{ a } s \text{-} 1 \text{-generic real}) \mapsto \mathcal{M}_t \text{ as per (\ref{lastreq})}) = F_{\bar{\Phi}} \circ F_{\Phi} \circ F_{\ddot{\Phi}} \text{,}
\end{align*}
where all of 
\begin{align*}
    F_{\bar{\Phi}} := \ & (\bar{g} \text{ a filter on } s \text{ meeting all } \Sigma^{0, s}_1 \text{ subsets of } s) \mapsto \mathcal{M}_{\bar{g}} \text{ as per (\ref{number9}),} \\ 
    F_{\Phi} := \ & (c \text{ a filter on } u \text{ meeting all } \Sigma^{0, s}_1 \text{ subsets of } u) \\ 
    & \mapsto \mathrm{UC}(s, (f" \pi)" c) \text{ as per (\ref{number10}), and} \\
    F_{\ddot{\Phi}} := \ & (t \text{ a } s \text{-} 1 \text{-generic real}) \mapsto f" \{t \! \restriction_n : n < \omega\} 
\end{align*}
are injective.

The argument in Remark \ref{lastrem} applies here to net us the uniformity of deriving $F_{\bar{\Phi}}$ and $F_{\Phi}$ from $\mathfrak{T}$, and clearly $F_{\ddot{\Phi}}$ does not depend on $\mathfrak{T}$ at all. As in Remark \ref{lastrem}, we can then conclude that $\Psi$ is the required witness to the lemma.
\end{proof}

Fix any nicely computable code $r$ of $H(\omega)$. Check that
\begin{itemize}
    \item the function $f$ given in Lemma \ref{ctblegeneric2} is a definable subset of $H(\omega)$ over the structure $(H(\omega); \in, r)$, in light of Fact \ref{uniquecode},
    \item the oracle machine $\Psi$ constructed in the proof of Lemma \ref{ctblegeneric2} is a definable element of $H(\omega)$ over the structure $(H(\omega); \in, r, f)$, and
    \item the injective function $t \mapsto f" \Sigma(\mathfrak{T}, \mathcal{M}_t)$ defined in the proof of Lemma \ref{ctblegeneric2} always has a left inverse computable using only $f_r" (P(\mathfrak{T})^{\top})$ as parameter. Further, said left inverse is uniformly computable over all relevant code-friendly $\Pi_2$ TCIs $\mathfrak{T}$.
\end{itemize}
As such, we have actually proven a more general version of Lemma \ref{ctblegeneric2}, which we formally present as our final theorem below.

\begin{thm}\label{lastthm}
There is a formula $\psi_{gmc}$ in two free variables, absolute for transitive models of $\mathsf{ZFC - Powerset}$, such that $\psi_{gmc}(r, (f_r, \Psi_r, \bar{\Psi}_{r}))$ defines a function $$r \mapsto (f_r, \Psi_r, \bar{\Psi}_{r})$$ on the set of all nicely computable codes of $H(\omega)$, wherewith
\begin{enumerate}[label=(\arabic*)]
    \item $(r, f_r)$ witnesses $(H(\omega); \in)$ is computable,
    \item $\Psi_r$ and $\bar{\Psi}_{r}$ are oracle machines, and
    \item whenever $\mathfrak{T}$ is a code-friendly $\Pi_2$ TCI, one of the following must hold:
    \begin{enumerate}[label=(\alph*)]
        \item All models of $\mathfrak{T}$ are almost finitely determined.
        \item For every $(f_r" (P(\mathfrak{T})^{\top}))$-$1$-generic real $t$, there is a unique model $\mathcal{M}_t$ of $\mathfrak{T}$ satisfying 
        \begin{gather*}
            \Psi_r^{t \oplus (f_r" (P(\mathfrak{T})^{\top}))} = f_r" (\Sigma(\mathfrak{T}, \mathcal{M}_t)) \text{ and} \\
            \bar{\Psi}_{r}^{(f_r" (\Sigma(\mathfrak{T}, \mathcal{M}_t))) \oplus (f_r" (P(\mathfrak{T})^{\top}))} = t.
        \end{gather*}
        In particular, for each such pair $(t, \mathcal{M}_t)$, $$t \oplus (f_r" (P(\mathfrak{T})^{\top})) \equiv_{T} (f_r" (\Sigma(\mathfrak{T}, \mathcal{M}_t))) \oplus (f_r" (P(\mathfrak{T})^{\top})).$$
\end{enumerate}
\end{enumerate}
\end{thm}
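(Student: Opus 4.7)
The plan is to extract $\psi_{gmc}$ directly from the construction underlying Lemma~\ref{ctblegeneric2}, then verify its three clauses one by one. Given a nicely computable code $r$, Fact~\ref{uniquecode} supplies a unique $f_r$ with $(r, f_r)$ witnessing $(H(\omega); \in)$ is computable, and the predicate ``$x = f_r$'' is expressible as a $\Delta_1$ formula in $r$, hence absolute for transitive models of $\mathsf{ZFC - Powerset}$. The oracle machine $\Psi_r$ I would read off the composite $F_{\bar{\Phi}} \circ F_{\Phi} \circ F_{\ddot{\Phi}}$ in the proof of Lemma~\ref{ctblegeneric2}: $\ddot{\Phi}$ does not depend on $\mathfrak{T}$ and is uniformly obtainable from $f_r$; $\Phi$ depends only on the dense embedding $\pi$ of Proposition~\ref{notlastp}, which was constructed $\Delta_1$-ly from $f_r$ and the parameter $P(\mathfrak{T})^{\top}$; $\bar{\Phi}$ is read off the uniform procedure supplied by Lemma~\ref{main2} applied to $\mathfrak{A} = (H(\omega); \in, P(\mathfrak{T})^{\top})$ and $\Gamma_{\mathfrak{T}}^{\top}$, parametrised only by $f_r$ and $\mathfrak{T}$. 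Each ingredient is a definable element of $H(\omega)$ over $(H(\omega); \in, r)$, so $\Psi_r$ is too, in a $\Delta_1$, absolute way.

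The construction of $\bar{\Psi}_r$ is the main new ingredient. Fix a code-friendly $\Pi_2$ TCI $\mathfrak{T}$ not all of whose models are almost finitely determined, and write $s := f_r"(P(\mathfrak{T})^{\top})$. For a $s$-$1$-generic real $t$, the argument underlying Lemma~\ref{ctblegeneric2} gives
$$\Sigma(\mathfrak{T}, \mathcal{M}_t) \;=\; \bigcup \mathrm{UC}\bigl(\mathbb{P}(\mathfrak{T})^{\top},\, \pi"\{t\restriction_n : n < \omega\}\bigr),$$
so $\pi(t \restriction_n) \subseteq \Sigma(\mathfrak{T}, \mathcal{M}_t)$ for every $n$. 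By the ``$P^{*}$-least split'' step of Proposition~\ref{notlastp}, the two successor values $\pi(t\restriction_n{}^{\frown}\langle 0\rangle)$ and $\pi(t\restriction_n{}^{\frown}\langle 1\rangle)$ are $\mathbb{P}(\mathfrak{T})^{\top}$-incompatible extensions of $\pi(t\restriction_n)$ whose symmetric difference consists of complementary literals $\varphi$ and $\neg\varphi$ over $\mathcal{L}_{\mathfrak{T}}$. Since $\Sigma(\mathfrak{T}, \mathcal{M}_t)$ is $\mathcal{L}_{\mathfrak{T}}$-nice (coming from a model of $\mathfrak{T}$), exactly one of $\varphi, \neg\varphi$ lies in $\Sigma(\mathfrak{T}, \mathcal{M}_t)$, and accordingly exactly one of the two candidate children of $\pi(t\restriction_n)$ is a subset of $\Sigma(\mathfrak{T}, \mathcal{M}_t)$; the choice matches the next bit of $t$.

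Consequently $\bar{\Psi}_r$ can be specified as the oracle machine that, at stage $n$ with oracle $Z := (f_r"\Sigma(\mathfrak{T}, \mathcal{M}_t)) \oplus s$, uses the right-hand summand to compute $f_r"\pi(t\restriction_n{}^{\frown}\langle 0\rangle)$ and $f_r"\pi(t\restriction_n{}^{\frown}\langle 1\rangle)$, tests which of them is a subset of the left-hand summand, and outputs the corresponding bit; by induction this reproduces $t$. Uniformity of $\pi$ in $(\mathfrak{T}, r)$ makes $\bar{\Psi}_r$ independent of $\mathfrak{T}$ and $\Delta_1$-definable from $r$, and the Turing-equivalence statement drops out of having both $\Psi_r$ and $\bar{\Psi}_r$ at hand. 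The main obstacle I expect is bookkeeping: making sure that the definitions of $f_r$, of $\pi$, and of the machine coming out of Lemma~\ref{main2} are each spelled out so that their conjunction is visibly a single two-variable formula $\psi_{gmc}$ whose evaluation is the same in $V$, in $V[g]$ for any set forcing $g$, and in any transitive intermediate model of $\mathsf{ZFC - Powerset}$ containing $r$.
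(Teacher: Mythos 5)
Your proposal is correct and follows essentially the same route as the paper: read $f_r$ off Fact \ref{uniquecode}, take $\Psi_r$ to be the machine $F_{\bar{\Phi}} \circ F_{\Phi} \circ F_{\ddot{\Phi}}$ from Lemma \ref{ctblegeneric2}, and obtain $\bar{\Psi}_r$ as a left inverse computable uniformly from $f_r"(P(\mathfrak{T})^{\top})$. Your explicit bit-recovery procedure via the splitting structure of $\pi$ from Proposition \ref{notlastp} correctly supplies the left inverse that the paper merely asserts to exist.
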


From Theorem \ref{lastthm}, we can prove that certain $T$-substructures of a countable structure have the prefect set property.

\begin{cor}\label{lastcor}
Let $\mathfrak{A} = (A; \mathcal{I})$ and $T$ be a countable structure and a first-order $\Pi_2$ theory respectively, over the same signature $\sigma$. Then $|\mathrm{Sub}(\mathfrak{A}, T)| \leq \aleph_0$ or $|\mathrm{Sub}(\mathfrak{A}, T)| = 2^{\aleph_0}$.
\end{cor}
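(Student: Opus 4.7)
The plan is to reduce the question to the generic-model dichotomy of Theorem \ref{lastthm} via an appropriately coded TCI. I would first apply the construction of Example \ref{2ndex} to obtain a TCI $\mathfrak{T}$ over the signature $\sigma$ whose models are exactly the members of $\mathrm{Sub}(\mathfrak{A}, T)$, noting that $\mathfrak{T}$ is $\Pi_2$ because $T$ is. Since $\mathfrak{A}$ is countable, both $A$ and $\sigma$ are countable, so by composing with a bijection $A \cong A' \subseteq \omega$ and a form-preserving signature embedding $\sigma \hookrightarrow H(\omega)$ --- neither of which affects $|\mathrm{Sub}(\mathfrak{A}, T)|$, as both merely transform the problem into an isomorphic one --- we may assume without loss of generality that $\mathfrak{T}$ is code-friendly. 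This ensures in particular that $\mathcal{L}_{\mathfrak{T}} \subseteq H(\omega)$ is countable and hence $[\mathcal{L}_{\mathfrak{T}}]^{<\omega}$ is countable as well.

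I would then split on whether every model of $\mathfrak{T}$ in $V$ is almost finitely determined. Suppose first that it is. By Lemma \ref{afdinV}, for each such $\mathcal{M}$ there exist an ordinal $\alpha$ and an atom $p$ of $\mathbb{P}(\mathfrak{T})^{(\alpha)}$ such that $\Sigma(\mathfrak{T}, \mathcal{M})$ is determined by $p$ via $g_p(\mathbb{P}(\mathfrak{T})^{(\alpha)})$. The key observation is that each $p \in [\mathcal{L}_{\mathfrak{T}}]^{<\omega}$ can serve as an atom at at most one level: if $p$ is an atom of $\mathbb{P}(\mathfrak{T})^{(\alpha)}$, then Definition \ref{CB1} adds the disjunction $\bigvee_{x \in p} \neg E(x)$ to $\Gamma_{\mathfrak{T}}^{(\alpha+1)}$, so no $\mathcal{L}_{\mathfrak{T}}$-nice set containing $p$ can satisfy $\Gamma_{\mathfrak{T}}^{(\alpha+1)}$, forcing $p \notin P(\mathfrak{T})^{(\alpha+1)}$ and preventing $p$ from recurring as an atom at any higher level. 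Combined with the injectivity of $\mathcal{M} \mapsto \Sigma(\mathfrak{T}, \mathcal{M})$ given by Lemma \ref{mcequiv2}, the assignment $\mathcal{M} \mapsto p_{\mathcal{M}}$ is therefore an injection from $\mathrm{Sub}(\mathfrak{A}, T)$ into the countable set $[\mathcal{L}_{\mathfrak{T}}]^{<\omega}$, yielding $|\mathrm{Sub}(\mathfrak{A}, T)| \leq \aleph_0$.

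Suppose instead that some model of $\mathfrak{T}$ in $V$ is not almost finitely determined. Then clause (b) of Theorem \ref{lastthm} applies to any nicely computable code $r$ of $(H(\omega); \in)$, yielding an injection $t \mapsto \mathcal{M}_t$ from the class of $(f_r \mathbin{{}^{\shortmid}\!} (P(\mathfrak{T})^{\top}))$-$1$-generic reals into $\mathrm{Sub}(\mathfrak{A}, T)$. Because $\Psi_r$ is an oracle machine and both $t$ and $f_r \mathbin{{}^{\shortmid}\!} (P(\mathfrak{T})^{\top})$ lie in $V$, so does $\Sigma(\mathfrak{T}, \mathcal{M}_t)$; the absoluteness of $\psi_{trans}$ (Lemma \ref{mcequiv2}) then places $\mathcal{M}_t$ in $V$. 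Since for any real $X$ the set of $X$-$1$-generic reals is comeager in $2^{\omega}$ and hence of cardinality $2^{\aleph_0}$, we obtain $|\mathrm{Sub}(\mathfrak{A}, T)| \geq 2^{\aleph_0}$. The reverse inequality is immediate, since every substructure of $\mathfrak{A}$ is determined by its base set and $|\mathcal{P}(A)| \leq 2^{\aleph_0}$.

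The main obstacle will be the atom-counting argument in the first case: one must verify both the unique-level property for atoms and that the abused notation $\Sigma(\mathfrak{T}, \mathcal{M}) = g_p(\mathbb{P}(\mathfrak{T})^{(\alpha)})$ in Lemma \ref{afdinV} is unfolded correctly so as to deliver an honest injection from $V$-models of $\mathfrak{T}$ into a countable set of finite conditions. By contrast, the second case is essentially automatic once one observes that the oracle machine $\Psi_r$ of Theorem \ref{lastthm}, when fed $V$-parameters, produces outputs in $V$ --- so the continuum of available $1$-generic reals immediately translates into a continuum of $V$-models.
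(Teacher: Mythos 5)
Your proposal is correct and follows essentially the same route as the paper: reduce to a code-friendly $\Pi_2$ TCI via Example \ref{2ndex}, then split on almost finite determination, using Lemma \ref{afdinV} to bound the count by $|[\mathcal{L}_{\mathfrak{T}}]^{<\omega}| \leq \aleph_0$ in one case and Theorem \ref{lastthm} plus the continuum of $1$-generic reals in the other. Your extra details (the unique-level property of atoms and the observation that the $\mathcal{M}_t$ land in $V$) merely flesh out steps the paper leaves implicit.
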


\begin{proof}
Clearly $\mathrm{Sub}(\mathfrak{A}, T)$ is invariant under isomorphisms, so without loss of generality, we can assume $A$ is some ordinal $\alpha$ with $\alpha \leq \omega$, and $dom(\mathcal{I}) \subset H(\omega) \setminus \omega$. By Example \ref{2ndex}, there is a $\Pi_2$ TCI $\mathfrak{T}$ such that $$\{ \text{models of } \mathfrak{T}\} = \mathrm{Sub}(\mathfrak{A}, T).$$ Our assumptions on $\mathfrak{A}$ allow us to choose $\mathfrak{T}$ satisfying $\mathcal{L}_{\mathfrak{T}} \subset H(\omega)$, so that $\mathfrak{T}$ is also code-friendly. Note that $|\mathrm{Sub}(\mathfrak{A}, T)| \leq 2^{|A|} \leq 2^{\aleph_0}$.

If all models of $\mathfrak{T}$ are almost finitely generated, then Lemma \ref{afdinV} tells us that the number of models of $\mathfrak{T}$ is bounded above by $$max\{|P(\mathfrak{T})|, |[\mathcal{L}_{\mathfrak{T}}]^{< \omega}|\} \leq \aleph_0,$$ which means $|\mathrm{Sub}(\mathfrak{A}, T)| \leq \aleph_0$. Otherwise, by Theorem \ref{lastthm}, for some real $X$ there is an injection from the set of $X$-$1$-generic reals into the set of models of $\mathfrak{T}$. As there are continuum many $X$-$1$-generic reals, $|\mathrm{Sub}(\mathfrak{A}, T)| = 2^{\aleph_0}$.
\end{proof}

Corollary \ref{lastcor} also follows from a well-known fact in descriptive set theory (see e.g. \cite{kechris}), via the Cantor-Bendixson theorem. 

\begin{fact}\label{lastfact}
Let $\mathfrak{A} = (A; \mathcal{I})$ and $T$ be a countable structure and a first-order theory respectively, over the same signature $\sigma$. Then $\mathrm{Sub}(\mathfrak{A}, T)$ is a closed set in some Polish space.
\end{fact}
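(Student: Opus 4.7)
The plan is to realise $\mathrm{Sub}(\mathfrak{A}, T)$ as a Borel subset of the Cantor space $2^{A}$ and then invoke the standard change-of-topology theorem from descriptive set theory to obtain a Polish refinement in which this set is closed. I would identify each substructure $\mathfrak{B} \leq \mathfrak{A}$ with its base set $B \subset A$, view $\mathcal{P}(A) = 2^{A}$ as the product of copies of the two-point discrete space, and observe that $A$ being countable makes this space Polish.

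The first substantive step is to show that the set $X$ of those $B \subset A$ actually carrying a substructure --- i.e., $B$ contains $\dot{c}^{\mathfrak{A}}$ for every constant symbol $\dot{c} \in \sigma$ and is closed under every function interpretation of $\mathcal{I}$ --- is closed in $2^{A}$. This is immediate because $X$ is the intersection, over the countably many constants and countably many tuples $(\vec{a}, f^{\mathfrak{A}}(\vec{a}))$ of appropriate arities, of clopen conditions of the form ``$\dot{c}^{\mathfrak{A}} \in B$'' and ``if $\vec{a} \subset B$ then $f^{\mathfrak{A}}(\vec{a}) \in B$''. Next, I would induct on the complexity of a $\sigma$-formula $\varphi$ with parameters in $A$ to show that $S_{\varphi} := \{B \in X : (B; \mathcal{I} \restriction B) \models \varphi\}$ is Borel. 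Quantifier-free $\varphi$ give clopen $S_{\varphi}$ by the absoluteness of quantifier-free formulas between $\mathfrak{A}$ and its substructures; boolean connectives correspond to finite boolean operations on Borel sets; and an existentially quantified step expresses $S_{\exists x \, \psi(x, \vec{a})}$ as
\begin{equation*}
\bigcup_{b \in A} \bigl( \{B \in X : b \in B\} \cap S_{\psi(b, \vec{a})} \bigr),
\end{equation*}
which is Borel by the induction hypothesis because $A$ is countable. Since $\mathfrak{A}$ is countable, $\sigma$ is countable and hence $T$ is countable, so $\mathrm{Sub}(\mathfrak{A}, T) = X \cap \bigcap_{\varphi \in T} S_{\varphi}$ is a countable intersection of Borel sets, and is therefore itself Borel in $2^{A}$.

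The final step is to invoke the change-of-topology theorem (see e.g.~\cite{kechris}): for any Polish space $(Y, \tau)$ and any Borel subset $E \subset Y$, there is a finer Polish topology $\tau' \supset \tau$ on $Y$ in which $E$ is clopen. Applying this with $Y = 2^{A}$ and $E = \mathrm{Sub}(\mathfrak{A}, T)$ furnishes the desired Polish space in which $\mathrm{Sub}(\mathfrak{A}, T)$ is closed. The main obstacle is the formula induction, where one must keep careful track of free versus bound variables and must rely on the countability of $A$ to keep each quantifier step within the Borel hierarchy; the closing topological refinement is a black-box application and poses no real difficulty.
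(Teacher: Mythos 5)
Your argument is correct. Note that the paper does not actually prove this Fact --- it is stated with a citation to \cite{kechris} and used only as an alternative route to Corollary \ref{lastcor} --- so there is no in-paper proof to compare against; your write-up supplies the standard justification. The two-stage structure is the right one: the substructure condition (containing the constants, closed under the functions) is genuinely closed in $2^{A}$, whereas the condition $\mathfrak{B}\models T$ for a general first-order $T$ is only Borel (already $\ulcorner\exists x\,P(x)\urcorner$ gives an open, non-closed condition), so the change-of-topology theorem (or, equivalently for the intended application, the perfect set property for Borel sets / the Lusin--Suslin representation of a Borel set as an injective continuous image of a closed subset of $\omega^{\omega}$) is genuinely needed and is correctly invoked. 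Two small points of bookkeeping you already gesture at: the identification $\mathfrak{B}\mapsto B$ is injective because a substructure of $\mathfrak{A}$ is determined by its base set; and in the quantifier-free base case of the induction you should note that terms evaluate identically in $\mathfrak{B}$ and in $\mathfrak{A}$ precisely because you have already restricted to the closed set $X$ of function-closed subsets, which is what makes $S_{\varphi}$ clopen there. Countability of $\sigma$ (hence of $T$ and of the relevant formulas) follows from the paper's definition of a countable structure as one with $\max\{|A|,|\mathcal{I}|\}\leq\aleph_{0}$, so the intersection defining $\mathrm{Sub}(\mathfrak{A},T)$ is indeed countable. No gaps.
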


Precisely because Theorem \ref{lastthm} establishes a very strong version of perfect set property, it makes sense to treat it like a souped-up variant of the Cantor-Bendixson theorem, subject to restrictions on use cases. Instead of being applicable to arbitrary closed subsets of some Polish space, Theorem \ref{lastthm} only applies to sets which contain exactly all the models of some $\Pi_2$ TCI. This treatment comes off as a natural extension of the parallels we drew in Remark \ref{CBrem}.

\subsection{Open Questions}

The study of how abundant generic models of a TCI are, can be approached from another direction: by comparing them with arbitrary models of the same TCI. As such, the propensity for a model of a TCI to be isomorphic to a generic model becomes of fundamental interest. In view of much of the work done in this section, the following is a most natural question.

\begin{enumerate}[label=(Q\arabic*)]
    \setcounter{enumi}{2}
    \item\label{54qn2} Is there a consistent $\Pi_2$ TCI $\mathfrak{T}$ such that every model of $\mathfrak{T}$ found in some outer model of $V$ is isomorphic to a $V$-generic model of $\mathfrak{T}$?
\end{enumerate}

Consider any consistent first-order $\Pi_2$ theory $T$ with only finite models (there are many such theories with the empty signature). $T$ can be used to define a $\Pi_2$ TCI $\mathfrak{T}$ such that every model of $T$ is isomorphic to some model of $\mathfrak{T}$, and vice versa. This relation between $T$ and $\mathfrak{T}$ remains true in all outer models of $V$. Hence, \ref{54qn2} can be answered in the affirmative. 

Even if we require $\mathfrak{T}$ to have an infinite model in each of these questions, the same answers apply when we choose $\mathfrak{T}$ to be a $\Pi_2$ TCI such that all models of $\mathfrak{T}$ are isomorphic to the unique (up to isomorphism) $\aleph_0$-sized model of a $\aleph_0$-categorical first-order $\Pi_2$ theory $T$ (say, the theory of dense linear orders without endpoints). As the $\aleph_0$-categoricity of a theory is absolute for transitive models of $\mathsf{ZFC}$ with the same ordinals, said relation between $T$ and $\mathfrak{T}$ is preserved across outer models of $V$.

However, the question dual to \ref{54qn2} appears more difficult.

\begin{enumerate}[label=(Q\arabic*)]
    \setcounter{enumi}{3}
    \item\label{54qn3} Is there a $\Pi_2$ TCI $\mathfrak{T}$ with a model $\mathcal{M}$ in some outer model of $V$ such that $\mathcal{M}$ is not isomorphic to any $V$-generic model of $\mathfrak{T}$?
\end{enumerate}

Since our impetus for studying TCIs stems from our interest in uncovering links between forcing/genericity and the semantics of first-order logic, it is perhaps fitting that we ask for a similar example of a first-order theory. 

\begin{enumerate}[label=(Q\arabic*)]
    \setcounter{enumi}{4}
    \item\label{54qn4} Is there a first-order $\Pi_2$ theory $T$ with a model $M$ in some outer model of $V$ such that for no $\Pi_2$ TCI $\mathfrak{T}$ is $M$ isomorphic to a $V$-generic model of $\mathfrak{T}$?
\end{enumerate}

By Lemma \ref{gmodelsinfe}, it seems that \ref{54qn3} and \ref{54qn4} cannot be solved using set forcing alone. Thankfully, class forcing has been developed sufficiently to answer them. Essentially, we ``cheat'' by choosing a close-to-trivial TCI, only possible models of which are of the form $(\omega; \in, A)$, where $A$ can be any real. By way of Jensen's coding-the-universe forcing, we can get to an outer model of $V$ with a new non-generic real $r$. Now the model of our TCI with $r$ as the predicate cannot be isomorphic to \emph{any} member of \emph{any} forcing extension of $V$. 

So, for TCIs with very simple theories, we can construct a non-generic model. We cannot do the same for all $\Pi_2$ TCIs because of Lemma \ref{revgenmodels}. Together, they make us wonder if a clear line can be drawn in $V$. Let 
\begin{align*}
    \mathrm{NG}_1 := \{\mathfrak{T} \in V : \ & \mathfrak{T} \text{ is a } \Pi_2 \text{ TCI and } \exists W \ \exists \mathcal{M} \! \in \! W \ \forall x \! \in \! W \ (W \text{ is an outer model of } V \\
    & \text{and } \mathcal{M} \models^* \mathfrak{T} \text{ and } x \not\cong \mathcal{M} \text{ whenever } x \text{ is a } V \text{-generic model of } \mathfrak{T})\} \\
    \mathrm{NG}_2 := \{T \in V : \ & T \text{ is a } \Pi_2 \text{ theory and } \\
    & \exists W \ \exists \mathcal{M} \! \in \! W \ \forall \mathfrak{T} \! \in \! V \ \forall x \! \in \! W \\
    & (W \text{ is an outer model of } V \text{ and } \mathcal{M} \models T \text{ and } \\
    & x \not\cong \mathcal{M} \text{ whenever } \mathfrak{T} \text{ is a TCI and } x \text{ is a } V \text{-generic model of } \mathfrak{T})\} \text{.}
\end{align*}

\begin{ques}\label{q548}
Is $\mathrm{NG}_1$ definable in $V$?
\end{ques}

\begin{ques}\label{q552}
Is $\mathrm{NG}_2$ definable in $V$?
\end{ques}

Our current line of questioning can be extended to the paradigm of relative effectiveness.

Fix $\psi_{gmc}$ to be as in Theorem \ref{lastthm}. Let $r$ be a nicely computable code of $H(\omega)$. Define
\begin{align*}
    f_r := \ & \text{the unique } f \text{ for which there are } \Psi \text{ and } \bar{\Psi} \text{ satisfying } \psi_{gmc}(r, (f, \Psi, \bar{\Psi})) \text{,} \\
    \Psi_r := \ & \text{the unique } \Psi \text{ for which there are } f \text{ and } \bar{\Psi} \text{ satisfying } \psi_{gmc}(r, (f, \Psi, \bar{\Psi})) \text{, and} \\
    \bar{\Psi}_r := \ & \text{the unique } \bar{\Psi} \text{ for which there are } f \text{ and } \Psi \text{ satisfying } \psi_{gmc}(r, (f, \Psi, \bar{\Psi})) \text{.}
\end{align*}
Analogous to Question \ref{q548}, we want to pick out every code-friendly TCI $\mathfrak{T}$ with a model $M$ that neither almost finitely determined nor isomorphic to any $\mathcal{M}_t$ born from a $(f_r" (P(\mathfrak{T})^{\top}))$-$1$-generic real $t$ \`{a} la Theorem \ref{lastthm}. As turns out, because $f_r \circ f_s^{-1}$ is computable for any other nicely computable code $s$ of $H(\omega)$, the answer to this question is independent of
the choice of $r$.

Let $\mathrm{NGE}'_1$ be the set containing exactly all the code-friendly $\Pi_2$ TCIs $\mathfrak{T}$ with a model $\mathcal{M}$ such that
\begin{enumerate}[label=(\alph*)]
    \item $\mathcal{M}$ is not almost finitely determined, and
    \item for every nicely computable code $r$ of $H(\omega)$ and every $(f_r" (P(\mathfrak{T})^{\top}))$-$1$-generic real $t$, if $\mathcal{M}'$ is a model of $\mathfrak{T}$ satisfying 
    \begin{equation*}
        \Psi_r^{t \oplus (f_r" (P(\mathfrak{T})^{\top}))} = f_r" (\Sigma(\mathfrak{T}, \mathcal{M}')) \text{,}
    \end{equation*}
    then $\mathcal{M} \not \cong \mathcal{M}'$.
\end{enumerate}
We are interested in representing $\mathrm{NGE}'_1$ as a set of reals, so fix a nicely computable code $r$ of $H(\omega)$ and set
\begin{equation*}
    \mathrm{NGE}_1 := \{(f_r" \Gamma_{\mathfrak{T}}) \oplus (f_r" \mathcal{L}_{\mathfrak{T}}) : \mathfrak{T} \in \mathrm{NGE}'_1\} \text{.}
\end{equation*}
Notice that the pair $(\Gamma_{\mathfrak{T}}, \mathcal{L}_{\mathfrak{T}})$ completely determines $\mathfrak{T}$. Further, both $f_r" \Gamma_{\mathfrak{T}}$ and $f_r" \mathcal{L}_{\mathfrak{T}}$ are well-defined by \ref{cf1}.

\begin{ques}\label{q553}
Is $\mathrm{NGE}_1$ a $\mathbf{\Delta^1_0}$ set of reals?
\end{ques}

We can replace $\mathrm{NGE}_1$ Question \ref{q553} with another set to get an analogue of Question \ref{q552} in the same spirit of relative effectiveness. 

Let $\mathrm{NGE}'_2$ be the set containing exactly all the first-order $\Pi_2$ theories $T$ with a model $M$ such that whenever $\mathfrak{T}$ is a code-friendly $\Pi_2$ TCI and $\mathcal{M} \models^* \mathfrak{T}$,
\begin{enumerate}[label=(\alph*)]
    \item if $\mathcal{M}$ is almost finitely determined then $M \not \cong \mathcal{M}$, and
    \item if $r$ is a nicely computable code of $H(\omega)$ and $t$ is a $(f_r" (P(\mathfrak{T})^{\top}))$-$1$-generic real for which
    \begin{equation*}
        \Psi_r^{t \oplus (f_r" (P(\mathfrak{T})^{\top}))} = f_r" (\Sigma(\mathfrak{T}, \mathcal{M})) \text{,}
    \end{equation*}
    then $M \not \cong \mathcal{M}$.
\end{enumerate}
Fix a nicely computable code $r$ of $H(\omega)$ and set
\begin{equation*}
    \mathrm{NGE}_2 := \{(f_r" \Gamma_{\mathfrak{T}}) \oplus (f_r" \mathcal{L}_{\mathfrak{T}}) : \mathfrak{T} \in \mathrm{NGE}'_2\} \text{.}
\end{equation*}

\begin{ques}
Is $\mathrm{NGE}_2$ a $\mathbf{\Delta^1_0}$ set of reals?
\end{ques}

In search of further evidence that $\hat{\mathbb{P}}$ is a useful lens through which one can classify the reach of forcing as a technique, we ask the next few questions following the directions of Remarks \ref{ramble1} and \ref{ramble2}.

\begin{ques}
Let $\mathfrak{T}_1$ and $\mathfrak{T}_2$ be $\Pi_2$ TCIs such that $\mathfrak{T}_1 \trianglelefteq \mathfrak{T}_2$.
\begin{enumerate}[label=(\arabic*)]
    \item If $\mathcal{M} \models^* \mathfrak{T}_2$ in an outer model of $V$, must $V[\mathcal{M}]$ contain a model of $\mathfrak{T}_1$?
    \item If $\mathcal{M} \models^* \mathfrak{T}_1$ in an outer model of $V$, must there be $\mathcal{M}'$ is some outer model of $V$ such that $\mathcal{M}' \models^* \mathfrak{T}_2$ and $V[\mathcal{M}] \subset V[\mathcal{M}']$?
\end{enumerate}
\end{ques}

\begin{ques}
Is there a ``naturally definable'' class $\mathcal{C}$ such that 
\begin{enumerate}[label=(\alph*)]
    \item $\mathcal{C} \subsetneq \{\mathfrak{T} : \mathfrak{T} \text{ is a } \Pi_2 \text{ TCI}\}$, and
    \item every member of $\trianglelefteq / \sim_T$ contains a member $\mathfrak{T}$ of $\mathcal{C}$ for which
    \begin{align*}
        & \{V[\mathcal{M}] : \mathcal{M} \models^* \mathfrak{T} \text{ in an outer model of } V\} \\
        & = \{V[g] : g \text{ is } \mathbb{P}(\mathfrak{T}) \text{-generic over } V\} \text{?}
    \end{align*}
\end{enumerate} 
\end{ques}

\begin{ques}
Is there a ``naturally definable'' class $\mathcal{C}$ of TCIs such that 
\begin{enumerate}[label=(\alph*)]
    \item $\mathcal{C} \supsetneq \{\mathfrak{T} : \mathfrak{T} \text{ is a } \Pi_2 \text{ TCI}\}$, and
    \item for every $\mathfrak{T} \in \mathcal{C}$, 
    \begin{align*}
        & \{V[\mathcal{M}] : \mathcal{M} \models^* \mathfrak{T} \text{ in an outer model of } V\} \\
        & \supset \{V[g] : g \text{ is } \mathbb{P}(\mathfrak{T}) \text{-generic over } V\} \text{?}
    \end{align*}
\end{enumerate} 
\end{ques}

\section{References}
\printbibliography[heading=none]

\end{document}